\DeclareRobustCommand{\cev}[1]{
  {\mathpalette\do@cev{#1}}
}
\newcommand{\do@cev}[2]{
  \vbox{\offinterlineskip
    \sbox\z@{$\m@th#1 x$}
    \ialign{##\cr
      \hidewidth\reflectbox{$\m@th#1\vec{}\mkern4mu$}\hidewidth\cr
      \noalign{\kern-\ht\z@}
      $\m@th#1#2$\cr
    }
  }
}
\let\oldtocsection=\tocsection
\let\oldtocsubsection=\tocsubsection
\let\oldtocsubsubsection=\tocsubsubsection
\numberwithin{equation}{subsection}
\renewcommand{\tocsection}[2]{\hspace{0em}\oldtocsection{#1}{#2}}
\renewcommand{\tocsubsection}[2]{\hspace{1em}\oldtocsubsection{#1}{#2}}
\renewcommand{\tocsubsubsection}[2]{\hspace{2em}\oldtocsubsubsection{#1}{#2}}
\title{Endoscopy for Modular Hecke Categories}
\author{Colton Sandvik}
\date{}
\newcommand{\C}{\mathbb{C}}
\newcommand{\Q}{\mathbb{Q}}
\newcommand{\Z}{\mathbb{Z}}
\newcommand{\F}{\mathbb{F}}
\renewcommand{\O}{\mathbb{O}}
\renewcommand{\P}{\mathbb{P}}
\newcommand{\K}{\mathbb{K}}
\newcommand{\A}{\mathbb{A}}
\newcommand{\G}{\mathbb{G}}
\renewcommand{\k}{\mathbbm{k}}
\newcommand{\scrO}{\mathcal{O}}
\newcommand{\scrQ}{\mathscr{Q}}
\newcommand{\scrZ}{\mathscr{Z}}
\DeclareMathOperator{\Sch}{Sch}
\DeclareMathOperator{\Stk}{Stk}
\DeclareMathOperator{\IndStk}{IndStk}
\DeclareMathOperator{\an}{an}
\DeclareMathOperator{\pt}{pt}
\DeclareMathOperator{\pr}{pr}
\DeclareMathOperator{\TwStk}{TwStk}
\DeclareMathOperator{\Hom}{Hom}
\DeclareMathOperator{\Ho}{Ho}
\DeclareMathOperator{\End}{End}
\DeclareMathOperator{\Vect}{Vect}
\DeclareMathOperator{\Ob}{Ob}
\DeclareMathOperator{\Ch}{Ch}
\renewcommand{\mod}[1]{#1\textnormal{-mod}}
\newcommand{\rgrmod}[1]{\textnormal{mod}^{\Z}\textnormal{-}#1}
\newcommand{\grbim}[1]{#1\textnormal{-bim}^{\Z}}
\DeclareMathOperator{\Cat}{Cat}
\DeclareMathOperator{\Alg}{Alg}
\DeclareMathOperator{\BiAlg}{BiAlg}
\DeclareMathOperator{\id}{id}
\DeclareMathOperator{\Mod}{Mod}
\DeclareMathOperator{\LMod}{LMod}
\DeclareMathOperator{\Coinv}{Coinv}
\newcommand{\infCat}{\infty\textnormal{-}\Cat}
\DeclareMathOperator{\Corr}{Corr}
\DeclareMathOperator{\Sh}{Sh}
\newcommand{\uw}{\underline{w}}
\newcommand{\ux}{\underline{x}}
\newcommand{\uy}{\underline{y}}
\newcommand{\BSBim}{\textbf{BS}\textnormal{Bim}}
\newcommand{\SBim}{\mathbb{S}\textnormal{Bim}}
\newcommand{\us}{\underline{s}}
\newcommand{\Realize}{\mathfrak{R}}
\newcommand{\NiceRealize}{\widetilde{\mathfrak{R}}}
\newcommand{\uH}{\underline{H}}
\newcommand{\W}[2]{{}_{#1}W_{#2}}
\newcommand{\uW}[2]{{}_{#1}\underline{W}_{#2}}
\newcommand{\scrF}{\mathcal{F}}
\newcommand{\scrG}{\mathcal{G}}
\newcommand{\scrH}{\mathcal{H}}
\newcommand{\scrC}{\mathcal{C}}
\newcommand{\scrD}{\mathcal{D}}
\newcommand{\scrE}{\mathcal{E}}
\newcommand{\scrA}{\mathcal{A}}
\newcommand{\scrK}{\mathcal{K}}
\newcommand{\scrB}{\mathcal{B}}
\newcommand{\scrL}{\mathcal{L}}
\newcommand{\scrP}{\mathcal{P}}
\DeclareMathOperator{\Perv}{Perv}
\DeclareMathOperator{\Par}{Par}
\DeclareMathOperator{\Loc}{Loc}
\DeclareMathOperator{\BS}{BS}
\DeclareMathOperator{\cons}{\textnormal{c}}
\DeclareMathOperator{\ic}{ic}
\DeclareMathOperator{\lisse}{l}
\DeclareMathOperator{\indlisse}{il}
\DeclareMathOperator{\supp}{supp}
\newcommand{\DD}{\mathbb{D}}
\DeclareMathOperator{\IC}{IC}
\DeclareMathOperator{\Av}{Av}
\DeclareMathOperator{\For}{For}
\newcommand{\RHom}{\textbf{R}\mathcal{H}om}
\newcommand{\uk}{\underline{\k}}
\newcommand{\bfX}{\mathbf{X}}
\newcommand{\bfY}{\mathbf{Y}}
\DeclareMathOperator{\SL}{SL}
\DeclareMathOperator{\GL}{GL}
\newcommand{\D}[2]{{}_{#1 \backslash} \scrD_{/ #2}}
\newcommand{\paraD}[3]{{}_{#1 \backslash} \scrD_{#3 / #2}}
\newcommand{\DE}[1]{\scrD_{/ #1}}
\newcommand{\DME}[2]{\:_{#1 \fatbslash} D_{/ #2} }
\newcommand{\DEE}[2]{\:_{#1 \backslash} D_{/ #2} }
\newcommand{\PME}[2]{\:_{#1 \fatbslash} \Par_{/ #2} }
\newcommand{\PEE}[2]{\:_{#1 \backslash} \Par_{/ #2} }
\newcommand{\Parity}[2]{{}_{#1 \backslash} \textnormal{Par}_{/ #2}}
\newcommand{\paraParity}[3]{{}_{#1 \backslash} \textnormal{Par}_{#3 / #2}}
\newcommand{\OneParity}[2]{{}_{#1 \backslash}^1 \textnormal{Par}_{/ #2}}
\newcommand{\PE}[1]{ \textnormal{Par}_{/ #1}}
\newcommand{\Whit}[1]{ \mathcal{W}_{/ #1}}
\newcommand{\ForME}[1]{{}_{#1 \backslash} \textnormal{For}}
\newcommand{\uDelta}{\underline{\Delta}}
\newcommand{\unabla}{\underline{\nabla}}
\newcommand{\ParH}{\textnormal{Par}^H}
\newcommand{\Fl}{\mathcal{F}\ell}
\newcommand{\eFl}{\widetilde{\mathcal{F}\ell}}
\newcommand{\BGB}{B \backslash G/B}
\newcommand{\UGU}{U \backslash G/U}
\renewcommand{\H}{\mathbb{H}}
\DeclareMathOperator{\Fun}{Fun}
\newcommand{\fr}[1]{\mathfrak{#1}}
\DeclareMathOperator{\op}{op}
\renewcommand{\Pr}{\textnormal{Pr}}
\DeclareMathOperator{\ev}{ev}
\DeclareMathOperator{\odd}{odd}
\DeclareMathOperator{\fg}{fg}
\DeclareMathOperator{\pty}{pty}
\newcommand{\et}{\textnormal{ét}}
\DeclareMathOperator{\ch}{ch}
\newcommand{\abs}[1]{\lvert #1 \rvert}
\DeclareMathOperator{\re}{re}
\DeclareMathOperator{\mon}{mon}
\DeclareMathOperator{\ind}{Ind}
\DeclareMathOperator{\AS}{AS}
\renewcommand{\emptyset}{\varnothing}
\newtheorem{corollary}[subsubsection]{Corollary}
\newtheorem{lemma}[subsubsection]{Lemma}
\newtheorem{proposition}[subsubsection]{Proposition}
\newtheorem{theorem}[subsubsection]{Theorem}
\newtheorem{notation}[subsubsection]{Notation}
\newtheorem{claim}[subsubsection]{Claim}
\theoremstyle{definition}
\newtheorem{definition}[subsubsection]{Definition}
\newtheorem{example}[subsubsection]{Example}
\newtheorem{remark}[subsubsection]{Remark}
\newenvironment{midsecproof}[1]{\vspace{\topsep} \noindent \textit{Proof of #1.}}{\hfill$\square$}
\begin{document}
\begin{abstract}
        Generalizing the theory of parity sheaves on complex algebraic stacks due to Juteau--Mautner--Williamson, we develop a theory of twisted equivariant parity sheaves. 
        We use this formalism to construct a modular incarnation of Lusztig and Yun's monodromic Hecke category. 
        We then give two applications: (1) a modular categorification of the monodromic Hecke algebra, and (2) a monoidal equivalence between the monodromic Hecke category of parity sheaves and the ordinary Hecke category of parity sheaves on the endoscopic group.  
  \end{abstract}

	\maketitle

        \begingroup
        \hypersetup{hidelinks}
	\tableofcontents
        \endgroup

	\section{Introduction}\label{sec:endo_intro}

        \subsection{Motivation}

Endoscopic patterns in representation theory largely date back to Lusztig's classification of irreducible complex representations of finite reductive groups \cite{Lu84}.
Informally, Lusztig showed one can reduce the study of all irreducible complex representations to just unipotent representations of the endoscopic group.
In recent years, related results have shown up throughout geometric representation theory. 
These results have largely taken the form of an equivalence between a monodromic Hecke category and a Hecke category corresponding to the endoscopic group. 

\subsubsection{Monodromic Hecke Categories}

Let $G$ be a connected reductive group. Denote by $B$ a Borel subgroup of $G$ with maximal torus $T$ and unipotent radical $U$. Let $\k$ be a field of characteristic $p \geq 0$.
The \emph{Hecke category}, $D_{\cons} (\BGB, \k)$, is defined to be the derived category of constructible sheaves on $\BGB$.
It is a monoidal category under the convolution product. The Hecke category is well-studied in geometric representation theory. 
The semisimple complexes in $D_{\cons} (\BGB, \C)$ categorify the Hecke algebra and the perverse sheaves in $D_{\cons} (\BGB, \C)$ can be identified with category $\scrO$ for the Lie algebra $\fr{g}$ of $G$.

We will study twisted equivariant versions of $D_{\cons} (\BGB, \k)$ where we twist the left and right equivariance constraints by rank 1 multiplicative local systems. The set of all such local systems is denoted by $\Ch (T, \k)$.
For $\scrL, \scrL' \in \Ch (T, \k)$, one can consider $\D{\scrL'}{\scrL} (\k) \coloneq D_{\cons} (T \backslash_{\scrL'} \UGU /_{\scrL} T, \k)$, the $(T \times T, \scrL' \boxtimes \scrL^{-1})$-equivariant derived category of constructible sheaves on $\UGU$ with coefficients in $\k$.
Following Lusztig and Yun, we call this category the \emph{monodromic Hecke category} \cite{LY}.
The monodromic Hecke categories also admit a convolution bifunctor; although, it does not endow $\D{\scrL'}{\scrL} (\k)$ with a monoidal structure unless $\scrL = \scrL'$.
In the case where $\scrL, \scrL' = \uk_T$, we recover the non-monodromic Hecke category $\D{\uk}{\uk} (\k) = D_{\cons} (\BGB, \k)$.

\subsubsection{Endoscopic Equivalences}

If $G$ is defined over $\overline{\F}_p$ and $\k = \overline{\Q}_{\ell}$ where $p \neq \ell$, one can consider a mixed incarnation of the Hecke category $D_{\cons}^m (\BGB, \overline{\Q}_\ell)$.
Similarly, there is a mixed incarnation of the monodromic Hecke category $\D{\scrL'}{\scrL}^m (\k)$.
The (mixed) monodromic Hecke category decomposes into a direct sum of full subcategories called \emph{blocks}. 
We denote $\D{\scrL}{\scrL}^{m, \circ} (\k)$ the block containing the monoidal unit. This block is called the \emph{neutral block}.

Associated to $\scrL$, we can define a root system $\Phi_{\scrL}$. This gives rise to the \emph{endoscopic group} $H_{\scrL}^{\circ}$ which is a reductive group over $\overline{\F}_p$
with maximal torus $T$ and root system $\Phi_{\scrL}$.
The choice of Borel $B$ gives a Borel $B_{\scrL}$ of $H_{\scrL}^{\circ}$. 
The main result of \cite{LY} gives an equivalence of monoidal categories
\begin{equation}\label{eq:LY_endo}
  D_{\cons}^m (B_{\scrL} \backslash H_{\scrL}^{\circ} /B_{\scrL}, \overline{\Q}_\ell) \cong \D{\scrL}{\scrL}^{m, \circ} (\overline{\Q}_\ell).
\end{equation}
This equivalence is referred to as the endoscopic-monodromic equivalence.
The equivalence of \cite{LY} can also be extended to the non-neutral blocks using 2-categories.
In \cite{Li}, the equivalence of \cite{LY} was extended to the case of loop groups; albeit, without the extension to non-neutral blocks.

The main obstacle in defining a version of (\ref{eq:LY_endo}) for positive characteristic coefficients is that the categories of mixed sheaves are no longer defined.
Nonetheless, in recent years there have been a few modular incarnations of the endoscopic-monodromic equivalence obtained by replacing mixed biequivariant sheaves with non-mixed free-monodromic sheaves \cite{Gou, Eteve24a}.
In this paper, we will prove another modular version of the endoscopic-monodromic equivalence, but on the biequivariant side.

Motivated by work of Achar and Riche, we will replace the mixed categories appearing in (\ref{eq:LY_endo}) with categories of parity sheaves \cite{AR2}.
The category of parity sheaves $\Par (\BGB, \k)$ on $\BGB$ has a rich history of its own. It provides a categorification of the Hecke algebra giving rise to the $p$-canonical bases and $p$-Kazhdan--Lusztig polynomials.
While there is no significant obstruction in defining parity sheaves contained in $\D{\scrL'}{\scrL} (\k)$, the original formulation of \cite{JMW} is not sufficiently general to allow for them.

\subsubsection{Parity Sheaves}

Following \cite{JMW}, we extend the theory of parity sheaves to incorporate twisted equivariance.
More precisely, let $H$ be a complex algebraic group, $X$ be an algebraic $H$-stack over $\C$, and $\scrL$ a multiplicative rank one local system on $H$.
Under suitable conditions on $D_{\cons} (H \backslash_{\scrL} X, \k)$, we define an additive category of parity sheaves $\Par (H \backslash_{\scrL}
X, \k)$.
In the case of $\scrL = \uk_G$, we recover the usual theory of parity sheaves on $H \backslash X$.

The theory of twisted equivariant parity sheaves shares many similarities with the usual theory of parity sheaves. In particular, parity extensions are unique and there
are variants of the existence criterion of \cite{JMW}.
We expect that twisted equivariant parity sheaves can find themselves useful in other areas of geometric representation theory.
For example, one can discuss twisted equivariant parity sheaves on toric varieties and affine Grassmannians.

We will apply these constructions to the monodromic Hecke category to construct categories of parity sheaves $\Parity{\scrL'}{\scrL} (\k) \coloneq \Par (T \backslash_{\scrL'} \UGU /_{\scrL} T, \k)$.
We will show that both all parity extensions exist and that convolution preserves parity sheaves.
The categories $\Parity{\scrL'}{\scrL} (\k)$ also admit a block decomposition inherited from the derived category of sheaves.
We will write $\Parity{\scrL}{\scrL}^{\circ} (\k)$ for the block containing the monoidal unit.

\subsection{Main Results}

We can now state a preliminary version of the endoscopic-monodromic equivalence which serves as a modular analogue of \cite{LY}.

\begin{theorem}[See Theorem \ref{thm:endoscopy_neutral_block_Kac_moody} for more precise version]
  There is a monoidal equivalence of categories
  \[\Par (B_{\scrL} \backslash H_{\scrL}^{\circ} / B_{\scrL}, \k) \cong \Parity{\scrL}{\scrL}^\circ (\k).\]
\end{theorem}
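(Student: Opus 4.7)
The plan is to realize the equivalence by matching Bott-Samelson generators on both sides and invoking uniqueness of parity extensions. The geometric heart of the argument, following the template of \cite{LY} but replacing mixed sheaves with parity sheaves, is a correspondence between the union of strata $T \backslash_{\scrL} U \backslash G_w U /_{\scrL} T$ indexed by $w \in W_{\scrL}$ and the Bruhat stratification of $B_{\scrL} \backslash H_{\scrL}^\circ / B_{\scrL}$. I would begin by showing that the neutral block $\Parity{\scrL}{\scrL}^\circ (\k)$ is supported precisely on these strata, so that the indexing set of indecomposable parity sheaves on the two sides is identified with $W_{\scrL}$.

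Next I would construct the equivalence on generators. For each simple reflection $s \in W_{\scrL}$, corresponding to a simple coroot of $H_{\scrL}^\circ$, there is a simple parity sheaf $\scrE_s^H$ in $\Par(B_{\scrL} \backslash H_{\scrL}^\circ / B_{\scrL}, \k)$ coming from the associated minimal parabolic of $H_{\scrL}^\circ$, and a companion parity sheaf $\scrE_s$ in $\Parity{\scrL}{\scrL}^\circ(\k)$ arising from the $\scrL$-twisted minimal parabolic in $G$ for the same reflection. Since convolution on the monodromic side preserves parity sheaves (as established earlier in the paper), sending $\scrE_{s_1}^H * \cdots * \scrE_{s_n}^H$ to $\scrE_{s_1} * \cdots * \scrE_{s_n}$ defines a monoidal functor on the full monoidal subcategory of Bott-Samelsons on the endoscopic side.

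The third step is to verify fully faithfulness. By parity vanishing, which holds on both sides, $\Hom^\bullet$ between Bott-Samelson objects is concentrated in even degrees and is computed by the equivariant Borel-Moore homology of the corresponding fiber product of Bott-Samelson varieties. The geometric correspondence of the first step identifies these fiber products, and one checks that the $\scrL$-twist, while modifying the equivariance data, does not alter the underlying cohomology relevant for the Hom computation. Hence the morphism spaces on the two sides agree. Since every indecomposable parity sheaf on either side is a direct summand of a Bott-Samelson, the functor extends to an equivalence on the additive Karoubian closures, and by construction it is monoidal.

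The main obstacle I anticipate is the Hom-space comparison in the third step: one must show carefully that the $\scrL$-monodromy on one side and the trivial monodromy on the other produce matching $\Hom^\bullet$ spaces. This is where the geometric content of endoscopy ultimately enters, and where the twisted equivariant parity sheaf formalism developed earlier in the paper should do the bulk of the work; I would expect that a careful local model, étale-locally identifying a neighborhood of each stratum in the $\scrL$-twisted quotient with its unmonodromic counterpart in the endoscopic flag variety, is the technical crux.
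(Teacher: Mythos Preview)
Your proposal has a genuine gap at its foundation. You write that for each simple reflection $s \in W_{\scrL}^\circ$ there is ``a companion parity sheaf $\scrE_s$ in $\Parity{\scrL}{\scrL}^\circ(\k)$ arising from the $\scrL$-twisted minimal parabolic in $G$ for the same reflection.'' But the endosimple reflections $s \in S_{\scrL}^\circ$ are in general \emph{not} simple reflections in $W$, so there is no corresponding minimal parabolic of $G$. The paper handles this via Lemma~\ref{lem:make_refln_simple}: each endosimple $s$ has a palindromic endo-reduced expression $\us = (s_1,\ldots,s_k,t,s_k,\ldots,s_1)$ in $W$, and the correct object on the monodromic side is the iterated convolution $\scrE_{\us}^{\scrL} = \IC_{s_1}^{s_2\cdots s_k\scrL} \star \cdots \star \IC_{s_k}^{\scrL}$, where the intermediate factors carry \emph{different} monodromies in the $W$-orbit $\fr{o}$. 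This is why the paper constructs Frobenius algebra structures on $\scrE_{\us}^{\scrL}$ (Lemma~\ref{lem:Frob_alg_stuff}) rather than reading them off from a single minimal parabolic.

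This also undermines your third step. The Bott--Samelson objects on the two sides are not geometrically parallel: on the endoscopic side a Bott--Samelson for a reduced word of $\scrL$-length $n$ has $n$ factors, while on the monodromic side the corresponding object is a convolution of typically many more factors, passing through various twists. There is no direct identification of fiber products, and your expectation that ``the $\scrL$-twist does not alter the underlying cohomology'' is not the mechanism at work. The paper instead routes the comparison through the diagrammatic Hecke category $\scrD_{\BS}(\fr{h}_{\k}^*, W_{\scrL}^\circ)$: it defines $\Phi_{\scrL}^\circ$ on the Elias--Williamson generators and checks the defining relations by lifting to a characteristic-zero ring $\scrZ$ (a weak-Cohen lift) and applying the Soergel $\H$-functor of \S\ref{sec:parabolic_soergel}, which uses \emph{maximal} IC sheaves and is only available in characteristic zero. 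Full faithfulness is then proved not by a Hom computation on Bott--Samelsons but by passing to the mixed derived categories $K^b(\Par)$, identifying images of standards and costandards (Lemma~\ref{lem:Psi_on_stds}), and applying Be\u{\i}linson's lemma together with the Hom-vanishing of Lemma~\ref{lem:hom_vanishing_for_ME}. Finally the equivalence with $\Par(\Fl_{H_{\scrL}^\circ}/B_{\scrL},\k)$ is obtained by composing with the Riche--Williamson equivalence \eqref{eq:RW_main_thm}.
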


In the precise version of the above theorem, we include numerous generalizations. We will briefly mention them here along with some related history.

\begin{enumerate}
  \item We will replace $G$ by a Kac--Moody group of either finite or affine type. In the characteristic 0 coefficient setting, the endoscopic-monodromic equivalence of \cite{LY} had already been extended
    to loop groups in Li's thesis \cite{Li}.
  \item The equivalence will also extend to the non-neutral blocks when $G$ is of finite type. This was already done in \cite[\S10]{LY} for characteristic 0 coefficients.
  \item The allowable coefficient rings will be enlarged to noetherian domains of finite global dimension.
  \item The equivalence admits a variation where left (resp. right) equivariance is replaced by left (resp. right) topological monodromy.
\end{enumerate}

\subsection{Connections and Future Work}

\subsubsection{Diagrammatic Hecke Categories}

For unipotent monodromy, there is a diagrammatic incarnation of the Hecke category defined in \cite{EW}.
Riche and Williamson proved that diagrammatic Hecke category coincides with the geometric Hecke category of parity sheaves when the category of parity sheaves is well-defined \cite{RW}.
In a subsequent paper, we will give a diagrammatic description of the monodromic Hecke categories of parity sheaves.
As in \cite{EW}, the diagrammatic category is defined in a much larger generality than the geometric one. 
In particular, we can make sense of the monodromic Hecke category for arbitrary Coxeter groups and realizations.

\subsubsection{Free-Monodromic Hecke Categories}

Gouttard has proven a variant of the endoscopic-monodromic equivalence for free-monodromic tilting sheaves on $\UGU$ rather than biequivariant sheaves on $\UGU$ \cite{Gou}.
Similar equivalences have also been produced in more recent work of Eteve \cite{Eteve24a} and Dhillon--Li--Yun--Zhu \cite{DLYZ}.
One can ask whether this equivalence can be lifted to mixed free-monodromic tilting sheaves. Some work is needed to define what free-monodromic mixed sheaves should mean on $\UGU$. 
In particular, the parity constraints are not satisfied, so the construction of \cite{AR2} does not work directly.
Nonetheless, Achar, Makisumi, Riche, and Williamson have defined a category of mixed free-monodromic tilting sheaves with unipotent monodromy using almost purely categorical constructions \cite{AMRW1}.
The author expects that the Achar--Makisumi--Riche--Williamson category can be generalized to arbitrary monodromy.
A benefit of working with the free-monodromic setting rather than the biequivariant setting is that we will be able to produce a degrading functor from the mixed category to the non-mixed category.
We expect that this will allow for some explicit computations of free-monodromic character sheaves using $p$-Kazhdan--Lusztig combinatorics (cf., \cite{Eteve24b}).

\subsection{Notations and Conventions}

Throughout the entirety of the paper, our derived categories of sheaves will be triangulated.
Despite this, Appendix \ref{apdx:A}, which provides the foundations of the sheaf theory for twisted equivariant sheaves, is written using $\infty$-categories.
This is somewhat crucial in order to make sense of twisted equivariant sheaves with respect to any multiplicative local system.
For example, in Gouttard's thesis \cite{Gou}, a necessary restriction is made to work with only multiplicative local systems arising from finite central isogenies.
By constructing twisted equivariant sheaves via categorical coinvariants, we are able to side-step this issue. 

By default, $\k$ will always denote a noetherian domain of finite global dimension.

\subsubsection{Geometry and Sheaves}

The six-functor formalism of constructible sheaves on complex algebraic stacks is not well-detailed in literature, and even less so is the six-functor formalism for
twisted equivariant sheaves on stacks.
As a result, we have included Appendix \ref{apdx:A} which develops these ideas. We will review its contents now.

Let $H$ be a complex algebraic group, and let $X$ be an algebraic $H$-stack over $\C$ of finite type. The groupoid of all multiplicative locally free rank one local
systems on $H$ is denoted by $\Ch (H, \k)$.\footnote{Some references call these rank one character sheaves on $H$.}
Let $\scrL \in \Ch (H, \k)$.
We will write $D_{\cons} (H \backslash_{\scrL} X, \k)$ for the $(H, \scrL)$-equivariant bounded derived category of constructible sheaves on $X$ with coefficients in $\k$-modules.
As a matter of convention, we set $D_{\cons} (X /_{\scrL} H, \k) \coloneq D_{\cons} (H \backslash_{\scrL^{-1}} X, \k)$. 

If $\varphi : H' \to H$ is a morphism of algebraic groups and $f : X' \to X$ is a morphism of algebraic stacks such that $f$ is equivariant with respect to $\varphi$, we
have a pair of adjoint functors
\[f^* : D_{\cons} (H \backslash_{\scrL} X, \k) \rightleftarrows D_{\cons} (H' \backslash_{\varphi^* \scrL} X', \k) : f_*.\]
There are also $!$-versions of the above functors.

If $\scrL, \scrL' \in \Ch (H, \k)$, the tensor product of twisted equivariant sheaves gives a bifunctor
\[(-) \otimes^L (-) : D_{\cons} (H \backslash_{\scrL} X, \k) \otimes D_{\cons} (H \backslash_{\scrL'} X, \k) \to D_{\cons} ( H \backslash_{\scrL \otimes \scrL'} X, \k).\]
Similarly, the inner Hom between sheaves gives a bifunctor
\[\RHom (-,-) : D_{\cons} (H \backslash_{\scrL} X, \k)^\textnormal{op} \otimes D_{\cons} (H \backslash_{\scrL'} X, \k) \to D_{\cons} (H \backslash_{\scrL^{-1} \otimes \scrL'} X, \k).\]

The six-functors for sheaves are related in the usual sense by the Verdier duality functor
\[\DD : D_{\cons} (H \backslash_{\scrL} X, \k)^\textnormal{op} \to D_{\cons} (H \backslash_{\scrL^{-1}} X, \k).\]

If $\k \to \k'$ is a ring homomorphism, there is an extension-of-scalars functor, which we denote by
\[\k' (-) : D_{\cons} (H \backslash_{\scrL} X, \k) \to D_{\cons} (H \backslash_{\k' (\scrL)} X, \k').\]

\subsubsection{Fixed Stratifications}

If $\{X_\lambda\}_{\lambda \in \Lambda}$ is an $H$-equivariant stratification of $X$, we will write $D_{\Lambda} (H \backslash_{\scrL} X, \k)$ for the full subcategory of
$D_{\cons} (H \backslash_{\scrL} X, \k)$ of sheaves constructible with respect to $\Lambda$.
For each $\lambda \in \Lambda$, there is a subcategory of ``local systems'', denoted $\Loc_{\textnormal{f}} (X_\lambda /_{\scrL} H, \k)$ consisting of sheaves $\scrK \in D_{(X_{\lambda})} (H \backslash_{\scrL} X_\lambda, \k)$ such that $\For_H \scrK$ is a local system on $X_{\lambda}$ of finite type.

\subsubsection{Kac--Moody Groups}\label{sec:review_of_Kac_Moody}

We will first review some preliminaries on Kac--Moody groups and their associated flag varieties.
Let $A$ be a generalized Cartan matrix. From $A$, we can associate a Kac--Moody root datum $(S, \bfX, \{\alpha_s\}_{s \in S}, \{\alpha_s^\vee\}_{s \in S})$.
Let $W$ denote the Weyl group of this root datum. The simple reflections in $W$ are in bijection with $S$, and we abuse notation and use elements of $S$ to denote the simple reflections.
The root datum then gives rise to a realization $\fr{h}_\k = (V, \{\alpha_s\}_{s \in S}, \{\alpha_s^\vee\}_{s \in S})$ of $W$ as follows. Let $\bfY = \Hom_{\Z} (\bfX, \Z)$.
Set $V = \k \otimes_{\Z} \bfY$. For $s \in S$, we set $\alpha_s$ (resp. $\alpha_s^\vee$) to be the image of the corresponding simple root (resp. simple
coroot) in $V^*$ (resp. $V$).
All geometric realizations of a generalized Cartan matrix are automatically balanced, but are not necessarily Demazure surjective.
We can fix this as follows. Suppose $\alpha_s : \bfY \to \Z$ and $\alpha_s^\vee : \bfX \to \Z$ are surjective, we can then set $\Z' = \Z$.
Otherwise, we set $\Z' = \Z[\frac{1}{2}]$. It is a standard fact that $\fr{h}_\k$ satisfies Demazure surjectivity provided there exists a ring homomorphism $\Z' \to \k$.
Note that if $\Z = \Z'$, then this is automatically satisfied.

One can associate to $A$ an integral Kac--Moody group $G_{\Z}$ with a Borel subgroup $B_{\Z}$.
Let $U_{\Z}$ denote the pro-unipotent radical of $B_{\Z}$ and let $T_{\Z}$ denote the canonical maximal torus whose group of characters can be canonically identified with $\bfX$.
Let $G, B, U$, and $T$ denote the base change to $\C$ of $G_{\Z}$, $B_{\Z}$, $U_{\Z}$, and $T_{\Z}$, respectively. Let $\eFl = U \backslash G$ denote the enhanced flag ind-variety.
The enhanced flag variety admits a Bruhat decomposition
\[\eFl = \bigsqcup_{w \in W} \eFl_w,\]
where $\eFl_w$ is the $B$-orbit of a lift of $w \in W$. Note that each Bruhat orbit is isomorphic to a product of a torus with an affine space $\eFl_w \cong T \times \A^{\ell (w)}.$
The $B$-orbits give a stratification of $\eFl$. The closures of the strata are given by the Bruhat order,
\[\eFl_{\leq w} = \overline{\eFl_w} = \bigsqcup_{x \leq w} \eFl_x.\]

Let $J \subset S$ of finite type. We denote by  $W_J$ the finite subgroup of $W$ generated by $s \in J$ and by $W^J$ the subset of $W$ consisting of elements $w$ which are minimal in the coset $wW_J$.
For each $w \in W$, we write $\overline{w}$ for the element of $W^J$ such that $w \in \overline{w} W_J$.
Since $W_J$ is finite, it has the longest element $w_0^J \in W_J$. To $J$, we can also define a parabolic subgroup $P_{J, \Z}$ of $G_{\Z}$ with pro-unipotent radical
$U^J_{\Z}$ and Levi subgroup $L_{J,\Z}$.
Let $P_J, U^J, L_J$ denote the base change to $\C$ of $P_{J, \Z}$, $U^J_{\Z}$, and $L_{J,\Z}$, respectively.
Let $\eFl^J = U^J \backslash G$ denote the partial enhanced flag ind-variety.
The partial flag ind-variety has a parabolic Bruhat decomposition
\[\eFl^J = \bigsqcup_{\overline{w} \in W^J} \eFl^J_{\overline{w}},\]
where $\eFl_{\overline{w}}^J \cong \A^{\ell (\overline{w})} \times L_J$.
The closures of $\eFl_{\overline{w}}^J$ are denoted by $\Fl_{\leq w}^J$ and $\eFl_{\leq w}^J$, respectively. As in the non-parabolic case, they are a union of
Bruhat strata indexed by elements in $W^J$ below $\overline{w}$ in the restricted Bruhat order.
We will write $j_{\overline{w}}^J$ for the embedding of $\eFl_{\overline{w}}^J$ into $\eFl^J$.

The most prominent example of a parabolic subgroup we will encounter is the special case of $J = \{s\}$ for some $s \in S$.
In this case, we will replace the superscript $J$ by $s$. In particular, we will write $\eFl^s = \eFl^{\{s\}}$ and likewise $\eFl^s = \eFl^{\{s\}}$.

\subsection{Acknowledgements}

The author thanks Pramod Achar for his continuous support and careful reading of earlier drafts of this paper. 
The author also thanks Ana Bălibanu, Gurbir Dhillon, Alberto San Miguel Malaney, and Simon Riche for useful comments and discussions that influenced this project.

The author was partially supported by NSF Grant DMS-2231492.
	
	\section{Twisted Equivariant Parity Sheaves}\label{sec:parity}

	\subsection{Definitions and Uniqueness}

In this section, we require that $\k$ be a noetherian complete local ring or a field.
Let $X$ be an (ind)-algebraic stack of finite type.
Let $H$ be a connected algebraic group acting on $X$ and $\scrL \in \Ch (H, \k)$.
Let $\{X_\lambda \}_{\lambda \in \Lambda}$ be an $H$-stable stratification of $X$. We write $j_{\lambda} : X_{\lambda} \hookrightarrow X$ for the inclusion maps.
For each $\lambda$, denote by $\Loc_{\textnormal{ff}} (H \backslash X_{\lambda}, \k)$ the full subcategory of $D_{(X_{\lambda})} (H \backslash_{\scrL} X_{\lambda}, \k)$ consisting of sheaves $\scrF$ such that $\For_H \scrF$ is a locally free local system of finite type on $X_{\lambda}$.

In order to simplify notation, for a $\k$-linear triangulated category $\scrC$, objects $c,d \in \scrC$, and $n \in \Z$, we will write
\[\Hom_{\scrC}^n (c,d) \coloneq \Hom_{\scrC} (c, d[n]).\]
Similarly, we will write
\[\Hom_{\scrC}^\bullet (c,d) \coloneq \bigoplus_{n \in \Z} \Hom_{\scrC}^n (c,d),\]
viewed as a graded $\k$-module.

There is a series of conditions on $D_{\Lambda} (H\backslash_{\scrL} X, \k)$ which ensures that parity sheaves are sensible to define.
These conditions are called the \emph{parity conditions}. For all $\scrK, \scrK' \in \Loc_{\textnormal{ff}} (H \backslash_{\scrL} X, \k)$, we require that
\begin{equation}\label{eq:parity_conditions}
    \Hom_{D_{\Lambda} (H \backslash_{\scrL} X, \k)}^n (\scrK, \scrK') = \begin{cases} 0 & \text{for } n \text{ odd, and} \\ \text{a free } \k\text{-module} & \text{for } n \text{ even}.  \end{cases}
\end{equation}

Throughout, we will assume that $D_{\Lambda} (H\backslash_{\scrL} X, \k)$ satisfies the parity conditions.

\begin{definition}\label{def:parity}
    Let $? \in \{*, !\}$. Let $\scrF \in D_{\Lambda} (H \backslash_{\scrL} X, \k)$.
    \begin{enumerate}
      \item  $\scrF$ is $?$-\emph{even} (resp. $?$-\emph{odd}) if, for all $\lambda \in \Lambda$, and $n \in \Z$,
        $H^n (j_\lambda^? \scrF) = 0$ when $n$ is odd, and $H^n (j_\lambda^? \scrF)$ is a local system (i.e., viewed as a non-equivariant sheaf on $X_{\lambda}$) whose stalks are finite rank free $\k$-modules when $n$ is even.
        Let $D_{\Lambda}^{?-\ev} (H \backslash_{\scrL} X, \k)$ (resp.
        $D_{\Lambda}^{?-\odd} (H \backslash_{\scrL} X, \k)$) denote the full subcategory of $D_{\Lambda} (H \backslash_{\scrL} X, \k)$ consisting of $?$-even (resp.
        $?$-odd) objects.
      \item  $\scrF$ is $?$-\emph{parity} if it is either $?$-even or $?$-odd. Let $D_{\Lambda}^{?-\pty} (H \backslash_{\scrL} X, \k)$ denote the full subcategory of
        $D_{\Lambda} (H \backslash_{\scrL} X, \k)$ consisting of $?$-parity objects.
      \item $\scrF$ is \emph{even} (resp. \emph{odd}) if it is both $*$-even and $!$-even (resp. odd). Let $D_{\Lambda}^{\ev} (H \backslash_{\scrL} X, \k)$ (resp.
        $D_{\Lambda}^{\odd} (H \backslash_{\scrL} X, \k)$) denote the full subcategory of \newline $D_{\Lambda} (H \backslash_{\scrL} X, \k)$ consisting of even (resp. odd) objects.
      \item  $\scrF$ is \emph{parity} if there is a decomposition $\scrF = \scrF_{\ev} \oplus \scrF_{\odd}$ where $\scrF_{\ev}$ is even and $\scrF_{\odd}$ is odd. Let
        $\Par_{\Lambda} (H \backslash_{\scrL} X, \k)$ denote the full subcategory of $D_{\Lambda} (H \backslash_{\scrL} X, \k)$ consisting of parity objects.
    \end{enumerate}
  \end{definition}

  The remaining results in this section are all twisted equivariant variations of results from \cite{JMW, MauR}.
  We have omitted their proofs since they share nearly identical arguments. 

  \begin{proposition}[{\cite[Proposition 2.6]{JMW}}]\label{prop:hom_star_and_shriek}
    If $\scrF$ is $*$-parity and $\scrG$ is $!$-parity, then we have an isomorphism of $\k$-modules,
    \[\Hom_{D_\Lambda (H \backslash_{\scrL} X, \k)}^n (\scrF, \scrG) \cong \bigoplus_{\lambda \in \Lambda} \Hom_{D_\Lambda (H \backslash_{\scrL} X_\lambda, \k)}^n
    (j_\lambda^* \scrF, j_\lambda^! \scrG).\]
    Moreover, both sides are free $\k$-modules.
  \end{proposition}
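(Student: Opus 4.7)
The plan is induction on the number of strata $\lvert \Lambda \rvert$, following closely the structure of \cite[Proposition 2.6]{JMW}. The base case $\lvert \Lambda \rvert = 1$ is tautological since $j_\lambda$ is the identity. For the inductive step, I pick an open stratum $X_{\lambda_0}$ with open inclusion $j$ and closed complementary inclusion $i \colon Z \hookrightarrow X$, where $Z$ inherits the stratification indexed by $\Lambda \setminus \{\lambda_0\}$. Applying $\Hom_{D_\Lambda(H \backslash_{\scrL} X, \k)}^\bullet(\scrF, -)$ to the standard recollement triangle
\[i_* i^! \scrG \to \scrG \to j_* j^* \scrG \xrightarrow{+1}\]
and invoking the $(i^*, i_*)$ and $(j^*, j_*)$ adjunctions together with the identification $j^* = j^!$ for the open immersion $j$ produces the long exact sequence
\[\cdots \to \Hom^n(i^*\scrF, i^!\scrG) \to \Hom^n(\scrF, \scrG) \to \Hom^n(j_{\lambda_0}^*\scrF, j_{\lambda_0}^!\scrG) \xrightarrow{\partial} \Hom^{n+1}(i^*\scrF, i^!\scrG) \to \cdots.\]

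Applying the inductive hypothesis to the pair $(i^*\scrF, i^!\scrG)$, which remains $*$- and $!$-parity on $Z$ respectively, identifies the $Z$-terms with $\bigoplus_{\lambda \neq \lambda_0} \Hom^n(j_\lambda^*\scrF, j_\lambda^!\scrG)$ and shows freeness over $\k$. The key step is to verify $\partial = 0$. By the parity conditions (\ref{eq:parity_conditions}), for any stratum $\lambda$ the graded module $\Hom^\bullet(j_\lambda^*\scrF, j_\lambda^!\scrG)$ is supported in degrees of a single parity, determined by the parities of $\scrF$ and $\scrG$; the same parity rule applies to both the $U$-term and the $Z$-term, since both $\scrF$ and $\scrG$ have globally fixed parity. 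Consequently, in every degree $n$ either the source of $\partial$ is zero or the target is zero, forcing $\partial = 0$. The long exact sequence then collapses into short exact sequences whose outer terms are free $\k$-modules; these split, yielding simultaneously the direct-sum decomposition and the freeness assertion.

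The main technical obstacle is the parity vanishing that makes $\partial = 0$. It rests on showing that a $*$-parity (resp. $!$-parity) sheaf has $j_\lambda^*$- (resp. $j_\lambda^!$-)restriction which, up to even shifts, is a direct sum of objects of $\Loc_{\textnormal{ff}}(H \backslash X_\lambda, \k)$ concentrated in cohomological degrees of a single parity. This is the content of a splitting-type lemma analogous to \cite[Proposition 2.5]{JMW}, and it requires that the parity conditions (\ref{eq:parity_conditions}) together with the six-functor formalism of Appendix \ref{apdx:A} interact compatibly with restriction to strata in the twisted equivariant setting. Once this foundational input is verified, the rest of the argument makes no further use of twistedness, which is why the author defers the details to \cite{JMW}.
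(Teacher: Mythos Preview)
Your proposal is correct and follows exactly the inductive recollement argument of \cite[Proposition 2.6]{JMW}, which is precisely what the paper defers to (the authors omit the proof, noting that it ``share[s] nearly identical arguments'' with \cite{JMW}). The only point worth flagging is that the splitting of $j_\lambda^* \scrF$ and $j_\lambda^! \scrG$ into direct sums of shifted objects of $\Loc_{\textnormal{ff}}$ follows directly from the parity conditions (\ref{eq:parity_conditions}) via repeated splitting of truncation triangles, so no separate ``splitting-type lemma'' beyond this observation is needed.
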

  
  \begin{proposition}[{\cite[Theorem 2.12]{JMW}}]\label{prop:uniqueness_of_parity_sh}
    Let $\scrF$ be an indecomposable object in $\Par_{\Lambda} (H \backslash_{\scrL} X, \k)$. Then
    \begin{enumerate}
      \item the support of $\scrF$ is of the form $\overline{X_\lambda}$, for some $\lambda \in \Lambda$;
      \item $j_\lambda^* \scrF \cong \scrK_{\lambda} [m]$, for some $m \in \Z$, and $\scrK_{\lambda} \in \Loc (H \backslash_{\scrL} X_\lambda, \k)$;
      \item any indecomposable parity object in $\Par_{\Lambda} (H \backslash_{\scrL} X, \k)$ supported on $\overline{X_\lambda}$ and extending $\scrK_\lambda [m]$ is
        isomorphic to $\scrF$.
    \end{enumerate}
  \end{proposition}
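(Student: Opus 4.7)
The plan is to follow the proof of \cite[Theorem 2.12]{JMW} essentially verbatim, since Proposition \ref{prop:hom_star_and_shriek} already supplies the essential homological input and the rest of that argument proceeds formally in any triangulated setting with such a Hom computation. As a preliminary step, I would verify Krull--Schmidt for $\Par_{\Lambda}(H \backslash_{\scrL} X, \k)$: by Proposition \ref{prop:hom_star_and_shriek}, the endomorphism ring of any parity object is a free $\k$-module of finite rank, and since $\k$ is assumed to be a field or a complete local noetherian ring, such $\k$-algebras are semiperfect, so idempotents split and indecomposable objects have local endomorphism rings.

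For parts (1) and (2), I would pick a stratum $X_\lambda$ open in $\supp(\scrF)$. Then $j_\lambda^*\scrF$ is a $*$-parity sheaf on a single stratum, hence decomposes as $\bigoplus_n \scrK_n[n]$ with $\scrK_n \in \Loc_{\textnormal{ff}}(H \backslash_{\scrL} X_\lambda, \k)$. Proposition \ref{prop:hom_star_and_shriek} identifies the $\lambda$-contribution to $\End(\scrF)$ with $\End(j_\lambda^*\scrF)$ in degree zero, which permits idempotents in the latter ring to be lifted to $\End(\scrF)$. Combined with indecomposability and Krull--Schmidt, this forces $\supp(\scrF) = \overline{X_\lambda}$ and $\scrK_n = 0$ for all but a single $n = m$, yielding both (1) and (2) with $\scrK_\lambda := \scrK_m$.

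For part (3), let $\scrG$ be another indecomposable parity object supported on $\overline{X_\lambda}$ with $j_\lambda^*\scrG \cong \scrK_\lambda[m]$. Since $X_\lambda$ is open in $\overline{X_\lambda} = \supp(\scrG)$, factoring $j_\lambda$ as a closed immersion followed by an open immersion and using $i^! i_* \cong \id$ for the closed part yields $j_\lambda^!\scrG \cong j_\lambda^*\scrG$. Proposition \ref{prop:hom_star_and_shriek} then lifts the isomorphism $j_\lambda^*\scrF \xrightarrow{\sim} j_\lambda^*\scrG$ to a morphism $f : \scrF \to \scrG$, and symmetrically produces $g : \scrG \to \scrF$ inverting $f$ on $X_\lambda$. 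The composition $gf \in \End(\scrF)$ restricts to the identity on $X_\lambda$, so it is not in the maximal ideal of the local ring $\End(\scrF)$; hence $gf$ is a unit, and likewise $fg$, so $\scrF \cong \scrG$.

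The main obstacle is the idempotent-lifting step, which is precisely what forces the hypothesis that $\k$ be a field or a complete local noetherian ring: without such an assumption, $\End(\scrF)$ need not be semiperfect, the support-reduction in (1) and the single-shift conclusion in (2) both break down, and the proposition can genuinely fail.
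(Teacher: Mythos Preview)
Your proposal is correct and takes essentially the same approach as the paper: the paper omits the proof entirely, stating that it is a twisted equivariant variation of \cite[Theorem~2.12]{JMW} whose argument carries over verbatim, and your sketch reproduces exactly that JMW argument (Krull--Schmidt via finite free endomorphism algebras over a complete local ring, surjectivity of restriction to an open stratum from Proposition~\ref{prop:hom_star_and_shriek}, and the local-ring trick for part (3)). One small point worth tightening: the direct-sum splitting in Proposition~\ref{prop:hom_star_and_shriek} is only $\k$-linear, so the relevant fact is that the \emph{projection} onto the $\lambda$-summand coincides with the restriction ring homomorphism, which is what lets you transport units (rather than idempotents) and run the local-ring argument; your claim that $\supp(\scrF)=\overline{X_\lambda}$ also needs the brief extra step of taking a second open stratum $X_\mu$ and using the idempotent on $X_\lambda \sqcup X_\mu$, but this is standard in JMW and your compression is reasonable.
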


  We conclude with a remark on the behavior of parity sheaves under extension of scalars. Let $\k \to \k'$ be a ring morphism of complete local rings. Consider the extension of scalars functor
  \[\k' (-) \coloneq \k' \otimes_\k^L (-) : D_\Lambda (H \backslash_{\scrL} X, \k) \to D_{\Lambda} (H \backslash_{\k' (\scrL)} X, \k').\]
  
  \begin{lemma}[{\cite[Lemma 2.36]{JMW}}]\label{lem:par_ext_of_scalars}
    Suppose that $\scrF \in D_{\Lambda} (H \backslash_{\scrL} X, \k)$ is $?$-even (resp. odd), then $\k' (\scrF)$ is $?$-even (resp. odd).
    In particular, $\k' (-)$ restricts to a functor
    \[\Par_\Lambda (H \backslash_{\scrL} X, \k) \to \Par_{\Lambda} (H \backslash_{\k' (\scrL)} X, \k').\]
  \end{lemma}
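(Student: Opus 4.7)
The statement decomposes into two points: first, that the extension-of-scalars functor sends $?$-even (resp. $?$-odd) sheaves to $?$-even (resp. $?$-odd) sheaves for each $? \in \{*,!\}$; second, that the resulting restriction to parity objects lands in parity objects. The second point is immediate from the first because the decomposition $\scrF = \scrF_\ev \oplus \scrF_\odd$ is respected by any additive functor, so I would focus entirely on the first point. By Definition \ref{def:parity}, verifying $?$-evenness after base change is a stratum-by-stratum condition on the cohomology sheaves of $j_\lambda^? \scrF$ (viewed non-equivariantly), so the plan is to reduce the problem to a pointwise assertion about tensoring free $\k$-modules with $\k'$.

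The first step is to establish the natural compatibility
\[
\k'(j_\lambda^? \scrF) \;\cong\; j_\lambda^? \bigl(\k'(\scrF)\bigr)
\]
for $? \in \{*,!\}$. This is a base-change compatibility of the extension-of-scalars functor with $j_\lambda^*$ and $j_\lambda^!$, and it should follow from the general six-functor formalism set up in Appendix \ref{apdx:A}: the functor $\k'\otimes_{\k}^L(-)$ is tensoring with a constant sheaf pulled back from a point, and both $j_\lambda^*$ and $j_\lambda^!$ commute with tensoring by locally constant sheaves of rings because they commute with pullback from $\mathrm{pt}$.

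The second and core step is pointwise. Since $\scrF$ is $?$-even, for each stratum the complex $j_\lambda^? \scrF$ has cohomology concentrated in even degrees and each $H^{2n}(j_\lambda^? \scrF)$ is a local system whose stalks are finite free $\k$-modules. Because derived tensor product with a $\k$-flat module does nothing beyond ordinary tensor, and free implies flat, the hyperhomology (universal coefficient) spectral sequence for $\k'\otimes_{\k}^L(-)$ collapses on $E_2$, yielding
\[
H^n\bigl(\k' \otimes_\k^L j_\lambda^? \scrF\bigr) \;\cong\; \k' \otimes_\k H^n(j_\lambda^? \scrF).
\]
This vanishes for odd $n$ and is a local system of finite free $\k'$-modules for even $n$, which is exactly the $?$-even condition for $\k'(\scrF)$ with respect to the twist $\k'(\scrL)$. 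The odd case is identical after a shift. The main technical obstacle, if any, is making the base-change isomorphism above precise in the twisted equivariant setting, since the twist by $\scrL$ transports to the twist by $\k'(\scrL)$; however, this is built into the construction of the extension-of-scalars functor in Appendix \ref{apdx:A} and requires no further work beyond citing it.
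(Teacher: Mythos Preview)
Your proposal is correct and follows the standard argument. The paper itself omits the proof entirely, deferring to \cite[Lemma 2.36]{JMW} with the remark that the twisted equivariant version requires no essential modification; your writeup is precisely that standard argument (compatibility of $\k'(-)$ with $j_\lambda^?$, then the observation that derived tensor with $\k'$ preserves stalkwise freeness and degree parity because the cohomology sheaves have free stalks), so there is nothing further to compare.
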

  
  \begin{lemma}[{\cite[Lemma 2.2]{MauR}}]\label{lem:ll_exists}
    Let $\scrE, \scrE' \in \Par_\Lambda (H \backslash_{\scrL} X, \k)$. The Hom space $\Hom_{\Par_\Lambda (H \backslash_{\scrL} X, \k)} (\scrE, \scrE')$
    is a free $\k$-module. Moreover, extension of scalars induces an isomorphism
    \[ \k' \otimes_\k \Hom_{\Par_\Lambda (H \backslash_{\scrL} X, \k)} (\scrE, \scrE') \stackrel{\sim}{\to} \Hom_{\Par_\Lambda (H \backslash_{\k'(\scrL)} X, \k')} (\k' (\scrE), \k'(\scrE')).\]
  \end{lemma}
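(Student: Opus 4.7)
The plan is to apply Proposition \ref{prop:hom_star_and_shriek} twice (once over $\k$, once over $\k'$) and reduce the base change claim to a stratum-wise statement for locally free equivariant local systems. Since any parity sheaf is simultaneously $*$-parity and $!$-parity, Proposition \ref{prop:hom_star_and_shriek} yields
\[
\Hom^\bullet_{\Par_\Lambda(H \backslash_{\scrL} X, \k)}(\scrE, \scrE') \;\cong\; \bigoplus_{\lambda \in \Lambda} \Hom^\bullet_{D_\Lambda(H \backslash_{\scrL} X_\lambda, \k)}(j_\lambda^* \scrE,\, j_\lambda^! \scrE'),
\]
with each summand a free $\k$-module. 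This already establishes the freeness assertion. By Lemma \ref{lem:par_ext_of_scalars}, $\k'(\scrE)$ and $\k'(\scrE')$ are $\k'$-parity, so the analogous decomposition applies over $\k'$ as well. Since $\k'(-)$ commutes with $j_\lambda^*$ and $j_\lambda^!$, a formal consequence of the six-functor package in Appendix \ref{apdx:A}, the two decompositions are compatible under the natural map induced by extension of scalars. It therefore suffices to prove the base change isomorphism summand-by-summand.

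Fix a stratum $X_\lambda$. By Proposition \ref{prop:uniqueness_of_parity_sh}(2), both $j_\lambda^* \scrE$ and $j_\lambda^! \scrE'$ decompose into finite direct sums of shifts of objects in $\Loc_{\textnormal{ff}}(H \backslash_{\scrL} X_\lambda, \k)$, so the problem reduces to showing that for $\scrK, \scrK' \in \Loc_{\textnormal{ff}}(H \backslash_{\scrL} X_\lambda, \k)$, the canonical comparison map
\[
\k' \otimes_\k \Hom^\bullet(\scrK, \scrK') \longrightarrow \Hom^\bullet(\k'(\scrK),\, \k'(\scrK'))
\]
is an isomorphism. To see this, I would write $\Hom^\bullet(\scrK, \scrK') = H^\bullet R\Gamma(H\backslash_{\scrL}X_\lambda,\, \scrK^\vee \otimes^L \scrK')$: since $\scrK$ is a locally free local system of finite rank, its dual exists and $\scrK^\vee \otimes^L \scrK'$ is again a locally free equivariant local system. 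Using that $\k'(-)$ commutes with $\otimes^L$ and with pushforward to a point for such objects, together with the freeness of each $\Hom^n(\scrK, \scrK')$ supplied by (\ref{eq:parity_conditions}) (which collapses any higher $\Tor$ contributions and identifies $\k' \otimes^L_\k$ with $\k' \otimes_\k$ on these Hom spaces), the map above is an isomorphism.

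The main obstacle is the stratum-wise base change for the pushforward to a point applied to a locally free twisted equivariant local system; this ultimately rests on the compatibility between $\k'(-)$ and proper (or smooth) pushforward for the twisted equivariant sheaf theory developed in Appendix \ref{apdx:A}. Everything else is bookkeeping: the reduction to strata via Proposition \ref{prop:hom_star_and_shriek}, the preservation of parity under $\k'(-)$ via Lemma \ref{lem:par_ext_of_scalars}, and the naturality of the extension-of-scalars map.
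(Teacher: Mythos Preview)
Your approach is correct and is exactly the standard argument from \cite[Lemma~2.2]{MauR} that the paper invokes (the paper omits the proof, noting only that it is identical to the untwisted case). One minor citation slip: Proposition~\ref{prop:uniqueness_of_parity_sh}(2) only describes the restriction of an indecomposable parity sheaf to its \emph{open} stratum; the fact that $j_\lambda^*\scrE$ and $j_\lambda^!\scrE'$ split as direct sums of shifts of objects in $\Loc_{\textnormal{ff}}$ on \emph{every} stratum follows instead directly from the parity conditions~(\ref{eq:parity_conditions}), which force the odd Ext groups between such local systems to vanish and hence split the relevant Postnikov towers.
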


\subsection{Baby Decomposition Theorem}

Let $\pi : H \backslash_{\scrL} X \to H' \backslash_{\scrL'} Y$ be a (bounded) morphism of twisted (ind)-algebraic stacks (see \ref{def:twisted_stacks}).
Note that $\pi$ consists of the following data:
\begin{itemize}
  \item an underlying (bounded) morphism of stacks $\pi : X \to Y$,
  \item a morphism of algebraic groups $\varphi : H \to H'$, and
  \item an isomorphism of local systems $\scrL \cong \varphi^* \scrL'$.
\end{itemize}

Suppose $X$ (resp. $Y$) is stratified with an $H$-stable (resp. $H'$-stable) stratification $X = \bigsqcup_{\lambda \in \Lambda_X} X_\lambda$  (resp. $Y = \bigsqcup_{\mu
\in \Lambda_Y} Y_\mu$).
We insist that these stratifications make $D_{\Lambda_X} (H \backslash_{\scrL} X, \k)$ and $D_{\Lambda_Y} (H' \backslash_{\scrL'} Y, \k)$ satisfy the parity conditions.
We will further impose that $\pi$ be stratified (see \ref{def:stratified_mor}).
In particular, $F_\mu \coloneq \pi^{-1} (Y_\mu)$ is a union of strata in $X$. We will write $\Lambda_{X, \mu}$ for the subset of $\Lambda_X$ such that
\[F_\mu = \bigcup_{\lambda \in \Lambda_{X, \mu}} X_{\lambda}.\]
For any $\lambda \in \Lambda_{X, \mu}$, we can write
\[\pi_{\lambda, \mu} : H \backslash_{\scrL} X_\lambda \to H' \backslash_{\scrL'} Y_\mu \]
for the restriction of $\pi$.

\begin{definition}\label{def:even_mor}
  We say that a morphism $\pi : H \backslash_{\scrL} X \to H' \backslash_{\scrL'} Y$ is \emph{even} if for all $\mu \in \Lambda_Y$ and $\lambda \in \Lambda_{X, \mu}$,
  the $*$-pushforward along the restricted morphism $\pi_{\lambda, \mu *}$ takes even (resp. odd) complexes to even (resp. odd) complexes.
\end{definition}

\begin{remark}
  Our definition of even differs significantly from that given in \cite{JMW}. In \emph{loc. cit.}, if $\scrF$ is an $H$-equivariant parity sheaf on $X$, then $\For_H \scrF$ is also a parity sheaf on $X$.
  In particular, the parity conditions are satisfied on not just $H \backslash X$ but also $X$.
  A stratified morphism $\pi : X \to Y$ is said to be \emph{JMW-even} if for all $\lambda \in \Lambda_X , \mu \in \Lambda_Y$, and $\scrL \in \Loc_{\textnormal{ff}} (X_{\lambda}, \k)$, the cohomology of the fibers $F_{\lambda, \mu}$ of the induced morphism $\pi_{\lambda, \mu} : X_\lambda \to Y_{\mu}$ with coefficients in $\scrL\vert_{F_{\lambda, \mu}}$ is torsion free and concentrated in even degrees.
  Similarly, a stratified morphism $\pi : H \backslash X \to H \backslash Y$ is said to be \emph{JMW-even} if the underlying map $\pi : X \to Y$ is JMW-even.
  If one tries to define JMW-even directly on $\pi : H \backslash X \to H \backslash Y$ without assuming parity conditions on $X$ and $Y$, then in order for the cohomology of the fiber to be well-defined, one needs that an $H$-stable base point of $Y_{\mu}$ exists.  
  Nonetheless, if $H, H' = 1$, then the proof of \cite[Proposition 2.34]{JMW} states JMW-even implies even.
\end{remark}

A key ingredient in the proof of the existence of parity sheaves is given by the following proposition.

\begin{proposition}[Baby Decomposition Theorem]\label{prop:pushforward_of_even}
  Let $\pi : H \backslash_{\scrL} X \to H' \backslash_{\scrL'} Y$ be a proper morphism.
  Then direct image $\pi_*$ of a $?$-even (resp. $?$-odd) object is again $?$-even (resp. $?$-odd).
  As a result, the direct image $\pi_*$ takes parity sheaves to parity sheaves.
\end{proposition}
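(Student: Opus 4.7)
The plan mirrors the strategy of Juteau--Mautner--Williamson in the non-equivariant setting, carried over to the twisted equivariant context. By a symmetry argument, it suffices to show that $\pi_*$ preserves $*$-even objects: the $?$-odd case follows by shifting, and the $!$-even case follows via Verdier duality, which exchanges $*$-even with $!$-even (up to shift) and commutes with $\pi_*$ by properness of $\pi$.

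For each $\mu \in \Lambda_Y$, let $\pi_\mu : F_\mu \to Y_\mu$ denote the base change of $\pi$, which is again proper. Proper base change gives $i_\mu^* \pi_* \scrF \cong \pi_{\mu *}(\scrF|_{F_\mu})$, reducing the problem to showing that this complex has cohomology that is a local system concentrated in even degrees with stalks that are finite rank free $\k$-modules. I would then induct on the cardinality of $\Lambda_{X,\mu}$. The base case of a single stratum is exactly the hypothesis that $\pi$ is an even morphism, applied to the restricted map $\pi_{\lambda,\mu}$. For the inductive step, pick a stratum $X_{\lambda_0}$ that is closed in $F_\mu$ with open complement $U$, and apply $\pi_{\mu *}$ to the recollement triangle
\[j_!(\scrF|_U) \to \scrF|_{F_\mu} \to i_*(\scrF|_{X_{\lambda_0}}) \xrightarrow{+1}.\]
The third term becomes $\pi_{\lambda_0,\mu *}(\scrF|_{X_{\lambda_0}})$, which is $*$-even by the evenness hypothesis on $\pi$. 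For the first term, properness of $\pi_\mu$ gives $\pi_{\mu *} j_! = (\pi_\mu j)_!$; this $!$-pushforward from the smaller stratification is handled by strengthening the inductive hypothesis to treat $\pi_{\mu *}$ and $\pi_{\mu !}$ simultaneously, using that the two are exchanged by Verdier duality for proper morphisms.

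The main obstacle I anticipate is that the restriction $\pi_\mu \circ j : U \to Y_\mu$ is not itself proper, so the open-part pushforward appearing in the devissage is genuinely a $!$-pushforward rather than a $*$-pushforward; this forces one to carry both variants of the evenness statement through the induction, keeping them linked via Verdier duality. Finally, to conclude that the long exact sequence in cohomology yields free stalks (and not merely stalks concentrated in even degrees), one uses that over a noetherian domain $\k$ an extension of finitely generated free $\k$-modules by a finitely generated free $\k$-module is itself free, which forces the middle term of each short exact sequence extracted from the triangle to be free in every even degree and zero in every odd degree.
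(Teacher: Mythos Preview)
Your d\'evissage does not close. After applying $\pi_{\mu*}=\pi_{\mu!}$ to the triangle $j_!(\scrF|_U)\to\scrF|_{F_\mu}\to i_*(\scrF|_{X_{\lambda_0}})$, the open term becomes $(\pi_\mu|_U)_!(\scrF|_U)$ with $\pi_\mu|_U$ not proper, as you note. But your proposed fix---carrying both $(\pi_\mu|_V)_*$ and $(\pi_\mu|_V)_!$ through the induction ``linked via Verdier duality for proper morphisms''---does not apply, precisely because after removing a stratum the restricted map is no longer proper, so duality does not identify the two pushforwards. To run the $!$-pushforward induction you would need the base case $(\pi_{\lambda,\mu})_!(\text{even})$ is even on a single stratum; this follows from the hypothesis on $(\pi_{\lambda,\mu})_*$ only by invoking Verdier duality \emph{on the individual strata} $X_\lambda$ and $Y_\mu$, which in turn requires that $\DD$ preserve evenness there (e.g.\ that the strata be smooth). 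That is an extra ingredient not contained in the even-morphism hypothesis, and your argument silently assumes it.

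The paper avoids this entirely by a simple reorientation: it proves the $!$-even case directly and filters $F_\mu$ by \emph{closed} unions of strata $\emptyset=F_0\subset\cdots\subset F_r=F_\mu$, so that the newly added stratum $X_{\lambda_{p+1}}=F_{p+1}\setminus F_p$ is \emph{open} in $F_{p+1}$. Applying $\pi_{(p+1)*}$ to the recollement triangle with the $*$-extension on the open part and the $!$-restriction on the closed part, the two outer terms become $(\pi_{\lambda_{p+1},\mu})_*(j_{X,\lambda_{p+1}}^!\scrF)$ and $(\pi_\mu|_{F_p})_*(i_{p,\mu}^!\scrF)$: the first is exactly the even-morphism hypothesis, and the second is the inductive hypothesis, with $\pi_\mu|_{F_p}$ still proper since $F_p$ is closed. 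Both are $*$-pushforwards throughout, so no stratum-level duality is ever needed; duality is used only once at the outset to pass from $*$-even to $!$-even via properness of $\pi$ itself. (Your closing remark on freeness of extensions is correct: the short exact sequences split because the quotient is free.)
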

\begin{proof}
  If the result holds for $?$-even sheaves, then it will hold for $?$-odd sheaves by shifting. 
  Additionally, since $\pi$ is proper, there are isomorphisms.
  \[\pi_* \DD \scrF \cong \DD \pi_! \scrF \cong \DD \pi_* \scrF.\]
  As a result it suffices to prove that $\pi_*$ takes $!$-even sheaves to $!$-even sheaves.

  Let $\scrF \in D (H \backslash_{\scrL} X, \k)$ be $!$-even. Consider the cartesian square
  \[
    \begin{tikzcd}
      F_\mu \arrow[r, "i_\mu"] \arrow[d, "\pi"] & X \arrow[d, "\pi"] \\
      Y_\mu \arrow[r, "{j_{Y, \mu}}"]           & Y.
    \end{tikzcd}
  \]
  By base change, we have an isomorphism
  \begin{equation}\label{eq:pushforward_of_even_1} j_{Y, \mu}^! \pi_* \scrF \cong \pi_* i_\mu^! \scrF.
  \end{equation}
  Choose a filtration
  \[F_\mu = F_r \supset F_{r-1} \supset \ldots \supset F_0 = \emptyset,\]
  where $F_p$ is closed in $F_{p+1}$ and $F_p \setminus F_{p-1} = X_{\lambda_p}$ for some $\lambda_p \in \Lambda_{X, \mu}$.
  We will write $i_p$ for the inclusion $i_p : F_p \hookrightarrow F_\mu$, and $i_{p, \mu}$ for the composition $i_{p, \mu} = i_p \circ i_\mu$.
  Let $\pi_p : F_p \to F_\mu$ denote the restriction of $\pi$ to $F_p$.

  We will prove by induction on $p$ that $\pi_{p*} i_{p, \mu}^! \scrF$ is an even object on $H' \backslash_{\scrL'} Y_\mu$.
  If $p = 0$, then $\scrF =0$ and the claim holds trivially.

  Now suppose that $p > 0$. Consider the commutative diagram
  \[
    \begin{tikzcd}
      & F_{p+1} \arrow[d, "i_{p+1}"] \arrow[rd, "{i_{p+1, \mu}}"] & X_{\lambda_{p+1}} \arrow[l, "i"'] \arrow[d, "{j_{X, \lambda_{p+1}}}"] \\
      F_p \arrow[ru, "j"] \arrow[r, "i_p"] \arrow[rr, "{i_{p, \mu}}"', bend right] & F_\mu \arrow[r, "i_\mu"]                                  & X.
    \end{tikzcd}
  \]
  Consider the open-closed distinguished triangle for the pair $(F_p, X_{\lambda_{p+1}})$ applied to $i_{p+1, \mu}^! \scrF$,
  \[i_* j_{X, \lambda_{p+1}}^! \scrF \to i_{p+1, \mu}^! \scrF \to j_* i_{p, \mu}^! \scrF \to,\]
 and the commutative diagram
  \[
    \begin{tikzcd}
      X_{\lambda_{p+1}} \arrow[r, "j"] \arrow[rd, "{\pi_{\lambda_{p+1}, \mu}}"'] & F_{p+1} \arrow[d, "\pi_{p+1}"] & F_p \arrow[l, "i"'] \arrow[ld, "\pi_p"] \\
      & Y_\mu.                          &
    \end{tikzcd}
  \]
  After applying $\pi_{p+1 *}$, we obtain a distinguished triangle
  \[\pi_{\lambda_{p+1}, \mu*} j_{X, \lambda_{p+1}}^! \scrF \to \pi_{p+1 *} i_{p+1, \mu}^! \scrF \to \pi_{p*}  i_{p, \mu}^! \scrF \to.\]
  By definition of an even morphism, $\pi_{\lambda_{p+1}, \mu*} j_{X, \lambda_{p+1}}^! \scrF$ is even, and by induction, $\pi_{p*}  i_{p, \mu}^! \scrF $ is even.
  Since the subcategory of even objects is closed under extensions, we must have that $\pi_{p+1 *} i_{p+1, \mu}^! \scrF$ is even.
  Therefore, by induction $\pi_* i_\mu^! \scrF$ is even. Since this holds for all $\mu$, by equation (\ref{eq:pushforward_of_even_1}), we must have that $\pi_* \scrF$ is
  $!$-even as desired.
\end{proof}

Since our definition for an even morphism is rather opaque, we will conclude this section with some examples.

\begin{example}\quad
  \begin{enumerate}
    \item Let $X$ be an (ind)-algebraic stack with an $H$-action. Let $\scrL \in \Ch (H, \k)$. Let $X$ be stratified by itself.
      Suppose that $D_{(X)} (H \backslash_{\scrL} X, \k)$ is stratified and satisfies the parity conditions.
      Let $F$ be a simply connected algebraic variety such that $H^n (F; \k)$ is 0 when $n$ is odd and a free $\k$-module when $n$ is even.
      Define $Y = X \times F$ which inherits an $H$-action from $H$-acting on $X$ and acting trivially on $F$.
      It is clear that $D_{(Y)} (H \backslash_{\scrL} Y, \k)$ is also stratified and satisfies the parity conditions.
      Let $\pi : H \backslash_{\scrL} Y \to H \backslash_{\scrL} X$ be the projection map. Let $\scrF \in D_{(X)} (H \backslash_{\scrL} X, \k)$ be an even complex.
      Then
      \[ H^\bullet (\pi_* (\scrF \boxtimes \uk_F)) \cong H^\bullet (\scrF) \otimes^L H^\bullet (F; \k). \]
      The conditions on the cohomology of $F$ and $\scrF$ being even ensures that the above cohomology is concentrated in even degrees and free in odd degrees.
      The case of $\scrF$ being odd is similar. We can then conclude that $\pi_*$ is even.
    \item We can generalize the above example to multiple strata as follows. Let $\pi : H \backslash_{\scrL} X \to H \backslash_{\scrL} Y$ be a stratified morphism of
      twisted (ind)-algebraic stacks.
      Suppose that for each $\lambda \in \Lambda_{X, \mu}$, $X_\lambda$ can be identified $H$-equivariantly with $Y_\mu \times F_{\lambda, \mu}$ such that $F_{\lambda,
      \mu}$ is a simply connected algebraic variety such that $H^n (F_{\lambda, \mu}; \k)$ is 0 when $n$ is odd and a free $\k$-module when $n$ is even.
      Moreover, under this identification suppose that $\pi_{\lambda, \mu}$ is identified with the projection $Y_\mu \times F_{\lambda, \mu} \to Y_\mu$.
      Then the argument for (1) shows that $\pi_*$ is even.
  \end{enumerate}
\end{example}

\subsection{Mixed Categories}\label{sec:mixed_cats}

In \cite{AR2}, Achar and Riche introduced the mixed modular derived category. It has served as a useful replacement for mixed sheaves in positive characteristic.
We will provide a brief overview of these categories and their key features in the twisted equivariant setting.
All the proofs in this section can be easily adapted from \cite{AR2}, and as such will be omitted.
We only include the components of this theory necessary for the present work. In upcoming work, we will study these categories more systematically. 

Throughout, we will fix an (ind-algebraic) $H$-stack $X$ with an $H$-stable stratification $X = \bigsqcup_{\lambda \in \Lambda} X_{\lambda}$ and a multiplicative local system $\scrL \in \Ch (H, \k)$.
We impose the following two constraints on $D_{\Lambda} (H \backslash_{\scrL} X, \k)$:
\begin{enumerate}
  \item The parity conditions should be satisfied for $D_{\Lambda} (H \backslash_{\scrL} X, \k)$.
  \item For each $\lambda \in \Lambda$ and $\scrK \in \Loc_{\textnormal{ff}} (H \backslash_{\scrL} X_{\lambda}, \k)$, there exists an indecomposable parity extension $\scrE_{\lambda} (\scrK)$ of $\scrK$.
\end{enumerate}

\begin{definition}
  The \emph{mixed derived category of $(H, \scrL)$-equivariant constructible sheaves} is the following triangulated category 
  \[ D^m_{\Lambda} (H \backslash_{\scrL} X, \k) \coloneq K^b \Par_{\Lambda} (H \backslash_{\scrL} X, \k) .\]
\end{definition}

The mixed derived category admits two shift functors:
\begin{enumerate}
  \item There is the internal shift inherited from $\Par_{\Lambda} (H \backslash_{\scrL} X, \k) $. We denote this shift by $(1) : D^m_{\Lambda} (H \backslash_{\scrL} X, \k) \to D^m_{\Lambda} (H \backslash_{\scrL} X, \k)$.
  \item There is the cohomological shift $[1] :  D^m_{\Lambda} (H \backslash_{\scrL} X, \k) \to D^m_{\Lambda} (H \backslash_{\scrL} X, \k)$.
\end{enumerate}
We also introduce the notation $\langle n \rangle \coloneq (-n) [n]$. The functor $\langle 1 \rangle$ is called the \emph{Tate twist}.

\begin{proposition}[{\cite[Proposition 2.3]{AR2}}]\label{prop:recollement_mixed}
  Let $U$ be an open union of strata in $X$. Let $Z$ denote its complement. Let $j : U \to X$ and $i : Z \to X$ denote the inclusions.
  There is a recollement diagram
  \[\begin{tikzcd}
    {D^m_{\Lambda} (H \backslash_{\scrL} Z, \k)} \arrow[rr, "i_*"] &  & {D^m_{\Lambda} (H \backslash_{\scrL} X, \k)} \arrow[rr, "j^*"] \arrow[ll, "i^*"', bend right] \arrow[ll, "i^!", bend left] &  & {D^m_{\Lambda} (H \backslash_{\scrL} U, \k).} \arrow[ll, "j_*", bend left] \arrow[ll, "j_!"', bend right]
    \end{tikzcd}\]
  where $i_*$ and $j^*$ are the functors induced from the corresponding functors of parity sheaves.\footnote{$i^*, i^!, j_*$, and $j_!$ are not the similarly denoted functors in the non-mixed category, since they do not send parity sheaves to parity sheaves.}
\end{proposition}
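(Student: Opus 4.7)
The plan is to follow Achar--Riche's strategy in \cite{AR2}: build the recollement by leveraging that $i_*$ and $j^*$ preserve parity sheaves, and then construct the remaining four functors at the chain level using the existence and uniqueness of parity extensions. First I would verify that $i_* : \Par_\Lambda(H \backslash_{\scrL} Z, \k) \to \Par_\Lambda(H \backslash_{\scrL} X, \k)$ preserves parity, either by invoking Proposition \ref{prop:pushforward_of_even} (since $i$ is a closed embedding, hence proper) or by directly computing $*$- and $!$-stalks along the strata of $Z$ and of $U$. Likewise, $j^*$ preserves parity since $j_\lambda^? \circ j^* = j_\lambda^?$ for each stratum $X_\lambda \subset U$. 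These additive functors descend to triangulated functors between the bounded homotopy categories, yielding the $i_*$ and $j^*$ appearing in the recollement.

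Next I would establish the two structural features that power the recollement. Let $\Par_\Lambda^Z(H \backslash_{\scrL} X, \k)$ denote the full subcategory of $\Par_\Lambda(H \backslash_{\scrL} X, \k)$ consisting of objects supported on $Z$. By Proposition \ref{prop:uniqueness_of_parity_sh}, an indecomposable parity sheaf on $X$ lies in $\Par_\Lambda^Z$ if and only if its $j^*$-restriction vanishes, and $i_*$ induces an equivalence $\Par_\Lambda(H \backslash_{\scrL} Z, \k) \cong \Par_\Lambda^Z(H \backslash_{\scrL} X, \k)$. The standing hypothesis that all parity extensions exist ensures that $j^* : \Par_\Lambda(H \backslash_{\scrL} X, \k) \to \Par_\Lambda(H \backslash_{\scrL} U, \k)$ is essentially surjective at the parity level. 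Finally, Proposition \ref{prop:hom_star_and_shriek} provides a stratum-by-stratum decomposition of Hom spaces, which will serve as the key computational tool.

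The remaining functors $i^*, i^!, j_!, j_*$ do not preserve parity sheaves and must therefore be constructed at the level of bounded complexes, following the recipe of \cite[Proposition 2.3]{AR2}. Roughly, for a parity sheaf $\scrF$ on $U$, one chooses a parity extension $\tilde{\scrF}$ on $X$ and builds $j_! \scrF$ as an explicit complex of parity sheaves on $X$ whose restriction to $U$ recovers $\scrF$ and whose restriction to $Z$ is acyclic; the other three functors are produced analogously. The main work is then to verify that these constructions are well-defined in $K^b \Par_\Lambda(H \backslash_{\scrL} X, \k)$ (independent of the chosen parity extension, up to canonical isomorphism), assemble into exact functors, and satisfy the orthogonality and adjunction axioms of a recollement together with $i_*$ and $j^*$. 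Each step ultimately reduces to the stratum-by-stratum structure of Hom spaces and the uniqueness of parity extensions, both of which hold without modification in the twisted equivariant setting. The main obstacle is therefore not conceptual but rather the extensive bookkeeping required to transcribe the argument of \cite[Proposition 2.3]{AR2}; in particular, checking that the choices involved in defining $i^*, i^!, j_!, j_*$ assemble into functors and that the counit/unit triangles descend correctly to $K^b \Par$. No new geometric input is needed beyond the foundational results of Section \ref{sec:parity}.
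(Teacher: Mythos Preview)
Your proposal is correct and matches the paper's approach: the paper omits the proof entirely, stating that ``All the proofs in this section can be easily adapted from \cite{AR2}, and as such will be omitted,'' so your outline of the Achar--Riche construction is precisely what the paper has in mind. The ingredients you identify---Propositions \ref{prop:hom_star_and_shriek}, \ref{prop:uniqueness_of_parity_sh}, and \ref{prop:pushforward_of_even}, together with the standing existence hypothesis on parity extensions---are exactly the twisted-equivariant inputs needed to transcribe the argument of \cite[Proposition~2.3]{AR2}.
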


Let $Y$ be a locally closed union of strata in $X$. Let $h : Y \to X$ denote the inclusion.
Proposition \ref{prop:recollement_mixed} allows one to define adjoint pairs of sheaf functors
\[\begin{tikzcd}
{D^m_{\Lambda} (H \backslash_{\scrL} Y, \k)} \arrow[rr, "h_*"', bend right] & \bot & {D^m_{\Lambda} (H \backslash_{\scrL} X, \k)} \arrow[rr, "h^!"', bend right] \arrow[ll, "h^*"', bend right] & \bot & {D^m_{\Lambda} (H \backslash_{\scrL} Y, \k).} \arrow[ll, "h_!"', bend right]
\end{tikzcd}\]
For example, we can write $h : Y \to X$ as the composition $Y \stackrel{j}{\to} \overline{Y} \stackrel{i}{\to} X$. We can then define $h_! = i_! \circ j_!$.
The fact that $h$ is independent of the factorization follows from \cite[Lemma 2.6]{AR2}.

The following lemma follows from a standard argument using recollement.
\begin{lemma}\label{lem:generation_of_mixed_cat}
  The category $D_{\Lambda}^m (H \backslash_{\scrL} X, \k)$ is generated under cohomological shifts and extensions by the standard (resp. costandard) sheaves $j_{\lambda !} \scrK (n)$ (resp. $j_{\lambda *} (\scrK) (n)$) for all $\lambda \in \Lambda$,  $\scrK \in \Loc_{\textnormal{ff}} (H \backslash_{\scrL} X_{\lambda}, \k)$, and $n \in \Z$.
\end{lemma}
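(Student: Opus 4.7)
The plan is to reduce the statement to a claim about individual parity sheaves, and then proceed by induction on the support using the recollement of Proposition \ref{prop:recollement_mixed}.

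First, since $D_{\Lambda}^m (H \backslash_{\scrL} X, \k) = K^b \Par_{\Lambda} (H \backslash_{\scrL} X, \k)$, every object is represented by a bounded complex of parity sheaves. By a standard d\'evissage on complexes via stupid truncations, every object of $D_\Lambda^m$ is an iterated extension of shifts $\scrE[n]$ of parity sheaves. By additivity, it suffices to show that each indecomposable parity sheaf $\scrE_\lambda (\scrK)$ lies in the triangulated subcategory $\langle j_{\mu !} \scrM (n) \rangle$ generated by the standard objects (under cohomological shifts and extensions).

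Next, I would proceed by induction on the support. Recall from Proposition \ref{prop:uniqueness_of_parity_sh} that the support of $\scrE_\lambda (\scrK)$ is of the form $\overline{X_\lambda}$, and on the open stratum $X_\lambda$ we have $j_\lambda^* \scrE_\lambda(\scrK) \cong \scrK$ for some $\scrK \in \Loc_{\textnormal{ff}}(H \backslash_{\scrL} X_\lambda, \k)$. Let $Z = \overline{X_\lambda} \setminus X_\lambda$ with closed embedding $i : Z \hookrightarrow X$, and let $j : X_\lambda \hookrightarrow X$ denote the locally closed embedding factored through $\overline{X_\lambda}$. Applying the recollement triangle in the mixed category to $\scrE_\lambda (\scrK)$ yields
\[ j_{!} j^{*} \scrE_\lambda (\scrK) \to \scrE_\lambda (\scrK) \to i_* i^* \scrE_\lambda (\scrK) \to . \]
By construction of the mixed pullback, $j^* \scrE_\lambda (\scrK) = \scrK$, so the left term is precisely the standard $j_{\lambda !} \scrK$. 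The right term is supported on $Z$, which is a union of strata $X_\mu$ with $\overline{X_\mu} \subsetneq \overline{X_\lambda}$. By the inductive hypothesis on support, $i^* \scrE_\lambda (\scrK) \in D_{\Lambda}^m (H \backslash_{\scrL} Z, \k)$ lies in the subcategory generated by standards $j_{\mu !} \scrM (n)$ for $\mu \neq \lambda$ indexing strata in $Z$, and $i_*$ sends these to standards on $X$. This realizes $\scrE_\lambda (\scrK)$ as an extension of two objects in the generated subcategory, completing the induction. The costandard case is entirely symmetric, using the triangle $i_* i^! \to \id \to j_* j^* \to$ and induction starting from the closed strata instead.

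The main obstacle is ensuring that the recollement functors in the mixed category interact correctly with parity sheaves; in particular, that $j^*$ of a parity sheaf on an open stratum genuinely recovers the local system part (which is immediate since open pullback preserves parity and matches the non-mixed pullback), and that the inductive hypothesis on $Z$ applies to $i^* \scrE_\lambda (\scrK)$ even though this object need not itself be a parity sheaf on $Z$ but only an object of $D_{\Lambda}^m (H \backslash_{\scrL} Z, \k)$. This latter point is handled by the first d\'evissage reducing arbitrary mixed objects to parity sheaves, applied now on $Z$.
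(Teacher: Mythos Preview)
Your proof is correct and is precisely the ``standard argument using recollement'' that the paper alludes to without spelling out; the d\'evissage to parity sheaves followed by induction on support via the recollement triangle of Proposition~\ref{prop:recollement_mixed} is exactly what is intended. One small remark: your induction is really on the number of strata in the support (which is finite for any given parity sheaf even when $\Lambda$ is infinite), and the inductive hypothesis is the full statement of the lemma for $Z$, as you correctly note in your final paragraph.
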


If $f : H' \backslash_{\scrL'} X' \to H \backslash_{\scrL} X$ is a proper, smooth, and even (bounded) morphism of twisted (ind)-algebraic stacks, then $f_*$ and $f^*$ both take parity sheaves to parity sheaves.
As a result, there are induced functors
\[f^* : D_{\Lambda}^m ( H \backslash_{\scrL} X, \k) \rightleftarrows D_{\Lambda'}^m (H' \backslash_{\scrL'} X', \k) : f_*.\] 
Let $Y$ be a locally closed union of strata in $X$. Let $h : Y \to X$ denote the inclusion. 
Consider the cartesian square
\[ \begin{tikzcd}
  H' \backslash_{\scrL'} f^{-1} (Y) \arrow[r, "h'"] \arrow[d, "f'"] & H' \backslash_{\scrL'} X' \arrow[d, "f"] \\
  H \backslash_{\scrL} Y \arrow[r, "h"]                             & H \backslash_{\scrL} X.                  
  \end{tikzcd}
\]
Note that $f'$ is proper, smooth, and even. By the same argument given in \cite[Proposition 2.8]{AR2}, there are natural isomorphisms of functors
\[f_* \circ h_*' \cong h_* \circ f_*', \hspace{2cm} f_* \circ h_!' \cong h_! \circ f_*',\]
\[(h')^* \circ f^* \cong (f')^* \circ h^*, \hspace{2cm} (h')^! \circ f^* \cong (f')^* \circ h^!,\]
\[f^* \circ h_* \cong h_*' \circ (f')^*, \hspace{2cm} f^* \circ h_! \cong h_!' \circ (f')^*,\]
\[h^* \circ f_* \cong f_*' \circ (h')^*, \hspace{2cm} h^! \circ f_* \cong f_*' \circ (h')^!.\]

Finally, we conclude with a useful Hom vanishing constraint.

\begin{lemma}[{\cite[Lemma 3.2]{AR2}}]\label{lem:hom_vanishing_for_ME}
  Suppose that for all $\lambda \in \Lambda$, there is an $H$-equivariant isomorphism $X_{\lambda} \cong H \times Y_{\lambda}$ where $Y_{\lambda}$ is an affine space.
  In this case, $\Loc_{\textnormal{ff}} (H \backslash_{\scrL} H \times Y_{\lambda}, \k)$ is generated by the sheaf $\scrL \boxtimes \uk_{Y_{\lambda}}$.
  We will write $\scrK_{\lambda} \in \Loc_{\textnormal{ff}} (H \backslash_{\scrL} X_{\lambda}, \k)$ for the sheaf corresponding to $\scrL \boxtimes \uk_{Y_{\lambda}}$.
  Moreover, for all $\lambda, \mu \in \Lambda$, we have
  \[\Hom_{D_{\Lambda}^m (H \backslash_{\scrL} X, \k)} (j_{\lambda !} \scrK_{\lambda}, j_{\mu *} \scrK_{\mu} \langle n \rangle [i]) \cong \begin{cases} \k & \lambda = \mu, n = \dim X_{\lambda} - \dim X_{\mu}, i=-n, \\ 0 & \text{otherwise.}\end{cases}\]
\end{lemma}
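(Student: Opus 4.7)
The plan is to reduce to the case $\lambda = \mu$ using the adjunctions in the mixed recollement (Proposition~\ref{prop:recollement_mixed}), and then compute the remaining Hom on a single stratum where the parity category is fully explicit.

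Applying the adjunctions $j_{\lambda!} \dashv j_\lambda^*$ and $j_\mu^* \dashv j_{\mu*}$, we rewrite
\[\Hom(j_{\lambda!} \scrK_\lambda, j_{\mu*} \scrK_\mu \langle n \rangle [i]) \cong \Hom(j_\mu^* j_{\lambda!} \scrK_\lambda, \scrK_\mu \langle n \rangle [i]) \cong \Hom(\scrK_\lambda, j_\lambda^* j_{\mu*} \scrK_\mu \langle n \rangle [i]).\]
When $\lambda \neq \mu$, since the closure order on strata is a partial order, at least one of $X_\mu \not\subseteq \overline{X_\lambda}$ or $X_\lambda \not\subseteq \overline{X_\mu}$ must hold. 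Factoring $j_{\lambda!} = i_* \circ j_!$ through the closed inclusion $\overline{X_\lambda} \hookrightarrow X$ and invoking the standard recollement identity $j^* i_* = 0$ for complementary open--closed pairs, we get $j_\mu^* j_{\lambda!} \scrK_\lambda = 0$ in the first case; the second case is symmetric by factoring $j_{\mu*}$ through $\overline{X_\mu}$. Either way the Hom vanishes.

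For $\lambda = \mu$, recollement yields $j_\lambda^* j_{\lambda*} \cong \id$, so the Hom becomes $\Hom_{D^m_{(X_\lambda)}(H \backslash_\scrL X_\lambda, \k)}(\scrK_\lambda, \scrK_\lambda \langle n \rangle [i])$. The equivariant isomorphism $X_\lambda \cong H \times Y_\lambda$ with $H$ acting freely on the first factor gives an equivalence $\Par_{(X_\lambda)}(H \backslash_\scrL H \times Y_\lambda, \k) \cong \Par(Y_\lambda, \k)$ under which $\scrK_\lambda$ corresponds to $\uk_{Y_\lambda}$. Writing $\langle n \rangle [i] = (-n)[n+i]$ with the internal shift $(-n)$ applied termwise, we must compute $\Hom_{K^b \Par(Y_\lambda, \k)}(\uk_{Y_\lambda}, \uk_{Y_\lambda}(-n)[n+i])$.

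Since both arguments are complexes concentrated in a single cohomological degree, this Hom vanishes unless $n + i = 0$, and in that case equals $\Hom_{\Par(Y_\lambda, \k)}(\uk_{Y_\lambda}, \uk_{Y_\lambda}(-n)) = H^{-n}(Y_\lambda; \k)$. Since $Y_\lambda$ is an affine space and hence contractible, this cohomology is $\k$ when $n = 0$ and $0$ otherwise; combined with $i = -n$, this forces $n = i = 0$, giving the claimed $\k$ (note that $n = \dim X_\lambda - \dim X_\mu = 0$ holds automatically when $\lambda = \mu$). The main subtlety is verifying the recollement identity $j^* i_* = 0$ in the mixed setting, but this follows immediately from the construction of $i_*$ and $j^*$ as the termwise functors induced from the corresponding parity-sheaf operations.
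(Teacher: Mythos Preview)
Your proof is correct and follows essentially the same approach as \cite[Lemma~3.2]{AR2}, which the paper cites but whose proof it omits: adjunction reduces to a single stratum, the support argument handles $\lambda\neq\mu$, and the free $H$-action identifies the stratum-level mixed category with $K^b\Par(Y_\lambda,\k)$, where the computation is immediate.
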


	\section{Monodromic Hecke Categories}\label{sec:mhc}

Fix a Kac--Moody group $G$ with Borel $B$ containing a maximal torus $T$ as in \ref{sec:review_of_Kac_Moody}.
Let $W$ denote the Weyl group of $G$ with respect to $T$. We will also denote $S$ the set of simple reflections in $W$.
We return to the setting where $\k$ is any noetherian domain of finite global dimension. 

\subsection{Endoscopic Weyl Groups}

We review the theory of endoscopic Weyl groups following \cite{LY}. 
We encounter additional problems when working with Kac--Moody groups since only real roots have associated coroots.
The definition of the endoscopic Weyl group only needs minor modifications; however, modified arguments need to be given for some foundations (cf., \cite{H22, H23}).

Fix a multiplicative local system $\scrL \in \Ch (T,\k)$. We define a $W$-action on $\Ch (T)$ by $w \cdot \scrL = (w^{-1})^* \scrL$. 
For $\scrL, \scrL' \in \Ch (T, \k)$ in the same $W$-orbit, we set
\[\W{\scrL'}{\scrL} \coloneq \{w \in W \mid w(\scrL) \cong \scrL' \}.\]
If $\scrL = \scrL'$, we will write $W_{\scrL} \coloneq \W{\scrL}{\scrL}$, i.e., the stabilizer in $W$ of $\scrL$.

\subsubsection{Definition}

Let $\Phi_{\re}$ denote the set of real roots in $\Phi$ (i.e., roots of the form $w \cdot \alpha_s$ for $w \in W$ and $s \in S$).
To each real root $\alpha = w \cdot \alpha_s$, there is a corresponding real coroot $\alpha^{\vee} \coloneq w \cdot \alpha_s^{\vee}$.
We define a collection of real coroots
\[\Phi_{\re, \scrL}^\vee \coloneq \{ \alpha^\vee \in \Phi_{\re}^\vee \mid (\alpha^\vee)^* \scrL \cong \uk_{\G_m} \},\]
where we view $\alpha^{\vee}$ as a morphism $\G_m \to T$.
We can then define a collection of real roots $\Phi_{\re, \scrL}$ consisting of roots $\alpha \in \Phi_{\re}$ such that $\alpha^\vee \in \Phi_{\re, \scrL}^\vee$.
Denote $\Phi_{\re, \scrL}^+ = \Phi_{\re, \scrL} \cap \Phi^+$.
We will write $W_{\scrL}^\circ$ for the subgroup of $W$ generated by reflections of roots in $\Phi_{\re, \scrL}$.

Let $R$ denote the set of reflections in $W$, i.e., elements of the form $wsw^{-1}$ where $w \in W$ and $s \in S$. 
Let $R_{\scrL}^{\circ}$ denote the set of reflections in $W_{\scrL}^{\circ}$.
For each $w \in W$, we can define $N(w) \coloneq \{ r \in R_{\scrL}^{\circ} \mid rw^{-1} < w^{-1} \}$.
By \cite[Theorem 3.3]{Dyer}, $W_{\scrL}^{\circ}$ is a Coxeter group with simple reflections
\[S_{\scrL}^{\circ} \coloneq \{ t \in R \mid N(t) \cap W_{\scrL}^{\circ} = \{t\} \}.\]
We call the reflections in $S_{\scrL}^{\circ}$ the \emph{endosimple} reflections.

Let $\leq_{\scrL}$ and $\ell_{\scrL}$ denote the Bruhat order and length on $(W_{\scrL}^{\circ}, S_{\scrL}^{\circ})$.
Since $S_{\scrL}^{\circ} \subset R$, for all $x,y \in W_{\scrL}^{\circ}$, 
\begin{equation}\label{eq:endo_vs_bruhat_orders}
   x \leq_{\scrL} y \implies x \leq y. 
\end{equation}

\begin{lemma}[{\cite[Lemma 5.13]{H22}}]\label{lem:endosimple_and_dyer}
  \begin{enumerate}
    \item One has $R \cap W_{\scrL}^{\circ} = R_{\scrL}^{\circ}$.
    \item Let $\alpha \in \Phi_{\re, \scrL}^+$. Then $r_{\alpha} \in S_{\scrL}^{\circ}$ if and only if $r_{\alpha} \Phi_{\re, \scrL}^+ \cap \Phi_{\re, \scrL}^{-} = \{-\alpha\}$.
    \item Let $w \in W_{\scrL}^{\circ}$. Let $\uw = (r_1, \ldots, r_k)$ be a reduced expression in $(W_{\scrL}^{\circ}, S_{\scrL}^{\circ})$. 
          Then 
          \[ \{ \alpha \in \Phi_{\re, \scrL}^+ \mid w \alpha < 0 \} = \{ \alpha_{r_k}, r_k \cdot \alpha_{r_{k-1}}, \ldots, r_{k} \ldots r_2 \cdot \alpha_{r_1} \}. \]
          In particular,
          \[\ell_{\scrL} (w) \coloneq \# \{ \alpha \in \Phi_{\re, \scrL}^+ \mid w \alpha < 0 \}.\]
  \end{enumerate}
\end{lemma}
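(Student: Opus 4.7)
The plan is to deduce all three statements from Dyer's theorem on reflection subgroups, applied to the pair $W_{\scrL}^{\circ} \subset W$. By construction, $W_{\scrL}^{\circ}$ is generated by the reflections $r_{\alpha}$ with $\alpha \in \Phi_{\re, \scrL}$, and the paper already invokes \cite[Theorem 3.3]{Dyer} to obtain the Coxeter presentation $(W_{\scrL}^{\circ}, S_{\scrL}^{\circ})$. My first task is to check that $\Phi_{\re, \scrL}$ plays the role of the abstract ``root system'' of this reflection subgroup in Dyer's sense. For this, I verify that $\Phi_{\re, \scrL}$ is stable under $W_{\scrL}^{\circ}$: if $\alpha \in \Phi_{\re, \scrL}$ and $w \in W_{\scrL}^{\circ} \subseteq W_{\scrL}$, then $(w\alpha)^{\vee} = w\alpha^{\vee}$, and pulling $\scrL$ back along this coroot gives $(\alpha^{\vee})^{*} w^{*}\scrL \cong (\alpha^{\vee})^{*}\scrL \cong \uk_{\G_m}$, so $w\alpha \in \Phi_{\re, \scrL}$.

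For (1), the inclusion $R_{\scrL}^{\circ} \subseteq R \cap W_{\scrL}^{\circ}$ is immediate, since each element of $S_{\scrL}^{\circ}$ is by definition a $W$-reflection lying in $W_{\scrL}^{\circ}$ and $W_{\scrL}^{\circ}$-conjugation preserves both properties. The reverse inclusion is essentially Dyer's assertion that the reflections of a reflection subgroup are precisely its intersection with the reflections of the ambient Coxeter group; combined with the stability of $\Phi_{\re, \scrL}$ already established, this forces the corresponding root $\beta \in \Phi_{\re}$ of any $r_{\beta} \in R \cap W_{\scrL}^{\circ}$ to lie in $\Phi_{\re, \scrL}$.

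For (2), this becomes the standard root-theoretic characterization of simple reflections inside a Coxeter system: a reflection $r_{\alpha}$ with $\alpha \in \Phi_{\re, \scrL}^{+}$ is simple for $(W_{\scrL}^{\circ}, S_{\scrL}^{\circ})$ if and only if it sends exactly one positive root of $\Phi_{\re, \scrL}$ to a negative one. The definition of $S_{\scrL}^{\circ}$ via the condition $N(t) \cap W_{\scrL}^{\circ} = \{t\}$ is then translated into this root-theoretic formulation using part (1), which identifies the reflections of $W_{\scrL}^{\circ}$ with the roots of $\Phi_{\re, \scrL}$. For (3), I apply the usual Coxeter-theoretic iterative description of the inversion set of a reduced expression inside $(W_{\scrL}^{\circ}, S_{\scrL}^{\circ})$, with roots drawn from $\Phi_{\re, \scrL}$, and read off the length formula by counting.

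The main obstacle is that we are in a Kac-Moody setting, where only real roots possess coroots, so Dyer's theorem must be applied with care to avoid contamination by imaginary roots, and one needs to confirm that the reflection-subgroup structure of $W_{\scrL}^{\circ}$ can be described purely through $\Phi_{\re, \scrL}$. Once the identification between $\Phi_{\re, \scrL}$ and Dyer's abstract root set is pinned down, the remaining content of each part is routine reflection-subgroup theory; the delicate step is the $W_{\scrL}^{\circ}$-stability of $\Phi_{\re, \scrL}$, which ultimately rests on $W_{\scrL}^{\circ} \subseteq W_{\scrL}$ stabilizing $\scrL$.
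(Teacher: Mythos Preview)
The paper does not supply its own proof of this lemma; it is quoted verbatim from \cite[Lemma 5.13]{H22}. Your outline via Dyer's theory of reflection subgroups is the standard route to such statements and is correct: the key input beyond Dyer's general results is precisely the $W_{\scrL}^{\circ}$-stability of $\Phi_{\re,\scrL}$, which you verify via $W_{\scrL}^{\circ} \subseteq W_{\scrL}$. One small comment: your phrasing for part (1) slightly conflates two facts---that $R \cap W_{\scrL}^{\circ}$ equals the Coxeter-theoretic reflections of $(W_{\scrL}^{\circ}, S_{\scrL}^{\circ})$ is a general consequence of Dyer's theorem and needs no root-system input, whereas the further identification of these reflections with $\{r_{\alpha} : \alpha \in \Phi_{\re,\scrL}^{+}\}$ is what actually uses the stability of $\Phi_{\re,\scrL}$.
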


When $G$ is finite type, Lemma \ref{lem:endosimple_and_dyer} (2) shows that $S_{\scrL}^{\circ}$ agrees with the Coxeter generators of $W_{\scrL}^{\circ}$ given in \cite{LY}.

\subsubsection{Blocks}

\begin{lemma}\label{lem:endo_subgp_is_normal}
$W_{\scrL}^{\circ}$ is a normal subgroup of $W_{\scrL}$.
\end{lemma}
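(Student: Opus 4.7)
The plan is to verify normality on generators. By definition, $W_{\scrL}^{\circ}$ is generated by the set of reflections $\{r_{\alpha} : \alpha \in \Phi_{\re, \scrL}\}$. Since conjugation in $W$ permutes reflections according to the rule $w r_{\alpha} w^{-1} = r_{w \alpha}$, it suffices to show that for every $w \in W_{\scrL}$ and every $\alpha \in \Phi_{\re, \scrL}$, one still has $w \alpha \in \Phi_{\re, \scrL}$; that is, $W_{\scrL}$ preserves the subset $\Phi_{\re, \scrL}$ of real roots.

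The set $\Phi_{\re}$ of real roots is evidently $W$-stable, and the coroot assignment $\alpha \mapsto \alpha^{\vee}$ is $W$-equivariant, so the question reduces to showing that $W_{\scrL}$ preserves $\Phi_{\re, \scrL}^{\vee}$. For $\alpha^{\vee} \in \Phi_{\re, \scrL}^{\vee}$, viewed as a morphism $\alpha^{\vee} : \G_m \to T$, the coroot $w\alpha^{\vee}$ is the composition $w \circ \alpha^{\vee}$, so we can compute
\[ (w\alpha^{\vee})^* \scrL \;\cong\; (\alpha^{\vee})^* w^* \scrL. \]
The condition $w \in W_{\scrL}$ means $w \cdot \scrL = (w^{-1})^* \scrL \cong \scrL$, equivalently $w^* \scrL \cong \scrL$. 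Substituting, we get $(w\alpha^{\vee})^* \scrL \cong (\alpha^{\vee})^* \scrL \cong \uk_{\G_m}$, where the last isomorphism is the defining property of $\alpha^{\vee} \in \Phi_{\re, \scrL}^{\vee}$. Hence $w\alpha^{\vee} \in \Phi_{\re, \scrL}^{\vee}$, as required.

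There is essentially no serious obstacle here; the only point requiring mild care is keeping track of the difference between the $W$-action on $\Ch(T, \k)$ (which is defined via pullback by $w^{-1}$) and the plain pullback functor $w^*$, but these are interchangeable for the purpose of checking the stabilizer condition. The Kac-Moody setting does not introduce any additional complication, since the argument only uses real roots, where coroots are defined and $W$-equivariance of $\alpha \mapsto \alpha^{\vee}$ holds.
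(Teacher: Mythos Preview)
Your proof is correct and follows essentially the same approach as the paper: both reduce to checking that $W_{\scrL}$ preserves $\Phi_{\re,\scrL}$ via the computation $(w\alpha^{\vee})^* \scrL \cong (\alpha^{\vee})^* \scrL \cong \uk_{\G_m}$. Your explicit remark about the $w^*$ versus $(w^{-1})^*$ bookkeeping is a nice clarification that the paper leaves implicit.
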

\begin{proof}
    Let $w \in W_{\scrL}$ and $\alpha \in \Phi_{\re, \scrL}^{\circ}$.
    By definition, $w r_{\alpha} w^{-1} = r_{w\alpha}$.
    We can then compute
    \[(w\alpha^{\vee})^* \scrL \cong (\alpha^{\vee})^* w \scrL \cong (\alpha^{\vee})^* \scrL \cong \uk_{\G_m}.\]  
\end{proof}

Let $\scrL, \scrL' \in \Ch^{\circ} (T,\k)$ be in the same $W$-orbit. We define the set of \emph{blocks} as the set of cosets
\[\uW{\scrL'}{\scrL} \coloneq \W{\scrL'}{\scrL} / W_{\scrL}^{\circ}.\]
By Lemma \ref{lem:endo_subgp_is_normal}, the set of blocks can also be defined by $\uW{\scrL'}{\scrL} = W_{\scrL'}^{\circ} \backslash \W{\scrL'}{\scrL}$.
Each block $\beta \in \uW{\scrL'}{\scrL}$ inherits a partial order $\leq$ from the Bruhat order in $W$. 

Let $\scrL, \scrL', \scrL'' \in \Ch^{\circ} (T,\k)$ be in the same $W$-orbit. 
Let $\beta \in \uW{\scrL'}{\scrL}$ and $\gamma \in \uW{\scrL''}{\scrL'}$.
The set $\gamma \cdot \beta \coloneq \{xy \mid x \in \gamma, y \in \beta\}$ is equal to $W_{\scrL''}^{\circ} xy = x W_{\scrL'}^{\circ} y = xy W_{\scrL}^{\circ}$ (for any $x \in \beta$ and $y \in \gamma$).
This makes $\gamma \cdot \beta$ an element of $\W{\scrL''}{\scrL}$. Moreover, there is a map
\[(-) \cdot (-) :  \uW{\scrL''}{\scrL'} \times \uW{\scrL'}{\scrL} \to  \uW{\scrL''}{\scrL}.\]
This map is associative in the obvious sense.

The following is an analogue of \cite[Lemma 4.2]{LY} and \cite[Corollary 4.3]{LY}. However, our proofs differ slightly. 
\begin{lemma}\label{lem:min_elts_in_blocks}
Let $\scrL, \scrL', \scrL'' \in \Ch^{\circ} (T,\k)$ be in the same $W$-orbit.
Let $\beta \in \uW{\scrL'}{\scrL}$, $\gamma \in \uW{\scrL''}{\scrL'}$. 
\begin{enumerate}
    \item There exists a unique minimal length (in $W$) element $w^{\beta} \in \beta$ such that $w^{\beta} \Phi_{\re, \scrL}^+ \subseteq \Phi^+$.
    \item The minimal elements in each block satisfy $w^{\gamma} w^{\beta} = w^{\gamma \beta}$.
\end{enumerate}
\end{lemma}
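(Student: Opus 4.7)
The plan is to imitate the standard theory of minimal coset representatives for reflection subgroups of Coxeter groups, adapted to the Kac-Moody setting, with $\Phi_{\re, \scrL}^+$ playing the role of the positive roots of $W_{\scrL}^\circ$.

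For part (1), I would argue existence, uniqueness, and minimality in succession. For existence, starting from any $w \in \beta$, if some $\alpha \in \Phi_{\re, \scrL}^+$ satisfies $w\alpha < 0$, then $r_\alpha \in W_{\scrL}^\circ$, so $wr_\alpha \in \beta$, and the standard length formula in a Kac-Moody Weyl group gives $\ell(wr_\alpha) < \ell(w)$; iterating produces in finitely many steps an element of $\beta$ mapping $\Phi_{\re, \scrL}^+$ into $\Phi^+$. For uniqueness, if $w, w' \in \beta$ both enjoy the positivity property, write $w' = wx$ with $x \in W_{\scrL}^\circ$; if $x \neq e$, Lemma \ref{lem:endosimple_and_dyer}(3) supplies some $\alpha \in \Phi_{\re, \scrL}^+$ with $x\alpha \in \Phi_{\re, \scrL}^-$ (using that $W_{\scrL}^\circ$ preserves $\Phi_{\re, \scrL}$, which follows from Lemma \ref{lem:endo_subgp_is_normal} applied to $\scrL = \scrL'$), whence $-x\alpha \in \Phi_{\re, \scrL}^+$, so $w(-x\alpha) > 0$, giving $w'\alpha = w(x\alpha) < 0$ and contradicting positivity of $w'$. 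Minimality of $w^\beta$ in $\beta$ then follows because any $w \in \beta$ distinct from $w^\beta$ must, by uniqueness, fail the positivity property, and so admits the strict length reduction of the existence step.

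For part (2), the key observation is that any $w \in \W{\scrL'}{\scrL}$ carries $\Phi_{\re, \scrL}$ bijectively onto $\Phi_{\re, \scrL'}$, via a direct check: since $w \cdot \scrL \cong \scrL'$ means $\scrL \cong w^* \scrL'$, for $\alpha^\vee \in \Phi_{\re, \scrL}^\vee$ one has $((w\alpha)^\vee)^* \scrL' \cong (\alpha^\vee)^* w^* \scrL' \cong (\alpha^\vee)^* \scrL \cong \uk_{\G_m}$. Combined with $w^\beta \Phi_{\re, \scrL}^+ \subseteq \Phi^+$ from part (1), this forces $w^\beta \Phi_{\re, \scrL}^+ \subseteq \Phi_{\re, \scrL'}^+$, and then $w^\gamma w^\beta \Phi_{\re, \scrL}^+ \subseteq w^\gamma \Phi_{\re, \scrL'}^+ \subseteq \Phi^+$. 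Since $w^\gamma w^\beta$ lies in $\gamma \cdot \beta$ by construction, the uniqueness portion of part (1) identifies it with $w^{\gamma\beta}$.

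The main obstacle is the uniqueness step of part (1): everything else is either a standard application of the length formula or a formal chase through definitions, but uniqueness genuinely requires both Lemma \ref{lem:endosimple_and_dyer}(3) and the invariance of $\Phi_{\re, \scrL}$ under $W_{\scrL}^\circ$, so that $x\alpha$ being negative is upgraded to $-x\alpha \in \Phi_{\re, \scrL}^+$, which is what makes the contradiction work.
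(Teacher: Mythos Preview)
Your proof is correct and takes essentially the same approach as the paper: both rely on the characterization of $w^\beta$ via the positivity condition $w^\beta \Phi_{\re,\scrL}^+ \subseteq \Phi^+$, the length-formula fact that $w\alpha < 0$ forces $\ell(wr_\alpha) < \ell(w)$, and the observation (used implicitly in the paper, explicitly by you) that $W_\scrL^\circ$ stabilizes $\Phi_{\re,\scrL}$ so that a negative root produced by $x \neq e$ actually lies in $\Phi_{\re,\scrL}^-$. The only organizational difference is that the paper begins by picking a minimal-length element and deduces positivity from minimality, whereas you descend to an element with positivity and then deduce minimality from uniqueness; these are two orderings of the same argument. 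Your treatment of part (2) spells out precisely the argument the paper defers to \cite[Corollary 4.3]{LY}. One small quibble: you cite Lemma~\ref{lem:endo_subgp_is_normal} for the stability of $\Phi_{\re,\scrL}$ under $W_\scrL^\circ$, but that lemma's \emph{statement} is about normality; what you are really using is the computation in its proof (or, equivalently, the direct check you give in part (2) applied with $\scrL'=\scrL$).
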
 
\begin{proof}
    We will just prove the first statement. The second statement follows the criterion for $w^{\beta}$ given in the first statement (cf., \cite[Corollary 4.3]{LY}).
    Let $w^{\beta}$ be of minimal length in $\beta$.
    Let $r = r_{\alpha}$ for $\alpha \in \Phi_{\re, \scrL}^+$.
    By assumption, $\ell (wr) > \ell (w)$. As a result, $w \cdot \alpha \in \Phi^+$ for all $\alpha \in \Phi_{\re, \scrL}^+$.

    Suppose $v \in W_{\scrL}^{\circ}$ with $v \neq e$.
    Since $\ell_{\scrL} (v) > 0$, there must exist some $\alpha \in \Phi_{\re, \scrL}^+$ such that $v \cdot \alpha \in \Phi^{-}$.
    Therefore, $w^{\beta} v \cdot \alpha \in \Phi^{-}$, so the only minimal length element of $\beta$ sending $\Phi_{\re, \scrL}^+$ to $\Phi_{\re, \scrL'}^+$ is $w^{\beta}$. 
\end{proof}

\begin{lemma}\label{lem:beta_conj_of_endo_weyl_gps}
Let $\beta \in \uW{\scrL'}{\scrL}$. Then the map $w \mapsto w^{\beta} w w^{\beta, -1}$ gives an isomorphism of Coxeter groups $W_{\scrL}^{\circ} \stackrel{\sim}{\to} W_{\scrL'}^{\circ}$.
\end{lemma}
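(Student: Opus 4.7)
The plan is to verify three things in order: (i) conjugation by $w^\beta$ carries $W_\scrL^\circ$ into $W_{\scrL'}^\circ$, (ii) it is a bijection, and (iii) it sends the endosimple generators $S_\scrL^\circ$ to the endosimple generators $S_{\scrL'}^\circ$.

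For (i), I would unwind the definitions at the level of coroots. Since $W$ acts on $T$ and hence on $\Ch(T,\k)$, the defining isomorphism $(w^\beta)^{-1,*}\scrL \cong \scrL'$ gives $(w^\beta)^*\scrL' \cong \scrL$. For any $\alpha^\vee \in \Phi_{\re,\scrL}^\vee$, the coroot $w^\beta \alpha^\vee$ (viewed as $w^\beta \circ \alpha^\vee: \G_m \to T$) then satisfies
\[(w^\beta \alpha^\vee)^* \scrL' \cong (\alpha^\vee)^*(w^\beta)^*\scrL' \cong (\alpha^\vee)^*\scrL \cong \uk_{\G_m},\]
so $w^\beta \alpha^\vee \in \Phi_{\re, \scrL'}^\vee$. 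Because $w^\beta r_\alpha w^{\beta,-1} = r_{w^\beta \alpha}$, conjugation sends the generating reflections of $W_\scrL^\circ$ into $W_{\scrL'}^\circ$, yielding a group homomorphism.

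For (ii), I would use Lemma \ref{lem:min_elts_in_blocks}(2) applied to $\gamma = \beta^{-1}$: the neutral block has minimal element $e$, so $w^{\beta^{-1}} w^\beta = e$, forcing $w^{\beta^{-1}} = w^{\beta,-1}$. Symmetry then produces the inverse homomorphism $W_{\scrL'}^\circ \to W_\scrL^\circ$ given by conjugation by $w^{\beta,-1} = w^{\beta^{-1}}$.

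The main substance is (iii), which uses the minimality property of $w^\beta$ crucially. I would first observe that $w^\beta$ restricts to a bijection $\Phi_{\re,\scrL}^+ \stackrel{\sim}{\to} \Phi_{\re,\scrL'}^+$: it maps $\Phi_{\re,\scrL}$ into $\Phi_{\re,\scrL'}$ by (i), and the condition $w^\beta \Phi_{\re,\scrL}^+ \subseteq \Phi^+$ from Lemma \ref{lem:min_elts_in_blocks}(1), together with the analogous statement for $w^{\beta,-1} = w^{\beta^{-1}}$, forces $w^\beta \Phi_{\re,\scrL}^+ = \Phi_{\re,\scrL'}^+$ and hence $w^\beta \Phi_{\re,\scrL}^- = \Phi_{\re,\scrL'}^-$. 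Given $\alpha \in \Phi_{\re,\scrL}^+$ with $r_\alpha \in S_\scrL^\circ$, I apply the criterion of Lemma \ref{lem:endosimple_and_dyer}(2): using $r_{w^\beta \alpha} = w^\beta r_\alpha w^{\beta,-1}$ and the three bijections just established,
\[ r_{w^\beta \alpha}\, \Phi_{\re, \scrL'}^+ \cap \Phi_{\re, \scrL'}^- = w^\beta\bigl(r_\alpha \Phi_{\re, \scrL}^+ \cap \Phi_{\re, \scrL}^-\bigr) = w^\beta\{-\alpha\} = \{-w^\beta \alpha\}, \]
so $r_{w^\beta\alpha} \in S_{\scrL'}^\circ$. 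The reverse inclusion follows by the same argument with $w^{\beta,-1}$, completing the isomorphism of Coxeter systems. The main obstacle is ensuring the bijectivity of $w^\beta$ on positive real $\scrL$-roots; everything else is formal bookkeeping once this is in place.
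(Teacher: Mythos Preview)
Your proof is correct and follows essentially the same line as the paper's. Both arguments hinge on the fact that $w^\beta$ restricts to a bijection $\Phi_{\re,\scrL}^+ \stackrel{\sim}{\to} \Phi_{\re,\scrL'}^+$, which you derive just as the paper does from Lemma~\ref{lem:min_elts_in_blocks}. The only cosmetic difference is in step~(iii): the paper packages the conclusion by noting that conjugation preserves the endoscopic length $\ell_\scrL$ (so length-one reflections go to length-one reflections), whereas you invoke the root-theoretic criterion of Lemma~\ref{lem:endosimple_and_dyer}(2) directly. These are equivalent routes through the same underlying bijection on positive roots, and neither buys anything the other does not.
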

\begin{proof}
    It follows from Lemma \ref{lem:min_elts_in_blocks} that $\ell_{\scrL} (w) = \ell_{\scrL'} (w^{\beta} w w^{\beta, -1})$ for all $w \in W_{\scrL}^{\circ}$.
    Moreover, if $\alpha \in \Phi_{\re, \scrL}^+$, then $w^{\beta} r_\alpha w^{\beta, -1} = r_{w^{\beta} \alpha}$ and $w^{\beta} \alpha \in \Phi_{\re, \scrL'}^+$.
    As a result, conjugation by $w^{\beta}$ induces a bijection between $S_{\scrL}^{\circ}$ and $S_{\scrL'}^{\circ}$.
\end{proof}

Let $\beta \in \uW{\scrL'}{\scrL}$. 
We can define a length function $\ell_{\beta} : \beta \to \Z_{\geq 0}$ as follows.
For each $w \in \beta$, there exists a unique $v \in W_{\scrL}^{\circ}$ such that $w = w^{\beta} v$. We then define $\ell_{\beta} (w) = \ell_{\scrL} (v)$.
By Lemma \ref{lem:min_elts_in_blocks}, the only element of $\beta$ with length 0 is $w^{\beta}$.
Similarly, we can define a partial order on $\beta$, denoted $\leq_{\beta}$. For $w.w' \in \beta$, let $v,v' \in W_{\scrL}^{\circ}$ such that $w = w^{\beta} v$ and $w' = w^{\beta} v'$.
We then define $w \leq_{\beta} w'$ if and only if $v \leq_{\scrL} v'$.

The following lemmas are a summary of results found in \cite[\S4]{LY}. Their proofs can follow from those in \emph{loc. cit.} after obvious modifications are made. 

\begin{lemma}\label{lem:simple_reflns_are_minimal}
Let $s \in S$ be a simple reflection such that $s \scrL \neq \scrL$. Then $s$ is the minimal element in the block $s W_{\scrL}^{\circ}$.
\end{lemma}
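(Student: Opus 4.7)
The plan is to invoke the characterization of $w^{\beta}$ provided by Lemma \ref{lem:min_elts_in_blocks}(1): $w^{\beta}$ is the unique element of the block $\beta$ whose action sends $\Phi_{\re, \scrL}^+$ into $\Phi^+$. Specializing to $\beta = sW_{\scrL}^{\circ}$, it is enough to verify that $s$ itself enjoys this property, since then $s$ must coincide with $w^{\beta}$ and therefore be minimal in $\beta$.

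First, I would note that $s = s \cdot e \in sW_{\scrL}^{\circ} = \beta$, so $s$ is at least an element of $\beta$. Next, I would recall the standard fact that the only positive root negated by the simple reflection $s$ is $\alpha_s$ itself. Consequently, checking $s \Phi_{\re, \scrL}^+ \subseteq \Phi^+$ reduces to the single assertion $\alpha_s \notin \Phi_{\re, \scrL}$.

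The main (but routine) obstacle is this final reduction, which hinges on the compatibility between the defining condition of $\Phi_{\re, \scrL}$ and the stabilizer $W_{\scrL}$. Suppose toward a contradiction that $\alpha_s \in \Phi_{\re, \scrL}$. Then $(\alpha_s^\vee)^* \scrL \cong \uk_{\G_m}$ by definition, so $s = r_{\alpha_s}$ is one of the generators of $W_{\scrL}^{\circ}$. Since the triviality of $(\alpha^\vee)^* \scrL$ forces the reflection $r_{\alpha}$ to fix $\scrL$ up to isomorphism, each generator of $W_{\scrL}^{\circ}$ lies in $W_{\scrL}$; hence $W_{\scrL}^{\circ} \subseteq W_{\scrL}$ and in particular $s \cdot \scrL \cong \scrL$, contradicting the standing hypothesis $s \scrL \neq \scrL$. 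Therefore $\alpha_s \notin \Phi_{\re, \scrL}$, and combining this with the preceding paragraphs yields $s = w^{\beta}$, as desired.
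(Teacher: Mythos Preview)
Your proof is correct. The paper omits the proof of this lemma, deferring to \cite[\S4]{LY}, and your argument via the characterization in Lemma~\ref{lem:min_elts_in_blocks}(1) is the natural one. One minor remark: the literal statement of Lemma~\ref{lem:min_elts_in_blocks}(1) says $w^{\beta}$ is the unique \emph{minimal length} element with $w^{\beta}\Phi_{\re,\scrL}^+ \subseteq \Phi^+$, not the unique element with that property; however, the proof of that lemma does establish the stronger uniqueness you invoke. If you prefer to stay closer to the stated version, note that your argument already shows $s \notin W_{\scrL}^{\circ}$ (hence $e \notin \beta$), so $\ell(s)=1$ is the minimal length in $\beta$ and uniqueness of the minimal length element gives $s = w^{\beta}$ directly, without needing the root condition at all.
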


\begin{lemma}\label{lem:block_translation_and_order}
    Let $\beta \in \uW{\scrL'}{\scrL}$.
    \begin{enumerate}
        \item If $\gamma \in \uW{\scrL''}{\scrL'}$, the map $(\beta, \leq_{\beta}) \to (\gamma \beta, \leq_{\gamma \beta})$ given by $w \mapsto w^{\gamma} w$ is an isomorphism of posets.
        \item If $\delta \in \uW{\scrL}{\scrL''}$, then the map $(\beta, \leq_{\beta}) \to ( \beta \delta, \leq_{\beta \delta})$ given by $w \mapsto w w^{\delta}$ is an isomorphism of posets.
        \item For $w,w' \in \beta$, if $w \leq_{\beta} w'$, then $w \leq w'$.
    \end{enumerate}
\end{lemma}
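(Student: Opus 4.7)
My plan is to reduce each assertion to a statement about $W_{\scrL}^{\circ}$ via the natural parametrization $\beta \leftrightarrow W_{\scrL}^{\circ}$, $w = w^{\beta}v \leftrightarrow v$, and analogously for $\gamma\beta$ and $\beta\delta$. By the very definition of $\leq_{\beta}$, the partial order on $\beta$ is transported from $\leq_{\scrL}$ on $W_{\scrL}^{\circ}$, so parts (1) and (2) reduce to checking that the given maps intertwine these natural identifications. Part (3) is more delicate: it requires comparing $\leq_{\scrL}$ with the ambient Bruhat order on $W$ after left multiplication by $w^{\beta}$.

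For (1), Lemma \ref{lem:min_elts_in_blocks}(2) gives $w^{\gamma}\cdot w^{\beta}v = w^{\gamma\beta}v$, so under the two parametrizations the map $w \mapsto w^{\gamma}w$ is the identity on $W_{\scrL}^{\circ}$, hence a poset isomorphism. For (2), the same lemma combined with the rearrangement
\[w w^{\delta} = (w^{\beta} w^{\delta})(w^{\delta,-1} v w^{\delta}) = w^{\beta\delta}(w^{\delta,-1} v w^{\delta})\]
reduces the claim to showing that $v \mapsto w^{\delta,-1} v w^{\delta}$ is a Coxeter-group isomorphism $W_{\scrL}^{\circ} \to W_{\scrL''}^{\circ}$; this is the inverse of the isomorphism provided by Lemma \ref{lem:beta_conj_of_endo_weyl_gps} applied to $\delta \in \uW{\scrL}{\scrL''}$, and any Coxeter-group isomorphism preserves Bruhat order.

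For (3), given $w = w^{\beta}v \leq_{\beta} w^{\beta}v' = w'$ (so $v \leq_{\scrL} v'$), I would choose a chain of $\leq_{\scrL}$-covers $v = v_0 \lessdot_{\scrL} v_1 \lessdot_{\scrL} \cdots \lessdot_{\scrL} v_k = v'$ with $v_{i+1} = v_i t_i$ for reflections $t_i \in R_{\scrL}^{\circ}$. By Lemma \ref{lem:endosimple_and_dyer}(3) combined with standard Coxeter theory, the cover $\ell_{\scrL}(v_i t_i) > \ell_{\scrL}(v_i)$ translates to $v_i(\alpha_{t_i}) \in \Phi_{\re,\scrL}^+$, using that $v_i \in W_{\scrL}^{\circ} \subseteq W_{\scrL}$ preserves $\Phi_{\re,\scrL}$ (since $v_i(\scrL) \cong \scrL$ forces $\alpha^{\vee}$-pullback of $\scrL$ to be trivial also after applying $v_i$). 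Lemma \ref{lem:min_elts_in_blocks}(1) then sends this root into $\Phi^+$, so $(w^{\beta} v_i)(\alpha_{t_i}) > 0$, and the reflection criterion for the ambient Bruhat order yields $w^{\beta} v_i < w^{\beta}v_i t_i = w^{\beta} v_{i+1}$ in $W$. Concatenating gives $w \leq w'$.

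The main obstacle is exactly this translation in (3) between the endoscopic and ambient Bruhat orders: one must ensure that each endoscopic covering step strictly increases the ambient $W$-length after left multiplication by $w^{\beta}$. This is precisely where the defining property of $w^{\beta}$ from Lemma \ref{lem:min_elts_in_blocks}(1), sending $\Phi_{\re,\scrL}^+$ into $\Phi^+$, is indispensable; without it, a chain of covers in $W_{\scrL}^{\circ}$ could fail to produce an increasing chain in $W$.
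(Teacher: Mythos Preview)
Your argument is correct. The paper does not give its own proof here, deferring instead to \cite[\S4]{LY} with the remark that the proofs carry over after obvious modifications; your approach---reducing (1) and (2) to the bijections $\beta \leftrightarrow W_{\scrL}^{\circ}$ via Lemma~\ref{lem:min_elts_in_blocks}(2) and Lemma~\ref{lem:beta_conj_of_endo_weyl_gps}, and handling (3) by lifting a chain of $\leq_{\scrL}$-covers through the positive-root criterion together with $w^{\beta}\Phi_{\re,\scrL}^{+}\subseteq\Phi^{+}$---is precisely the standard route and supplies those omitted details.
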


\begin{lemma}\label{lem:length_for_blocks}
  Let $\beta \in \uW{\scrL'}{\scrL}$, $w \in \beta$, and $\uw = (s_1, \ldots, s_k)$ be a reduced expression of $w$. 
  Let $\scrL_{k+1} \coloneq \scrL$ and $\scrL_i = s_i \ldots s_k \scrL$ for $1 \leq i \leq k$.
  Then
  \begin{enumerate}
    \item $\ell_{\beta} (w) = \# \{ \alpha \in \Phi_{\re, \scrL}^+ \mid w \alpha < 0 \}$.
    \item For $\gamma \in \uW{\scrL''}{\scrL'}$, we have
        \[\ell_{\gamma \beta} (w^{\gamma} w) = \ell_{\beta} (w).\]
    \item We have $\ell_{\beta} (w) = \# \{ 1 \leq i \leq k \mid \scrL_i = \scrL_{i+1} \}$.
  \end{enumerate}
\end{lemma}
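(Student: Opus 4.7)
The plan is to prove the three parts in order, with part (1) doing the main work, part (2) following by a near-triviality, and part (3) reducing to part (1) by a root-theoretic identification.

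For part (1), write $w = w^{\beta} v$ with $v \in W_{\scrL}^{\circ}$, so by definition $\ell_{\beta}(w) = \ell_{\scrL}(v)$. By Lemma~\ref{lem:endosimple_and_dyer}(3) applied in the Coxeter system $(W_{\scrL}^{\circ}, S_{\scrL}^{\circ})$, one has $\ell_{\scrL}(v) = \#\{\alpha \in \Phi_{\re, \scrL}^+ \mid v\alpha < 0\}$, where ``$< 0$'' can be taken to mean negative in $\Phi_{\re,\scrL}$. The key point is then that $w^{\beta}$ preserves the positivity of $\Phi_{\re,\scrL}$-roots: by Lemma~\ref{lem:min_elts_in_blocks}(1), $w^{\beta}$ sends $\Phi_{\re, \scrL}^+$ into $\Phi^+$. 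So for any $\alpha \in \Phi_{\re,\scrL}^+$, writing $w\alpha = w^{\beta}(v\alpha)$, we see $w\alpha < 0$ precisely when $v\alpha \in \Phi_{\re,\scrL}^-$, and the counts agree.

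Part (2) is immediate from Lemma~\ref{lem:min_elts_in_blocks}(2): if $w = w^{\beta}v$ with $v \in W_{\scrL}^{\circ}$, then $w^{\gamma}w = (w^{\gamma}w^{\beta})v = w^{\gamma\beta}v$, which exhibits the same $v$ as the $W_{\scrL}^{\circ}$-component of $w^{\gamma}w$ with respect to $\gamma\beta$. Hence both lengths equal $\ell_{\scrL}(v)$.

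For part (3), I use the standard enumeration of inversions: with $\alpha_i \coloneq s_k s_{k-1} \cdots s_{i+1}\cdot \alpha_{s_i}$, one has $\{\alpha \in \Phi^+ \mid w\alpha < 0\} = \{\alpha_1, \ldots, \alpha_k\}$, and these are distinct. By (1), $\ell_{\beta}(w)$ counts those $i$ with $\alpha_i \in \Phi_{\re,\scrL}^+$. The membership of $\alpha_i$ in $\Phi_{\re,\scrL}$ is equivalent to $s_{\alpha_i}$ fixing $\scrL$, since $\alpha \in \Phi_{\re,\scrL}$ iff $(\alpha^{\vee})^*\scrL \cong \uk_{\G_m}$ iff the reflection in $\alpha$ acts trivially on $\Ch(T,\k)$ at $\scrL$. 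Now $s_{\alpha_i} = (s_k \cdots s_{i+1})\, s_i\, (s_{i+1} \cdots s_k)$, so
\[ s_{\alpha_i} \scrL = (s_k \cdots s_{i+1})\, s_i \cdot \scrL_{i+1}. \]
Noting $s_i \cdot \scrL_{i+1} = \scrL_i$, we see that $s_{\alpha_i}\scrL = \scrL$ is equivalent to $\scrL_i = \scrL_{i+1}$ (since $s_k \cdots s_{i+1}$ is invertible). Combining yields the identification of the two index sets, proving (3).

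The main obstacle is the bookkeeping in part (3): making sure that the characterization $\alpha \in \Phi_{\re,\scrL} \iff s_{\alpha}$ fixes $\scrL$ is applied correctly, and that the conjugate reflection $s_{\alpha_i}$ is unpacked through the reduced expression so that it matches the condition $\scrL_i = \scrL_{i+1}$. The rest is formal manipulation.
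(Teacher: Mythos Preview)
Parts (1) and (2) are correct and argued cleanly; since the paper itself gives no proof (it simply cites \cite[\S4]{LY}), yours is the substantive one here.

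There is, however, a genuine gap in part (3). You assert that $(\alpha^{\vee})^*\scrL \cong \uk_{\G_m}$ is equivalent to $s_\alpha\scrL = \scrL$. Only the forward implication holds in general: the converse needs the pairing $\langle \alpha,-\rangle : X_*(T)\to\Z$ to be surjective. For $G=\SL_2$ with $\scrL$ given by $\chi(\alpha^{\vee})=-1$, the reflection $s$ acts on $X_*(T)=\Z$ by negation, so $s\scrL=\scrL$, yet $(\alpha^{\vee})^*\scrL\not\cong\uk$. Here $W_\scrL=\{e,s\}$ but $W_\scrL^\circ=\{e\}$; taking $w=s$ in the block $\beta=\{s\}$ gives $\ell_\beta(s)=0$ while $\#\{i:\scrL_i=\scrL_{i+1}\}=1$, so part (3) \emph{as stated} is false in this example.

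What your argument does correctly establish is
\[
\ell_\beta(w)=\#\{i:\alpha_i\in\Phi_{\re,\scrL}\}=\#\{i:s_i\in W_{\scrL_{i+1}}^\circ\},
\]
and this ``$s_i\in W_{\scrL_{i+1}}^\circ$'' dichotomy is exactly the one used throughout the rest of the paper (e.g.\ Corollary~\ref{cor:conv_with_ICs_bad_s}, Lemma~\ref{lem:conv_ICs_with_stds}). The two conditions agree precisely when every simple root is primitive in $X^*(T)$; absent that hypothesis, the step you flagged cannot be completed, and the issue lies with the lemma's formulation rather than with your method.
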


\subsubsection{Multiplicative Local Systems on Reductive Groups}

We need to impose some constraints on the allowable local systems in $\Ch (T, \k)$ to ensure that the collection of roots $\Phi_{\re, \scrL}$ is invariant under extension of scalars.

Each $\scrL \in \Ch (T, \k)$ defines a group homomorphism $\rho_{\scrL} : \bfY = \pi_1 (T) \to \k^{\times}$. 
Given a real root $\alpha \in \Phi_{\re}$, we define $\mon_{\alpha} (\scrL) \coloneq \rho_{\scrL} (\alpha^{\vee}) \in \k^{\times}$.

\begin{definition}
  Let $\scrL \in \Ch (T, \k)$. 
  \begin{enumerate}
    \item We say that $\scrL$ is \emph{torsion} if there exists some $n \in \Z_{> 0}$ which is invertible in $\k$ such that $\scrL^{\otimes n} \cong \uk_T$.
    \item We say that $\scrL$ is \emph{good} if for all real roots $\alpha \in \Phi_{\re}$ one has that $\mon_{\alpha} (\scrL) \in (\k^{\times} + 1) \cup \{1\}$. 
  \end{enumerate}
\end{definition}
Denote the subsets of $\Ch (T, \k)$ consisting of torsion (resp. good) local systems by $\Ch^{\mu} (T, \k)$ (resp. $\Ch^{\circ} (T, \k)$).  

It is easy to check that the class of torsion (resp. good) local systems in $\Ch (T,\k)$ is closed under the $W$-action and duals. 
Most of the results in this paper hold provided we restrict to good multiplicative local systems.
We only will require these local systems to be torsion in \S\S\ref{subsec:whit}-\ref{subsec:all_block_endo}.

\begin{lemma}\label{lem:torsion_implies_good}
  Let $\scrL \in \Ch (T, \k)$. If $\scrL$ is torsion, then $\scrL$ is good.
\end{lemma}
\begin{proof}
  Let $\alpha \in \Phi_{\re, \scrL}$ and write $\zeta = \mon_{\alpha} (\scrL)$. 
  Take $n \in \Z_{> 0}$ such that $\zeta$ is an $n$-th root of unity.
  Assume that $\zeta \neq 1$.
  Consider the polynomial $f(x) = \frac{x^n - 1}{x-1}$ in $k [x]$. This polynomial can be expressed by
  \[f (x) = x^{n-1} + x^{n-2} + \ldots + x + 1.\]
  Alternatively, we can factor $f(x)$ as 
  \[f (x) = \prod_{i=1}^{n-1} (x - \zeta^i).\]
  By evaluating $f(x)$ at $x=1$, we see that
  \[n = \prod_{i=1}^{n-1} (1 - \zeta^i).\]
  As a result, we must have that $1 - \zeta$ is a unit in $\k$.
\end{proof}

\begin{remark}\label{rem:good}\quad
    \begin{enumerate}
        \item When $\mon_{\alpha} (\scrL) = 1$, we have that $(\alpha^{\vee})^* (\scrL) \cong \uk_{\G_m}$.
        \item Suppose that $(\alpha^{\vee})^* (\scrL) \not\cong \uk_{\G_m}$. Let $j : \G_m \hookrightarrow \A^1$.
            We can compute $(j_* (\alpha^{\vee})^* \scrL)_e$ as the two term chain complex
            \[\begin{tikzcd}
        \k \arrow[rr, "\mon_{\alpha} (\scrL) -1"] && \k
        \end{tikzcd}\]
        concentrated in degrees 0 and 1. In particular, we see that $(\alpha^{\vee})^* (\scrL)$ is clean on $\A^1$ if and only if $\mon_{\alpha} (\scrL) -1 \in \k^{\times}$.
        \item If $\k$ is a field, then every character sheaf is good. If $\k$ is an algebraic field extension of $\F_p$ or a finite extension of $\Z_p$, then every character sheaf is torsion.
    \end{enumerate}
\end{remark}

\begin{lemma}\label{lem:eos_and_root_systems}
  Let $\scrL, \scrL' \in \Ch^{\circ} (T, \k)$ and let $\varphi : \k \to \k'$ be a ring homomorphism between noetherian domains.
  \begin{enumerate}
    \item $\k' (\scrL) \in \Ch (T, \k')$ is good;
    \item $\Phi_{\re, \scrL} = \Phi_{\re, \k' (\scrL)}$;
    \item $W_{\scrL}^{\circ} = W_{\k (\scrL)}^{\circ}$.
  \end{enumerate}
\end{lemma}
\begin{proof}
  (1): Let $\alpha^{\vee} \in \Phi_{\re}^{\vee}$. Then it follows from definitions that 
  \begin{equation}\label{eq:eos_and_root_systems_1}
     \mon_{\alpha} (\k' (\scrL)) = \varphi (\mon_{\alpha} (\scrL)).
  \end{equation}
  The claim then follows from $\varphi$ taking $\k^{\times}$ to $(\k')^{\times}$.

  (2): Let $\alpha \in \Phi_{\re}$. If $\alpha \in \Phi_{\re, \scrL}$, then $(\alpha^{\vee})^* \k' (\scrL) \cong \k' \left( (\alpha^{\vee})^* \scrL \right) \cong \uk_{\G_m}'$. In other words, $\Phi_{\re, \scrL} \subseteq \Phi_{\re, \k' (\scrL)}$.
  Now suppose that $\alpha \notin \Phi_{\re, \scrL}$. We then must have that $\mon_{\alpha} (\scrL) - 1 \in \k^{\times}$. In particular, from (\ref{eq:eos_and_root_systems_1}), we have that $\mon_{\alpha} (\k' (\scrL)) - 1 \in \k^{\times}$.
  As a consequence $\mon_{\alpha} (\k' (\scrL)) \neq 1$ which forces $(\alpha^{\vee})^* \k' (\scrL)$ to be non-constant.

  (3): This is immediate from definitions.
\end{proof}

\subsection{Hecke Categories}\label{subsec:hecke}

\subsubsection{Definition and Basic Structure}

In this section, we recall the construction of the monodromic Hecke category and discuss some of its basic properties.
We regard $\eFl = \bigcup_{w \in W} \eFl_{\leq w}$ as an ind-scheme.
The algebraic group of pro-finite type $T \times B$ acts on $\eFl$ via $(t,b) \cdot x = txb^{-1}$.
This action is compatible in the sense of \S\ref{subsec:twisted_eq_sh_on_ind_stacks}. In more detail, for each $w \in W$, there exists a normal, finite codimension subgroup $J_w$ of $B$ such that $J_w \subseteq U$ and $J_w$ acts trivially on $\eFl_{\leq w}$. Without loss of generality, by taking common intersections, we may assume that $J_x \subset J_w$ for $x \leq w$.

Let $\scrL, \scrL' \in \Ch (T, \k)$. We can abuse notation and regard $\scrL$ as a multiplicative local system on $B$ via the pullback along the projection map $B \to T$.
 We can then define the \emph{monodromic Hecke category} as 
 \[\D{\scrL'}{\scrL} (\k) \coloneq D_{\cons} (T \backslash_{\scrL'} \eFl /_{\scrL} B, \k).\]
 It will be useful to unpack this definition. We can define
\[\D{\scrL'}{\scrL} (\leq w, \k) = D_{\cons} (T \backslash_{\scrL'} (\eFl_{\leq w} / (U / J_w) ) /_{\scrL} T, \k)\]
and
\[\D{\scrL'}{\scrL} (w, \k) =  D_{\cons} (T \backslash_{\scrL'} (\eFl_{w} / (U / J_w) ) /_{\scrL} T, \k).\]
Then the monodromic Hecke category is given as the limit of categories
\[\D{\scrL'}{\scrL} (\k) = \lim_{\stackrel{\longrightarrow}{w \in W}} \D{\scrL'}{\scrL} (\leq w, \k),\] 
where the transition maps are given by pushforwards of the closed embeddings $i_{x,y} : \eFl_{\leq x} \to \eFl_{\leq y}$.

When $G$ is a reductive group, then the monodromic Hecke category's definition can be simplified as follows:
\[\D{\scrL'}{\scrL} (\k) = D_{\cons} (T \backslash_{\scrL'} (\UGU) /_{\scrL} T, \k).\]

\begin{proposition}\label{prop:mhc_strat_and_parity}
  Fix $w \in W$ and define a subset
  \[\Gamma (w) = \{ (wt, t) \in T \times T \mid t \in T\}.\]
  We regard $\Gamma (w)$ as an algebraic group isomorphic to $T$.
  \begin{enumerate}
    \item If $\scrL' \neq w (\scrL)$, then $\D{\scrL'}{\scrL} (w, \k) = 0$. If $\scrL' = w (\scrL)$, then there is an equivalence of categories
      \[\D{w(\scrL)}{\scrL} (w, \k)\cong D_{\cons}( \Gamma (w) \backslash \{\dot{w}\}, \k)\]
      obtained by taking the stalk at $\dot{w}$ for a lift $\dot{w} \in G$ of $w$.
    \item The category $\D{\scrL'}{\scrL} (\k)$ satisfies the parity conditions.
  \end{enumerate}
\end{proposition}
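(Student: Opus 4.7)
The plan is to reduce both parts to a stack-theoretic computation on each Bruhat stratum $\eFl_w = U \backslash U \dot w B$.

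For part (1), I first determine the $T \times B$-equivariant structure of $\eFl_w$ (modulo the finite-codimension normal subgroup $J_w \subseteq U$). Writing $b = t_b u$ with $t_b \in T$ and $u \in U$, the condition that $(t, b)$ stabilizes $U \dot w$ forces $t = w(t_b)$, while the unipotent part of $B$ acts freely on the affine factor of the stratum. Hence the stabilizer in $T \times T$ of $\dot w$ is exactly $\Gamma(w)$, and after quotienting by the free $U$-action the stack $T \backslash \eFl_w / B$ identifies with $B\Gamma(w)$. Pulling the external twist $\scrL' \boxtimes \scrL^{-1}$ back along $\Gamma(w) \hookrightarrow T \times T$, $t \mapsto (w(t), t)$, produces the local system $(w^* \scrL') \otimes \scrL^{-1}$ on $\Gamma(w) \cong T$, which is trivial exactly when $\scrL' \cong w(\scrL)$. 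Invoking the general fact that for a connected group acting trivially on a point the $(\Gamma, \chi)$-twisted equivariant derived category vanishes unless $\chi$ is trivial, and otherwise equals $D_{\cons}(B\Gamma, \k)$ (part of the formalism of Appendix~\ref{apdx:A}), the dichotomy of (1) follows immediately.

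For part (2), I would reduce the parity conditions to computing $\Hom^\bullet$ between standard and costandard extensions of local systems on strata. By adjunction and the extension-by-zero property of $j_{x !}$, the group $\Hom^n(j_{x !} \scrK_x, j_{y *} \scrK_y)$ vanishes unless $x = y$, reducing the test to a computation inside $\D{\scrL'}{\scrL}(w, \k)$. By (1), either this category is zero (nothing to check) or it is equivalent to $D_{\cons}(BT, \k)$. Since $BT$ is simply connected, every finite-rank locally free local system there is a direct sum of shifts of $\uk$, so the parity test boils down to $\Hom^\bullet(\uk, \uk) \cong H^\bullet(BT, \k)$, a polynomial $\k$-algebra in $\dim T$ generators of degree $2$, concentrated in even degrees and free of finite rank in each degree.

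The main technical hurdle is the stabilizer-and-descent computation in part (1): confirming that the $U$-action descends correctly (using the $J_w$-approximation from the setup), carefully identifying how the external twist $\scrL' \boxtimes \scrL^{-1}$ pulls back along $\Gamma(w) \hookrightarrow T \times T$, and applying the vanishing criterion for twisted equivariant sheaves on a point with non-trivial multiplicative twist. Once this stack-theoretic identification is in hand, both halves of the proposition reduce to standard facts about equivariant cohomology of $BT$.
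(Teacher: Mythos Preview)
Your proposal is correct and matches the paper's approach: identify the stratum category with $D_{\cons}(\Gamma(w)\backslash\{\dot w\},\k)$ via the stabilizer computation and the restricted twist, deduce the vanishing/equivalence dichotomy, and then read off the parity conditions from $H^\bullet_T(\pt;\k)$ being a polynomial ring on degree-$2$ generators. Your detour in part~(2) through $\Hom^n(j_{x!}\scrK_x, j_{y*}\scrK_y)$ is unnecessary---the parity conditions~(\ref{eq:parity_conditions}) are already stratum-local---but harmless, since adjunction immediately collapses it to the single-stratum computation the paper invokes directly.
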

\begin{proof}
  \emph{(1):} Note that since $\eFl_w \cong \A^{\ell (w)} \times \dot{w} T$, requiring that a sheaf on $\eFl_w$ be $(T\times T, \scrL' \boxtimes
  \scrL^{-1})$-equivariant forces that $\scrL' = w\scrL$.
  If $\scrL' = w\scrL$, we can take the stalk at a lift $\dot{w}$ of $w$ to obtain a functor,
  \[i_{\dot{w}}^* : \D{w(\scrL)}{\scrL} (w, \k) \to D_{\cons} ((T \times T) {}_{(w \scrL \boxtimes \scrL^{-1})} \backslash \dot{w} T, \k) \cong D_{\cons} (\Gamma (w)
  \backslash \{ \dot{w} \}, \k)  .\]
    So $w \scrL \boxtimes \scrL^{-1}$ restricted to $\Gamma (w)$ is a
  constant sheaf. It is easy to check that since the cohomology of twisted equivariant constructible sheaves on $\eFl_w$ are local systems, that $i_{\dot{w}}^*$ gives an
  equivalence of triangulated categories.

  \emph{(2):} The parity conditions follow immediately from (1) along with the observation that $D_{\cons} ( T \backslash \pt, \k)$ satisfies the parity
  conditions since the equivariant cohomology $H_T^\bullet (\pt; \k) \cong H_{\Gamma (w)} (\{ \dot{w} \}; \k)$ is a free $\k$-module in even degrees and vanishes
  in odd degrees.
\end{proof}

The previous proposition gives us the freedom to use standard, costandards, IC-extensions, parity sheaves, and simple perverse sheaves in $\D{\scrL'}{\scrL} (\k)$.
The general theory of these is given in Appendix \ref{apdx:A}, and we will review it here.
For each $w \in \W{\scrL'}{\scrL}$, we define $\scrK_{\dot{w}}^{\scrL} \in \D{\scrL'}{\scrL} (w, \k)$ as the sheaf corresponding to the constant sheaf on $\{\dot{w}\}/\Gamma (w)$ under the
equivalence given in Proposition \ref{prop:mhc_strat_and_parity}.
Alternatively, under the isomorphism of schemes $\eFl_w \cong \A^{\ell (w)} \times \dot{w} T$, $\scrK_{\dot{w}}^{\scrL}$ is characterized as the unique sheaf (up to isomorphism) which satisfies $\For_{T \times T, \scrL' \boxtimes \scrL^{-1}} \scrK_{\dot{w}}^{\scrL} \cong \uk_{\A^{\ell (w)}} \boxtimes \scrL$.
A priori, $\scrK_{\dot{w}}^{\scrL}$ depends on the choice on the choice of lift $\dot{w}$. 
It is easy to show that the isomorphism class of $\scrK_{\dot{w}}^{\scrL}$ does not depend on the choice $\dot{w}$. 
As a result, we will simply write $\scrK_w^{\scrL}$ to refer to any object whose isomorphism class is $\scrK_{\dot{w}}^{\scrL}$. We warn that $\scrK_{w}^{\scrL}$ is only defined up to non-unique isomorphism-- there is no canonical choice of $\scrK_w^{\scrL}$ except when $w = e$.

We can then define the following sheaves in $\D{\scrL'}{\scrL} (\k)$
\[\Delta_w^{\scrL} \coloneq j_{w!} \scrK_w^{\scrL} [\ell (w)] \hspace{0.5cm}\text{and}\hspace{0.5cm} \nabla_w^{\scrL} \coloneq j_{w!} \scrK_w^{\scrL} [\ell (w)],\]
which are called the \emph{standard} and \emph{costandard} sheaves respectively.

Recall from Appendix \ref{apdx:A} that $\D{\scrL'}{\scrL} (\k)$ has a perverse $t$-structure. Explicitly a sheaf $\scrF \in \D{\scrL'}{\scrL} (\k)$ is perverse if $\For_{T \times T, \scrL' \boxtimes \scrL^{-1}} \scrF [2\dim T]$ is a perverse on $\UGU$.
It is then easy to see that $\Delta_w^{\scrL}$ and $\nabla_w^{\scrL}$ are both perverse sheaves. We can define the IC-sheaves,
\[\IC_w^{\scrL} = \text{Im} (\Delta_w^{\scrL} \to \nabla_w^{\scrL}).\]

The tensor product for twisted equivariant sheaves defines a bifunctor
\[(-) \otimes^L (-) : \D{\scrL_4}{\scrL_3} (\k) \times \D{\scrL_2}{\scrL_1} (\k) \to \D{\scrL_4 \scrL_2}{\scrL_3 \scrL_1} (\k), \]
for $\scrL_1, \scrL_2, \scrL_3, \scrL_4 \in \Ch^{\circ} (T,\k)$ in the same $W$-orbit.
Similarly, the tensor product admits a right adjoint taking
\[\RHom (-, -): \D{\scrL_4}{\scrL_3}^{\op} (\k) \times \D{\scrL_2}{\scrL_1} \to \D{\scrL_4^{-1} \scrL_2}{ \scrL_3^{-1} \scrL_1} (\k),\]
for $\scrL_1, \scrL_2, \scrL_3, \scrL_4 \in \Ch^{\circ} (T, \k)$ in the same $W$-orbit.

\subsubsection{Parabolic Variants}

Let $J \subseteq I$ of finite type. Associated to such a $J$ is a standard parabolic subgroup $P_J$ containing $B$ with Levi decomposition $P_J = U^J L_J$ where $L_J$ is
a connected reductive group containing $T$.
Let $W_J$ be the subgroup of $W$ generated by $J$, which can be identified with the Weyl group of $L_J$. Let $w_J$ denote the element of $W_J$ of maximal length.
Let $W^J \subseteq W$ denote the minimal length representatives of the cosets in $W/W_J$.
The parabolic Bruhat decomposition states that the orbits of the right $P_J$-action on $\eFl$ are indexed by $W/W_J$.
For each $\overline{w} \in W/W_J$, we will write $\eFl_{\overline{w}}^J = U \backslash B \dot{w} P_J$ where $\dot{w} \in N_G(T)$ is a lift of a representative of $\overline{w}$. We will also write $j_{\overline{w}} : \eFl_{\overline{w}}^J \hookrightarrow \eFl$ for the inclusion map.
The orbit closures are given by unions of strata from the Bruhat order
\[ \overline{\eFl_{ \overline{w}}^J} = \bigcup_{\stackrel{\overline{x} \in W/W_J}{\overline{x} \leq \overline{w}}} \eFl_{\overline{x}}^J, \]
where the order on $W / W_J$ is given by the restricted Bruhat order from $W$. More precisely, the partial order on $W / W_J$ is given by $\overline{x} \leq \overline{y}$ if $x_{-} < y_{-}$ where $x_{-}$ (resp. $y_{-}$) is of minimal length in $\overline{x}$ (resp. $\overline{y}$).  

Let $\Ch_J^{\circ} (T, \k)$ denote the subset of $\Ch^{\circ} (T, \k)$ consisting of local systems $\scrL \in \Ch^{\circ} (T, \k)$ such that $s_j (\scrL) \cong \scrL$ for all $j \in J$.
As a result, the action of $W/W_J$ on $\Ch_J^{\circ} (T, \k)$ is well-defined. Precisely, if $\overline{w} \in W/W_J$ and $w \in W$ is a representative for the coset $\overline{w}$,
we can define $\overline{w} \cdot \scrL = w (\scrL)$ which is independent of the choice of representative.
If $\scrL, \scrL' \in \Ch_{J}^{\circ} (T, \k)$ are in the same $W$-orbit, we can define ${}_{\scrL'} (W/W_J)_{\scrL} = \{ \overline{w} \in W/W_J \mid \overline{w} (\scrL) = \scrL' \}$.

\begin{lemma}\label{lem:extending_character_sheaves_to_levis}
  Let $\scrL \in \Ch^{\circ} (T, \k)$ and $J \subset I$ of finite type.
  Then $\scrL$ can be extended to a multiplicative rank one local system $\scrL^J$ on $L_J$ if and only if $\scrL \in \Ch_J^{\circ} (T, \k)$.
\end{lemma}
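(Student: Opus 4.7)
The plan is to identify, for any connected complex algebraic group $H$, the set $\Ch(H, \k)$ with the group $\Hom(\pi_1^{\textnormal{top}}(H), \k^\times)$ of continuous characters of the topological fundamental group. Under this identification, $\Ch(T, \k) \cong \Hom(\Hom_{\Z}(\bfX, \Z), \k^\times)$, and the closed immersion $T \hookrightarrow L_J$ induces a surjection $\pi_1^{\textnormal{top}}(T) \twoheadrightarrow \pi_1^{\textnormal{top}}(L_J)$ whose kernel is the coroot sublattice $\Z \Phi_J^\vee$ (since $J$ is of finite type, $L_J$ is a finite-dimensional complex reductive group, and this is standard). Thus $\scrL$ extends to $L_J$ if and only if the associated character $\chi_{\scrL}: \Hom_{\Z}(\bfX, \Z) \to \k^\times$ vanishes on each simple coroot $\alpha_j^\vee$ for $j \in J$.

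For the forward direction, suppose $\scrL$ is the restriction to $T$ of some $\scrL^J \in \Ch(L_J, \k)$. Any multiplicative rank one local system on a connected group is invariant under all inner automorphisms: the assignment sending $g \in L_J$ to the pullback of $\scrL^J$ along conjugation by $g$ defines a continuous map from the connected space $L_J$ to the discrete set $\Ch(L_J, \k)$, so it is constant. Evaluating at a lift $\dot{s}_j \in N_{L_J}(T)$ of $s_j$ and restricting the resulting isomorphism to $T$ yields $s_j^* \scrL \cong \scrL$, placing $\scrL$ in $\Ch_J(T, \k)$.

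For the reverse direction, a direct computation using $s_j \cdot \lambda = \lambda - \alpha_j(\lambda) \alpha_j^\vee$ shows that $s_j^* \scrL \cong \scrL$ is equivalent to $\chi_{\scrL}(\alpha_j^\vee)^{\alpha_j(\lambda)} = 1$ for every $\lambda \in \Hom_{\Z}(\bfX, \Z)$. Invoking Demazure surjectivity for $\alpha_j$ (guaranteed by the standing assumption of Section \ref{sec:review_of_Kac_Moody} that $\k$ admits a ring morphism from $\Z'$), the pairing $\alpha_j : \Hom_{\Z}(\bfX, \Z) \to \Z$ is surjective, forcing $\chi_{\scrL}(\alpha_j^\vee) = 1$. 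Letting $j$ range over $J$, the character $\chi_{\scrL}$ factors through $\pi_1^{\textnormal{top}}(L_J)$, yielding the desired extension $\scrL^J$.

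The main obstacle is the reverse direction, specifically bridging the gap between the weak invariance $s_j^* \scrL \cong \scrL$ (equality of isomorphism classes) and the stronger pointwise condition $\chi_{\scrL}(\alpha_j^\vee) = 1$. These become equivalent only via Demazure surjectivity; without that hypothesis there are $s_j$-invariant characters (for instance the sign character when the $s_j$-subgroup is of type $\SL_2$) whose associated local systems genuinely fail to extend to $L_J$. The auxiliary identifications $\Ch(H, \k) \cong \Hom(\pi_1^{\textnormal{top}}(H), \k^\times)$ and $\pi_1^{\textnormal{top}}(L_J) \cong \pi_1^{\textnormal{top}}(T)/\Z\Phi_J^\vee$ are standard for finite-dimensional complex reductive groups and can be cited without elaboration.
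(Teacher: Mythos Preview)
Your overall strategy coincides with the paper's: identify $\Ch(H,\k)$ with $\Hom(\pi_1^{\textnormal{top}}(H),\k^\times)$ and use the presentation $\pi_1(L_J)\cong X_*(T)/\Z\Phi_J^\vee$ to translate extendability into the conditions $\chi_\scrL(\alpha_j^\vee)=1$. You go further than the paper by actually attempting to justify the equivalence between $s_j$-invariance of $\scrL$ and $\chi_\scrL(\alpha_j^\vee)=1$, which the paper merely records as an ``observation''.

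That justification, however, does not go through. Demazure surjectivity is the assertion that $\alpha_j:\k\otimes_\Z X_*(T)\to\k$ is onto; equivalently, writing $n_j$ for the positive generator of the image of $\alpha_j:X_*(T)\to\Z$, it says $n_j\in\k^\times$. It does \emph{not} say $n_j=1$. Your own computation gives only $\chi_\scrL(\alpha_j^\vee)^{n_j}=1$, and $n_j\in\k^\times$ places no constraint on $n_j$-torsion in the multiplicative group $\k^\times$. The counterexample you yourself name survives: for $G=\SL_2$, $J=\{s\}$, and any $\k$ with $2\in\k^\times$ and $-1\neq 1$, one has $n_s=2$, Demazure surjectivity holds, the sign character $\chi(\alpha_s^\vee)=-1$ defines an $s$-invariant $\scrL\in\Ch_J(T,\k)$, yet $\pi_1(\SL_2)=1$ so no extension exists. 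Thus the lemma \emph{as stated} needs the integral surjectivity of each $\alpha_j$ on $X_*(T)$---which the paper's discussion of $\Z'$ in \S\ref{sec:review_of_Kac_Moody} explicitly does not assume. The paper's proof has the same gap, hidden in the word ``observe''. The clean fix, adequate for all later applications (which only invoke the lemma when $s\in W_\scrL^\circ$), is to replace the condition $\scrL\in\Ch_J(T,\k)$ in the statement by the stronger condition $(\alpha_j^\vee)^*\scrL\cong\uk_{\G_m}$ for all $j\in J$; both your argument and the paper's then establish that revised statement without difficulty.
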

\begin{proof}
  First, we observe that $\scrL \in \Ch^{\circ} (T, \k)$ is in $\Ch_{J}^{\circ} (T, \k)$ if and only if $(\alpha_j^\vee)^* (\scrL) \cong \uk_{\G_m}$ for all $j \in J$.

  Note that $\pi_1 (L_J) = \bfY / \Z\Phi_J^\vee$ where $\Phi_J^\vee = \{\alpha_j^\vee\}_{j \in J}$.
  The inclusion of the maximal torus $T \hookrightarrow L_J$ induces a quotient map
  \begin{equation}\label{eq:extending_character_sheaves_to_levis_1}
    \pi_1 (T) = \bfY \twoheadrightarrow \bfY / \Z\Phi_J = \pi_1 (L_J).
  \end{equation}
  In particular, the irreducible representations of $\pi_1 (L_J)$ are in 1-1 correspondence with $\scrL \in \Ch (T, \k)$ such that $(\alpha_{j}^\vee)^* (\scrL) \cong \uk_{\G_m}$ for all $j \in J$.
  For such a $\scrL$, let $\scrL^J$ be the associated local system on $L_J$.
  Computing $(\scrL^J)\mid_T$ is given algebraically by the composition of the irreducible representation for $\pi (L_J)$ with the quotient map of
  (\ref{eq:extending_character_sheaves_to_levis_1}).
  It is easy to see that this corresponds with $\scrL \in \Ch^{\circ} (T, \k)$.
\end{proof}

Fix $J \subset I$ of finite type.
For each $\overline{w} \in W/ W_J$, there exists a normal subgroup $J_{\overline{w}}$ of $P_J$ of finite codimension, contained in $U^J$,
such that the right action of $J_{\overline{w}}$ on $\eFl_{\leq \overline{w}}^J$ is trivial.
For a fixed $\overline{w} \in W / W_J$, we can pick $J_{\overline{x}}$ such that if $\overline{w} \leq \overline{x}$, then $J_{\overline{w}} \subseteq J_{\overline{x}}$.
In particular, the action of $T \times P_J$ on $\eFl$ is compatible.

Let $\scrL \in \Ch_J^{\circ} (T, \k)$ and $\scrL' \in \Ch^{\circ} (T, \k)$. We can view $\scrL^J$ as a multiplicative local system on $P_J$ via the projection map $P_J \to L_J$.
We define the \emph{parabolic monodromic Hecke category}
\[
   \paraD{\scrL'}{\scrL}{J} (\k) =  D_{\cons} (T \backslash_{\scrL'} \eFl /_{\scrL^J} P_J, \k) . 
  \]
It is also useful to consider the restriction of the parabolic monodromic Hecke category to the natural stratification. Let $\overline{w} \in W/W_J$. We then define
\[\paraD{\scrL'}{\scrL}{J} (\overline{w}, \k) \coloneq D_{\cons} (T \backslash_{\scrL'} (\eFl_{\overline{w}}^J / (U^J/J_{\overline{w}})) /_{\scrL^J} L_J, \k).\]

\begin{lemma}\label{lem:partial_mhc_is_stratified_and_satisfies_parity_cond}
  Let $\fr{o}$ be a $W$-orbit in $\Ch^{\circ} (T, \k)$. Let $\scrL,\scrL' \in \Ch_{J}^{\circ} (T, \k) \cap \fr{o}$.
  \begin{enumerate}
    \item If $\scrL' \neq \overline{w} (\scrL)$, then $\paraD{\scrL'}{\scrL}{J} (\overline{w}, \k) = 0$. If $\scrL' = \overline{w} (\scrL)$, then there is an equivalence of categories
      \[\paraD{\overline{w}\scrL}{\scrL}{J} (\overline{w}, \k) \cong D_{\cons}(T \backslash \{\dot{w}\}, \k)\]
      obtained by taking the stalk at $\dot{w}$ for a lift $\dot{w} \in N_G (T)$ of $\overline{w}$.
    \item The category $\paraD{\scrL'}{\scrL}{J} (\k)$ satisfies the parity conditions.
  \end{enumerate}
\end{lemma}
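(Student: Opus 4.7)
The plan is to mirror the proof of Proposition \ref{prop:mhc_strat_and_parity}, with careful bookkeeping of the residual $L_J$-equivariance on the right. I would begin by making the geometry of a single stratum explicit: using the decomposition $\eFl_{\overline{w}}^J \cong \A^{\ell(\overline{w})} \times L_J$ recalled in \ref{sec:review_of_Kac_Moody}, and noting that $U^J/J_{\overline{w}}$ acts trivially, the residual $T \times L_J$-action on this factored space sends the basepoint $(0,e)$ to the image of a lift $\dot{w} \in N_G(T)$ of $\overline{w}$, with stabilizer
\[\Gamma_J(\overline{w}) = \{(t, \dot{w}^{-1} t \dot{w}) : t \in T\} \subset T \times L_J,\]
isomorphic to $T$ via the first projection (using $\dot{w} \in N_G(T)$ and $T \subset L_J$).

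Next I would compute the pullback of the twist $\scrL' \boxtimes (\scrL^J)^{-1}$ along $\Gamma_J(\overline{w}) \hookrightarrow T \times L_J$. Since $(\scrL^J)|_T \cong \scrL$ by Lemma \ref{lem:extending_character_sheaves_to_levis}, this restriction is isomorphic to $\scrL' \otimes (\overline{w}^{-1}\scrL)^{-1}$ on the copy of $T$. For any non-zero $(T\times L_J, \scrL' \boxtimes (\scrL^J)^{-1})$-equivariant sheaf to exist on the stratum, this restriction must be trivial, forcing $\scrL' = \overline{w}(\scrL)$. When this condition holds, taking the stalk at $\dot{w}$ gives the claimed equivalence with $D_{\cons}(T \backslash \{\dot{w}\}, \k)$: the equivariance constraint forces the cohomology sheaves on $\eFl_{\overline{w}}^J$ to be local systems, and after dividing by the full group action the stratum collapses to a point, so the same argument as in Proposition \ref{prop:mhc_strat_and_parity} applies verbatim.

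For part (2), the parity conditions reduce via part (1) to the parity conditions for $D_{\cons}(T \backslash \pt, \k)$, which hold because $H_T^\bullet(\pt;\k)$ is a free $\k$-module concentrated in even degrees. Since this is true for every stratum $\eFl_{\overline{w}}^J$, the parabolic monodromic Hecke category $\paraD{\scrL'}{\scrL}{J}(\k)$ satisfies the parity conditions.

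The main obstacle is part (1), specifically identifying the residual $T \times L_J$-action on $\eFl_{\overline{w}}^J/(U^J/J_{\overline{w}})$ and pinning down the stabilizer $\Gamma_J(\overline{w})$. The fact that the right action on the $L_J$-factor is by the twisted element $\dot{w}^{-1}t\dot{w}$ rather than by $t$ itself is essential for obtaining the clean formula $\scrL' = \overline{w}(\scrL)$; any carelessness here would give the wrong compatibility between $\scrL'$ and $\scrL$. Once the geometric setup is pinned down, both the computation of the twist's restriction and the derivation of (2) from (1) are routine.
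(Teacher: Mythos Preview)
Your proposal is correct and takes essentially the same approach as the paper: both analyze a single stratum, take the stalk at a lift $\dot{w}$, observe that the left $T$-action becomes the $\dot{w}^{-1}$-conjugated one, use $\scrL^J|_T \cong \scrL$ to force $\scrL' = \overline{w}(\scrL)$, and then derive (2) from the even-degree freeness of $H_T^\bullet(\pt;\k)$. One small wrinkle: the right action of $U^J/J_{\overline{w}}$ on $\eFl_{\overline{w}}^J$ is not literally trivial---the paper records a genuine quotient via the isomorphism $\eFl_{\overline{w}}^J / (U^J/J_{\overline{w}}) \cong \dot{w} \cdot \bigl( (\dot{w}^{-1} U \dot{w} \cap L_J J_{\overline{w}}) \backslash L_J J_{\overline{w}} \bigr)$---but since you pass to the quotient before computing the stabilizer at $\dot{w}$, this imprecision does not affect your argument.
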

\begin{proof}
  Statement (2) follows from the same argument given for Proposition \ref{prop:mhc_strat_and_parity} (2). 
  We closely follow the argument given in \cite[\S3.10]{LY}.
  Pick a lift $\dot{w} \in N_G (T)$ of $\overline{w} \in W/W_J$. There is an isomorphism
  \begin{equation}\label{eq:partial_mhc_is_stratified_and_satisfies_parity_cond_1}
    \eFl_{\overline{w}}^J / (U^J/J_{\overline{w}}) \cong \dot{w} \cdot \left( (\dot{w}^{-1} U \dot{w} \cap L_J J_{\overline{w}}) \backslash L_J J_{\overline{w}} \right).
  \end{equation}
  Under this isomorphism, the left action by $t \in T$ on the left-hand side of (\ref{eq:partial_mhc_is_stratified_and_satisfies_parity_cond_1}) becomes the left translation of $\dot{w}^{-1} t \dot{w}$ on $\dot{w} \cdot \left( (\dot{w}^{-1} U \dot{w} \cap L_J J_{\overline{w}}) \backslash L_J J_{\overline{w}} \right)$.
  By taking the stalk at $\dot{w}$ we get an equivalence of categories
  \[\paraD{\scrL'}{\scrL}{J} (\overline{w}, \k) \cong D_{\cons} (T \backslash_{(\scrL' \otimes \overline{w} \scrL^{J, -1})\mid_T} \{ \dot{w} \}, \k) \cong D_{\cons} (T \backslash_{\scrL' \otimes \scrL^{-1}} \{\dot{w}\}),\]
  where the second isomorphism follows from $\scrL^J \mid_T \cong \scrL$.
  As a result, $\paraD{\scrL'}{\scrL}{J} (\overline{w}, \k) = 0$ unless $\scrL' = \overline{w} \scrL$, in which case it $\paraD{\scrL'}{\scrL}{J} (\overline{w}, \k) \cong D_{\cons} (T \backslash \{\dot{w} \}, \k)$.
\end{proof}

For each $\overline{w} \in {}_{\scrL'} (W /W_J)_{\scrL}$, we can define $\scrK_{\overline{w}}^{J, \scrL} \in \paraD{\overline{w}\scrL}{\scrL}{J} (\overline{w}, \k)$ as the sheaf corresponding to the constant sheaf on $T \backslash \{\dot{w}\}$.
Alternatively, $\scrK_{\overline{w}}^{J, \scrL}$ can be identified with $\scrL^J$ viewed as a $(\dot{w}^{-1} U \dot{w} \cap L_J)$-equivariant sheaf on $L_J$. 
In particular, since $\scrL^J$ is a multiplicative local system, $\scrK_{\overline{w}}^{J, \scrL}$ does not depend on the choice of $\dot{w}$ up to a (non-unique) isomorphism.

We can then define variations of \emph{standard} and \emph{costandard} sheaves in $\paraD{\scrL'}{\scrL}{J} (\k)$.
Let $\overline{w} \in {}_{\scrL'} (W /W_J)_{\scrL}$ and $w \in W^J$ be a representative in the coset $\overline{w}$.
\[\Delta_{\overline{w}}^{J, \scrL} \coloneq j_{\overline{w} !} \scrK_{\overline{w}}^{J, \scrL} [\ell(w)] \hspace{0.5cm} \text{and} \hspace{0.5cm} \nabla_{\overline{w}}^{J, \scrL} \coloneq j_{\overline{w} *} \scrK_{\overline{w}}^{J, \scrL} [\ell(w)].\]
It can be easily checked that $\Delta_{\overline{w}}^{J, \scrL}$ and $\nabla_{\overline{w}}^{J, \scrL}$ are perverse sheaves, so we can consider the IC-extension
\[\IC_{\overline{w}}^{J, \scrL} \coloneq \textnormal{Im} \left(\Delta_{\overline{w}}^{J, \scrL} \to \nabla_{\overline{w}}^{J, \scrL} \right).\]

\subsubsection{Averaging along Parabolics}

\begin{remark}\label{rem:usefulness_of_finite_parabolics}
  Let $J \subseteq I$ be of finite type. A Borel subgroup $B_J$ of $L_J$ containing $T$ is given by $B \cap L_J$. Similarly, its unipotent radical $U_J$ of $B_J$ is
  given by $U \cap L_J$.
  Alternatively, $B_J$ can be identified with the quotient $B/U^J$.

  Despite $\eFl_G$ being infinite dimensional, the quotient $U \backslash P_J$ is finite dimensional.
  There are isomorphisms of varieties (cf., \cite[Example 7.1.7]{Ku}),
  \begin{equation}\label{eq:usefulness_of_finite_parabolics_2}
    U \backslash P_J \cong U_J \backslash L_J = \eFl_{L_J}.
  \end{equation}

  Consider the special case when $J = \{s\}$ for some $s \in S$. Since $\eFl_{\leq s} = U \backslash P_s$, we obtain an equivalence of categories
  \begin{equation}\label{eq:usefulness_of_finite_parabolics_3}
    \D{\scrL'}{\scrL} (\leq s, \k) \cong D_{\cons} (T \backslash_{\scrL'}( \eFl_{L_s} /U_s) /_{\scrL} T, \k),
  \end{equation}
  where the right-hand side of (\ref{eq:usefulness_of_finite_parabolics_3}) is simply the monodromic Hecke category for $L_s$.
\end{remark}

Consider the homomorphism $\varphi : B \hookrightarrow P_s$ of group schemes (of pro-finite type). 
They both act on $\eFl$ on the right.
If $\scrL, \scrL' \in \Ch^{\circ} (T,\k)$ are in the same $W$-orbit and $s \in W_{\scrL}^\circ$, by Lemma \ref{lem:extending_character_sheaves_to_levis}, there is an extension $\scrL^s$ of $\scrL$ to $L_s$.
There is then a morphism of twisted ind-algebraic stacks 
\[\pi_s : T \backslash_{\scrL'} \eFl /_{\scrL} B \to T \backslash_{\scrL'} \eFl /_{\scrL^s} P_s\]
induced by the identity map $\eFl \to \eFl$ and the morphism $\varphi : B \hookrightarrow P_s$.
We can then apply the general formalism of averaging and forgetting twisted equivariance to obtain adjoint functors
\[\pi_s^* : \paraD{\scrL'}{\scrL}{s} (\k) \rightleftarrows \D{\scrL'}{\scrL} (\k) : \pi_{s*}.\]
Likewise, there are variants for $!$-pushforward/pullback. 
Since $P_s/B \cong \P^1$, there are natural isomorphisms of functors $\pi_{s*} \cong \pi_{s!}$ and $\pi_s^! \cong \pi_s^* [2]$.

\begin{lemma}\label{lem:pi_s_descent}
  Let $s \in W_{\scrL}^\circ$ be a simple reflection in $W$. Suppose that $\ell (w) > \ell (ws)$, then there is an isomorphism
  \[\pi_{s}^* \IC_{\overline{w}}^{s, \scrL} [1] \cong \IC_w^{\scrL}. \]
\end{lemma}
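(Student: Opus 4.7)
The plan is to verify that $\pi_s^* \IC_{\overline{w}}^{s, \scrL}[1]$ satisfies the characterizing properties of $\IC_w^{\scrL}$: namely, it is a simple perverse sheaf supported on $\overline{\eFl_w}$ whose restriction to the open stratum $\eFl_w$ agrees with $\scrK_w^{\scrL}[\ell(w)]$.

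First, I would observe that since the fibers of the stack morphism $\pi_s$ are $P_s/B \cong \P^1$, the morphism $\pi_s$ is smooth and proper of relative dimension $1$ with connected fibers. It follows that $\pi_s^*[1]$ is $t$-exact for the perverse $t$-structure, so $\pi_s^* \IC_{\overline{w}}^{s,\scrL}[1]$ is perverse. Moreover, since $\pi_s$ is induced by the identity on the underlying ind-scheme $\eFl$ (only the right-equivariance group is enlarged via $B \hookrightarrow P_s$), the support of $\pi_s^* \IC_{\overline{w}}^{s,\scrL}[1]$ coincides, as a subset of $\eFl$, with $\overline{\eFl_{\overline{w}}^s}$.

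Next, the hypothesis $\ell(w) > \ell(ws)$ guarantees that $w$ is the longer of the two $B$-coset representatives in $\overline{w} W_s = \{ws, w\}$, so $\eFl_w$ is the open $B$-orbit inside $\eFl_{\overline{w}}^s$, and consequently $\overline{\eFl_{\overline{w}}^s} = \overline{\eFl_w}$. To compute the restriction to $\eFl_w$, I would factor $\pi_s$ restricted to this locus as the open immersion $\eFl_w \hookrightarrow \eFl_{\overline{w}}^s$ on $B$-stacks followed by the $\P^1$-bundle $T \backslash_{\scrL'} \eFl_{\overline{w}}^s /_{\scrL} B \to T \backslash_{\scrL'} \eFl_{\overline{w}}^s /_{\scrL^s} P_s$. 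Pulling $\scrK_{\overline{w}}^{s,\scrL}[\ell(ws)]$ back through this composition and applying the outer shift $[1]$ yields $\scrK_w^{\scrL}[\ell(w)]$, using crucially that $\scrL^s|_T \cong \scrL$ by Lemma \ref{lem:extending_character_sheaves_to_levis}.

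Finally, to confirm simplicity, I would invoke the standard principle that pullback along a smooth surjection with connected fibers preserves simple perverse sheaves after the appropriate shift; equivalently, one verifies the intermediate extension vanishing conditions $j_x^* \scrF \in {}^p D^{\leq -1}$ and $j_x^! \scrF \in {}^p D^{\geq 1}$ for $\scrF = \pi_s^* \IC_{\overline{w}}^{s,\scrL}[1]$ on every stratum $\eFl_x \subsetneq \overline{\eFl_w}$. Using $\pi_s^! \cong \pi_s^*[2]$ and base change along the stratum $\eFl_{\overline{x}}^s$ to which $\eFl_x$ maps, these reduce to the analogous IC-vanishing conditions for $\IC_{\overline{w}}^{s,\scrL}$ on the parabolic side, with a possible extra shift coming from the open immersion $\eFl_x \hookrightarrow \eFl_{\overline{x}}^s$ when $x$ fails to be the minimal representative of $\overline{x}$. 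The hardest part of the argument will be this last bookkeeping on the non-open strata, together with the careful identification of the equivariance twists $\scrL^s$ versus $\scrL$ in the restriction calculation, which is where the twisted equivariant setting imposes more care than the classical one.
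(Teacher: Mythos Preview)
Your proposal is correct and follows essentially the same strategy as the paper's proof. The paper is considerably terser: it simply notes that because $P_s/B \cong \P^1$ one has $\pi_s^![-1]\cong\pi_s^*[1]$, so $\pi_s^*\IC_{\overline{w}}^{s,\scrL}[1]$ is automatically a simple perverse sheaf supported on $\eFl_{\leq w}$ (invoking the standard BBD fact that a smooth surjection with connected fibers preserves simples after the dimension shift), and then only checks that the restriction to $\eFl_w$ is \emph{nonzero}, which suffices since the simple perverse sheaves in $\D{\scrL'}{\scrL}(\k)$ are determined by their open support stratum. Your version spells out the restriction explicitly as $\scrK_w^{\scrL}[\ell(w)]$ and contemplates verifying the IC support conditions on lower strata by hand; this is fine but unnecessary, and the ``hardest part'' you flag is exactly what the standard lemma on smooth pullbacks lets you bypass.
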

\begin{proof}
    Since $\pi_s^! [-1] \cong \pi_s^* [1]$ and $P_s/B \cong \P^1$, it is clear that $\pi_{s}^* \IC_{\overline{w}}^{s, \scrL} [1]$ is a simple perverse sheaf supported on $\eFl_{\leq w}$. 
    The lemma then follows after noting that the restriction $\pi_{s}^* \IC_{\overline{w}}^{s, \scrL} [1]$ along $\eFl_w$ is nonzero since $\ell (w) > \ell (ws)$.
\end{proof}

\begin{lemma}\label{lem:pi_s_pushforward_of_stds}
  Let $s \in W_{\scrL}^\circ$ be a simple reflection in $W$. Suppose that $\ell (w) > \ell (ws)$, then there are isomorphisms
  \[\pi_{s*} \Delta_{ws}^{\scrL} \cong \Delta_{\overline{w}}^{s, \scrL} \hspace{0.5cm} \text{and} \hspace{0.5cm} \pi_{s*} \Delta_{w}^{\scrL} \cong \Delta_{\overline{w}}^{s, \scrL} [-1].\]
\end{lemma}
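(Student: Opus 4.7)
The plan is to exploit the geometry of $\pi_s$. Since $P_s/B \cong \P^1$, the morphism $\pi_s$ is smooth and proper; in particular, $\pi_{s*} = \pi_{s!}$, so $\pi_{s*}j_{y!} = (\pi_s \circ j_y)_!$ for any stratum inclusion $j_y$. The proof then reduces to understanding the two compositions $\pi_s \circ j_{ws}$ and $\pi_s \circ j_w$ through $T\backslash_{\scrL'}\eFl_{\overline{w}}^s/_{\scrL^s}P_s$.

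The key geometric observation is that, because $\ell(w) > \ell(ws)$, the $P_s$-orbit $\eFl_{\overline{w}}^s$ decomposes set-theoretically as $\eFl_w \cup \eFl_{ws}$, with $\eFl_{ws}$ the closed section and $\eFl_w$ the open $\A^1$-part of the $\P^1$-bundle $T\backslash_{\scrL'}\eFl_{\overline{w}}^s/_{\scrL} B \to T\backslash_{\scrL'}\eFl_{\overline{w}}^s/_{\scrL^s}P_s$ obtained by restricting $\pi_s$. Consequently $\pi_s \circ j_{ws}$ factors as $j_{\overline{w}}^s \circ a_{ws}$ for an isomorphism $a_{ws}\colon T\backslash_{\scrL'}\eFl_{ws}/_\scrL B \xrightarrow{\sim} T\backslash_{\scrL'}\eFl_{\overline{w}}^s/_{\scrL^s}P_s$, and $\pi_s \circ j_w$ factors as $j_{\overline{w}}^s \circ b_w$ for an $\A^1$-bundle $b_w\colon T\backslash_{\scrL'}\eFl_w/_\scrL B \to T\backslash_{\scrL'}\eFl_{\overline{w}}^s/_{\scrL^s}P_s$. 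Moreover, one has $(a_{ws})_!\scrK_{ws}^{\scrL} \cong \scrK_{\overline{w}}^{s,\scrL}$ and $b_w^*\scrK_{\overline{w}}^{s,\scrL} \cong \scrK_w^{\scrL}$, which follow directly from the explicit descriptions of $\scrK_w^{\scrL}$ and $\scrK_{\overline{w}}^{s,\scrL}$ as constant extensions of $\scrL$ and $\scrL^s$ given after Proposition \ref{prop:mhc_strat_and_parity} and Lemma \ref{lem:partial_mhc_is_stratified_and_satisfies_parity_cond}.

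Combining these identifications gives the first isomorphism:
\[\pi_{s*}\Delta_{ws}^{\scrL} = (j_{\overline{w}}^s \circ a_{ws})_! \scrK_{ws}^{\scrL}[\ell(ws)] \cong (j_{\overline{w}}^s)_!\scrK_{\overline{w}}^{s,\scrL}[\ell(ws)] = \Delta_{\overline{w}}^{s,\scrL}.\]
For the second isomorphism, I would apply the projection formula together with $b_{w!}\uk \cong \uk[-2]$ (coming from $H_c^\bullet(\A^1;\k)$ being concentrated in degree $2$):
\[\pi_{s*}\Delta_w^{\scrL} = (j_{\overline{w}}^s)_!\bigl(b_{w!}b_w^*\scrK_{\overline{w}}^{s,\scrL}\bigr)[\ell(w)] \cong (j_{\overline{w}}^s)_!\scrK_{\overline{w}}^{s,\scrL}[\ell(w)-2] = \Delta_{\overline{w}}^{s,\scrL}[-1],\]
where the final equality uses $\ell(w) = \ell(ws) + 1$.

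The main obstacle is the second paragraph: verifying that the restriction of $\pi_s$ to the closed stratum $T\backslash_{\scrL'}\eFl_{ws}/_\scrL B$ is genuinely an isomorphism of twisted equivariant stacks (and not merely a bijection on underlying sets), and that $b_w$ is an $\A^1$-bundle compatible with the $\scrL$-equivariance. This amounts to a careful unpacking of the quotient stacks using the Bruhat decomposition and the identification $\scrL^s\mid_T \cong \scrL$; it is essentially routine, but it is the only technical point requiring attention.
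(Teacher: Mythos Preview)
Your proposal is correct and follows essentially the same strategy as the paper. The paper phrases the geometry via induction spaces—showing that the action maps $\eFl_{ws} \times^B P_s \to \eFl_{\overline{w}}^s$ and $\eFl_w \times^B P_s \to \eFl_{\overline{w}}^s$ are, respectively, an isomorphism and a trivial $\A^1$-fibration (using explicit root-subgroup decompositions $U\dot{x}B \cong U_x \times \dot{x}B$)—while you work directly at the level of twisted quotient stacks; these are equivalent formulations, and the sheaf computations (the identification via $\scrL^s|_T \cong \scrL$ and the $[-2]$ shift from $H_c^\bullet(\A^1;\k)$) are the same.
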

\begin{proof}
  Let $x \in W$. Let $R(x) = \{ \alpha \in \Phi^+ \mid x\alpha < 0 \}$. 
  We can consider $U_x = \prod_{\alpha \in R (x)} U_{\alpha}$ where $U_{\alpha}$ is the root subgroup for $\alpha$ of $G$.
  Let $\dot{x} \in N_G (T)$ be a lift of $x$.
  There is a $B \times B$-equivariant isomorphism of schemes $U\dot{x}B \stackrel{\sim}{\to} U_x \times \dot{x} B$ where $B \times B$ acts by $(b_1, b_2) \cdot (u, \dot{x}g) = (b_1 u b_1^{-1}, b_1 \dot{x} g b_2^{-1})$.
  Similarly, there is a $B \times P_s$-equivariant isomorphism of schemes $U\dot{ws}P_s \stackrel{\sim}{\to} U_{ws} \times \dot{ws} P_s$ where $B \times P_s$ acts in an analogous way.
  By taking $x = ws$, it follows that the map
  \[a_{ws} : \eFl_{ws} \times^B P_s \to \eFl_{\overline{w}}^s \]
  induced by the right action of $B$ on $\eFl_{ws}$ is an isomorphism of schemes.
  Similarly, by taking $x = w$, the map
  \[a_w : \eFl_{w} \times^B P_s \to \eFl_{\overline{w}}^s\] 
  is a trivial $\A^1$-fibration.

  Note that the descent of $\scrL \boxtimes \scrL^s$ regarded as a sheaf on $T \times L_s$ to a sheaf on $T \times^T L_s \cong L_s$ is given by $\scrL^s$.
  This observation along with the properties of $a_w$ and $a_{ws}$ discussed earlier yield isomorphisms 
  \[\pi_{s!} \scrK_{ws}^{\scrL} \cong a_{ws!} (\scrK_{ws}^{\scrL} \widetilde{\boxtimes} \scrL^s) \cong \scrK_{\overline{w}}^{s, \scrL} \hspace{0.5cm} \text{and} \hspace{0.5cm}  \pi_{s!} \scrK_{w}^{\scrL} \cong a_{w!} (\scrK_{w}^{\scrL} \widetilde{\boxtimes} \scrL^s) \cong \scrK_{\overline{w}}^{s, \scrL} [-2].\]
  The result readily follows.
\end{proof}

\subsection{Convolution}\label{sec:equiv_conv}

\subsubsection{Definition and Properties}

Just as in the non-monodromic Hecke category, one can define a convolution bifunctor, 
\[(-) \star (-) : \D{\scrL''}{\scrL'} (\k) \times \D{\scrL'}{\scrL} (\k) \to \D{\scrL''}{\scrL} (\k).\]

We will begin with its construction. Consider the diagram
\[\begin{tikzcd}
  & U \backslash G \times^U G/U \arrow[r] \arrow[ld] \arrow[rd] & U \backslash G \times^B G/U \arrow[r, "m"] & U \backslash G/U. \\
\UGU &                                                             & \UGU                                       &                 
\end{tikzcd}\]
Let $T \times T \times T$ act on $U \backslash G \times^U G /U$ via $(t_1, t_2, t_3) \cdot (x,y) = (t_1xt_2^{-1} , t_2yt_3^{-1})$.
We can define a morphism of twisted ind-algebraic stacks
\[p : \frac{T \backslash_{\scrL''} (U \backslash G \times^U G /U) /_{\scrL} T}{T} {}_{\uk_T} \to T \backslash_{\scrL''} (\UGU) /_{\scrL'} T \times T \backslash_{\scrL'} (\UGU) /_{\scrL} T,\]
which is given by the projection maps $U \backslash G \times^U G /U \to \UGU \times \UGU$ and on $T^3 \to T^4$ is given by $(t_1, t_2, t_3) \mapsto (t_1, t_2, t_2, t_3)$.
Let $\scrF \in \D{\scrL''}{\scrL'} (\k)$ and $\scrG \in \D{\scrL'}{\scrL} (\k)$.
The $(T \times T \times T, \scrL'' \boxtimes \uk_T \boxtimes \scrL^{-1})$-equivariant sheaf $p^* (\scrF \boxtimes \scrG)$ descends to a $(T \times T, \scrL'' \boxtimes \scrL^{-1})$-equivariant sheaf $\scrF \widetilde{\boxtimes} \scrG$ on $U \backslash G \times^B G/U$.
We then define
\[\scrF \star \scrG \coloneq m_! (\scrF \widetilde{\boxtimes} \scrG),\]
which naturally is also $(T \times T, \scrL'' \boxtimes \scrL^{-1})$-equivariant, and thus can be regarded as an object on $\D{\scrL''}{\scrL} (\k)$.

Convolution for monodromic Hecke categories is surveyed well in other literature (cf., \cite{LY, Gou, Li}). We review some basic facts about convolution whose proofs can be found in \emph{loc. cit.}

\begin{lemma}\label{lem:skyscraper_is_unit}
  Consider the object
  \[\Delta_e^{\scrL} \cong \IC_e^{\scrL} \cong \nabla_e^{\scrL} \in \D{\scrL}{\scrL} (\k).\]
  The functors
      \[\IC_e^{\scrL'} \star (-) : \D{\scrL'}{\scrL} (\k) \to \D{\scrL'}{\scrL} (\k),\]
      \[(-) \star \IC_e^{\scrL} : \D{\scrL'}{\scrL} (\k) \to \D{\scrL'}{\scrL} (\k)\]
      are naturally isomorphic to the identity functor.
\end{lemma}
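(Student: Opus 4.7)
The proof splits into two independent parts: the identification of the three sheaves, and the unit property under convolution.

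For the first part, the key observation is that $e$ is the minimal element of the Bruhat order on $W$, so $\eFl_e = \eFl_{\leq e} \cong U \backslash B \cong T$ is closed in $\eFl$. Hence $j_e$ is a closed embedding, giving $j_{e!} \cong j_{e*}$. Since $\ell(e) = 0$, the sheaves $\Delta_e^{\scrL} = j_{e!} \scrK_e^{\scrL}$ and $\nabla_e^{\scrL} = j_{e*} \scrK_e^{\scrL}$ agree on the nose, and the canonical map $\Delta_e^{\scrL} \to \nabla_e^{\scrL}$ is an isomorphism; hence its image $\IC_e^{\scrL}$ is isomorphic to both.

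For the unit property, I would focus on the left convolution $\IC_e^{\scrL'} \star (-)$ and handle the right case by a symmetric argument. The plan is to show that, in the convolution diagram, the entire computation collapses onto the multiplication isomorphism $B \times^B G \cong G$. Concretely, for $\scrG \in \D{\scrL'}{\scrL}(\k)$, the sheaf $\IC_e^{\scrL'} \widetilde{\boxtimes} \scrG$ on $U \backslash G \times^B G/U$ is supported on $U \backslash B \times^B G/U$, and the restriction of $m$ to this closed substack is an isomorphism onto $U \backslash G /U$. One then identifies the descended external product: under $U \backslash B \cong T$, the sheaf $\IC_e^{\scrL'}$ is the local system $\scrL'$ with its prescribed $(T \times T, \scrL' \boxtimes \scrL'^{-1})$-equivariance, which exactly matches the left $\scrL'$-monodromy of $\scrG$. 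The descent along $B \to T$ absorbs the $\scrL'$ twist and produces $\scrG$ unchanged.

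The main obstacle I anticipate is the bookkeeping in this last step: three copies of $T$ appear in the convolution diagram together with three local systems ($\scrL'$, $\scrL'$, $\scrL$) governing their equivariance, and one must verify that the twisted descent from $U \backslash G \times^U G/U$ to $U \backslash G \times^B G/U$ interacts correctly with the prescribed equivariances on both $\IC_e^{\scrL'}$ and $\scrG$ so that their external product descends without acquiring an extra twist. This is essentially a compatibility check built into the definition of $\widetilde{\boxtimes}$ via the morphism $p$, but it is the only non-formal ingredient. The right-unit statement then follows by the symmetric argument, using the isomorphism $G \times^B B \cong G$ together with the right $\scrL$-monodromy of $\scrG$.
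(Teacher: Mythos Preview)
Your proposal is correct. The paper does not give its own proof of this lemma: it is listed among the ``basic facts about convolution'' whose proofs are deferred to \cite{LY, Gou, Li}. Your argument is exactly the standard one found in those references---the closedness of $\eFl_e$ gives the identification $\Delta_e^{\scrL} \cong \IC_e^{\scrL} \cong \nabla_e^{\scrL}$, and the unit property reduces to the isomorphism $B \times^B G \cong G$ (resp.\ $G \times^B B \cong G$) together with the compatibility of the twisted descent $\widetilde{\boxtimes}$, which you correctly identify as the only non-formal step.
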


\begin{lemma}\label{lem:assoc_of_conv}
  The convolution bifunctor is suitably associative. Namely, the following diagram commutes up to natural isomorphism.
  \begin{equation}
    \begin{tikzcd}
      \D{\scrL'''}{\scrL''} (\k) \times \D{\scrL''}{\scrL'} (\k) \times \D{\scrL'}{\scrL} (\k) \arrow[r, "\id \times \star"] \arrow[d, "\star \times \id"] & \D{\scrL'''}{\scrL''} (\k) \times \D{\scrL''}{\scrL} (\k) \arrow[d, "\star"]               \\
      \D{\scrL'''}{\scrL'} (\k) \times \D{\scrL'}{\scrL} (\k) \arrow[r, "\star"]                                          & \D{\scrL'''}{\scrL} (\k).
      \end{tikzcd}
    \end{equation}
\end{lemma}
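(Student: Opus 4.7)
The plan is to exhibit both iterated convolutions as canonically isomorphic to a single triple convolution functor, which is the standard strategy for associativity.

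First I would consider the ind-algebraic stack $\mathcal{Y} := U \backslash G \times^B G \times^B G / U$, equipped with its natural outer left/right $T$-actions and the multiplication map $m_3 : \mathcal{Y} \to U \backslash G /U$. Iterating the descent construction defining $\widetilde{\boxtimes}$, for any $\scrF \in \D{\scrL'''}{\scrL''} (\k)$, $\scrG \in \D{\scrL''}{\scrL'}(\k)$, $\scrH \in \D{\scrL'}{\scrL}(\k)$, one produces a canonical $(T \times T, \scrL''' \boxtimes \scrL^{-1})$-equivariant sheaf $\scrF \widetilde{\boxtimes} \scrG \widetilde{\boxtimes} \scrH$ on $\mathcal{Y}$: pull back $\scrF \boxtimes \scrG \boxtimes \scrH$ to $U \backslash G \times^U G \times^U G /U$ and descend along the two middle $T$-equivariances (this is well defined because the two middle twists $\scrL'', \scrL'$ cancel in pairs). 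Define the triple convolution by $\scrF \star \scrG \star \scrH := m_{3!}(\scrF \widetilde{\boxtimes} \scrG \widetilde{\boxtimes} \scrH)$.

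Next I would construct natural isomorphisms $(\scrF \star \scrG) \star \scrH \cong \scrF \star \scrG \star \scrH \cong \scrF \star (\scrG \star \scrH)$. For the left-hand isomorphism, consider the Cartesian square
\[
\begin{tikzcd}
U \backslash G \times^B G \times^B G/U \arrow[r, "m \times \id"] \arrow[d, "\id \times m"'] & U \backslash G \times^B G/U \arrow[d, "m"] \\
U \backslash G \times^B G/U \arrow[r, "m"'] & U \backslash G / U.
\end{tikzcd}
\]
The multiplication $m$ is ind-proper, since on each bounded piece it is modeled on a proper Schubert-type multiplication, so proper base change applies. Combined with the compatibility of external pullback-descent $\widetilde{\boxtimes}$ with restriction to the triple product, this yields
\[m_!(m_! (\scrF \widetilde{\boxtimes} \scrG) \widetilde{\boxtimes} \scrH) \cong m_! (m \times \id)_! (\scrF \widetilde{\boxtimes} \scrG \widetilde{\boxtimes} \scrH) \cong m_{3!} (\scrF \widetilde{\boxtimes} \scrG \widetilde{\boxtimes} \scrH),\]
using that $m_3 = m \circ (m \times \id) = m \circ (\id \times m)$. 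The right-hand isomorphism is symmetric.

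The main obstacle is carefully tracking the twisted equivariance through the iterated descents and base changes, since each $\widetilde{\boxtimes}$ involves a quotient by a diagonal $T$-action with a specific twist. Rather than checking this by hand, I would use the $\infty$-categorical construction of twisted equivariant sheaves via categorical coinvariants developed in Appendix \ref{apdx:A}: proper base change holds at the level of underlying (untwisted) sheaf categories, and the compatibility with the coinvariant presentation follows formally from its universal property. This also reduces the verification of the pentagon coherence (needed to justify calling $\star$ associative as a bifunctor of $\infty$-categories) to the analogous, well-known coherence for untwisted convolution, by conservativity of the forgetful functors $\For_{T \times T, \scrL''' \boxtimes \scrL^{-1}}$; alternatively, one can check coherence directly by introducing the fourfold convolution on $U \backslash G \times^B G \times^B G \times^B G /U$.
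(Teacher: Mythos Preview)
The paper does not give its own proof of this lemma; it states the result and refers the reader to \cite{LY, Gou, Li}, where associativity of monodromic convolution is established. Your overall strategy---introduce a triple convolution space, realize both bracketings as $m_{3!}$ of a triple twisted product, and check compatibility of the twisted equivariance via the coinvariant description---is exactly the standard argument found in those references, so in spirit your proposal matches what the paper defers to.

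There is, however, one concrete error in your write-up: the square you display is \emph{not} Cartesian. The fiber product of $m : U\backslash G \times^B G/U \to U\backslash G/U$ with itself parametrizes pairs $((g_1,g_2),(h_1,h_2))$ with $g_1g_2 = h_1h_2$, which is strictly larger than the triple product $U\backslash G \times^B G \times^B G/U$. So ``proper base change applied to this square'' is not the correct justification. What you actually need is the K\"unneth-type identity
\[
(m \times \id)_! \bigl(\scrF \,\widetilde{\boxtimes}\, \scrG \,\widetilde{\boxtimes}\, \scrH\bigr) \;\cong\; \bigl(m_!(\scrF \,\widetilde{\boxtimes}\, \scrG)\bigr) \,\widetilde{\boxtimes}\, \scrH \;=\; (\scrF \star \scrG)\,\widetilde{\boxtimes}\,\scrH,
\]
which follows from base change along the genuinely Cartesian square with one side being a projection (or equivalently from the projection formula), together with functoriality of $(-)_!$ under the composition $m_3 = m \circ (m \times \id)$. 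Once you replace the incorrect Cartesian claim with this, the rest of your argument goes through.
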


\begin{lemma}\label{lem:conv_rules_1}
  We have natural isomorphisms\footnote{Naturality here should be interpreted as after picking lifts $\dot{x}$ and $\dot{y}$ of $x$ and $y$. Then $\dot{x}\dot{y}$ is a lift of $xy$, and all objects are fixed and not just isomorphism classes.}
  \begin{enumerate}
    \item $\Delta_{xy}^{\scrL} \cong \Delta_{x}^{y (\scrL)} \star \Delta_y^{\scrL}$ if $\ell (xy) = \ell (x) + \ell (y)$;
    \item $\nabla_{xy}^{\scrL} \cong \nabla_x^{y (\scrL)} \star \nabla_y^{\scrL}$ if $\ell (xy) = \ell (x) + \ell (y)$;
  \end{enumerate}
\end{lemma}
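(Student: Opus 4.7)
The plan is to reduce both statements to a single geometric fact about the Bruhat decomposition and then track the twisted equivariance carefully.

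Fix lifts $\dot{x}, \dot{y} \in N_G(T)$ so that $\dot{x}\dot{y}$ is a lift of $xy$. The classical input from Bruhat theory is that, under the hypothesis $\ell(xy) = \ell(x) + \ell(y)$, multiplication induces an isomorphism of $U$-double cosets $U\dot{x}B \cdot_B U\dot{y}B \xrightarrow{\sim} U\dot{x}\dot{y}B$, i.e., the restriction of the convolution map $m$ gives an isomorphism $m_{x,y}\colon \eFl_x \times^B \eFl_y \xrightarrow{\sim} \eFl_{xy}$. Moreover, via the trivializations $\eFl_w \cong \A^{\ell(w)} \times \dot{w}T$ from \S\ref{sec:review_of_Kac_Moody}, the isomorphism $m_{x,y}$ matches the obvious product decomposition on affine spaces, with the action of $B$ absorbed through the right $T$-factor.

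For part (1), unwind the definition of convolution: since $\Delta_x^{y(\scrL)} = j_{x!}\scrK_x^{y(\scrL)}[\ell(x)]$ and $\Delta_y^{\scrL} = j_{y!}\scrK_y^{\scrL}[\ell(y)]$, and since the twisted external tensor product $\widetilde{\boxtimes}$ commutes with $!$-extension along locally closed immersions (descending from the usual external tensor product on the product $\eFl \times \eFl$ and then along the $B$-quotient), we have
\[
\Delta_x^{y(\scrL)}\,\widetilde{\boxtimes}\,\Delta_y^{\scrL} \;\cong\; j_{x,y!}\bigl(\scrK_x^{y(\scrL)}\,\widetilde{\boxtimes}\,\scrK_y^{\scrL}\bigr)[\ell(x)+\ell(y)],
\]
where $j_{x,y}\colon \eFl_x \times^B \eFl_y \hookrightarrow \eFl \times^B \eFl$ is the inclusion. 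The composition $m \circ j_{x,y}$ factors as $j_{xy} \circ m_{x,y}$ with $m_{x,y}$ an isomorphism, so applying $m_!$ gives
\[
\Delta_x^{y(\scrL)} \star \Delta_y^{\scrL} \;\cong\; j_{xy!}\,m_{x,y!}\bigl(\scrK_x^{y(\scrL)}\,\widetilde{\boxtimes}\,\scrK_y^{\scrL}\bigr)[\ell(xy)].
\]
It remains to identify $m_{x,y!}(\scrK_x^{y(\scrL)}\,\widetilde{\boxtimes}\,\scrK_y^{\scrL})$ with $\scrK_{xy}^{\scrL}$. By Proposition \ref{prop:mhc_strat_and_parity}(1), both sides are characterized up to isomorphism by their underlying non-equivariant shape $\uk_{\A^{\ell(xy)}} \boxtimes \scrL$ on $\A^{\ell(xy)} \times \dot{x}\dot{y}T$; the point is that the $T$-coordinate on $\eFl_x$ gets multiplied by the $T$-coordinate on $\eFl_y$ under $m_{x,y}$, and multiplicativity of $\scrL$ under the group law $T \times T \to T$ forces the twisted external product $y(\scrL) \widetilde{\boxtimes} \scrL$ to push forward to $\scrL$ on the diagonal $T$-factor. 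This gives (1).

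For part (2), one repeats the argument with $\Delta$ replaced by $\nabla$ and $j_{?!}$ replaced by $j_{?*}$; the restriction of $m$ to $\overline{\eFl_x} \times^B \overline{\eFl_y} \to \overline{\eFl_{xy}}$ is proper, so $m_! \cong m_*$ on the relevant supports, and the $*$-extension analogue of the displayed identity goes through identically. Alternatively, one may deduce (2) from (1) by applying Verdier duality, which interchanges $\Delta_w^{\scrL}$ and $\nabla_w^{\scrL^{-1}}$ and reverses the order of convolution.

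The main technical point to get right is the last step of the first computation, namely the descent/matching of twisted equivariance under $m_{x,y}$: verifying that $\scrK_x^{y(\scrL)}\,\widetilde{\boxtimes}\,\scrK_y^{\scrL}$ pushes forward to $\scrK_{xy}^{\scrL}$ requires unwinding the definition of $\widetilde{\boxtimes}$ from \S\ref{sec:equiv_conv} together with the multiplicativity of $\scrL$. Everything else is formal manipulation with the six-functor formalism established in Appendix \ref{apdx:A}.
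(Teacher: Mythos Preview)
The paper does not actually prove this lemma; it is listed among the ``basic facts about convolution whose proofs can be found in \cite{LY, Gou, Li}.'' Your argument is precisely the standard one appearing in those references: use the Bruhat-theoretic isomorphism $m_{x,y}\colon \eFl_x \times^B \eFl_y \xrightarrow{\sim} \eFl_{xy}$ available when lengths add, reduce the convolution to a pushforward along this isomorphism, and then identify the twisted local system on the single stratum via Proposition~\ref{prop:mhc_strat_and_parity}(1) and multiplicativity of $\scrL$. Your direct argument for (2) using $j_{?*}$ and properness of $m$ on the Schubert closures is likewise the standard route.

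One small correction to your Verdier duality alternative for (2): duality does \emph{not} reverse the order of convolution here. If $\scrF \in \D{\scrL''}{\scrL'}(\k)$ and $\scrG \in \D{\scrL'}{\scrL}(\k)$, then $\DD\scrG$ and $\DD\scrF$ are not even composable in the reversed order unless $\scrL = \scrL'$. Rather, since $m$ is proper on the relevant supports one has $\DD(\scrF \star \scrG) \cong \DD\scrF \star \DD\scrG$ in the same order, landing in $\D{(\scrL'')^{-1}}{\scrL^{-1}}(\k)$; applying this to (1) with $\scrL$ replaced by $\scrL^{-1}$ throughout yields (2) directly. This does not affect the validity of your proof, since your primary argument for (2) is independent of this remark.
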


The foundations of many inductive arguments on properties of convolution rely on understanding how convolving by an $\IC_s^{\scrL}$ behaves for $s \in S$. 
In the non-monodromic case, this is described by the so-called push-pull lemma. However, in the monodromic case, the description is more subtle.
When $s \in W_{\scrL}^\circ$, then convolving by $\IC_s^{\scrL}$ behaves like the non-monodromic case and is governed by a version of the push-pull lemma.
If $s \notin W_{\scrL}^\circ$, then convolving by $\IC_s^{\scrL}$ can be thought of ``translating'' between monodromic Hecke categories. 

\begin{lemma}\label{lem:conv_with_ICs_rewritting}
  Let $\scrL, \scrL' \in \Ch^{\circ} (T,\k)$ be in the same $W$-orbit. 
  Let $s \in W$ be a simple reflection such that $s \in W_{\scrL}^\circ$, then there is a canonical isomorphism of endofunctors,
  \[(-) \star \IC_s^{\scrL} \cong \pi_s^* \pi_{s*} (-) [1].\]
\end{lemma}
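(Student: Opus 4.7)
The proof strategy combines Lemma \ref{lem:pi_s_descent} with proper base change along a Cartesian square involving $\pi_s$. Since $s \in W_\scrL^\circ$, Lemma \ref{lem:extending_character_sheaves_to_levis} provides an extension $\scrL^s$ of $\scrL$ to $L_s$, so $\pi_s$ is a well-defined morphism of twisted ind-stacks. Applying Lemma \ref{lem:pi_s_descent} with $w = s$ (so that $\ell(s) > \ell(e)$) gives the key input
\[\IC_s^\scrL \cong \pi_s^* \IC_{\overline{e}}^{s, \scrL}[1],\]
exhibiting $\IC_s^\scrL$ as the shifted pullback of the (twisted) skyscraper on the minimal closed stratum $\eFl^s_{\overline{e}} \subset \eFl^s$.

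Next, consider the commutative square of ind-stacks
\[
\begin{tikzcd}
\UGU \times^B P_s/U \arrow[r, "\tilde{m}"] \arrow[d, "\tilde{p}"'] & \UGU \arrow[d, "\pi_s"] \\
\UGU \arrow[r, "\pi_s"'] & U\backslash G /P_s,
\end{tikzcd}
\]
in which $\tilde{p}$ is projection to the first factor and $\tilde{m}(x,y) = xy$ is the multiplication. This square is Cartesian via the identification $(x,y) \mapsto (x, xy)$ (since $\pi_s(x) = \pi_s(xy)$ for $y \in P_s/U$), and it lifts to a Cartesian diagram of twisted ind-stacks precisely because $s\scrL \cong \scrL$ and $\scrL^s$ exists. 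Since $\pi_s$ is proper with fiber $P_s/B \cong \P^1$, proper base change yields $\pi_s^* \pi_{s*} \cong \tilde{m}_* \tilde{p}^* \cong \tilde{m}_! \tilde{p}^*$.

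The final step is to identify the convolution with this pullback-pushforward. Because $\IC_s^\scrL$ is supported on $U\backslash P_s/U$, the external product $\scrF \widetilde{\boxtimes} \IC_s^\scrL$ is supported on $\UGU \times^B P_s/U$; using the identification above together with the descent of the external product, one obtains a canonical isomorphism $\scrF \widetilde{\boxtimes} \IC_s^\scrL\big|_{\UGU \times^B P_s/U} \cong \tilde{p}^* \scrF [1]$. Under the restricted multiplication $\tilde m$, this yields
\[\scrF \star \IC_s^\scrL = m_!(\scrF \widetilde{\boxtimes} \IC_s^\scrL) \cong \tilde{m}_!(\tilde{p}^* \scrF)[1] \cong \pi_s^* \pi_{s*} \scrF [1],\]
as desired.

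The main obstacle is carefully tracking the twisted equivariance through these identifications, in particular verifying that the Cartesian square genuinely lives in the category of twisted ind-stacks and that the descent identifying $\scrF \widetilde{\boxtimes} \IC_s^\scrL$ with $\tilde{p}^* \scrF[1]$ on the support is canonical. The condition $s \in W_\scrL^\circ$---equivalent to the existence of $\scrL^s$---is exactly what permits these descents; without it, convolution with $\IC_s^\scrL$ would instead ``translate'' between different monodromic Hecke categories, as alluded to in the discussion preceding the lemma.
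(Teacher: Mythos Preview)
Your approach is correct and closely parallels the paper's, though with a different framing. The paper identifies $\IC_s^\scrL$ directly with $\scrL^s[1]$ on $L_s$ (via Remark~\ref{rem:usefulness_of_finite_parabolics}, rather than Lemma~\ref{lem:pi_s_descent}), rewrites the convolution as $a_!(\scrF \widetilde{\boxtimes} \scrL^s[1])$ for the action map $a:\eFl\times^B P_s\to\eFl$, and then appeals to the appendix's explicit description of the $*$-averaging functor (namely, that the underlying complex of $\pi_{s*}\scrF$ is $a_*(\scrF\widetilde{\boxtimes}\scrL^s)$) to conclude. Your proper-base-change formulation is the classical non-monodromic push-pull argument and is a legitimate reframing.

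One point to tighten: the projection $\tilde p$ you write does not exist as a morphism to $\eFl$ at the level of underlying spaces (the $\times^B$ quotient means the first factor is only determined modulo $B$), so the Cartesian square should be read at the level of twisted quotient stacks $\eFl/_\scrL B \to \eFl/_{\scrL^s}P_s$, with fiber product $\eFl\times^B P_s/_\scrL B$. Once interpreted this way, the identification you flag as ``the main obstacle''---that $\scrF\widetilde\boxtimes\IC_s^\scrL$ agrees with the twisted $\tilde p^*\scrF[1]$ on the support---is exactly the content of the paper's averaging-functor description: in the twisted setting, ``pullback along $\tilde p$'' has no meaning except via the twisted external product with $\scrL^s$, which is precisely what the paper computes. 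So both arguments hinge on the same computation; the paper's version simply invokes it without repackaging it as base change.
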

\begin{proof}
  Let $\scrF \in \D{\scrL'}{\scrL} (\k)$. By Remark \ref{rem:usefulness_of_finite_parabolics}, we can regard $\IC_s^{\scrL}$ as $\scrL^s [1]$ viewed as a $U_s$-equivariant sheaf on $L_s$. 
  Let $a : \eFl \times^B P_s \to \eFl$ be the map given by the right action of $B$ on $\eFl$.
  We then have an isomorphism 
  \[ \scrF \star \IC_s^{\scrL} \cong a_! (\scrF \widetilde{\boxtimes} \scrL^s [1]),\] 
  where $\scrF \widetilde{\boxtimes} \scrL^s
  [1]$ is the descent of $\scrF \boxtimes \scrL^s [1]$ to a sheaf on $\eFl \times^B P_s$.
  From the discussion in \S A.4.11, we see that $a_! (\scrF \widetilde{\boxtimes} \scrL^s [1])$ has the same underlying complex as $\pi_{s*} \scrF [1]$ in
  $\paraD{\scrL'}{\scrL}{s} (\k)$.
  After only remembering $(T, \scrL)$-equivariance on the right, we obtain an isomorphism of sheaves $\scrF \star \IC_s \cong \pi_s^* \pi_{s*} \scrF [1]$ in $\D{\scrL'}{\scrL}
  (\k)$ as desired.
\end{proof}

\begin{lemma}\label{lem:conv_ICs_with_stds}
  Let $s \in S$ be a simple reflection such that $s \in W_{\scrL}^\circ$, and let $w \in \W{\scrL'}{\scrL}$.
  Then
  \begin{equation}
    \textnormal{rank} H^i (j_x^* (  j_{w!} \scrK_w^{\scrL} \star \IC_s^{\scrL}) ) \cong
    \begin{cases} 1 & \text{if } ws > w, i = -1, \text{ and } x \in \{w, ws\}, \\ 1 & \text{if } ws < w, i = 1, \text{ and } x \in \{w, ws\}, \\ 0 & \text{otherwise.}
    \end{cases}
  \end{equation}
\end{lemma}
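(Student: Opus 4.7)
The plan is to apply Lemma \ref{lem:conv_with_ICs_rewritting} to rewrite the convolution, and then compute the resulting pushforward-pullback via Lemma \ref{lem:pi_s_pushforward_of_stds} and base change. Since $s \in W_{\scrL}^\circ$, Lemma \ref{lem:conv_with_ICs_rewritting} gives
\[ j_{w!} \scrK_w^{\scrL} \star \IC_s^{\scrL} \cong \pi_s^* \pi_{s*} (j_{w!} \scrK_w^{\scrL}) [1], \]
so it suffices to compute the right hand side stratum by stratum.

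The first step is to compute $\pi_{s*} (j_{w!} \scrK_w^{\scrL})$. Writing $j_{w!} \scrK_w^{\scrL} = \Delta_w^{\scrL}[-\ell(w)]$, I split into two cases. When $ws > w$, applying Lemma \ref{lem:pi_s_pushforward_of_stds} with its variable taken to be $ws$ (so that its length hypothesis is satisfied) yields $\pi_{s*} \Delta_w^{\scrL} \cong \Delta_{\overline{w}}^{s, \scrL}$, whence $\pi_{s*} (j_{w!} \scrK_w^{\scrL}) \cong j_{\overline{w}!} \scrK_{\overline{w}}^{s, \scrL}$. When $ws < w$, the lemma applies directly to $w$ and yields $\pi_{s*} \Delta_w^{\scrL} \cong \Delta_{\overline{w}}^{s, \scrL}[-1]$, so that $\pi_{s*} (j_{w!} \scrK_w^{\scrL}) \cong j_{\overline{w}!} \scrK_{\overline{w}}^{s, \scrL}[-2]$, once we account for the minimal length representative of $\overline{w}$ now being $ws$, of length $\ell(w)-1$.

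The second step is to apply $\pi_s^* (-) [1]$. Let $\tilde{j} \colon \eFl_w \cup \eFl_{ws} = \pi_s^{-1}(\eFl^s_{\overline{w}}) \hookrightarrow \eFl$ denote the inclusion of the preimage, and let $\pi'_s$ denote the restriction of $\pi_s$ to it. Base change for the associated cartesian square gives $\pi_s^* (j_{\overline{w}!} \scrK_{\overline{w}}^{s, \scrL}) \cong \tilde{j}_! ((\pi'_s)^* \scrK_{\overline{w}}^{s, \scrL})$. Since $\pi'_s$ is smooth with connected fibers on each of $\eFl_w$ and $\eFl_{ws}$, the pullback $(\pi'_s)^* \scrK_{\overline{w}}^{s, \scrL}$ restricts to a rank one local system concentrated in cohomological degree $0$ on each stratum. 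Restricting to $\eFl_x$, the sheaf $\tilde{j}_!((\pi'_s)^* \scrK_{\overline{w}}^{s, \scrL})$ vanishes outside $\eFl_w \cup \eFl_{ws}$, so the stalks are zero when $x \notin \{w, ws\}$. Combining with the overall cohomological shift---$[1]$ in the first case, $[-1]$ in the second---gives the asserted rank one contributions in degree $-1$ or $+1$, respectively. The main technical point is verifying the base change isomorphism in the twisted-equivariant ind-stack setting of Appendix \ref{apdx:A}; the rest is bookkeeping of shifts and lengths.
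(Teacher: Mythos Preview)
Your proof is correct and follows the same route as the paper: apply Lemma \ref{lem:conv_with_ICs_rewritting}, compute $\pi_{s*}$ via Lemma \ref{lem:pi_s_pushforward_of_stds}, then pull back along $\pi_s$ using base change. The only difference is that the paper goes further and explicitly identifies the restrictions $j_w^* (\pi_s^* \scrK_{\overline{w}}^{s,\scrL})$ and $j_{ws}^* (\pi_s^* \scrK_{\overline{w}}^{s,\scrL})$ as $\scrK_w^{\scrL}$ and $\scrK_{ws}^{\scrL}$ respectively (a computation involving the structure of $\scrL^s$ on root subgroups), whereas you observe that a smooth pullback of a rank one local system is rank one in degree zero, which is all the statement requires; the paper's finer identification is used later (e.g.\ in the proof of Theorem \ref{thm:existence_of_parity} and Lemma \ref{lem:mixed_pis_dt}), so be aware you may need to revisit this if you cite the lemma for those purposes.
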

\begin{proof}
  We will assume that $ws < w$, but we will compute both $j_{w!} \scrK_w^{\scrL} \star \IC_s^{\scrL}$ and $j_{ws!} \scrK_w^{\scrL} \star \IC_s^{\scrL}$.
  Lemma \ref{lem:conv_with_ICs_rewritting} and Lemma \ref{lem:pi_s_pushforward_of_stds} gives isomorphisms
  \[j_{ws!} \scrK_{ws}^{\scrL} \star \IC_s^{\scrL} \cong j_{\overline{w} !} \scrK_{w, ws}^{\scrL} [1] \hspace{0.5cm} \text{and} \hspace{0.5cm} j_{w!} \scrK_{w}^{\scrL} \star \IC_s^{\scrL} \cong j_{\overline{w} !} \scrK_{w,ws}^{\scrL} [-1],\]
  where $\scrK_{w,ws}^{\scrL} = \pi_s^* \scrK_{\overline{w}}^{s, \scrL}$. 
  
  We will first prove that $j_{ws}^* \scrK_{w,ws}^{\scrL} \cong \scrK_{ws}^{\scrL}$.
  Under the identification 
  \[\eFl_{\overline{w}}^s / U \cong U \backslash (U_{ws} \times \dot{ws} L_s/U_s),\] 
  the sheaf $\scrK_{w,ws}^{\scrL}$ corresponds to the $U \times U_s$-equivariant local system $\uk_{U_{ws}} \boxtimes \scrL^s$.\footnote{Note that $\scrK_{w,ws}^{\scrL}$ is not literally a sheaf $\eFl_{\overline{w}}^s / U$ since this stack is not locally of finite type. Instead, $\scrK_{w,ws}^{\scrL}$ is a sheaf on $\eFl_{\overline{w}}^s / (U/J_w)$. We suppress this subtlety here and throughout similar isomorphisms in this proof to make statements simpler. Since $U$ is pro-unipotent, asking for $U$-equivariance is simply a constructibility constraint which coincides with that imposed by $U/J_w$. As a result, this simplification does not lead to any ambiguous sheaf theory.}
  Similarly, under the isomorphism $\eFl_{ws} / U \cong U \backslash (U_{ws} \times \dot{ws} T)$, the sheaf $\scrK_{ws}^{\scrL}$ corresponds to the $U$-equivariant local system $\uk_{U_{ws}} \boxtimes \scrL$. 
  Since $\scrL^s\vert_T \cong \scrL$, we have that $j_{ws}^* \scrK_{w,ws}^{\scrL} \cong \scrK_{ws}^{\scrL}$.
  
  We will now prove that $j_{w}^* \scrK_{w,ws}^{\scrL} \cong \scrK_{w}^{\scrL}$.
  Under the isomorphism $\eFl_{w}/U \cong U \backslash (U_w \times \dot{w} T)$, the sheaf $\scrK_{w}^{\scrL}$ corresponds to the $U$-equivariant local system $\uk_{U_{w}} \boxtimes \scrL$. 
  We claim that via the isomorphism $B\dot{s}B/U \cong U_s \times \dot{s} T$, there is an isomorphism of local systems $\scrL^s \vert_{U_s \times \dot{s} T} \cong \uk_{U_s} \boxtimes \scrL$.
  To this consider the sequence of maps
  \begin{equation}\label{eq:conv_ICs_with_stds_1}
    \begin{tikzcd}
    T \arrow[r, "i", hook] & U_s \times \dot{s} T \arrow[r, "i_s", hook] & L_s/U_s \arrow[r, "\mu", "\sim"'] & L_s/U_s
    \end{tikzcd}
  \end{equation}
    where $i : T \to U_s \times \dot{s} T$ is the map $i(t) = (1,\dot{s}t)$, $i_s$ is the inclusion of $B\dot{s}B/U$ into $L_s/U_s \cong P_s/U$, and $\mu$ is the isomorphism of varieties given by $\mu (x) = \dot{s}^{-1} x$.
    It is clear from its definition that the composition given in \ref{eq:conv_ICs_with_stds_1} coincides with the inclusion map $B \hookrightarrow P_s$.
    As a result, we obtain an isomorphism of local systems $\scrL \cong i^* i_s^* \scrL^s$. Since $i_s^* \scrL^s$ is a rank-1 local system, we conclude that $i_s^* \scrL^s \cong \uk_{U_s} \boxtimes \scrL$.
    Via the isomorphism $\eFl_w/U \cong U \backslash U_{ws} \dot{ws} \times U_s \times \dot{s} T$, we can compute the restriction 
    \[ j_w^* \scrK_{w,ws}^{\scrL} \cong \uk_{U_{ws}} \boxtimes \scrL^s\vert_{U_s \times \dot{s} T} \cong \uk_{U_{ws}} \boxtimes \uk_{U_s} \boxtimes \scrL \cong \scrK_w^{\scrL},\]
    which completes the stalk computation for $\scrK_{w,ws}^{\scrL}$.
\end{proof}

\begin{lemma}\label{lem:conv_canceling_ICs}
  Let $s \in W$ be a simple reflection. Then there are natural isomorphisms
  \[\nabla_s^{s \scrL} \star \Delta_s^{\scrL} \cong \Delta_e^{\scrL} \cong \Delta_s^{s \scrL} \star \nabla_s^{\scrL}.\]
\end{lemma}
\begin{proof}
  By Remark \ref{rem:usefulness_of_finite_parabolics}, it suffices to consider the sheaves $\nabla_s^{s \scrL}$ and $\Delta_s^{\scrL}$ as objects of $\D{\scrL}{s \scrL} (L_s, \k)$ and $\D{s \scrL}{\scrL}
  (L_s, \k)$, respectively.
  Under this identification, the convolution $\nabla_s^{s \scrL} \star \Delta_s^{\scrL}$ corresponds to the convolution on $U_s \backslash L_s / U_s$.
  The result then follows from \cite[Lemma 8.3.4]{Gou} when $s \notin W_{\scrL}^{\circ}$ and Lemma 8.3.7 of \emph{loc. cit.} when $s \in W_{\scrL}^{\circ}$.
\end{proof}

\begin{corollary}\label{cor:conv_with_ICs_bad_s}
  Let $\scrL \in \Ch^{\circ} (T, \k)$.
  Let $s \in W$ be a simple reflection with $s \notin W_{\scrL}^\circ$.
  \begin{enumerate}
    \item The natural maps $\Delta_s^{\scrL} \to \IC_s^{\scrL} \to \nabla_s^{\scrL}$ are isomorphisms.
    \item The functor
      \[ (-) \star \IC_s^{\scrL} : \D{\scrL'}{s \scrL} \to \D{\scrL'}{\scrL}\]
      is an equivalence of triangulated categories which is $t$-exact for the perverse $t$-structure.
    \item Let $w \in \W{\scrL'}{s \scrL}$, then
      \[ \Delta_w^{s \scrL} \star \IC_s^{\scrL} \cong \Delta_{ws}^{\scrL}, \hspace{1cm} \nabla_w^{s \scrL} \star \IC_s^{\scrL} \cong \nabla_{ws}^{\scrL}, \hspace{1cm} \IC_w^{s \scrL} \star \IC_s^{\scrL}
      \cong \IC_{ws}^{\scrL}.\]
  \end{enumerate}
  Moreover, the obvious analogues of (2) and (3) obtained by convolving on the left by $\IC_s^{\scrL}$ also hold.
\end{corollary}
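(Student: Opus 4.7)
The plan is to prove (1) first, use it together with Lemma \ref{lem:conv_cancelling_ICs} and Lemma \ref{lem:conv_rules_1} to obtain the equivalence in (2) and the $\Delta, \nabla$ cases of (3), and only then deduce $t$-exactness and the $\IC$ case.

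For (1), observe that $s \notin W_\scrL^\circ$ is equivalent to $s\scrL \not\cong \scrL$: a simple reflection $s = r_{\alpha_s}$ lies in $W_\scrL^\circ$ iff $\alpha_s \in \Phi_{\re,\scrL}$ by Lemma \ref{lem:endosimple_and_dyer}(1), which is the condition $(\alpha_s^\vee)^*\scrL \cong \uk_{\G_m}$, and this in turn is precisely what it means for $s$ to fix $\scrL$. Hence Proposition \ref{prop:mhc_strat_and_parity}(1) gives $\D{s\scrL}{\scrL}(e, \k) = 0$. Both the kernel of $\Delta_s^\scrL \twoheadrightarrow \IC_s^\scrL$ and the cokernel of $\IC_s^\scrL \hookrightarrow \nabla_s^\scrL$ are perverse sheaves supported on the closed stratum $\eFl_e$, so they vanish and both maps are isomorphisms.

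For (2), combining (1) with Lemma \ref{lem:conv_cancelling_ICs} and Lemma \ref{lem:skyscraper_is_unit} yields
\[\IC_s^{s\scrL} \star \IC_s^\scrL \cong \nabla_s^{s\scrL} \star \Delta_s^\scrL \cong \Delta_e^\scrL,\]
and its symmetric partner $\IC_s^\scrL \star \IC_s^{s\scrL} \cong \Delta_e^{s\scrL}$, so $(-) \star \IC_s^\scrL$ is an equivalence with quasi-inverse $(-) \star \IC_s^{s\scrL}$. For the $\Delta$ case of (3), fix $w \in \W{\scrL'}{s\scrL}$. If $ws > w$, Lemma \ref{lem:conv_rules_1} gives $\Delta_w^{s\scrL} \star \Delta_s^\scrL \cong \Delta_{ws}^\scrL$ and we rewrite $\Delta_s^\scrL = \IC_s^\scrL$ via (1). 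If $ws < w$, the same lemma gives $\Delta_w^{s\scrL} \cong \Delta_{ws}^\scrL \star \Delta_s^{s\scrL}$; convolving on the right by $\IC_s^\scrL = \nabla_s^\scrL$ and applying associativity (Lemma \ref{lem:assoc_of_conv}) with the cancellation $\Delta_s^{s\scrL} \star \nabla_s^\scrL \cong \Delta_e^\scrL$ from Lemma \ref{lem:conv_cancelling_ICs} reduces the composite to $\Delta_{ws}^\scrL$. The $\nabla$ case is entirely analogous.

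To finish (2), the equivalence $(-) \star \IC_s^\scrL$ now sends the family of standards $\{\Delta_w^{s\scrL}\}$ bijectively onto $\{\Delta_{ws}^\scrL\}$ and similarly for costandards; since the perverse $t$-structure is characterized by Hom-vanishing against standards and costandards, the equivalence preserves the perverse heart and is therefore $t$-exact. The $\IC$ case of (3) then follows: $\IC_w^{s\scrL}$ is the image of $\Delta_w^{s\scrL} \to \nabla_w^{s\scrL}$ in the heart, and a $t$-exact equivalence commutes with taking images, giving $\IC_w^{s\scrL} \star \IC_s^\scrL = \mathrm{im}(\Delta_{ws}^\scrL \to \nabla_{ws}^\scrL) = \IC_{ws}^\scrL$. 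The left-convolution analogues are proved by the symmetric argument. The main subtlety I anticipate is the initial identification $s \notin W_\scrL^\circ \Leftrightarrow s\scrL \not\cong \scrL$ (invoking Dyer's description via Lemma \ref{lem:endosimple_and_dyer}(1)); after that, the work is careful bookkeeping of monodromy labels through the length-additivity and cancellation lemmas of \S \ref{sec:equiv_conv}.
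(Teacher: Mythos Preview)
Your argument for parts (2) and (3) is essentially the paper's, and is fine. The gap is in part (1): the implication $s\notin W_\scrL^\circ \Rightarrow s\scrL\not\cong\scrL$ that you rely on is not true in general, and Lemma~\ref{lem:endosimple_and_dyer}(1) does not supply it. The condition $s\notin W_\scrL^\circ$ says exactly that $(\alpha_s^\vee)^*\scrL\not\cong\uk_{\G_m}$, while $s\scrL\cong\scrL$ is the weaker condition that $\chi(\alpha_s^\vee)^d=1$, where $\chi\colon X_*(T)\to\k^\times$ corresponds to $\scrL$ and $d$ is the index of the image of $\alpha_s\colon X_*(T)\to\Z$. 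When $\alpha_s$ is not surjective on $X_*(T)$ these differ: for $G=\SL_2$ one has $d=2$, and taking $\scrL$ to be the sign local system on $T\cong\G_m$ gives $s\scrL\cong\scrL$ yet $(\alpha_s^\vee)^*\scrL\not\cong\uk_{\G_m}$, so $s\notin W_\scrL^\circ$. In that case $\D{s\scrL}{\scrL}(e,\k)=\D{\scrL}{\scrL}(e,\k)\neq 0$ and your vanishing argument collapses, even though the conclusion $\Delta_s^\scrL\cong\nabla_s^\scrL$ is still true.

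The paper's proof of (1) does not pass through $s\scrL\neq\scrL$ at all. It works directly with the hypothesis $(\alpha_s^\vee)^*\scrL\not\cong\uk_{\G_m}$: after forgetting equivariance and reducing via a central isogeny $Z^0\times\SL_2\to L_s$, the stalk computation $j_e^*\nabla_s^\scrL$ becomes a cleanness statement for the pullback of a nontrivial rank-one local system on $\G_m$ to $\A^1\times\G_m\subset\A^2\setminus\{0\}$. Your shortcut is valid precisely when $\alpha_s$ is surjective on $X_*(T)$; to cover all root data allowed by the paper (see \S\ref{sec:review_of_Kac_Moody}, where non-surjective $\alpha_s$ are explicitly accommodated by inverting $2$), you still need a geometric argument of the paper's type for the residual case.
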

\begin{proof}
  \emph{(1): } By Remark \ref{rem:usefulness_of_finite_parabolics}, it suffices to prove the statement for $G$ semisimple of rank 1. There is an open-closed distinguished triangle
  \[ \Delta_s^{\scrL} \to \nabla_s^{\scrL} \to j_{e*} j_e^* \nabla_s^{\scrL} \to.\]
  We will show that $j_e^* \nabla_s^{\scrL} \cong 0$, which will show the desired isomorphism.

  Since the forgetful functor $\For_{T \times T, \scrL' \boxtimes \scrL^{-1}}$ is conservative, it also suffices to simply forget the twisted equivariance on
  $\nabla_s^{\scrL}$ so that $\nabla_s^{\scrL}$ is a constructible sheaf in $\eFl_{L_s}$.
  Explicitly, under the isomorphism $U_s \backslash B_s \dot{s} B_s \cong \A^1 \times \dot{s} T$, $\nabla_s^{\scrL}$ is the $*$-extension of $\scrL^{L_s} \coloneq \uk_{\A^1} \boxtimes \scrL$ to $\eFl_{L_s}$.

  Write $Z^0 = Z^0 (L_s)$ for the identity component of the center of $L_s$. By \cite[\S 1.18]{Jan}, there is a central isogeny
  \[\nu : Z^0 \times \SL_2 \to L_s.\]
  This induces a map
  \[\nu' : Z^0 \times \eFl_{\SL_2} \to \eFl_{L_s},\]
  which satisfies $f (Z^0 \times \eFl_{\SL_2, w} ) \subseteq \eFl_{L_s, w}$ for $w=e,s$. As a result, there is a cartesian diagram
  \[
    \begin{tikzcd}
      {Z^0 \times \eFl_{\SL_2, e}} \arrow[d, "\nu_e'"] \arrow[r, "\id_{Z^0} \times j_e", hook] & Z^0 \times \eFl_{\SL_2} \arrow[d, "\nu'"] & {Z^0 \times \eFl_{\SL_2, s}} \arrow[d, "\nu_s'"] \arrow[l, "\id_{Z^0} \times j_s"', hook'] \\
      {\eFl_{L_s, e}} \arrow[r, "j_e"', hook]                                                        & \eFl_{L_s}                                & {\eFl_{L_s, s},} \arrow[l, "j_s", hook']                                                         
      \end{tikzcd}
  \]
  where the vertical arrows are smooth, surjective, and with connected fibers.
  As a result, in order to show that $j_e^* \nabla_s^{\scrL} \cong j_e^* j_{s*} \scrL^{L_s} [2] = 0$, we can instead show that $(\nu_e')^* j_e^* j_{s*} \scrL^{L_s} = 0$.
  From functorality and smooth base change, we have an isomorphism
  \[(\nu_e')^* j_e^* j_{s*} \scrL^{L_s} \cong (\id_{Z^0} \times j_e)^* (\id_{Z^0} \times j_s)_* (\nu_s')^* \scrL^{L_s}.\]

  Recall that $\eFl_{\SL_2} \cong \A^2 \setminus \{0\}$, $\eFl_{\SL_2, s} \cong \A^1 \times \G_m$, and $\eFl_{\SL_2, e} \cong \G_m \times \{0\}$.
  Under the above identifications, there are projection maps $p_s : \eFl_{L_s, s} \to T$ and $p : \eFl_{\SL_2, s} \to \G_m$.
  These projection maps are compatible in the sense that the following diagram commutes
  \[
    \begin{tikzcd}
      {Z^0 \times \eFl_{\SL_2, s}} \arrow[r, "p"] \arrow[d, "\nu_s'"] & \G_m \arrow[d, "\alpha_s^\vee"] \\
      {\eFl_{L_s,s}} \arrow[r, "p_s"]                                 & T.                              
      \end{tikzcd}
  \]
  Since $\scrL^{L_s} \cong p_s^* \scrL$, functorality gives an isomorphism
  \[ (\nu_s')^* \scrL^{L_s} \cong p^* (\alpha_s^\vee)^* \scrL.\]
  From the definition of $s \notin W_{\scrL}^\circ$, we have that $(\alpha_s^\vee)^* \scrL$ must be a non-trivial rank-one local system $\scrL_{\G_m}$ on $\G_m$.
  A simple sheaf functor computation yields isomorphisms
  \[
    (\id_{Z^0} \times j_e)^* (\id_{Z^0} \times j_s)^* (\nu_s')^* \scrL^{L_s} \cong \uk_{Z^0} \boxtimes j_e^* j_{s*} (\uk_{\A^1} \boxtimes \scrL_{\G_m}).
  \]
  As in Remark \ref{rem:good}, the local system $\uk_{\A^1} \boxtimes \scrL_{\G_m}$ on $\A^1 \times \G_m$ is clean with respect to $\A^2 \setminus \{0\}$; therefore, $j_e^* j_{s*} (\uk_{\A^1} \boxtimes \scrL_{\G_m}) = 0$.

  Since the natural map $\Delta_s^{\scrL} \to \nabla_s^{\scrL}$ is an isomorphism, clearly we will also have natural isomorphisms $\Delta_s^{\scrL} \stackrel{\sim}{\to} \IC_s^{\scrL}
  \stackrel{\sim}{\to} \nabla_s^{\scrL}$.

  \emph{(2), (3): } By Lemma \ref{lem:conv_canceling_ICs}, it is clear that $(-) \star \IC_s^{\scrL}$ is an equivalence of triangulated categories.
  Moreover, a combination of Lemma \ref{lem:conv_canceling_ICs} and Lemma \ref{lem:conv_rules_1} shows that $\Delta_w^{s \scrL} \star \IC_s^{\scrL} \cong \Delta_{ws}^{\scrL}$
  and $\nabla_w^{s \scrL} \star \IC_s^{\scrL} \cong \nabla_{ws}^{\scrL}$ for all $w \in \W{\scrL'}{s \scrL}$.
  By naturality of these isomorphisms, we have that $\IC_w^{s \scrL} \star \IC_s^{\scrL} \cong  \IC_{ws}^{\scrL}$.
  As a result, it is clear that $(-) \star \IC_s^{\scrL}$ is a perverse $t$-exact.
\end{proof}

\begin{corollary}\label{cor:biadjoint_of_conv_with_ICs}
  Let $s \in W$ be a simple reflection. Then the functor
  \[(-) \star \IC_s^{\scrL} : \D{\scrL'}{s \scrL} (\k) \to \D{\scrL'}{\scrL} (\k) \]
  has right adjoint also given by $(-) \star \IC_s^{s \scrL}$.
\end{corollary}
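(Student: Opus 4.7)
The plan is to split the argument into two cases according to whether $s$ lies in $W_{\scrL}^{\circ}$, since the behavior of convolution with $\IC_s^{\scrL}$ is qualitatively different in the two cases, as recorded in Lemma \ref{lem:conv_with_ICs_rewritting} and Corollary \ref{cor:conv_with_ICs_bad_s}. In both cases the strategy is to realize $(-)\star\IC_s^{\scrL}$ in a form whose right adjoint can be written down by inspection.

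If $s \notin W_{\scrL}^{\circ}$, so that $s\scrL \neq \scrL$, Corollary \ref{cor:conv_with_ICs_bad_s}(1) identifies $\Delta_s^{\scrL} \cong \IC_s^{\scrL} \cong \nabla_s^{\scrL}$, and likewise with $s\scrL$ in place of $\scrL$. Feeding these identifications into Lemma \ref{lem:conv_cancelling_ICs} gives $\IC_s^{s\scrL} \star \IC_s^{\scrL} \cong \IC_e^{\scrL}$ and $\IC_s^{\scrL} \star \IC_s^{s\scrL} \cong \IC_e^{s\scrL}$. Combining this with associativity (Lemma \ref{lem:assoc_of_conv}) and the unit property of $\IC_e^{(-)}$ (Lemma \ref{lem:skyscraper_is_unit}), I get that $(-)\star\IC_s^{\scrL}$ and $(-)\star\IC_s^{s\scrL}$ are mutually quasi-inverse equivalences. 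Since the quasi-inverse of an equivalence is automatically biadjoint, the desired right adjoint statement follows immediately in this case.

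If $s \in W_{\scrL}^{\circ}$, then $s\scrL = \scrL$ and both functors are endofunctors of $\D{\scrL'}{\scrL}(\k)$, so the claim reduces to showing that $(-)\star\IC_s^{\scrL}$ is self-right-adjoint. Here I would invoke Lemma \ref{lem:conv_with_ICs_rewritting} to rewrite $(-)\star\IC_s^{\scrL} \cong \pi_s^{*}\pi_{s*}(-)[1]$. Because $P_s/B \cong \P^1$, the map $\pi_s$ is smooth and proper of relative dimension $1$, so $\pi_{s*} \cong \pi_{s!}$ and $\pi_s^{!} \cong \pi_s^{*}[2]$. Taking the right adjoint of the composition (which reverses order and replaces each functor by its right adjoint) gives $\pi_s^{!}\pi_{s*}[-1] \cong \pi_s^{*}\pi_{s*}[1]$, which is again $(-)\star\IC_s^{\scrL}$, as needed.

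The only real technical point is the shift bookkeeping coming from $\pi_s^{!} \cong \pi_s^{*}[2]$; once this relative-dimension-$1$ identification is in hand, the rest is routine adjunction manipulation. The only care required is making sure the local systems $\scrL$ and $s\scrL$ match correctly under the convolution, which is automatic from the fact that $\IC_s^{\scrL} \in \D{s\scrL}{\scrL}(\k)$ and $\IC_s^{s\scrL} \in \D{\scrL}{s\scrL}(\k)$, both of which are symmetric in $\scrL \leftrightarrow s\scrL$ exactly when $s \in W_{\scrL}^{\circ}$.
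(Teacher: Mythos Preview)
Your proof is correct and follows essentially the same approach as the paper's: splitting on whether $s \in W_{\scrL}^{\circ}$, using Corollary~\ref{cor:conv_with_ICs_bad_s} for the equivalence case, and using the push-pull description of Lemma~\ref{lem:conv_with_ICs_rewritting} together with $\pi_s^! \cong \pi_s^*[2]$ for the self-adjoint case. The paper defers the latter computation to \cite[Corollary 3.9]{LY}, whereas you spell it out directly, but the argument is the same.
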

\begin{proof}
  If $s \in W_{\scrL'}^{\circ}$, then the statement follows from the proof of \cite[Corollary 3.9]{LY} after replacing \cite[Lemma 3.8]{LY} with Lemma \ref{lem:conv_with_ICs_rewritting}.
  If $s \notin W_{\scrL'}^{\circ}$, then the statement follows from Corollary \ref{cor:conv_with_ICs_bad_s}.
\end{proof}

We summarize all the various convolution rules for monodromic standards and costandards from this section in the following proposition.

\begin{proposition}\label{prop:conv_rules}
  We have natural isomorphisms
  \begin{enumerate}
    \item $\Delta_{xy}^{\scrL} \cong \Delta_{x}^{y (\scrL)} \star \Delta_y^{\scrL}$ if $\ell (xy) = \ell (x) + \ell (y)$;
    \item $\nabla_{xy}^{\scrL} \cong \nabla_x^{y (\scrL)} \star \nabla_y^{\scrL}$ if $\ell (xy) = \ell (x) + \ell (y)$;
    \item $\nabla_{x^{-1}}^{x (\scrL)} \star \Delta_x^{\scrL} \cong \Delta_e^{\scrL} \cong \Delta_{x^{-1}}^{x (\scrL)} \star \nabla_x^{\scrL}$.
  \end{enumerate}
\end{proposition}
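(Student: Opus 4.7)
The plan is to observe that items (1) and (2) of the proposition are literal restatements of Lemma~\ref{lem:conv_rules_1}, so the only nontrivial content is item (3). I will prove (3) by induction on $\ell(x)$, using (1) and (2) to reduce the length and using Lemma~\ref{lem:conv_cancelling_ICs} as the base step.

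For the base case $\ell(x)=0$ (so $x=e$), the claim is vacuous. For $\ell(x)=1$ (so $x$ is a simple reflection), the statement is exactly Lemma~\ref{lem:conv_cancelling_ICs}. For the inductive step, fix $x$ with $\ell(x)\geq 2$, choose $s\in S$ with $\ell(sx) = \ell(x)-1$, and set $x' = sx$, so that $x = sx'$ with $\ell(x) = \ell(s) + \ell(x')$. Since $s^{-1}=s$, we also have $x^{-1} = x'^{-1}s$ with $\ell(x^{-1}) = \ell(x'^{-1}) + \ell(s)$, and $s\cdot x(\scrL) = s\cdot s x'(\scrL) = x'(\scrL)$. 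Applying (1) and (2) from the proposition (i.e., Lemma~\ref{lem:conv_rules_1}) to these factorizations gives
\[
\Delta_x^{\scrL} \cong \Delta_s^{x'(\scrL)} \star \Delta_{x'}^{\scrL}, \qquad \nabla_{x^{-1}}^{x(\scrL)} \cong \nabla_{x'^{-1}}^{x'(\scrL)} \star \nabla_s^{x(\scrL)}.
\]

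Using associativity of convolution (Lemma~\ref{lem:assoc_of_conv}) together with Lemma~\ref{lem:conv_cancelling_ICs} applied to the simple reflection $s$ (with local system $x'(\scrL)$, noting $x(\scrL) = s\cdot x'(\scrL)$), we compute
\[
\nabla_{x^{-1}}^{x(\scrL)} \star \Delta_x^{\scrL} \cong \nabla_{x'^{-1}}^{x'(\scrL)} \star \bigl(\nabla_s^{s\cdot x'(\scrL)} \star \Delta_s^{x'(\scrL)}\bigr) \star \Delta_{x'}^{\scrL} \cong \nabla_{x'^{-1}}^{x'(\scrL)} \star \Delta_{x'}^{\scrL},
\]
which by the inductive hypothesis is $\Delta_e^{\scrL}$. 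The second isomorphism in (3) is proved symmetrically: the analogous factorizations read $\Delta_{x^{-1}}^{x(\scrL)} \cong \Delta_{x'^{-1}}^{x'(\scrL)} \star \Delta_s^{x(\scrL)}$ and $\nabla_x^{\scrL} \cong \nabla_s^{x'(\scrL)} \star \nabla_{x'}^{\scrL}$, and the other half of Lemma~\ref{lem:conv_cancelling_ICs} collapses the middle pair $\Delta_s^{s\cdot x'(\scrL)} \star \nabla_s^{x'(\scrL)}$ to $\Delta_e^{x'(\scrL)}$, reducing to the inductive hypothesis.

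Since all the ingredients (associativity, the length-additive convolution rules, and the rank-one cancellation) have already been established, there is no real obstacle; the main thing to be careful about is bookkeeping of the monodromies under the action of $s$, in particular the identity $s\cdot x(\scrL) = x'(\scrL)$, which is what makes the two applications of Lemma~\ref{lem:conv_cancelling_ICs} match up correctly.
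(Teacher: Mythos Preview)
Your proposal is correct and follows exactly the approach the paper takes: the paper's proof simply states that (1) and (2) are verbatim Lemma~\ref{lem:conv_rules_1}, and that (3) follows from a combination of (1), (2), and Lemma~\ref{lem:conv_cancelling_ICs}. Your induction on $\ell(x)$ with the careful monodromy bookkeeping is precisely the argument the paper leaves implicit.
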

\begin{proof}
  Statements (1) and (2) are the same verbatim as in Lemma \ref{lem:conv_rules_1}. Statement (3) follows from a combination of (1), (2), and Lemma
  \ref{lem:conv_canceling_ICs}.
\end{proof}

\subsubsection{Blocks}

For the remainder of the section, we fix a $W$-orbit $\fr{o}$ in $\Ch^{\circ} (T, \k)$.
Let $\scrL,\scrL' \in \fr{o}$.
In general, since $W_{\scrL}^\circ$ need not be equal to $W_{\scrL}$, one has to be careful when extending results to all of $W_{\scrL}$. The general strategy will be to analyze the
blocks and describe how they interact.
The block decomposition of $\W{\scrL'}{\scrL}$ gives rise to a block decomposition of $\D{\scrL'}{\scrL} (\k)$.

\begin{definition}\label{def:blocks_of_mhc}
  Let $\beta \in \uW{\scrL'}{\scrL}$ be a block. Let $\D{\scrL'}{\scrL}^\beta (\k)$ be the full triangulated subcategory of $\D{\scrL'}{\scrL} (\k)$ generated by $\{ \Delta_w^{\scrL} \}_{w \in \beta}$.
  We call $\D{\scrL'}{\scrL}^\beta (\k)$ the \emph{$\beta$-block} of $\D{\scrL'}{\scrL} (\k)$.

  When $\beta$ is the coset $W_{\scrL}^\circ$ in $W_{\scrL}$, we will write $\D{\scrL}{\scrL}^\circ (\k)$. This is called the \emph{neutral block} of $\D{\scrL}{\scrL} (\k)$.
\end{definition}

\begin{proposition}[{\cite[Proposition 4.11]{LY}}]\label{prop:block_decomp_of_mhc}
  Let $\scrL,\scrL' \in \fr{o}$. We have a direct sum decomposition of triangulated categories
  \begin{equation}\label{eq:block_decomp_of_mhc_1}
    \D{\scrL'}{\scrL} (\k) = \bigoplus_{\beta \in \uW{\scrL'}{\scrL}} \D{\scrL'}{\scrL}^\beta (\k).
  \end{equation}
\end{proposition}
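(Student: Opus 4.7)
The plan is to verify that $\{\D{\scrL'}{\scrL}^\beta(\k)\}_{\beta}$ gives an internal direct sum decomposition by checking (a) joint generation of $\D{\scrL'}{\scrL}(\k)$ and (b) Hom-orthogonality between distinct blocks. Condition (a) is immediate: by Proposition \ref{prop:mhc_strat_and_parity}(1), $\D{\scrL'}{\scrL}(w, \k)$ vanishes unless $w \in \W{\scrL'}{\scrL}$, so the Bruhat recollement exhibits $\D{\scrL'}{\scrL}(\k)$ as the triangulated hull of the standards $\{\Delta_w^{\scrL} : w \in \W{\scrL'}{\scrL}\}$, which partitions across the blocks.

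For (b), the stratum-wise Hom-vanishing
\[\Hom^n_{\D{\scrL'}{\scrL}(\k)}(\Delta_w^{\scrL}, \nabla_{w'}^{\scrL}) = 0 \quad (w \neq w' \text{ in } \W{\scrL'}{\scrL})\]
holds trivially: the adjunction $(j_{w!}, j_w^*)$ reduces this to the identity $j_{w'}^* j_{w!} = 0$, which holds since $\eFl_w \cap \eFl_{w'} = \emptyset$ as strata.

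The key step, and the main obstacle, is to show that $\D{\scrL'}{\scrL}^\beta(\k)$ is \emph{also} generated by the costandards $\{\nabla_w^{\scrL} : w \in \beta\}$. It suffices to show $\nabla_w^{\scrL} \in \D{\scrL'}{\scrL}^{\beta(w)}(\k)$ (where $\beta(w)$ denotes the block of $w$) and, symmetrically, $\Delta_w^{\scrL} \in \langle \nabla_{w'}^{\scrL} : w' \in \beta(w)\rangle$ for every $w \in \W{\scrL'}{\scrL}$. Fix a reduced expression $w = s_1 \cdots s_m$ and set $\scrL_{m+1} = \scrL$, $\scrL_i = s_i \cdots s_m \scrL$. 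Proposition \ref{prop:conv_rules}(2) yields
\[\nabla_w^{\scrL} \cong \nabla_{s_1}^{\scrL_1} \star \cdots \star \nabla_{s_m}^{\scrL_m}.\]
For each index $i$ with $\scrL_i \neq \scrL_{i+1}$, Corollary \ref{cor:conv_with_ICs_bad_s}(1) gives $\nabla_{s_i}^{\scrL_i} \cong \Delta_{s_i}^{\scrL_i}$, which lies in the singleton block $\{s_i\} \in \uW{\scrL_i}{\scrL_{i+1}}$. For each index $i$ with $\scrL_i = \scrL_{i+1}$ (so $s_i \in W_{\scrL_{i+1}}^\circ$), the open-closed triangle realizes $\nabla_{s_i}^{\scrL_i}$ as an extension of $\Delta_e^{\scrL_i}$ by $\Delta_{s_i}^{\scrL_i}$, both of which lie in the neutral block of $\uW{\scrL_{i+1}}{\scrL_{i+1}}$ since $\{e, s_i\} \subset W_{\scrL_{i+1}}^\circ$. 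Because convolution is compatible with the block composition law $\uW{\scrL_{i-1}}{\scrL_i} \times \uW{\scrL_i}{\scrL_{i+1}} \to \uW{\scrL_{i-1}}{\scrL_{i+1}}$ (read off from Proposition \ref{prop:conv_rules}(1)--(2) by induction on reduced expressions), the iterated product lies in the composite block $\beta_1 \cdots \beta_m = \beta(w)$. The containment $\Delta_w^{\scrL} \in \langle \nabla_{w'}^{\scrL} : w' \in \beta(w)\rangle$ follows by the symmetric argument using Proposition \ref{prop:conv_rules}(1).

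Granted this, devissage through the defining triangles reduces $\Hom^n(\D{\scrL'}{\scrL}^\beta(\k), \D{\scrL'}{\scrL}^{\beta'}(\k))$ to $\Hom^n(\Delta_w^{\scrL}, \nabla_{w'}^{\scrL})$ for $w \in \beta$, $w' \in \beta'$, which vanishes since $\beta \neq \beta'$ forces $w \neq w'$. Orthogonality together with generation then yields the direct sum decomposition. The main difficulty is the block-tracking in the costandard-generation step: careful bookkeeping of the local systems $\scrL_i$ along the reduced expression, and verifying that the convolution composition law on blocks reproduces the $W_{\scrL}^\circ$-coset product in $\uW{\scrL'}{\scrL}$.
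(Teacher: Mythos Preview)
Your overall strategy is sound and aligns with the argument in \cite{LY} that the paper cites: generation by standards is immediate from recollement, and once you know each $\nabla_w^{\scrL}$ lies in $\D{\scrL'}{\scrL}^{\beta(w)}(\k)$, orthogonality follows from $\Hom^\bullet(\Delta_w,\nabla_{w'})=0$ for $w\neq w'$.

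There is, however, a genuine gap in the costandard-generation step. After replacing each good $\nabla_{s_i}^{\scrL_{i+1}}$ by an extension of $\Delta_e$ and $\Delta_{s_i}$, you are reduced to convolutions $\Delta_{t_1}\star\cdots\star\Delta_{t_m}$ with $t_i\in\{e,s_i\}$. Such a subexpression need not be reduced (e.g.\ drop the middle letter from $s_1s_2s_1$ when all three are good), so Proposition~\ref{prop:conv_rules}(1)--(2) alone cannot compute it: those rules only apply when lengths add. Your appeal to ``convolution is compatible with the block composition law'' is exactly Proposition~\ref{prop:conv_preserves_blocks}, and in \cite{LY} that result is proved \emph{using} the block decomposition, so invoking it here would be circular.

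The repair is local. In the inductive step, for $s\in W_{\scrL}^\circ$ and $x'\in\beta(w)$ with $\ell(x's)<\ell(x')$, write $\Delta_{x'}\cong\Delta_{x's}\star\Delta_s$ by rule~(1), so $\Delta_{x'}\star\Delta_s\cong\Delta_{x's}\star(\Delta_s\star\Delta_s)$. The open--closed triangle $\Delta_s\to\nabla_s\to\Delta_e[1]$ convolved with $\Delta_s$, together with $\nabla_s\star\Delta_s\cong\Delta_e$ (Lemma~\ref{lem:conv_cancelling_ICs}, i.e.\ rule~(3), which you did not cite), shows $\Delta_s\star\Delta_s$ is an extension of $\Delta_e$ and a shift of $\Delta_s$. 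Hence $\Delta_{x'}\star\Delta_s$ is built from $\Delta_{x'}$ and $\Delta_{x's}$, both in $\beta(w)$. Alternatively, follow \cite{LY} and prove directly via the push--pull description (Lemmas~\ref{lem:conv_with_ICs_rewritting} and~\ref{lem:conv_ICs_with_stds}) that $j_y^*\nabla_w^{\scrL}=0$ unless $y\in\beta(w)$; recollement then gives the containment.
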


\begin{proposition}[{\cite[Proposition 4.13]{LY}}]\label{prop:conv_preserves_blocks}
  Let $\scrL, \scrL', \scrL'' \in \fr{o}$. Let $\beta \in \uW{\scrL'}{\scrL}$ and $\gamma \in \uW{\scrL''}{\scrL'}$. If $\scrG \in \D{\scrL'}{\scrL}^\beta (\k)$ and $\scrF \in \D{\scrL''}{\scrL'}^\gamma (\k)$,
  then $\scrF \star \scrG \in \D{\scrL''}{\scrL}^{\gamma \beta } (\k)$.
  In particular, convolution restricts to a bifunctor
  \[(-) \star (-) : \D{\scrL''}{\scrL'}^\gamma (\k) \times \D{\scrL'}{\scrL}^\beta (\k) \to \D{\scrL''}{\scrL}^{\gamma \beta} (\k).\]
\end{proposition}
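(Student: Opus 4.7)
The plan is to reduce to the case where both $\scrF$ and $\scrG$ are standards in their respective blocks, and then to reduce further to convolution by a standard indexed by a single simple reflection. Since blocks are defined as the triangulated subcategories generated by $\{\Delta_w^{\scrL} : w \in \beta\}$ and convolution is exact in each variable, it suffices to show that $\Delta_x^{\scrL'} \star \Delta_y^{\scrL} \in \D{\scrL''}{\scrL}^{\gamma\beta}(\k)$ whenever $x \in \gamma$ and $y \in \beta$. Writing $x = s_1 \cdots s_k$ as a reduced expression and iteratively applying Proposition \ref{prop:conv_rules}(1), we may decompose $\Delta_x^{\scrL'}$ as a convolution of simple-reflection standards. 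By associativity of convolution the problem reduces to the following universal statement: for every simple reflection $s \in S$, every $\scrM \in \fr{o}$, every $\alpha \in \uW{\scrM}{\scrL}$, and every $y \in \alpha$, one has $\Delta_s^{\scrM} \star \Delta_y^{\scrL} \in \D{s\scrM}{\scrL}^{\{s\} \cdot \alpha}$, where $\{s\}$ denotes the block of $s$ in $\uW{s\scrM}{\scrM}$.

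The next step is a case analysis on whether $s \in W_{\scrM}^{\circ}$. If $s \notin W_{\scrM}^{\circ}$, then Corollary \ref{cor:conv_with_ICs_bad_s}(1),(3) gives $\Delta_s^{\scrM} \cong \IC_s^{\scrM}$ and $\IC_s^{\scrM} \star \Delta_y^{\scrL} \cong \Delta_{sy}^{\scrL}$, which lies in $syW_{\scrL}^{\circ} = \{s\} \cdot \alpha$. If instead $s \in W_{\scrM}^{\circ}$, then $s\scrM = \scrM$ and $\{s\} \cdot \alpha = \alpha$. In the sub-case $sy > y$, Proposition \ref{prop:conv_rules}(1) gives $\Delta_s^{\scrM} \star \Delta_y^{\scrL} \cong \Delta_{sy}^{\scrL}$; writing $y = w^{\alpha} v$ with $v \in W_{\scrL}^{\circ}$ and invoking Lemma \ref{lem:beta_conj_of_endo_weyl_gps}, which identifies conjugation by $w^{\alpha}$ with an isomorphism of Coxeter groups $W_{\scrM}^{\circ} \stackrel{\sim}{\to} W_{\scrL}^{\circ}$, we conclude $y^{-1} s y \in W_{\scrL}^{\circ}$ and hence $sy \in \alpha$.

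The main obstacle I anticipate is the remaining sub-case: $s \in W_{\scrM}^{\circ}$ with $sy < y$. Here I would apply $(-) \star \Delta_y^{\scrL}$ to the distinguished triangle $\Delta_s^{\scrM} \to \nabla_s^{\scrM} \to C \to$, where the cone $C$ is supported on the identity stratum $\eFl_e$. Decomposing $\Delta_y^{\scrL} \cong \Delta_s^{\scrM} \star \Delta_{sy}^{\scrL}$ via Proposition \ref{prop:conv_rules}(1) (applicable since $y = s \cdot sy$ with $\ell(y) = 1 + \ell(sy)$) and combining with $\nabla_s^{\scrM} \star \Delta_s^{\scrM} \cong \Delta_e^{\scrM}$ from Proposition \ref{prop:conv_rules}(3) simplifies the middle term to $\nabla_s^{\scrM} \star \Delta_y^{\scrL} \cong \Delta_{sy}^{\scrL}$, which sits in $\D{\scrM}{\scrL}^{\alpha}$. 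For the cone term, Proposition \ref{prop:mhc_strat_and_parity}(1) identifies the twisted equivariant derived category on $\eFl_e$ with $D_{\cons}(\Gamma(e) \backslash \{e\}, \k)$, which is generated under cohomological shifts and extensions by the monoidal unit $\Delta_e^{\scrM}$; hence $C \star \Delta_y^{\scrL}$ lies in the triangulated subcategory generated by $\Delta_e^{\scrM} \star \Delta_y^{\scrL} \cong \Delta_y^{\scrL}$ (Lemma \ref{lem:skyscraper_is_unit}), which is contained in $\D{\scrM}{\scrL}^{\alpha}$. Since $\D{\scrM}{\scrL}^{\alpha}$ is closed under extensions, the triangle then forces $\Delta_s^{\scrM} \star \Delta_y^{\scrL}$ into the desired block.

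The hardest step will be this cone analysis: one must carefully leverage the stratum-level description of the identity stratum and verify that the $T$-equivariant cohomological shifts arising in $C$ remain compatible with convolution against $\Delta_y^{\scrL}$. The rest of the argument is largely formal, relying on the convolution calculus already established in Proposition \ref{prop:conv_rules} and the Coxeter-theoretic compatibility encoded in Lemma \ref{lem:beta_conj_of_endo_weyl_gps}.
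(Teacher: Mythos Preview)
The paper gives no proof of this proposition, simply citing \cite[Proposition 4.13]{LY}. Your approach---reduce to generators, factor $\Delta_x^{\scrL'}$ as a convolution of simple-reflection standards via Proposition~\ref{prop:conv_rules}(1), and then do a case analysis on whether $s \in W_{\scrM}^\circ$---is exactly the strategy of \cite{LY} and is correct. The reduction step is slightly under-explained (after one application of $\Delta_s \star(-)$ you no longer have a standard, so you are implicitly combining your ``universal statement'' with exactness of convolution to conclude that $\Delta_s^{\scrM}\star(-)$ carries the whole block $\D{\scrM}{\scrL}^{\alpha}$ into $\D{s\scrM}{\scrL}^{\{s\}\alpha}$), but this is routine.

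The one place requiring more care is your claim that $D_{\cons}(\Gamma(e)\backslash\{e\},\k)$ is generated under shifts and extensions by $\Delta_e^{\scrM}$. For general noetherian $\k$ this is not obvious as stated, and in any case you do not need it: it suffices that the \emph{specific} cone $C$ arising here lies in the triangulated subcategory generated by $\Delta_e^{\scrM}$. This follows directly by combining the two triangles
\[
\Delta_s^{\scrM}\to\IC_s^{\scrM}\to\Delta_e^{\scrM}[1]\to \qquad\text{and}\qquad \Delta_e^{\scrM}[-1]\to\IC_s^{\scrM}\to\nabla_s^{\scrM}\to
\]
(both available from recollement together with Lemma~\ref{lem:pi_s_descent}) via the octahedral axiom, which yields a triangle $\Delta_e^{\scrM}[1]\to C\to\Delta_e^{\scrM}\to$. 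With this in hand, your cone argument for the sub-case $s\in W_{\scrM}^\circ$, $sy<y$ goes through verbatim.
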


\subsubsection{Minimal IC Sheaves}

As we saw, the skyscraper sheaf $\IC_e^{\scrL}$ acts on the right of $\D{\scrL'}{\scrL} (\k)$ as a unit.
We want to construct a version of these sheaves but on each block, rather than just the neutral block.
The definition of the minimal elements in a block $\beta \in \uW{\scrL'}{\scrL}$ suggests that such a replacement should allow us to translate problems in non-neutral blocks to
neutral blocks.
The correct replacement for $\IC_e^{\scrL}$ for non-neutral blocks will be the IC-sheaves corresponding to these minimal elements.

\begin{definition}\label{def:min_IC}
  For $\beta \in \uW{\scrL'}{\scrL}$, the sheaf $\IC_{w^\beta}^{\scrL}$ is called the \emph{minimal IC sheaf} in $\D{\scrL'}{\scrL}^\beta (\k)$.
\end{definition}

We have already seen examples of minimal IC sheaves in Corollary \ref{cor:conv_with_ICs_bad_s}. Namely, if $s \in \W{\scrL'}{\scrL}$ is a simple reflection in $W$ such that $s
\notin W_{\scrL}^\circ$, then $s$ is minimal in the block containing it by Lemma \ref{lem:simple_reflns_are_minimal}.
In particular, $\Delta_s^{\scrL} \cong \IC_s^{\scrL} \cong \nabla_s^{\scrL}$ is a minimal IC sheaf. The following propositions provide a generalization of Corollary
\ref{cor:conv_with_ICs_bad_s} to all blocks. An analogous result is provided by \cite[Proposition 5.2]{LY}, but we provide a slightly different argument.

\begin{proposition}\label{prop:structure_of_min_IC}
  Let $\scrL,\scrL' \in \fr{o}$, and let $\beta \in \uW{\scrL'}{\scrL}$.
  \begin{enumerate}
    \item The natural maps $\Delta_{w^\beta}^{\scrL} \to \IC_{w^\beta}^{\scrL} \to \nabla_{w^\beta}^{\scrL}$ are isomorphisms.
    \item Let $\scrL'' \in \fr{o}$ and $\gamma \in \uW{\scrL''}{\scrL'}$. The functors
      \[\IC_{w^\gamma}^{\scrL'} \star (-) : \D{\scrL'}{\scrL}^{\beta} (\k) \to \D{\scrL''}{\scrL}^{ \gamma \beta } (\k)\]
      and
      \[(-) \star \IC_{w^\beta}^{\scrL} : \D{\scrL''}{\scrL'}^{\gamma} (\k) \to \D{\scrL''}{\scrL}^{\gamma \beta} (\k)\]
      are perverse $t$-exact equivalences of categories.
    \item The equivalence $\IC_{w^\gamma}^{\scrL'} \star (-)$ satisfy
      \[\IC_{w^\gamma}^{\scrL'} \star \Delta_x^{\scrL} \cong \Delta_{w^\gamma x}^{\scrL}, \hspace{0.3cm} \IC_{w^\gamma}^{\scrL'} \star \nabla_x^{\scrL} \cong \nabla_{w^\gamma x}^{\scrL}, \hspace{0.3cm}
      \IC_{w^\gamma}^{\scrL'} \star \IC_x^{\scrL} \cong \IC_{w^\gamma x}^{\scrL}, \]
      for all $x \in \beta$. Analogous statements for the right convolution by $\IC_{w^\beta}^{\scrL}$ also hold.
  \end{enumerate}
\end{proposition}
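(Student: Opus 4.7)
The plan is to prove all three statements simultaneously by induction on $\ell(w^\beta)$, with the base case $\ell(w^\beta) = 0$ (where $w^\beta = e$ is the identity, $\beta$ is the neutral block, and there is nothing to prove beyond Lemma~\ref{lem:skyscraper_is_unit}). The key observation is that a reduced expression $(s_1, \ldots, s_k)$ of $w^\beta$ has the property that every $s_i$ is ``bad'' relative to the appropriate local system, so the full statement reduces to an iterated application of Corollary~\ref{cor:conv_with_ICs_bad_s}.

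More precisely, pick a reduced expression $\uw = (s_1, \ldots, s_k)$ of $w^\beta$ and set $\scrL_{k+1} = \scrL$, $\scrL_i = s_i \cdots s_k \scrL$. Since $w^\beta$ is the minimal element of $\beta$, Lemma~\ref{lem:length_for_blocks}(3) gives $\ell_\beta(w^\beta) = 0$, so $\scrL_i \neq \scrL_{i+1}$ for all $i$. Since any reflection lying in $W_{\scrL_{i+1}}^\circ \subseteq W_{\scrL_{i+1}}$ must stabilize $\scrL_{i+1}$, this forces $s_i \notin W_{\scrL_{i+1}}^\circ$ for each $i$. In particular, Corollary~\ref{cor:conv_with_ICs_bad_s}(1) yields $\Delta_{s_i}^{\scrL_{i+1}} \cong \IC_{s_i}^{\scrL_{i+1}} \cong \nabla_{s_i}^{\scrL_{i+1}}$.

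Let $w' = s_1 \cdots s_{k-1}$, which is the minimal element of its block $\beta' \in \uW{\scrL'}{\scrL_k}$. By induction, (1)--(3) hold for $w' = w^{\beta'}$, so in particular $\Delta_{w'}^{\scrL_k} \cong \IC_{w'}^{\scrL_k} \cong \nabla_{w'}^{\scrL_k}$. Using the length-additive factorization from Proposition~\ref{prop:conv_rules}(1),(2), we may write $\Delta_{w^\beta}^\scrL \cong \Delta_{w'}^{\scrL_k} \star \Delta_{s_k}^\scrL$ and $\nabla_{w^\beta}^\scrL \cong \nabla_{w'}^{\scrL_k} \star \nabla_{s_k}^\scrL$. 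Since $\Delta_{s_k}^\scrL \cong \IC_{s_k}^\scrL \cong \nabla_{s_k}^\scrL$, combining with the inductive hypothesis for $w'$ gives $\Delta_{w^\beta}^\scrL \cong \IC_{w'}^{\scrL_k} \star \IC_{s_k}^\scrL$, and Corollary~\ref{cor:conv_with_ICs_bad_s}(3) identifies this with $\IC_{w^\beta}^\scrL$; the analogous identification for $\nabla_{w^\beta}^\scrL$ establishes (1). For (2), the functor $\IC_{w^\gamma}^{\scrL'} \star (-)$ factors through a composition of the functors $\IC_{s_i}^{(\cdot)} \star (-)$ (by part~(1) and associativity, Lemma~\ref{lem:assoc_of_conv}), each of which is a perverse $t$-exact equivalence by Corollary~\ref{cor:conv_with_ICs_bad_s}(2). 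The same applies to right convolution by $\IC_{w^\beta}^\scrL$. For (3), one convolves Corollary~\ref{cor:conv_with_ICs_bad_s}(3) iteratively along the reduced factorization, using Proposition~\ref{prop:conv_rules}(1) to guarantee the necessary length additivity $\ell(w^\gamma x) = \ell(w^\gamma) + \ell(x)$ (which holds since $w^\gamma$ sends $\Phi_{\re, \scrL'}^+$ to $\Phi^+$ by Lemma~\ref{lem:min_elts_in_blocks}(1), and $x \in \beta$ with $\ell(x) \geq 0$ is built from reflections in $\Phi_{\re, \scrL'}$ after shifting by $w^\beta$; more carefully one uses Lemma~\ref{lem:block_translation_and_order} to move between blocks).

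The main technical obstacle is the length-additivity claim needed in part~(3): one must check that the reduced expression of $w^\gamma$ can be concatenated with a reduced expression of $x \in \beta$ to yield a reduced expression of $w^\gamma x$. This follows from the fact that $w^\gamma$ is minimal in its block (so $w^\gamma \cdot \Phi_{\re, \scrL'}^+ \subseteq \Phi^+$ by Lemma~\ref{lem:min_elts_in_blocks}) together with the characterization of block-lengths in Lemma~\ref{lem:length_for_blocks}; everything else is a bookkeeping of indices through the iterated application of Corollary~\ref{cor:conv_with_ICs_bad_s}.
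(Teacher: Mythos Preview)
Your overall strategy---induction on $\ell(w^\beta)$, peeling off one simple reflection at a time, and reducing everything to Corollary~\ref{cor:conv_with_ICs_bad_s}---is exactly the paper's approach, and your arguments for (1) and (2) are essentially correct. The observation via Lemma~\ref{lem:length_for_blocks}(3) that every $s_i$ in a reduced expression for $w^\beta$ satisfies $s_i \notin W_{\scrL_{i+1}}^\circ$ is the right one; the paper phrases the same fact as a direct length-comparison argument showing $w^\beta s_k = w^{\beta'}$, but your formulation is equivalent.

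There is, however, a genuine gap in your treatment of (3). You assert length additivity $\ell(w^\gamma x) = \ell(w^\gamma) + \ell(x)$ for all $x \in \beta$, and spend the final paragraph trying to justify it. This is false in general: take $W_{\scrL}^\circ$ trivial so that every block is a singleton, and choose $\gamma = \{s\}$, $\beta = \{s\}$ for a simple reflection $s$ with $s\scrL \neq \scrL$; then $w^\gamma = x = s$ and $w^\gamma x = e$, so $\ell(w^\gamma x) = 0 \neq 2$. The fix is that you do not need length additivity at all. Corollary~\ref{cor:conv_with_ICs_bad_s}(3) asserts $\IC_s^{\scrL} \star \Delta_w^{\scrL''} \cong \Delta_{sw}^{\scrL''}$ (and the right-convolution analogue) for \emph{every} $w$, with no hypothesis on $\ell(sw)$ versus $\ell(w)$; this is precisely why Lemma~\ref{lem:conv_cancelling_ICs} is invoked in its proof. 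So your ``iterate Corollary~\ref{cor:conv_with_ICs_bad_s}(3) along the reduced factorization of $w^\gamma$'' works directly, and the appeal to Proposition~\ref{prop:conv_rules}(1) should simply be deleted. This is exactly how the paper argues: it writes $\IC_{w^\beta}^\scrL \cong \IC_{w^{\beta'}}^{s\scrL} \star \IC_s^\scrL$, applies the inductive hypothesis to the first factor, and then applies Corollary~\ref{cor:conv_with_ICs_bad_s}(3) once more, with no length bookkeeping.
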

\begin{proof}
  We will prove all 3 statements by induction on $\ell (w^\beta)$. If $\ell (w^\beta) = 0$, then $w^\beta = e$ and the statements follow from Lemma \ref{lem:skyscraper_is_unit}.
  Suppose the statements hold for $\ell (w^\beta) < n$. Let $\beta \in \uW{\scrL'}{\scrL}$ such that $\ell (w^\beta) = n$.
  We can pick a simple reflection $s \in W$ such that $\ell (w^\beta s) < \ell (w^\beta)$. Note that $\ell (w^\beta s) < \ell (w^\beta)$ forces $s \notin W_{\scrL}^{\circ}$.
  Let $\beta' \in \uW{\scrL'}{s \scrL}$ be the block containing $w^\beta s$.
  If $w^\beta s \neq w^{\beta'}$, then $\ell_{\beta'} (w^{\beta'}) < \ell_{\beta'} (w^\beta s)$. By \ref{eq:endo_vs_bruhat_orders}, we must have that $\ell
  (w^{\beta'}) < \ell (w^\beta s)$.
  As a consequence, we have inequalities
  \[\ell (w^{\beta'} s) \leq \ell (w^{\beta'}) + 1 \leq \ell (w^\beta s) < \ell (w^\beta).\]
  This gives a contradiction since $w^{\beta'} s \in \beta$ and $w^\beta$ is minimal in $\beta$. Therefore, $w^\beta s = w^{\beta'}$.

  By Corollary \ref{cor:conv_with_ICs_bad_s} (3),
  \[\Delta_{w^{\beta'}}^{s \scrL} \star \IC_s^{\scrL} \cong \Delta_{w^{\beta'} s}^{\scrL} \cong \Delta_{w^\beta}^{\scrL}.\]
  Similarly, $\nabla_{w^{\beta'}}^{s \scrL} \star \IC_s^{\scrL} \cong \nabla_{w^\beta}^{\scrL}$.
  Statement (1) then follows from the observation that the natural map $\Delta_{w^{\beta}}^{\scrL} \to \nabla_{w^\beta}^{\scrL}$ factors through isomorphisms
  \[\Delta_{w^{\beta}}^{\scrL} \cong \Delta_{w^{\beta'}}^{s \scrL} \star \IC_s^{\scrL} \cong \nabla_{w^{\beta'}}^{s \scrL} \star \IC_s^{\scrL} \cong \nabla_{w^\beta}^{\scrL},\]
  where the second isomorphism follows from induction.

  Statement (2) follows from (1) and Proposition \ref{prop:conv_rules}. We will just prove (3) for standard sheaves. The proofs for the costandard and IC statements are the same, mutatis mutandis.
  By the inductive hypothesis, $\Delta_w^{\scrL'} \star \IC_{w^{\beta'} }^{s \scrL} \cong \Delta_{ww^{\beta'}}^{s \scrL}$. A short computation yields
  \begin{align*}
    \Delta_w^{\scrL'} \star \IC_{w^\beta}^{\scrL} &\cong \Delta_w^{\scrL'} \star \IC_{w^{\beta'}}^{s \scrL} \star \IC_s^{\scrL} \\
    &\cong \Delta_{ww^{\beta'}}^{s \scrL} \star \IC_s^{\scrL} \\
    &\cong \Delta_{ww^\beta}
  \end{align*}
  where the third isomorphism follows from Corollary \ref{cor:conv_with_ICs_bad_s} (3).
\end{proof}

\subsection{Parity Complexes}

Throughout this section, we assume that $\k$ is a field or a complete local ring.
The goal of this section is to show that parity sheaves exist in the monodromic Hecke category and that convolution preserves parity sheaves.
We have already seen that $\D{\scrL'}{\scrL} (\k)$ satisfies the parity conditions in Proposition \ref{prop:mhc_strat_and_parity}, so we already have that if
parity extensions exist than they are unique up to isomorphism.

Let $\scrL, \scrL' \in \fr{o}$. We will write $\Parity{\scrL'}{\scrL} (\k)$ for the full additive subcategory of $\D{\scrL'}{\scrL} (\k)$ consisting of parity sheaves.
  The category of parity sheaves inherits a block decomposition
  \[\Parity{\scrL'}{\scrL} (\k) = \bigoplus_{\beta \in \uW{\scrL'}{\scrL}} \Parity{\scrL'}{\scrL}^\beta (\k). \]

We can now state the main two results of this section.

\begin{theorem}\label{thm:existence_of_parity}
  Let $\scrL,\scrL' \in \fr{o}$. Let $\beta \in \uW{\scrL'}{\scrL}$.
  For every $w \in \beta$, there is an indecomposable sheaf $\scrE_w^{\scrL} \in \Parity{\scrL'}{\scrL}^\beta (\k)$ satisfying
  \begin{enumerate}
    \item $\scrE_w^{\scrL}$ is supported on $\eFl_{\leq w}$;
    \item $j_w^* \scrE_w^{\scrL} \cong \scrK_w^{\scrL} [\ell (w)]$.
  \end{enumerate}
\end{theorem}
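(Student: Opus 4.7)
The plan is to prove existence of $\scrE_w^{\scrL}$ by strong induction on $\ell(w)$, constructing each as an indecomposable summand of a convolution with $\IC_s^{\scrL}$ for a well-chosen simple reflection $s \in S$.

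The essential preliminary step is to show that convolution $(-) \star \IC_s^{\scrL} : \D{\scrL'}{s\scrL}(\k) \to \D{\scrL'}{\scrL}(\k)$ preserves parity. This splits by whether $s \in W_{\scrL}^{\circ}$. When $s \notin W_{\scrL}^{\circ}$, Corollary~\ref{cor:conv_with_ICs_bad_s}(1) gives $\IC_s^{\scrL} \cong \Delta_s^{\scrL} \cong \nabla_s^{\scrL}$, so $\IC_s^{\scrL}$ is cleanly supported on the single stratum $\eFl_s$, and a direct stratum-by-stratum analysis of the convolution map (using that right-multiplication by $\dot{s}$ is an isomorphism between strata differing in length by one) yields parity preservation. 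When $s \in W_{\scrL}^{\circ}$, Lemma~\ref{lem:conv_with_ICs_rewritting} identifies $(-) \star \IC_s^{\scrL}$ with $\pi_s^{*} \pi_{s*}(-)[1]$; the pushforward $\pi_{s*}$ is even in the sense of Definition~\ref{def:even_mor} because its stratum-to-stratum restrictions are either $\mathbb{P}^1$-bundles or isomorphisms, both of which have free, even-concentrated cohomology, so Proposition~\ref{prop:pushforward_of_even} applies. The smooth pullback $\pi_s^{*}$ preserves parity via $\pi_s^{!} \cong \pi_s^{*}[2]$ and smooth base change on each stratum.

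For the main induction, the base case handles $w = w^{\beta}$ for each $\beta \in \uW{\scrL'}{\scrL}$: Proposition~\ref{prop:structure_of_min_IC}(1) yields $\Delta_{w^{\beta}}^{\scrL} \cong \IC_{w^{\beta}}^{\scrL} \cong \nabla_{w^{\beta}}^{\scrL}$, so this sheaf is cleanly supported on $\eFl_{w^{\beta}}$ with $j_{w^{\beta}}^{*}$- and $j_{w^{\beta}}^{!}$-restrictions both equal to $\scrK_{w^{\beta}}^{\scrL}[\ell(w^{\beta})]$, hence is parity; set $\scrE_{w^{\beta}}^{\scrL} := \IC_{w^{\beta}}^{\scrL}$. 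For the inductive step, given $w$ with $w \neq w^{\beta}$, pick $s \in S$ with $\ell(ws) < \ell(w)$, set $v = ws$, and form $\scrE := \scrE_v^{s\scrL} \star \IC_s^{\scrL}$ using the inductively constructed $\scrE_v^{s\scrL}$. By the preliminary lemma, $\scrE$ is parity and supported in $\eFl_{\leq w}$; Proposition~\ref{prop:uniqueness_of_parity_sh} then decomposes it into indecomposable parity summands. To locate the desired summand, compute $j_w^{*} \scrE$: the distinguished triangle $\Delta_v^{s\scrL} \to \scrE_v^{s\scrL} \to \scrF \to$ (with $\scrF$ supported on strata strictly below $\eFl_v$), after convolution with $\IC_s^{\scrL}$, produces a triangle in which Proposition~\ref{prop:conv_rules}(1) (for $s \notin W_{\scrL}^{\circ}$) or Lemma~\ref{lem:conv_ICs_with_stds} (for $s \in W_{\scrL}^{\circ}$) ensures that $j_w^{*}(\Delta_v^{s\scrL} \star \IC_s^{\scrL})$ contributes a free rank-one $\k$-summand in degree $-\ell(w)$, while the contribution from $\scrF \star \IC_s^{\scrL}$ sits on strata $\neq \eFl_w$. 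Krull--Schmidt forces an indecomposable summand of $\scrE$ supported on $\eFl_{\leq w}$ extending $\scrK_w^{\scrL}[\ell(w)]$; define $\scrE_w^{\scrL}$ to be this summand.

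The main technical obstacle is the parity preservation of $(-) \star \IC_s^{\scrL}$ in the case $s \in W_{\scrL}^{\circ}$: unlike the non-monodromic setting where $\IC_s^{\scrL}$ is a shifted skyscraper on $\eFl_{\leq s}$, here it is a genuine perverse sheaf with a nontrivial composition series, so one must pass through the parabolic reduction via $\pi_s$ and carefully verify the evenness hypotheses of Proposition~\ref{prop:pushforward_of_even} on each stratum of $\eFl^s$, taking care that twisted equivariance does not interfere with the cohomological vanishing on $\mathbb{P}^1$-fibers.
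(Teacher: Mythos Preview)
Your proposal is correct and follows essentially the same strategy as the paper: establish that $(-)\star\IC_s^{\scrL}$ preserves parity (splitting on $s\in W_{\scrL}^{\circ}$ or not, via Corollary~\ref{cor:conv_with_ICs_bad_s} and the push--pull formula with Proposition~\ref{prop:pushforward_of_even}), then build a parity sheaf by iterated convolution, compute $j_w^*$ using the open--closed triangle on the top stratum together with Lemma~\ref{lem:conv_ICs_with_stds}, and extract the indecomposable summand. The only organizational difference is that the paper forms the full Bott--Samelson object $\IC_{s_1}^{\cdots}\star\cdots\star\IC_{s_k}^{\scrL}$ in one stroke and then inducts on the length of the expression to compute the top stalk, whereas you induct on $\ell(w)$ and convolve one $\IC_s^{\scrL}$ onto the previously constructed indecomposable $\scrE_{ws}^{s\scrL}$; since $\scrE_{ws}^{s\scrL}$ is itself a summand of a Bott--Samelson, these are the same construction viewed from slightly different angles. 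Two small inaccuracies to flag: for $s\notin W_{\scrL}^{\circ}$ the functor actually \emph{flips} parity (even to odd), not preserves it, though the conclusion is unaffected; and the stratum-wise fibers of $\pi_s$ are $\A^1$-bundles or isomorphisms (Lemma~\ref{lem:pi_s_pushforward_of_stds}), not $\P^1$-bundles.
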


\begin{theorem}\label{thm:conv_preserves_parity}
  Convolution sends parity sheaves to parity sheaves, in particular, convolution restricts to a bifunctor between additive categories,
  \[(-) \star (-) : \Parity{\scrL''}{\scrL'} (\k) \times \Parity{\scrL'}{\scrL} (\k) \to \Parity{\scrL''}{\scrL} (\k).\]
\end{theorem}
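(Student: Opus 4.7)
The strategy I propose adapts the Bott--Samelson argument of \cite{JMW} to the monodromic setting. The main technical step is to show that convolution with $\IC_s^{\scrL}$ preserves parity for every simple reflection $s$ of $W$. Granted this, for any $w \in \W{\scrL'}{\scrL}$ with reduced expression $\uw = (s_1, \ldots, s_k)$, I set $\scrL_i = s_{i+1} \cdots s_k(\scrL)$ and form the Bott--Samelson object
\[\IC_{\uw}^{\scrL} \coloneqq \IC_{s_1}^{\scrL_1} \star \cdots \star \IC_{s_k}^{\scrL_k}.\]
By induction this is parity, its support lies in $\eFl_{\leq w}$ (iterating Proposition \ref{prop:conv_rules}), and its restriction to the open stratum $\eFl_w$ is a nonzero shift of $\scrK_w^{\scrL}$. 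Combined with Theorem \ref{thm:existence_of_parity} and the uniqueness part of Proposition \ref{prop:uniqueness_of_parity_sh}, the indecomposable parity extension $\scrE_w^{\scrL}$ is a direct summand of $\IC_{\uw}^{\scrL}$. Since every parity sheaf is a finite sum of shifts of such $\scrE_w^{\scrL}$, the convolution $\scrF \star \scrG$ of two parities is a direct summand of an iterated Bott--Samelson, hence parity.

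For the single-reflection claim, I split into cases depending on whether $s \in W_{\scrL}^\circ$. In the first case, Lemma \ref{lem:conv_with_ICs_rewritting} rewrites the functor as
\[(-) \star \IC_s^{\scrL} \cong \pi_s^* \pi_{s *}(-)[1],\]
where $\pi_s$ is the $\P^1$-bundle to the partial flag stack. Since $\pi_s$ is proper, smooth, and even (the fiber $\P^1$ has free cohomology in even degrees only), the Baby Decomposition Theorem (Proposition \ref{prop:pushforward_of_even}) gives that $\pi_{s*}$ preserves parity. The smoothness of $\pi_s$ restricted to each Bruhat stratum then yields the same for $\pi_s^*$ by a direct stalk analysis, so the composition preserves parity.

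If instead $s \notin W_{\scrL}^\circ$, the parabolic averaging is unavailable since $\scrL$ does not extend to $L_s$ by Lemma \ref{lem:extending_character_sheaves_to_levis}. Here Corollary \ref{cor:conv_with_ICs_bad_s} identifies $\IC_s^{\scrL} \cong \Delta_s^{\scrL} \cong \nabla_s^{\scrL}$, so the sheaf is clean on the single stratum $\eFl_s$, and convolution with it is a triangulated equivalence. Parity preservation then follows from a direct $*$- and $!$-stalk computation via base change on the convolution diagram: Corollary \ref{cor:conv_with_ICs_bad_s}(3) controls the behavior on standards and costandards, and one propagates through the recollement triangles associated to the stratification to conclude that the full parity structure is preserved. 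This second case is the main obstacle, since it requires working directly with the clean structure of $\IC_s^{\scrL}$ in place of the parabolic averaging adjunction that made the first case transparent.
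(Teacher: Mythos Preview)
Your proposal is correct and follows the paper's approach. One remark: you have the difficulty reversed---the case $s \notin W_{\scrL}^\circ$ is the easy one (the equivalence of Corollary \ref{cor:conv_with_ICs_bad_s} sends (co)standards to (co)standards with a $\pm 1$ length shift, so stalks and costalks shift by one degree and parity simply flips, as in Lemma \ref{lem:conv_with_ICs_bad_s_parity}), whereas the case $s \in W_{\scrL}^\circ$ carries the actual content: one must verify that $\pi_s$ is even in the sense of Definition \ref{def:even_mor}, which the paper does via the stratum-wise pushforward computation of Lemma \ref{lem:pi_s_pushforward_of_stds} rather than by invoking the global $\P^1$ fiber.
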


These theorems will be proved more-or-less simultaneously. We briefly outline the strategy below.
\begin{enumerate}
  \item Understand how parity changes under convolution by $\IC_s^{\scrL}$ via Proposition \ref{prop:structure_of_min_IC} and Lemma \ref{lem:conv_with_ICs_rewritting}. 
  \item Use convolution to construct parity extensions.
  \item Use Theorem \ref{thm:existence_of_parity} and Proposition \ref{prop:uniqueness_of_parity_sh} to conclude that convolution preserves parity sheaves.
\end{enumerate}

\begin{lemma}\label{lem:conv_with_ICs_bad_s_parity}
  Let $s \in W$ be a simple reflection and $s \notin W_{\scrL}^\circ$.
  The functor $(-) \star \IC_s^{\scrL} : \D{\scrL'}{s \scrL} (\k) \to \D{\scrL'}{\scrL} (\k)$ flips parity, i.e., sends even objects to odd objects (and vice versa).
  Therefore, $(-) \star \IC_s^{s \scrL}$ restricts to a functor,
  \[(-) \star \IC_s^{\scrL} : \Parity{\scrL'}{s \scrL} (\k) \to \Parity{\scrL'}{\scrL} (\k).\]
\end{lemma}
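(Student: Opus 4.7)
The plan is to establish the even-to-odd direction; the reverse then follows by shifting: if $\scrF$ is odd, then $\scrF[-1]$ is even and $\scrF \star \IC_s^{\scrL} \cong (\scrF[-1] \star \IC_s^{\scrL})[1]$ is even. The central idea is to upgrade the cohomology-sheaf definition of parity to a Hom-vanishing characterization, and then transfer that characterization through the adjunction already provided by Corollary \ref{cor:biadjoint_of_conv_with_ICs}. Concretely, using Proposition \ref{prop:mhc_strat_and_parity}(1) (which identifies each stratum category with a twisted equivariant derived category on a point) together with the parity conditions, I would show that $\scrF \in \D{\scrL'}{s\scrL}(\k)$ is $*$-even if and only if, for every $y \in \W{\scrL'}{s\scrL}$, the graded $\k$-module $\Hom^{\bullet}(\scrF, \nabla_y^{s\scrL})$ vanishes in degrees $n \not\equiv \ell(y) \pmod{2}$ and is free in the remaining degrees, and an analogous characterization using $\Hom^{\bullet}(\Delta_y^{s\scrL}, \scrF)$ detects $!$-evenness (the odd conditions simply swap the allowed parities).

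Next, Corollary \ref{cor:conv_with_ICs_bad_s}(2) makes $(-) \star \IC_s^{\scrL}$ an equivalence whose inverse is $(-) \star \IC_s^{s\scrL}$, and hence both its left and right adjoint (the right adjoint being the content of Corollary \ref{cor:biadjoint_of_conv_with_ICs}). Combining this with $\Delta_x^{\scrL} \star \IC_s^{s\scrL} \cong \Delta_{xs}^{s\scrL}$ and $\nabla_x^{\scrL} \star \IC_s^{s\scrL} \cong \nabla_{xs}^{s\scrL}$ (Corollary \ref{cor:conv_with_ICs_bad_s}(3) with the roles of $\scrL$ and $s\scrL$ swapped), I obtain natural isomorphisms
\[\Hom^n(\scrF \star \IC_s^{\scrL}, \nabla_x^{\scrL}) \cong \Hom^n(\scrF, \nabla_{xs}^{s\scrL}),\]
\[\Hom^n(\Delta_x^{\scrL}, \scrF \star \IC_s^{\scrL}) \cong \Hom^n(\Delta_{xs}^{s\scrL}, \scrF)\]
for all $x \in \W{\scrL'}{\scrL}$ and $n \in \Z$. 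Because $s$ is a simple reflection, $\ell(xs) = \ell(x) \pm 1$, so $\ell(xs) \not\equiv \ell(x) \pmod{2}$. Thus if $\scrF$ is even, the right-hand sides vanish and are free in the degrees dictated by the parity of $\ell(xs)$; via these isomorphisms, the left-hand sides satisfy the opposite parity condition indexed by $\ell(x)$, which is precisely the Hom-characterization of $\scrF \star \IC_s^{\scrL}$ being odd.

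The main obstacle is justifying the Hom-vanishing characterization cleanly, since the definition of $*$-parity refers to cohomology sheaves with free local-system stalks while the argument needs a statement about graded Hom-modules. The bridge is again Proposition \ref{prop:mhc_strat_and_parity}(1): the stratum category is equivalent to the derived category of $\Gamma(y)$-equivariant sheaves on a point, and every object there decomposes as a sum of shifts of the trivial sheaf because $H_T^{\bullet}(\pt;\k)$ is a polynomial ring on even-degree generators that is free over $\k$. Once this technical reduction is in place, the remainder of the proof is nothing but bookkeeping of parities of Weyl-group lengths.
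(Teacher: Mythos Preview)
Your proposal is correct and follows essentially the same route as the paper: the paper's proof is the single sentence ``This follows immediately from Corollary~\ref{cor:conv_with_ICs_bad_s},'' and your argument is a careful unpacking of that deferral, using the same corollary (together with its adjunction consequence, Corollary~\ref{cor:biadjoint_of_conv_with_ICs}) and the length shift $\ell(xs)=\ell(x)\pm 1$ to transport the parity condition across the equivalence. Your Hom-characterization via Proposition~\ref{prop:mhc_strat_and_parity}(1) is a legitimate way to make the word ``immediately'' rigorous; an equivalent shortcut is to note that since the equivalence sends $j_{w!}\scrK_w^{s\scrL}$ to $j_{ws!}\scrK_{ws}^{\scrL}[\pm 1]$ and $j_{w*}\scrK_w^{s\scrL}$ to $j_{ws*}\scrK_{ws}^{\scrL}[\pm 1]$, it intertwines the recollement data up to an odd shift, hence swaps even and odd directly at the level of $j_x^{?}$-restrictions.
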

\begin{proof}
  This follows immediately from Corollary \ref{cor:conv_with_ICs_bad_s}.
\end{proof}

Our goal is to establish a similar parity preserving result when $s \in W_{\scrL}^\circ$. To do so, we will use the push-pull lemma (Lemma
\ref{lem:conv_with_ICs_rewritting}) combined with Proposition \ref{prop:pushforward_of_even}.
By Lemma \ref{lem:partial_mhc_is_stratified_and_satisfies_parity_cond}, the partial monodromic Hecke category $\paraD{\scrL'}{\scrL}{s} (\k)$ satisfies the parity conditions.

\begin{lemma}\label{lem:pi_s_and_parity}
  The functors $\pi_{s*}$ and $\pi_s^*$ take parity sheaves to parity sheaves.
\end{lemma}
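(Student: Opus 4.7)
The plan hinges on the geometric fact that $\pi_s$ is a $\P^1$-bundle: under the right action of $P_s/B \cong \P^1$ on $\eFl/B$, the quotient map $\pi_s : \eFl/B \to \eFl/P_s$ is proper and smooth of relative dimension one. In particular, $\pi_s^! \cong \pi_s^*[2]$.

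For $\pi_{s*}$, I would invoke the Baby Decomposition Theorem (Proposition \ref{prop:pushforward_of_even}), so it suffices to verify that $\pi_s$ is even in the sense of Definition \ref{def:even_mor}. Fix a target stratum $\eFl^s_{\overline{w}}/P_s$ and let $w \in \overline{w}$ be the minimal-length representative, so $\ell(ws)=\ell(w)+1$. Inside the $P_s$-orbit $\eFl^s_{\overline{w}}$, the $B$-orbit $\eFl_{ws}$ is open while $\eFl_w$ is closed of codimension one, and after quotienting by $B$ on the right these identify as a section and its complementary $\A^1$-bundle in the $\P^1$-bundle $\eFl^s_{\overline{w}}/B \to \eFl^s_{\overline{w}}/P_s$. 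Thus the two restrictions $\pi_s|_{\eFl_w/B}$ and $\pi_s|_{\eFl_{ws}/B}$ are, respectively, an equivalence of stacks and an $\A^1$-bundle; in both cases the fibers (a point and $\A^1$) have free cohomology concentrated in degree zero, so the argument of the example following Proposition \ref{prop:pushforward_of_even} applies to show that $*$-pushforward along each preserves parity. Hence $\pi_s$ is even.

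For $\pi_s^*$, let $\scrE$ be a parity sheaf in $\paraD{\scrL'}{\scrL}{s}(\k)$. For each $w \in W$ with image $\overline{w} \in W/W_s$, functoriality of $*$- and $!$-pullback applied to the commutative diagram
\[
\begin{tikzcd}
\eFl_w/B \arrow[r, "\pi_{s,w}"] \arrow[d, "j_w"', hook] & \eFl^s_{\overline{w}}/P_s \arrow[d, "j^s_{\overline{w}}", hook] \\
\eFl/B \arrow[r, "\pi_s"'] & \eFl/P_s
\end{tikzcd}
\]
yields
\[
j_w^* \pi_s^* \scrE \cong \pi_{s,w}^* (j^s_{\overline{w}})^* \scrE, \qquad j_w^! \pi_s^* \scrE \cong \pi_{s,w}^! (j^s_{\overline{w}})^! \scrE [-2],
\]
where the second isomorphism uses $\pi_s^! \cong \pi_s^*[2]$. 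The parity hypothesis on $\scrE$ means each $(j^s_{\overline{w}})^?\scrE$ has free local systems as cohomology sheaves, concentrated in cohomological degrees of a single parity. Smooth $*$-pullback preserves the local system property and cohomological degree, while smooth $!$-pullback only introduces an even shift equal to twice the relative dimension of $\pi_{s,w}$. Combined with the extra shift $[-2]$, the parity is preserved on every stratum of $\eFl/B$, so $\pi_s^*\scrE$ is again parity.

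The main obstacle is the geometric bookkeeping in the first step: identifying the stratum-wise restrictions of $\pi_s$ as an equivalence of stacks (on the closed section $\eFl_w/B$, $w$ minimal) or as an $\A^1$-bundle (on the complementary open $\eFl_{ws}/B$). Once this picture is established, both halves of the lemma follow from Proposition \ref{prop:pushforward_of_even} and the standard functoriality of the six-functor formalism for smooth morphisms.
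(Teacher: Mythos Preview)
Your proposal is correct and takes essentially the same approach as the paper. For $\pi_{s*}$, both you and the paper reduce to checking that $\pi_s$ is even (Definition~\ref{def:even_mor}) and invoke Proposition~\ref{prop:pushforward_of_even}; the paper packages the required stratum-wise geometry as ``recollement and Lemma~\ref{lem:pi_s_pushforward_of_stds}'', whose proof establishes exactly the section/$\A^1$-bundle picture you describe. For $\pi_s^*$, the paper argues slightly differently: rather than computing $j_w^!\pi_s^*\scrE$ directly via the smooth restrictions $\pi_{s,w}$, it observes that $\pi_s^*$ trivially preserves $*$-parity and then uses Verdier duality together with $\pi_s^!\cong\pi_s^*[2]$ to handle $!$-parity---a shorter route that avoids tracking the relative dimensions stratum by stratum.
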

\begin{proof}
  We will first prove that $\pi_s^*$ preserves parity sheaves. Let $\scrF \in \paraD{\scrL}{\scrL}{s}$ be $*$-parity.
  By definition, $\pi_s^* \scrF$ is clearly $*$-parity. 
  On the other hand, if $\scrG \in \paraD{\scrL'}{\scrL}{s}$ is $!$-parity, then $\DD (\scrG) \in \paraD{(\scrL')^{-1}}{\scrL^{-1}}{s}$ is $*$-parity.
  Since $\pi_s^! \cong \pi_s^* [2]$, it follows that  $\pi_s^! \DD (\scrG) \cong \pi_s^* \DD (\scrG) [2]$ is also $*$-parity.

  We will now prove that $\pi_{s*}$ preserves parity complexes. By Proposition \ref{prop:pushforward_of_even}, it suffices to check that $\pi_s$ is an even morphism.
  This follows from recollement and Lemma \ref{lem:pi_s_pushforward_of_stds}.
\end{proof}

\begin{corollary}\label{cor:conv_with_ICs_good_s}
  Let $s \in W$ be a simple reflection such that $s \in W_{\scrL}^\circ$, the functor $(-) \star \IC_s^{\scrL} : \D{\scrL'}{\scrL} (\k) \to \D{\scrL'}{\scrL} (\k)$ preserves parity sheaves. As a result, there is a functor
  \[ (-) \star \IC_s^{\scrL} : \Parity{\scrL'}{\scrL} (\k) \to \Parity{\scrL'}{\scrL} (\k). \]
\end{corollary}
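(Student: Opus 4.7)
The plan is to invoke the rewriting in Lemma \ref{lem:conv_with_ICs_rewritting}, which gives a natural isomorphism of functors
\[(-) \star \IC_s^{\scrL} \cong \pi_s^* \pi_{s*} (-) [1],\]
and then to reduce everything to the preservation statement from Lemma \ref{lem:pi_s_and_parity}. Since the parity conditions on $\paraD{\scrL'}{\scrL}{s} (\k)$ are guaranteed by Lemma \ref{lem:partial_mhc_is_stratified_and_satisfies_parity_cond}, so it is meaningful to speak of parity sheaves in the intermediate category.

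Concretely, given $\scrF \in \Parity{\scrL'}{\scrL} (\k)$, I would first apply $\pi_{s*}$ to land in $\paraD{\scrL'}{\scrL}{s} (\k)$; by the second half of Lemma \ref{lem:pi_s_and_parity}, $\pi_{s*} \scrF$ remains parity. Then pulling back via $\pi_s^*$ back to $\D{\scrL'}{\scrL} (\k)$ preserves parity by the first half of the same lemma. Finally, the cohomological shift $[1]$ sends even complexes to odd complexes and vice versa, so parity (being even plus odd) is preserved. Composing these three operations and using the isomorphism above yields $\scrF \star \IC_s^{\scrL} \in \Parity{\scrL'}{\scrL} (\k)$, which is exactly what is claimed.

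There is essentially no obstacle here since the heavy lifting has already been done: the core content is the push-pull identification in Lemma \ref{lem:conv_with_ICs_rewritting}, together with the even-morphism verification underlying Lemma \ref{lem:pi_s_and_parity} (which itself relies on Proposition \ref{prop:pushforward_of_even} and Lemma \ref{lem:pi_s_pushforward_of_stds}). Thus the proof of this corollary should be a short citation of those two results, with only a remark that the cohomological shift preserves the full subcategory of parity objects.
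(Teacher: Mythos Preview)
Your proposal is correct and matches the paper's approach exactly: the paper's proof is the single sentence ``This follows immediately from Lemma \ref{lem:conv_with_ICs_rewritting} and Lemma \ref{lem:pi_s_and_parity},'' and your expanded explanation of how those two ingredients combine (via the shift $[1]$ swapping even and odd) is precisely the intended argument.
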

\begin{proof}
  This follows immediately from Lemma \ref{lem:conv_with_ICs_rewritting} and Lemma \ref{lem:pi_s_and_parity}.
\end{proof}

\begin{midsecproof}{Theorem \ref{thm:existence_of_parity}}
  Let $w \in \beta$. Let $\uw = (s_1, \ldots, s_k)$ be a reduced expression for $w$.
  Define
  \[\scrE_{\uw}^{\scrL} = \IC_{s_1}^{s_2 \ldots s_k (\scrL)} \star \IC_{s_2}^{s_3 \ldots s_k (\scrL)} \star \ldots \star \IC_{s_k}^{\scrL}.\]
  In particular, $\scrE_{\uw}^{\scrL} \in \D{\scrL'}{\scrL}^\beta$.
  By Lemma \ref{lem:conv_with_ICs_bad_s_parity} and Corollary \ref{cor:conv_with_ICs_good_s}, we have that $\scrE_{\uw}^{\scrL}$ is a
  parity sheaf.
  Moreover, $\scrE_{\uw}^{\scrL}$ is supported on $\eFl_{\leq w}$.

  We claim that $j_w^* \scrE_{\uw}^{\scrL} \cong \scrK_w^{\scrL} [\ell (w)]$. We will prove this by induction on $k$. The case of $k=1$ is obvious.
  Let $\ux = (s_1, \ldots, s_{k-1})$ and write $x = x s_k$. Assume that $j_x^* \scrE_{\ux}^{s_k \scrL} \cong \scrK_x^{s_k\scrL} [\ell (x)]$.
  If $s_k \notin W_{\scrL}^{\circ}$, we are done by Proposition \ref{prop:structure_of_min_IC}. As a result, we may assume that $s_k \in W_{\scrL}^{\circ}$.
  By induction, we have a distinguished triangle
  \begin{equation}\label{eq:existence_of_parity_1}
    \Delta_x^{\scrL} \to \scrE_{\ux}^{\scrL} \to j_{< x *} j_{< x}^* \scrE_{\ux}^{\scrL},
  \end{equation}
  where $j_{< x} : \eFl_{< x} \to \eFl$ is the inclusion map.
  Note that $j_{< x *} j_{< x}^* \scrE_{\ux}^{\scrL} \star \IC_{s_k}^{\scrL}$ is supported on $\eFl_{< w}$. 
  As a result, when we apply $j_w^* \left( (-) \star \IC_{s_k}^{\scrL} \right)$ to (\ref{eq:existence_of_parity_1}), we get an isomorphism $j_w^* (\Delta_x^{\scrL} \star \IC_{s_k}^{\scrL}) \stackrel{\sim}{\to} j_w^* \scrE_{\uw}^{\scrL}$.
  By Lemma \ref{lem:conv_ICs_with_stds}, there is an isomorphism $j_w^* (\Delta_x^{\scrL} \star \IC_{s_k}^{\scrL}) \cong \scrK_w^{\scrL} [\ell (w)]$.
  
  The result follows from taking the indecomposable summand $\scrE_w^{\scrL}$ of $\scrE_{\uw}^{\scrL}$ such that $j_w^* \scrE_w^{\scrL} \cong \scrK_w^{\scrL} [\ell (w)]$. 
\end{midsecproof}

\begin{midsecproof}{Theorem \ref{thm:conv_preserves_parity}}
    Let $\beta \in \uW{\scrL'}{\scrL}$ and $\gamma \in \uW{\scrL''}{\scrL'}$.
  By Theorem \ref{thm:existence_of_parity} and Proposition \ref{prop:uniqueness_of_parity_sh}, one has that $\Parity{\scrL'}{\scrL}^\beta (\k)$ is generated under direct sums and
  shifts by $\scrE_w^{\scrL}$ for $w \in \beta$.
  It suffices to check that for $w \in \beta$ and $v \in \gamma$, one has that $\scrE_v^{\scrL'} \star \scrE_w^{\scrL}$ is parity.
  Let $\uw = (s_1, \ldots, s_k)$ be a reduced expression for $w$. The construction of $\scrE_w^{\scrL}$ in the proof of Theorem \ref{thm:existence_of_parity} gives that
   $\scrE_v^{\scrL'} \star \scrE_w^{\scrL}$ is a direct summand of a shift of $\scrE_v^{\scrL'} \star \IC_{s_1}^{s_2 \ldots s_k (\scrL)} \star \ldots \star \IC_{s_k}^{\scrL}$.
  The latter is a parity sheaf by Lemma \ref{lem:conv_with_ICs_bad_s_parity} and Corollary \ref{cor:conv_with_ICs_good_s}. Since any direct summand of a parity sheaf is
  also a parity sheaf, we obtain that $\scrE_v^{\scrL'} \star \scrE_w^{\scrL}$ is parity.
\end{midsecproof}

\subsection{Right Equivariant Hecke Category}

Recall that we have fixed a $W$-orbit $\fr{o}$ in $\Ch^{\circ} (T, \k)$.
Let $\scrL \in \fr{o}$.
We call $\DE{\scrL} (\k) \coloneq D_{\cons} (\eFl /_{\scrL} B, \k)$ the \emph{Hecke category with right equivariant monodromy $\scrL$}.
For $w \in W$, we write $\DE{\scrL} (w, \k) = D_{\cons} (\eFl_w /_{\scrL} B, \k)$.
As discussed in \cite[\S3.2]{Gou}, there is a direct sum decomposition of triangulated categories
\[\DE{\scrL} (\k) \cong \bigoplus_{\scrL' \in \fr{o}} D_{\cons} (T \fatbslash_{\scrL'} \UGU /_{\scrL} T, \k),\]
where $D_{\cons} (T \fatbslash_{\scrL'} \UGU /_{\scrL} T, \k)$ are the full subcategories of $\DE{\scrL} (\k)$ generated by the essential image of the forgetful functors
\[\ForME{\scrL'} : \D{\scrL'}{\scrL} (\k) \to \DE{\scrL} (\k).\]

\begin{lemma}\label{lem:right_equiv_mon_hecke_is_parity}
  The Hecke category with right equivariant monodromy $\scrL$ satisfies the following properties:
  \begin{enumerate}
    \item There are equivalences of categories
      \[\DE{\scrL} (w, \k) \cong D_{\cons}(T /_{\scrL} T, \k) \cong D^b(\mod{\k}^{\fg})\]
      obtained by taking the stalk at $\dot{w}$ for a lift $\dot{w} \in G$ of $w$.
    \item The category $\DE{\scrL} (\k)$ satisfies the parity conditions.
  \end{enumerate}
\end{lemma}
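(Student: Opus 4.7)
The plan is to parallel the argument used for Proposition~\ref{prop:mhc_strat_and_parity}, adapted to the absence of left $T$-equivariance. For part (1), I would first use the Bruhat description $\eFl_w \cong \A^{\ell(w)} \times \dot{w} T$, viewed as a right $B$-variety where $B$ acts through the quotient $B \twoheadrightarrow T$ by translation on the second factor and trivially (up to constructibility) on the affine factor. Because $\A^{\ell(w)}$ is simply connected, any $(B, \scrL)$-equivariant constructible sheaf on this product must be pulled back from a $(T,\scrL)$-equivariant sheaf on $\dot{w}T$ along the projection. The stalk at $\dot{w}$ therefore induces an equivalence
\[ i_{\dot{w}}^* : \DE{\scrL}(w, \k) \stackrel{\sim}{\longrightarrow} D_{\cons}(\dot{w}T /_{\scrL} T, \k). \]
Since $T$ acts freely and transitively on $\dot{w}T$, descent identifies the right-hand side with $D(\mod{\k})$; in the absence of stabilizers the twist by $\scrL$ collapses to the untwisted derived category of the quotient $\dot{w}T/T \cong \pt$.

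For part (2), the cleanest route is via the direct sum decomposition recalled at the top of this subsection,
\[ \DE{\scrL}(\k) \cong \bigoplus_{\scrL' \in \fr{o}} D_{\cons}\bigl(T \fatbslash_{\scrL'} \UGU /_{\scrL} T, \k\bigr). \]
Hom-spaces between sheaves supported in different summands vanish, so the parity conditions for $\DE{\scrL}(\k)$ reduce to the parity conditions on each biequivariant component $\D{\scrL'}{\scrL}(\k)$, which are precisely Proposition~\ref{prop:mhc_strat_and_parity}(2). Alternatively, one can argue directly: by part (1), local systems on strata correspond to finite rank free $\k$-modules, and the relevant $\Hom$-space between $j_{w!}\scrK$ and $j_{w'*}\scrK'$ is controlled by $H_T^\bullet(\pt; \k)$, which is free over $\k$ in every even degree and vanishes in odd degrees.

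The only genuine subtlety I anticipate is justifying the descent identification $D_{\cons}(\dot{w}T /_{\scrL} T, \k) \simeq D(\mod{\k})$ in the presence of a nontrivial character twist: this is a direct instance of the general principle developed in Appendix~\ref{apdx:A}, namely that a free action trivializes the twisting. With that in hand, everything else is a verbatim adaptation of the biequivariant case and presents no further obstacles.
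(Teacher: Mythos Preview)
Your argument for part (1) is correct and matches the paper's approach.

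For part (2), your main route via the direct sum decomposition does not work as stated. The summands $D_{\cons}(T \fatbslash_{\scrL'} \UGU /_{\scrL} T, \k)$ are full subcategories of $\DE{\scrL}(\k)$, not copies of the biequivariant categories $\D{\scrL'}{\scrL}(\k)$. The forgetful functor $\ForME{\scrL'}$ is not fully faithful---on graded Hom-spaces it is $\k \otimes_{R} (-)$ (cf.\ Lemma~\ref{lem:eos_for_right_equiv}(3))---so the parity conditions in a summand are not literally those of Proposition~\ref{prop:mhc_strat_and_parity}(2). Concretely, $\End^\bullet(\IC_e^{\scrL})$ is $R = H_T^\bullet(\pt;\k)$ in $\D{\scrL}{\scrL}(\k)$ but $\k$ in $\DE{\scrL}(\k)$.

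Your alternative route is closer to what is needed, but the claim that the Hom-space is ``controlled by $H_T^\bullet(\pt;\k)$'' repeats the same conflation: that is the biequivariant answer, coming from the residual stabiliser $\Gamma(w)$. In the right equivariant category there is no such stabiliser. Part (1) already identifies $\DE{\scrL}(w,\k)$ with $D(\mod{\k})$, so local systems are finite free $\k$-modules and $\Hom^n(\scrK,\scrK')$ is free for $n=0$ and zero otherwise. This trivially verifies the parity conditions, which is what the paper means by ``routine.''
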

\begin{proof}
  Since $\eFl_w$ is $T\times T$-equivariantly isomorphic to $\A^{\ell (w)} \times \dot{w} T$, the first isomorphism of  (1) easily follows.
  The second isomorphism is standard, for example see the proof of \cite[Lemma 7.4.1]{Gou}. Statement (2) is routine, and is similar to that of
  Proposition \ref{prop:mhc_strat_and_parity} (2).
\end{proof}

Routine modifications in the definition of the biequivariant convolution functor produces a right action of $\D{\scrL}{\scrL} (\k)$ on $\DE{\scrL} (\k)$. More generally, there are bifunctors
\[(-) \star (-) : \DE{\scrL'} (\k) \times \D{\scrL'}{\scrL} (\k) \to \DE{\scrL} (\k), \]
which are suitably associative.

The following shows how the biequivariant convolution and the above action is intertwined by the forgetful functor. Its proof follows readily from definitions.
\begin{lemma}\label{lem:top_mon_and_conv}
  Let $\scrL,\scrL',\scrL'' \in \fr{o}$.
  For all $\scrF \in \D{\scrL''}{\scrL'} (\k)$ and $\scrG \in \D{\scrL'}{\scrL} (\k)$, there is a canonical isomorphism
  \[\ForME{\scrL''} (\scrF \star \scrG) \cong \ForME{\scrL''} (\scrF) \star \scrG.\]
\end{lemma}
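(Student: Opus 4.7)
The plan is to unpack the construction of the biequivariant convolution and verify that $\ForME{\scrL''}$ commutes with each of its constituent steps. Recall that $\scrF \star \scrG = m_!(\scrF \widetilde{\boxtimes} \scrG)$, where $\scrF \widetilde{\boxtimes} \scrG$ on $U \backslash G \times^B G/U$ is produced from $\scrF \boxtimes \scrG$ by $*$-pullback along the projection $p$ and subsequent descent through the diagonal middle $T$-action. The right-module convolution $\ForME{\scrL''}(\scrF) \star \scrG$ is defined by the very same geometric diagram, but with the $(T, \scrL'')$-equivariance on the far left simply dropped at every stage; concretely, one replaces $\scrF$ by $\ForME{\scrL''}(\scrF)$ at the outset and runs the three-step construction using the analogous morphism of twisted stacks that no longer records the $\scrL''$-constraint on the left $T$.

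I would then verify three compatibilities in sequence. First, $\ForME{\scrL''}$ commutes with the external tensor product $(-) \boxtimes (-)$, which is immediate from the definition of the tensor on twisted equivariant sheaves since forgetting equivariance on one factor just drops the corresponding equivariance on the product. Second, $\ForME{\scrL''}$ commutes with $*$-pullback along $p$ and with descent through the middle $T$-action; both facts follow from the naturality of pullback and coinvariants constructions in the twisted-equivariant six-functor formalism of Appendix \ref{apdx:A}, since $\ForME{\scrL''}$ is itself the $*$-pullback along a morphism of twisted (ind)-stacks. Third, $\ForME{\scrL''}$ commutes with the proper pushforward $m_!$ by base change applied to the evident cartesian square obtained by forgetting left $T$-equivariance. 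Chaining these three natural isomorphisms gives the desired canonical isomorphism $\ForME{\scrL''}(\scrF \star \scrG) \cong \ForME{\scrL''}(\scrF) \star \scrG$.

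The main obstacle, such as it is, lies purely in bookkeeping: one must check that the equivariance data and local-system twists match at each step of the three-step construction, and that the various forgetful functors appearing along the way are all restrictions of a single morphism of twisted ind-stacks. Once this setup is in place, each of the three compatibilities is a direct consequence of the general sheaf-theoretic machinery developed in Appendix \ref{apdx:A}, and no new ideas beyond careful tracking of the diagram are required.
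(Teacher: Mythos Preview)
Your proposal is correct and matches the paper's approach: the paper simply states that the lemma ``follows readily from definitions'' without giving any further argument, and your unpacking of the three steps (external product, pullback-and-descent, proper pushforward) together with the observation that $\ForME{\scrL''}$ is a $*$-pullback along a morphism of twisted ind-stacks is exactly the routine verification that phrase is pointing to.
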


Let $\k$ be a complete local ring or a field.
By Lemma \ref{lem:right_equiv_mon_hecke_is_parity}, $\DE{\scrL} (\k)$ satisfies the parity conditions.
We will write $\PE{\scrL} (\k)$ for the category of parity sheaves in $\DE{\scrL} (\k)$.
The forgetful functor $\ForME{\scrL'} : \D{\scrL'}{\scrL} (\k) \to \DE{\scrL} (\k)$ takes parity sheaves to parity sheaves.
The following lemma then follows from Theorems \ref{thm:existence_of_parity} and \ref{thm:conv_preserves_parity}.

\begin{lemma}
  Suppose that $\k$ is a complete local ring or a field.
    For every $w \in W$, there is a unique indecomposable parity extension $\scrE_w^{\scrL}$ of $\scrK_w^{\scrL} [\ell (w)]$ in $\PE{\scrL} (\k)$.
    Moreover, convolution restricts to a bifunctor of parity sheaves
\[(-) \star (-) : \PE{\scrL'} (\k) \times \Parity{\scrL'}{\scrL} (\k) \to \PE{\scrL} (\k).\]
\end{lemma}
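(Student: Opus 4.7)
The plan is to reduce both claims to the already-established biequivariant results (Theorems~\ref{thm:existence_of_parity} and~\ref{thm:conv_preserves_parity}) by means of the forgetful functors $\ForME{\scrL'}$ and the block decomposition $\DE{\scrL}(\k) \cong \bigoplus_{\scrL' \in \fr{o}} D_{\cons}(T \fatbslash_{\scrL'} \UGU /_{\scrL} T, \k)$ recalled at the start of the section.

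For existence of parity extensions: given $w \in W$, set $\scrL' = w(\scrL)$ so that $w \in \W{\scrL'}{\scrL}$. Theorem~\ref{thm:existence_of_parity} produces an indecomposable $\scrE_w^{\scrL} \in \Parity{\scrL'}{\scrL}(\k)$ supported on $\eFl_{\leq w}$ with $j_w^* \scrE_w^{\scrL} \cong \scrK_w^{\scrL}[\ell(w)]$. Since $\ForME{\scrL'}$ sends parity sheaves to parity sheaves, and since forgetting left equivariance commutes with $j_w^*$, the image $\ForME{\scrL'} \scrE_w^{\scrL} \in \PE{\scrL}(\k)$ is a parity sheaf whose restriction to $\eFl_w$ is $\scrK_w^{\scrL}[\ell(w)]$. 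By Lemma~\ref{lem:right_equiv_mon_hecke_is_parity} the category $\DE{\scrL}(\k)$ satisfies the parity conditions, so Proposition~\ref{prop:uniqueness_of_parity_sh} applies: any indecomposable parity summand of $\ForME{\scrL'}\scrE_w^{\scrL}$ whose support contains $\eFl_w$ must restrict to $\scrK_w^{\scrL}[\ell(w)]$ there (the remaining summands are necessarily supported on $\eFl_{<w}$). Selecting such a summand gives the desired $\scrE_w^{\scrL} \in \PE{\scrL}(\k)$. Uniqueness is then immediate from Proposition~\ref{prop:uniqueness_of_parity_sh}~(3).

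For preservation under convolution: let $\scrF \in \PE{\scrL'}(\k)$ and $\scrG \in \Parity{\scrL'}{\scrL}(\k)$. The block decomposition above restricts to an analogous decomposition $\PE{\scrL'}(\k) = \bigoplus_{\scrL'' \in \fr{o}} \PE{\scrL'}^{\scrL''}(\k)$, where each summand is generated (under direct sums and shifts) by the parity sheaves $\ForME{\scrL''}\scrE_v^{\scrL'}$ produced in the previous paragraph, for $v \in \W{\scrL''}{\scrL'}$. By bilinearity it suffices to handle $\scrF = \ForME{\scrL''}\scrF_0$ with $\scrF_0 \in \Parity{\scrL''}{\scrL'}(\k)$. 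Lemma~\ref{lem:top_mon_and_conv} then gives $\scrF \star \scrG \cong \ForME{\scrL''}(\scrF_0 \star \scrG)$, and Theorem~\ref{thm:conv_preserves_parity} ensures $\scrF_0 \star \scrG$ is parity in $\Parity{\scrL''}{\scrL}(\k)$. Applying $\ForME{\scrL''}$ (which preserves parity) finishes the argument.

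The main technical point is the reduction step in the second paragraph: verifying that every indecomposable parity sheaf in $\PE{\scrL}(\k)$ arises as a summand of some $\ForME{\scrL'}\scrE_w^{\scrL}$, i.e.\ that the $\scrL'$-summand of $\PE{\scrL}(\k)$ is genuinely generated by the images of biequivariant indecomposable parity sheaves. This follows from Proposition~\ref{prop:uniqueness_of_parity_sh} combined with the fact that $\ForME{\scrL'}$ lands in the $\scrL'$-summand and produces extensions of every possible $\scrK_w^{\scrL}[\ell(w)]$; there are no other simple objects in $\Loc_{\textnormal{ff}}(\eFl_w /_{\scrL} T, \k)$ to account for, so the classification of indecomposable parity sheaves on each side matches. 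Everything else is a formal consequence of the biequivariant theory and the interaction of convolution with the forgetful functor.
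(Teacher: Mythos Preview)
Your proposal is correct and matches the paper's approach: the paper simply asserts that the lemma follows from Theorems~\ref{thm:existence_of_parity} and~\ref{thm:conv_preserves_parity} after noting that $\ForME{\scrL'}$ preserves parity sheaves, and your argument is a faithful expansion of this using the block decomposition and Lemma~\ref{lem:top_mon_and_conv}. The only difference is level of detail; the underlying strategy is identical.
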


Let $\beta \in \uW{\scrL'}{\scrL}$ be a block.
We will also consider the full subcategory $\PME{\scrL'}{\scrL}^{\beta} (\k)$ of $\PE{\scrL} (\k)$ which is generated under shifts and direct sums by $\scrE_w^{\scrL}$ for $w \in \beta$.
When $\beta$ is the neutral block, we will write $\PME{\scrL}{\scrL}^{\circ} (\k) \coloneq \PME{\scrL}{\scrL}^{\beta} (\k)$.

\subsection{Monodromic Bott--Samelson Hecke Category}

The category of sheaves generated by $\IC_s^{\scrL}$ for $s \in S$ under convolution in the monodromic Hecke category are still worthwhile to study even when $\k$ is not a field or a complete local ring.
The main obstacle that presents itself is the fact that $\D{\scrL'}{\scrL} (\k)$ need not be idempotent complete, so parity extensions may not exist.
To worry less about whether idempotents exist when $\k$ is not a complete local ring, we introduce the monodromic Bott--Samelson category.

Let $\uw = (s_1, \ldots, s_k)$ be an expression of $w \in W$. We define the \emph{Bott--Samelson sheaf},
\[\scrE_{\uw}^{\scrL} \coloneq \IC_{s_1}^{s_{2} \ldots s_k \scrL} \star \ldots \star \IC_{s_k}^{\scrL},\]
which is regarded as either an object of $\D{w \scrL}{\scrL} (\k)$ or $\DE{\scrL} (\k)$.
Note that when $w \in \beta$ for some block $\beta \in \uW{w\scrL}{\scrL}$, then $\scrE_{\uw}^{\scrL} \in \D{w \scrL}{\scrL}^{\beta} (\k)$.

\begin{definition}
  Let $\scrL,\scrL' \in \fr{o}$.
  Let $\Parity{\scrL'}{\scrL}^{\BS} (\k)$ denote the full replete subcategory of $\D{\scrL'}{\scrL} (\k)$ generated by objects of the form $\scrE_{\uw}^{\scrL} [n]$ where $\uw$ is an expression for $w \in \W{\scrL'}{\scrL}$ and $n \in \Z$.
  We call $\Parity{\scrL'}{\scrL}^{\BS} (\k)$ the \emph{monodromic Bott--Samelson category}.

  Let $\beta \in \uW{\scrL'}{\scrL}$ be a block. We can define a full replete subcategory $\Parity{\scrL'}{\scrL}^{\BS, \beta} (\k)$ of $\Parity{\scrL'}{\scrL}^{\BS} (\k)$ generated by objects of the form $\scrE_{\uw}^{\scrL} [n]$ where $\uw$ is an expression for $w \in \beta$ and $n \in \Z$.
\end{definition}

The Bott--Samelson categories inherit a convolution bifunctor
\[(-) \star (-) : \Parity{\scrL''}{\scrL'}^{\BS, \gamma} (\k) \times \Parity{\scrL'}{\scrL}^{\BS, \beta} (\k) \to \Parity{\scrL''}{\scrL}^{\BS, \gamma \beta} (\k),\]
\[\scrE_{\uw}^{\scrL'} [n] \star \scrE_{\ux}^{\scrL} [m] = \scrE_{\uw \ux}^{\scrL} [n+m].\]

Let $x,y \in \W{\scrL'}{\scrL}$. Let $\ux$ and $\uy$ be expressions for $x$ and $y$.
We will write 
\[\Hom_{\Parity{\scrL'}{\scrL}^{\BS} (\k)}^\bullet (\scrE_{\ux}^{\scrL} [n], \scrE_{\uy}^{\scrL} [m] ) \coloneq \bigoplus_{k \in \Z} \Hom_{\Parity{\scrL'}{\scrL}^{\BS} (\k)} (\scrE_{\ux}^{\scrL} [n], \scrE_{\uy}^{\scrL} [m+k]).\]

\begin{lemma}
    If $\k$ is a field or a complete local ring, then the Karoubian envelope of the additive hull of $\Parity{\scrL'}{\scrL}^{\BS, \beta} (\k)$ is equivalent to $\Parity{\scrL'}{\scrL}^{\beta} (\k)$.
\end{lemma}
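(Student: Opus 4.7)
The plan is to produce the equivalence by showing that the obvious comparison functor is fully faithful and essentially surjective.

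First, the proof of Theorem \ref{thm:existence_of_parity} shows, via iterated application of Lemma \ref{lem:conv_with_ICs_bad_s_parity} and Corollary \ref{cor:conv_with_ICs_good_s} to the definition of the Bott--Samelson sheaf, that every $\scrE_{\uw}^{\scrL}$ (for $\uw$ an expression of $w \in \beta$) lies in $\Parity{\scrL'}{\scrL}^{\beta} (\k)$. Hence the inclusion of $\Parity{\scrL'}{\scrL}^{\BS, \beta} (\k)$ into $\Parity{\scrL'}{\scrL}^{\beta} (\k)$ is a fully faithful additive functor. To extend this to the Karoubian envelope of the additive hull via the universal property, it suffices to check that the target is itself additive and Karoubian.

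Second, I would verify that $\Parity{\scrL'}{\scrL}^{\beta} (\k)$ is Karoubian (indeed Krull--Schmidt) when $\k$ is a field or complete local ring. By Proposition \ref{prop:hom_star_and_shriek}, together with the observation that any parity sheaf is supported on a finite union of Bruhat strata $\eFl_{\leq w}$ inside $\beta$, the Hom-spaces between parity sheaves are finitely generated free $\k$-modules. Hence endomorphism rings are module-finite $\k$-algebras. Such algebras are semi-perfect---by Hensel's lemma in the complete local case, by the Artin--Wedderburn theory in the field case---so idempotents lift modulo the Jacobson radical, giving Krull--Schmidt. This yields the required fully faithful extension
\[ F : \text{Kar}\bigl( \Parity{\scrL'}{\scrL}^{\BS, \beta} (\k)^{\oplus} \bigr) \longrightarrow \Parity{\scrL'}{\scrL}^{\beta} (\k). \]

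Third, for essential surjectivity: Proposition \ref{prop:uniqueness_of_parity_sh} classifies the indecomposable objects of $\Parity{\scrL'}{\scrL}^{\beta} (\k)$ up to shift, each being determined by its support $\overline{\eFl_w}$ for some $w \in W$ and by the local system $\scrK_w^{\scrL}[\ell(w)]$ that it extends; membership in the $\beta$-block forces $w \in \beta$. Combined with the Krull--Schmidt property established above, every object of $\Parity{\scrL'}{\scrL}^{\beta} (\k)$ decomposes as a finite direct sum of shifts of the $\scrE_w^{\scrL}$ for $w \in \beta$. The construction in the proof of Theorem \ref{thm:existence_of_parity} realizes each such $\scrE_w^{\scrL}$ as an indecomposable direct summand of $\scrE_{\uw}^{\scrL}$ for any reduced expression $\uw$ of $w$, placing it in the essential image of $F$.

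The main obstacle is the Karoubianity/Krull--Schmidt step: one must bookkeep the Hom-finiteness carefully (using that our parity sheaves are supported on finitely many strata and that the equivariant cohomology of each stratum contributes a finitely generated free $\k$-module by Proposition \ref{prop:mhc_strat_and_parity}). Once this is settled, fully faithfulness comes for free from the universal property of the Karoubi envelope, and essential surjectivity is immediate from the explicit construction in Theorem \ref{thm:existence_of_parity}.
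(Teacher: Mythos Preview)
Your proof is correct and follows essentially the same route as the paper's: both rely on Theorem \ref{thm:existence_of_parity} to realize each $\scrE_w^{\scrL}$ as a summand of a Bott--Samelson object, and on the block decomposition (Proposition \ref{prop:block_decomp_of_mhc}) to ensure that passing to summands does not leave the $\beta$-block. The paper's argument is terser, phrasing the equivalence as a mutual containment of full subcategories inside the ambient derived category; your version is more explicit in verifying the Krull--Schmidt property of the target (via Hom-finiteness and semi-perfect endomorphism rings), which the paper takes for granted.
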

\begin{proof}
  It follows from Theorem \ref{thm:existence_of_parity} that $\Parity{\scrL'}{\scrL}^{\beta} (\k)$ is a full subcategory of the Karoubian envelope of the additive hull of $\Parity{\scrL'}{\scrL}^{\BS, \beta} (\k)$.
  By Proposition \ref{prop:block_decomp_of_mhc}, any summand of the idempotent completion of $\Parity{\scrL'}{\scrL}^{\BS, \beta} (\k)$ is also in the $\beta$-block. The lemma readily follows.
\end{proof}

We can now describe how extension of scalars behaves with respect to the Bott--Samelson category. The following lemma follows from standard properties for parity sheaves (cf., \cite[Lemma 2.2 (2)]{MauR}).
\begin{lemma}\label{lem:eos_for_biequiv}
  Let $\k \to \k'$ be a ring homomorphism. Let $\ux$ and $\uy$ be expressions of elements in $\W{\scrL'}{\scrL}$.
  \begin{enumerate}
    \item  The graded left $H_T^\bullet (\pt; \k)$-module
    \[\Hom_{\Parity{\scrL'}{\scrL}^{\BS} (\k)}^{\bullet} (\scrE_{\ux}^{\scrL} [n], \scrE_{\uy}^{\scrL} [m])\]
    is free.
    \item Extension of scalars induces an isomorphism of $H_T^{\bullet} (\pt; \k')$-modules,
    \[\k' \otimes_{\k} \Hom_{\Parity{\scrL'}{\scrL}^{\BS} (\k)} (\scrE_{\ux}^{\scrL} [n], \scrE_{\uy}^{\scrL} [m]) \stackrel{\sim}{\to} \Hom_{\Parity{\k' (\scrL')}{\k' (\scrL)}^{\BS} (\k')} (\scrE_{\ux}^{\k' (\scrL )} [n], \scrE_{\uy}^{\k' (\scrL) } [m]).\] 
  \end{enumerate}
\end{lemma}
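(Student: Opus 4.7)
The plan is to compute the $\Hom$-spaces in question as $T$-equivariant cohomology of iterated fibrations arising from Bott–Samelson resolutions, so that both freeness over $H_T^{\bullet}(\pt; \k)$ and compatibility with extension of scalars follow from classical facts.

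First I would verify inductively, using the description of $\IC_s^{\scrL}$ from the proof of Lemma \ref{lem:conv_with_ICs_rewritting}, that the Bott–Samelson sheaf $\scrE_{\uw}^{\scrL}$ is isomorphic (up to cohomological shift) to $\mu_{\uw, *}\, \mathcal{L}_{\uw}$, where $\mu_{\uw} : \BS(\uw) = P_{s_1} \times^B \cdots \times^B P_{s_k}/U \to \eFl$ is the multiplication map and $\mathcal{L}_{\uw}$ is a rank-one $(T \times B)$-equivariant local system obtained by pulling $\scrL$ back along natural projections. Since $\mu_{\uw}$ is proper, we also have $\mu_{\uw,*} \cong \mu_{\uw,!}$, giving a concrete geometric model to which the six-functor formalism of Appendix A applies cleanly.

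Next, by adjunction and proper base change applied to the cartesian square formed by $\mu_{\ux}$ and $\mu_{\uy}$, we obtain
\[\Hom^{\bullet}_{\D{\scrL'}{\scrL}(\k)}(\scrE_{\ux}^{\scrL}, \scrE_{\uy}^{\scrL}) \cong \Hom^{\bullet}_{\BS(\ux) \times_{\eFl} \BS(\uy)}(p^* \mathcal{L}_{\ux}, q^! \mathcal{L}_{\uy})\]
up to degree shift, where $p,q$ are the projections from the fibered product. This fibered product admits a $(T \times T)$-equivariant stratification into iterated $\P^1$- and $\A^1$-fibrations over a $T$-orbit, and the local system involved is pulled back from $\scrL$ along the natural map to $T$. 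I would then argue by induction on $\ell(\ux) + \ell(\uy)$ that this equivariant cohomology is a free $H_T^{\bullet}(\pt; \k)$-module. The base case is the equivariant cohomology of a single $T$-orbit with coefficients in a rank-one $T$-equivariant local system, which identifies with $H_T^{\bullet}(\pt; \k)$. The inductive step combines the projection formula with the classical computation $H_T^{\bullet}(\P^1; \k) \cong H_T^{\bullet}(\pt; \k)^{\oplus 2}$, together with the analogous statement for $\A^1$-fibrations. Part (2) then follows automatically, since every step (proper pushforward, projection formula, stratification spectral sequence) commutes with extension of scalars $\k \to \k'$ for constructible sheaves with Noetherian coefficients.

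The main obstacle is tracking the twisted equivariant structure throughout: one must verify that the identification $\scrE_{\uw}^{\scrL} \cong \mu_{\uw, *} \mathcal{L}_{\uw}$ realizes the correct $\scrL$-twist on $\BS(\uw)$, and that the stratification of the fibered product is compatible with this twist so the inductive cohomology computation goes through uniformly. Once these compatibilities are in place, both the freeness statement (1) and the base-change statement (2) reduce to standard facts about $T$-equivariant cohomology of iterated $\P^1$-fibrations.
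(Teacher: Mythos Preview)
Your approach is a valid alternative, but the paper takes a much shorter route via the parity-sheaf machinery of \S\ref{sec:parity} (this is the content of the reference to \cite[Lemma~2.2]{MauR}). The sheaves $\scrE_{\uw}^{\scrL}$ are both $*$-parity and $!$-parity over any $\k$: the arguments of Lemmas~\ref{lem:conv_with_ICs_bad_s_parity}, \ref{lem:pi_s_and_parity} and Corollary~\ref{cor:conv_with_ICs_good_s} are pure sheaf-functor manipulations that do not require $\k$ to be complete local. Proposition~\ref{prop:hom_star_and_shriek} then decomposes $\Hom^\bullet(\scrE_{\ux}^{\scrL}, \scrE_{\uy}^{\scrL})$ as $\bigoplus_w \Hom^\bullet(j_w^* \scrE_{\ux}^{\scrL}, j_w^! \scrE_{\uy}^{\scrL})$, and by Proposition~\ref{prop:mhc_strat_and_parity} each summand lives in $D_{\cons}(\Gamma(w) \backslash \{\dot w\}, \k)$, where it is visibly a free $H_T^\bullet(\pt;\k)$-module compatible with extension of scalars. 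Your direct computation via $\BS(\ux) \times_{\eFl} \BS(\uy)$ amounts to rederiving this stratum decomposition geometrically; note though that this fibered product is not literally an iterated $\P^1/\A^1$-tower over a single $T$-orbit --- what is true is that its projection to $\BS(\uy)$ (an iterated $\P^1$-bundle over $T$) has affinely-paved fibers, or alternatively one first reduces to $\ux = \emptyset$ via Corollary~\ref{cor:biadjoint_of_conv_with_ICs}. The parity framework buys you precisely that the twisted-equivariance bookkeeping you flag as the main obstacle is absorbed once into Proposition~\ref{prop:mhc_strat_and_parity} rather than carried through every inductive step.
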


It will also be useful to define a version of the Bott--Samelson category for the right equivariant category of parity sheaves.
\begin{definition}
  Let $\scrL \in \fr{o}$.
  Let $\PE{\scrL}^{\BS} (\k)$ denote the full replete category of $\DE{\scrL} (\k)$ generated by objects of the form $\scrE_{\uw}^{\scrL}$ where $\uw$ is an expression for $w \in W$ and $n \in \Z$.
  We call $\PE{\scrL}^{\BS} (\k)$ the \emph{right $\scrL$-monodromic Bott--Samelson category}.

  If $\beta \in \uW{\scrL'}{\scrL}$ is a block, we will also consider the full replete subcategory $\PME{\scrL'}{\scrL}^{\BS, \beta} (\k)$ of $\PE{\scrL}^{\BS} (\k)$ generated by the objects $\scrE_{\uw}^{\scrL} [n]$ where $\uw$ is an expression for $w \in \beta$ and $n \in \Z$.
  If $\beta = W_{\scrL}^{\circ}$ is the neutral block, we will write $\PME{\scrL}{\scrL}^{\BS, \beta} (\k) \coloneq \PME{\scrL}{\scrL}^{\BS, \circ}$. 
\end{definition}

Let $x,y \in W$. Let $\ux$ and $\uy$ be expressions for $x$ and $y$.
We will write 
\[\Hom_{\PE{\scrL}^{\BS} (\k)}^\bullet (\scrE_{\ux}^{\scrL} [n], \scrE_{\uy}^{\scrL} [m]) \coloneq \bigoplus_{k \in \Z} \Hom_{\PE{\scrL}^{\BS} (\k)} (\scrE_{\ux}^{\scrL} [n], \scrE_{\uy}^{\scrL} [m+k]).\]

We obtain the following variant of Lemma \ref{lem:eos_for_biequiv}. It also follows from the same argument given in \cite[Lemma 2.2 (2)]{MauR}.
\begin{lemma}\label{lem:eos_for_right_equiv}
  Let $\k \to \k'$ be a ring homomorphism. Let $\ux$ and $\uy$ be expressions of elements in $\W{\scrL'}{\scrL}$.
  \begin{enumerate}
    \item The $\k$-module 
    \[\Hom_{\PE{\scrL}^{\BS} (\k)} (\scrE_{\ux}^{\scrL} [n], \scrE_{\uy}^{\scrL} [m])\]
    is free.
    \item Extension of scalars induces an isomorphism of $\k'$-modules
    \[ \Hom_{\PE{\scrL}^{\BS} (\k)} (\scrE_{\ux}^{\scrL} [n], \scrE_{\uy}^{\scrL} [m]) \stackrel{\sim}{\to} \Hom_{\PE{ \k' (\scrL)}^{\BS} (\k')} (\scrE_{\ux}^{\k' (\scrL)} [n], \scrE_{\uy}^{\k' (\scrL)} [m]).\]
    \item The forgetful functor $\ForME{\scrL'} : \D{\scrL'}{\scrL} (\k) \to \DE{\scrL} (\k)$ induces an isomorphism of graded $\k$-modules
    \[ \k \otimes_{H_T^\bullet (\pt; \k)}\Hom_{\Parity{\scrL'}{\scrL}^{\BS} (\k)} (\scrE_{\ux}^{\scrL} [n], \scrE_{\uy}^{\scrL} [m]) \stackrel{\sim}{\to} \Hom_{\PE{\scrL}^{\BS} (\k)}^{\bullet} (\scrE_{\ux}^{\scrL} [n], \scrE_{\uy}^{\scrL} [m]).\]
  \end{enumerate}
\end{lemma}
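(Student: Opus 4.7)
The plan is to prove part (3) first, then derive parts (1) and (2) as formal consequences of (3) combined with Lemma \ref{lem:eos_for_biequiv}.

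For part (3), I will exploit the fact that the forgetful functor $\ForME{\scrL'}$ corresponds geometrically to forgetting left $(T,\scrL')$-equivariance. At the level of stacks, this is a smooth surjective map $\pi : [\eFl/_\scrL B] \to [T \backslash_{\scrL'} \eFl /_\scrL B]$ whose homotopy fiber is $BT$, and so by the general principle that forgetting equivariance corresponds to base change along $\pt \to BT$ (where $H^\bullet(BT;\k) = H^\bullet_T(\pt;\k)$), one obtains a natural isomorphism
\[
R\Hom_{\DE{\scrL}(\k)}(\ForME{\scrL'}\scrE, \ForME{\scrL'}\scrF) \cong R\Hom_{\D{\scrL'}{\scrL}(\k)}(\scrE, \scrF) \otimes^L_{H^\bullet_T(\pt;\k)} \k
\]
for any $\scrE, \scrF \in \D{\scrL'}{\scrL}(\k)$, where the $H^\bullet_T(\pt;\k)$-module structure on the right comes from the left $T$-equivariance. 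Specializing to Bott-Samelson sheaves and using Lemma \ref{lem:eos_for_biequiv}(1) to observe that the biequivariant Hom is free over $H^\bullet_T(\pt;\k)$, the derived tensor product collapses to an ordinary tensor product, yielding (3).

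Given part (3), part (1) is immediate: the biequivariant Hom is free over $H^\bullet_T(\pt;\k)$, so tensoring it down to $\k$ remains free. For part (2), applying (3) over both $\k$ and $\k'$ reduces the claim to the biequivariant extension-of-scalars isomorphism already given by Lemma \ref{lem:eos_for_biequiv}(2), via the compatibility $\k' \otimes_{H^\bullet_T(\pt;\k)} M \cong \k' \otimes_{H^\bullet_T(\pt;\k')} (\k' \otimes_\k M)$ coming from $H^\bullet_T(\pt;\k') \cong \k' \otimes_\k H^\bullet_T(\pt;\k)$.

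The main obstacle is establishing the base-change formula in part (3) rigorously in the twisted equivariant setting. The cleanest path is to adapt the argument of \cite[Lemma 2.2(2)]{MauR}, which handles a directly analogous statement for ordinary parity sheaves; the twist $\scrL'$ appears on both the source and target of $\ForME{\scrL'}$ and effectively cancels, reducing the computation to the untwisted case once the six-functor formalism of Appendix \ref{apdx:A} is invoked to justify the relevant pullback, pushforward, and base-change manipulations.
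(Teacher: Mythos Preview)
Your proposal is correct and aligns with the paper's approach: the paper does not give a detailed proof but simply states that the lemma ``follows from the same argument given in \cite[Lemma 2.2 (2)]{MauR}'', which is precisely the reference you invoke at the end. Your ordering---establishing (3) via the equivariance-forgetting/base-change relationship and then deducing (1) and (2) from Lemma~\ref{lem:eos_for_biequiv}---is the natural way to unpack that citation, and the observation that freeness over $H_T^\bullet(\pt;\k)$ collapses the derived tensor product is exactly what makes the argument go through.
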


\subsection{Bicategorical Variants}

Up until this point, we have mostly concerned ourselves with the 1-categorical and monoidal structure of the monodromic Hecke category.
It is useful to reformulate this in terms of bicategorical language. 
While the monodromic Bott--Samelson categories can be assembled into a 2-category, the behavior of blocks is somewhat opaque.
In particular, the non-neutral block monodromic-endoscopic equivalence (Theorem \ref{thm:endoscopy_Kac_moody}) is stated for parity sheaves rather than Bott--Samelson sheaves.
As a result, for this section, we will impose the constraint that $\k$ is a complete local ring or a field.
We will explain the necessary foundations from category theory before explaining how to construct bicategorical versions of the monodromic Hecke category.

\subsubsection{Bicategorical Preliminaries}

Let $\fr{C}$ denote a (weak) 2-category.\footnote{Classically, the notion of a 2-category is synonymous with that of a \emph{strict} 2-category. Our notion of 2-category has classically been called a bicategory and does not require that horizontal composition be strictly associative. cf., \cite[\S10.4]{LY} for further discussion on this topic.} Let $x,y \in \Ob (\fr{C})$, we will write ${}_y \fr{C}_x$ for the morphism category $\Hom_{\fr{C}} (x,y)$.
We can associate an ordinary 1-category from $\fr{C}$, denoted $\pi_{\leq 1} \fr{C}$. It has the same object set as $\fr{C}$, but the morphism sets $\Hom_{\pi_{\leq 1} \fr{C}} (x,y)$ are given by the
isomorphism classes of objects in ${}_y \fr{C}_x$.
Let $\Gamma$ be a small groupoid. A \emph{2-category over $\Gamma$} is a 2-category $\fr{C}$ along with a functor $\omega : \pi_{\leq 1} \fr{C} \to \Gamma$.
Note that any small groupoid $\Gamma$ can be regarded as a 2-category over itself.

If $(\fr{C}, \omega)$ is a 2-category over $\Gamma$, and $x,y\in \Ob (\fr{C}), \xi \in {}_{\omega (y)} \Gamma_{\omega (x)}$, we define a full subcategory ${}_y \fr{C}_x^{\xi} $ of $ {}_y \fr{C}_x$
consisting of objects which map to $\xi$ under $\omega$.
It can be easily checked that composition restricts to a bifunctor
\[\circ : {}_z \fr{C}_y^{\eta} \times {}_y \fr{C}_x^{\xi} \to {}_z \fr{C}_x^{\eta \xi},\]
for all $x,y,z \in \Ob (\fr{C})$, $\xi \in {}_{\omega (y)} \Gamma_{\omega (x)}$, and $\eta \in {}_{\omega (z)} \Gamma_{\omega (y)}$. 
When $\xi = \id_{\omega (x)} \in {}_{\omega (x)} \Gamma_{\omega (x)}$, we will often denote ${}_x \fr{C}_x^{\circ} \coloneq {}_x \fr{C}_x^{\id_{\omega (x)}}$. 

\subsubsection{Monodromic Hecke 2-Categories}

We will now apply these categorical preliminaries to construct 2-categorical versions of the monodromic Hecke category. 
Recall that we have fixed a $W$-orbit $\fr{o}$ in $\Ch^{\circ} (T, \k)$.
Let $\Xi$ denote the groupoid defined as follows:
\begin{enumerate}
  \item $\Ob (\Xi) = \fr{o}$
  \item $\Hom_\Xi (\scrL,\scrL') = {}_{\scrL'} \Xi_{\scrL} = \{w^\beta \in W \mid \beta \in \uW{\scrL'}{\scrL} \}$.
\end{enumerate}
We can construct a 2-category $\Parity{}{} (\k)$ (resp. $\D{}{} (\k)$) over $\Xi$. Namely, the object set is
given by $\fr{o}$.
The morphisms from $\scrL$ to $\scrL'$ are given by the categories $\Parity{\scrL'}{\scrL} (\k)$ (resp. $\D{\scrL'}{\scrL} (\k)$)
where composition is given by convolution. By Proposition \ref{prop:conv_preserves_blocks} and \ref{thm:conv_preserves_parity}, this data ensures that $\Parity{}{} (\k)$ (resp. $\D{}{} (\k)$) is a well-defined 2-category over $\Xi$.

\subsection{Monodromic Hecke Algebra}\label{subsec:par_mha}

The monodromic Hecke algebra was defined by Lusztig in the study of non-unipotent character sheaves \cite{Lu19, Lu16}.
It serves as an analogue for the Hecke algebra for non-unipotent representations of finite reductive groups.
The semisimple complexes in the monodromic Hecke category give a categorification of the monodromic Hecke algebra. This gives rise to a geometric interpretation for the
canonical basis.

Some complications arise when moving from finite Weyl groups to infinite ones. Namely, the monodromic Hecke algebra is no longer well-defined due to some sums in the defining relations having infinitely many terms.
This is a shadow of the fact that the monodromic Hecke category failing to be monoidal due to its lack of a unit.
As we saw in the previous section, the monodromic Hecke category can be reformulated into a bicategory.
The decategorification of a 2-category is a 1-category, and in this way, one expects that the monodromic Hecke bicategory decategorifies into a 1-categorical enhancement of the monodromic Hecke algebra called the \emph{monodromic Hecke algebroid}.\footnote{This should not be confused by Williamson's Hecke algebroid which is the decategorification of the 2-category of singular Soergel bimodules.}
As foreshadowed by the usual characteristic $p$ story, the correct incarnation of the monodromic Hecke category will be via parity sheaves.

The monodromic Hecke algebroid possesses all the usual features of the Hecke algebra. 
Namely, our categorification provides a theory of $p$-canonical bases and along with it $p$-Kazhdan--Lusztig cells. 
If $G$ is a reductive group, then we will be able to reduce to a 1-categorification of Lusztig's monodromic Hecke algebra. 

\subsubsection{Definitions}

\begin{definition}
  The \emph{monodromic Hecke algebroid} of $W$ with monodromy $\fr{o}$, denoted by $\mathbf{H}^{\fr{o}}$, is a $\Z [v,v^{-1}]$-linear category whose objects are elements of $\fr{o}$.
  We write ${}_{\scrL'} \mathbf{H}_{\scrL}^{\fr{o}}$ for the morphism space $\Hom_{\mathbf{H}^{\fr{o}}} (\scrL, \scrL')$.
  It is defined as the free $\Z[v,v^{-1}]$-module with basis $\{T_w^{\scrL} \}_{w \in \W{\scrL'}{\scrL}}$.
  Composition in $\mathbf{H}^{\fr{o}}$ is defined by the generating relation
  \begin{equation}
    T_s^{x \scrL} T_x^{\scrL} \coloneq \begin{cases} T_{sx}^{\scrL} & sx > x, \\ (v^{2} - 1) T_x^{\scrL} + v^2 T_{sx}^{\scrL}  & sx < x, s \in W_{x\scrL}^{\circ}, \\ v^2 T_{sx}^{\scrL} & sx < x, s \notin W_{x\scrL}^{\circ}.\end{cases}
  \end{equation}
\end{definition}

We have the following inversion formula,
\[(T_s^{\scrL})^{-1} = \begin{cases} v^{-2} T_s^{\scrL} + (v^{-2} - 1)T_e^{\scrL} & s \in W_{\scrL}^{\circ}, \\ v^{-2} T_s^{s \scrL} & s \notin W_{\scrL}^{\circ}.\end{cases}\]

\subsubsection{Modular Categorification}

Our main result of this section gives a categorification of the monodromic Hecke algebroid in terms of parity sheaves.
For this section, we will assume that $\k$ is a field or a complete local ring.

Let $\fr{C}$ be a $\Z$-linear 2-category. We can define its \emph{split Grothendieck algebroid} $K_{\oplus} (\fr{C})$ as the category whose object set is the same as $\fr{C}$.
The morphisms are given by taking the split Grothendieck group of the morphism categories in $\fr{C}$. 

\begin{theorem}\label{thm:categorification}
 There is an equivalence of categories 
 \[ \ch : K_\oplus (\Parity{}{} (\k)) \to \mathbf{H}^{\fr{o}},\] 
 defined by
  \begin{equation}\label{eq:categorification_1}
    \ch ([\scrF]) = \sum_{\stackrel{w \in \W{\scrL'}{\scrL}}{i \in \Z}} (\textnormal{rank} H^i (j_w^* \scrF)) v^i T_w^{\scrL},
  \end{equation}
  for $\scrF \in \Parity{\scrL'}{\scrL} (\k)$. The equivalence satisfies $\ch ([\scrF [1]]) = v^{-1} \ch ([\scrF])$. Moreover, the set $\{ \ch ([\scrE_w^{\scrL}])\}_{w \in \W{\scrL'}{\scrL}}$ forms a $\Z[v,v^{-1}]$-basis for ${}_{\scrL'} \mathbf{H}_{\scrL}^{\fr{o}}$.
\end{theorem}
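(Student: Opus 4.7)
My plan is to define $\ch$ explicitly by the formula, verify it descends to $K_\oplus$ and lands in the correct Hecke algebroid morphism space, then establish invertibility via a triangular basis argument, and finally show multiplicativity by reducing to convolutions with $\IC_s^{\scrL}$ for simple reflections.

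First I would check that (\ref{eq:categorification_1}) defines a well-defined $\Z[v,v^{-1}]$-linear map $\ch : K_\oplus (\Parity{\scrL'}{\scrL} (\k)) \to {}_{\scrL'} \mathbf{H}^{\fr{o}}_{\scrL}$. Additivity on direct sums follows from additivity of the rank function; the image lies in ${}_{\scrL'} \mathbf{H}^{\fr{o}}_{\scrL}$ because, by Proposition \ref{prop:mhc_strat_and_parity}(1), $j_w^* \scrF$ vanishes unless $w \in \W{\scrL'}{\scrL}$; and the shift formula $\ch ([\scrF[1]]) = v^{-1} \ch ([\scrF])$ is transparent. Next, by Theorem \ref{thm:existence_of_parity}, $\scrE_w^{\scrL}$ is supported on $\eFl_{\leq w}$ with $j_w^* \scrE_w^{\scrL} \cong \scrK_w^{\scrL}[\ell (w)]$, so the parity hypothesis produces an expansion $\ch ([\scrE_w^{\scrL}]) = v^{-\ell (w)} T_w^{\scrL} + \sum_{x<w} c_{x,w} (v) T_x^{\scrL}$ with $c_{x,w} (v) \in \Z_{\geq 0}[v, v^{-1}]$. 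Since $\k$ is a field or complete local ring, $\Parity{\scrL'}{\scrL} (\k)$ is Krull--Schmidt, so Proposition \ref{prop:uniqueness_of_parity_sh} together with Theorem \ref{thm:existence_of_parity} identifies $\{[\scrE_w^{\scrL}]\}_{w \in \W{\scrL'}{\scrL}}$ as a $\Z[v,v^{-1}]$-basis of $K_\oplus (\Parity{\scrL'}{\scrL} (\k))$. Triangularity of the image with respect to the Bruhat order then upgrades $\ch$ to an isomorphism of free $\Z[v,v^{-1}]$-modules on each morphism space, and establishes the basis claim.

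The substantive step is multiplicativity, $\ch ([\scrF \star \scrG]) = \ch ([\scrF]) \cdot \ch ([\scrG])$. By bilinearity and associativity of convolution, and since every indecomposable parity sheaf is a summand of a Bott--Samelson sheaf $\scrE_{\uw}^{\scrL} = \IC_{s_1}^{s_2 \cdots s_k \scrL} \star \cdots \star \IC_{s_k}^{\scrL}$, the problem reduces to showing $\ch ([\IC_s^{\scrL'} \star \scrF]) = \ch ([\IC_s^{\scrL'}]) \cdot \ch ([\scrF])$ for every simple reflection $s$ and every parity $\scrF$. When $s \notin W_{\scrL'}^{\circ}$, Corollary \ref{cor:conv_with_ICs_bad_s} gives $\IC_s^{\scrL'} \cong \Delta_s^{\scrL'}$ and the clean stratum-shifting identities $\IC_s^{\scrL'} \star \Delta_w^{\scrL} \cong \Delta_{sw}^{\scrL}$ (and similarly for costandards and indecomposable parity extensions via Proposition \ref{prop:structure_of_min_IC}); combined with $\ch ([\IC_s^{\scrL'}]) = v^{-1} T_s^{\scrL'}$, these match the Hecke algebroid relation $T_s^{w\scrL} T_w^{\scrL} = v^2 T_{sw}^{\scrL}$ stratum by stratum.

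The hardest case, and the main obstacle, is $s \in W_{\scrL'}^{\circ}$, where the relation $T_s^{x\scrL} T_x^{\scrL} = (v^2 - 1) T_x^{\scrL} + v^2 T_{sx}^{\scrL}$ for $sx < x$ reflects a genuine mixing of stalks on the pair of strata $\eFl_x$ and $\eFl_{sx}$. The approach is to apply the left analogue of Lemma \ref{lem:conv_with_ICs_rewritting}, rewriting $\IC_s^{\scrL'} \star (-)$ as a shifted pull-push through the parabolic projection $\pi_s$, and then to compute stalks on $\eFl_x$ and $\eFl_{sx}$ using base-change identifications in the spirit of Lemma \ref{lem:conv_ICs_with_stds}. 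The contributions combine so that the coefficients $v^2 - 1$ and $v^2$ arise from the even/odd degree shifts produced by the $\P^1$-fibration structure in the pullback, matching the Hecke algebroid relation. Careful tracking of these parity shifts, especially when $\scrF$ itself already carries cohomology on both $\eFl_x$ and $\eFl_{sx}$, is the crux of the argument; once this is verified, the triangular basis already established guarantees that the resulting $\ch$ is an equivalence of $\Z[v,v^{-1}]$-linear categories.
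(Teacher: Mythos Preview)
Your outline matches the paper's proof in structure: triangularity of $\ch([\scrE_w^{\scrL}])$ against the standard basis gives the isomorphism on each morphism space, and multiplicativity is reduced to convolution by a single $\IC_s^{\scrL}$, split into the cases $s\notin W_{\scrL}^{\circ}$ (handled by Corollary~\ref{cor:conv_with_ICs_bad_s}) and $s\in W_{\scrL}^{\circ}$. The paper convolves on the right rather than the left, but this is immaterial.

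Where your proposal is incomplete is exactly the step you flag as the crux. For $s\in W_{\scrL}^{\circ}$ and a general parity $\scrF$, the stalks of $\scrF\star\IC_s^{\scrL}\cong\pi_s^*\pi_{s*}\scrF[1]$ are not determined by the stalks $j_w^*\scrF$ and $j_{ws}^*\scrF$ alone, since extension data along the $\P^1$-fiber could in principle interfere. The paper does not attempt a direct push--pull stalk computation for arbitrary $\scrF$; instead it first establishes (its Step~1) that $\ch$ is additive on any distinguished triangle of $*$-even complexes, because parity forces the long exact sequence in $H^i(j_w^*(-))$ to degenerate into short exact sequences of free modules. This reduces the verification to the single-stratum case $\scrF=j_{w!}\scrK_w^{\scrL}[2n]$, where Lemma~\ref{lem:conv_ICs_with_stds} applies verbatim. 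Your sketch also slightly misidentifies the target identity: one must match $\ch([\IC_s^{\scrL}])\cdot T_w^{\scrL}=v^{-1}(T_s^{\scrL}+T_e^{\scrL})\,T_w^{\scrL}$, which collapses to $v^{\mp 1}(T_w^{\scrL}+T_{ws}^{\scrL})$ in both cases $ws\gtrless w$; the $(v^2-1)$ from the quadratic relation cancels against the $T_e^{\scrL}$ term and never appears in the stalk ranks.
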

\begin{proof}
  Almost all the work in proving the theorem is in establishing that $\ch$ is actually a functor. We will delay the proof of this fact until the end.
  It will be useful to also consider $\ch : K_{\oplus} (\D{}{} (\k)) \to \mathbf{H}^{\fr{o}}$ (not necessarily a functor) defined by the same formula as in (\ref{eq:categorification_1}).

  Suppose $\ch$ is a (not necessarily $\Z [v,v^{-1}]$-linear) functor. Clearly, $\ch$ is essentially surjective. 
  At the level of morphisms, $\ch$ is a well-defined homomorphism of abelian groups which satisfies $\ch ([\scrF[1]]) =
  v^{-1} \ch (\scrF)$. As a result, $\ch$ is a homomorphism of $\Z[v,v^{-1}]$-modules where $v^n [ \scrF ] = [\scrF [-n]]$.
  In particular, $\ch$ is $\Z [v,v^{-1}]$-linear.
  By Proposition \ref{thm:existence_of_parity}, we have that
  \[\ch ([\scrE_w^{\scrL}]) \in v^{-\ell (w)} T_w^{\scrL} + \sum_{\stackrel{x \in \W{w \scrL}{\scrL}}{x < w}} \Z[v,v^{-1}] T_x^{\scrL}.\]
  Since $\{ T_x^{\scrL} \}_{w \in {}_{\scrL'} W_{\scrL}}$ is a $\Z[v,v^{-1}]$-basis for ${}_{\scrL'} \mathbf{H}_{\scrL}^{\fr{o}}$, by the above equation we have that $\{\ch ([\scrE_w^{\scrL}])\}_{w \in \W{\scrL'}{\scrL}}$ is a $\Z[v,v^{-1}]$-basis for ${}_{\scrL'} \mathbf{H}_{\scrL}^{\fr{o}}$.
  Therefore, $\ch$ is fully faithful and an equivalence of categories.

  We will now show that $\ch$ is actually a functor.

  \emph{Step 1. Let $\scrF' \to \scrF \to \scrF'' \to$ be a distinguished triangle of $*$-even (resp. $*$-odd) complexes. Then $\ch ([\scrF]) = \ch ([\scrF']) + \ch ([\scrF''])$.}
  Since all three terms are $*$-even (resp. $*$-odd), after applying $H^i (j_w^* (-))$ to the distinguished triangle, we get short exact sequences
  \[0 \to H^i (j_w^* \scrF') \to H^i (j_w^* \scrF) \to H^i (j_w^* \scrF'') \to 0.\]
  The claim follows from the rank being additive on short exact sequences.

  \emph{Step 2. Let $\scrF \in \D{\scrL'}{s \scrL}$ be a $*$-even object. Then we have $\ch ([\scrF \star \IC_s^{\scrL}]) = \ch ([\scrF]) \ch ([\IC_s^{\scrL}])$ for $s \in W$ a
  simple reflection}.
  By Step 1 and Proposition \ref{prop:mhc_strat_and_parity}, it suffices to take $\scrF = j_{w!} \scrK_w^{s \scrL} [2n]$ for $w \in \W{\scrL'}{s \scrL}$ and $n \in \Z$.

  Let $k = \ell (ws) - \ell (w)$. If $s \notin W_{\scrL}^\circ$, then by Proposition \ref{prop:structure_of_min_IC},
  \[\ch ([j_{w!} \scrK_w^{s \scrL} [2n]] \star \IC_s^{\scrL}) = \ch ([j_{ws!} \scrK_{ws}^{\scrL} [2n+k]]) = v^{-2n-k} T_{ws}^{\scrL}.\]
  On the other hand, directly from the definitions we see that
  \[\ch ([j_{w!} \scrK_w^{s \scrL} [2n]]) = v^{-2n} T_w^{s \scrL} \hspace{0.5cm} \textnormal{and} \hspace{0.5cm} \ch([\IC_s^{\scrL}]) = v^{-1} T_s^{\scrL}.\]
  Using the defining relations for the monodromic Hecke algebroid, we get equalities
  \begin{equation*}
    \ch ([j_{w!} \scrK_w^{s \scrL} [2n]]) \ch([\IC_s^{\scrL}]) = \begin{cases} v^{-2n-1} T_{ws}^{\scrL} & k = 1, \\ v^{-2n+1} T_{ws}^{\scrL} & k = -1. \end{cases}
  \end{equation*}

  Suppose $s \in W_{\scrL}^\circ$. By Lemma \ref{lem:conv_ICs_with_stds}, we have that
  \[\ch ([j_{w!} \scrK_w^{\scrL} [2n]] \star \IC_s^{\scrL}) = v^{-2n-k} (T_w^{\scrL} + T_{ws}^{\scrL}).\]
  Using the defining relations in the monodromic Hecke algebroid, we get equalities
  \begin{equation*}
    \ch ([j_{w!} \scrK_w^{\scrL} [2n]]) \ch([\IC_s^{\scrL}]) = \begin{cases}  v^{-2n-1} (T_{ws}^{\scrL} + T_w^{\scrL})  & k =1, \\  v^{-2n+1} (T_{ws}^{\scrL} + T_w^{\scrL})& k = -1. \end{cases}
  \end{equation*}
  Therefore, for all simple reflections $s$, we have that
  \[\ch ([j_{w!} \scrK_w^{s \scrL} [2n]]) \ch([\IC_s^{\scrL}]) = \ch ([j_{w!} \scrK_w^{s \scrL} [2n] \star \IC_s^{\scrL}]),\]
  which completes step 2.

  \emph{Step 3. Let $\scrF \in \Parity{\scrL''}{\scrL'} (\k)$ and $\scrF' \in \Parity{\scrL'}{\scrL} (\k)$, we have that $\ch ([\scrF' \star \scrF]) = \ch ([\scrF']) \ch ([\scrF]).$ Therefore, $\ch$ is a functor.}
  Since $\scrF'$ is a direct sum of a $*$-even object and $*$-object, without loss of generality, we may assume that $\scrF'$ is itself $*$-even (the $*$-odd case can be
  recovered from $\Z [v,v^{-1}]$-linearity).
  It is also enough to prove the claim for $\scrF = \scrE_w^{\scrL}$. We will induct on $\ell (w)$.
  If $\ell (w) = 0$, i.e., $w = e$, then since $\ch ([\IC_e^{\scrL}]) = T_e^{\scrL}$ the claim is obvious.
  If $\ell (w) = 1$, i.e., $w = s$ is a simple reflection, the claim follows from Step 2.

  Suppose $\ell (w) > 1$. Pick a reduced expression $w = s_1 \ldots s_k$. By Theorem \ref{thm:existence_of_parity}, we have a decomposition
  \[\IC_{s_1}^{s_2 \ldots s_k \scrL} \star \ldots \star \IC_{s_k}^{\scrL} = \IC_w^{\scrL} \oplus \scrF_{< w},\]
  where $\scrF_{< w}$ is a direct sum of shifts of $\scrE_x^{\scrL}$ for $x < w$ and $x \in \W{\scrL'}{\scrL}$.
  By induction, we have
  \[\ch ([\scrF' \star \scrF_{< w}]) = \ch ([\scrF']) \ch (\scrF_{< w}).\]
  On the other hand, by iterating Step 2, we have that
  \[\ch ([\scrF' \star \IC_{s_1}^{s_2 \ldots s_k \scrL} \star \ldots \star \IC_{s_k}^{\scrL}]) = \ch ([\scrF']) \ch ([\IC_{s_1}^{s_2 \ldots s_k \scrL}]) \ldots \ch ([\IC_{s_k}^{\scrL}]).\]
  As a result,
  \begin{align*}
    \ch ([\scrF' \star \scrE_w^{\scrL}]) &= \ch ([\scrF' \star \IC_{s_1}^{s_2 \ldots s_k \scrL} \star \ldots \star \IC_{s_k}^{\scrL}]) - \ch ([\scrF' \star \scrF_{< w}]) \\
    &= \ch ([\scrF']) \ch ([\IC_{s_1}^{s_2 \ldots s_k \scrL} \star \ldots \star \IC_{s_k}^{\scrL}]) - \ch ([\scrF']) \ch([\scrF_{< w}]) \\
    &= \ch ([\scrF']) (\ch ([\IC_{s_1}^{s_2 \ldots s_k \scrL} \star \ldots \star \IC_{s_k}^{\scrL}]) - \ch([\scrF_{< w}])) \\
    &= \ch ([\scrF']) \ch ([\scrE_w^{\scrL}]).
  \end{align*}
  This completes the proof of the theorem.
\end{proof}

Let $p$ be 0 or a prime number, and let $\k$ be either a field of characteristic $p$ or a complete noetherian local ring of finite global dimension and residue characteristic $p$.
Let
\[{}^p \uH_w^{\scrL} \coloneq \ch ([\scrE_w^{\scrL}]) \in {}_{w \scrL} \mathbf{H}_{\scrL}^{\fr{o}}.\]
By Theorem \ref{thm:categorification}, the set $\{ {}^p \uH_w^{\scrL} \}_{w \in \W{\scrL'}{\scrL}}$ is a basis for ${}_{\scrL'} \mathbf{H}_{\scrL}^{\fr{o}}$.
We call this basis the \emph{monodromic $p$-Kazhdan--Lusztig basis}. After writing in terms of the standard basis, we obtain expressions
\[{}^p \uH_w^{\scrL} = v^{-\ell (w)} \sum_{x \in \W{\scrL'}{\scrL}} {}^p P_{x,w}^{\scrL} T_w^{\scrL}.\]
The coefficients ${}^p P_{x,w}^{\scrL} \in \Z[v,v^{-1}]$ are called the \emph{monodromic $p$-Kazhdan--Lusztig polynomials}. 
If $p = 0$, we will write $\underline{H}_w^{\scrL} := {}^0 \underline{H}_w^{\scrL}$ and $P_{x,w}^{\scrL} := {}^0 P_{x,w}^{\scrL}$.

\begin{remark}
  Other notions from $p$-Kazhdan--Lusztig theory should have obvious analogues in the monodromic setting. Most notably, there is a theory of $p$-Kazhdan--Lusztig cells.
  Just as Theorem \ref{thm:endoscopy_Kac_moody} dictates how monodromic $p$-Kazhdan--Lusztig polynomials are computed from non-monodromic $p$-Kazhdan--Lusztig polynomials,
  it also gives a means of computing $p$-cells in the monodromic setting.
\end{remark}

\subsubsection{Reduction for Reductive Groups}

We will now explain how when $G$ is a reductive group or more generally if $\fr{o}$ is a finite $W$-orbit, then Theorem \ref{thm:categorification} can be simplified to a categorification of Lusztig's monodromic Hecke algebra.

\begin{definition}
  Let $\fr{o}$ be a finite order $W$-orbit in $\Ch^{\circ} (T, \k)$.
  The \emph{monodromic Hecke algebra} of $W$ with monodromy $\fr{o}$, denoted by $\scrH^{\fr{o}}$, is a unital associative $\Z[v,v^{-1}]$-algebra defined as follows.
  The generators of $\scrH^{\fr{o}}$ are $T_w (w \in W)$ and $1_{\scrL}$ ($\scrL \in \fr{o}$). The multiplication is defined by the relations
  \begin{align}
    1_{\scrL} 1_{\scrL'} &= \delta_{\scrL, \scrL'} 1_{\scrL}, & \text{for } \scrL, \scrL' \in \fr{o}; \\
    T_x T_y &= T_{xy}, & \text{if } x,y \in W \text{ and } \ell (xy) = \ell (x) + \ell (y); \\
    T_x 1_{\scrL} &= 1_{x\scrL} T_x, & \text{for } x \in W, \scrL \in \fr{o}; \\
    T_s^2 &= v^2 T_e + (v^2 - 1) \sum_{\stackrel{\scrL \in \fr{o} \textnormal{ s.t.}}{s \in W_{\scrL}^\circ}} T_s 1_{\scrL}, & \text{ for simple reflections } s \in W; \\
    T_e &= 1 = \sum_{\scrL \in \fr{o}} 1_{\scrL}.  &
  \end{align}
\end{definition}

\begin{notation}
  The $\fr{o}$-monodromic Hecke 1-category is defined as the direct sum of additive categories
  \[\OneParity{}{} (\k)= \bigoplus_{\scrL, \scrL' \in \fr{o}} \Parity{\scrL'}{\scrL} (\k).\]
  By Theorem \ref{thm:conv_preserves_parity}, $\OneParity{}{}$ is a monoidal category.
\end{notation}

As a $\Z[v,v^{-1}]$-module, $\scrH^{\fr{o}}$ is a free module with basis $\{ T_w 1_{\scrL} \mid (w, \scrL) \in W \times \fr{o} \}$.
The monodromic Hecke algebra has an involution $\bar{\:} : \scrH^{\fr{o}} \to \scrH^{\fr{o}}$ defined by $\overline{v} = v^{-1}$, $\overline{T_w} =
T_{w^{-1}}^{-1}$, and $\overline{1_{\scrL}} = 1_{\scrL}$ for all $w \in W$ and $\scrL \in \fr{o}$.

Since $\fr{o}$ is finite, we have that $\bigoplus_{\scrL, \scrL' \in \fr{o}} \Hom_{\mathbf{H}^{\fr{o}}} (\scrL, \scrL')$ is a ring with unit given by $\sum_{\scrL \in \fr{o}} T_e^{\scrL}$.
The following lemma follows from definitions of the monodromic Hecke algebra/algebroid.

\begin{lemma}\label{lem:algebroid_vs_algebra}
  The $\Z[v,v^{-1}]$-module map
  \[\varphi : \scrH^{\fr{o}} \to \bigoplus_{\scrL, \scrL' \in \fr{o}} \Hom_{\mathbf{H}^{\fr{o}}} (\scrL, \scrL'),\]
  \[T_w 1_{\scrL} \mapsto T_w^{\scrL} \in \Hom_{\mathbf{H}^{\fr{o}}} (\scrL, w\scrL)\]
  is a ring isomorphism.
\end{lemma}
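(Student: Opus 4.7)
The plan is to check $\varphi$ on a basis and then verify it is multiplicative by checking the generating relations. Both sides are visibly free $\Z[v,v^{-1}]$-modules with basis indexed by $W \times \fr{o}$: the source has the basis $\{T_w 1_\scrL\}_{(w,\scrL)}$ by definition, and the target has the basis $\bigsqcup_{\scrL \in \fr{o}} \{T_w^\scrL \mid w \in W\}$ obtained by collecting the basis of each ${}_{w\scrL}\mathbf{H}_\scrL^{\fr{o}}$ as $\scrL$ varies (using that $\Hom_{\mathbf{H}^{\fr{o}}}(\scrL,\scrL') = 0$ unless $\scrL' = w\scrL$ for some $w$, and the basis of $\Hom_{\mathbf{H}^{\fr{o}}}(\scrL,w\scrL)$ consists of the $T_x^\scrL$ with $x \in \W{w\scrL}{\scrL}$). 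Since $\varphi$ sends basis to basis bijectively, it is automatically a $\Z[v,v^{-1}]$-module isomorphism.

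The unit is preserved since $\varphi(1) = \varphi(\sum_\scrL 1_\scrL) = \sum_\scrL T_e^\scrL$, which is the identity in $\bigoplus_{\scrL} \mathrm{End}_{\mathbf{H}^{\fr{o}}}(\scrL)$. For multiplicativity, I would verify the defining relations of $\scrH^{\fr{o}}$ under $\varphi$. The relations $1_\scrL 1_{\scrL'} = \delta_{\scrL,\scrL'} 1_\scrL$ and $T_x 1_\scrL = 1_{x\scrL} T_x$ are immediate, since $T_x^\scrL$ is by construction a morphism $\scrL \to x\scrL$ and distinct objects have no nonzero morphisms via identities. The braid-type relation $T_x T_y = T_{xy}$ when $\ell(xy) = \ell(x) + \ell(y)$ translates, after inserting $1_\scrL$ on the right, into $T_x^{y\scrL} T_y^{\scrL} = T_{xy}^{\scrL}$, which follows by induction on $\ell(x)$ from the generating composition rule (first case, $sx > x$).

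The main obstacle, and the only computation with any content, is the quadratic relation $T_s^2 = v^2 T_e + (v^2-1)\sum_{\scrL : s \in W_\scrL^\circ} T_s 1_\scrL$. To handle this, I would expand $\varphi(T_s)\varphi(T_s) = (\sum_\scrL T_s^\scrL)(\sum_{\scrL'} T_s^{\scrL'})$, note that only pairs with $\scrL = s\scrL'$ contribute (so this equals $\sum_{\scrL'} T_s^{s\scrL'} T_s^{\scrL'}$), and then apply the algebroid composition rule with $x = s$, $sx = e$ case by case. Using that $s \in W_{s\scrL'}^\circ \iff s \in W_{\scrL'}^\circ$ (since $s \in W_{\scrL'}^\circ$ forces $s\scrL' = \scrL'$), the two cases of the defining relation in $\mathbf{H}^{\fr{o}}$ assemble to give exactly $v^2 \sum_{\scrL'} T_e^{\scrL'} + (v^2-1)\sum_{\scrL' : s \in W_{\scrL'}^\circ} T_s^{\scrL'}$, matching the image of the right-hand side.

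Since the relations all match and $\varphi$ is a bijection on bases, $\varphi$ is a ring isomorphism. No further ingredients beyond the explicit composition rule for $\mathbf{H}^{\fr{o}}$ are needed; the whole argument is a bookkeeping exercise once the basis bijection is set up.
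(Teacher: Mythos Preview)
Your proposal is correct and is exactly the routine verification the paper has in mind: the paper omits the proof entirely, stating only that the lemma ``follows from definitions of the monodromic Hecke algebra/algebroid,'' and your basis-bijection plus relation-checking argument is the natural way to unpack that.
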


\begin{corollary}\label{cor:finite_categorification}
  The equivalence of categories 
  \[ \ch : K_\oplus (\Parity{}{} (\k)) \to \mathbf{H}^{\fr{o}}\] 
  induces a ring isomorphism
  \[ \ch : K_{\oplus} (\OneParity{}{} (\k)) \to \scrH^{\fr{o}}.\]
   The set $\{ \ch ([\scrE_w^{\scrL}])\}_{w \in W, \scrL \in \fr{o}}$ forms a $\Z[v,v^{-1}]$-basis for $\scrH^{\fr{o}}$.
\end{corollary}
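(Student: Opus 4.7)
The plan is to directly bootstrap from Theorem \ref{thm:categorification}, where all the substantive combinatorial content --- the multiplicativity rules matching the monodromic Hecke relations, and the existence of the distinguished basis of indecomposable parity sheaves --- has already been established at the 2-categorical (algebroid) level. What remains is the purely formal passage from the algebroid $\mathbf{H}^{\fr{o}}$ to the algebra $\scrH^{\fr{o}}$, which is exactly the content of Lemma \ref{lem:algebroid_vs_algebra}.

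Concretely, I would first observe that by definition $\OneParity{}{} (\k) = \bigoplus_{\scrL, \scrL' \in \fr{o}} \Parity{\scrL'}{\scrL} (\k)$ as an additive category, and its convolution monoidal structure respects this decomposition by Proposition \ref{prop:conv_preserves_blocks} and Theorem \ref{thm:conv_preserves_parity}. Taking the split Grothendieck group therefore yields a $\Z[v,v^{-1}]$-module decomposition $K_{\oplus} (\OneParity{}{} (\k)) \cong \bigoplus_{\scrL, \scrL'} K_{\oplus} (\Parity{\scrL'}{\scrL} (\k))$. Applying Theorem \ref{thm:categorification} summand-wise and then composing with the ring isomorphism $\varphi$ of Lemma \ref{lem:algebroid_vs_algebra} produces the desired $\Z[v,v^{-1}]$-linear bijection $\ch : K_{\oplus} (\OneParity{}{} (\k)) \stackrel{\sim}{\to} \scrH^{\fr{o}}$, and the basis statement is then immediate from the basis statement of Theorem \ref{thm:categorification} combined with the block decomposition above.

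The only step requiring a moment of care is to verify that this $\Z[v,v^{-1}]$-linear bijection is in fact a ring isomorphism. Multiplicativity across matching blocks is precisely the functoriality of $\ch$ on the 2-category $\Parity{}{} (\k)$ established in Theorem \ref{thm:categorification}. Cross-block multiplication vanishes on both sides for compatible reasons: $\scrF' \star \scrF = 0$ in $\OneParity{}{} (\k)$ whenever the middle monodromies disagree, and on the Hecke-algebra side this is enforced by the relation $1_{\scrL} 1_{\scrL'} = \delta_{\scrL, \scrL'} 1_{\scrL}$. Compatibility of units reduces to Lemma \ref{lem:skyscraper_is_unit} together with the relation $1 = \sum_{\scrL \in \fr{o}} 1_{\scrL}$, which shows that $\sum_{\scrL \in \fr{o}} [\IC_e^{\scrL}]$ is sent to the unit of $\scrH^{\fr{o}}$. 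The only "obstacle" is therefore purely bookkeeping, and it is entirely absorbed into Lemma \ref{lem:algebroid_vs_algebra}.
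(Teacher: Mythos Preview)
Your proposal is correct and takes essentially the same approach as the paper: both deduce the corollary by combining Theorem \ref{thm:categorification} with Lemma \ref{lem:algebroid_vs_algebra}, using the finiteness of $\fr{o}$ to pass from the algebroid to the algebra. Your version is simply more explicit about the bookkeeping (cross-block vanishing and unit compatibility) that the paper leaves implicit.
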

\begin{proof}
  Since $\fr{o}$ is finite and by Theorem \ref{thm:categorification}, we have an isomorphism of rings
\[K_{\oplus} (\OneParity{}{} (\k)) \cong \bigoplus_{\scrL, \scrL' \in \fr{o}} \Hom_{\mathbf{H}^{\fr{o}}} (\scrL, \scrL').\]
The result, then follows from Lemma \ref{lem:algebroid_vs_algebra}.
\end{proof}

\begin{remark}
  Note that Verdier duality for parity sheaves does not give rise to the bar involution on the monodromic Hecke algebra.
  In particular, if $\scrL \in \fr{o}$, it can be the case that $\scrL^{-1} \notin \fr{o}$.
  Nonetheless, the $p$-canonical basis is self-dual under the bar involution (see for example, \cite[\S3.14]{LY}).
  In order to give a geometric interpretation, consider the map 
  \[\textnormal{inv} :  T \backslash_{\scrL'} \eFl /_{\scrL} B \to T \backslash_{(\scrL')^{-1}} \eFl /_{\scrL^{-1}} B \]
  given by taking the identity on $\eFl$ and $\varphi : T \times B \to T \times B$ defined by $\varphi (a,b) = (a^{-1}, b^{-1})$.
  Define a functor $\overline{\DD} \coloneq \textnormal{inv}^* \DD : (\DEE{\scrL}{\scrL'} (\k))^{\op} \to \DEE{\scrL}{\scrL'} (\k)$.
  It is easy to check that $\overline{\DD} (\IC_s^{\scrL}) \cong \IC_s^{\scrL}$ for all $s \in S$.
  As a consequence, $\overline{\DD} (\scrE_w^{\scrL} ) \cong \scrE_w^{\scrL}$ for all $w \in \W{\scrL'}{\scrL}$.
  It then follows that under Corollary \ref{cor:finite_categorification} that for $\scrE \in \Parity{\scrL'}{\scrL} (\k)$ we have $\ch ([\overline{\DD} (\scrE)]) = \overline{\ch([\scrE])}$.
\end{remark}

\subsubsection{Graded Hom Formulas}

Let $s$ be a simple reflection. Define an element of ${}_{s \scrL} \mathbf{H}_{\scrL}$,
\[\underline{H}_s^{\scrL} = \begin{cases} v^{-1} T_s + v^{-1} T_e & s \in W_{\scrL}^\circ, \\ v^{-1} T_s & s \notin W_{\scrL}^\circ .\end{cases}\]
For an expression $\uw = (s_1, \ldots, s_k)$, we define
\[\underline{H}_{\uw}^{\scrL} = \underline{H}_{s_1}^{s_2 \ldots s_k\scrL} \ldots \underline{H}_{s_k}^{\scrL}.\]
Note that as defined in Theorem \ref{thm:categorification}, $\underline{H}_s^{\scrL} = \ch ([\scrE_s^{\scrL}])$.

We can define the pairing
\[\langle - , - \rangle : {}_{\scrL'} \mathbf{H}_{\scrL} \times {}_{\scrL'} \mathbf{H}_{\scrL} \to \Z[v,v^{-1}]\]
by the relation $\langle T_x, T_y \rangle = \delta_{x,y}$. 
The pairing $\langle - , - \rangle$ is called the \emph{standard form} on ${}_{\scrL'} \mathbf{H}_{\scrL}$. It is a $\Z[v,v^{-1}]$-linear form.
It satisfies a version of biadjunction: for all $A \in {}_{\scrL'} \mathbf{H}_{\scrL}$, $B \in {}_{s (\scrL')} \mathbf{H}_{\scrL}$, and simple reflections $s$, we have 
\begin{equation}\label{eq:biadj_of_std_form}
  \langle \underline{H}_s^{\scrL'} A, B \rangle = \langle A, \underline{H}_s^{s (\scrL')} B \rangle.
\end{equation}

The following is a monodromic analogue of Soergel's Hom formula for parity sheaves \cite[\S3.10]{MauR}.
\begin{proposition}\label{prop:soergel_hom_v1}
  For any expressions $\ux, \uy$ of elements in $\W{\scrL'}{\scrL}$, we have that $\Hom (\scrE_{\ux}^{\scrL}, \scrE_{\uy}^{\scrL} [n])$ is free of finite rank.
  Moreover, the graded rank of these Hom spaces is given by
  \[\sum_{n \in \Z} \textnormal{rank}_{\k} \Hom (\scrE_{\ux}^{\scrL}, \scrE_{\uy}^{\scrL} [n]) v^n = \langle \underline{H}_{\ux}^{\scrL}, \underline{H}_{\uy}^{\scrL} \rangle.\] 
\end{proposition}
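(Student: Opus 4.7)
The plan is to establish the two claims separately: the freeness of the Hom-spaces follows from a direct stratawise analysis, and the graded-rank formula follows by induction along the expression $\ux$, using the biadjunctions of $\IC_s$ and of the pairing.

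For freeness, I would invoke Proposition \ref{prop:hom_star_and_shriek}. By Theorem \ref{thm:existence_of_parity} and Theorem \ref{thm:conv_preserves_parity}, every Bott--Samelson $\scrE_{\ux}^{\scrL}$ is a parity complex, hence both $*$-parity and $!$-parity. So
\[\Hom^{n}(\scrE_{\ux}^{\scrL}, \scrE_{\uy}^{\scrL}) \cong \bigoplus_{w \in \W{\scrL'}{\scrL}} \Hom^{n}(j_w^* \scrE_{\ux}^{\scrL}, j_w^! \scrE_{\uy}^{\scrL}).\]
The proof of Theorem \ref{thm:existence_of_parity} (and successive convolution with simple $\IC$'s) shows that each $j_w^* \scrE_{\ux}^{\scrL}$ and $j_w^! \scrE_{\uy}^{\scrL}$ is a finite direct sum of shifted copies of $\scrK_w^{\scrL}$. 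Using the equivalence in Proposition \ref{prop:mhc_strat_and_parity}(1) to identify each stratum with $D_{\cons}(\Gamma(w)\backslash\{\dot w\},\k)$, the inter-Hom of such shifted copies reduces to (graded pieces of) $H^\bullet_{\Gamma(w)}(\pt;\k)$, which is free of finite rank. Summing over strata yields the first assertion.

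For the graded-rank identity, I would proceed by induction on $\ell(\ux)$. When $\ux = \emptyset$, the stratawise decomposition above collapses to the stratum $\eFl_e$ (since $\IC_e^{\scrL}$ is the skyscraper there), and the Hom computes a shifted costalk $j_e^!\scrE_{\uy}^{\scrL}$; combining this with Verdier duality and the parity hypothesis, one identifies this generating function with the coefficient of $T_e^{\scrL}$ in $\ch([\scrE_{\uy}^{\scrL}]) = \underline{H}_{\uy}^{\scrL}$ (via Theorem \ref{thm:categorification}), which is precisely $\langle T_e^{\scrL}, \underline{H}_{\uy}^{\scrL}\rangle = \langle \underline{H}_{\ux}^{\scrL}, \underline{H}_{\uy}^{\scrL}\rangle$. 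For the inductive step, write $\ux = (s_1,\ux')$ and $\scrE_{\ux}^{\scrL} \cong \IC_{s_1}^{s_2 \cdots s_k \scrL} \star \scrE_{\ux'}^{\scrL}$. Corollary \ref{cor:biadjoint_of_conv_with_ICs} provides the biadjunction
\[\Hom(\IC_{s_1}^{s_2\cdots s_k \scrL} \star \scrE_{\ux'}^{\scrL}, \scrE_{\uy}^{\scrL}[n]) \cong \Hom(\scrE_{\ux'}^{\scrL}, \IC_{s_1}^{s_1 s_2 \cdots s_k \scrL} \star \scrE_{\uy}^{\scrL}[n]),\]
while the target identifies with a Hom between $\scrE_{\ux'}^{\scrL}$ and a new Bott--Samelson $\scrE_{(s_1,\uy)}^{\scrL}$. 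On the algebraic side, the biadjunction of the pairing (equation \ref{eq:biadj_of_std_form}) gives
\[\langle \underline{H}_{\ux}^{\scrL}, \underline{H}_{\uy}^{\scrL}\rangle = \langle \underline{H}_{s_1}^{s_2\cdots s_k\scrL}\underline{H}_{\ux'}^{\scrL}, \underline{H}_{\uy}^{\scrL}\rangle = \langle \underline{H}_{\ux'}^{\scrL}, \underline{H}_{s_1}^{s_1 s_2\cdots s_k\scrL}\underline{H}_{\uy}^{\scrL}\rangle = \langle \underline{H}_{\ux'}^{\scrL}, \underline{H}_{(s_1,\uy)}^{\scrL}\rangle,\]
so the inductive hypothesis applied to the pair $(\ux', (s_1,\uy))$ closes the argument.

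The main obstacle I anticipate is carefully tracking the grading through the induction: the cohomological shift $[n]$ in the Hom must be reconciled with the $v$-grading of the pairing in a manner compatible with Theorem \ref{thm:categorification}'s convention $\ch([\scrF[1]]) = v^{-1}\ch([\scrF])$, and one must check that the biadjunction isomorphism of Corollary \ref{cor:biadjoint_of_conv_with_ICs} is degree-preserving (i.e., introduces no hidden shift) so that the geometric and algebraic biadjunctions match term by term. A subsidiary subtlety is handling the change of monodromy when moving $\IC_{s_1}$ across the adjunction—the sheaf on the right becomes $\IC_{s_1}^{s_1 s_2\cdots s_k \scrL}$ rather than $\IC_{s_1}^{s_2\cdots s_k\scrL}$—but this is exactly mirrored by the monodromy bookkeeping on the algebraic side in equation \ref{eq:biadj_of_std_form}.
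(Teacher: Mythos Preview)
Your proposal is correct and follows essentially the same route as the paper's proof: freeness via the stratawise Hom decomposition for parity complexes (the paper packages this as Lemma~\ref{lem:eos_for_biequiv}), and the rank formula via the biadjunction of Corollary~\ref{cor:biadjoint_of_conv_with_ICs} together with \eqref{eq:biadj_of_std_form} to reduce to $\ux=\emptyset$, where the Hom is identified with $H^\bullet(j_e^!\scrE_{\uy}^{\scrL})$ and compared to $\langle T_e^{\scrL},\underline{H}_{\uy}^{\scrL}\rangle$ via Verdier self-duality and Theorem~\ref{thm:categorification}. The only cosmetic differences are that the paper first reduces to field coefficients before computing ranks, and it states the biadjunction reduction in one line rather than as an explicit induction; your anticipated obstacle (that the adjunction of Corollary~\ref{cor:biadjoint_of_conv_with_ICs} is shift-free) is genuine but routine, and the paper does not elaborate on it either.
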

\begin{proof}
  It follows from Lemma \ref{lem:eos_for_biequiv}
  that $\Hom^{\bullet} (\scrE_{\ux}^{\scrL}, \scrE_{\uy}^{\scrL})$ is a graded free left $R$-module of finite rank.
  As a result, in order to compute the graded $\k$-rank of the Hom space, it suffices to take $\k$ to be a field.

  By (\ref{eq:biadj_of_std_form}) and Corollary \ref{cor:biadjoint_of_conv_with_ICs}, it suffices to consider $\ux = \emptyset$ and $\uy$ an expression for an element in $\W{\scrL}{\scrL}$.
  A routine argument produces isomorphisms of graded $\k$-modules \cite[\S3.10]{MauR},
  \[\Hom^\bullet ( \scrE_{\emptyset}^{\scrL}, \scrE_{\uy}^{\scrL}) \cong H^\bullet (j_e^! \scrE_{\uy}^{\scrL}).\]
  Moreover, by Verdier self-duality of $\IC$-sheaves, we have that $H^{n} (j_e^! \scrE_{\uy}^{\scrL})$ and \newline $H^{-n} (j_e^* \scrE_{\uy}^{\scrL})$ have the same ranks.
  As a result, by Theorem \ref{thm:categorification}, we have that
  \begin{align*}
    \sum_{n \in \Z} \textnormal{rank}_{\k} \Hom (\scrE_{\emptyset}^{\scrL}, \scrE_{\uy}^{\scrL} [n]) v^n &= \langle T_e^{\scrL}, \ch ([\scrE_{\uy}^{\scrL}]) \rangle \\
    &= \langle T_e^{\scrL}, \underline{H}_{\uy}^{\scrL} \rangle.
  \end{align*}
\end{proof}

\subsubsection{Decategorified Endoscopy}

\begin{definition}
  The \emph{endoscopic Hecke algebroid} of $W$, denoted by $\mathbf{E}^{\fr{o}}$, is a $\Z [v,v^{-1}]$-linear category whose objects are $\fr{o}$.
  Write ${}_{\scrL'} \mathbf{E}_{\scrL}$ for the morphism space $\Hom_{\mathbf{E}^{\fr{o}}} (\scrL, \scrL')$. 
  As a $\Z [v,v^{-1}]$-module it is defined by 
  \[ {}_{\scrL'} \mathbf{E}_{\scrL} = \bigoplus_{\beta \in \uW{\scrL'}{\scrL}} {}_{\scrL'} \mathbf{E}_{\scrL}^{\beta} \]
  in which ${}_{\scrL'} \mathbf{E}_{\scrL}^{\beta} = \scrH (W_{\scrL}^{\circ})$ and $\scrH (W_{\scrL}^{\circ})$ denotes the usual Hecke algebra for the endoscopic Weyl group $W_{\scrL}^{\circ}$.
  Composition is defined via
  \[ {}_{\scrL''} \mathbf{E}_{\scrL'}^{\gamma} \times {}_{\scrL'} \mathbf{E}_{\scrL}^{\beta} \to {}_{\scrL''} \mathbf{E}_{\scrL}^{\gamma \beta},\]
  \[(A, B) \mapsto \beta^{-1} (A) \cdot B,\]
  where $\cdot$ is the product in $\scrH_{\scrL'}^{\circ}$ and $\beta^{-1} : \scrH (W_{\scrL'}^{\circ}) \to \scrH (W_{\scrL}^{\circ})$ is the algebra isomorphism induced by the isomorphism of Coxeter groups $W_{\scrL'}^{\circ} \to W_{\scrL}^{\circ}$ given by conjugation by $w^{\beta, -1}$.
\end{definition}

The endoscopic Hecke algebroid was essentially introduced in \cite[\S1.6]{Lu19} for finite Weyl groups.\footnote{In \emph{loc. cit.}, Lusztig defines an algebra rather than a category. There is really no difference, and Lusztig's definition can be recovered from ours via the same process as Lemma \ref{lem:algebroid_vs_algebra}.} 
In particular, the work in checking that it is well-defined is covered in \emph{loc. cit.}

Note that ${}_{\scrL'} \mathbf{E}_{\scrL}^{\beta}$ is a free $\Z[v,v^{-1}]$-module with basis $\tilde{T}_w^{\scrL}$ for $w \in \beta$. Here $\tilde{T}_w^{\scrL}$ denotes the standard basis element indexed by $w$ in $\scrH (W_{\scrL}^{\circ})$.

We recall the decategorified version of the endoscopic equivalence we will later encounter in Theorem \ref{thm:endoscopy_Kac_moody}. The result is due to Lusztig for monodromic Hecke algebras (i.e., finite Weyl groups).
Lusztig's argument translates without any changes, so we omit it.

\begin{proposition}[{\cite[1.6(a)]{Lu19}}]\label{prop:endoscopic_equiv_for_Hecke_algebras}
  There is a well-defined functor
  \[\theta : \mathbf{E}^{\fr{o}} \to \mathbf{H}^{\fr{o}}\]
  defined by
  \[\tilde{T}_w^{\scrL} \mapsto T_w^{\scrL}.\]
  Moreover, $\theta$ is an isomorphism of categories.
\end{proposition}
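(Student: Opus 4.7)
The map $\theta$ is clearly a $\Z[v,v^{-1}]$-linear bijection on each morphism space: the block decomposition $\W{\scrL'}{\scrL} = \bigsqcup_{\beta \in \uW{\scrL'}{\scrL}} \beta$ combined with the bijection $\beta \xrightarrow{\sim} W_{\scrL}^{\circ},\; w \mapsto (w^{\beta})^{-1} w$, identifies both ${}_{\scrL'}\mathbf{E}_{\scrL}$ and ${}_{\scrL'}\mathbf{H}_{\scrL}^{\fr{o}}$ with the free $\Z[v,v^{-1}]$-module on $\W{\scrL'}{\scrL}$. Preservation of identities is immediate, since $\tilde{T}_e^{\scrL}$ (living in the neutral block) is the unit in ${}_{\scrL}\mathbf{E}_{\scrL}$ and maps to the unit $T_e^{\scrL}$ in ${}_{\scrL}\mathbf{H}_{\scrL}^{\fr{o}}$. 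The entire substance of the proposition is the functoriality statement that $\theta$ respects composition.

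My strategy is to reduce to a generating set and then verify the defining relations. First, for $w \in \beta$ with factorization $w = w^{\beta} v$, the composition rule in $\mathbf{E}^{\fr{o}}$ gives $\tilde{T}_w^{\scrL} = \tilde{T}_{w^{\beta}}^{\scrL} \circ \tilde{T}_v^{\scrL}$. The length-additivity $\ell(w^{\beta} v) = \ell(w^{\beta}) + \ell(v)$ (which can be extracted from $w^{\beta}\Phi_{\re,\scrL}^+ \subseteq \Phi^+$ in Lemma \ref{lem:min_elts_in_blocks}(1) by a root-by-root case analysis on whether $\alpha \in \Phi_{\re,\scrL}^+$ or not) combined with Proposition \ref{prop:conv_rules}(1) yields the matching factorization $T_w^{\scrL} = T_{w^{\beta}}^{\scrL} \cdot T_v^{\scrL}$ in $\mathbf{H}^{\fr{o}}$. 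Together with the block translation isomorphisms of Lemma \ref{lem:block_translation_and_order}, this reduces functoriality to two statements: the composition law for block-minimal pairs $T_{w^{\gamma}}^{\scrL'} \cdot T_{w^{\beta}}^{\scrL} = T_{w^{\gamma\beta}}^{\scrL}$ in $\mathbf{H}^{\fr{o}}$ (which follows from Lemma \ref{lem:min_elts_in_blocks}(2)), and the fact that $\theta$ restricts to a ring homomorphism on each neutral block $\scrH(W_{\scrL}^{\circ}) \cong {}_{\scrL}\mathbf{E}_{\scrL}^{\circ}$. For the latter, $\scrH(W_{\scrL}^{\circ})$ is presented by the endosimple generators $\{\tilde{T}_s\}_{s \in S_{\scrL}^{\circ}}$ together with braid and quadratic relations; the braid relations transport to $\mathbf{H}^{\fr{o}}$ via Proposition \ref{prop:conv_rules}(1), because both sides expand into equal reduced products of simple reflections of $W$ representing the same element.

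The main obstacle is verifying the quadratic relation $(T_s^{\scrL})^2 = (v^2 - 1) T_s^{\scrL} + v^2 T_e^{\scrL}$ in $\mathbf{H}^{\fr{o}}$ for an endosimple reflection $s \in S_{\scrL}^{\circ}$, since such an $s = r_{\alpha}$ corresponds to a real root $\alpha \in \Phi_{\re,\scrL}^+$ that is generally not simple in $W$. I would write $\alpha = u \cdot \alpha_t$ with $u \in W$ of minimal length and $t \in S$ simple, so that $s = u t u^{-1}$ is reduced of length $2\ell(u)+1$; Proposition \ref{prop:conv_rules}(1) then expresses $T_s^{\scrL}$ as the corresponding reduced product in $\mathbf{H}^{\fr{o}}$, and in the computation of $(T_s^{\scrL})^2$ one uses the cancellation identities of Proposition \ref{prop:conv_rules}(3) to collapse the sandwich $u^{-1} \cdot u$ appearing in the middle. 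By induction on $\ell(u)$, the quadratic identity reduces to the same identity for the simple reflection $t$ acting on the appropriate translate of $\scrL$, which is built into the defining relations of $\mathbf{H}^{\fr{o}}$. Carrying out this inductive calculation while carefully tracking which intermediate local systems each simple reflection does or does not stabilize (so as to apply the correct case of the defining relation) is exactly Lusztig's argument in \cite[1.6(a)]{Lu19}; it is insensitive to the Kac-Moody extension, since the calculation uses only the presentation of $\mathbf{H}^{\fr{o}}$ in terms of simple reflections of $W$.
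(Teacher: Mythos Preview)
There is a genuine gap in your reduction step: the length-additivity assertion $\ell(w^{\beta}v) = \ell(w^{\beta}) + \ell(v)$ for $v \in W_{\scrL}^{\circ}$ is false in general, and your factorization $T_w^{\scrL} = T_{w^{\beta}}^{\scrL}\cdot T_v^{\scrL}$ in $\mathbf{H}^{\fr{o}}$ rests on it. Take $W$ of type $A_2$ with simple reflections $s,t$ and a character $\scrL$ for which $\Phi_{\re,\scrL}^{+}=\{\alpha_s+\alpha_t\}$, so that $W_{\scrL}^{\circ}=\langle sts\rangle$. The block of $s$ in $\W{s\scrL}{\scrL}$ has $w^{\beta}=s$ (indeed $s(\alpha_s+\alpha_t)=\alpha_t>0$), yet for $v=sts$ one finds $w^{\beta}v=ts$, so $\ell(w^{\beta}v)=2$ while $\ell(w^{\beta})+\ell(v)=4$. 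The condition $w^{\beta}\Phi_{\re,\scrL}^{+}\subseteq\Phi^{+}$ in Lemma~\ref{lem:min_elts_in_blocks}(1) controls only roots in $\Phi_{\re,\scrL}$; roots outside $\Phi_{\re,\scrL}$ are precisely the ones that spoil additivity, so no ``root-by-root case analysis'' salvages the claim. In this same example the defining relations give $T_s^{\scrL}T_{sts}^{\scrL}=v^2T_{ts}^{\scrL}$ (third case: $s\cdot sts=ts<sts$ and $s\notin W_{\scrL}^{\circ}$), so the factorization you want is off by a power of $v$.

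This stray power of $v$ is exactly what Lusztig's argument must account for. The discrepancy $\ell(w)-\ell_{\beta}(w)$ counts, by Lemma~\ref{lem:length_for_blocks}(3), the simple factors $s_i$ in a reduced expression with $s_i\notin W_{s_{i+1}\cdots s_k\scrL}^{\circ}$; each such factor produces a $v^2$ via the third relation whenever it is cancelled, and these contributions must be tracked and matched across both factors in any product, not only in the quadratic relation. Your treatment of the endosimple quadratic relation already gestures at this (you implicitly need $u$ minimal in its block so that every intermediate step falls into the third case), but the same bookkeeping is required for the block-minimal and neutral-block reductions as well. As a minor point, Proposition~\ref{prop:conv_rules} is a statement about sheaves; the algebraic identities you need in $\mathbf{H}^{\fr{o}}$ should be extracted directly from its defining relations rather than from the sheaf-theoretic convolution formulas.
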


\begin{corollary}\label{cor:inner_form_properties}
  Let $\langle - , - \rangle_{\beta} : {}_{\scrL'} \mathbf{E}_{\scrL}^{\beta} \times {}_{\scrL'} \mathbf{E}_{\scrL}^{\beta} \to \Z[v,v^{-1}]$ denote the standard form of $\scrH (W_{\scrL}^{\circ})$.
  Then for all $A, B \in  {}_{\scrL'} \mathbf{E}_{\scrL}^{\beta}$, we have equalities
  \[\langle \theta (A), \theta (B) \rangle = \langle A, B \rangle_{\beta}. \]
\end{corollary}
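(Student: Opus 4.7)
The plan is to reduce the identity to a check on a $\Z[v,v^{-1}]$-basis and then observe that, under the definitions, both sides collapse to the same Kronecker delta. Both $\langle \theta(-), \theta(-)\rangle$ and $\langle -, -\rangle_{\beta}$ are $\Z[v,v^{-1}]$-bilinear forms on ${}_{\scrL'}\mathbf{E}_{\scrL}^{\beta}$, so it suffices to compare their values on pairs of standard basis elements $\tilde{T}_w^{\scrL}$ with $w \in \beta$.

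For the right-hand side, the standard form on $\scrH(W_{\scrL}^{\circ})$, transported along the bijection $W_{\scrL}^{\circ} \xrightarrow{\sim} \beta$, $v \mapsto w^{\beta} v$, yields $\langle \tilde{T}_w^{\scrL}, \tilde{T}_{w'}^{\scrL}\rangle_{\beta} = \delta_{w,w'}$ for $w, w' \in \beta$. For the left-hand side, Proposition \ref{prop:endoscopic_equiv_for_Hecke_algebras} gives $\theta(\tilde{T}_w^{\scrL}) = T_w^{\scrL}$, and the defining relation $\langle T_x, T_y \rangle = \delta_{x,y}$ of the standard form on ${}_{\scrL'}\mathbf{H}_{\scrL}$ gives $\langle T_w^{\scrL}, T_{w'}^{\scrL}\rangle = \delta_{w,w'}$. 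The two sides therefore agree on basis pairs, and bilinearity completes the argument.

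There is no substantive obstacle: the entire content of the corollary is already packaged into the fact (from Proposition \ref{prop:endoscopic_equiv_for_Hecke_algebras}) that $\theta$ sends the standard basis of $\mathbf{E}^{\fr{o}}$ bijectively to the standard basis of $\mathbf{H}^{\fr{o}}$ indexed by $\beta$. The only point that warrants care is the bookkeeping of indexing conventions — namely, that the standard basis element of $\scrH(W_{\scrL}^{\circ})$ associated to $v \in W_{\scrL}^{\circ}$ is precisely what is denoted $\tilde{T}_{w^{\beta} v}^{\scrL}$ on the endoscopic side, so that the Kronecker deltas on the two sides are indexed in compatible ways.
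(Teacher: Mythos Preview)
Your proposal is correct and is precisely the argument the paper has in mind: the paper states this result as an immediate corollary of Proposition~\ref{prop:endoscopic_equiv_for_Hecke_algebras} without further proof, and your unpacking via the standard bases and bilinearity is exactly the implicit reasoning.
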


Let $t$ be a simple reflection in $W_{\scrL}^{\circ}$ which is not necessarily simple in $W$. Define an element of ${}_{\scrL} \mathbf{E}_{\scrL}$,
\[\underline{\tilde{H}}_t^{\scrL} = v^{-1} T_t + v^{-1} T_e.\]
For an expression $\uw = (t_1, \ldots, t_k)$ of an element $ w \in W_{\scrL}^{\circ}$, we define
\[\underline{\tilde{H}}_{\uw}^{\scrL} = \underline{\tilde{H}}_{t_1}^{\scrL} \ldots \underline{\tilde{H}}_{t_k}^{\scrL}.\]
Let $\uw'$ be an expression of $w$ in terms of simple reflections in $W$ obtained by replacing the $t_i$ in $\uw$ with reduced expressions in $W$. 
It then follows from Proposition \ref{prop:endoscopic_equiv_for_Hecke_algebras} that
\[\theta \left(\underline{\tilde{H}}_{\uw}^{\scrL} \right) = \underline{H}_{\uw'}^{\scrL}.\]

\begin{corollary}\label{cor:soergel_hom_v2}
  Let $\ux$ and $\uy$ be expressions of simple reflections in $W_{\scrL}^{\circ}$.
  Let $\ux'$ and $\uy'$ be expressions of simple reflections in $W$ obtained from $\ux$ and $\uy$ by substituting simple reflections in $W_{\scrL}^{\circ}$ with reduced expressions in $W$.
  Then 
  \[\sum_{n \in \Z} \textnormal{rank}_{\k} \Hom (\scrE_{\ux'}^{\scrL}, \scrE_{\uy'}^{\scrL} [n]) v^n = \langle \underline{\tilde{H}}_{\ux}^{\scrL}, \underline{\tilde{H}}_{\uy}^{\scrL} \rangle.\] 
\end{corollary}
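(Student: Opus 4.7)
The plan is to combine three ingredients that have already been set up: the biequivariant Hom formula for Bott--Samelson sheaves (Proposition \ref{prop:soergel_hom_v1}), the endoscopic isomorphism of Hecke algebroids (Proposition \ref{prop:endoscopic_equiv_for_Hecke_algebras}), and the compatibility of standard forms from Corollary \ref{cor:inner_form_properties}. The statement is essentially a translation under $\theta$ of the identity already proved in Proposition \ref{prop:soergel_hom_v1}, once one checks that the chosen monodromic Kazhdan--Lusztig style generators on both sides correspond.

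First I would apply Proposition \ref{prop:soergel_hom_v1} to $\ux'$ and $\uy'$ to obtain
\[\sum_{n \in \Z} \textnormal{rank}_{\k} \Hom (\scrE_{\ux'}^{\scrL}, \scrE_{\uy'}^{\scrL} [n]) v^n = \langle \underline{H}_{\ux'}^{\scrL}, \underline{H}_{\uy'}^{\scrL} \rangle,\]
an equality in ${}_{\scrL} \mathbf{H}_{\scrL}^{\fr{o}}$. Since each simple reflection $t_i$ appearing in $\ux$ (respectively $\uy$) lies in $W_{\scrL}^{\circ}$, the expressions $\ux'$ and $\uy'$ obtained by substituting reduced expressions for the $t_i$ both represent elements of the neutral block, so the right-hand side lives in the neutral block component.

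Next I would invoke the explicit identity $\theta(\underline{\tilde{H}}_{\ux}^{\scrL}) = \underline{H}_{\ux'}^{\scrL}$ (and similarly for $\uy$) established in the paragraph immediately preceding the corollary; this is where the definition of $\underline{\tilde{H}}_t^{\scrL}$ for $t$ a simple reflection of $W_{\scrL}^{\circ}$ and the functoriality of $\theta$ with respect to products of these generators gets used. Then Corollary \ref{cor:inner_form_properties}, applied to $A = \underline{\tilde{H}}_{\ux}^{\scrL}$ and $B = \underline{\tilde{H}}_{\uy}^{\scrL}$ in the neutral block ${}_{\scrL} \mathbf{E}_{\scrL}^{\circ}$, yields
\[\langle \underline{H}_{\ux'}^{\scrL}, \underline{H}_{\uy'}^{\scrL} \rangle = \langle \theta(\underline{\tilde{H}}_{\ux}^{\scrL}), \theta(\underline{\tilde{H}}_{\uy}^{\scrL}) \rangle = \langle \underline{\tilde{H}}_{\ux}^{\scrL}, \underline{\tilde{H}}_{\uy}^{\scrL} \rangle_{\circ},\]
and chaining the two displayed equations produces the required identity.

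There is essentially no obstacle: all the real work is packaged in the upstream statements, and what remains is a bookkeeping step verifying that the substitution $\ux \leadsto \ux'$ at the level of expressions is matched exactly by the decategorification $\theta(\underline{\tilde{H}}_t^{\scrL}) = \underline{H}_t^{\scrL}$ applied factor-by-factor. The only subtle point to flag is that one must check $t \in W_{\scrL}^{\circ}$ implies $t \in W_{t\scrL}^{\circ} = W_{\scrL}^{\circ}$ so that the formula $\underline{H}_t^{\scrL} = v^{-1} T_t + v^{-1} T_e$ (rather than the $s \notin W_{\scrL}^{\circ}$ branch) is the one that gets applied in each factor; this is immediate from the definition of the endoscopic subgroup.
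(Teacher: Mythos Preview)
Your proof is correct and matches the paper's implicit approach: the corollary is stated without proof in the paper precisely because it follows by chaining Proposition~\ref{prop:soergel_hom_v1}, the identity $\theta(\underline{\tilde{H}}_{\uw}^{\scrL}) = \underline{H}_{\uw'}^{\scrL}$, and Corollary~\ref{cor:inner_form_properties}, exactly as you outline. One small quibble: your closing parenthetical is slightly garbled, since the two-branch formula for $\underline{H}_s^{\scrL}$ is only stated for $s$ simple in $W$, not for endosimple $t$; the relevant check is rather that $\theta(\underline{\tilde{H}}_t^{\scrL}) = v^{-1}T_t^{\scrL} + v^{-1}T_e^{\scrL}$ agrees with the product $\underline{H}_{\underline{t}}^{\scrL}$ over a reduced expression $\underline{t}$ of $t$ in $W$, which is what the paper packages into the line $\theta(\underline{\tilde{H}}_{\uw}^{\scrL}) = \underline{H}_{\uw'}^{\scrL}$.
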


\subsection{Mixed Categories}\label{subsec:mixed_hecke_cats_par}

The goal of this section is to construct a mixed version of the monodromic Hecke category.
We will prove various mixed versions of statements from \S\ref{sec:equiv_conv}.
Throughout this section we assume that $\k$ is a field or a complete local ring.

\subsubsection{Biequivariant Category}

Consider the mixed derived category defined in \S\ref{sec:mixed_cats},
\[\D{\scrL'}{\scrL}^m (\k) \coloneq K^b \Parity{\scrL'}{\scrL} (\k).\]
It inherits a convolution product 
\[ \star : \D{\scrL''}{\scrL'}^m (\k) \times \D{\scrL'}{\scrL}^m (\k) \to \D{\scrL''}{\scrL}^m (\k)\] 
inherited from the convolution of parity sheaves (Theorem \ref{thm:conv_preserves_parity}).
For all $w \in \W{\scrL'}{\scrL}$, we can define the \emph{standard} and \emph{costandard} sheaves
\[\underline{\Delta}_w^{\scrL} \coloneq j_{w!} \scrK_w^{\scrL} (\ell (w)) \hspace{0.5cm} \text{and} \hspace{0.5cm} \underline{\nabla}_w^{\scrL} \coloneq j_{w*} \scrK_w^{\scrL} (\ell (w)).\]

When $s \in S$ is a simple reflection, we can give an explicit description of the chain complex for $\underline{\Delta}_s^{\scrL}$ and $\underline{\nabla}_s^{\scrL}$ using the construction given in \cite[Lemma 2.4]{AR2}.
If $s \not\in W_{\scrL}^{\circ}$, there are isomorphisms
\[\underline{\Delta}_s^{\scrL} \cong \scrE_s^{\scrL} \cong \underline{\nabla}_s^{\scrL}.\]
If $s \in W_{\scrL}^{\circ}$, there are isomorphisms
\begin{equation}\label{eq:ch_cplxs_for_Delta_s}
  \underline{\Delta}_s^{\scrL} \cong \left(
\begin{tikzpicture}[baseline=(current bounding box.center)]
\node (A) at (0,-0.6) {$\scrE_s^{\scrL}$};
\node (B) at (0,0.6) {$\scrE_{\emptyset}^{\scrL} (1)$};
\node (C) at (-0.5,-0.6) {$\bullet$};
\draw[->] (A) -- (B);
\end{tikzpicture} \right) \qquad \text{and} \qquad \underline{\nabla}_s^{\scrL} \cong \left(
\begin{tikzpicture}[baseline=(current bounding box.center)]
\node (A) at (0,0.6) {$\scrE_s^{\scrL}$};
\node (B) at (0,-0.6) {$\scrE_{\emptyset}^{\scrL} (-1)$};
\node (C) at (-0.5,0.6) {$\bullet$};
\draw[->] (B) -- (A);
\end{tikzpicture} \right),
\end{equation}
where the bullet marks indicate the terms in cohomological degree 0, and the morphisms are those induced by the adjunction unit/counit morphisms.

\subsubsection{Parabolic Category}

Recall that we have fixed a $W$-orbit in $\Ch^{\circ} (T, \k)$.
Let $\scrL, \scrL' \in \fr{o}$
Let $s \in S$ be a simple reflection such that $s \in W_{\scrL}^{\circ}$.
We can then extend $\scrL$ to a multiplicative local system $\scrL^s \in \Ch (L_s, \k)$ via Lemma \ref{lem:extending_character_sheaves_to_levis}.
Consider the mixed derived category,
\[\paraD{\scrL'}{\scrL}{s}^{m} (\k) \coloneq K^b \left( \paraParity{\scrL'}{\scrL}{s} (\k) \right),\]
where $\paraParity{\scrL'}{\scrL}{s} (\k)$ is the full subcategory of $\paraD{\scrL'}{\scrL}{s} (\k)$ whose objects are parity sheaves.
For all $\overline{w} \in {}_{\scrL'} (W/\langle s \rangle)_{\scrL}$, we can define the \emph{standard} and \emph{costandard} sheaves
\[\underline{\Delta}_{\overline{w}}^{s, \scrL} \coloneq j_{\overline{w}!} \scrK_{\overline{w}}^{s, \scrL} (\ell (w)) \hspace{0.5cm} \text{and} \hspace{0.5cm} \underline{\nabla}_{\overline{w}}^{s, \scrL} \coloneq j_{\overline{w}*} \scrK_{\overline{w}}^{s, \scrL} (\ell (w)).\]

Recall that there is a proper, smooth, and even morphism $\pi_s : \eFl /_{\scrL} B \to \eFl /_{\scrL^s} P_s$ of twisted ind-algebraic stacks. 
This gives to functors (see Lemma \ref{lem:pi_s_and_parity}),
\[\pi_{s}^* : \paraD{\scrL'}{\scrL}{s}^m (\k) \rightleftarrows  \D{\scrL'}{\scrL}^{m} (\k) : \pi_{s*}.\]
Moreover, we have that $\pi_s^*$ has left adjoint $\pi_{s*} (-2)$. 

\subsubsection{Convolution of Standards and Costandards}

\begin{lemma}\label{lem:mixed_block_translation}
  Let $\beta \in \uW{\scrL''}{\scrL'}$ be a block. There is an equivalence of categories
  \[\uDelta_{w^{\beta}}^{\scrL'} \star (-) : \D{\scrL'}{\scrL}^m (\k) \to \D{\scrL''}{\scrL}^m (\k). \]
  Moreover, for all $w \in \W{\scrL'}{\scrL}$, there are isomorphisms
  \[ \uDelta_{w^{\beta}}^{\scrL'} \star \uDelta_w^{\scrL} \cong \uDelta_{w^{\beta} w}^{\scrL} \hspace{0.5cm} \text{and} \hspace{0.5cm} \uDelta_{w^{\beta}}^{\scrL'} \star \unabla_w^{\scrL} \cong \unabla_{w^{\beta} w}^{\scrL}.\]
\end{lemma}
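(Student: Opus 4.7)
The plan is to leverage Proposition \ref{prop:structure_of_min_IC}(1), which guarantees that $\Delta_{w^\beta}^{\scrL'} \cong \IC_{w^\beta}^{\scrL'} \cong \nabla_{w^\beta}^{\scrL'}$ is a parity sheaf. Consequently $\uDelta_{w^\beta}^{\scrL'}$ is represented in $\D{\scrL''}{\scrL'}^m (\k) = K^b \Parity{\scrL''}{\scrL'} (\k)$ by a complex concentrated in degree zero, and convolution with it is induced from the parity convolution of Theorem \ref{thm:conv_preserves_parity}. To construct a quasi-inverse, I first verify that $(w^\beta)^{-1}$ is the minimal element of the inverse block $\beta^{-1} \in \uW{\scrL'}{\scrL''}$: since $w^\beta \Phi_{\re, \scrL'}^+ \subseteq \Phi^+$ by Lemma \ref{lem:min_elts_in_blocks} and $w^\beta$ identifies $\Phi_{\re, \scrL'}$ bijectively with $\Phi_{\re, \scrL''}$, a cardinality count gives $w^\beta \Phi_{\re, \scrL'}^+ = \Phi_{\re, \scrL''}^+$, whence $(w^\beta)^{-1} \Phi_{\re, \scrL''}^+ = \Phi_{\re, \scrL'}^+ \subseteq \Phi^+$. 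Hence $\nabla_{(w^\beta)^{-1}}^{\scrL''}$ is also a parity sheaf, and Proposition \ref{prop:conv_rules}(3) supplies an isomorphism of parity sheaves $\nabla_{(w^\beta)^{-1}}^{\scrL''} \star \Delta_{w^\beta}^{\scrL'} \cong \Delta_e^{\scrL'}$ that lifts tautologically to the mixed category, yielding $\unabla_{(w^\beta)^{-1}}^{\scrL''} \star -$ as the desired quasi-inverse.

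For the convolution formulas, I argue by induction on $\ell(w^\beta)$. The base case $w^\beta = e$ is the unit property of $\uDelta_e^{\scrL'}$. For the inductive step, following the proof of Proposition \ref{prop:structure_of_min_IC}, I choose a simple reflection $s$ with $\ell(w^\beta s) < \ell(w^\beta)$. As shown there one must have $s \notin W_{\scrL'}^\circ$, and $w^{\beta'} \coloneq w^\beta s$ is the minimal element of a block $\beta' \in \uW{\scrL''}{s \scrL'}$ with $\ell(w^{\beta'}) = \ell(w^\beta) - 1$. Since $\uDelta_s^{\scrL'} \cong \scrE_s^{\scrL'}$ is itself a parity sheaf by Corollary \ref{cor:conv_with_ICs_bad_s}(1), the mixed lift of Proposition \ref{prop:conv_rules}(1) gives $\uDelta_{w^\beta}^{\scrL'} \cong \uDelta_{w^{\beta'}}^{s \scrL'} \star \uDelta_s^{\scrL'}$. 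Convolving with $\uDelta_w^{\scrL}$ and applying the mixed lift of Corollary \ref{cor:conv_with_ICs_bad_s}(3) (which reads $\uDelta_s^{\scrL'} \star \uDelta_w^{\scrL} \cong \uDelta_{sw}^{\scrL}$), followed by the inductive hypothesis applied to $w^{\beta'}$, yields $\uDelta_{w^\beta}^{\scrL'} \star \uDelta_w^{\scrL} \cong \uDelta_{w^\beta w}^{\scrL}$. The costandard formula is established identically.

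The main obstacle is justifying that the non-mixed isomorphisms invoked above---which mix parity sheaves such as $\scrE_s^{\scrL'}$ with non-parity mixed standards $\uDelta_w^{\scrL}$---lift from the non-mixed to the mixed setting. The key observation is that in every such application one of the two factors is a parity sheaf, so convolution with it is exact and inherited from an additive functor on $\Parity{\scrL'}{\scrL}(\k)$; the resulting mixed object can then be identified with the claimed mixed standard by combining the recollement of Proposition \ref{prop:recollement_mixed} with a stalk computation on the relevant stratum (of the type carried out in Lemma \ref{lem:conv_ICs_with_stds}) and the Hom-vanishing criterion of Lemma \ref{lem:hom_vanishing_for_ME}.
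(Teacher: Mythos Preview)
Your argument is correct, but the induction on $\ell(w^\beta)$ is a detour that buys you nothing. The recollement/stalk argument you sketch in your final paragraph to handle the case of a single simple reflection $s$ works verbatim for the general minimal element $w^\beta$, and that is exactly the paper's route. Concretely: since $\uDelta_{w^\beta}^{\scrL'}$ is a parity sheaf concentrated in degree $0$, convolution with it acts termwise on complexes; by Proposition~\ref{prop:structure_of_min_IC}(3) (together with $\IC_{w^\beta}^{\scrL'} = \scrE_{w^\beta}^{\scrL'}$) one has $\uDelta_{w^\beta}^{\scrL'} \star \scrE_x^{\scrL} \cong \scrE_{w^\beta x}^{\scrL}$ for every $x$. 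Computing $j_{w^\beta x}^*$ of $\uDelta_{w^\beta}^{\scrL'} \star \uDelta_w^{\scrL}$ termwise and applying adjunction then yields
\[
\Hom(\uDelta_{w^\beta}^{\scrL'} \star \uDelta_w^{\scrL}, \unabla_{w^\beta x}^{\scrL}(i)[n]) \cong \Hom(\uDelta_w^{\scrL}, \unabla_x^{\scrL}(i)[n]),
\]
which is enough to pin down the object by recollement. Your inductive step reduces to the special case $w^\beta = s$, but then your proposed justification for that case is this very computation; you might as well run it once for $w^\beta$.

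Two smaller remarks. First, your cardinality argument that $w^\beta \Phi_{\re,\scrL'}^+ = \Phi_{\re,\scrL''}^+$ does not work in the Kac--Moody setting where these sets can be infinite; however the conclusion is immediate from $w^\beta(-\alpha) = -w^\beta(\alpha)$, or more directly one has $(w^\beta)^{-1} = w^{\beta^{-1}}$ by applying Lemma~\ref{lem:min_elts_in_blocks}(2) with $\gamma = \beta^{-1}$. Second, invoking a ``mixed lift of Corollary~\ref{cor:conv_with_ICs_bad_s}(3)'' in the inductive step is circular: that lift is precisely the $\ell(w^\beta)=1$ case of the lemma you are proving, so it cannot be cited---it must be established by the recollement argument you outline, at which point the induction is superfluous.
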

\begin{proof}
  It is obvious from Proposition \ref{prop:structure_of_min_IC} that $\uDelta_{w^{\beta}}^{\scrL'} \star (-)$ is an equivalence of categories.

  We will now prove that $\uDelta_{w^{\beta}}^{\scrL'} \star \uDelta_w^{\scrL} \cong \uDelta_{w^{\beta} w}^{\scrL}$. The costandard version follows from a similar argument.
  Let $\scrF^i$ denote the degree $i$ term in the complex for $\uDelta_w^{\scrL}$.
  By Proposition \ref{prop:structure_of_min_IC}, we have that 
  \[ \uDelta_{w^{\beta}}^{\scrL'} \star \scrE_x^{\scrL} \cong \scrE_{w^{\beta} x}^{\scrL}, \]
  for all $x \in \W{\scrL'}{\scrL}$. As a result, for each $s \in \W{\scrL'}{\scrL}$, by computing on each term of $j_{w^{\beta} x}^* (\uDelta_{w^{\beta}}^{\scrL'} \star \uDelta_w^{\scrL})$, we obtain an isomorphism
  \begin{align*}
    \Hom (\uDelta_{w^{\beta}}^{\scrL'} \star \uDelta_w^{\scrL} , \unabla_{w^{\beta} x}^{\scrL} (i) [n]) &\cong \Hom (\uDelta_w^{\scrL} , \unabla_{x}^{\scrL} (i) [n]) \\
    &\cong \begin{cases} \k & x=w, n=0, i\geq 0 \text{ even}, \\ 0 & \text{otherwise}.\end{cases}
  \end{align*}
  By recollement, this gives an isomorphism $\uDelta_{w^{\beta}}^{\scrL'} \star \uDelta_w^{\scrL} \cong \uDelta_{w^{\beta} w}^{\scrL}$. 
\end{proof}

\begin{lemma}\label{lem:mixed_pis_dt}
  Let $s$ be a simple reflection such that $s \in W_{\scrL}^{\circ}$. Let $w \in W$ such that $ws < w$. 
  \begin{enumerate}
    \item There are isomorphisms 
    \[ \pi_{s*} \uDelta_w^{\scrL} \cong \uDelta_{\overline{w}}^{s, \scrL} (-1) \hspace{0.5cm} \text{and} \hspace{0.5cm} \pi_{s*} \uDelta_{ws}^{\scrL} \cong \uDelta_{\overline{w}}^{s, \scrL}.\]
    \item Let $\eta : \uDelta_w^{\scrL} \to \pi_s^* \uDelta_{\overline{w}}^{s, \scrL} (1)$ and $\epsilon : \pi_s^* \uDelta_{\overline{w}}^{s, \scrL} (1) \to \uDelta_{ws}^{\scrL} (1)$ be the maps induced from the isomorphisms in (1) and the unit and counit maps for $\pi_{s*}$, respectively.
    Then there exists a morphism $f : \uDelta_{ws}^{\scrL} (1) \to \uDelta_w^{\scrL} [1]$ such that
    \[\uDelta_w^{\scrL} \stackrel{\eta}{\to} \pi_s^* \uDelta_{\overline{w}}^{s, \scrL} (1) \stackrel{\epsilon}{\to} \uDelta_{ws}^{\scrL} (1) \stackrel{f}{\to}\]
    is a distinguished triangle.
  \end{enumerate}
\end{lemma}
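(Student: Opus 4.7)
The plan is to establish (1) by mimicking the proof of Lemma~\ref{lem:pi_s_pushforward_of_stds} in the mixed setting. Using the base change isomorphisms listed after Proposition~\ref{prop:recollement_mixed}, one has $\pi_{s*} j_{w!} \cong j_{\overline{w}!} \pi_{s*}'$, where $\pi_s'$ is the restriction of $\pi_s$ to $\eFl_w \cup \eFl_{ws} = \pi_s^{-1}(\eFl_{\overline{w}}^s)$; similarly for $\pi_{s*} j_{ws!}$. The local computation then proceeds via the factorizations $a_w$ (trivial $\A^1$-fibration) and $a_{ws}$ (isomorphism) from the non-mixed proof. The only new ingredient is that in the mixed setting the $\A^1$-pushforward acquires a Tate twist $(-2)$ (chosen so that de-mixing recovers the non-mixed shift $[-2]$); after combining with the length-normalizing twist in $\uDelta_w^{\scrL}$, this yields exactly the two formulas in (1).

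For (2), I take $\epsilon$ to be the counit of $\pi_s^* \dashv \pi_{s*}$ evaluated at $\uDelta_{ws}^{\scrL}(1)$, precomposed with the isomorphism $\pi_s^* \pi_{s*} \uDelta_{ws}^{\scrL}(1) \cong \pi_s^* \uDelta_{\overline{w}}^{s, \scrL}(1)$ from (1). To complete $\epsilon$ to the claimed distinguished triangle, I compute the fiber of $\epsilon$ stratum-wise via the mixed recollement (Proposition~\ref{prop:recollement_mixed}). Since $\pi_s^* \uDelta_{\overline{w}}^{s, \scrL}$ is supported on $\eFl_w \cup \eFl_{ws}$ (as the pullback of a $j_{\overline{w}!}$-extension, whose $*$-restriction vanishes on strata below $\overline{w}$) and $\uDelta_{ws}^{\scrL}$ is supported on $\eFl_{\leq ws}$, it suffices to inspect the two strata $\eFl_w$ and $\eFl_{ws}$. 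On $\eFl_{ws}$, $\epsilon$ restricts to an isomorphism $\scrK_{ws}^{\scrL}(\ell(w)) \to \scrK_{ws}^{\scrL}(\ell(w))$ since $\pi_s$ there factors through the isomorphism $a_{ws}$; on $\eFl_w$, the target vanishes while the source restricts to $\scrK_w^{\scrL}(\ell(w))$ via the trivial $\A^1$-fibration $a_w$. Hence the fiber of $\epsilon$ is supported on $\eFl_w$ with restriction $\scrK_w^{\scrL}(\ell(w))$, and the uniqueness clause of recollement forces $\mathrm{fib}(\epsilon) \cong j_{w!} \scrK_w^{\scrL}(\ell(w)) = \uDelta_w^{\scrL}$, producing the triangle.

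The identification of the resulting first map with $\eta$ (induced by the unit of $\pi_{s*} \dashv \pi_s^!$ together with the isomorphism of (1)) is then a stratum-wise compatibility check using the rigidity of the local system $\scrK_w^{\scrL}$. The main obstacle is part (1): carefully tracking the Tate twist $(-2)$ arising from the mixed $\A^1$-pushforward and verifying the base change in the mixed category for $\pi_{s*}$ applied to $j_{w!}$-extensions (which was only stated for proper, smooth, even morphisms at the level of recollement). Once (1) is in hand, the construction in (2) reduces to a formal stratum-wise computation combined with recollement.
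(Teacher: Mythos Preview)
Your approach is correct and close in spirit to the paper's, but the paper organizes part (2) a bit differently, in a way that sidesteps the base-change worry you flag. Rather than starting from $\epsilon$ and computing its fiber via mixed recollement, the paper observes that $\scrK_{w,ws}^{\scrL} \coloneq \pi_s^* \scrK_{\overline{w}}^{s,\scrL}$ is a \emph{parity sheaf} (since $\pi_s^*$ preserves parity), so its $*$-restrictions to the single strata $\eFl_w$ and $\eFl_{ws}$ can be computed in the non-mixed category, as already done in Lemma~\ref{lem:conv_ICs_with_stds}. One then applies the functorial open-closed triangle $h_! j_! j^* \to h_! \to h_! i_! i^* \to$ directly to $\scrK_{w,ws}^{\scrL}(\ell(w))$, obtaining a triangle with the correct three terms; the identification of the two maps with $\eta$ and $\epsilon$ is then handled by a one-line rescaling argument (citing \cite[Lemma~4.1]{AR2}), which is precisely the rank-one Hom-space observation underlying your ``rigidity'' compatibility check. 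Your stratum-wise fiber computation unwinds to the same triangle, but the paper's route via the parity sheaf avoids ever invoking the mixed base-change isomorphisms for $\pi_{s*}$ against $j_{w!}$ that you were concerned about.
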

\begin{proof}
  The first statement follows from the same argument given in Lemma \ref{lem:pi_s_pushforward_of_stds}.

  We will now prove the second statement. 
  Consider $\scrK_{w,ws}^{\scrL} \coloneq \pi_s^* \scrK_{\overline{w}}^{s, \scrL}$. Since $\pi_s^*$ preserves parity, $\scrK_{w,ws}^{\scrL}$ is a parity sheaf.
  Let $j : \eFl_w \to \eFl_{\overline{w}}^s$, $i : \eFl_{ws} \to \eFl_{\overline{w}}^s$, and $h : \eFl_{\overline{w}}^s \to \eFl$ denote the inclusion maps. 
  As discussed in the proof of Lemma \ref{lem:conv_ICs_with_stds}, there are isomorphisms $j^* \scrK_{w,ws}^{\scrL} \cong \scrK_w^{\scrL} (\ell (w))$ and $i^* \scrK_{w,ws}^{\scrL} \cong \scrK_{ws}^{\scrL} (\ell (w))$. 
  Note that here $j^*$ and $i^*$ can be safely interpreted as restrictions in the non-mixed category since they are restrictions onto single strata, and hence, they preserve parity.
  If we apply the functorial distinguished triangle $h_! j_! j^* \to h_! \to h_! i_! i^* \to$ to $\scrK_{w,ws}^{\scrL} (\ell (w))$, we obtain a distinguished triangle
  \begin{equation}\label{eq:mixed_pis_dt}
    \uDelta_w^{\scrL} \to \pi_s^* \uDelta_{\overline{w}}^{\scrL} (1) \to \uDelta_{ws}^{\scrL} (1) \to.
  \end{equation}
  By appropriately rescaling the morphisms in (\ref{eq:mixed_pis_dt}), we obtain the desired distinguished triangle (cf., \cite[Lemma 4.1]{AR2} on why this can be done).
\end{proof}

The following is a mixed analogue of Proposition \ref{prop:conv_rules}.
\begin{proposition}\label{prop:mixed_conv_rules_par}
  We have natural isomorphisms
  \begin{enumerate}
    \item $\underline{\Delta}_{xy}^{\scrL} \cong \underline{\Delta}_{x}^{y \scrL} \star \underline{\Delta}_y^{\scrL}$ if $\ell (xy) = \ell (x) + \ell (y)$;
    \item $\underline{\nabla}_{xy}^{\scrL} \cong \underline{\nabla}_x^{y \scrL} \star \underline{\nabla}_y^{\scrL}$ if $\ell (xy) = \ell (x) + \ell (y)$;
    \item $\underline{\nabla}_{x^{-1}}^{x \scrL} \star \underline{\Delta}_x^{\scrL} \cong \underline{\Delta}_e^{\scrL} \cong \underline{\Delta}_{x^{-1}}^{x \scrL} \star \underline{\nabla}_x^{\scrL}$.
  \end{enumerate}
\end{proposition}
\begin{proof}
  By Verdier duality, we can reduce to proving just (1) and the first isomorphism in (3).
  We can further reduce to the case where $y = s$ is a simple reflection in (1) and $x=s$ a simple reflection in (2). 
  If $s \notin W_{\scrL}^{\circ}$, both isomorphisms follow from Lemma \ref{lem:mixed_block_translation}. 
  As a result, we will assume that $s \in W_{\scrL}^{\circ}$.
  By the description of $\uDelta_s^{\scrL}$ given in (\ref{eq:ch_cplxs_for_Delta_s}), there is a distinguished triangle
  \begin{equation}\label{eq:mixed_conv_rules_1}
    \uDelta_s^{\scrL} \to \scrE_s^{\scrL} \to \uDelta_e^{\scrL} (1).
  \end{equation}

  We can now prove (1).
  Apply $\Delta_x^{\scrL} \star (-)$ to (\ref{eq:mixed_conv_rules_1}) to obtain a distinguished triangle
  \[\uDelta_x^{\scrL} \star \uDelta_s^{\scrL} \to \uDelta_x^{\scrL} \star \scrE_s^{\scrL} \to \uDelta_x^{\scrL} (1).\]
  By Lemma \ref{lem:conv_with_ICs_rewritting} and Lemma \ref{lem:mixed_pis_dt} (1), this distinguished triangle is of the form given in Lemma \ref{lem:mixed_pis_dt} (2).
  Therefore, there is an isomorphism $\uDelta_x^{\scrL} \star \uDelta_s^{\scrL} \cong \uDelta_{xs}^{\scrL}$.

  For (3), we apply $\unabla_s^{\scrL} (-1) \star (-) $ to  (\ref{eq:mixed_conv_rules_1}) to obtain a distinguished triangle
  \begin{equation}\label{eq:mixed_conv_rules_2}
    \unabla_s^{\scrL} \star \uDelta_s^{ \scrL} (-1) \to \unabla_s^{\scrL} \star \scrE_s^{\scrL} (-1) \to \unabla_s^{\scrL}.
  \end{equation}
  By a Verdier dual version of Lemma \ref{eq:mixed_pis_dt} (1) and Lemma \ref{lem:conv_with_ICs_rewritting}, we have an isomorphism $\unabla_s^{\scrL} \star \scrE_s^{\scrL} (-1) \cong \pi_s^* \unabla_{\overline{s}}^{s,\scrL} (1)$.
  It can be readily checked that $\unabla_{\overline{s}}^{s,\scrL} \cong \IC_{\overline{s}}^{s,\scrL}$ since $\eFl_{\overline{s}}^s$ is closed in $\eFl$.
  This observation combined with Lemma \ref{lem:pi_s_descent} yields an isomorphism $\unabla_s^{\scrL} \star \scrE_s^{\scrL} (-1) \cong \scrE_s^{\scrL}$. We can then rewrite (\ref{eq:mixed_conv_rules_2}) as follows:
  \[\unabla_s^{\scrL} \star \uDelta_s^{ \scrL} (-1) \to \scrE_s^{\scrL} \to \unabla_s^{\scrL}.\]
  This triangle is Verdier dual to (\ref{eq:mixed_conv_rules_1}) (after switching $\scrL$ with $\scrL^{-1}$).  
  As a result, we obtain an isomorphism $\uDelta_s^{ \scrL} \star \unabla_s^{\scrL} \cong \uDelta_e^{\scrL}$.
\end{proof}

\begin{corollary}\label{cor:endo_mixed_conv_rules}
  Let $\uw = (s_1, \ldots, s_k)$ be a reduced expression of endosimple reflections for $w \in W_{\scrL}^{\circ}$. Then there are isomorphisms
  \[\uDelta_{w}^{\scrL} \cong \uDelta_{s_1}^{\scrL} \star \ldots \star \uDelta_{s_k}^{\scrL} \hspace{0.5cm} \text{and} \hspace{0.5cm} \unabla_{w}^{\scrL} \cong \unabla_{s_1}^{\scrL} \star \ldots \star \unabla_{s_k}^{\scrL}.\]
\end{corollary}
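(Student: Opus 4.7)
The plan is to proceed by induction on $k = \ell_{\scrL}(w)$, the length of $w$ in the endoscopic Coxeter system $(W_{\scrL}^{\circ}, S_{\scrL}^{\circ})$. The base cases $k = 0$ (where $w = e$ and both sides equal $\uDelta_e^{\scrL}$) and $k = 1$ are immediate. For the inductive step with $k \geq 2$, set $w' = s_1 \cdots s_{k-1}$, which still lies in $W_{\scrL}^{\circ}$ with $\ell_{\scrL}(w') = k - 1$. By the inductive hypothesis, $\uDelta_{w'}^{\scrL} \cong \uDelta_{s_1}^{\scrL} \star \cdots \star \uDelta_{s_{k-1}}^{\scrL}$, and since each $s_i \in W_{\scrL}^{\circ} \subseteq W_{\scrL}$ fixes $\scrL$, all local-system superscripts match automatically; it therefore suffices to prove the single-step identity $\uDelta_w^{\scrL} \cong \uDelta_{w'}^{\scrL} \star \uDelta_{s_k}^{\scrL}$.

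By Proposition \ref{prop:mixed_conv_rules}(1), this single-step identity reduces to the length-additivity statement $\ell_W(w' s_k) = \ell_W(w') + \ell_W(s_k)$ in the ambient Weyl group $W$. I expect this to be the main technical obstacle: an endosimple reflection $s_i$ need not be a simple reflection of $W$, and its $W$-length can exceed $1$, so the claim is nontrivial even at a single step. Iterating this additivity over the entire reduced expression amounts to the Dyer-type statement that reduced expressions in $(W_{\scrL}^{\circ}, S_{\scrL}^{\circ})$ remain length-additive in $(W, S)$. The tools needed to establish it are all present in Lemma \ref{lem:endosimple_and_dyer}: namely the characterization $r_\alpha \Phi_{\re, \scrL}^+ \cap \Phi_{\re, \scrL}^- = \{-\alpha\}$ of endosimple reflections in part (2), the explicit description of the inversion set of $w$ via the reduced expression in part (3), and the general reflection-subgroup framework of Dyer as developed in the Kac-Moody setting in \cite{H22}.

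With length additivity in hand, the inductive step follows directly from Proposition \ref{prop:mixed_conv_rules}(1), completing the induction for the standard case. The costandard statement is obtained by repeating the identical induction with Proposition \ref{prop:mixed_conv_rules}(2) in place of (1); the length-additivity hypothesis and the local-system compatibility are the same in both cases, so no further argument is required. Alternatively, once the standard case is proven, one can deduce the costandard case from it via the Verdier-duality functor $\overline{\DD}$ (as used elsewhere in the paper), which interchanges $\uDelta$ and $\unabla$ and is anti-monoidal for convolution; applying it to the standard identity for $w^{-1}$ (whose reduced expression in $(W_{\scrL}^{\circ}, S_{\scrL}^{\circ})$ is $(s_k, \ldots, s_1)$) yields the desired costandard identity for $w$.
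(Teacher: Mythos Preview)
Your reduction to the single-step identity $\uDelta_{w}^{\scrL}\cong\uDelta_{w'}^{\scrL}\star\uDelta_{s_k}^{\scrL}$ is fine, but the way you propose to prove it does not work: the length-additivity $\ell(w's_k)=\ell(w')+\ell(s_k)$ in the ambient Weyl group $W$ is \emph{false} in general for endosimple $s_k$, so Proposition~\ref{prop:mixed_conv_rules}(1) is not directly applicable. A concrete counterexample already appears in type $A_3$. Take $\scrL$ with $(\alpha_1^\vee)^*\scrL$ trivial and $(\alpha_2^\vee)^*\scrL=\zeta$, $(\alpha_3^\vee)^*\scrL=\zeta^{-1}$ for some $\zeta\neq 1$; then $\Phi_{\scrL}^+=\{\alpha_1,\alpha_2+\alpha_3,\alpha_1+\alpha_2+\alpha_3\}$ and the endosimple reflections are $r_1=s_1$ and $r_2=s_2s_3s_2$, with $\ell(r_1)=1$ and $\ell(r_2)=3$. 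The expression $(r_2,r_1,r_2)$ is reduced in $W_{\scrL}^{\circ}$, but $\ell(r_2r_1)=4$ while $\ell(r_2r_1r_2)=\ell(r_1r_2r_1)=5\neq 4+3$. Nothing in Lemma~\ref{lem:endosimple_and_dyer} or in Dyer's theory asserts such additivity; part (3) there controls the \emph{endoscopic} inversion count $\ell_{\scrL}$, not the ambient $\ell$.

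The paper's argument avoids this by combining Proposition~\ref{prop:mixed_conv_rules} with Lemma~\ref{lem:mixed_block_translation}. The point is that by Lemma~\ref{lem:make_refln_simple} an endosimple reflection factors as $s_k=w^{\beta,-1}\,t\,w^{\beta}$ with $t$ genuinely simple in $W$ and $w^{\beta}$ a block-minimal element, and Lemma~\ref{lem:mixed_block_translation} shows that convolution with $\uDelta_{w^{\beta}}^{\scrL}$ sends standards to standards \emph{without any length hypothesis}. One then uses Proposition~\ref{prop:mixed_conv_rules}(1) only for the honest simple reflection $t$, and block translation (together with part (3) for the cancellations) to pass back and forth along $w^{\beta}$. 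This is what replaces the length-additivity step you were hoping for.
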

\begin{proof}
  The statement follows from a routine inductive argument on $\ell_{\scrL} (w)$ using Proposition \ref{prop:mixed_conv_rules_par} and Lemma \ref{lem:mixed_block_translation}.
\end{proof}

\subsubsection{Right Equivariant Category}

Consider the mixed derived category,
\[\DE{\scrL}^m (\k) \coloneq K^b \PE{\scrL} (\k).\]
For all $w \in W$, we can define the \emph{standard} and \emph{costandard} sheaves
\[\underline{\Delta}_w^{\scrL} \coloneq j_{w!} \scrK_w^{\scrL} (\ell (w)) \qquad \text{and} \qquad \underline{\nabla}_w^{\scrL} \coloneq j_{w*} \scrK_w^{\scrL} (\ell (w)).\]
The forgetful functor $\ForME{\scrL'} : \Parity{\scrL'}{\scrL} (\k) \to \PE{\scrL} (\k)$
induces a forgetful functor on mixed sheaves,
\[\ForME{\scrL'} : \D{\scrL'}{\scrL}^m (\k) \to \DE{\scrL}^m (\k).\]
It can be easily checked that $\ForME{\scrL'}$ commutes with $j_{w*}$ and $j_{w!}$ for any $w \in \W{\scrL'}{\scrL}$ (cf., \cite[(3.4)]{AR2}).
As a result, there are natural isomorphisms
\begin{equation}\label{eq:For_and_mixed_std_costds}
  \ForME{\scrL'} (\underline{\Delta}_w^{\scrL}) \cong \underline{\Delta}_w^{\scrL} \qquad \text{and} \qquad \ForME{\scrL'} (\underline{\nabla}_w^{\scrL}) \cong \underline{\nabla}_w^{\scrL},
\end{equation}
for all $w \in \W{\scrL'}{\scrL}$.

        \section{Soergel Theory for Parabolics of Finite Type}\label{sec:parabolic_soergel}

        The main result of this section will give an equivalence between a parabolic version of the neutral block of the monodromic Hecke category of parity sheaves with characteristic 0 coefficients and the category of Soergel bimodules associated to the endoscopic group.
This can be thought of as a simplified version of the more general neutral block endoscopic equivalence (Theorem \ref{thm:endoscopy_neutral_block_Kac_moody}).
We restrict to finite parabolic subgroups of $W_{\scrL}^\circ$ and to coefficients in a field of characteristic 0. These have the following benefits:
\begin{enumerate}
  \item The finite conditions allow us to define the $\mathbb{H}$-functor using maximal IC sheaves, $\mathbb{H}_{\Theta}^{\scrL} : \Parity{\scrL}{\scrL}^{\Theta, \circ} (\k) \to \SBim_{W_{\scrL, \Theta}}$.
  \item The characteristic 0 coefficients condition ensures that the $\mathbb{H}$-functor is an equivalence of monoidal categories.
\end{enumerate}
We will later compare these parity sheaves with the Elias--Williamson diagrammatic Hecke category which will remove the above two restrictions.
Nonetheless, in the diagrammatic version, we will use the characteristic 0 Soergel theory in checking the defining relations of the diagrammatic Hecke category are as specified.

\subsection{Soergel Bimodules}

We will provide a reminder on the general theory of Soergel bimodules. 
For the most part, the ambient realization will be fixed, namely, it will be the Cartan subalgebra $\fr{h}$; however, the Coxeter group with change throughout.
As a result, we will provide some discussion on realizations.
All the results in this section are well-known, and as such, we will omit proofs.

\subsubsection{Realizations}

Let $(W,S)$ be a Coxeter system. Let $\k$ be a noetherian domain of finite global dimension.

\begin{definition}
  A \emph{realization} of $(W, S)$ over $\k$ is a triple $(\fr{h}, \{\alpha_s\}_{s \in S}, \{\alpha_s^\vee\}_{s \in S})$, where $\fr{h}$ is a finite rank free
  $\k$-module, together with subsets
  \[\{\alpha_s^\vee \}_{s \in S} \subseteq \fr{h} \qquad\text{and}\qquad \{\alpha_s\}_{s\in S} \subseteq \fr{h}^*.\]
  We call $\alpha_s$ simple roots and $\alpha_s^\vee$ simple coroots of the realization. A realization must satisfy the following conditions:
  \begin{enumerate}
    \item $\alpha_s (\alpha_s^\vee) = 2$ for all $s \in S$;
    \item the assignment
      \[S \times \fr{h} \to \fr{h} \hspace{1cm} (s,v) \mapsto v - \alpha_s (v) \alpha_s^\vee\]
      extends to a $W$-action on $\fr{h}$;
    \item for any pair $(s,t)$ of distinct simple reflections such that $m_{st} < \infty$, we have
      \[[m_{st}]_s = [m_{st}]_{t} = 0,\]
      where $[m_{st}]_s, [m_{st}]_{t}$ are the 2-colored quantum numbers for $m_{st}$ at $x=-\langle \alpha_s^\vee, \alpha_{t} \rangle$ and $y=-\langle \alpha_{t}^\vee,
      \alpha_s \rangle$ respectively.
  \end{enumerate}
\end{definition}
Condition (3) is quite technical and will usually be satisfied. See \cite[\S3.1]{EW} for a detailed discussion.

It is useful to construct a category of realizations. Namely, we define a category, $\Realize (\k)$ whose objects consist of pairs $((W,S), (\fr{h}, \{\alpha_s\}_{s \in S},
\{\alpha_s^\vee\}_{s \in S}))$ where $(\fr{h}, \{\alpha_s\}_{s \in S}, \{\alpha_s^\vee\}_{s \in S})$ is a realization of $(W,S)$ over $\k$.
We will abuse notation and often write $(W, \fr{h})$ to denote this pair when no ambiguity is present.
A morphism between realizations $\phi : (W, \fr{h}) \to (W', \fr{h}')$ consists of an isomorphism of underlying vector spaces $\phi : \fr{h} \stackrel{\sim}{\to} \fr{h}'$ along with a 
morphism of Coxeter groups $\varphi : W \to W'$ (i.e. $\varphi$ must take simple reflections to either simple reflections or the identity) such that $\phi (w \cdot x) =
\varphi (w) \cdot \phi (x)$ for $w \in W$ and $x \in \fr{h}$.

\begin{definition}\label{def:nice_realization_stuff}
  A realization $(\fr{h}, \{\alpha_s\}_{s \in S}, \{\alpha_s^\vee\}_{s \in S})$ of $(W,S)$ is said to be:
  \begin{enumerate}
    \item  \emph{Demazure surjective} if the maps
      \[\alpha_s : \fr{h} \to \k, \qquad\text{and}\qquad \alpha_s^\vee : \fr{h}^* \to \k\]
      are surjective for all $s \in S$;
    \item \emph{faithful} if the representation
      \[W \to \GL (\fr{h}) \qquad\qquad w \mapsto w \cdot (-)\]
      is faithful;
    \item \emph{reflection faithful} if it is faithful, and for all $w \in W$, the fixed point set $\fr{h}_w \subseteq \fr{h}$ has codimension one if and only if $w$
      is a reflection in $W$;
    \item \emph{balanced} if for any pair of distinct simple reflections $(s,t)$, we have
      \[[m_{st} - 1]_s = [m_{st}-1]_{t} = 1.\]
  \end{enumerate}
\end{definition}

All of our realizations will be balanced and Demazure surjective. 
Other than in \S\ref{sec:endo}, our realizations will also be reflection faithful.
We will write $\NiceRealize (\k)$ for the full subcategory of $\Realize (\k)$ consisting of
such Demazure surjective, balanced, and reflection faithful realizations.
We will often call realizations in $\NiceRealize (\k)$ ``nice'' to simplify terminology.

Finally, if $\Theta \subset S$, we can associate a parabolic subgroup $W_\Theta$ of $W$ generated by $\Theta$.
For a nice realization $(\fr{h}, \{\alpha_s\}_{s \in S}, \{\alpha_s^\vee\}_{s \in S})$ of $(W,S)$, we can consider the \emph{restricted} realization for $(W_\Theta, \Theta)$.
The restricted realization consists of the triple $(\fr{h}, \{\alpha_s\}_{s \in \Theta}, \{\alpha_s^\vee\}_{s\in \Theta})$.

\subsubsection{Basic Definitions}

Let $(\fr{h}, \{\alpha_s\}_{s \in S}, \{\alpha_s^\vee\}_{s \in S})$ be a nice realization of $(W,S)$. We will write $R (\fr{h})$ for the symmetric algebra on $\fr{h}^*$
which is graded with $\deg \fr{h}^* = 2$.
If the realization is fixed or obvious from context, then we will simply write $R$ for this symmetric algebra. The contragradient action of $W$ on $\fr{h}^*$ gives rise
to an action of $W$ on $R$.

For $w \in W$, define a graded $R$-bimodule $R_w$ as the quotient of $R \otimes R$ generated by the ideal $w(r) \otimes 1 - 1 \otimes r$ for $r \in R$.
Alternatively, we can think of $R_w$ as the $w$-twisting of the tautological $R$-bimodule.

For $I \subseteq S$, we define the graded algebra $R^I$ consisting of the $W_I$-invariants of $R$. If $s \in S$, we will write $R^s = R^{\{s\}}$.
We will work in the abelian category $\grbim{R}$ of graded $R$-bimodules. For each $s \in S$, let $B_s$ denote the graded $R$-bimodule given by $B_s = R
\otimes_{R^s} R(1)$.
Define the category of \emph{Bott--Samelson bimodules} for $W$, denoted $\BSBim_W (\fr{h})$, as the smallest full monoidal subcategory of $\grbim{R}$ closed under $\otimes_R$ containing the objects $(B_s)_{s \in S}$.
If $\k$ is a field or a complete local ring, we can consider the idempotent completion of the additive hull of $\BSBim_W (\fr{h})$, denoted $\SBim_W (\fr{h})$, which is called the category of \emph{Soergel bimodules}.
If the choice of realization is obvious from context, we will write $\BSBim_W$ (resp. $\SBim_W$) for the category of Bott--Samelson bimodules (resp. Soergel bimodules).

\subsubsection{Functorality of Realizations}

Assume that $\k$ is a complete local ring or a field.
The category of Soergel bimodules admits a certain degree of functorality over $\NiceRealize (\k)$.
Let $(W,\fr{h})$ and $(W',\fr{h}')$ be realizations in $\NiceRealize (\k)$. Let $\phi : (W, \fr{h}) \to (W', \fr{h}')$ be a morphism of realizations. We can define
${}^{\phi}R_{\fr{h}', \fr{h}}$ as the quotient of $R(\fr{h}') \otimes_{\k} R (\fr{h})$ by the ideal generated by $\phi (r) \otimes 1 - 1 \otimes r$ for $r \in R (\fr{h})$.
Similarly, we can define
$R_{\fr{h}, \fr{h}'}^{\phi} $ as the quotient of $R(\fr{h}) \otimes_{\k} R (\fr{h}')$ by the ideal generated by $r \otimes 1 - 1 \otimes \phi (r)$ for $r \in R (\fr{h})$.
Note that ${}^{\phi}R_{\fr{h}', \fr{h}} $ (resp. $R_{\fr{h}, \fr{h}'}^{\phi}$) is a graded left (resp. right) $R (\fr{h}')$-module and a graded right (resp. left) $R(\fr{h})$-module, both of rank 1. 

We can then consider the functor
\[\phi_* : \grbim{R(\fr{h})} \to \grbim{R(\fr{h}')},\]
\[M \mapsto {}^{\phi} R_{\fr{h}', \fr{h}} \otimes_{R (\fr{h})} M \otimes_{R (\fr{h})} R_{\fr{h}, \fr{h}'}^{\phi}.\]

The equivariance constraint on morphisms of realizations guarantees that $\phi_*$ restricts to a functor
\[\phi_* : \SBim_W (\fr{h}) \to \SBim_{W'} (\fr{h}').\]

Let $\Cat$ denote the 2-category of small categories.
The following lemma follows easily from definitions.
\begin{lemma}\label{lem:equiv_of_realizations}
  There is a 2-functor
  \[\SBim : \NiceRealize (\k) \to \Cat \]
  such that for all morphisms of realizations $\phi : (W, \fr{h}) \to (W', \fr{h}')$, $\SBim (\phi) = \phi_*$.

  In particular, if $\phi$ is an isomorphism of realizations, then
  \[\phi_* : \SBim_W (\fr{h}) \to \SBim_{W'} (\fr{h}')\]
  is an equivalence of categories.
\end{lemma}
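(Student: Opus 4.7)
The main content of this lemma is that $\phi_*$, which is tautologically a functor between categories of graded bimodules, restricts to a functor between Soergel bimodules; the 2-functorality and the ``equivalence'' clause are then essentially formal consequences. So the plan is to establish the restriction first and then derive the rest.

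The first step is to analyze the action of $\phi_*$ on generators. Since $\phi : \fr{h} \to \fr{h}'$ is an isomorphism, the dual map induces an isomorphism of graded $\k$-algebras $\phi^* : R(\fr{h}') \to R(\fr{h})$. The bimodules ${}'R_{\fr{h}',\fr{h}}(\phi)$ and $R'_{\fr{h},\fr{h}'}(\phi)$ are free of rank one over $R(\fr{h}')$ (acting on the left and right, respectively), with the right (resp. left) $R(\fr{h})$-action obtained through $\phi^*$. The equivariance condition $\phi(w \cdot x) = \varphi(w) \cdot \phi(x)$ implies that $\phi^*$ intertwines the $W$- and $W'$-actions, so it restricts to an isomorphism $R(\fr{h}')^{\varphi(s)} \xrightarrow{\sim} R(\fr{h})^s$ for each $s \in S$ (with $R(\fr{h}')^{\varphi(s)} = R(\fr{h}')$ when $\varphi(s) = e$). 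Unwinding the tensor product
\[
\phi_*(B_s) = {}'R_{\fr{h}',\fr{h}}(\phi) \otimes_{R(\fr{h})^s} R'_{\fr{h},\fr{h}'}(\phi)(1)
\]
using this identification yields $\phi_*(B_s) \cong R(\fr{h}') \otimes_{R(\fr{h}')^{\varphi(s)}} R(\fr{h}')(1)$, which is $B_{\varphi(s)}$ when $\varphi(s) \in S'$ and is the shifted monoidal unit $R(\fr{h}')(1)$ when $\varphi(s) = e$. Either way, this is a Bott-Samelson bimodule.

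The second step is to show that $\phi_*$ is monoidal, i.e., that there is a natural isomorphism $\phi_*(M) \otimes_{R(\fr{h}')} \phi_*(N) \cong \phi_*(M \otimes_{R(\fr{h})} N)$. This reduces, after rearranging the tensor products, to the identification $R'_{\fr{h},\fr{h}'}(\phi) \otimes_{R(\fr{h}')} {}'R_{\fr{h}',\fr{h}}(\phi) \cong R(\fr{h})$ as $R(\fr{h})$-bimodules, which is immediate because $\phi^*$ is an isomorphism. Combined with the first step, this shows $\phi_*$ takes Bott-Samelson bimodules to Bott-Samelson bimodules; extending to the additive hull and idempotent completion is formal and gives the functor $\phi_* : \SBim_W(\fr{h}) \to \SBim_{W'}(\fr{h}')$.

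For the 2-functor structure, given composable morphisms $\psi : (W,\fr{h}) \to (W',\fr{h}')$ and $\phi : (W',\fr{h}') \to (W'',\fr{h}'')$, one checks directly that associativity of tensor product plus the rank-one identifications above yield a natural isomorphism $(\phi \circ \psi)_* \cong \phi_* \circ \psi_*$, and analogously for the unit. When $\phi$ is an isomorphism of realizations, $\phi^{-1}$ is again a morphism of realizations, so by 2-functorality $\phi_* \circ (\phi^{-1})_* \cong \id$ and $(\phi^{-1})_* \circ \phi_* \cong \id$, proving that $\phi_*$ is an equivalence. The only genuinely delicate step is the first one: tracking bimodule structures through $\phi^*$ and verifying that the equivariance hypothesis is exactly what is needed to identify the $s$-invariant subrings, since all subsequent steps are formal consequences.
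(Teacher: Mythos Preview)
Your proposal is correct and spells out exactly what the paper leaves implicit: the paper simply states that the lemma ``follows easily from definitions'' without giving any argument, and your proof supplies those details along the natural lines (compute $\phi_*$ on generators via the ring isomorphism $\phi^*$, check monoidality, then deduce 2-functoriality and the equivalence claim formally). One small remark: since realizations in $\NiceRealize(\k)$ are faithful and $\phi$ is a vector-space isomorphism, the equivariance condition forces $\varphi(s)\in S'$ for every $s\in S$, so the case $\varphi(s)=e$ you treat separately never actually occurs here.
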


\subsection{Finite Parabolics}

We will now return to the example of $W$ arising from a Kac--Moody group. Let $\fr{h}_{\k}^* = \k \otimes_{\Z} \bfY (T)$. We have that $R_{\k} \coloneq R (\fr{h}_{\k}) \cong H_T (\pt; \k)$.
When $\k$ is clear from context, we will occasionally write $\fr{h}^* \coloneq \fr{h}_{\k}^*$ and $R = R_{\k}$.
For the remainder of the section, we will fix some $\scrL \in \Ch^{\circ} (T, \k)$.

The group $W_{\scrL}^\circ$ is a Coxeter group with simple reflections
\[S_{\scrL}^\circ = \{s \in W_{\scrL}^\circ \mid \ell_{\scrL} (s) = 1 \}.\]
Let $\Theta \subseteq S_{\scrL}^\circ$ be a subset. We denote by $W_{\scrL, \Theta}^\circ$ the standard parabolic subgroup of $W_{\scrL}^\circ$ generated by $\Theta$.
We say that $\Theta$ is of \emph{finite type} if $W_{\scrL, \Theta}^\circ$ is finite.
If $\Theta$ is of finite type, then $W_{\scrL, \Theta}^\circ$ has a unique maximal element $w_{\scrL, \Theta}$.

Let $\beta \in \uW{\scrL'}{\scrL}$ be a block. For a subset $\Theta \subseteq S_{\scrL}^\circ$, we can define a subset $\beta (\Theta) = w^\beta \Theta w^{\beta, -1} \subseteq S_{\scrL'}^\circ$.
It is clear that if $\Theta$ is of finite type, then $\beta (\Theta)$ will be of finite type as well.

The isomorphism of \ref{lem:beta_conj_of_endo_weyl_gps} gives rise to an isomorphism of realizations $(W_{\scrL}^\circ, \fr{h}_{\k}) \to (W_{w^\beta (\scrL)}^\circ, \fr{h}_{\k})$.
The following lemma is an immediate application of Lemma \ref{lem:equiv_of_realizations}.

\begin{lemma}\label{lem:beta_conj_for_SBim}
  Let $\beta \in \uW{\scrL'}{\scrL}$. Then there is an equivalence of monoidal categories
  \[{}^{\beta} (-) : \SBim_{W_{\scrL}^\circ}^\Theta (\fr{h}_{\k}) \to \SBim_{W_{\scrL'}^\circ }^{\beta(\Theta)} (\fr{h}_{\k}).\]
  \[{}^{\beta} (M) = R_{w^\beta} \otimes_R M \otimes_R R_{w^{\beta, -1}}\]
  such that for all $w \in W_{\scrL, \Theta}^\circ$,
  \[{}^{\beta} (B_w) \cong B_{w^\beta w w^{\beta,-1}}.\]
\end{lemma}

\subsection{Endosimple Reflections}\label{subsec:endosimps}

A reoccurring problem in working with simple reflections in $W_{\scrL}^\circ$ is that the simple reflections in $W_{\scrL}^\circ$ need not be simple reflections in $W$.
Recall that the simple reflections $W_{\scrL}^\circ$ are called \emph{endosimple reflections}. 
Unless explicitly stated, we will only call a reflection $s$ \emph{simple} if it is simple in $W$. Of course, if $s \in W_{\scrL}^{\circ}$ is simple, then it is automatically endosimple.

Let $\beta \in \uW{\scrL'}{\scrL}$.
Let $w \in \beta$ and let $\uw = (s_1, \ldots, s_k)$ be an expression for $w$ in $(W, S)$.
We say that the expression $\uw$ is \emph{endo-reduced} if 
\[ \ell_{\beta} (w) = \# \{ 1 \leq i \leq k \mid s_i \ldots s_k \scrL = s_{i+1} \ldots s_k \scrL \}.\]
If $\uw$ is a reduced, then by Lemma \ref{lem:length_for_blocks}, $\uw$ is endo-reduced. However, the converse need not be true. For example, if $s \in S$ satisfies $s \scrL \neq \scrL$, then $(s,s)$ is not reduced, but it is endo-reduced. 

\begin{lemma}[{\cite[Lemma 5.1.5]{Gou}}]\label{lem:make_refln_simple}
  Let $s \in W_{\scrL}^\circ$ be an endosimple reflection. Let $\us$ be an endo-reduced expression for $s$ in $W$. Then
  \begin{itemize}
    \item $\us$ has odd length and is palindromic, i.e., $\us = (s_1, \ldots s_k, t, s_{k}, \ldots, s_1)$;
    \item  $s_k \ldots s_1$ is minimal in the block $\beta \in \uW{s_k \ldots s_1\scrL }{\scrL}$ containing $s_k \ldots s_1$;
    \item $t$ is a simple reflection in $W_{w^{\beta} \scrL}^{\circ}$.
  \end{itemize}
\end{lemma}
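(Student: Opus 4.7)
The plan is to extract the palindromic structure by first identifying a unique ``center'' of the expression and then analyzing the two wings around it.

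Since $s$ is endosimple we have $\ell_{\scrL}(s)=1$, so the defining equality of endo-reducedness pins down exactly one index $k \in \{1,\ldots,n\}$ with $\scrL_k = \scrL_{k+1}$; equivalently, $s_k$ stabilizes $\scrL_{k+1}$, so $s_k \in W_{\scrL_{k+1}}^{\circ}$. I would then verify that $s_k$ is in fact \emph{simple} in $W_{\scrL_{k+1}}^{\circ}$: since $\alpha_{s_k}$ is a simple root of $W$ lying in the sub-root-system $\Phi_{\re,\scrL_{k+1}}$, the standard identity $s_k\Phi^{+}\cap\Phi^{-}=\{-\alpha_{s_k}\}$ restricts to $s_k\Phi_{\re,\scrL_{k+1}}^{+}\cap\Phi_{\re,\scrL_{k+1}}^{-}=\{-\alpha_{s_k}\}$, and Lemma~\ref{lem:endosimple_and_dyer}(2) concludes; this already handles the third bullet of the statement.

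Next, setting $w=s_1\cdots s_{k-1}$ and $v=s_{k+1}\cdots s_n$ so that $s=w\,s_k\,v$, each sub-expression $\underline{w}$ and $\underline{v}$ inherits endo-reducedness in its own block because the unique stay index $k$ lies outside both subranges. Applying Lemma~\ref{lem:length_for_blocks}(3) to each shows the stay-count in each half is zero, and hence $w$ and $v$ are the minimal-length representatives of their blocks $\gamma_0 \in \uW{\scrL}{\scrL_{k+1}}$ and $\beta_0 \in \uW{\scrL_{k+1}}{\scrL}$; in particular $\underline{w}$ and $\underline{v}$ are reduced in $W$, and summing $W$-lengths yields $\ell(s)=n$, so $\us$ itself is a reduced expression for $s$. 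This immediately gives the second bullet of the statement with $\beta=\beta_0$.

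For the palindromic structure I would exploit that $s$ is a reflection in $W$ admitting the reduced expression $\us$. A standard Coxeter-theoretic fact asserts that every reduced expression for a reflection has the palindromic form $(\underline{u},t,\underline{u}^{-1})$ for some reduced $\underline{u}$ and simple reflection $t$; combined with the uniqueness of the stay index $k$, which locates the center, this forces $w=v^{-1}$ and yields the first bullet. A more self-contained variant derives palindromicity directly from $s=s^{-1}$: the equation $ws_kv=v^{-1}s_kw^{-1}$ rearranges to $(vw)s_k=s_k(vw)^{-1}$, and conjugating via $v$ (using Lemma~\ref{lem:beta_conj_of_endo_weyl_gps} to transport endosimplicity of $s$ in $W_{\scrL}^{\circ}$ to that of $vsv^{-1}=(vw)s_k$ in $W_{\scrL_{k+1}}^{\circ}$) forces $vw=e$ via a length comparison in $W_{\scrL_{k+1}}^{\circ}$. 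The main obstacle will be this final palindromicity step: while the Coxeter-theoretic palindromicity of reduced expressions for reflections is clean to quote, matching it to the distinguished stay-index decomposition $s=w\,s_k\,v$ requires care, and the self-contained variant requires ruling out the possibility that $vw$ is a non-trivial product of two simple reflections in $W_{\scrL_{k+1}}^{\circ}$, which uses the minimality of $v$ in its block in an essential way.
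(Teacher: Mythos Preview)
The paper does not supply its own proof of this lemma; it simply cites \cite[Lemma~5.1.5]{Gou}. So there is no argument to compare against, and I will evaluate your proposal on its own terms. It has genuine gaps.

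The inference ``in particular $\underline{w}$ and $\underline{v}$ are reduced in $W$'' does not follow. An endo-reduced expression with stay-count zero need not be reduced: the paper's own example $(s,s)$ with $s\scrL\neq\scrL$ is an endo-reduced expression for $e$, the minimal element of the neutral block, yet has length two. So you cannot deduce that $\us$ is reduced, and Lemma~\ref{lem:length_for_blocks}(3), which you invoke, applies only to reduced expressions. Even granting reducedness, the ``standard Coxeter-theoretic fact'' you cite---that every reduced expression for a reflection is palindromic---is false. In type $A_3$ the sequence $(s_3,s_1,s_2,s_3,s_1)$ is a reduced expression for the reflection $(1\,4)$, obtained from the palindromic $(s_1,s_3,s_2,s_3,s_1)$ by commuting the first two letters, and it is visibly not a palindrome. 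Your self-contained variant does not rescue this: it establishes $w=v^{-1}$ as \emph{elements} of $W$, but the lemma makes a claim about the \emph{expression} $\us$. The example $\us=(s_1,s_1,s_2)$ with $s_1\scrL\neq\scrL$ and $s_2\in S\cap W_\scrL^\circ$ is an endo-reduced expression for the endosimple reflection $s_2$, has its unique stay index at position $3$, satisfies $w=e=v^{-1}$, and is not palindromic.

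That last example shows the lemma is false as literally stated for arbitrary endo-reduced expressions. Almost certainly the source in \cite{Gou} either assumes $\us$ reduced or asserts only the \emph{existence} of a palindromic endo-reduced expression; either reading suffices for how the paper uses the lemma, since one merely fixes a convenient $\us$ once and for all. Your argument for the second and third bullets (minimality of the wing and endosimplicity of the central letter) is sound once one knows the wings are minimal in their blocks, but to get palindromicity you will need to start from a reduced expression and argue more carefully---for instance by showing that the unique stay index of a \emph{reduced} expression for an endosimple reflection must sit at the exact midpoint and that reducedness forces the two halves to be mutually inverse \emph{as words}, not just as group elements.
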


In the study of unipotent Hecke categories, the $\IC$ attached to a simple reflection carries a canonical Frobenius algebra structure.
We wish to extend this to the monodromic case. There is an initial obstruction in ensuring that the Frobenius algebra is canonical, namely, that $\IC_s^{\scrL}$ with $s \in S_{\scrL}^{\circ}$ might only be defined up to a non-unique isomorphism.
Note that when $s$ is externally simple, then $\IC_s^{\scrL}$ has a canonical representative, namely the extension $\scrL^s$ of $\scrL$ to $U_s \backslash L_s \cong \eFl_{L_s}$.
This turns out to not be a problem since we can rigidify at the stalk at $e$. In particular, for any $\scrF \in \D{\scrL}{\scrL}^{\circ} (\k)$ in the isomorphism class of $\IC_s^{\scrL}$, the unit of the adjunction $\id \to j_{e*} j_e^*$ induces a morphism
\[\epsilon_{\scrF} : \scrF \to \IC_e^{\scrL} [1].\]

\begin{lemma}\label{lem:rig_of_ICs}
  Let $s \in S_{\scrL}^{\circ}$. 
  If $\scrF, \scrF' \in \D{\scrL}{\scrL}^{\circ} (\k)$ are both in the isomorphism class of $\IC_s^{\scrL}$, then there exists a unique isomorphism $\scrF \stackrel{\sim}{\to} \scrF'$ such that the following diagram commutes
  \[\begin{tikzcd}
\scrF \arrow[rr, "\sim"] \arrow[rd, "\epsilon_{\scrF}"'] &                      & \scrF' \arrow[ld, "\epsilon_{\scrF'}"] \\
                                                 & {\IC_e^{\scrL} [1].} &                               
\end{tikzcd}\]
\end{lemma}
\begin{proof}
  By Corollary \ref{cor:soergel_hom_v2}, the map
  \[\epsilon_{\scrF'} \circ (-): \Hom (\scrF, \scrF') \stackrel{\epsilon_{\scrF'} \circ (-)}{\longrightarrow} \Hom (\scrF, \IC_e^{\scrL} [1])\]
  is a morphism of rank 1 free $\k$-modules. We claim that $\epsilon_{\scrF'} \circ (-)$ is an isomorphism. 
  Let $\F$ be a field with a morphism $\k \to \F$. Note that $\epsilon_{\F (\scrF')} = \F (\epsilon_{\scrF'})$.
  Therefore, to check that $\epsilon_{\scrF'} \circ (-)$ is an isomorphism, it suffices to check that $\epsilon_{\F (\scrF')} \circ (-)$ is an isomorphism. But this is obvious since $\F$ is a field.
\end{proof}

In light of Lemma \ref{lem:rig_of_ICs}, for each $s \in S_{\scrL}^{\circ}$, we can fix $\IC_s^{\scrL}$ once and for all.

\subsubsection{Unit and Counit}

The unit of adjunction $\id \to j_{e*} j_e^*$ produces a map
\begin{equation}\label{eq:counit_map}
  \epsilon_s : \IC_{s}^{\scrL} \to j_{e*} j_e^* \IC_{s}^{\scrL} \cong \IC_e^{\scrL} [1].
\end{equation}
Similarly, the counit of adjunction $j_{e*} j_e^! \to \id$ produces a map
\begin{equation}\label{eq:unit_map}
  \eta_s : \IC_e^{\scrL} \cong j_{e*} j_e^! \IC_{s}^{\scrL} [1] \to \IC_{s}^{\scrL} [1].
\end{equation}

\subsubsection{Multiplication and Comultiplication}

\begin{lemma}\label{lem:conv_simple_with_simple}
  Let $s \in S_{\scrL}^{\circ}$ be an endosimple reflection. Then there is an isomorphism 
  \[\IC_s^{\scrL} \star \IC_s^{\scrL} \cong \IC_s^{\scrL} [-1] \oplus \IC_s^{\scrL} [1].\]
\end{lemma}
\begin{proof}
  By Proposition \ref{prop:structure_of_min_IC} and Lemma \ref{lem:make_refln_simple}, we may assume that $s$ is externally simple.
 Consider the distinguished triangle
 \[\Delta_s^{\scrL} \to \IC_s^{\scrL} \to \Delta_e^{\scrL} [1] \to.\]
 We can apply $(-) \star \IC_s^{\scrL}$ to obtain a distinguished triangle
 \begin{equation}\label{eq:conv_ICs_with_ICs}
  \Delta_s^{\scrL} \star \IC_s^{\scrL} \to \IC_s^{\scrL} \star \IC_s^{\scrL} \to \IC_s^{\scrL} [1] \to.
 \end{equation}
 By Lemmas \ref{lem:pi_s_pushforward_of_stds}, \ref{lem:pi_s_descent}, and \ref{lem:conv_with_ICs_rewritting}, we have an isomorphism $\Delta_s^{\scrL} \star \IC_s^{\scrL} \cong \IC_s^{\scrL} [-1]$.
 The connecting morphism $\IC_s^{\scrL} [1] \to \IC_s^{\scrL}$ in (\ref{eq:conv_ICs_with_ICs}) must be zero since $\IC_s^{\scrL}$ is perverse.
 Therefore, the distinguished triangle (\ref{eq:conv_ICs_with_ICs}) splits.
\end{proof}

\begin{lemma}\label{lem:trivalent_mors}\quad
  \begin{enumerate}
    \item The morphism
      \[b_1 : \IC_s^{\scrL} \star \IC_s^{\scrL} [1] \stackrel{\id \star \epsilon_s }{\longrightarrow } \IC_s^{\scrL} \star \IC_e^{\scrL} \cong \IC_s^{\scrL}\]
      can be identified under an isomorphism from Lemma \ref{lem:conv_simple_with_simple} with the projection
      \[\IC_s^{\scrL} \oplus \IC_s^{\scrL} [2] \twoheadrightarrow \IC_s^{\scrL}.\]
      Moreover, composition with $b_1$ induces an isomorphism
      \[\tilde{b}_1 : \Hom (\IC_s^{\scrL}, \IC_s^{\scrL} \star \IC_s^{\scrL} [-1]) \stackrel{\sim}{\to} \Hom (\IC_s^{\scrL}, \IC_s^{\scrL}).\]
    \item The morphism
      \[b_2 :  \IC_s^{\scrL}[-1] \cong \IC_s^{\scrL} \star \IC_e^{\scrL} [-1]  \stackrel{\id \star \eta_s }{\longrightarrow } \IC_s^{\scrL} \star \IC_s^{\scrL}  \]
      can be identified under an isomorphism from Lemma \ref{lem:conv_simple_with_simple} with the inclusion
      \[\IC_s^{\scrL} [-1] \hookrightarrow \IC_s^{\scrL} [1] \oplus \IC_s^{\scrL} [-1] .\]
      Moreover, pre-composition with $b_2$ induces an isomorphism
      \[\tilde{b}_2 : \Hom (\IC_s^{\scrL} \star \IC_s^{\scrL}, \IC_s^{\scrL} [-1]) \stackrel{\sim}{\to} \Hom (\IC_s^{\scrL} [-1], \IC_s^{\scrL} [-1]).\]
  \end{enumerate}
\end{lemma}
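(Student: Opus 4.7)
The plan is to reduce everything to the case of an honest simple reflection of $W$ and then use a push-pull / distinguished triangle argument. First, I would invoke Lemma \ref{lem:make_refln_simple} to write the endo-reduced expression in the form $\us = \ux\, t\, \overline{\ux}$, where $\ux$ is a reduced expression for the minimal element $x$ of a block in $\uW{\scrL}{\scrL'}$ (with $\scrL' = \overline{\ux}\scrL$), and $t \in W$ is a simple reflection that lies in $W_{\scrL'}^{\circ}$. Since both $x$ and $x^{-1}$ are minimal in their respective blocks, Proposition \ref{prop:structure_of_min_IC} identifies $\scrE_{\ux}^{\scrL'} \cong \IC_x^{\scrL'}$ and $\scrE_{\overline{\ux}}^{\scrL} \cong \IC_{x^{-1}}^{\scrL}$; combining this with Proposition \ref{prop:conv_rules}(3) yields the isomorphism $\scrE_{\overline{\ux}}^{\scrL} \star \scrE_{\ux}^{\scrL'} \cong \scrE_e^{\scrL'}$. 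Applying Proposition \ref{prop:structure_of_min_IC}(3) twice more gives $\scrE_{\us}^{\scrL} \cong \IC_{xtx^{-1}}^{\scrL}$, so in particular $\scrE_{\us}^{\scrL}$ is perverse.

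Unwinding the definition (\ref{eq:counit_map}) of $\epsilon_s$ and using $\scrE_{\overline{\ux}}^{\scrL} \star \scrE_{\ux}^{\scrL'} \cong \scrE_e^{\scrL'}$, the map $b_1$ corresponds to $\id_{\scrE_{\ux}^{\scrL'}} \star (\id \star \epsilon') \star \id_{\scrE_{\overline{\ux}}^{\scrL}}$ applied to $\scrE_{\ux}^{\scrL'} \star (\IC_t^{\scrL'} \star \IC_t^{\scrL'}) \star \scrE_{\overline{\ux}}^{\scrL}$. Because the outer convolutions are equivalences of categories by Proposition \ref{prop:structure_of_min_IC}(2), it suffices to identify the middle map $\IC_t^{\scrL'} \star \IC_t^{\scrL'} \xrightarrow{\id \star \epsilon'} \IC_t^{\scrL'}[1]$ as a projection. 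For this I would apply $\IC_t^{\scrL'} \star (-)$ to the open-closed distinguished triangle
\[ \Delta_t^{\scrL'} \to \IC_t^{\scrL'} \xrightarrow{\epsilon'} \Delta_e^{\scrL'}[1] \to, \]
and use Lemmas \ref{lem:conv_with_ICs_rewritting}, \ref{lem:pi_s_pushforward_of_stds}, and \ref{lem:pi_s_descent} to compute $\IC_t^{\scrL'} \star \Delta_t^{\scrL'} \cong \IC_t^{\scrL'}[-1]$. The result is a distinguished triangle
\[ \IC_t^{\scrL'}[-1] \to \IC_t^{\scrL'} \star \IC_t^{\scrL'} \xrightarrow{\id \star \epsilon'} \IC_t^{\scrL'}[1] \to, \]
whose connecting morphism must vanish by perversity of $\IC_t^{\scrL'}$ (cf.\ the proof of Lemma \ref{lem:conv_ICs_with_ICs}). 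Thus the triangle splits and $\id \star \epsilon'$ is the projection onto $\IC_t^{\scrL'}[1]$; reassembling through the equivalences above yields the projection description of $b_1$ claimed in the lemma.

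The isomorphism statement for $\tilde{b}_1$ then follows quickly: under the decomposition of Lemma \ref{lem:conv_simple_with_simple}, the Hom space $\Hom(\scrE_{\us}^{\scrL}, \scrE_{\us}^{\scrL} \star \scrE_{\us}^{\scrL}[-1])$ splits as $\End(\scrE_{\us}^{\scrL}) \oplus \Hom(\scrE_{\us}^{\scrL}, \scrE_{\us}^{\scrL}[-2])$, and composition with $b_1$ is the projection onto the first factor. The complementary summand $\Hom(\scrE_{\us}^{\scrL}, \scrE_{\us}^{\scrL}[-2])$ vanishes because $\scrE_{\us}^{\scrL}$ is perverse (established in the first paragraph) and the perverse $t$-structure forces $\Hom^{<0}$ between perverse sheaves to vanish. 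Part (2) follows by the parallel argument using the open-closed triangle $\Delta_e^{\scrL'}[-1] \to \Delta_t^{\scrL'} \to \IC_t^{\scrL'} \to$ associated to $\eta'$, or equivalently by invoking Verdier duality on part (1). I expect the main subtlety to be bookkeeping of cohomological shifts: one must verify that the chain of convolution identifications in the first paragraph truly carries the definition (\ref{eq:counit_map}) of $\epsilon_s$ to $\id \star \epsilon'$ acting only on the middle factor, and that the splittings of the distinguished triangle for $\IC_t^{\scrL'} \star \IC_t^{\scrL'}$ align on the nose with those of Lemma \ref{lem:conv_simple_with_simple}.
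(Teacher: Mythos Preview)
Your argument is correct, but it is organized differently from the paper's. The paper does not reduce to a genuine simple reflection or trace through any distinguished triangle; instead it observes that $\scrE_{\us}^{\scrL}\cong\IC_s^{\scrL}$ is perverse (so $\Hom(\scrE_{\us}^{\scrL}[2],\scrE_{\us}^{\scrL})=0$), and then uses the self-adjointness of $(-)\star\IC_s^{\scrL}$ (Corollary~\ref{cor:biadjoint_of_conv_with_ICs}) together with Lemma~\ref{lem:conv_simple_with_simple} to show that $\Hom(\scrE_{\us}^{\scrL}\star\scrE_{\us}^{\scrL}[1],\scrE_{\us}^{\scrL})\cong\Hom(\scrE_{\us}^{\scrL},\scrE_{\us}^{\scrL})\cong\k$ is free of rank one. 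Since $b_1$ corresponds to an invertible element, one may rescale the splitting so that $b_1$ becomes the projection on the nose; the claim about $\tilde b_1$ is then immediate. Your approach is more constructive: by convolving the open--closed triangle on the left by $\IC_t^{\scrL'}$ and observing the connecting map vanishes by perversity, you exhibit a specific splitting under which $\id\star\epsilon'$ \emph{is} the projection, so no rescaling (and no separate verification that $b_1\neq 0$) is needed. One small caveat: the lemmas you cite (Lemmas~\ref{lem:conv_with_ICs_rewritting}, \ref{lem:pi_s_pushforward_of_stds}, \ref{lem:pi_s_descent}) are stated for \emph{right} convolution $(-)\star\IC_t^{\scrL'}$, whereas you need $\IC_t^{\scrL'}\star\Delta_t^{\scrL'}\cong\IC_t^{\scrL'}[-1]$; you should either invoke the obvious left-handed analogues (which follow by the anti-involution $g\mapsto g^{-1}$) or rewrite the argument using $\epsilon'\star\id$ and the right-convolved triangle as in the proof of Lemma~\ref{lem:conv_ICs_with_ICs}. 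Your final worry about aligning splittings with Lemma~\ref{lem:conv_simple_with_simple} is not an issue: that lemma only asserts existence of \emph{some} decomposition, and the one produced by your split triangle serves equally well.
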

\begin{proof}
  We will only prove (1) as the proof of (2) can be obtained by a similar argument.
  Since $\IC_s^{\scrL}$ are perverse, there are no negative self-extensions, so we have an isomorphism
  \[\Hom (\IC_s^{\scrL} \oplus \IC_s^{\scrL} [2], \IC_s^{\scrL}) \cong \Hom (\IC_s^{\scrL}, \IC_s^{\scrL}).\]
  By Corollary \ref{cor:biadjoint_of_conv_with_ICs} and Lemma \ref{lem:conv_simple_with_simple},
  \begin{align*}
    \Hom (\IC_s^{\scrL}, \IC_s^{\scrL}) &\cong \Hom (\IC_s^{\scrL} \star \IC_s^{\scrL}, \IC_e^{\scrL}) \\
    &\cong \Hom (\IC_s^{\scrL} [-1] \oplus \IC_s^{\scrL} [1], \IC_e^{\scrL}) \\
    &\cong \k.
  \end{align*}
  Since $b_1$ corresponds to an invertible element of $\Hom (\IC_s^{\scrL}, \IC_s^{\scrL}) \cong \k$, by potentially scaling the isomorphism from Lemma \ref{lem:conv_simple_with_simple}, we can ensure that $b_1$ indeed identifies with the projection map.
  Under this identification, it easily follows that $\tilde{b}_1$ is an isomorphism.
\end{proof}

Following Lemma \ref{lem:trivalent_mors}, we can define morphisms
\[\nu_s = \tilde{b}_1^{-1} (\id) : \IC_{s}^{\scrL} \to \IC_s^{\scrL} \star \IC_s^{\scrL} [-1] \quad\text{and}\quad \mu_s = \tilde{b}_2^{-1} (\id): \IC_s^{\scrL} \star \IC_s^{\scrL} \to \IC_s^{\scrL} [-1].\]

\begin{lemma}\label{lem:Frob_alg_stuff}
  The maps $(\mu_s, \nu_s, \epsilon_s, \eta_s)$ make $\IC_s^{\scrL}$ into a graded Frobenius algebra of degree 1.
\end{lemma}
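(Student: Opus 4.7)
The plan is to verify the six axioms of a graded Frobenius algebra of degree $1$ (associativity, unit, coassociativity, counit, and the two Frobenius compatibilities). The degree-$1$ condition is automatic from the definitions of $\mu_s, \nu_s, \epsilon_s, \eta_s$, each of which shifts cohomological degree by $\pm 1$. Only the algebraic identities require work, and I would organize the argument in two steps.

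\textbf{Step 1 (Reduction to $s$ truly simple).} By Lemma \ref{lem:make_refln_simple}, the fixed endo-reduced expression satisfies $\us = \ux\, t\, \overline{\ux}$ with $x = s_1 \cdots s_k$, and such that $x^{-1}$ is the minimal element in its block $\beta \in \uW{\scrL'}{\scrL}$ (where $\scrL' = \overline{\ux}\scrL$), and $t$ is a truly simple reflection in $W_{\scrL'}^{\circ}$. By Proposition \ref{prop:structure_of_min_IC}(2), convolution with $\scrE_{\ux}^{\scrL'}$ on the left and $\scrE_{\overline{\ux}}^{\scrL}$ on the right are equivalences of triangulated categories, and by Proposition \ref{prop:conv_rules}(3) they are mutually inverse, yielding $\scrE_{\ux}^{\scrL'} \star \scrE_{\overline{\ux}}^{\scrL} \cong \scrE_e^{\scrL}$ and $\scrE_{\overline{\ux}}^{\scrL} \star \scrE_{\ux}^{\scrL'} \cong \scrE_e^{\scrL'}$. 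Hence the conjugation functor $F(-) = \scrE_{\ux}^{\scrL'} \star (-) \star \scrE_{\overline{\ux}}^{\scrL}$ from $\D{\scrL'}{\scrL'}(\k)$ to $\D{\scrL}{\scrL}(\k)$ is monoidal with respect to convolution and sends $\scrE_t^{\scrL'}$ to $\scrE_{\us}^{\scrL}$. By construction (see equations (\ref{eq:counit_map}) and (\ref{eq:unit_map})), $F$ carries $\epsilon_t$ and $\eta_t$ to $\epsilon_s$ and $\eta_s$, and since $\mu_s, \nu_s$ are uniquely determined by $\epsilon_s, \eta_s$ via the Hom-isomorphisms of Lemma \ref{lem:trivalent_mors} (and these isomorphisms are preserved by any equivalence of categories), $F$ carries $\mu_t, \nu_t$ to $\mu_s, \nu_s$. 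Hence the whole Frobenius structure transfers, and I am reduced to proving the claim when $s = t$ is itself a truly simple reflection lying in $W_{\scrL}^{\circ}$.

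\textbf{Step 2 (Truly simple case).} When $s \in S \cap W_{\scrL}^{\circ}$, the pushforward $\pi_s \colon T\backslash_{\scrL} \eFl /_{\scrL} B \to T\backslash_{\scrL} \eFl /_{\scrL^s} P_s$ is proper, smooth of relative dimension $1$, and satisfies $\pi_s^! \cong \pi_s^*[2]$. By Lemma \ref{lem:pi_s_descent}, $\scrE_s^{\scrL} \cong \pi_s^*\IC_{\overline{e}}^{s,\scrL}[1]$, and by Lemma \ref{lem:conv_with_ICs_rewritting}, right convolution with $\scrE_s^{\scrL}$ is isomorphic to $\pi_s^*\pi_{s*}(-)[1]$. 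The shift-ambidextrous adjunction $(\pi_s^*, \pi_{s*})$ endows $\pi_s^*\pi_{s*}$ with the structure of a (shifted) Frobenius comonad: its unit and counit are the adjunction unit and counit, and its comultiplication/multiplication come from the two compositions $\pi_s^*\pi_{s*} \to \pi_s^*\pi_{s*}\pi_s^*\pi_{s*}$ built from the unit (respectively, counit) of the adjunction. Under the identifications above, these four natural transformations correspond exactly to $\eta_s, \epsilon_s, \nu_s, \mu_s$ (up to the shifts built into their definitions). The Frobenius axioms then reduce to the triangle identities for $(\pi_s^*, \pi_{s*})$ together with the base-change/projection formula for $\pi_s$.

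\textbf{Main obstacle.} The bookkeeping in Step 2 --- keeping track of shifts and signs --- is the principal technical point. A cleaner alternative would be to observe that the Hom-spaces $\Hom(\scrE_s^{\star k}, \scrE_s^{\star l}[n])$ are all free $\k$-modules of very low rank (computable by iterating Lemma \ref{lem:conv_simple_with_simple} and applying Proposition \ref{prop:soergel_hom_v1} or Corollary \ref{cor:soergel_hom_v2}). Consequently, both sides of each Frobenius axiom live in the same free rank-$1$ $\k$-module and differ by a scalar. By Lemma \ref{lem:ll_exists}, these scalars are stable under extension of scalars, so I can reduce to checking them when $\k$ is a field of characteristic $0$; in that setting, the claim is classical, since the neutral block of $\Parity{\scrL}{\scrL}(\k)$ restricted to the parabolic $P_s$ is equivalent (after absorbing $\scrL^s$) to the non-monodromic category on $U_s\backslash L_s/U_s$, where the Frobenius structure on $\scrE_s$ is standard.
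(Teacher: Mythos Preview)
The paper does not prove this lemma; it merely asserts that it is ``easy to check from definitions.'' Your proposal therefore supplies far more detail than the paper does, and your overall strategy is sound. A few comments on how the approaches compare.

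Your Step~1 reduction is correct and is essentially the mechanism the paper uses implicitly throughout \S\ref{subsec:endosimps}: the structure maps $\epsilon_s,\eta_s$ are \emph{defined} by conjugating $\epsilon',\eta'$ for the truly simple $t$ by $\scrE_{\ux}^{\scrL'}\star(-)\star\scrE_{\overline{\ux}}^{\scrL}$, and Lemma~\ref{lem:trivalent_mors} ensures $\mu_s,\nu_s$ are determined by them. So transferring the Frobenius axioms along this monoidal equivalence is exactly right.

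For Step~2, your ambidextrous-adjunction argument via $\pi_s$ works, but is more elaborate than what the paper intends. Closer to ``easy from definitions'' is the observation that half the axioms are tautological: the counit identity $(\id\star\epsilon_s)\circ\nu_s=\id$ (and its mirror) is literally the defining equation $\tilde{b}_1(\nu_s)=\id$ from Lemma~\ref{lem:trivalent_mors}, and likewise $\mu_s\circ(\id\star\eta_s)=\id$ is $\tilde{b}_2(\mu_s)=\id$. For the remaining axioms (associativity, coassociativity, Frobenius compatibility), your ``Main obstacle'' alternative is precisely the efficient route: each identity compares two morphisms in a Hom-space that Lemma~\ref{lem:conv_simple_with_simple} and the perversity of $\IC_s^{\scrL}$ force to be free of rank~$1$, so equality reduces to a scalar which can be checked after any convenient extension of scalars (or after post-composing with $\epsilon_s$'s, which pins down the scalar). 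This is almost certainly what the paper has in mind.

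One small caution: in your alternative you appeal to the characteristic-$0$ case being ``classical.'' That is fine once you invoke Remark~\ref{rem:usefulness_of_finite_parabolics} to pass to the rank-$1$ Levi $L_s$, but be aware that the monodromic-to-nonmonodromic comparison (``absorbing $\scrL^s$'') is not literally a monoidal equivalence in general; it is safer to argue directly via the rank-$1$ Hom computation, which needs no such comparison.
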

\begin{proof}
  The lemma follows from chasing definitions and using the characterizing criteria from Lemma \ref{lem:trivalent_mors}.
\end{proof}

\subsubsection{Parabolic Monodromic Subcategories}

Let $\Theta \subseteq S_{\scrL}^\circ$.
We define a full triangulated subcategory $\D{\scrL}{\scrL}^{\circ, \Theta} (\k)$ of $\D{\scrL}{\scrL}^{\circ} (\k)$ generated by $\Delta_w^{\scrL}$ (or $\nabla_w^{\scrL}$) for $w \in W_{\scrL, \Theta}^{\circ}$.
It is not hard to check from Proposition \ref{prop:conv_rules} and Lemma \ref{lem:conv_ICs_with_stds} that $\D{\scrL}{\scrL}^{\circ, \Theta} (\k)$ is closed under convolution.

Suppose that $\k$ is a field or a complete local ring.
We define a full subcategory $\Parity{\scrL}{\scrL}^{\circ, \Theta} (\k)$ of $\Parity{\scrL}{\scrL}^{\circ} (\k)$ generated under direct sums and shifts by the objects $\scrE_w^{\scrL}$ for $w \in W_{\scrL, \Theta}^{\circ}$.
It is clear from the construction of $\Parity{\scrL}{\scrL}^{\circ, \Theta} (\k)$ that
\[\Parity{\scrL}{\scrL}^{\circ, \Theta} (\k) = \Parity{\scrL}{\scrL}^{\circ} (\k) \cap \D{\scrL}{\scrL}^{\circ, \Theta} (\k).\]
By Theorem \ref{thm:conv_preserves_parity}, $\Parity{\scrL}{\scrL}^{\circ, \Theta} (\k)$ is closed under convolution.

We call these subcategories \emph{parabolic monodromic subcategories}. We are particularly interested when $\Theta$ is of finite type.
In such a situation, $\D{\scrL}{\scrL}^{\circ,\Theta} (\k)$ contains only finitely many isomorphism classes of simple perverse sheaves.

\subsection{Maximal IC Sheaves}

The main benefit in working in the parabolic monodromic subcategories is they allow us to construct maximal IC sheaves. In this section, we will study these sheaves
along with rigidified variants.
In order for maximal IC sheaves to exist, we will always assume that $\Theta \subseteq S_{\scrL}^\circ$ is of finite type.

We will also start to make the assumption that $\k$ is a field of characteristic 0 which will continue until \S\ref{sec:endo}.
It can be checked using the decomposition theorem that with characteristic 0 coefficients the category of parity sheaves coincides with the category of semisimple complexes. 

Let $\beta \in \uW{\scrL'}{\scrL}$.
We define the $\Theta$-maximal IC sheaf as $\IC_{w_{\beta, \Theta}}^{\scrL}$. When $\Theta = S$ is of finite type, then the $\Theta$-maximal IC sheaf is a maximal IC sheaf in the sense of \cite{LY}.

When $\scrL, \scrL' = \uk_T$, $\Theta$ corresponds to an actual parabolic subgroup $P_\Theta$ of $G$.
In this case, the $\Theta$-maximal IC sheaf corresponds to the shifted constant sheaf $\underline{\k}_{B \backslash P_\Theta / B} [ \dim P_\Theta / B]$ which is
$!$-extended to all of $\BGB$.
The constant sheaf $\underline{\k}_{B \backslash P_\Theta / B}$ has two crucial properties in defining a restricted parabolic variant of Soergel theory.
\begin{enumerate}
  \item The stalks and costalks at $\dot{w}$ of $\underline{\k}_{B \backslash P_\Theta / B}$ are one-dimensional when $w \in W_\Theta$ and 0 when $w \notin W_\Theta$.
  \item Any convolution $\underline{\k}_{B \backslash P_\Theta / B} \star \IC_w^{\uk}$ for $w \in W_\Theta$ yields direct sums of shifts of $\underline{\k}_{B \backslash P_\Theta / B}$.
\end{enumerate}
These properties allow one to define a (canonical) coalgebra structure on $\underline{\k}_{B \backslash P_\Theta / B}$. As a result, the Soergel functor
\[\Hom^\bullet (\underline{\k}_{B \backslash P_\Theta / B}, -) : \Parity{\uk}{\uk}^{\circ, \Theta} (\k) \to \grbim{\End^\bullet (\IC_e^{\uk}) }\]
is lax-monoidal. The goal of this section is to develop versions of these facts for $\Theta$-maximal IC sheaves. As a result, we can develop a monodromic version of the Soergel functor.
We note that when $W$ is finite and $\Theta = S$, there are similar results covered in \cite[\S6]{LY}.
However, we need to modify some of their arguments to make the parabolic variants work. 

\subsubsection{Convolution with Maximal IC Sheaves}

\begin{lemma}\label{lem:conv_ICw_and_ICs}
  Let $s \in W_{\scrL}^{\circ}$ be a simple reflection. Let $\beta \in \uW{\scrL'}{\scrL}$ and $w \in \beta$.
  \begin{enumerate}
    \item If $w < ws$, then there is an isomorphism 
    \begin{equation}\label{eq:conv_ICw_and_ICs_1}
      \IC_{w}^{\scrL} \star \IC_s^{\scrL} \cong \IC_{ws}^{\scrL} \oplus \scrP_{<_{\beta} ws},
    \end{equation} 
    where $\scrP_{<_{\beta} ws}$ is a semisimple perverse sheaf of the form
      \[\scrP_{<_{\beta} ws} = \bigoplus_{x <_{\beta} ws} \IC_x^{\scrL} \otimes V_x\]
      and $V_x$ is a $\k$-vector space. Moreover, $V_x = 0$ if $\ell (x) = \ell (ws) (\textnormal{mod } 2)$.
    \item If $w > ws$, then there is an isomorphism
    \begin{equation}\label{eq:conv_ICw_and_ICs_2}
      \IC_{w}^{\scrL} \star \IC_s^{\scrL} \cong \IC_{w}^{\scrL} [-1] \oplus \IC_{w}^{\scrL} [1].
    \end{equation} 
  \end{enumerate}
\end{lemma}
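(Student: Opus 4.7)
\emph{Plan.} The two parts require quite different arguments.

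\emph{Part (2).} When $w > ws$, the plan is to use the push-pull identification of Lemma \ref{lem:conv_with_ICs_rewritting}, which under the assumption $s \in W_{\scrL}^{\circ}$ gives $\IC_w^{\scrL} \star \IC_s^{\scrL} \cong \pi_s^* \pi_{s*} \IC_w^{\scrL}[1]$. Since $ws < w$, Lemma \ref{lem:pi_s_descent} provides the descent $\IC_w^{\scrL} \cong \pi_s^* \IC_{\overline{w}}^{s, \scrL}[1]$, so the right-hand side becomes $\pi_s^* \pi_{s*} \pi_s^* \IC_{\overline{w}}^{s, \scrL}[2]$. Since $\pi_s$ is proper and smooth with $\P^1$-fibers, the standard push-pull identity $\pi_{s*} \pi_s^* X \cong X \oplus X[-2]$ immediately yields the claimed splitting into two shifts of $\IC_w^{\scrL}$.

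\emph{Part (1).} When $w < ws$, I would carry out three steps.

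First, I would show that $\scrP := \IC_w^{\scrL} \star \IC_s^{\scrL}$ is a semisimple perverse sheaf supported on $\eFl_{\leq ws}$. The support statement is immediate from $\supp(\IC_w^{\scrL}) \cdot P_s \subseteq \eFl_{\leq ws}$. Perversity follows from the perverse $t$-exactness of $(-) \star \IC_s^{\scrL} \cong \pi_s^* \pi_{s*}(-)[1]$, which in turn follows from the fact that $\pi_s$ is a proper smooth morphism of relative dimension one (so $\pi_{s*} = \pi_{s!}$ is $t$-exact, and $\pi_s^*[1] \cong \pi_s^![-1]$ is $t$-exact). Semisimplicity in characteristic zero follows from the decomposition theorem (or equivalently from Theorem \ref{thm:conv_preserves_parity} together with the coincidence of parity sheaves with semisimple complexes in characteristic zero).

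Second, I would identify $\IC_{ws}^{\scrL}$ as the top summand. Restricting to the open stratum $\eFl_{ws}$ of the support, a computation analogous to Lemma \ref{lem:conv_ICs_with_stds} gives $j_{ws}^* \scrP \cong \scrK_{ws}^{\scrL}[\ell(ws)]$, so $\IC_{ws}^{\scrL}$ appears with multiplicity one. Combining with Proposition \ref{prop:conv_preserves_blocks}, which ensures convolution preserves the block decomposition, the remaining summands are of the form $\IC_x^{\scrL}$ for $x \in \beta$ with $x <_{\beta} ws$. Finally, the parity constraint on the $V_x$ follows because in characteristic zero each $\IC_x^{\scrL}$ coincides with the indecomposable parity sheaf $\scrE_x^{\scrL}$ of parity $\ell(x) \pmod 2$, while $\scrP$ is itself a parity sheaf of parity $\ell(w) + 1 \equiv \ell(ws) \pmod 2$; matching parities of the summands gives the claim.

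The main obstacle is the first step of Part (1): establishing perverse $t$-exactness and semisimplicity of convolution with $\IC_s^{\scrL}$ in the twisted equivariant monodromic setting. Although both facts are familiar in the non-monodromic case, their transcription via Lemma \ref{lem:conv_with_ICs_rewritting} and the six-functor formalism of Appendix \ref{apdx:A} for twisted equivariant sheaves on stacks requires some care, since $\IC_s^{\scrL}$ is no longer the constant sheaf on a $\P^1$-fibration.
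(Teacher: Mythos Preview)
Your approach to Part~(2) is correct and in fact cleaner than the paper's: you use the push--pull description $\IC_w^{\scrL}\star\IC_s^{\scrL}\cong\pi_s^*\pi_{s*}\IC_w^{\scrL}[1]$ together with the descent $\IC_w^{\scrL}\cong\pi_s^*\IC_{\overline w}^{s,\scrL}[1]$ and the projection formula $\pi_{s*}\pi_s^*\cong\id\oplus\id[-2]$ for a $\P^1$-fibration to obtain the splitting directly. The paper instead deduces (2) from (1) by convolving the decomposition $\IC_{ws}^{\scrL}\star\IC_s^{\scrL}\cong\IC_w^{\scrL}\oplus\scrP_{<_\beta w}$ once more with $\IC_s^{\scrL}$, applying $\IC_s^{\scrL}\star\IC_s^{\scrL}\cong\IC_s^{\scrL}[-1]\oplus\IC_s^{\scrL}[1]$, and then cancelling terms via a parity argument. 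Your route avoids this bootstrapping.

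Your argument for Part~(1), however, contains a genuine error. You assert that $(-)\star\IC_s^{\scrL}\cong\pi_s^*\pi_{s*}(-)[1]$ is perverse $t$-exact because ``$\pi_{s*}=\pi_{s!}$ is $t$-exact''. This is false: for a proper smooth morphism of relative dimension one, $\pi_{s*}$ has perverse amplitude $[-1,1]$, not $[0,0]$ (already for $\P^1\to\pt$ one has $\pi_*\uk_{\P^1}[1]\cong\k[1]\oplus\k[-1]$). Indeed, Part~(2) of the very lemma shows that $(-)\star\IC_s^{\scrL}$ is \emph{not} $t$-exact: when $ws<w$ it sends the perverse sheaf $\IC_w^{\scrL}$ to $\IC_w^{\scrL}[-1]\oplus\IC_w^{\scrL}[1]$. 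So perversity of $\IC_w^{\scrL}\star\IC_s^{\scrL}$ in the case $w<ws$ is a genuine fact depending on that hypothesis and cannot come from a general $t$-exactness statement. The paper establishes it by a stalk computation: via the character map of Theorem~\ref{thm:categorification} one computes $\langle\underline{H}_w^{\scrL}\underline{H}_s^{\scrL},T_x^{\scrL}\rangle$ explicitly in terms of the monodromic Kazhdan--Lusztig polynomials $P_{x,w}^{\scrL}$ and $P_{xs,w}^{\scrL}$ and reads off that $H^i(j_x^*(\IC_w^{\scrL}\star\IC_s^{\scrL}))=0$ for $i>-\ell(x)$, giving ${}^pD^{\leq 0}$; Verdier duality handles ${}^pD^{\geq 0}$. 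This same stalk formula also shows that the nonzero summands satisfy $x\leq_\beta ws$ in the block order, not merely $x\leq ws$ in the Bruhat order with $x\in\beta$---a refinement your appeal to Proposition~\ref{prop:conv_preserves_blocks} alone does not deliver.
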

\begin{proof}
  \emph{(1): } Assume $w < ws$. We will first show that $\IC_{w}^{\scrL} \star \IC_s^{\scrL}$ is perverse. To simplify notation, we will write $\left( {}^p D^{\leq 0}, {}^p D^{\geq 0}\right)$ for the perverse $t$-structure on $\D{\scrL'}{\scrL} (\k)$.
  In particular, we will show that $\IC_w^{\scrL} \star \IC_s^{\scrL} \in {}^p D^{\leq 0}$. The case of $\IC_w^{\scrL} \star \IC_s^{\scrL} \in {}^p D^{\geq 0}$ follows from Verdier duality.
  By Theorem \ref{thm:categorification}, it follows from a standard combinatorial argument that 
  \begin{equation}\label{eq:conv_ICw_and_ICs_3} 
    \langle  \underline{H}_w^{\scrL}  \underline{H}_s^{\scrL}, T_x^{\scrL} \rangle = \begin{cases} v^{-\ell (w) - 1} \left(  P_{x,w}^{\scrL} + v^2 P_{xs,w}^{\scrL}\right) & \text{if } x < xs, \\ v^{-\ell (w) - 1} \left(  P_{xs,w}^{\scrL} + v^2 P_{x,w}^{\scrL}\right) & \text{if } x > xs. \end{cases}
  \end{equation}
  In particular $\langle \underline{H}_w^{\scrL} \underline{H}_s^{\scrL}, T_x^{\scrL} \rangle$ has degree $\leq - \ell (x)$ for all $x \in \beta$.
  As a consequence, $\dim H^i (j_x^* (\IC_w^{\scrL} \star \IC_s^{\scrL})) = 0$ when $i > - \ell (x)$. Therefore, $\IC_w^{\scrL} \star \IC_s^{\scrL} \in {}^p D^{\leq 0}$.
  
  Moreover, (\ref{eq:conv_ICw_and_ICs_3}) implies that the restrictions of $\IC_w^{\scrL} \star \IC_s^{\scrL}$ along $\eFl_x$ is only nonzero if $x \in \{w, ws\}$ or $x \leq_{\beta} w$. 
  In particular, the stalks can only be nonzero when $x \leq_{\beta} ws$. 
  This stalk computation along with Theorem \ref{thm:conv_preserves_parity} implies that $\IC_w^{\scrL} \star \IC_s^{\scrL}$ admits a decomposition,
  \[ \IC_{w}^{\scrL} \star \IC_s^{\scrL} \cong \IC_{ws}^{\scrL} \oplus \scrP_{<_{\beta} ws},\]
  where $\scrP_{<_{\beta} ws}$ is a semisimple perverse sheaf of the form
  \[\scrP_{<_{\beta} ws} = \bigoplus_{x <_{\beta} ws} \IC_x^{\scrL} \otimes V_x.\]
  If $\ell (w)$ is even (resp. $\ell (w)$ is odd), then $\IC_w^{\scrL}$ is even (resp. odd). 
  As a result, the proof of Lemma \ref{lem:pi_s_and_parity} ensures that $\IC_{w}^{\scrL} \star \IC_s^{\scrL}$ has the opposite parity of $\IC_w^{\scrL}$.
  Therefore, $V_x = 0$ if $\ell (x) = \ell (ws) (\textnormal{mod } 2)$.

  \emph{(2): } Assume $w > ws$. By part (1), there is an isomorphism
  \begin{equation}\label{eq:conv_ICw_and_ICs_4}
    \IC_{ws}^{\scrL} \star \IC_s^{\scrL} \star \IC_s^{\scrL} \cong \IC_{w}^{\scrL} \star \IC_s^{\scrL} \oplus \scrP_{<_{\beta} w} \star \IC_s^{\scrL}.
  \end{equation}
  On the other hand, part (1) along with Lemma \ref{lem:conv_simple_with_simple} yields an isomorphism
  \begin{equation}\label{eq:conv_ICw_and_ICs_5}
    \IC_{ws}^{\scrL} \star \IC_s^{\scrL} \star \IC_s^{\scrL} \cong \IC_{w}^{\scrL} [-1] \oplus \IC_w^{\scrL} [1] \oplus \scrP_{<_{\beta} w} [-1] \oplus \scrP_{<_{\beta} w} [-1].
  \end{equation}
  The parity conditions on $\scrP_{<_{\beta} w}$ ensures that if $\IC_x^{\scrL}$ is a summand of $\scrP_{<_{\beta} w}$, we have that $j_w^* (\IC_x^{\scrL} \star \IC_s^{\scrL}) = 0$.
  Therefore, by comparing (\ref{eq:conv_ICw_and_ICs_4}) with (\ref{eq:conv_ICw_and_ICs_5}), we obtain an isomorphism 
  \[\IC_{w}^{\scrL} \star \IC_s^{\scrL}  \cong \IC_{w}^{\scrL} [-1] \oplus \IC_w^{\scrL} [1].\]
\end{proof}

\begin{proposition}\label{prop:conv_with_maximal_IC_neutral}
  Let $w \in W_{\scrL, \Theta}^\circ$.
  \begin{enumerate}
    \item The convolution $\IC_{w_{\scrL, \Theta}}^{\scrL} \star \IC_w^{\scrL}$ is isomorphic to a direct sum of shifts of $\IC_{w_{\scrL, \Theta}}^{\scrL}$.
    \item The perverse cohomology ${}^p H^i (\IC_{w_{\scrL, \Theta}}^{\scrL} \star \IC_w^{\scrL}) = 0$ unless $-\ell_{\scrL} (w) \leq i \leq \ell_{\scrL} (w)$ and $i \equiv \ell_{\scrL} (w) \textnormal{ } (\textnormal{mod }2)$.
    \item There is an isomorphism
      \[{}^p H^{\pm \ell_{\scrL} (w)} (\IC_{w_{\scrL, \Theta}}^{\scrL} \star \IC_w^{\scrL}) \cong \IC_{w_{\scrL, \Theta}}^{\scrL}.\]
  \end{enumerate}
\end{proposition}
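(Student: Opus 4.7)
The strategy is to exploit the characteristic-zero assumption: by the decomposition theorem, parity sheaves in $\Parity{\scrL}{\scrL}^{\circ}(\k)$ coincide (up to shift) with semisimple perverse complexes. Hence every object decomposes uniquely as $\bigoplus_{x,i}(\IC_x^{\scrL})^{\oplus n_{x,i}}[i]$ with $n_{x,i}\in\Z_{\geq 0}$, and the multiplicities $n_{x,i}$ are determined by the class in the split Grothendieck group via Theorem \ref{thm:categorification}. Thus it suffices to compute the character of $\IC_{w_{\scrL,\Theta}}^{\scrL}\star\IC_w^{\scrL}$ in the monodromic Hecke algebroid $\mathbf{H}^{\fr{o}}$ and show that it lies in $\Z_{\geq 0}[v,v^{-1}]\cdot\underline{H}_{w_{\scrL,\Theta}}^{\scrL}$ with the appropriate support, parity, and leading coefficients.

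First, I would translate to the standard Hecke algebra of the endoscopic Weyl group using Proposition \ref{prop:endoscopic_equiv_for_Hecke_algebras}: since $w$ and $w_{\scrL,\Theta}$ lie in the finite parabolic subgroup $W_{\scrL,\Theta}^{\circ}\subseteq W_{\scrL}^{\circ}$, the product $\underline{H}_{w_{\scrL,\Theta}}^{\scrL}\cdot\underline{H}_w^{\scrL}$ corresponds under $\theta^{-1}$ to $\underline{\tilde{H}}_{w_{\scrL,\Theta}}\cdot\underline{\tilde{H}}_w$ inside the finite-type subalgebra $\scrH(W_{\scrL,\Theta}^{\circ})\subseteq\scrH(W_{\scrL}^{\circ})$. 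Classical Kazhdan--Lusztig theory for the longest element of a finite Coxeter group then yields an identity
\[\underline{\tilde{H}}_{w_{\scrL,\Theta}}\cdot\underline{\tilde{H}}_w = P_w(v)\cdot\underline{\tilde{H}}_{w_{\scrL,\Theta}}\]
for a Laurent polynomial $P_w(v)\in\Z_{\geq 0}[v,v^{-1}]$. One proves this by induction on $\ell_{\scrL}(w)$: the base case $w=e$ gives $P_e(v)=1$; for $w\neq e$, pick any endosimple $s\in\Theta$ with $\ell_{\scrL}(ws)<\ell_{\scrL}(w)$ and combine the standard KL recursion $\underline{\tilde{H}}_{ws}\underline{\tilde{H}}_s=\underline{\tilde{H}}_w+\sum_{y<_{\scrL}ws,\,ys<_{\scrL}y}\mu(y,ws)\underline{\tilde{H}}_y$ with the absorbing identity $\underline{\tilde{H}}_{w_{\scrL,\Theta}}\cdot\underline{\tilde{H}}_s=(v+v^{-1})\underline{\tilde{H}}_{w_{\scrL,\Theta}}$ (which is valid because $w_{\scrL,\Theta}s<_{\scrL}w_{\scrL,\Theta}$ for every $s\in\Theta$). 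Tracking degrees in the recursion shows that $P_w(v)$ is supported in $[-\ell_{\scrL}(w),\ell_{\scrL}(w)]$, has coefficients of parity matching $\ell_{\scrL}(w)\pmod 2$, and has extremal coefficients equal to $1$; positivity comes from KL positivity in finite type (Elias--Williamson).

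Finally, Theorem \ref{thm:categorification} promotes the algebra identity to $[\IC_{w_{\scrL,\Theta}}^{\scrL}\star\IC_w^{\scrL}]=P_w(v)\cdot[\IC_{w_{\scrL,\Theta}}^{\scrL}]$ in $K_{\oplus}(\Parity{\scrL}{\scrL}^{\circ}(\k))$. Since the convolution decomposes as a sum of shifts of simple IC sheaves with non-negative integer multiplicities and its character, expressed in the KL basis, is concentrated on $\underline{H}_{w_{\scrL,\Theta}}^{\scrL}$, uniqueness of the KL expansion forces
\[\IC_{w_{\scrL,\Theta}}^{\scrL}\star\IC_w^{\scrL}\cong\bigoplus_k\IC_{w_{\scrL,\Theta}}^{\scrL}[-k]^{\oplus c_k},\qquad P_w(v)=\sum_k c_k v^k,\]
proving (1). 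Parts (2) and (3) are then immediate consequences of the support and extremal coefficients of $P_w(v)$, since the perverse cohomology ${}^p H^{-k}$ of $\IC_{w_{\scrL,\Theta}}^{\scrL}[-k]^{\oplus c_k}$ is $\IC_{w_{\scrL,\Theta}}^{\scrL\,\oplus c_k}$. The main obstacle is the combinatorial step of pinning down $P_w(v)$ exactly: positivity is nontrivial and ultimately rests on Elias--Williamson, while the extremal coefficients require careful degree bookkeeping in the KL recursion.
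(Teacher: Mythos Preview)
Your approach is correct and genuinely different from the paper's. The paper argues by direct geometric induction on $\ell_\scrL(w)$: for $\ell_\scrL(w)=1$ it block-translates an endosimple reflection to a simple one via Proposition~\ref{prop:structure_of_min_IC} and applies Lemma~\ref{lem:conv_ICw_and_ICs}(2) to obtain $\IC_{w_{\scrL,\Theta}}^\scrL\star\IC_s^\scrL\cong\IC_{w_{\scrL,\Theta}}^\scrL[-1]\oplus\IC_{w_{\scrL,\Theta}}^\scrL[1]$, and the inductive step decomposes $\IC_{w'}^\scrL\star\IC_s^\scrL$ via Lemma~\ref{lem:conv_ICw_and_ICs}(1) and tracks perverse degrees directly. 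You instead pass to the split Grothendieck group via Theorem~\ref{thm:categorification}, transfer to $\scrH(W_{\scrL,\Theta}^\circ)$ via Proposition~\ref{prop:endoscopic_equiv_for_Hecke_algebras}, and run the classical KL recursion against the absorbing identity $\underline{\tilde H}_{w_{\scrL,\Theta}}\underline{\tilde H}_s=(v+v^{-1})\underline{\tilde H}_{w_{\scrL,\Theta}}$, lifting back via semisimplicity. Two remarks: your appeal to Elias--Williamson is unnecessary, since nonnegativity of $P_w$ is forced a posteriori by the geometric decomposition (as you yourself observe in the final step); and you implicitly use that $\theta$ carries the KL basis of $\scrH(W_\scrL^\circ)$ to the characteristic-zero canonical basis $\{\underline{H}_w^\scrL\}$ of the monodromic algebroid, which follows from $\theta$ intertwining the bar involutions (both sides send $T_w\mapsto T_{w^{-1}}^{-1}$) together with the uniqueness characterization of KL bases, but deserves explicit mention.
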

\begin{proof}
  We will argue by induction on $\ell_{\scrL} (w)$.
  We can first consider the base cases of $\ell_{\scrL} (w) = 0$ and $\ell_{\scrL} (w) = 1$. When $\ell_{\scrL} (w) = 0$, we have that $w = e$, and so (1), (2), (3) follow
  $\IC_e^{\scrL}$ being the monoidal unit.
  Now suppose that $\ell_{\scrL} (w) = 1$, i.e., $w = s \in \Theta$ an endosimple reflection.
  By Lemma \ref{lem:make_refln_simple}, we can find some block $\beta \in \uW{\scrL}{\scrL'}$ such that $s = w^\beta t w^{\beta, -1}$ for $t$ a simple reflection in $W_{\scrL'}^{\circ}$.
  By Proposition \ref{prop:structure_of_min_IC} there is an isomorphism
  \[ \IC_{w_{\scrL, \Theta}}^{\scrL} \star \IC_s^{\scrL} \cong \IC_{w_{\scrL, \Theta} w^{\beta} }^{\scrL'} \star \IC_{t}^{\scrL'} \star \IC_{w^{\beta, -1}}^{\scrL}. \]
  By maximality in $W_{\scrL, \Theta}^{\circ}$, $\ell_{\scrL} (w_{\scrL, \Theta} s) < \ell_{\scrL} (w_{\scrL, \Theta})$. As a result, by (\ref{eq:endo_vs_bruhat_orders}), we have that $\ell (w_{\scrL, \Theta} w^{\beta} t) < \ell (w_{\scrL, \Theta} w^\beta)$.
  By Lemma \ref{lem:conv_ICw_and_ICs} (2), we obtain an isomorphism
  \[  \IC_{w_{\scrL, \Theta} w^{\beta} }^{\scrL'} \star \IC_{t}^{\scrL'} \cong \IC_{w_{\scrL, \Theta} w^\beta}^{\scrL'} [-1] \oplus \IC_{w_{\scrL, \Theta} w^\beta}^{\scrL'}[1]. \]
  We can convolve the above isomorphisms with $\IC_{w^{\beta, -1}}^{\scrL}$ to obtain
  \begin{equation} \IC_{w_{\scrL, \Theta}}^{\scrL} \star \IC_s^{\scrL}  \cong \IC_{w_{\beta, \Theta}}^{\scrL}  [-1] \oplus \IC_{w_{\beta, \Theta}}^{\scrL} [1].
    \label{eq:conv_with_maximal_IC_netural_1}
  \end{equation}

  Let $\ell_{\scrL} (w) > 1$. We can then find some $s \in \Theta$ such that $w' = ws$ with $\ell_{\scrL} (w') < \ell_{\scrL} (w)$.
  By the inductive hypothesis,
  \begin{equation} \IC_{w_{\scrL, \Theta}}^{\scrL} \star \IC_{w'}^{\scrL} \cong \IC_{w_{\scrL, \Theta}}^{\scrL} \otimes V', \label{eq:conv_with_maximal_IC_netural_2}
  \end{equation}
  where $V' = \bigoplus_{n=-\ell_{\scrL} (w')}^{\ell_{\scrL} (w')} V_n' [-n]$ where $\dim V_n' = 1$ if $n = \pm \ell_{\scrL} (w')$.
  From (\ref{eq:conv_with_maximal_IC_netural_1}) and (\ref{eq:conv_with_maximal_IC_netural_2}), we obtain an isomorphism
  \begin{equation}
    \IC_{w_{\scrL, \Theta}}^{\scrL} \star \IC_{w'}^{\scrL} \star \IC_s^{\scrL} \cong \IC_{w_{\scrL, \Theta}}^{\scrL} \otimes V'', \label{eq:conv_with_maximal_IC_netural_3}
  \end{equation}
  where $V'' = V' [-1] \oplus V' [1]$. By Lemma \ref{lem:conv_ICw_and_ICs} and Proposition \ref{prop:structure_of_min_IC}, we have an isomorphism $\IC_{w'}^{\scrL} \star \IC_s^{\scrL} \cong \IC_w^{\scrL} \oplus
  \scrP_{<_{\scrL} w}$ where $\scrP_{<_{\scrL} w}$ is a semisimple perverse sheaf supported on $\eFl_{< w}$.
  As a result,
  \[ \IC_{w_{\scrL, \Theta}}^{\scrL} \star \IC_w^{\scrL} \oplus \IC_{w_{\scrL, \Theta}}^{\scrL} \star \scrP_{<_{\scrL} w} \cong \IC_{w_{\scrL, \Theta}}^{\scrL} \otimes V''.\]
  It is then clear that (1) and (2) hold. For $x \in W_{\scrL, \Theta}^\circ$ with $\ell_{\scrL} (x) < \ell_{\scrL} (w)$, the inductive hypothesis yields an isomorphism
  \[\IC_{w_{\scrL, \Theta}}^{\scrL} \star \IC_x^{\scrL} \cong \IC_{w_{\scrL, \Theta}}^{\scrL} \otimes V_x,\]
  where $V_x = \bigoplus_{n = -\ell_{\scrL} (x)}^{\ell_{\scrL} (x)} V_{x,n} [-n]$. Since all $\IC_x^{\scrL}$-summands of $\scrP_{<_{\scrL} w}$ satisfy that $\ell_{\scrL} (x) < \ell_{\scrL}
  (w)$ we have that
  \begin{equation} \IC_{w_{\scrL, \Theta}}^{\scrL} \star \scrP_{<_{\scrL} w} \cong \IC_{w_{\scrL, \Theta}}^{\scrL} \otimes V''', \label{eq:conv_with_maximal_IC_netural_4}
  \end{equation}
  where $V'''$ is a formal chain complex of $\k$-vector spaces concentrated in degrees $n$ where $-\ell_{\scrL} (w) < n < \ell_{\scrL} (w)$.
  By comparing the $V''$ from (\ref{eq:conv_with_maximal_IC_netural_3}) and the $V'''$ from (\ref{eq:conv_with_maximal_IC_netural_4}), we see that
  \[{}^p H^{n} (\IC_{w_{\scrL, \Theta}}^{\scrL} \star \IC_w^{\scrL}) \cong \IC_{w_{\scrL, \Theta}}^{\scrL},\]
  when $n = \pm \ell_{\scrL} (w)$ which completes the proof of the lemma.
\end{proof}

\subsubsection{Stalks of Maximal IC Sheaves}

\begin{proposition}\label{prop:stalks_of_maximal_IC}
  For $w \in W_{\scrL, \Theta}^{\circ}$, we have isomorphisms
  \begin{enumerate}
    \item  $j_w^* \IC_{w_{\scrL, \Theta}}^{\scrL} \cong \scrK_w^{\scrL} [\ell_{\scrL} (w_{\scrL, \Theta}) + \ell (w) - \ell_{\scrL} (w)], $
    \item $ j_w^! \IC_{w_{\scrL, \Theta}}^{\scrL} \cong \scrK_w^{\scrL} [-\ell_{\scrL} (w_{\scrL, \Theta}) + \ell (w) + \ell_{\scrL} (w)].$
  \end{enumerate}
\end{proposition}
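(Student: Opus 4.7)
I would prove (1) by downward induction on $\ell_{\scrL}(w)$ within $W_{\scrL,\Theta}^\circ$, and deduce (2) from (1) via Verdier duality. The base case is $w = w_{\scrL,\Theta}$: this is the standard formula for the restriction of an IC sheaf to its open stratum, giving $j_{w_{\scrL,\Theta}}^* \IC_{w_{\scrL,\Theta}}^{\scrL} \cong \scrK_{w_{\scrL,\Theta}}^{\scrL}[\ell(w_{\scrL,\Theta})]$, which matches the formula since $\ell_{\scrL}(w_{\scrL,\Theta}) + \ell(w_{\scrL,\Theta}) - \ell_{\scrL}(w_{\scrL,\Theta}) = \ell(w_{\scrL,\Theta})$.

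For the inductive step, take $w \in W_{\scrL,\Theta}^\circ$ with $\ell_{\scrL}(w) < \ell_{\scrL}(w_{\scrL,\Theta})$ and pick an endosimple $s \in \Theta$ with $ws >_{\scrL} w$ (so $\ell_{\scrL}(ws) = \ell_{\scrL}(w)+1$ and $ws \in W_{\scrL,\Theta}^\circ$). By Proposition~\ref{prop:conv_with_maximal_IC_neutral} applied with $w' = s$, we have $\IC_{w_{\scrL,\Theta}}^{\scrL} \star \IC_s^{\scrL} \cong \IC_{w_{\scrL,\Theta}}^{\scrL}[1] \oplus \IC_{w_{\scrL,\Theta}}^{\scrL}[-1]$ (the middle perverse cohomology vanishes since $\ell_{\scrL}(s) = 1$). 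Applying $j_w^*$ to both sides gives two copies (with shifts $\pm 1$) of the stalk we want. To compute the left-hand side directly, decompose $\IC_s^{\scrL}$ via an endo-reduced expression $\us = \ux\, t\, \overline{\ux}$ (Lemma~\ref{lem:make_refln_simple}): the factors $\IC_{s_i}^{\cdots}$ arising from $\ux, \overline{\ux}$ are block translations (Proposition~\ref{prop:structure_of_min_IC}) which translate strata without creating lower terms, while the middle factor $\IC_t^{\cdots}$ (with $t$ a simple reflection in $W$) contributes via push-pull (Lemma~\ref{lem:conv_with_ICs_rewritting}). Combining with Lemmas~\ref{lem:pi_s_pushforward_of_stds}, \ref{lem:conv_ICs_with_stds}, and the inductive hypothesis at $ws$, the stalk at $w$ acquires precisely the shift $\ell_{\scrL}(w_{\scrL,\Theta}) + \ell(w) - \ell_{\scrL}(w)$, the extra $\ell(s) - \ell_{\scrL}(s) = 2|\ux|$ from $\IC_s^{\scrL}$ being absorbed into the difference $\ell(w) - \ell_{\scrL}(w)$ across the iteration.

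For (2), I use Verdier duality. Since $\mathbb{D} \IC_{w_{\scrL,\Theta}}^{\scrL} \cong \IC_{w_{\scrL,\Theta}}^{\scrL^{-1}}$ and $W_{\scrL^{-1}}^\circ = W_{\scrL}^\circ$ (with the same length function and the same longest element of the parabolic), statement~(1) applied to $\scrL^{-1}$ yields $j_w^* \IC_{w_{\scrL,\Theta}}^{\scrL^{-1}} \cong \scrK_w^{\scrL^{-1}}[\ell_{\scrL}(w_{\scrL,\Theta}) + \ell(w) - \ell_{\scrL}(w)]$. Since $j_w^! \IC_{w_{\scrL,\Theta}}^{\scrL} \cong \mathbb{D}(j_w^* \mathbb{D} \IC_{w_{\scrL,\Theta}}^{\scrL})$, and the Verdier duality shift on the stratum $\eFl_w$ is $2\ell(w)$ (consistent with $\mathbb{D}\Delta_w^{\scrL} \cong \nabla_w^{\scrL^{-1}}$, so that $\mathbb{D} \scrK_w^{\scrL^{-1}}[m] \cong \scrK_w^{\scrL}[2\ell(w) - m]$), applying $\mathbb{D}$ produces $\scrK_w^{\scrL}[2\ell(w) - \ell_{\scrL}(w_{\scrL,\Theta}) - \ell(w) + \ell_{\scrL}(w)] = \scrK_w^{\scrL}[-\ell_{\scrL}(w_{\scrL,\Theta}) + \ell(w) + \ell_{\scrL}(w)]$, which is exactly (2).

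The hard part will be the bookkeeping in the inductive step. The factor $\ell(s) - \ell_{\scrL}(s) = 2|\ux|$ arising from each endosimple reflection's endo-reduced expansion must accumulate correctly to yield the aggregate correction $\ell(w) - \ell_{\scrL}(w)$. This forces a careful analysis of how block translations interact with the stalk of $\IC_{w_{\scrL,\Theta}}^{\scrL}$ at non-open strata, using both Proposition~\ref{prop:conv_with_maximal_IC_neutral} (for the convolution structure) and the push-pull identities (for the stalk computation), and carefully verifying that the ``extra'' shifts from the block-translation factors match the discrepancy between the Bruhat length $\ell$ and the endoscopic length $\ell_{\scrL}$.
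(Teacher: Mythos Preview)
Your overall architecture matches the paper exactly: downward induction on $\ell_{\scrL}(w)$ with base case $w = w_{\scrL,\Theta}$, and deducing (2) from (1) via Verdier duality (your treatment of the $\scrL \leftrightarrow \scrL^{-1}$ swap and the $2\ell(w)$ shift is correct and more explicit than the paper's one-line remark). The reduction of the endosimple reflection $s$ to a genuine simple reflection $t$ via block translation (Lemma~\ref{lem:make_refln_simple} and Proposition~\ref{prop:structure_of_min_IC}) is also the right move, and the paper does the same.

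The gap is in the inductive step. You propose to take $j_w^*$ of both sides of the identity $\IC_{w_{\scrL,\Theta}}^{\scrL} \star \IC_s^{\scrL} \cong \IC_{w_{\scrL,\Theta}}^{\scrL}[1] \oplus \IC_{w_{\scrL,\Theta}}^{\scrL}[-1]$ and compute the left side ``directly'' via push--pull. But after block-translating and applying push--pull, the left side becomes $j_{ww^\beta}^*\bigl(\IC_{w_{\scrL,\Theta}w^\beta}^{\scrL'} \star \IC_t^{\scrL'}\bigr)$, and since $\IC_{w_{\scrL,\Theta}w^\beta}^{\scrL'} \star \IC_t^{\scrL'}$ is \emph{again} $\IC_{w_{\scrL,\Theta}w^\beta}^{\scrL'}[1] \oplus \IC_{w_{\scrL,\Theta}w^\beta}^{\scrL'}[-1]$ (Lemma~\ref{lem:conv_ICw_and_ICs}(2)), you recover precisely the right side. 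The equation is a tautology: both sides are the unknown stalk at $w$, shifted by $\pm 1$, and the inductive hypothesis at $ws$ never enters. Lemmas~\ref{lem:pi_s_pushforward_of_stds} and~\ref{lem:conv_ICs_with_stds} concern standards, not the maximal IC, so they do not break this circularity.

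What the paper does instead is bypass the convolution identity entirely at this point and use Lemma~\ref{lem:pi_s_descent}: since $\ell(w_{\scrL,\Theta}w^\beta t) < \ell(w_{\scrL,\Theta}w^\beta)$, the block-translated IC satisfies $\IC_{w_{\scrL,\Theta}w^\beta}^{\scrL'} \cong \pi_t^* \IC_{\overline{w_{\scrL,\Theta}w^\beta}}^{t,\scrL'}[1]$, so its restrictions to the two strata $\eFl_{ww^\beta}$ and $\eFl_{ww^\beta t}$ lying over the same parabolic stratum coincide up to a single shift. Phrasing this via $\Hom^\bullet(\IC_{w_{\scrL,\Theta}}^{\scrL},\nabla_w^{\scrL})$ and adjunction then gives $\Hom^\bullet(\IC_{w_{\scrL,\Theta}}^{\scrL},\nabla_w^{\scrL}) \cong \Hom^\bullet(\IC_{w_{\scrL,\Theta}}^{\scrL},\nabla_{ws}^{\scrL}[-1])$, which is exactly the comparison you need to invoke the inductive hypothesis at $ws$. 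The ``bookkeeping'' you flag as the hard part is then a one-line shift count: the $[-1]$ exactly accounts for $\ell_{\scrL}(ws) - \ell_{\scrL}(w) = 1$, and no accumulation of $2|\ux|$ corrections is needed because the block translations on either side cancel.
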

\begin{proof}
  It suffices to just prove the first isomorphism as the second will follow from Verdier duality.

  Since $\IC_{w_{\scrL, \Theta}}^{\scrL}$ is a parity sheaf, we have that $j_w^* \IC_{w_{\scrL, \Theta}}^{\scrL} \cong V_w \otimes_{H_T^\bullet (\pt; \k)} \scrK_w^{\scrL}$ where $V_w = \bigoplus_n V_w^n [n]$ is a graded $H_{T}^\bullet (\pt; \k)$-module\footnote{Here we are using the identification given in Proposition \ref{prop:mhc_strat_and_parity} to view sheaves on $\eFl_w$ as equivariant sheaves on a point.} which can be computed by
  \[V_w^n = \Hom (j_w^* \IC_{w_{\scrL, \Theta}}^{\scrL}, \scrK_w^{\scrL} [-n]).\]
  The proposition can then be reformulated as
  \begin{equation}\label{eq:stalks_of_maximal_IC_1}
    \Hom^\bullet (\IC_{w_{\scrL, \Theta}}^{\scrL}, \nabla_w^{\scrL}) \cong H^\bullet_T (\pt; \k) [-\ell_{\scrL} (w_{\scrL, \Theta}) + \ell_{\scrL} (w)].
  \end{equation}

  We will argue by backwards induction on $\ell_{\scrL} (w)$. If $\ell_{\scrL} (w)$ is maximal, i.e., $w = w_{\scrL, \Theta}$, then the isomorphism is obvious.
  If $\ell_{\scrL} (w)$ is not maximal, then we can find some $s \in \Theta$ such that $w <_{\scrL} ws$.
  By Lemma \ref{lem:make_refln_simple}, we can find some block $\beta \in \uW{\scrL}{\scrL'}$ such that $s = w^\beta t w^{\beta, -1}$ for $t$ a simple reflection in $W_{\scrL'}^{\circ}$.
  By Proposition \ref{prop:structure_of_min_IC}, we have an isomorphism
  \[ \Hom^\bullet (\IC_{w_{\scrL, \Theta}}^{\scrL}, \nabla_w^{\scrL}) \cong \Hom^\bullet (\IC_{w_{\scrL, \Theta} w^{\beta}}^{\scrL'}, \nabla_{ww^{\beta}}^{\scrL'}). \]
   Since $\ell_{\beta} (w_{\scrL, \Theta} w^{\beta} t) < \ell_{\beta} (w_{\scrL, \Theta} w^{\beta})$, we can apply Lemma \ref{lem:pi_s_descent} to show that the stalk of $\IC_{w_{\scrL, \Theta} w^{\beta}}^{\scrL'}$ at lifts of $ww^{\beta}$ and $w w^{\beta} t$ are isomorphic up to a shift.
   This induces an isomorphism of Hom spaces,
    \[ \Hom^\bullet (\IC_{w_{\scrL, \Theta} w^{\beta}}^{\scrL'}, \nabla_{ww^{\beta}}^{\scrL'}) \cong \Hom^\bullet (\IC_{w_{\scrL, \Theta} w^{\beta}}^{\scrL'}, \nabla_{ww^{\beta} t}^{\scrL'} [-1]).\]
  Finally, by another application of Proposition \ref{prop:structure_of_min_IC} and the inductive hypothesis, we get isomorphisms
  \begin{align*}
    \Hom^\bullet (\IC_{w_{\scrL, \Theta}}^{\scrL}, \nabla_w^{\scrL}) &\cong \Hom^\bullet (\IC_{w_{\scrL, \Theta} w^{\beta}}^{\scrL'}, \nabla_{ww^{\beta} t}^{\scrL'} [-1]) \\
    &\cong \Hom^\bullet (\IC_{w_{\scrL, \Theta}}^{\scrL}, \nabla_{ws}^{\scrL} [-1]) \\
    &\cong H_T^\bullet (\pt; \k) [-\ell_{\scrL} (w_{\scrL, \Theta}) + \ell_{\scrL} (ws) - 1] \\
    &\cong H_T^\bullet (\pt; \k) [-\ell_{\scrL} (w_{\scrL, \Theta}) + \ell_{\scrL} (w) ]. 
  \end{align*}
  This completes the proof by the characterization of the proposition given in (\ref{eq:stalks_of_maximal_IC_1}).
\end{proof}

\begin{remark}
  There are extensions of Proposition \ref{prop:conv_with_maximal_IC_neutral} and Proposition \ref{prop:stalks_of_maximal_IC} to sheaves outside the neutral block.
  Such generalizations necessitate a more general definition of the parabolic monodromic subcategories. For example, if $\beta \in \uW{\scrL'}{\scrL}$, one could consider the IC-sheaf $\IC_{w^{\beta} w_{\scrL, \Theta}}^{\scrL}$ which behaves like a maximal IC sheaf for the block $\beta$.
  We do not need these generalizations for this paper, but similar results in this direction when $W$ is finite can be found in \cite[\S6]{LY}.
\end{remark}

\subsubsection{Coalgebra Structure on Maximal IC Sheaves}

Let $\Theta \subset S_{\scrL}^{\circ}$ be of finite type.
Let $\scrF \in \D{\scrL}{\scrL}^{\circ} (\k)$ be an object whose isomorphism class is $\IC_{w_{\scrL, \Theta} }^{\scrL} [-\ell_{\scrL} (w_{\scrL, \Theta})]$.
By Proposition \ref{prop:stalks_of_maximal_IC}, there is a nonzero morphism
\[\epsilon_{\scrF} : \scrF \to \IC_e^{\scrL}\]
induced by the unit map $\id \to j_{e*} j_e^*$. 

\begin{lemma}\label{lem:rig_of_max_IC}
  If $\scrF, \scrF' \in \D{\scrL}{\scrL}^{\circ} (\k)$ are both isomorphic to $\IC_{w_{\scrL, \Theta} }^{\scrL} [-\ell_{\scrL} (w_{\scrL, \Theta})]$, then there exists a unique isomorphism $\scrF \stackrel{\sim}{\to} \scrF'$ such that the following diagram commutes
  \[\begin{tikzcd}
\scrF \arrow[rr, "\sim"] \arrow[rd, "\epsilon_{\scrF}"'] &                      & \scrF' \arrow[ld, "\epsilon_{\scrF'}"] \\
                                                 & {\IC_e^{\scrL}.} &                               
\end{tikzcd}\]
\end{lemma}
\begin{proof}
  By Proposition \ref{prop:stalks_of_maximal_IC}, the map
  \[\Hom (\scrF, \scrF') \stackrel{\epsilon_{\scrF'} \circ (-)}{\longrightarrow} \Hom (\scrF, \IC_e^{\scrL})\]
  is a morphism of 1-dimensional $\k$-vector spaces. Clearly this map is nonzero, and hence, must be an isomorphism.
\end{proof}

In light of Lemma \ref{lem:rig_of_max_IC}, we can fix the object $\IC_{w_{\scrL, \Theta} }^{\scrL} [-\ell_{\scrL} (w_{\scrL, \Theta})]$ once and for all.
We will denote the fixed object by $\scrK^{\scrL}_{\Theta} \in \D{\scrL}{\scrL}^{\circ} (\k)$. This sheaf should be thought of as a replacement for the constant sheaf in the twisted-equivariant setting.
It comes equipped with a canonical adjunction morphism $\epsilon_{\scrL} : \scrK_{\Theta}^{\scrL} \to \IC_e^{\scrL}$.

\begin{proposition}\label{prop:coalg_on_max_IC}
    There is a unique coalgebra structure on $\scrK_{\Theta}^{\scrL}$ with counit map $\epsilon_{\scrL}$.
\end{proposition}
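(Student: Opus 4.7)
The plan is to show that the relevant Hom spaces are one-dimensional in degree zero, so that $\Delta_{\scrL}$ is essentially forced by the counit axiom.

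First I would compute $\scrK_{\Theta}^{\scrL} \star \scrK_{\Theta}^{\scrL}$ via Proposition \ref{prop:conv_with_maximal_IC_neutral} applied with $w = w_{\scrL, \Theta}$. Parts (1)--(3) of that proposition give $\IC_{w_{\scrL, \Theta}}^{\scrL} \star \IC_{w_{\scrL, \Theta}}^{\scrL} \cong \bigoplus_i \IC_{w_{\scrL, \Theta}}^{\scrL}[i]^{\oplus m_i}$, with $i$ running from $-\ell_{\scrL}(w_{\scrL, \Theta})$ to $\ell_{\scrL}(w_{\scrL, \Theta})$ in steps of two and both extremal shifts occurring with multiplicity one. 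After shifting by $-2\ell_{\scrL}(w_{\scrL, \Theta})$ this becomes
$$\scrK_{\Theta}^{\scrL} \star \scrK_{\Theta}^{\scrL} \;\cong\; \scrK_{\Theta}^{\scrL} \,\oplus\, \bigoplus_{j < 0} \scrK_{\Theta}^{\scrL}[j]^{\oplus a_j},$$
with $\scrK_{\Theta}^{\scrL}$ appearing with multiplicity one. Since $\scrK_{\Theta}^{\scrL}$ is a shift of a simple perverse sheaf, perversity forces $\Hom(\scrK_{\Theta}^{\scrL}, \scrK_{\Theta}^{\scrL}[j]) = 0$ for $j < 0$ and $= \k$ for $j = 0$, so $\Hom(\scrK_{\Theta}^{\scrL}, \scrK_{\Theta}^{\scrL} \star \scrK_{\Theta}^{\scrL}) \cong \k$, spanned by the inclusion $\iota$ of the unshifted summand. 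Iterating the same argument gives $\Hom(\scrK_{\Theta}^{\scrL}, \scrK_{\Theta}^{\scrL} \star \scrK_{\Theta}^{\scrL} \star \scrK_{\Theta}^{\scrL}) \cong \k$.

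Next I would define scalars $c_L, c_R \in \k$ by $(\epsilon_{\scrL} \star \id) \circ \iota = c_L \cdot \id_{\scrK_{\Theta}^{\scrL}}$ and $(\id \star \epsilon_{\scrL}) \circ \iota = c_R \cdot \id_{\scrK_{\Theta}^{\scrL}}$ (using $\IC_e^{\scrL} \star \scrK_{\Theta}^{\scrL} \cong \scrK_{\Theta}^{\scrL} \cong \scrK_{\Theta}^{\scrL} \star \IC_e^{\scrL}$ and $\End^0(\scrK_{\Theta}^{\scrL}) = \k \cdot \id$), and show $c_L = c_R \neq 0$, so that $\Delta_{\scrL} := c_L^{-1} \iota$ satisfies both counit axioms. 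Non-vanishing is checked by passing to the stalk at $e$: Proposition \ref{prop:stalks_of_maximal_IC} gives $j_e^* \scrK_{\Theta}^{\scrL} \cong \scrK_e^{\scrL}$, and by construction $j_e^* \epsilon_{\scrL}$ is the identity; a base-change computation, using that the multiplication map $\eFl \times^B \eFl \to \eFl$ carries $(\dot{e}, \dot{e})$ to $\dot{e}$, identifies the stalk at $e$ of the unshifted summand of $\scrK_{\Theta}^{\scrL} \star \scrK_{\Theta}^{\scrL}$ with $\scrK_e^{\scrL}$, and $j_e^* \iota$ with the identity. For $c_L = c_R$, I would invoke the left--right symmetry induced by $g \mapsto g^{-1}$ on $G$, which (combined with the appropriate duality data) gives an involutive antiequivalence on $\Parity{\scrL}{\scrL}^{\circ, \Theta}(\k)$ fixing $\scrK_{\Theta}^{\scrL}$ (since $w_{\scrL, \Theta}^{-1} = w_{\scrL, \Theta}$), intertwining $\epsilon_{\scrL}$ with itself, and exchanging the two counit conditions.

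Coassociativity is then automatic from the one-dimensionality of $\Hom(\scrK_{\Theta}^{\scrL}, \scrK_{\Theta}^{\scrL} \star \scrK_{\Theta}^{\scrL} \star \scrK_{\Theta}^{\scrL})$: both $(\Delta_{\scrL} \star \id) \circ \Delta_{\scrL}$ and $(\id \star \Delta_{\scrL}) \circ \Delta_{\scrL}$ live in this one-dimensional space, and composing each with $\epsilon_{\scrL} \star \epsilon_{\scrL} \star \id$ yields $\id_{\scrK_{\Theta}^{\scrL}}$ by two applications of the counit axiom, forcing the two maps to coincide. Uniqueness is immediate: any comultiplication $\Delta$ with counit $\epsilon_{\scrL}$ must be a scalar multiple of $\iota$, and the counit condition pins down the scalar. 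The main technical obstacle is the verification $c_L = c_R \neq 0$: non-vanishing requires unwinding the base change for convolution near the closed stratum, while the equality requires identifying the correct involutive symmetry on the parabolic monodromic category and checking it intertwines all the relevant structure; everything else reduces cleanly to the one-dimensional Hom computations above.
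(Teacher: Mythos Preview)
Your overall strategy --- exploiting one-dimensionality of $\Hom(\scrK_{\Theta}^{\scrL}, (\scrK_{\Theta}^{\scrL})^{\star n})$ via Proposition \ref{prop:conv_with_maximal_IC_neutral}, then deducing coassociativity and uniqueness from that --- is exactly the paper's approach (packaged there as Lemma \ref{lem:uniqueness_of_comult}). Two points where your execution diverges deserve comment.

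First, the inversion-symmetry argument for $c_L = c_R$ is unnecessary, and setting it up correctly in the monodromic setting (where $g \mapsto g^{-1}$ interacts with the $\scrL \leftrightarrow \scrL^{-1}$ swap) would take real work. The paper sidesteps this entirely: it normalizes $\mu_{\scrL}^2$ by the single condition $(\epsilon_{\scrL} \star \epsilon_{\scrL}) \circ \mu_{\scrL}^2 = \epsilon_{\scrL}$, and then both counit identities follow from the two bifunctor factorizations $\epsilon_{\scrL} \star \epsilon_{\scrL} = \epsilon_{\scrL} \circ (\epsilon_{\scrL} \star \id) = \epsilon_{\scrL} \circ (\id \star \epsilon_{\scrL})$ (using the unit isomorphisms for $\IC_e^{\scrL}$). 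In your notation this says $c_L \epsilon_{\scrL} = (\epsilon_{\scrL} \star \epsilon_{\scrL}) \circ \iota = c_R \epsilon_{\scrL}$, hence $c_L = c_R$ for free since $\epsilon_{\scrL} \neq 0$.

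Second, your non-vanishing argument via the stalk at $e$ is the genuinely weak step. The stalk at $e$ of a convolution is computed by proper base change as cohomology over the \emph{entire} fiber $m^{-1}(\dot e) \subset \eFl_{\leq w_{\scrL,\Theta}} \times^B \eFl_{\leq w_{\scrL,\Theta}}$, not just by evaluating at the single point $(\dot e, \dot e)$; so the sentence ``$j_e^*\iota$ is the identity'' does not follow from the fact that $m(\dot e,\dot e) = \dot e$, and as written the computation is circular (you would need to already know how $\id \star \epsilon_{\scrL}$ acts on the top summand to read off its stalk). The paper instead argues with perverse cohomology: ${}^p H^0((\scrK_{\Theta}^{\scrL})^{\star 2}) \cong \scrK_{\Theta}^{\scrL}$ forces ${}^p H^0(\mu_{\scrL}^2)$ to be an isomorphism, while ${}^p H^0(\epsilon_{\scrL}^2)$ is surjective (via the factoring above and Proposition \ref{prop:conv_with_maximal_IC_neutral}); combining these gives ${}^p H^0(\epsilon_{\scrL}^2 \circ \mu_{\scrL}^2) \neq 0$.
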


Before proving Proposition \ref{prop:coalg_on_max_IC}, we must introduce some notation for handling iterated convolutions of minimal and maximal IC's. For each $n \in \Z_{>0}$, we write
\[
  (\scrK_{\Theta}^{\scrL})^{\star n} \coloneq \underbrace{\scrK_{\Theta}^{\scrL} \star \ldots \star \scrK_{\Theta}^{\scrL}}_{n \textnormal{-fold}} \hspace{0.5cm}\text{and}\hspace{0.5cm} (\IC_e^{\scrL})^{\star n} \coloneq \underbrace{\IC_{e}^{\scrL} \star \ldots \star \IC_e^{\scrL}}_{n \textnormal{-fold}}.
\]
By convention, we set $(\scrK_{\Theta}^{\scrL})^{\star 0} = \IC_e^{\scrL} = (\IC_e^{\scrL})^{\star 0}$.
Similarly, we can define a morphism $\epsilon_{\scrL}^n : (\scrK_{\Theta}^{\scrL})^{\star n} \to (\IC_{e}^{\scrL})^{\star n}$ as the $n$-fold convolution of $\epsilon_{\scrL}$.
Of course, there are canonical isomorphisms $(\IC_{e}^{\scrL})^{\star n} \cong \IC_{e}^{\scrL}$.

\begin{lemma}\label{lem:uniqueness_of_comult}
    For each $n \in \Z_{\geq 0}$, there is a unique morphism
    \[\mu_{\scrL}^n : \scrK_{\Theta}^{\scrL} \to  (\scrK_{\Theta}^{\scrL})^{\star n}\]
    making the following diagram commute
    \begin{equation}
        \begin{tikzcd}\label{eq:uniqueness_of_comult_0}
            {\scrK_{\Theta}^{\scrL}} \arrow[r, "\mu_{\scrL}^n"] \arrow[d, "\epsilon_{\scrL}"] &  (\scrK_{\Theta}^{\scrL})^{\star n} \arrow[d, "\epsilon_{\scrL}^n"] \\
            \IC_{e}^{\scrL} \arrow[r, "\sim"]                                                  & (\IC_{e}^{\scrL})^{\star n}.                             
            \end{tikzcd}
    \end{equation}
\end{lemma}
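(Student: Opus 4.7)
The plan is to prove both existence and uniqueness by reducing to the case $n=2$, which will require a direct construction. The cases $n = 0$ and $n = 1$ are immediate: one takes $\mu_{\scrL}^0 = \epsilon_{\scrL}$ and $\mu_{\scrL}^1 = \id_{\scrK_\Theta^\scrL}$. For the inductive step $n \geq 3$, assuming $\mu_\scrL^2$ and $\mu_\scrL^{n-1}$ have already been constructed, I would define $\mu_\scrL^n := (\id_{\scrK_\Theta^\scrL} \star \mu_\scrL^{n-1}) \circ \mu_\scrL^2$. The required commutativity follows from the factorization $\epsilon_\scrL^n = \epsilon_\scrL \star \epsilon_\scrL^{n-1}$, the inductive hypothesis, and functorality of convolution.

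Before handling the base case, I would pin down the relevant Hom-spaces. Iterating Proposition \ref{prop:conv_with_maximal_IC_neutral}, I will show by induction that $(\scrK_\Theta^\scrL)^{\star n} \cong \scrK_\Theta^\scrL \otimes_\k W_n$ for a non-negatively graded $\k$-vector space $W_n$ with $W_n^0 = \k$. The crucial point is that the shift $\scrK_\Theta^\scrL = \IC_{w_{\scrL,\Theta}}^\scrL[-\ell_\scrL(w_{\scrL,\Theta})]$ exactly cancels the top perverse degree appearing in Proposition \ref{prop:conv_with_maximal_IC_neutral}(2), so the decomposition is concentrated in non-negative cohomological degrees. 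Combined with Schur's lemma (valid since $\k$ has characteristic 0 and $\scrK_\Theta^\scrL$ is a shift of a simple perverse sheaf) and the vanishing of negative self-extensions of perverse sheaves, this gives $\Hom^0(\scrK_\Theta^\scrL, (\scrK_\Theta^\scrL)^{\star n}) \cong \End^0(\scrK_\Theta^\scrL) \otimes W_n^0 = \k$. Similarly, by $(j_{e!}, j_e^*)$-adjunction together with Proposition \ref{prop:stalks_of_maximal_IC}, $\Hom^0(\scrK_\Theta^\scrL, \IC_e^\scrL) \cong \End^0(\scrK_e^\scrL) = \k$. Since both spaces are one-dimensional, the map $\epsilon_\scrL^n \circ (-)$ between them is either zero or an isomorphism; in either case the lift $\mu_\scrL^n$ is unique if it exists, yielding uniqueness.

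The main obstacle will be the base case $n = 2$, where existence has to be established. By Proposition \ref{prop:conv_with_maximal_IC_neutral}(3) applied with $w = w_{\scrL,\Theta}$, the perverse cohomology ${}^p H^{-\ell_\scrL(w_{\scrL,\Theta})}(\IC_{w_{\scrL,\Theta}}^\scrL \star \IC_{w_{\scrL,\Theta}}^\scrL) \cong \IC_{w_{\scrL,\Theta}}^\scrL$, and in the semisimple setting (characteristic 0 coefficients, so parity sheaves coincide with semisimple complexes) this provides a canonical direct summand, giving a morphism $\iota : \scrK_\Theta^\scrL \hookrightarrow \scrK_\Theta^\scrL \star \scrK_\Theta^\scrL$ that realises the inclusion into the $W_2^0$-component (unique up to scalar). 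I would then show that $(\epsilon_\scrL \star \epsilon_\scrL) \circ \iota$ is a nonzero multiple of $\epsilon_\scrL$ by the following dimension argument: the morphism $\epsilon_\scrL \star \epsilon_\scrL$ is itself a generator of the one-dimensional space $\Hom^0(\scrK_\Theta^\scrL \star \scrK_\Theta^\scrL, \IC_e^\scrL) \cong (W_2^0)^* \otimes \Hom^0(\scrK_\Theta^\scrL, \IC_e^\scrL) = \k$, which is nonzero because it corresponds via the canonical identifications to the tensor product $\pi_0 \otimes \epsilon_\scrL$, where $\pi_0$ is the projection onto $W_2^0$. The composition $(\epsilon_\scrL \star \epsilon_\scrL) \circ \iota$ then evaluates to the natural pairing $(\pi_0 \otimes \epsilon_\scrL) \circ (\iota_0 \otimes \id_{\scrK_\Theta^\scrL}) = \epsilon_\scrL$, which is nonzero. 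After rescaling $\iota$ by the resulting scalar, one obtains $\mu_\scrL^2$ satisfying the required commutativity.

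The delicate point throughout is verifying that the morphisms $\epsilon_\scrL \star \id$, $\id \star \epsilon_\scrL$, and the canonical inclusion $\iota$ all behave compatibly under the iso $\scrK_\Theta^\scrL \star \scrK_\Theta^\scrL \cong \scrK_\Theta^\scrL \otimes W_2$; this comes down to the fact that any degree-$0$ morphism between $\scrK_\Theta^\scrL \otimes W_2$ and $\scrK_\Theta^\scrL$ (or vice versa) is, by the perversity and Schur arguments above, forced to be of the simple tensor form $\id \otimes \phi$ with $\phi$ determined by the degree-$0$ component of $W_2$.
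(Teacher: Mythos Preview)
Your overall strategy coincides with the paper's: both establish $\dim\Hom(\scrK_\Theta^\scrL, (\scrK_\Theta^\scrL)^{\star n}) = 1$ via the decomposition from Proposition~\ref{prop:conv_with_maximal_IC_neutral}, pick a nonzero generator, show that postcomposing with $\epsilon_\scrL^n$ yields a nonzero multiple of $\epsilon_\scrL$, and rescale. Your inductive construction of $\mu_\scrL^n$ from $\mu_\scrL^2$ and $\mu_\scrL^{n-1}$ is a clean alternative to the paper's direct treatment of general $n$.

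There is, however, a genuine gap in your $n=2$ step. You assert that $\Hom^0((\scrK_\Theta^\scrL)^{\star 2}, \IC_e^\scrL) \cong (W_2^0)^* \otimes \Hom^0(\scrK_\Theta^\scrL, \IC_e^\scrL) = \k$, but this fails: under $(\scrK_\Theta^\scrL)^{\star 2} \cong \bigoplus_{i\geq 0} \scrK_\Theta^\scrL[-i] \otimes W_2^i$, each summand contributes $\Hom^i(\scrK_\Theta^\scrL, \IC_e^\scrL) \otimes (W_2^i)^*$, and by adjunction together with Proposition~\ref{prop:stalks_of_maximal_IC} one has $\Hom^\bullet(\scrK_\Theta^\scrL, \IC_e^\scrL) \cong H_T^\bullet(\pt;\k)$, which is nonzero in every nonnegative even degree. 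The same issue invalidates the ``vice versa'' clause of your last paragraph: while $\Hom^0(\scrK_\Theta^\scrL, \scrK_\Theta^\scrL[-i]) = 0$ for $i > 0$ by vanishing of negative self-Ext of simples, the opposite direction $\Hom^0(\scrK_\Theta^\scrL[-i], \scrK_\Theta^\scrL) = \Ext^i(\IC_{w_{\scrL,\Theta}}^\scrL, \IC_{w_{\scrL,\Theta}}^\scrL)$ contains the equivariant cohomology of a point and does not vanish. Consequently your identification of $\epsilon_\scrL \star \epsilon_\scrL$ with $\pi_0 \otimes \epsilon_\scrL$ is unjustified, and the nonvanishing of $(\epsilon_\scrL\star\epsilon_\scrL)\circ\iota$ remains unproven. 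The paper fills this gap by factoring $\epsilon_\scrL^2$ through $\scrK_\Theta^\scrL \star \IC_e^\scrL \cong \scrK_\Theet_\Theta^\scrL$ and arguing via perverse cohomology that the induced map on the lowest summand is an isomorphism.
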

\begin{proof}
    We can iterate Proposition \ref{prop:conv_with_maximal_IC_neutral} (3) to obtain an isomorphism
    \[(\scrK_{\Theta}^{\scrL})^{\star n} \cong \scrK_{\Theta}^{\scrL} \otimes V\]
    where 
    \[V = \bigoplus_{i=0}^{2n \cdot \ell_{\scrL} (w_{\scrL, \Theta})} V_{-i} [i]\]
    and $\dim V_0 = 1$. As a result, we can see that $\scrK_{\Theta}^{\scrL}$ occurs with multiplicity 1 in $(\scrK_{\Theta}^{\scrL})^{\star n}$.
    Moreover, since $V_i = 0$ whenever $i > 0$, it follows that
    \[\dim \Hom (\scrK_{\Theta}^{\scrL}, (\scrK_{\Theta}^{\scrL})^{\star n}) = 1.\]
    Let $\mu^n_{\scrL} : \scrK_{\Theta}^{\scrL} \to (\scrK_{\Theta}^{\scrL})^{\star n}$ be any nonzero morphism.

    We claim that the composition
    \begin{equation}\label{eq:uniqueness_of_comult_1}
      \scrK_{\Theta}^{\scrL} \stackrel{\mu_{\scrL}^n}{\to} (\scrK_{\Theta}^{\scrL})^{\star n} \stackrel{\epsilon_{\scrL}^n}{\to} (\IC_{e}^{\scrL})^{\star n} \cong \IC_{e}^{\scrL}
    \end{equation}
    is nonzero. We will prove this claim for $n = 2$ as the cases of $n > 2$ can be obtained by a routine induction argument (cf., \cite[Proposition 6.6]{LY}).
    Since $\epsilon_{\scrL}^{2} = (\epsilon_{\scrL} \star \id_{\IC_e^{\scrL}}) \circ (\id_{\scrK_{\Theta}^{\scrL}} \star \epsilon_{\scrL})$ and from Proposition \ref{prop:conv_with_maximal_IC_neutral}, we obtain that 
    ${}^p H^0 (\epsilon_{\scrL}^2)$ is surjective. As we remarked before, ${}^p H^0 ((\scrK_{\Theta}^{\scrL})^{\star 2}) \cong \scrK_{\Theta}^{\scrL}$, hence, ${}^p H^0 (\mu_{\scrL}^2)$ is an isomorphism.
    In particular, ${}^p H^0 (\mu_{\scrL}^2 \circ \epsilon_{\scrL}^2)$ is nonzero which proves the claim.
  
    Since $\Hom (\scrK_{\Theta}^{\scrL}, \IC_e^{\scrL})$ is one-dimensional and since (\ref{eq:uniqueness_of_comult_1}) is nonzero, $\mu_{\scrL}^n$ is uniquely determined by some scalar which makes diagram (\ref{eq:uniqueness_of_comult_0}) commute.
\end{proof}

\begin{midsecproof}{Proposition \ref{prop:coalg_on_max_IC}}
    In light of Lemma \ref{lem:uniqueness_of_comult}, we only need to show that $\scrK_{\Theta}^{\scrL}$ is a coalgebra with comultiplication $\mu_{\scrL}^2$ and counit $\epsilon_{\scrL}$.
    This argument is essentially given in \cite[Proposition 6.6]{LY}. 
    The coassociativity axioms follows from the uniqueness of $\mu_{\scrL}^3$.
    The counit axioms follow from the commutativity of (\ref{eq:uniqueness_of_comult_0}).
\end{midsecproof}

\subsection{Soergel \texorpdfstring{$\H$}{H}-functor}

\subsubsection{Definition}

Let $\scrF, \scrG \in \D{\scrL}{\scrL}^\circ (\k)$. We have an isomorphism
\[\Hom^\bullet_{\D{\scrL}{\scrL}^{\circ}} (\scrF, \scrG) \cong H_{T \times T}^\bullet (\eFl, \RHom (\scrF, \scrG)).\]
As a result, there is an action of $R_{\k} = H_T^\bullet (\pt; \k)$ on both the left and right of $\Hom^\bullet (\scrF, \scrG)$ which makes $\Hom^\bullet (\scrF, \scrG)$ a graded $R_{\k}$-bimodule.

For a subset $\Theta \subset S_{\scrL}^\circ$ of finite type, we define the Soergel $\H$-functor,
\[\H_{\Theta}^{\scrL} : \D{\scrL}{\scrL}^{\circ, \Theta} (\k) \to \grbim{R_{\k}}, \hspace{1cm} \scrF \mapsto \Hom^{\bullet} (\scrK_{\Theta}^{\scrL}, \scrF).\]

\subsubsection{Behavior on Unit Object}

We first will describe the behavior of the $\H$-functor on the unit object.

\begin{lemma}\label{lem:H_functor_on_costds}
  There is a canonical isomorphism of graded $R$-bimodules,
  \[\H_{\Theta}^{\scrL} (\IC_e^{\scrL}) \cong R\]
  such that the map $\epsilon_{\scrL} : \scrK_{\Theta}^{\scrL} \to \IC_{e}^{\scrL}$ corresponds to $1 \in R$.
\end{lemma}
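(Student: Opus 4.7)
The plan is to reduce the computation of $\H_{\Theta}^{\scrL}(j_{w*} j_w^* \scrK_{\Theta}^{\scrL})$ to an equivariant cohomology calculation on a single Bruhat stratum and then identify the resulting $R$-bimodule with $R_w$ via the character-lattice description of $\Gamma(w) \hookrightarrow T \times T$.

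First, I would apply $(j_w^*, j_{w*})$-adjunction to obtain a canonical isomorphism of graded $R \otimes R$-modules
\[
\Hom^\bullet(\scrK_{\Theta}^{\scrL}, j_{w*} j_w^* \scrK_{\Theta}^{\scrL}) \cong \Hom^\bullet(j_w^* \scrK_{\Theta}^{\scrL}, j_w^* \scrK_{\Theta}^{\scrL}).
\]
Next, using Proposition \ref{prop:stalks_of_maximal_IC} together with $\scrK_{\Theta}^{\scrL} = \IC_{w_{\scrL,\Theta}}^{\scrL}[-\ell_{\scrL}(w_{\scrL,\Theta})]$, the stalk $j_w^* \scrK_{\Theta}^{\scrL}$ is canonically (up to the chosen iso class of $\scrK_w^{\scrL}$) a shift of $\scrK_w^{\scrL}$, so the right-hand side becomes $\End^\bullet(\scrK_w^{\scrL})$ independent of the shift.

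The second step is the computation of $\End^\bullet(\scrK_w^{\scrL})$ as an $R$-bimodule. By Proposition \ref{prop:mhc_strat_and_parity}, the monodromic stratum category is equivalent to $D_{\cons}(\Gamma(w) \backslash \{\dot w\}, \k)$, under which $\scrK_w^{\scrL}$ corresponds to the constant sheaf. Hence
\[
\End^\bullet(\scrK_w^{\scrL}) \cong H^\bullet_{\Gamma(w)}(\{\dot{w}\}; \k) \cong R,
\]
and the $R \otimes R$-module structure arises from the inclusion $\Gamma(w) \hookrightarrow T \times T$, $t \mapsto (w(t), t)$. On character lattices this gives the map $X(T) \oplus X(T) \to X(T)$, $(\chi_1, \chi_2) \mapsto \chi_1 \circ w + \chi_2$, so the action of $r \otimes r'$ on the generator $1$ is $w^{-1}(r) \cdot r'$. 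This is precisely the bimodule structure on $R_w = (R \otimes R)/(w(r) \otimes 1 - 1 \otimes r)$ with respect to its canonical right-module generator $1 \otimes 1$. Thus we get a canonical bimodule isomorphism $\End^\bullet(\scrK_w^{\scrL}) \cong R_w$.

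For the moreover statement, note that when $w = e$, the sheaf $j_{e*} j_e^* \scrK_{\Theta}^{\scrL}$ is just $\IC_e^{\scrL}$ since $\scrK_e^{\scrL} \cong \IC_e^{\scrL}$. Under the isomorphism built above, the generator $1 \in R = R_e$ corresponds to the identity map $\id_{\scrK_e^{\scrL}}$. By the very definition of $\epsilon_{\scrL}$ (the morphism corresponding to $\id_{\scrK_e^{\scrL}}$ under the adjunction isomorphism $\Hom(\scrK_{\Theta}^{\scrL}, \IC_e^{\scrL}) \cong \End(\scrK_e^{\scrL})$), this identifies $\epsilon_{\scrL}$ with $1 \in R$.

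The only real subtlety I anticipate is tracking the bimodule structure, specifically matching the twist convention in the character-lattice calculation with the paper's definition of $R_w$. Once conventions are fixed, every step is either a formal adjunction or an application of the already-established stalk computation in Proposition \ref{prop:stalks_of_maximal_IC}, so no substantial new geometric input is needed.
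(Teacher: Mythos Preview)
Your proposal is correct and follows essentially the same argument as the paper: adjunction plus Proposition~\ref{prop:stalks_of_maximal_IC} reduces to $\End^\bullet(\scrK_w^{\scrL}) \cong H_{\Gamma(w)}^\bullet(\pt;\k)$, which is then identified with $R_w$ via the bimodule structure induced by $\Gamma(w) \hookrightarrow T \times T$. You supply more detail on the character-lattice computation and the moreover clause than the paper does, but the skeleton is identical.
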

\begin{proof}
  By adjunction and Proposition \ref{prop:stalks_of_maximal_IC}, we obtain canonical isomorphisms of graded $R$-bimodules,
  \[\H_{\Theta}^{\scrL} (\IC_e^{\scrL}) \cong \Hom^\bullet (\scrK_e^{\scrL}, \scrK_e^{\scrL}) \cong H_{T}^\bullet (\pt; \k).\]
  The definition of $\epsilon_{\scrL}$ ensures that it identifies with $1 \in R$.
\end{proof}

\subsubsection{Monoidal Structure}

In this section, we will show that the $\H$-functor is monoidal after restricting to the category of parity sheaves. 

By Proposition \ref{prop:coalg_on_max_IC}, $\H_{\Theta}^{\scrL}$ is lax-monoidal. It will be useful to unpack what this entails.
Let $\scrF, \scrG \in \D{\scrL}{\scrL}^{\circ, \Theta} (\k)$.
For all $i, j \in \Z$, we can define the map
\begin{align*}
  \Hom (\scrK_{\Theta}^{\scrL}, \scrG [i]) \times \Hom (\scrK_{\Theta}^{\scrL}, \scrF [j]) &\stackrel{\star}{\to}  \Hom (\scrK_{\Theta
  }^{\scrL} \star \scrK_{\Theta}^{\scrL}, \scrG \star \scrF [i + j]) \\
  &\stackrel{(-) \circ \mu_{\scrL}^2 }{\longrightarrow} \Hom (\scrK_{\Theta}^{\scrL}, \scrG \star \scrF [i+j]).
\end{align*}
By taking direct sums over all $i,j$, we get an induced pairing
\[\H_{\Theta}^{\scrL} (\scrG) \times \H_{\Theta}^{\scrL} (\scrF) \to \H_{\Theta}^{\scrL} (\scrG \star \scrF).\]
The pairing is $R$-balanced and hence induces a morphism of graded $R$-bimodules
\begin{equation}\label{eq:H_mononoidal_map}
  c_{\Theta}^{\scrL} (\scrG, \scrF) : \H_{\Theta}^{\scrL} (\scrG) \otimes_R \H_{\Theta}^{\scrL} (\scrF) \to \H_{\Theta}^{\scrL} (\scrG \star \scrF).
\end{equation}
Note that $ c_{\Theta}^{\scrL} (\scrG, \scrF)$ is functorial in $\scrF, \scrG$, so we obtain a natural transformation of bifunctors
\[  c_{\Theta}^{\scrL} : \H_{\Theta}^{\scrL} (-) \otimes_R \H_{\Theta}^{\scrL} (-) \implies \H_{\Theta}^{\scrL} (- \star -) .\]
Since $\H_{\Theta}^{\scrL}$ is only lax-monoidal, the morphisms $c_{\Theta}^{\scrL} (\scrG, \scrF)$ are generally not isomorphisms. 
Our next goal will be to build up increasingly more general classes of objects
in which $c_{\Theta}^{\scrL} (\scrG, \scrF)$ is an isomorphism.

\begin{lemma}\label{lem:beta_conj_for_H_functor}
  Let $\beta \in \uW{\scrL'}{\scrL}$ be a block. Pick sheaves $\delta_{\beta}^{\scrL}$ and $\delta_{\beta^{-1}}^{\scrL'}$ in the isomorphism classes of $\IC_{w^{\beta}}^{\scrL}$ and $\IC_{w^{\beta, -1}}^{\scrL'}$ respectively.
  \begin{enumerate}
    \item The following diagram commutes up to natural isomorphism 
    \begin{equation}\label{eq:beta_conj_for_H_functor_1}
      \begin{tikzcd}
        {\D{\scrL}{\scrL}^{\circ, \Theta} (\k)} \arrow[r, "\H_{\Theta}^{\scrL}"] \arrow[d, "\delta_{\beta}^{\scrL} \star (-) \star \delta_{\beta^{-1}}^{\scrL'} "'] & \grbim{R_{\k}} \arrow[d, "{}^\beta (-)"] \\
        {\D{\scrL'}{\scrL'}^{\circ, \beta(\Theta)} (\k)} \arrow[r, "\H_{\beta (\Theta)}^{\scrL'}"]                                                                    & \grbim{R_{\k} }.                       
        \end{tikzcd}
    \end{equation}
     \item For $\scrF, \scrG \in \D{\scrL}{\scrL}^{\circ, \Theta} (\k)$, ${}^{\beta} (c_{\Theta}^{\scrL} (\scrG, \scrF))$ identifies with $c_{\Theta}^{\scrL'} (\delta_{\beta}^{\scrL} \star \scrG \star \delta_{\beta^{-1}}^{\scrL'}, \delta_{\beta}^{\scrL} \star \scrF \star \delta_{\beta^{-1}}^{\scrL'})$ via the commutativity of (\ref{eq:beta_conj_for_H_functor_1})
  \end{enumerate}
\end{lemma}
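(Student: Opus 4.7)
The plan is to first establish the key structural isomorphism ${}^{\beta} \scrK_{\Theta}^{\scrL} \cong \scrK_{\beta(\Theta)}^{\scrL'}$, use it together with the equivalence ${}^{\beta}(-)$ to build the natural isomorphism in (1), and then deduce (2) from the uniqueness of the comultiplication on the maximal IC sheaf.

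First, I would observe, via Lemma \ref{lem:beta_conj_of_endo_weyl_gps}, that conjugation by $w^\beta$ is an isomorphism of Coxeter groups $(W_\scrL^\circ, S_\scrL^\circ) \to (W_{\scrL'}^\circ, S_{\scrL'}^\circ)$ that sends $\Theta$ to $\beta(\Theta)$ and in particular sends the longest element $w_{\scrL, \Theta}$ to the longest element $w_{\scrL', \beta(\Theta)}$, preserving endo-length. Applying Proposition \ref{prop:structure_of_min_IC} (3) on both sides yields
\[\IC_{w^\beta}^{\scrL} \star \IC_{w_{\scrL, \Theta}}^{\scrL} \star \IC_{w^{\beta,-1}}^{\scrL'} \cong \IC_{w^\beta w_{\scrL, \Theta} w^{\beta, -1}}^{\scrL'} = \IC_{w_{\scrL', \beta(\Theta)}}^{\scrL'},\]
so after an appropriate shift we obtain ${}^{\beta} \scrK_{\Theta}^{\scrL} \cong \scrK_{\beta(\Theta)}^{\scrL'}$. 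Combined with the fact that ${}^{\beta}(-)$ is an equivalence of categories (Proposition \ref{prop:structure_of_min_IC} (2), applied from either side, with inverse ${}^{\beta^{-1}}(-)$), there is a chain of canonical graded $\k$-module isomorphisms
\[\H_{\beta(\Theta)}^{\scrL'}({}^{\beta} \scrF) = \Hom^\bullet(\scrK_{\beta(\Theta)}^{\scrL'}, {}^\beta \scrF) \cong \Hom^\bullet({}^{\beta} \scrK_{\Theta}^{\scrL}, {}^{\beta} \scrF) \cong \Hom^\bullet(\scrK_{\Theta}^{\scrL}, \scrF) = \H_{\Theta}^{\scrL}(\scrF).\]

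The main obstacle is checking that this identification intertwines the ${}^{\beta}$-twisted $R$-bimodule structures on the two sides. The $R$-bimodule structure on any $\Hom^\bullet$-space in $\D{\scrL}{\scrL}^{\circ}(\k)$ arises from the left and right $T$-equivariance via $R \otimes R \cong H^\bullet_{T \times T}(\pt; \k)$. Convolution with $\IC_{w^\beta}^{\scrL}$ on the left reparametrizes the left $T$-equivariance, in the sense that the new $R$-action is obtained by precomposing the old one with the algebra automorphism of $R$ induced by $w^\beta$, and analogously on the right for $\IC_{w^{\beta,-1}}^{\scrL'}$; this matches precisely the definition of ${}^\beta M = R_{w^\beta} \otimes_R M \otimes_R R_{w^{\beta,-1}}$. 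I would verify this cleanly by reducing to costandards through the recollement, using Lemma \ref{lem:H_functor_on_costds}: since ${}^\beta \nabla_x^{\scrL} \cong \nabla_{w^\beta x w^{\beta, -1}}^{\scrL'}$ by Proposition \ref{prop:structure_of_min_IC} (3), both sides then evaluate to the same bimodule $R_{w^\beta x w^{\beta, -1}} \cong {}^\beta R_x$. Compatibility across composable blocks $\beta, \gamma$ reduces to the geometric associativity $\IC_{w^\gamma}^{\scrL'} \star \IC_{w^\beta}^{\scrL} \cong \IC_{w^{\gamma \beta}}^{\scrL}$ matching the algebraic associativity $R_{w^\gamma} \otimes_R R_{w^\beta} \cong R_{w^{\gamma \beta}}$.

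For part (2), the monoidality morphism $c_{\Theta}^{\scrL}(\scrG, \scrF)$ is built from the comultiplication $\mu_{\scrL}^2$ on $\scrK_{\Theta}^{\scrL}$ produced by Proposition \ref{prop:coalg_on_max_IC}. Applying ${}^{\beta}(-)$ to $\mu_{\scrL}^2$ and using the isomorphism from the first step yields a morphism $\scrK_{\beta(\Theta)}^{\scrL'} \to \scrK_{\beta(\Theta)}^{\scrL'} \star \scrK_{\beta(\Theta)}^{\scrL'}$ that fits into the defining diagram of Lemma \ref{lem:uniqueness_of_comult}; by the uniqueness statement there, it must agree with $\mu_{\scrL'}^2$. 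Chasing the construction of the structure map in (\ref{eq:H_mononoidal_map}) through the identification built in (1) then gives the claimed compatibility.
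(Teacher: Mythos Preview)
Your approach is essentially the same as the paper's, with one point of emphasis worth noting. The paper pins down the isomorphism ${}^{\beta}\scrK_{\Theta}^{\scrL} \cong \scrK_{\beta(\Theta)}^{\scrL'}$ \emph{uniquely} by requiring compatibility with the counits: there is exactly one isomorphism $\varphi$ making
\[
\begin{tikzcd}
\scrK_{\beta(\Theta)}^{\scrL'} \arrow[r,"\varphi"] \arrow[d,"\epsilon_{\scrL'}"'] & \IC_{w^{\beta}}^{\scrL}\star\scrK_{\Theta}^{\scrL}\star\IC_{w^{\beta,-1}}^{\scrL'} \arrow[d,"\id\star\epsilon_{\scrL}\star\id"] \\
\IC_e^{\scrL'} \arrow[r,"\sim"] & \IC_{w^{\beta}}^{\scrL}\star\IC_e^{\scrL}\star\IC_{w^{\beta,-1}}^{\scrL'}
\end{tikzcd}
\]
commute. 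This is what justifies the word ``canonical'' and what makes the compatibility for composable blocks automatic; it is also exactly the input needed in part~(2), since $\varphi$ is then a coalgebra isomorphism by the uniqueness in Proposition~\ref{prop:coalg_on_max_IC}. You arrive at the same conclusion via Lemma~\ref{lem:uniqueness_of_comult}, but your argument implicitly assumes that ${}^{\beta}(\epsilon_{\scrL})$ matches $\epsilon_{\scrL'}$ under your chosen identification, which is precisely the normalization the paper imposes at the outset. Your more detailed discussion of the $R$-bimodule twist is a welcome expansion of what the paper leaves as ``unpacking definitions.''
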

\begin{proof}
  Note that there is a unique isomorphism $\varphi : \scrK_{\beta (\Theta)}^{\scrL'} \to \delta_{\beta}^{\scrL} \star \scrK_{\Theta}^{\scrL} \star \delta_{\beta^{-1}}^{\scrL'}$ which makes the following diagram commute
  \begin{equation}
    \begin{tikzcd}
      \scrK_{\beta (\Theta)}^{\scrL'} \arrow[r] \arrow[d, "\epsilon_{\scrL'}"] & { \delta_{\beta}^{\scrL} \star \scrK_{\Theta}^{\scrL} \star \delta_{\beta^{-1}}^{\scrL'}} \arrow[d, "\id \star \epsilon_{\scrL} \star \id"] \\
      \IC_e^{\scrL'} \arrow[r, "\sim"]                                         & { \delta_{\beta}^{\scrL} \star \IC_e^{\scrL} \star \delta_{\beta^{-1}}^{\scrL'}.}                                                           
      \end{tikzcd}
  \end{equation} 
  After unpacking definitions, we see that $\varphi$ is enough to prove (1).

  By Proposition \ref{prop:coalg_on_max_IC} and the construction of $\varphi : \scrK_{\beta (\Theta)}^{\scrL'} \to \delta_{\beta}^{\scrL} \star \scrK_{\Theta}^{\scrL} \star \delta_{\beta^{-1}}^{\scrL'}$ in (1), we have that $\varphi$ is an isomorphism of coalgebras.
  This is enough to prove (2).
\end{proof}

\begin{lemma}\label{lem:xi_s_map}
  There is a unique map $\xi_s : \scrK_{\Theta}^{\scrL} \to \IC_s^{\scrL} [-1]$ making the following diagram commute
  \[\begin{tikzcd}
\scrK_{\Theta}^{\scrL} \arrow[rr, "\xi_s"] \arrow[rd, "\epsilon_{\scrL}"'] &               & {\IC_s^{\scrL} [-1]} \arrow[ld, "{\epsilon_s [-1]}"] \\
                                                                           & \IC_e^{\scrL}. &                                                     
\end{tikzcd}\]
\end{lemma}
\begin{proof}
  We can compute 
  \[\Hom (\scrK_{\Theta}^{\scrL}, \IC_s^{\scrL} [-1]) \cong \Hom (\scrK_{\Theta}^{\scrL} \star \IC_s^{\scrL}, \IC_e^{\scrL} [-1]) \cong \Hom (\scrK_{\Theta}^{\scrL} [-1] \oplus \scrK_{\Theta}^{\scrL} [1], \IC_e^{\scrL}) \cong \k.\]
  The first isomorphism follows from Corollary \ref{cor:biadjoint_of_conv_with_ICs}, the second isomorphism by Lemma \ref{lem:conv_ICw_and_ICs}, and the third isomorphism from Proposition \ref{prop:stalks_of_maximal_IC}.
  Now consider the map
  \begin{equation}\label{eq:xi_s_map_1}
    \Hom (\scrK_{\Theta}^{\scrL}, \IC_s^{\scrL} [-1]) \to \Hom (\scrK_{\Theta}^{\scrL}, \IC_e^{\scrL}).
  \end{equation}
  By Proposition \ref{prop:stalks_of_maximal_IC}, this is a morphism of 1-dimensional $\k$-vector spaces.
  The composition
  \[\scrK_{\Theta}^{\scrL} [-1] \oplus \scrK_{\Theta}^{\scrL} \twoheadrightarrow \scrK_{\Theta}^{\scrL} \stackrel{\epsilon_{\scrL}}{\to} \IC_e^{\scrL}\]
  under the biadjunction of $(-) \star \IC_s^{\scrL}$ becomes a map $\scrK_{\Theta}^{\scrL} \to \IC_e^{\scrL}$ whose image under (\ref{eq:xi_s_map_1}) is nonzero. Therefore, (\ref{eq:xi_s_map_1}) is an isomorphism.
\end{proof}

\begin{lemma}\label{lem:H_functor_for_internally_simple_reflns}
  Let $s \in W_{\scrL, \Theta}^\circ$ be an endosimple reflection.
  \begin{enumerate}
    \item Let $\scrF \in \D{\scrL}{\scrL}^{\circ, \Theta} (\k)$. There is an isomorphism of graded $R_{\k}$-bimodules
      \[\H_{\Theta}^{\scrL} (\scrF) \otimes_{R^s} R (1) \cong \H_{\Theta}^{\scrL} (\scrF \star \IC_s^{\scrL}).\]
    \item There is an isomorphism of graded $R_{\k}$-bimodules
      \[\H_{\Theta}^{\scrL} (\IC_s^{\scrL}) \cong B_s.\]
      \item The canonical map $\xi_s : \scrK_{\Theta}^{\scrL} \to \IC_s^{\scrL} [-1]$ corresponding to the counit map $\scrK_{\Theta}^{\scrL} [-1] \oplus \scrK_{\Theta}^{\scrL} [1] \twoheadrightarrow \scrK_{\Theta}^{\scrL} [-1] \stackrel{\epsilon_{\scrL}}{\to} \IC_e^{\scrL}$ 
      under biadjunction of $\IC_s^{\scrL}$ identifies with $1 \otimes 1$ in $B_s [-1]$.
  \end{enumerate}
\end{lemma}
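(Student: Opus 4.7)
The proof has three parts: we establish (1) first, derive (2) by specialization, and verify (3) by tracing maps through the identification from (2).

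For (1), we use a two-step reduction. First, if $s \in W_{\scrL, \Theta}^{\circ}$ is endosimple but not simple in $W$, Lemma~\ref{lem:make_refln_simple} gives $s = w^{\beta, -1} t w^{\beta}$ with $t$ a simple reflection in $W$ that is also simple in $W_{w^\beta \scrL}^{\circ}$. By the left and right versions of Proposition~\ref{prop:structure_of_min_IC}(3), one checks that ${}^\beta (\scrF \star \IC_s^{\scrL}) \cong {}^\beta \scrF \star \IC_t^{w^\beta \scrL}$ via the identity $s \cdot w^{\beta, -1} = w^{\beta, -1} \cdot t$. Combined with Lemma~\ref{lem:beta_conj_for_H_functor} and the fact that conjugation by $w^\beta$ carries $R^s$-invariants onto $R^t$-invariants (since $t = w^\beta s w^{\beta, -1}$), this reduces (1) to the case where $s$ is a simple reflection in $W$.

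Assume now $s \in W_{\scrL, \Theta}^{\circ}$ is simple in $W$. By Lemma~\ref{lem:conv_with_ICs_rewritting}, $\scrF \star \IC_s^{\scrL} \cong \pi_s^* \pi_{s*} \scrF [1]$. Since $w_{\scrL, \Theta} s <_{\scrL} w_{\scrL, \Theta}$ forces $\ell(w_{\scrL, \Theta} s) < \ell(w_{\scrL, \Theta})$ by (\ref{eq:endo_vs_bruhat_orders}), Lemma~\ref{lem:pi_s_descent} produces a descent $\scrK_{\Theta}^{\scrL} \cong \pi_s^* \scrK_{\Theta}^{s, \scrL} [1]$ along the parabolic projection. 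Then the adjunction $\pi_s^* \dashv \pi_{s*}$ combined with the projection formula $\pi_{s*} \pi_s^* (-) \cong (-) \otimes_{\k} H^\bullet(\P^1; \k)$ (valid since $\pi_s$ is a $\P^1$-bundle) yields
\[\H_{\Theta}^{\scrL}(\scrF \star \IC_s^{\scrL}) \cong \Hom^\bullet\bigl(\scrK_{\Theta}^{s, \scrL}, \pi_{s*}\scrF\bigr) \otimes_{\k} H^\bullet(\P^1; \k) \cong \H_{\Theta}^{\scrL}(\scrF)(1) \otimes_{R^s} R,\]
where the second identification uses the decomposition $R \cong R^s \otimes_{\k} H^\bullet(\P^1; \k)$ as graded right $R^s$-modules. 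Rewriting this as $\H_{\Theta}^{\scrL}(\scrF) \otimes_{R^s} R(1)$ completes (1).

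Part (2) is the specialization of (1) to $\scrF = \IC_e^{\scrL}$: since $\IC_e^{\scrL}$ is the monoidal unit (Lemma~\ref{lem:skyscraper_is_unit}) and $\H_{\Theta}^{\scrL}(\IC_e^{\scrL}) \cong R$ by Lemma~\ref{lem:H_functor_on_costds}, we obtain $\H_{\Theta}^{\scrL}(\IC_s^{\scrL}) \cong R \otimes_{R^s} R(1) = B_s$. For (3), the map $\xi_s$ arises from $\epsilon_{\scrL} : \scrK_{\Theta}^{\scrL} \to \IC_e^{\scrL}$ via the biadjunction of Corollary~\ref{cor:biadjoint_of_conv_with_ICs}. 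The final clause of Lemma~\ref{lem:H_functor_on_costds} identifies $\epsilon_{\scrL}$ with $1 \in R$, and tracing this element through the biadjunction and the explicit chain of isomorphisms from (1)--(2) pins down the image of $\xi_s$ in $B_s[-1]$ as $1 \otimes 1$.

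The main obstacle is the bookkeeping in the push-pull step of (1): one must verify that the right $R$-action on $\H_{\Theta}^{\scrL}(\scrF \star \IC_s^{\scrL})$ factors through $R^s$ exactly through the geometry of $\pi_s$ (the right $T$-action on the parabolic side extends to a $P_s$-action, whose equivariant cohomology is $R^s$), and that the residual $\P^1$-fiber contributes the Tate twist $(1)$ with the correct sign convention so that the $B_s$-bimodule structure appears without spurious shifts.
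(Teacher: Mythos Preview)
Your proposal is correct and follows the same approach as the paper: reduce to $s$ simple in $W$ via Lemmas~\ref{lem:make_refln_simple} and~\ref{lem:beta_conj_for_H_functor}, then use the descent of the maximal IC along $\pi_s$ (Lemma~\ref{lem:pi_s_descent}) together with the push--pull identity (Lemma~\ref{lem:conv_with_ICs_rewritting}). The paper packages your projection-formula step as the equivariant cohomology identity $H^\bullet_T(X,\pi_s^*\scrG)\cong H^\bullet_T(X^s,\scrG)\otimes_{R^s}R$, which absorbs exactly the $R^s$-bookkeeping you flag, and for (3) argues via the characterizing equation $\epsilon_s\circ\xi_s=\epsilon_{\scrL}$ (so that $m_s(\H_\Theta^{\scrL}(\xi_s))=1$ forces $\H_\Theta^{\scrL}(\xi_s)=1\otimes 1$) rather than tracing through the adjunction directly.
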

\begin{proof}
  By Lemma \ref{lem:make_refln_simple} and Lemma \ref{lem:beta_conj_for_H_functor}, it suffices to consider $s \in W_{\scrL, \Theta}^{\circ}$ a simple reflection. 
  Let $\scrF \in \D{\scrL}{\scrL}^{\circ, \Theta} (\k)$. By assumption, $\scrF$ is supported on $\eFl_{\leq w_{\scrL, \Theta}}$. We will write $X = \eFl_{\leq w_{\scrL, \Theta}} / T$ and $X^s = \eFl_{\leq w_{\scrL, \Theta}} / L_s$.

  \emph{(1):}
  We can extend $\scrL$ uniquely to $\scrL^s \in \Ch (L_s, \k)$ via Lemma \ref{lem:extending_character_sheaves_to_levis}.
  By Lemma \ref{lem:pi_s_descent}, we get an isomorphism,
  \begin{align}
    \H_{\Theta}^{\scrL} (\scrF) &\cong \Hom^\bullet (\IC_{w_{\scrL, \Theta}}^{\scrL^s} [-\ell_{\scrL} (w_{\scrL, \Theta}) + 1], \pi_{s*} \scrF) \notag \\
    &\cong H^\bullet_{T} ( X^s, \RHom (\IC_{w_{\scrL,\Theta}}^{\scrL^s} [-\ell_{\scrL} (w_{\scrL, \Theta}) + 1], \pi_{s*} \scrF)). \label{eq:H_functor_for_simple_reflns_1}
  \end{align}
  In particular, $\H_{\Theta}^{\scrL} (\scrF)$ naturally has the structure of a graded $(R, R^s)$-bimodule via the identification $R^s \cong H^\bullet_{L_s} (\pt; \k)$.

  For $\scrG \in D (T \backslash X^s , \k)$, the pullback along the morphism $\pi_s : X \to X^s$ induces an isomorphism of graded $R$-bimodules,
  \[H^\bullet_{T} (X, \pi_s^* \scrG) \cong H^\bullet_T (X^s, \scrG \otimes^L \pi_{s*} \underline{\k}) \cong H^\bullet_T (X^s, \scrG) \otimes_{R^s} R .\]
  We can apply the above isomorphism to (\ref{eq:H_functor_for_simple_reflns_1}) to obtain isomorphisms
  \begin{align*}
    \H_{\Theta}^{\scrL} (\scrF) \otimes_{R^s} R (1) &\cong H^\bullet_{T} (X^s, \pi_s^* \RHom (\IC_{w_{\scrL,\Theta}}^{\scrL^s} [-\ell_{\scrL} (w_{\scrL, \Theta}) + 1], \pi_{s*} \scrF [1])) \\
    &\cong H^\bullet_{T} (X, \RHom (\pi_s^* \IC_{w_{\scrL, \Theta}}^{\scrL^s} [-\ell_{\scrL} (w_{\scrL, \Theta}) + 1], \pi_s^* \pi_{s*} \scrF [1]) ) \\
    &\cong H^\bullet_{T} (X, \RHom (\scrK_{\Theta}^{\scrL}, \scrF \star \IC_s^{\scrL}) ) \\
    &\cong \H_{\Theta}^{\scrL} (\scrF \star \IC_s^{\scrL} ).
  \end{align*}
  Note here we are using the push-pull lemma (Lemma \ref{lem:conv_with_ICs_rewritting}).

  \emph{(2): } The statement easily follows from (1) and Lemma \ref{lem:H_functor_on_costds} by taking $\scrF = \IC_e^{\scrL}$.
  
  \emph{(3): } By Lemma \ref{lem:xi_s_map}, $\xi_s$ is the unique morphism $\scrK_{\Theta}^{\scrL} \to \IC_s^{\scrL} [-1]$ which satisfies $\epsilon_s \circ \xi_s = \epsilon_{\scrL}$.
  Then $\H_{\Theta}^{\scrL} (\xi_s)$ is the unique element of $R \otimes_{R^s} R$ such that $m_s (\xi_s) = 1$ where $m : R \otimes_{R^s} R \to R$ is the multiplication map.
  As a result, $\H_{\Theta}^{\scrL} (\xi_s) = 1 \otimes 1$.
\end{proof}

\begin{lemma}\label{lem:monoidality_for_ICs}
  Let $s \in W_{\scrL, \Theta}^\circ$ be an endosimple reflection. Let $\scrF \in \D{\scrL}{\scrL}^{\circ, \Theta} (\k)$.
  Then the map $c_\Theta^{\scrL} (\scrF, \IC_s^{\scrL}) : \H_{\Theta}^{\scrL} (\scrF) \otimes_R \H_{\Theta}^{\scrL} (\IC_s^{\scrL}) \to \H_{\Theta}^{\scrL} (\scrF \star \IC_s^{\scrL})$ is an isomorphism.
\end{lemma}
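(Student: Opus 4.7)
The plan is to show $c_\Theta^{\scrL}(\scrF, \IC_s^{\scrL})$ is a map of graded $R$-bimodules between two copies of the same bimodule, and then verify that it is an isomorphism by checking its behavior on a distinguished generator. First, by Lemma \ref{lem:beta_conj_for_H_functor} and Lemma \ref{lem:make_refln_simple}, I may harmlessly assume $s$ is a simple reflection in $W$ (not merely endosimple), since the conjugation equivalence ${}^{\beta}(-)$ intertwines both the $\H$-functor and the natural transformation $c_\Theta^{\scrL}$. Under this reduction, the map $\pi_s : \eFl/_\scrL B \to \eFl/_{\scrL^s} P_s$ and the push-pull lemma (Lemma \ref{lem:conv_with_ICs_rewritting}) become available.

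Next, I would identify both sides. By Lemma \ref{lem:H_functor_for_internally_simple_reflns}(2), $\H_\Theta^{\scrL}(\IC_s^{\scrL}) \cong B_s = R \otimes_{R^s} R(1)$, so the source is naturally
\[
  \H_\Theta^{\scrL}(\scrF) \otimes_R B_s \cong \H_\Theta^{\scrL}(\scrF) \otimes_{R^s} R(1).
\]
By Lemma \ref{lem:H_functor_for_internally_simple_reflns}(1), the target is also canonically $\H_\Theta^{\scrL}(\scrF) \otimes_{R^s} R(1)$. Both are finitely generated free right $R$-modules, and the map $c_\Theta^{\scrL}(\scrF, \IC_s^{\scrL})$ is $(R,R)$-bilinear. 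Since $B_s$ is cyclic as an $(R,R)$-bimodule, generated by $1 \otimes 1$, it suffices to compute $c_\Theta^{\scrL}(\scrF, \IC_s^{\scrL})$ on elements of the form $m \otimes (1 \otimes 1)$ for $m \in \H_\Theta^{\scrL}(\scrF)$.

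By Lemma \ref{lem:H_functor_for_internally_simple_reflns}(3), the element $1 \otimes 1 \in B_s[-1]$ corresponds precisely to $\xi_s : \scrK_\Theta^{\scrL} \to \IC_s^{\scrL}[-1]$. Unpacking the definition (\ref{eq:H_mononoidal_map}) of $c_\Theta^{\scrL}$, the image of $m \otimes \xi_s$ is the composite
\[
  \scrK_\Theta^{\scrL} \xrightarrow{\mu_\scrL^2} \scrK_\Theta^{\scrL} \star \scrK_\Theta^{\scrL} \xrightarrow{m \star \xi_s} \scrF \star \IC_s^{\scrL}[|m| - 1].
\]
I would then show this agrees, under the push-pull identification of Lemma \ref{lem:H_functor_for_internally_simple_reflns}(1), with the element $m \otimes 1 \in \H_\Theta^{\scrL}(\scrF) \otimes_{R^s} R$. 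The key input is that $\mu_\scrL^2$ is uniquely characterized (Lemma \ref{lem:uniqueness_of_comult}) by the condition that $\epsilon_\scrL^2 \circ \mu_\scrL^2 = \epsilon_\scrL$ under the canonical isomorphism $(\IC_e^{\scrL})^{\star 2} \cong \IC_e^{\scrL}$, and composing the above map with $\epsilon_s = \epsilon_\scrL \circ \xi_s$ on the right factor recovers $m$ itself.

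The main obstacle will be the final compatibility check: matching the coalgebra-theoretic description of $c_\Theta^{\scrL}$ against the geometric, adjunction-based isomorphism of Lemma \ref{lem:H_functor_for_internally_simple_reflns}(1). This amounts to a diagram chase interlocking the counit $\pi_s^* \pi_{s*} \to \id$, the defining diagram of $\mu_\scrL^2$, and the factorization $\xi_s$ of $\epsilon_\scrL$ through $\IC_s^{\scrL}[-1]$. Once that identification is verified for the generator $m \otimes \xi_s$, $(R,R)$-bilinearity propagates the isomorphism to all of the source, yielding the claim.
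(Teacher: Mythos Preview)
Your proposal is correct and follows essentially the same route as the paper: first reduce to the case where $s$ is a genuine simple reflection via Lemma~\ref{lem:beta_conj_for_H_functor}, then use the push-pull identification of Lemma~\ref{lem:H_functor_for_internally_simple_reflns} to recognize both sides, and finally verify that $c_\Theta^{\scrL}(\scrF,\IC_s^{\scrL})$ agrees with that identification. The paper carries out exactly these first two steps and then, for the final compatibility check you describe (matching the coalgebra-theoretic $c_\Theta^{\scrL}$ against the adjunction-based isomorphism on the generator $m\otimes(1\otimes 1)$), simply cites \cite[Lemma~7.7]{LY}; your sketch of that diagram chase is precisely what that citation unpacks.
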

\begin{proof}
  By Lemma \ref{lem:H_functor_for_internally_simple_reflns}, we have an isomorphism
  \[\mu_{\scrF, s} : \H_{\Theta}^{\scrL} (\scrF) \otimes_R \H_{\Theta}^{\scrL} (\IC_s^{\scrL}) \cong \H_{\Theta}^{\scrL} (\scrF) \otimes_{R^s} R(1) \cong \H_{\Theta}^{\scrL} (\scrF
  \star \IC_s^{\scrL}).\]
  It remains to show that $\mu_{\scrF, s}$ agrees with $c_{\Theta}^{\scrL} (\scrF, \IC_s^{\scrL})$.
  By Lemma \ref{lem:beta_conj_for_H_functor}, it suffices to show this for $s \in S$ a simple reflection.

  The proof then follows from the same argument given in \cite[Lemma 7.7]{LY} after the obvious replacements are made. 
\end{proof}

\begin{proposition}\label{prop:monoidality_of_H_functor}
  Let $\scrF, \scrG \in \D{\scrL}{\scrL}^{\circ, \Theta} (\k)$.
  The map $c_{\Theta}^{\scrL} (\scrG, \scrF)$ is an isomorphism if either $\scrF$ or $\scrG$ is a parity sheaf.
  As a result, the restriction of the Soergel functor 
  \[ \H_{\Theta}^{\scrL} : \Parity{\scrL}{\scrL}^{\circ, \Theta} (\k) \to \grbim{R_{\k}} \]
  is monoidal.
\end{proposition}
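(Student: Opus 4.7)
The second assertion follows from the first, so I focus on proving that $c_{\Theta}^{\scrL}(\scrG, \scrF)$ is an isomorphism whenever either argument is a parity sheaf. Since both $\H_{\Theta}^{\scrL}$ and $\otimes_R$ preserve direct sums and direct summands, the two sides of $c_{\Theta}^{\scrL}(\scrG, \scrF)$ behave compatibly, so it suffices to verify the claim for $\scrG$ running over a generating collection. Because $\k$ has characteristic zero, the parity sheaves in $\Parity{\scrL}{\scrL}^{\circ, \Theta} (\k)$ coincide with the semisimple complexes, and iterating Lemma \ref{lem:conv_ICw_and_ICs} shows that every indecomposable parity sheaf $\IC_w^{\scrL}$ with $w \in W_{\scrL, \Theta}^{\circ}$ occurs as a direct summand of an iterated convolution $\IC_{t_1}^{\scrL} \star \cdots \star \IC_{t_k}^{\scrL}$, where $(t_1, \ldots, t_k)$ is a reduced expression for $w$ in the endosimple reflections of $W_{\scrL, \Theta}^{\circ}$. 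So it is enough to treat such convolutions.

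I would then induct on $k$. The base case $k = 0$ ($\scrG = \IC_e^{\scrL}$) is Lemma \ref{lem:H_functor_on_costds}. The base case $k = 1$ — that $c_{\Theta}^{\scrL}(\IC_t^{\scrL}, \scrF)$ is an isomorphism for any $\scrF \in \D{\scrL}{\scrL}^{\circ, \Theta} (\k)$ — requires a left-sided analogue of Lemma \ref{lem:monoidality_for_ICs}: the proof of that lemma hinges on push-pull along the right parabolic projection $\pi_s$, and the analogous statement for left convolution is proved by running the same argument with the left quotient $P_t \backslash G$ in place of $G / P_t$. For the inductive step, write $\scrG = \scrG' \star \IC_{t_k}^{\scrL}$ and invoke the coherence relation for the lax monoidal structure:
\begin{equation*}
c(\scrG' \star \IC_{t_k}^{\scrL}, \scrF) \circ \bigl(c(\scrG', \IC_{t_k}^{\scrL}) \otimes \id\bigr) = c(\scrG', \IC_{t_k}^{\scrL} \star \scrF) \circ \bigl(\id \otimes c(\IC_{t_k}^{\scrL}, \scrF)\bigr).
\end{equation*}
Lemma \ref{lem:monoidality_for_ICs} makes $c(\scrG', \IC_{t_k}^{\scrL})$ an isomorphism, the base case $k=1$ makes $c(\IC_{t_k}^{\scrL}, \scrF)$ an isomorphism, and the inductive hypothesis applied to $\scrG'$ (with the arbitrary second argument $\IC_{t_k}^{\scrL} \star \scrF$) makes $c(\scrG', \IC_{t_k}^{\scrL} \star \scrF)$ an isomorphism. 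The right-hand side is therefore an isomorphism, forcing $c(\scrG' \star \IC_{t_k}^{\scrL}, \scrF)$ to be one as well.

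The case where $\scrF$ is parity and $\scrG$ is arbitrary is handled by the mirror-image induction, reducing $\scrF$ to a convolution of endosimple IC sheaves; here the inductive step rests on Lemma \ref{lem:monoidality_for_ICs} directly, with no left-sided variant required. The second assertion — monoidality of the restricted Soergel functor — is the special case of the first in which both arguments are parity, so we are done.

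The main obstacle is the base case $c(\IC_t^{\scrL}, \scrF)$, which needs the symmetric version of Lemma \ref{lem:monoidality_for_ICs}. In the Kac–Moody setup most of the preceding geometry is phrased in terms of right actions of $P_s$ on $\eFl = U\backslash G$, and one has to verify carefully that the push-pull identification used in Lemma \ref{lem:monoidality_for_ICs} has a valid mirror form for the left parabolic quotient. All the supporting ingredients (base change, the description of $\pi_s$-fibers, and Lemma \ref{lem:H_functor_for_internally_simple_reflns}) admit such mirror forms, but setting up the notation so that the bimodule tensor product $\otimes_R$ appears on the correct side is the step most prone to sign and side errors.
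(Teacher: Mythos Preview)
Your argument is correct and follows essentially the same route as the paper: reduce the parity object to a direct summand of an iterated convolution of endosimple IC sheaves and then peel off one factor at a time using the lax-monoidal coherence relation together with Lemma~\ref{lem:monoidality_for_ICs}. The paper's proof is terser: it simply says ``the statement is symmetric'' and then treats only one side by a successive application of Lemma~\ref{lem:monoidality_for_ICs}. What you call the ``left-sided analogue'' of that lemma is precisely what the paper is invoking under the word ``symmetric''; you are right that this requires the mirror push--pull argument with the left parabolic quotient, and your caution about checking it carefully is well placed, though in the present setup (finite-type $\Theta$, characteristic zero) it does go through without difficulty.

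One small simplification: in your induction for the case where $\scrG$ is parity, you mix both the left- and right-sided versions of Lemma~\ref{lem:monoidality_for_ICs}. It is cleaner (and this is what the paper's ``symmetry'' really means) to peel $\scrG$ from the \emph{left}, i.e.\ write $\scrG = \IC_{t_1}^{\scrL} \star \scrG''$, and use only the left-sided variant throughout; then the two cases become genuine mirror images of one another and the inductive step needs only one form of the lemma at a time.
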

\begin{proof}
  Since the statement is symmetric, we may assume that $\scrG$ is a parity sheaf. 
  By Theorem \ref{thm:existence_of_parity}, it suffices to prove the isomorphism for $\scrG = \IC_{s_1}^{\scrL} \star \ldots \star \IC_{s_k}^{\scrL}$ where $s_1, \ldots, s_k$ are endosimple reflections in $W_{\scrL, \Theta}^{\circ}$.
  The proposition then follows from a simple inductive argument on $k$ using Lemma \ref{lem:monoidality_for_ICs}
\end{proof}

\subsection{Comparison with Soergel Bimodules}

We are ready to state the main result of the section.
 
\begin{theorem}\label{thm:H_functor_properties_par}
  The $\H$-functor restricts to a monoidal equivalence of categories
  \[\H_{\Theta}^{\scrL} : \Parity{\scrL}{\scrL}^{\circ, \Theta} (\k) \to \SBim_{W_{\scrL, \Theta}^\circ} (\fr{h}_{\k})\]
  such that $\H_{\Theta}^{\scrL} (\IC_w^{\scrL} [n]) \cong B_w (n)$ for all $w \in W_{\scrL, \Theta}^{\circ}$ and $n \in \Z$.
\end{theorem}

Note that by Proposition \ref{prop:monoidality_of_H_functor} and Lemma \ref{lem:H_functor_for_internally_simple_reflns}, 
\[ \H_{\Theta}^{\scrL} : \Parity{\scrL}{\scrL}^{\circ, \Theta} (\k) \to \grbim{R_{\k}} \] 
is monoidal and satisfies $\H_{\Theta}^{\scrL} (\IC_s^{\scrL}) \cong B_s$ for all $s \in S_{\scrL}^{\circ}$.
These observations together imply that $\H_{\Theta}^{\scrL} (\IC_w^{\scrL}) \cong B_w$ for all $w \in W_{\scrL, \Theta}^{\circ}$.
As a result, to prove Theorem \ref{thm:H_functor_properties_par}, it suffices to show that $\H_{\Theta}^{\scrL}$ is fully faithful.
Before proving full faithfulness, we need some preliminary lemmas.

\begin{lemma}\label{lem:par_H_functor_ff_base_case}
  Let $\scrG \in \Parity{\scrL}{\scrL}^{\circ, \Theta} (\k)$. The natural map
  \[ \Hom (\IC_e^{\scrL}, \scrG) \to \Hom (R, \H_{\Theta}^{\scrL} (\scrG))\]
  induced by $\H_{\Theta}^{\scrL}$ is an isomorphism.
\end{lemma}
\begin{proof}
  The lemma follows from the proof of \cite[Lemma 7.10]{LY} after the obvious modifications are made.
\end{proof}

\begin{lemma}\label{lem:par_compat_of_self_adjointness} 
  Let $\scrF, \scrG \in \Parity{\scrL}{\scrL}^{\circ, \Theta} (\k)$. Let $s \in \Theta$. There is a natural commutative diagram
  \begin{equation}\label{eq:par_compat_of_self_adjointness_1}
    \begin{tikzcd}
      {\Hom (\scrF \star \IC_s^{\scrL}, \scrG)} \arrow[rr, "\sim"] \arrow[d, "{\H_{\Theta}^{\scrL}}"]                                                               &  &
      {\Hom (\scrF, \scrG \star \IC_s^{\scrL})} \arrow[d, "{\H_{\Theta}^{\scrL}}"]                                                                               \\
      {\Hom (\H_{\Theta}^{\scrL} (\scrF \star \IC_s^{\scrL}), \H_{\Theta}^{\scrL} (\scrG))} \arrow[d, "{(-)\circ c_{\Theta}^{\scrL} (\scrF, \IC_s^{\scrL})}"] &  &
      {\Hom (\H_{\Theta}^{\scrL} (\scrF ), \H_{\Theta}^{\scrL} (\scrG \star \IC_s^{\scrL}) )} \arrow[d, "{c_\Theta^{\scrL} (\scrG, \IC_s^{\scrL})^{-1} \circ (-)}"] \\
      {\Hom (\H_{\Theta}^{\scrL} (\scrF) \otimes_R B_s, \H_{\Theta}^{\scrL} (\scrG))} \arrow[rr, "\sim"]                                                             &  &
      {\Hom (\H_{\Theta}^{\scrL} (\scrF) , \H_{\Theta}^{\scrL} (\scrG) \otimes_R B_s),}
    \end{tikzcd}
  \end{equation}
  where the horizontal maps are given by the self-adjunction of $(-) \star \IC_s^{\scrL}$ and $(-) \otimes_R B_s$.
\end{lemma}
\begin{proof}
  We can factor the diagram (\ref{eq:par_compat_of_self_adjointness_1}) as follows:
  \[\adjustbox{scale=0.7, center}{\begin{tikzcd} 
    {\Hom (\scrF \star \IC_s^{\scrL}, \scrG)} \arrow[d, "\H_{\Theta}^{\scrL}"'] \arrow[r, "(-) \star \IC_s^{\scrL}"]                                                                                                  & {\Hom^\bullet (\scrF \star \IC_s^{\scrL} \star \IC_s^{\scrL}, \scrG \star \IC_s^{\scrL})} \arrow[r] \arrow[d, "\H_{\Theta}^{\scrL}"]                       & {\Hom^\bullet (\scrF, \scrG \star \IC_s^{\scrL})} \arrow[d, "\H_{\Theta}^{\scrL}"]                                                                                    \\
    {\Hom (\H_{\Theta}^{\scrL} (\scrF \star \IC_s^{\scrL}), \H_{\Theta}^{\scrL} (\scrG))} \arrow[d, "{(-)\circ c_{\Theta}^{\scrL} (\scrF, \IC_s^{\scrL})}"'] \arrow[r, "\H_{\Theta}^{\scrL} (- \star \IC_s^{\scrL})"] & {\Hom^\bullet (\H_{\Theta}^{\scrL} (\scrF \star \IC_s^{\scrL} \star \IC_s^{\scrL}), \H_{\Theta}^{\scrL} ( \scrG \star \IC_s^{\scrL}))} \arrow[r] \arrow[d] & {\Hom^\bullet (\H_{\Theta}^{\scrL} (\scrF ), \H_{\Theta}^{\scrL} (\scrG \star \IC_s^{\scrL}) )} \arrow[d, "{c_\Theta^{\scrL} (\scrG, \IC_s^{\scrL})^{-1} \circ (-)}"] \\
    {\Hom (\H_{\Theta}^{\scrL} (\scrF) \otimes_R B_s, \H_{\Theta}^{\scrL} (\scrG))} \arrow[r, "(-) \otimes_R B_s"]                                                                                                    & {\Hom^\bullet (\H_{\Theta}^{\scrL} (\scrF) \otimes_R B_s \otimes_R B_s, \H_{\Theta}^{\scrL} (\scrG) \otimes_R B_s)} \arrow[r]                              & {\Hom^\bullet (\H_{\Theta}^{\scrL} (\scrF) , \H_{\Theta}^{\scrL} (\scrG) \otimes_R B_s),}                                                                             
    \end{tikzcd}}\]
    where the horizontal morphisms on the right are given by the unit morphisms of adjunction.
    The top two squares commute by functorality of $\H_{\Theta}^{\scrL}$. The bottom two squares commute by Proposition \ref{prop:monoidality_of_H_functor}.
\end{proof}

\begin{midsecproof}{Theorem \ref{thm:H_functor_properties_par}} 
  Consider the morphism
  \[\alpha (\scrF, \scrG) : \Hom (\scrF, \scrG) \to \Hom (\H_{\Theta}^{\scrL} (\scrF), \H_{\Theta}^{\scrL} (\scrG)).\]
  For each expression $\uw = (s_1, \ldots, s_k)$ of endosimple reflections in $\Theta$, define a parity sheaf
  \[\scrF_{\uw}^{\scrL} \coloneq \IC_{s_1}^{\scrL} \star \ldots \star \IC_{s_k}^{\scrL}.\]
  By Theorem \ref{thm:existence_of_parity} and Theorem \ref{thm:conv_preserves_parity}, every indecomposable parity sheaf occurs as a direct summand of some $\scrF_{\uw}^{\scrL}$. 
  It suffices to prove that $\alpha (\scrF, \scrG)$ is an isomorphism for $\scrF = \scrF_{\uw}^{\scrL}$.
  We will prove $\alpha (\scrF_{\uw}^{\scrL}, \scrG)$ is an isomorphism by induction on $k$. The base case of $k = 0$ is covered by Lemma \ref{lem:par_H_functor_ff_base_case}.

  Let $\uw' = (s_1, \ldots, s_{k-1})$ and write $s = s_k$. 
  Note that $\scrF_{\uw}^{\scrL} = \scrF_{\uw'}^{\scrL} \star \IC_{s}^{\scrL}$.
  By Lemma \ref{lem:par_compat_of_self_adjointness}, we get a commutative diagram
  \begin{equation}
    \begin{tikzcd}
      {\Hom (\scrF_{\uw'}^{\scrL} \star \IC_s^{\scrL}, \scrG)} \arrow[rr, "\sim"] \arrow[d, "{\alpha (\scrF_{\uw}^{\scrL}, \scrG)}"']
      &  & {\Hom (\scrF_{\uw'}^{\scrL}, \scrG \star \IC_s^{\scrL})} \arrow[d, "{\alpha (\scrF_{\uw'}^{\scrL}, \scrG \star \IC_s^{\scrL})}"]                                                                               \\
      {\Hom (\H_{\Theta}^{\scrL} (\scrF_{\uw'}^{\scrL} \star \IC_s^{\scrL}), \H_{\Theta}^{\scrL} (\scrG))} \arrow[d, "\sim", "\ref{prop:monoidality_of_H_functor}"'] &  &
      {\Hom (\H_{\Theta}^{\scrL} (\scrF_{\uw'}^{\scrL} ), \H_{\Theta}^{\scrL} (\scrG \star \IC_s^{\scrL}) )} \arrow[d, "\sim", "\ref{prop:monoidality_of_H_functor}"'] \\
      {\Hom (\H_{\Theta}^{\scrL} (\scrF_{\uw'}^{\scrL}) \otimes_R B_s, \H_{\Theta}^{\scrL} (\scrG))} \arrow[rr, "\sim"]
      &  & {\Hom (\H_{\Theta}^{\scrL} (\scrF_{\uw'}^{\scrL}) , \H_{\Theta}^{\scrL} (\scrG) \otimes_R B_s).}
    \end{tikzcd}
  \end{equation}
  The top right vertical map is an isomorphism by induction; therefore, we must have that $\alpha (\scrF_{\uw}^{\scrL}, \scrG)$ is an isomorphism. We can then conclude that $\H_{\Theta}^{\scrL}$ is fully faithful.
\end{midsecproof}

\subsection{Right Equivariant Soergel Functor}

We again keep the assumption that $\k$ is a characteristic 0 field. 
Let $\Theta \subseteq S_{\scrL}^{\circ}$. We define a full subcategory $\PME{\scrL}{\scrL}^{\circ, \Theta} (\k)$ of $\PME{\scrL}{\scrL}^{\circ} (\k)$  generated under direct sums and shifts by the objects $\scrE_{w}^{\scrL}$ for $w \in W_{\scrL, \Theta}^{\circ}$.
We define the right-equivariant Soergel functor
\[\tilde{\H}_{\Theta}^{\scrL} : \PME{\scrL}{\scrL}^{\circ, \Theta} (\k) \to \rgrmod{R}, \qquad \scrF \mapsto \Hom_{\PME{\scrL}{\scrL}^{\circ, \Theta} (\k)}^{\bullet} (\ForME{\scrL} (\scrK_{\Theta}^{\scrL}), \scrF ).\]
It follows from definitions that the following diagram commutes up to natural isomorphisms
\begin{equation}\label{eq:soergel_and_for}
  \begin{tikzcd}
{\PEE{\scrL}{\scrL}^{\circ, \Theta} (\scrQ)} \arrow[d, "\ForME{\scrL}"'] \arrow[r, "\H_{\Theta}^{\scrL}"] & \grbim{R} \arrow[d, "\k \otimes_{R}(-)"] \\
{\PME{\scrL}{\scrL}^{\circ, \Theta} (\scrQ)} \arrow[r, "\tilde{\H}_{\Theta}^{\scrL}"]                     & \rgrmod{R}                                                            
\end{tikzcd}
\end{equation}

\begin{proposition}\label{prop:right_equiv_H_functor_properties}
 The $\tilde{\H}$-functor
  \[\tilde{\H}_{\Theta}^{\scrL} : \PME{\scrL}{\scrL}^{\circ, \Theta} (\k) \to \rgrmod{R}\]
  is fully faithful and satisfies $\H_{\Theta}^{\scrL} (\IC_w^{\scrL} [n]) \cong \k \otimes_R B_w (n)$ for all $w \in W_{\scrL, \Theta}^{\circ}$ and $n \in \Z$.
\end{proposition}
\begin{proof}
  The fact that $\H_{\Theta}^{\scrL} (\IC_w^{\scrL} [n]) \cong \k \otimes_R B_w (n)$ for all $w \in W_{\scrL, \Theta}^{\circ}$ and $n \in \Z$ follows from Theorem \ref{thm:H_functor_properties_par} and (\ref{eq:soergel_and_for}).
  It remains to show that $\tilde{\H}_{\Theta}^{\scrL}$ is fully faithful.

  Note that $\ForME{\scrL} : \PEE{\scrL}{\scrL}^{\circ, \Theta} (\k) \to \PME{\scrL}{\scrL}^{\circ, \Theta} (\k)$ is essentially surjective. In particular, it suffices to prove that
  \begin{align*}
    \Hom (\ForME{\scrL} (\scrF), \ForME{\scrL} (\scrG)) &\to \Hom_{\rgrmod{R}} (\tilde{\H}_{\Theta}^{\scrL} (\ForME{\scrL} (\scrF)), \tilde{\H}_{\Theta}^{\scrL} (\ForME{\scrL} (\scrG))) \\
    &\to \Hom_{\rgrmod{R}} (\k \otimes_R \H_{\Theta}^{\scrL} (\scrF), \k \otimes_R \H_{\Theta}^{\scrL} (\scrG))
  \end{align*}
  is an isomorphism. By Lemma \ref{lem:eos_for_right_equiv} and \cite[Theorem 5.15]{So07}, this map can be identified with the map
  \[\k \otimes_R \H_{\Theta}^{\scrL} : \Hom_{\Parity{\scrL}{\scrL}^{\circ, \Theta} (\k)} (\scrF, \scrG) \to  \k \otimes_R \Hom_{\grbim{R}} (\H_{\Theta}^{\scrL} (\scrF), \H_{\Theta}^{\scrL} (\scrG)).\]
  This is an isomorphism by Theorem \ref{thm:H_functor_properties_par}.
\end{proof}

        \section{Endoscopic-Monodromic Equivalences}\label{sec:endo}

\subsection{Coefficient Rings}\label{subsec:integral_lifts}

Let $\k$ be a noetherian ring of finite global dimension which admits a ring homomorphism $\Z' \to \k$. 
Our goal is to find a characteristic 0 noetherian ring $\scrZ$ of finite global dimension which acts like an ``integral'' version $\k$.
Of course, one could take $\scrZ = \Z'$, but this introduces a problem: most local systems $\scrL \in \Ch^{\circ} (T, \k)$ fail to lift to local systems in $\Ch^{\circ} (T, \Z')$.
To fix this, we will construct a characteristic 0 noetherian domain of finite global dimension $\scrZ$ and ring homomorphism $\scrZ \to \k$ such that $\scrL$ lifts to a good $\scrZ$-local system.

Fix $\scrL \in \Ch^{\circ} (T, \k)$. The data of this local systems is equivalent to a group homomorphism
\[\rho_{\scrL} : \bfY = \pi_1 (T) \to \k^{\times}.\]
Denote by $K$ the kernel of $\rho_{\scrL}$. Since $\k$ is a domain, there is a non-canonical isomorphism $\bfY/K \cong \Z^r \times C_m$ where $r,m \in \Z_{\geq 0}$ and $C_m$ denotes the cyclic group of order $m$.
We can then pick an injective group homomorphism $\iota : \bfY/K \hookrightarrow \C^{\times}$. Explicitly, this is given by picking $r$ algebraically independent transcendental numbers in $\C$ and a primitive $m$-th root of unity $\zeta_m$.
We can then define $\scrZ'$ as the sub-$\Z'$-algebra of $\C$ generated by the image of $\iota$. 
Under the isomorphism $\bfY/K \cong \Z^r \times C_m$, we can describe $\scrZ'$ as follows,
\[\scrZ' \cong \Z' [\zeta_m] [y_1^{\pm 1}, \ldots, y_r^{\pm}],\]
where $\Z' [\zeta_m]$ is the ring of cyclotomic integers which is localized at $2$ if $\Z' = \Z [\frac{1}{2}]$. Consider the set
\[J = \{ \alpha \in \Phi_{\re} \mid \rho_{\scrL} (\alpha^{\vee}) \neq 1 \}.\]
Denote by $\scrZ$ the localization of $\scrZ'$ at the multiplicative set generated by $\iota ([\alpha^{\vee}]) - 1$ for $\alpha \in J$ (here $[\alpha^{\vee}]$ is the coset of $\alpha^{\vee}$ in $\bfY/K$).

\begin{lemma}\label{lem:existence_of_integral_version}
  Follow the same notation and setup as above.
  \begin{enumerate}
    \item There exists a ring homomorphism $\varphi : \scrZ \to \k$ and a good local system $\scrL_{\scrZ} \in \Ch^{\circ} (T, \scrZ)$ such that $\k (\scrL_{\scrZ}) = \scrL$.
    \item The ring $\scrZ$ is a characteristic 0 noetherian domain of finite global dimension.
  \end{enumerate}
\end{lemma}
\begin{proof}
  We will first construct the ring homomorphism $\varphi : \scrZ \to \k$.
  The morphism $\rho_{\scrL}$ factors through a morphism $\rho_{\k} : \Z' [\bfY / K] \to \k$. Similarly, by construction $\iota$ factors through a morphism $\rho_{\C} : \Z' [\bfY / K] \to \scrZ'$.
  Let $\gamma \in \bfY / K$ be a generator for the (cyclic) torsion subgroup. One can readily check that the kernel of $\rho_{\C}$ is generated by the cyclotomic polynomial $\Phi_m (\gamma)$.
  Since $\rho_{\k} (\gamma)$ is also an $m$-th root of unity, we must have that $\rho_{\k} (\Phi_m (\gamma)) = 0$. As a result, $\rho_{\scrL}$ factors through a morphism $\varphi' : \scrZ' \to \k$.
  Finally, we can check that $\varphi' ([\alpha^{\vee}] - 1) \in \k^{\times}$ for $\alpha \in J$. Indeed, following definitions 
  \[\varphi' ([\alpha^{\vee}] - 1) = \rho_{\scrL} (\alpha^{\vee}) - 1. \]
Since $\scrL$ is good, we can conclude that the $\rho_{\scrL} (\alpha^{\vee}) - 1 \in \k^{\times}$. Therefore, $\varphi'$ factors through a ring homomorphism $\varphi : \scrZ \to \k$ as desired.
We summarize that the following diagram is commutative
\begin{equation}\label{eq:existence_of_integral_version_1}
\begin{tikzcd}
{\Z [\bfY]} \arrow[r] \arrow[rrrr, "\rho_{\scrL}"', bend right] & {\Z' [\bfY/K]} \arrow[r, "\rho_{\C}"] & \scrZ' \arrow[r, hook] & \scrZ \arrow[r, "\varphi"] & \k.
\end{tikzcd}
\end{equation}
The composite $\rho_{\scrL_{\scrZ}} : \Z[\bfY] \to \scrZ$ given by the arrows in (\ref{eq:existence_of_integral_version_1}) defines a local system $\scrL_{\scrZ} \in \Ch (T, \scrZ)$ such that $\k (\scrL_{\scrZ}) = \scrL$ as desired.

We will now check that $\scrL_{\scrZ}$ is good. This is equivalent to the condition that $\rho_{\scrL_{\scrZ}} (\alpha^{\vee}) = 1$ or $\rho_{\scrL_{\scrZ}} - 1 \in \scrZ^{\times}$.
Assume that  $\rho_{\scrL_{\scrZ}} (\alpha^{\vee}) \neq 1$. Note that we must have $[\alpha^{\vee}] \neq 1 \in \Z' [\bfY/K]$. In other words, $\alpha^{\vee}\notin K$, and thus, $\rho_{\scrL} (\alpha^{\vee}) \neq 1$.
Therefore, $\alpha \in J$ which implies that $\rho_{\scrL_{\scrZ}} - 1$ is invertible in $\scrZ$.

Finally, we will verify that $\scrZ$ is a characteristic 0 noetherian domain of finite global dimension.
Since $\scrZ$ is a sub-ring of $\C$, it must be a characteristic 0 domain. Now, $\Z' [\zeta_m]$ is a Dedekind domain. In particular, it is noetherian and has finite global dimension.
Since $\scrZ$ is a localization of a polynomial ring over $\Z' [\zeta_m]$, Hilbert's basis theorem and Hilbert's syzygy theorem imply that $\scrZ$ is also noetherian and has finite global dimension.
\end{proof}

Lemma \ref{lem:existence_of_integral_version} implies that the induced map $\k (-) : \Ch^{\circ} (T, \scrZ) \to \Ch^{\circ} (T, \k)$ surjects onto the $W$-orbit of $\scrL$.

\subsection{Endoscopic Groups}\label{subsec:endo_gps}

We have seen that $(W_{\scrL}^{\circ}, S_{\scrL}^{\circ})$ is a Coxeter group with realization $\fr{h}$.
We would like $W_{\scrL}^{\circ}$ to arise as the Weyl group of a Kac--Moody group with maximal torus $T$. This amounts to two conditions:
\begin{enumerate}
  \item $W_{\scrL}^{\circ}$ is crystallographic;
  \item the quadruple $(S_{\scrL}^{\circ}, \bfX, \{ \alpha_s\}_{s \in S_{\scrL}^{\circ}}, \{\alpha_s^{\vee}\}_{s \in S_{\scrL}^{\circ}})$ is a Kac--Moody root datum.
\end{enumerate} 
Condition (1) was shown to hold in \cite[Lemma 3.5]{H23}; however, it turns out that (2) does not always hold. In fact, when $W$ is of indefinite type, then $W_{\scrL}^{\circ}$ can fail to be of finite rank.

\begin{example}[{\cite[Lemma 3.11]{H23}}]\label{ex:endosimple_not_finite}
  Consider the generalized Cartan matrix given by
  \[A = \begin{bmatrix} 2 & -2 & -2 & -2 \\ -2 & 2 & -2 & -2 \\ -2 & -2 & 2 & -2 \\ -2 & -2 & -3 & 2 \end{bmatrix}.\]
   To $A$, we associate a Kac--Moody group $G$ along with Borel $B$ and maximal torus $T$. Let $\fr{h}$ denote the Lie algebra of $T$. Let $W = \langle s_1, s_2, s_3, s_4 \rangle$ denote the Weyl group of $G$.
   Note that $A$ is invertible. As a result, we may assume that $\fr{h} \cong \bigoplus_{i=1}^4 \C \alpha_i^{\vee}$ where the $\alpha_i^{\vee}$'s are the simple coroots.
  Define a $\C$-linear map $f : \fr{h} \to \C$ by $h ( \alpha_i^{\vee} ) = 1$ for all $i=1,2,3,4$.
  We can then define a group homomorphism $\chi : \bfY \to \C^{\times}$ by $\chi (\phi) = (-1)^{f (\phi)}$.
  Note that $\chi$ corresponds with a multiplicative local system $\scrL_{\chi} \in \Ch^{\circ} (T, \C)$.
  Moreover, we have that $\alpha^{\vee} \in \Phi_{\re, \scrL_{\chi}}^{\vee}$ if and only if $\chi (\alpha^{\vee}) = 1$.

  Let $W' = \langle s_1, s_2 \rangle \subset W$. We claim that for all $w \in W'$ that $ws_3 \cdot \alpha_4^{\vee} \in \Phi_{\re, \scrL_{\chi}}^{\vee}$.
  In order to see this, note that $s_3 \cdot \alpha_4^{\vee} = \alpha_4^{\vee} + 3 \alpha_3^{\vee}$. 
  Moreover, if $i = 1$ or $i=2$, then $s_i \cdot (\alpha_4^{\vee} + 3 \alpha_3^{\vee} + 2 \bfY) \subseteq (\alpha_4^{\vee} + 3 \alpha_3^{\vee} + 2 \bfY)$.
  As a result, $f (ws_3 \cdot \alpha_4^{\vee}) = 1$ and hence $ws_3 \cdot \alpha_4^{\vee} \in \Phi_{\re, \scrL_{\chi}}^{\vee}$ for all $w \in W'$.

  Next, we will show that $v = ws_3s_4s_3w^{-1}$, the reflection about $ws_3 \cdot \alpha_4^{\vee}$, is endosimple.
  For all $i \in \{1,2,3,4\}$ and $j \in \{1,2,3\}$, we have that $s_j \cdot (\alpha_i^{\vee} + 2 \bfY) \subseteq \alpha_i^{\vee} + 2 \bfY$.
  As a result, for all $u \in W$, $f(u \cdot \alpha_j^{\vee}) = -1$, and so $u\alpha_j^{\vee} \notin \Phi_{\re, \scrL}^{\vee}$.
  By \cite[Lemma 1.3.14]{Ku}, $\{ \alpha \in \Phi^{\vee, +} \mid v \alpha < 0 \}$ consists of elements of the form $u \cdot \alpha_j$ for $j \in \{1,2,3\}$ for all but one element.
  In particular, 
  \[\{ \alpha \in \Phi_{\re, \scrL}^{\vee, +} \mid v \alpha < 0 \} =  \{ \alpha \in \Phi^{\vee, +} \mid v \alpha < 0 \} \cap \Phi_{\re, \scrL} = \{ws_3 \alpha_4^{\vee}\}.\]
  By Lemma \ref{lem:endosimple_and_dyer}, we conclude that $ws_3s_4s_3w^{-1}$ is endosimple for all $w \in W'$. 
  Since $W'$ is infinite, we conclude that $W_{\scrL_{\chi}}^{\circ}$ has infinite rank as a Coxeter group. 
  By \cite[Remark 2.5 (2)]{H22}, it follows that there is no finite rank presentation of $W_{\scrL_{\chi}}^{\circ}$ either.
\end{example}

In light of Example \ref{ex:endosimple_not_finite}, whenever we discuss the endoscopic Kac--Moody group, we will assume that $G$ is either of finite or affine type.
We review the construction of endoscopic groups which can be found in \cite{LY} in finite type or in \cite{Li} in affine type.
Let $\scrL \in \Ch^{\circ} (T, \k)$. To $\scrL$ we associated a collection of real roots $\Phi_{\textnormal{re}, \scrL} \subset \bfX$, (endo)simple roots $\{\alpha_s\}_{s \in S_{\scrL}^{\circ}} \subset \bfX$, and (endo)simple coroots  $\{\alpha_s^{\vee}\}_{s \in S_{\scrL}^{\circ}} \subset \bfY$.
The subgroup $W_{\scrL}^\circ$ of $W$ generated by reflections in $\Phi_{\textnormal{re}, \scrL}$ is canonically a crystallographic Coxeter group of finite rank.
We can then consider the Kac--Moody root datum $(S_{\scrL}^{\circ}, \bfX, \{\alpha_s\}_{s \in S_{\scrL}^{\circ}}, \{\alpha_s^{\vee}\}_{s \in S_{\scrL}^{\circ}})$ for the generalized Cartan matrix $(\alpha_s (\alpha_t^{\vee}))_{t,s \in S_{\scrL}^{\circ}}$.
This root datum gives rise to a (connected) complex Kac--Moody group $H_{\scrL}^{\circ}$ with canonical Borel subgroup $B_{\scrL}$.
The maximal torus of $B_{\scrL}$ is $T$, and the Weyl group of $H_{\scrL}^{\circ}$ is $W_{\scrL}^{\circ}$.
We write $U_{\scrL}$ for the pro-unipotent radical of $B_{\scrL}$. The group $H_{\scrL}^{\circ}$ is called the \emph{endoscopic group} of $G$ corresponding to $\scrL$.

We let $\Fl_{H_{\scrL}^{\circ}} \coloneq B_{\scrL} \backslash H_{\scrL}^{\circ}$ (resp. $\eFl_{H_{\scrL}^{\circ}} \coloneq U_{\scrL} \backslash H_{\scrL}^{\circ}$) denote the flag variety (resp. enhanced flag variety) of $H_{\scrL}^{\circ}$ with respect to $B_{\scrL}$.
The Bruhat stratification gives a decomposition of the flag variety into locally closed varieties,
\[\Fl_{H_{\scrL}^{\circ}} = \bigsqcup_{w \in W_{\scrL}^{\circ}} \Fl_{H_{\scrL}^{\circ}, w},\]
where $\Fl_{H_{\scrL}^{\circ}, w}$ is isomorphic to an affine space of dimension $\ell_{\scrL} (w)$. The Bruhat stratification lifts to a stratification of the enhanced flag variety,
\[\eFl_{H_{\scrL}^{\circ}} = \bigsqcup_{w \in W_{\scrL}^{\circ}} \eFl_{H_{\scrL}^{\circ}, w},\]
where $\eFl_{H_{\scrL}^{\circ}, w} \cong \A^{\ell_{\scrL} (w)} \times T$.
We write $j_w^H : \Fl_{H_{\scrL}^{\circ}, w} \hookrightarrow \Fl_{H_{\scrL}^{\circ}}$ for the inclusion maps, and we use the same notation for the enhanced flag variety variants.
The closure of $\Fl_{H_{\scrL}^{\circ}, w}$ is given by $\Fl_{H_{\scrL}^{\circ}, \leq w} = \bigcup_{x \leq_{\scrL} w} \Fl_{H_{\scrL}^{\circ}, x}$.
It is also useful to consider the complement of the closure which is given by $\Fl_{H_{\scrL}^{\circ}, > w} = \bigcup_{x >_{\scrL} w} \Fl_{H_{\scrL}^{\circ}, x}$. There are obvious variants of these Schubert varieties for the enhanced flag variety which we denote similarly.

Let $D_{\cons} (\Fl_{H_{\scrL}^{\circ}} / B_{\scrL}, \k)$ denote the bounded derived category of $B_{\scrL}$-equivariant sheaves on $\Fl_{H_{\scrL}^{\circ}}$.\footnote{As with the monodromic Hecke category, this definition needs some justification since neither $B_{\scrL}$ nor $\Fl_{H_{\scrL}^{\circ}}$ is of finite type. By taking unipotent monodromy, one can simply use the definition from the monodromic Hecke category.}
Note that $D_{\cons} (\Fl_{H_{\scrL}^{\circ}} / B_{\scrL}, \k)$ is a monoidal category with respect to convolution $\star$.
For each $s \in S_{\scrL}^{\circ}$, we will write $\scrE_s^{H, \scrL} \coloneq \IC_s^{H, \scrL}$, i.e., the $\IC$-extension of $\uk_{\Fl_{H_{\scrL}^{\circ}}, s} [1]$.
For an expression $\uw = (s_1, \ldots, s_k)$ in $W_{\scrL}^{\circ}$, we will write $\scrE_{\uw}^{H, \scrL} = \scrE_{s_1}^{H, \scrL} \star \ldots \star \scrE_{s_k}^{H, \scrL}$.
We will write $\Par_{\BS} (\Fl_{H_{\scrL}^{\circ}} / B_{\scrL}, \k)$ for the Bott--Samelson category of parity sheaves for $\Fl_{H_{\scrL}^{\circ}}$. That is the full subcategory of $D_{\cons} (\Fl_{H_{\scrL}^{\circ}} / B_{\scrL}, \k)$ consisting of the objects $\scrE_{\uw}^{\scrL} [n]$.
Note that $\Par_{\BS} (\Fl_{H_{\scrL}^{\circ}} / B_{\scrL}, \k)$ is a monoidal category with respect to convolution.
When $\k$ is a field or a complete local ring, we can consider also consider the category of parity sheaves  for $\Fl_{H_{\scrL}^{\circ}}$, denoted $\Par (\Fl_{H_{\scrL}^{\circ}} /B_{\scrL}, \k)$.
In this case, the indecomposables of $\Par (\Fl_{H_{\scrL}^{\circ}} / B_{\scrL}, \k)$ are indexed by $W_{\scrL}^\circ \times \Z$. For all $w \in W_{\scrL}^{\circ}$, we write $\scrE_w^{H, \scrL}$ for the parity extension of $\uk_{\Fl_{H_{\scrL}^{\circ}, w}} [\ell_{\scrL} (w)]$.

We can also consider right equivariant versions of the endoscopic categories. Let $D_{\cons} (\eFl_{H_{\scrL}^{\circ}} / B_{\scrL}, \k)$ denote the bounded derived category of right $B_{\scrL}$-equivariant constructible sheaves on $\eFl_{H_{\scrL}^{\circ}}$.
We regard any sheaf on $\Fl_{H_{\scrL}^{\circ}}$ as a sheaf in $D_{\cons} (\eFl_{H_{\scrL}^{\circ}} / B_{\scrL}, \k)$ via pullback along the $T$-torsor $\eFl_{H_{\scrL}^{\circ}} \to \Fl_{H_{\scrL}^{\circ}}$.
In particular, for each expression $\uw$ in $W_{\scrL}^{\circ}$, we can regard $\scrE_{\uw}^{H, \scrL}$ as a sheaf on the enhanced flag variety.
We write $\Par_{\BS} (\eFl_{H_{\scrL}^{\circ}} / B_{\scrL}, \k)$ for the Bott--Samelson category of right-equivariant parity sheaves for $\eFl_{H_{\scrL}^{\circ}}$. 
That is the full subcategory of $D_{\cons} (\eFl_{H_{\scrL}^{\circ}} / B_{\scrL}, \k)$ consisting of objects of the form $\scrE_{\uw}^{\scrL} [n]$.
When $\k$ is a field or a complete local ring, we can consider also consider the category of parity sheaves for $\eFl_{H_{\scrL}^{\circ}}$, denoted $\Par (\eFl_{H_{\scrL}^{\circ}} / B_{\scrL}, \k)$.
For all $w \in W_{\scrL}^{\circ}$, we also write $\scrE_w^{H, \scrL}$ for the parity extension of $\uk_{\eFl_{H_{\scrL}^{\circ}, w}} [\ell_{\scrL} (w)]$.

\subsection{Neutral Block Endoscopy}

Let $\scrQ$ be a field of characteristic 0.
If $W_{\scrL}^\circ$ is finite, then we can take $\Theta = S_{\scrL}^\circ$. In which case, Theorem \ref{thm:H_functor_properties_par} simplifies to an equivalence of monoidal categories
\[\mathbb{H}_{\scrL}^{\circ} : \Parity{\scrL}{\scrL}^\circ (\scrQ) \to \SBim_{W_{\scrL}^\circ} (\fr{h}_{\scrQ}).\]
The classical theory of Soergel bimodules gives an equivalence of monoidal categories,
\[\Par (\Fl_{H_{\scrL}^{\circ}} / B_{\scrL}, \scrQ) \cong \SBim_{W_{\scrL}^\circ} (\fr{h}_{\scrQ}).\]
By composition of these equivalences with obtain an equivalence,
\[\Par ( \Fl_{H_{\scrL}^{\circ}} / B_{\scrL}, \scrQ) \cong \Parity{\scrL}{\scrL}^\circ (\scrQ),\]
which provides an analogue of \cite[Theorem 9.2]{LY}.
In the following sections, we will further extend this equivalence in three key ways:
\begin{enumerate}
  \item We will enlarge the allowable coefficient rings to any noetherian domain of finite global dimension $\k$.
  \item We will remove the finiteness condition on $W_{\scrL}^\circ$ (for the neutral block).
  \item We will extend the result beyond the neutral block.
\end{enumerate}

Let $\scrD_{\BS} (\fr{h}_{\k}, W_{\scrL}^{\circ})$ denote the Bott--Samelson diagrammatic Hecke category introduced in \cite{EW}.
The objects of $\scrD_{\BS} (\fr{h}_{\k}, W_{\scrL}^{\circ})$ are indexed by pairs $(\uw, n)$ where $\uw$ is an expression in $W_{\scrL}^{\circ} $ and $n \in \Z$. 
We denote the object in $\scrD_{\BS} (\fr{h}_{\k}, W_{\scrL}^{\circ})$ indexed by $(\uw, n)$ by $B_{\uw} (n)$.
We can also consider the category $\widetilde{\scrD}_{\BS} (\fr{h}_{\k}, W_{\scrL}^{\circ})$ obtained from $\scrD_{\BS} (\fr{h}_{\k}, W_{\scrL}^{\circ})$ by tensoring all morphism spaces as left $R_{\k}$-modules with the trivial $R_{\k}$-module $\k$.
If $\k$ is a complete local ring of a field, we will write $\scrD (\fr{h}_{\k}, W_{\scrL}^{\circ})$ for the Karoubian envelope of the additive hull of $\scrD_{\BS} (\fr{h}_{\k}, W_{\scrL}^{\circ})$.

\begin{theorem}[Monodromic-Endoscopic equivalence for the neutral block]\label{thm:endoscopy_neutral_block_Kac_moody}\qquad 
  \begin{enumerate}
    \item There is an equivalence of monoidal additive categories,
    \[\Upsilon_{\scrL}^{\circ} : \scrD_{\BS} (\fr{h}_{\k}, W_{\scrL}^{\circ}) \to \Parity{\scrL}{\scrL}^{\BS, \circ} (\k).\]
    \item Assume that $G$ is either of finite or affine type.
    There is an equivalence of monoidal categories,
    \[\Psi_{\scrL}^{\circ} : \Par_{\BS} (\Fl_{H_{\scrL}^{\circ}} /B_{\scrL}, \k) \to \Parity{\scrL}{\scrL}^{\BS, \circ} (\k).\]
    If $\k$ is a complete local ring, $\Psi_{\scrL}^{\circ}$ induces an equivalence of monoidal additive categories
    \[\Psi_{\scrL}^{\circ} : \Par (\Fl_{H_{\scrL}^{\circ}} / B_{\scrL}, \k) \to \Parity{\scrL}{\scrL}^\circ (\k)\]
    such that for all $w \in W_{\scrL}^\circ$, we have that $\Psi_{\scrL}^\circ (\scrE_w^{H, \scrL}) \cong \scrE_w^{\scrL}$.
  \end{enumerate}
\end{theorem}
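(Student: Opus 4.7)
The plan is to reduce both equivalences to the Elias--Williamson diagrammatic Hecke category. First I would construct $\Phi_{\scrL}^{\BS,\circ}$ by specifying it on the generators of $\scrD_{\BS}(\fr{h}_{\k}^*, W_{\scrL}^{\circ})$ and checking the defining relations; then I would obtain $\Psi_{\scrL}^{\BS,\circ}$ by composing with the Riche--Williamson-style diagrammatic presentation of the ordinary Hecke category of parity sheaves for the endoscopic group $H_{\scrL}^{\circ}$, and pass to Karoubian envelopes in the complete-local case to obtain $\Psi_{\scrL}^{\circ}$.

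For (1), I would fix an endo-reduced expression $\us$ for each $s \in S_{\scrL}^{\circ}$ (Section \ref{subsec:endosimps}) and send the generating object $B_s$ to the Frobenius algebra $\scrE_{\us}^{\scrL}$. The dot (unit/counit) and trivalent (multiplication/comultiplication) generators are realized by Lemma \ref{lem:Frob_alg_stuff}; the box morphism labelled by $f \in R_{\k}$ comes from the graded $R_{\k}$-bimodule action on Hom-spaces. The remaining generator is the $2m_{st}$-valent vertex for each pair of distinct endosimple reflections $s,t$ with $m_{st} < \infty$. Since the subset $\Theta = \{s,t\}$ is of finite type, I would first pick a weak-Cohen lift $\scrZ \to \k$ (using Lemma \ref{lem:mwc_props} to ensure endoscopic data are preserved) and further extend to a characteristic-zero field $\scrQ$, so that Theorem \ref{thm:H_functor_properties} identifies the candidate morphism space with $\Hom_{\SBim}$ between the corresponding rex Soergel bimodules, which is one-dimensional in degree $0$. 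Lemma \ref{lem:eos_for_biequiv} then lets me transport a suitable normalization of this morphism back to $\k$.

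Verifying the defining relations is the core of the argument. By Lemma \ref{lem:eos_for_biequiv}, morphism spaces in $\Parity{\scrL}{\scrL}^{\BS,\circ}(\k)$ are free $\k$-modules compatible with base change, so every relation reduces, via a weak-Cohen lift, to one over a characteristic-zero field $\scrQ$. Each diagrammatic relation lives in a finite-type parabolic subcategory $\Parity{\scrL}{\scrL}^{\circ,\Theta}(\scrQ)$ where Theorem \ref{thm:H_functor_properties} identifies it with $\SBim_{W_{\scrL,\Theta}^{\circ}}(\fr{h}_{\scrQ}^*)$, and Soergel's classical theorem guarantees the relation holds there. Full faithfulness of $\Phi_{\scrL}^{\BS,\circ}$ then follows by comparing graded Hom-ranks over $\scrQ$: the geometric side is computed by Corollary \ref{cor:soergel_hom_v2} in terms of the standard pairing on the endoscopic Hecke algebroid, while the diagrammatic side is given by Libedinsky's double-leaves basis formula; both agree by Proposition \ref{prop:endoscopic_equiv_for_Hecke_algebras} and Corollary \ref{cor:inner_form_properties}.

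For (2), the Riche--Williamson equivalence applied to $H_{\scrL}^{\circ}$ with realization $\fr{h}_{\k}^*$ gives a monoidal equivalence $\scrD_{\BS}(\fr{h}_{\k}^*, W_{\scrL}^{\circ}) \simeq \Par_{\BS}(\Fl_{H_{\scrL}^{\circ}}/B_{\scrL}, \k)$; composing its inverse with $\Phi_{\scrL}^{\BS,\circ}$ produces $\Psi_{\scrL}^{\BS,\circ}$. For complete local $\k$, Proposition \ref{prop:uniqueness_of_parity_sh} realizes both $\Par(\Fl_{H_{\scrL}^{\circ}}/B_{\scrL},\k)$ and $\Parity{\scrL}{\scrL}^{\circ}(\k)$ as Karoubian closures of the respective Bott--Samelson subcategories, so $\Psi_{\scrL}^{\BS,\circ}$ extends uniquely to an equivalence $\Psi_{\scrL}^{\circ}$ on the full parity categories. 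The identification $\Psi_{\scrL}^{\circ}(\scrE_w^{H,\scrL}) \cong \scrE_w^{\scrL}$ is a uniqueness statement for indecomposable parity extensions: both sheaves are supported on the closure of the length-$\ell_{\scrL}(w)$ stratum and restrict on the open stratum to the appropriately shifted rank-$1$ local system, hence agree by Proposition \ref{prop:uniqueness_of_parity_sh}. The main obstacle throughout will be the two-color Jones--Wenzl relations for pairs of endosimple reflections that are not simple in $W$: these are invisible to any ordinary parabolic of $G$, so one has no choice but to pass through the parabolic monodromic Soergel theory of Section \ref{sec:parabolic_soergel} and then transport the resulting identities across the weak-Cohen lift using only the restricted base-change information of Lemma \ref{lem:eos_for_biequiv}.
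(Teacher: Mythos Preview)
Your construction of $\Phi_{\scrL}^{\BS,\circ}$ on generators, the verification of the diagrammatic relations by passage to $\scrQ$ and the finite-parabolic $\H$-functor, and the deduction of (2) from (1) via the Riche--Williamson equivalence all match the paper's approach (\S\ref{subsec:constr_of_Phi} and the reduction preceding it).

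The gap is in full faithfulness. You claim it ``follows by comparing graded Hom-ranks over $\scrQ$'' using Corollary \ref{cor:soergel_hom_v2} and the double-leaves count. But equality of ranks does not imply that the map induced by $\Phi$ on Hom-spaces is an isomorphism: over a general ring a map of free modules of the same rank need not be bijective, and even over the field $\scrQ$ you have not supplied either injectivity or surjectivity. One might hope to get faithfulness over $\scrQ$ from Theorem \ref{thm:H_functor_properties}, but that equivalence only applies to a \emph{finite-type} parabolic $\Theta \subset S_{\scrL}^{\circ}$; for an arbitrary pair of expressions $\ux,\uy$ the endosimple reflections they involve need not generate a finite subgroup (already in affine type two endosimple reflections can generate an infinite dihedral group), so there is no single $\Theta$ in which to run your argument.

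The paper closes this gap by an entirely different route (\S5.6--5.7): it first reduces to complete local coefficients by localizing and completing at maximal ideals, then proves the right-equivariant version by passing to the mixed derived categories $K^b(\PE{\scrL})$ and $\widetilde{D}^m(\fr{h}_{\k}^*,W_{\scrL}^{\circ})$. There one shows (Lemma \ref{lem:Psi_on_stds} and (\ref{eq:ME_Psi_on_stds})) that $\underline{\tilde\Phi}_{\scrL}^{\circ}$ sends standards to standards and costandards to costandards, and that a chosen unit in the one-dimensional space $\Hom(\uDelta_w^{H,\scrL},\unabla_w^{H,\scrL})$ maps to a unit on the monodromic side; Beĭlinson's lemma together with the Hom-vanishing of Lemma \ref{lem:hom_vanishing_for_ME} then gives full faithfulness. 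The biequivariant statement is deduced from this by graded Nakayama. This mixed-category step is precisely what replaces the unavailable ``global'' $\H$-functor and is the missing idea in your proposal.
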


Along the way, we will also prove the following variant of Theorem \ref{thm:endoscopy_neutral_block_Kac_moody} for right equivariant Hecke categories.

\begin{theorem}\label{thm:endoscopy_neutral_block_Kac_moody_right_equivariant}\qquad
  \begin{enumerate}
    \item There is an equivalence of additive categories,
    \[\tilde{\Upsilon}_{\scrL}^{\circ} : \widetilde{\scrD}_{\BS} (\fr{h}_{\k}, W_{\scrL}^{\circ}) \to \PME{\scrL}{\scrL}^{\BS, \circ} (\k).\]
    \item Assume that $G$ is either of finite or affine type. Then there is an equivalence of additive categories,
    \[\tilde{\Psi}_{\scrL}^{\circ} : \Par_{\BS} (\eFl_{H_{\scrL}^{\circ}} / B_{\scrL}, \k) \to \PME{\scrL}{\scrL}^{\BS, \circ} (\k).\]
    If $\k$ is a complete local ring, $\tilde{\Psi}_{\scrL}^{ \circ}$ induces an equivalence of additive categories
    \[\tilde{\Psi}_{\scrL}^{\circ} : \Par (\eFl_{H_{\scrL}^{\circ}} / B_{\scrL}, \k) \to \PME{\scrL}{\scrL}^\circ (\k)\]
    such that for all $w \in W_{\scrL}^\circ$, we have that $\tilde{\Psi}_{\scrL}^\circ (\scrE_w^{H, \scrL}) \cong \scrE_w^{\scrL}$.
  \end{enumerate}
\end{theorem}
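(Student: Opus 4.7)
The plan is to deduce Theorem \ref{thm:endoscopy_neutral_block_Kac_moody_right_equivariant} from its biequivariant counterpart, Theorem \ref{thm:endoscopy_neutral_block_Kac_moody}, by ``tensoring down'' along $R_{\k} \twoheadrightarrow \k$. The key observation is that passage from biequivariant to right-equivariant data manifests on both sides as exactly this operation on morphism spaces. On the diagrammatic side, $\widetilde{\scrD}_{\BS}(\fr{h}_{\k}^*, W_{\scrL}^{\circ})$ is obtained from $\scrD_{\BS}(\fr{h}_{\k}^*, W_{\scrL}^{\circ})$ by tensoring each Hom space (as a left $R_{\k}$-module) with the trivial module $\k$. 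On the sheaf side, Lemma \ref{lem:eos_for_right_equiv}(3) gives a natural isomorphism
\[
\k \otimes_{R_{\k}} \Hom^\bullet_{\Parity{\scrL}{\scrL}^{\BS}(\k)}(\scrE_{\ux}^{\scrL}[n], \scrE_{\uy}^{\scrL}[m]) \stackrel{\sim}{\to} \Hom^\bullet_{\PE{\scrL}^{\BS}(\k)}(\scrE_{\ux}^{\scrL}[n], \scrE_{\uy}^{\scrL}[m]),
\]
induced by the forgetful functor $\ForME{\scrL}$, and this functor sends $\scrE_{\uw}^{\scrL}$ to $\scrE_{\uw}^{\scrL}$ by construction.

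With this in hand, I would define $\tilde{\Phi}_{\scrL}^{\BS,\circ}$ as the functor induced on the ``tensored down'' categories by $\Phi_{\scrL}^{\BS,\circ}$, sending $B_{\uw}(n)$ to $\scrE_{\uw}^{\scrL}[n]$. Essential surjectivity is immediate from the description of the objects on both sides, and fully faithfulness follows from the compatibility of the two ``tensor with $\k$'' operations together with the fully faithfulness of $\Phi_{\scrL}^{\BS,\circ}$ from Theorem \ref{thm:endoscopy_neutral_block_Kac_moody}(1). This establishes part (1).

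For part (2), the same degrading strategy applies to $\Psi_{\scrL}^{\BS,\circ}$. The geometric side requires the analogous parity-sheaf Hom formula on $\eFl_{H_{\scrL}^{\circ}}$: the forgetful functor from $B_{\scrL}$-biequivariant to right $B_{\scrL}$-equivariant sheaves induces an isomorphism
\[
\k \otimes_{H_T^\bullet(\pt;\k)} \Hom^\bullet_{\Par_{\BS}(\Fl_{H_{\scrL}^\circ}/B_{\scrL}, \k)}(\scrE_{\ux}^{H,\scrL}[n], \scrE_{\uy}^{H,\scrL}[m]) \stackrel{\sim}{\to} \Hom^\bullet_{\Par_{\BS}(\eFl_{H_{\scrL}^\circ}/B_{\scrL}, \k)}(\scrE_{\ux}^{H,\scrL}[n], \scrE_{\uy}^{H,\scrL}[m]),
\]
which is standard (see e.g.\ \cite[Lemma 2.2]{MauR} applied to the endoscopic flag variety). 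Composing the biequivariant equivalence $\Psi_{\scrL}^{\BS,\circ}$ with the two ``tensor down'' functors and invoking the compatibility on both sides yields the desired equivalence $\tilde{\Psi}_{\scrL}^{\BS,\circ}$. When $\k$ is a complete local ring, passing to Karoubian envelopes on both sides upgrades this to the equivalence $\tilde{\Psi}_{\scrL}^{\circ}$, and the identification $\tilde{\Psi}_{\scrL}^{\circ}(\scrE_w^{H,\scrL}) \cong \scrE_w^{\scrL}$ follows from the analogous identity for $\Psi_{\scrL}^\circ$ together with the fact that $\ForME{\scrL}$ preserves indecomposable parity extensions (since it is conservative and the parity conditions descend).

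The main obstacle will be checking that the ``tensor with $\k$'' operation is genuinely compatible with composition of morphisms on both sides simultaneously, so that $\tilde{\Phi}_{\scrL}^{\BS,\circ}$ and $\tilde{\Psi}_{\scrL}^{\BS,\circ}$ are actually functors and not merely object-level correspondences. This reduces to verifying that the isomorphism of Lemma \ref{lem:eos_for_right_equiv}(3) is natural in both the source and target Bott-Samelson sheaves and intertwines convolution on the right with the right action structure. On the diagrammatic side, one needs the parallel assertion that the evaluation $R_{\k} \to \k$ is compatible with the diagrammatic relations defining composition; this is ultimately a consequence of how $\widetilde{\scrD}_{\BS}$ is constructed from $\scrD_{\BS}$, but unwinding the naturality carefully is the technical heart of the argument.
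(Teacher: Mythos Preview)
Your proposal reverses the logical dependency that the paper actually uses. In the paper, Theorem~\ref{thm:endoscopy_neutral_block_Kac_moody} (the biequivariant statement) is \emph{deduced from} Theorem~\ref{thm:endoscopy_neutral_block_Kac_moody_right_equivariant}, not the other way around: the proof of Theorem~\ref{thm:endoscopy_neutral_block_Kac_moody} applies the graded Nakayama lemma to the map $\beta$ of free $R_\k$-modules, reducing to showing that $\k \otimes_{R_\k} \beta$ is an isomorphism, and then invokes Theorem~\ref{thm:endoscopy_neutral_block_Kac_moody_right_equivariant} to conclude. So your plan of ``tensoring down'' from the biequivariant equivalence is circular in the paper's framework. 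The direction you propose (isomorphism of free $R_\k$-modules implies isomorphism after $\k \otimes_{R_\k}(-)$) is of course the trivial direction; the content lies in going the other way, and that is why the paper organizes things as it does.

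The paper's actual proof of Theorem~\ref{thm:endoscopy_neutral_block_Kac_moody_right_equivariant} proceeds as follows. One first establishes Claim~\ref{claim:field_case}: that $\tilde{\Phi}_{\scrL}^{\circ}$ is an equivalence when $\k$ is a field or complete local ring. This is done by passing to the mixed derived categories $\widetilde{D}^m(\fr{h}_\k^*, W_{\scrL}^\circ)$ and $\DE{\scrL}^m(\k)$, verifying via Lemma~\ref{lem:Psi_on_stds} and (\ref{eq:ME_Psi_on_stds}) that $\underline{\tilde{\Phi}}_{\scrL}^\circ$ sends standards and costandards to standards and costandards, and then applying Be\u{\i}linson's lemma together with the Hom-vanishing of Lemma~\ref{lem:hom_vanishing_for_ME}. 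The general ring case is then obtained by localizing at maximal ideals and completing: since the Hom-spaces on both sides are free $\k$-modules of finite rank (Lemma~\ref{lem:eos_for_right_equiv}), $\alpha$ is an isomorphism if and only if $\hat{\k}_{\fr{m}} \otimes_\k \alpha$ is an isomorphism for every maximal ideal $\fr{m}$, and unit-injectivity of $\k \hookrightarrow \hat{\k}_{\fr{m}}$ ensures (via Lemma~\ref{lem:unit_inj_implies_equal_H}) that the endoscopic data are unchanged, so Claim~\ref{claim:field_case} applies. Your proposal contains none of this machinery; the mixed-category argument for Claim~\ref{claim:field_case} is the genuine input you are missing.
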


\begin{remark}
  \begin{enumerate}
    \item While we only stated Theorem \ref{thm:endoscopy_neutral_block_Kac_moody_right_equivariant} for the right equivariant Hecke categories, an obvious variant exists with left equivariant Hecke categories instead.
    \item Theorems \ref{thm:endoscopy_neutral_block_Kac_moody} and \ref{thm:endoscopy_neutral_block_Kac_moody_right_equivariant} are compatible in the sense that for sheaves $\scrF \in \Par (\eFl_{H_{\scrL}^{\circ}} / B_{\scrL}, \k)$ and $\scrG \in \Par (\Fl_{H_{\scrL}^{\circ}} / B_{\scrL}, \k)$, there are natural isomorphisms
  \[\tilde{\Psi}_{\scrL}^{\circ} (\scrF \star \scrG) \cong \tilde{\Psi}_{\scrL}^{\circ} (\scrF) \star \Psi_{\scrL}^{\circ} (\scrG).\]
    \item There are derived versions of Theorems \ref{thm:endoscopy_neutral_block_Kac_moody} and \ref{thm:endoscopy_neutral_block_Kac_moody_right_equivariant} where we take the bounded homotopy category of both sides of the equivalences.
        This produces an equivalence of mixed derived categories. This perspective will be used in the proof of Theorem \ref{thm:endoscopy_neutral_block_Kac_moody_right_equivariant}.
  \end{enumerate}
\end{remark}

The proof of Theorems \ref{thm:endoscopy_neutral_block_Kac_moody} and \ref{thm:endoscopy_neutral_block_Kac_moody_right_equivariant} will occupy the remainder of the section. We will first explain how to deduce Theorem \ref{thm:endoscopy_neutral_block_Kac_moody} (2) from Theorem \ref{thm:endoscopy_neutral_block_Kac_moody} (1), and likewise for the right equivariant categories.

If $G$ is either of finite or affine type, by \cite[Theorem 10.5]{RW}, we have an equivalence of monoidal categories
\begin{equation}\label{eq:RW_main_thm}
  \scrD_{\BS} (\fr{h}_{\k}, W_{\scrL}^\circ) \cong \Par_{\BS} (\Fl_{H_{\scrL}^{\circ}} / B_{\scrL}, \k) 
\end{equation}
that intertwines (1) with [1] and sends $B_{\uw} (n)$ to $\scrE_{\uw} [n]$ for expressions $\uw$ of $w \in W_{\scrL}^\circ$ and $n \in \Z$.
If $\k$ is a field or a complete local ring, the idempotent completion of (\ref{eq:RW_main_thm}) induces an equivalence,
\[\scrD (\fr{h}_{\k}, W_{\scrL}^\circ ) \cong  \Par (\Fl_{H_{\scrL}^{\circ}} / B_{\scrL}, \k) .\]
We can then compose the equivalence (\ref{eq:RW_main_thm}) with $\Upsilon_{\scrL}^{\circ}$ to produce the desired equivalence for Theorem \ref{thm:endoscopy_neutral_block_Kac_moody} (2). 

By applying $\k \otimes_{R_{\k}} (-)$ to the Hom spaces on both sides of (\ref{eq:RW_main_thm}), we obtain an analogous equivalence of additive categories $\widetilde{\scrD}_{\BS} (\fr{h}_{\k}, W_{\scrL}^\circ) \cong \Par_{\BS} (\eFl_{H_{\scrL}^{\circ}} / B_{\scrL}, \k)$.
The previous argument can then be copied to show that Theorem \ref{thm:endoscopy_neutral_block_Kac_moody_right_equivariant} (1) implies Theorem \ref{thm:endoscopy_neutral_block_Kac_moody_right_equivariant} (2).

\subsection{Construction of \texorpdfstring{$\Upsilon_{\scrL}^\circ$}{Phi}}\label{subsec:constr_of_Phi}

Let $\scrZ \to \k$ be as in \S\ref{subsec:integral_lifts}.
We define $\scrQ = \textnormal{Frac} (\scrZ)$, the field of fractions for $\scrZ$.
We then have ring homomorphisms
\[\scrQ \leftarrow \scrZ \to \k.\]
By Lemma \ref{lem:existence_of_integral_version}, there exists some $\scrL_{\scrZ} \in \Ch^{\circ} (T, \scrZ)$ such that $\k (\scrL_{\scrZ}) = \scrL$. Denote $\scrL_{\scrQ} \coloneq \scrQ (\scrL_{\scrZ})$.
Moreover, by Lemma \ref{lem:eos_and_root_systems}, we have that $W_{\scrL_{\scrQ}}^{\circ} = W_{\scrL_{\scrZ}}^{\circ} = W_{\scrL}^{\circ}$ and these have the same Coxeter structure.
As a result, we will simplify notation and just write $W_{\scrL}^{\circ}$ to refer to any of these endoscopic Weyl groups.

The main principle of the construction of $\Upsilon_{\scrL}^\circ$ is to define the functor on the defining objects and morphisms of $\scrD_{\BS} (\fr{h}_{\scrZ}, W_{\scrL}^\circ)$ over $\scrZ$.
We will then check that the defining relations in $\scrD_{\BS} (\fr{h}_{\scrQ}, W_{\scrL}^\circ)$ are satisfied by extending scalars to $\scrQ$.
Finally, we will extend scalars to construct $\Upsilon_{\scrL}^\circ$ over $\k$.

It is easy to define  $\Upsilon_{\scrL_{\scrZ}}^\circ$  on objects. For $s \in S_{\scrL}^{\circ}$, we set
\[ \Upsilon_{\scrL_{\scrZ}}^\circ (B_s (n)) \coloneq \IC_s^{\scrL_{\scrZ}} [n]\qquad\text{and}\qquad \Upsilon_{\scrL_{\scrZ}}^{\circ} (R (n)) \coloneq \IC_e^{\scrL} [n].\]
More generally, for an expression $\uw = (s_1, \ldots, s_k)$ in $W_{\scrL}^{\circ}$, we abuse notation and write
\[\scrE_{\uw}^{\scrL} \coloneq \IC_{s_1}^{\scrL} \star \IC_{s_2}^{\scrL} \star \ldots \star \IC_{s_k}^{\scrL}.\]
We can then define
\[ \Upsilon_{\scrL_{\scrZ}}^\circ (B_{\uw} (n)) \coloneq \scrE_{\uw}^{\scrL_{\scrZ}} [n].\]

The generating morphisms in $\scrD_{\BS} (\fr{h}_{\scrZ}, W_{\scrL}^\circ)$ are given by the diagrams
\[
  \begin{tikzpicture}[squarednode/.style={rectangle, draw=black, minimum size=5mm}, dotnode/.style={circle, draw=black, fill=black, minimum size=2mm, inner sep=0pt}]
    \node (f1) at (0,-1) {};
    \node (f2) at (0,1) {};
    \node[squarednode] (f3) at (0,0) {$f$};
    \draw[dashed] (f1) -- (f3) -- (f2);

    \node[dotnode, red] (a1) at (1,0) {};
    \node (a2) at (1,1) {};
    \draw[red, thick] (a1) -- (a2);

    \node[dotnode, red] (b1) at (2,0) {};
    \node (b2) at (2,-1) {};
    \draw[red, thick] (b1) -- (b2);

    \draw[red, thick] (3,-1) -- (4,0) -- (5,-1);
    \draw[red, thick] (4,0) -- (4,1);

    \draw[red, thick] (6,1) -- (7,0) -- (8,1);
    \draw[red, thick] (7,0) -- (7,-1);

    \node (14) at (10.25, 0.75) {$\ldots$};
      \node (17) at (10.25, -0.65) {$\ldots$};
      \draw [color=red, thick] (8.75, -1) to (10,0);
      \draw [color=red, thick] (9.75, -1) to (10,0);
      \draw [color=red, thick] (10,0) to (9.25, 1);
      \draw [color=blue, thick] (8.75, 1) to (10,0);
      \draw [color=blue, thick] (9.25, -1) to (10,0);
      \draw [color=blue, thick] (10,0) to (9.75, 1);
      \draw [color=violet, thick] (11.25, -1) to (10,0);
      \draw [color=violet, thick] (10,0) to (11.25, 1);

  \end{tikzpicture}
\]
where $f \in R_{\scrZ}$ is homogeneous.

\subsubsection{Polynomials}
By adjunction, we have a canonical identification
\[\theta : \Hom^\bullet (\IC_e^{\scrL_{\scrZ}}, \IC_e^{\scrL_{\scrZ}}) \stackrel{\sim}{\to} H_T^\bullet (\pt, \scrZ) = R_{\scrZ}.\]
We then define
\[\Upsilon_{\scrL_{\scrZ}}^\circ \left( \scalebox{1}{
      \begin{tikzpicture}[baseline={([yshift=-.5ex]current bounding box.center)}, squarednode/.style={rectangle, draw=black, minimum size=1mm}, dotnode/.style={circle,
        draw=black, fill=black, minimum size=1mm, inner sep=0pt}]
        \node (f1) at (0,-0.7) {};
        \node (f2) at (0,0.7) {};
        \node[squarednode] (f3) at (0,0) {$f$};
        \draw[dashed] (f1) -- (f3) -- (f2);
\end{tikzpicture} } \right) = \theta^{-1} (f),\]
for $f \in R_{\scrZ}$. Since $\theta$ is an isomorphism of graded algebras, we have that $f \in R_{\scrZ}^{m}$ maps to a morphism $\Upsilon_{\scrL_{\scrZ}}^\circ (f) : \IC_e^{\scrL_{\scrZ}} \to \IC_e^{\scrL_{\scrZ}} [m]$.

\subsubsection{One-color Morphisms}

Recall from Lemma \ref{lem:Frob_alg_stuff} that $\IC_s^{\scrL_{\scrZ}}$ carried a graded Frobenius algebra structure.
\[\mu_{s} : \IC_s^{\scrL_{\scrZ}} \star \IC_s^{\scrL_{\scrZ}} \to \IC_s^{\scrL_{\scrZ}} [-1], \qquad\qquad \nu_{s} : \IC_s^{\scrL_{\scrZ}} \to \IC_s^{\scrL_{\scrZ}} \star \IC_s^{\scrL_{\scrZ}} [-1],\]
\[ \eta_{s} : \IC_e^{\scrL_{\scrZ}} \to \IC_s^{\scrL_{\scrZ}} [1], \qquad\qquad \epsilon_{s} : \IC_s^{\scrL_{\scrZ}} \to \IC_e^{\scrL_{\scrZ}} [1]. \]

We then define
\[
  \Upsilon_{\scrL_{\scrZ}}^\circ \left( 
      \begin{tikzpicture}[baseline={([yshift=-.5ex]current bounding box.center)}, squarednode/.style={rectangle, draw=black, minimum size=2mm}, dotnode/.style={circle,
        draw=black, fill=black, minimum size=1mm, inner sep=0pt}]
        \draw[red, thick] (6.5,-0.5) -- (7,0) -- (7.5,-0.5);
        \draw[red, thick] (7,0) -- (7,0.5);
  \end{tikzpicture}  \right) \coloneq \mu_{s}, \qquad\qquad
  \Upsilon_{\scrL_{\scrZ}}^\circ \left( 
      \begin{tikzpicture}[baseline={([yshift=-.5ex]current bounding box.center)}, squarednode/.style={rectangle, draw=black, minimum size=2mm}, dotnode/.style={circle,
        draw=black, fill=black, minimum size=1mm, inner sep=0pt}]
        \draw[red, thick] (3.5,0.5) -- (4,0) -- (4.5,0.5);
        \draw[red, thick] (4,0) -- (4,-0.5);
  \end{tikzpicture} \right) \coloneq \nu_{s},
\]
\[\Upsilon_{\scrL_{\scrZ}}^\circ \left( 
      \begin{tikzpicture}[baseline={([yshift=-.5ex]current bounding box.center)}, squarednode/.style={rectangle, draw=black, minimum size=2mm}, dotnode/.style={circle,
        draw=black, fill=black, minimum size=2mm, inner sep=0pt}]
        \node[dotnode, red] (a1) at (1,0.5) {};
        \node (a2) at (1,1) {};
        \node (a3) at (1,0) {};
        \draw[red, thick] (a1) -- (a2);
  \end{tikzpicture} \right) \coloneq \eta_{s}, \qquad\qquad \Upsilon_{\scrL_{\scrZ}}^\circ \left( 
      \begin{tikzpicture}[baseline={([yshift=-.5ex]current bounding box.center)}, squarednode/.style={rectangle, draw=black, minimum size=2mm}, dotnode/.style={circle,
        draw=black, fill=black, minimum size=2mm, inner sep=0pt}]
        \node[dotnode, red] (a1) at (1,-0.5) {};
        \node (a2) at (1,-1) {};
        \node (a3) at (1,0) {};
        \draw[red, thick] (a1) -- (a2);
\end{tikzpicture} \right) \coloneq \epsilon_{s}.\]

\subsubsection{\texorpdfstring{$2m_{st}$}{2mst}-valent vertices.}

Let $s,t \in S_{\scrL}^\circ$ with $m_{s,t} < \infty$. Write $w = sts\ldots$ with $m_{s,t}$-terms.
Let $\scrF_s^{\scrL_{\scrZ}} \coloneq \scrE_{(s, t, \ldots)}^{\scrL_{\scrZ}}$ and $\scrF_t^{\scrL_{\scrZ}} \coloneq \scrE_{(t,s, \ldots)}^{\scrL_{\scrZ}}$ where the expressions in both subscripts have $m_{s,t}$-terms.

\begin{lemma}\label{lem:mst_pullbacks}
  The pullback $j_w^*$ induces an isomorphism of $\scrZ$-modules
  \begin{equation}\label{eq:mst_pullbacks_1}
    \Hom (\scrF_s^{\scrL_{\scrZ}}, \scrF_t^{\scrL_{\scrZ}}) \stackrel{\sim}{\to} \Hom (j_w^* \scrF_s^{\scrL_{\scrZ}}, j_w^* \scrF_t^{\scrL_{\scrZ}}).
  \end{equation}
  Moreover, both of sides of (\ref{eq:mst_pullbacks_1}) are free $\scrZ$-modules of rank 1.
\end{lemma}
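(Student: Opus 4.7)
The plan proceeds in three steps: identify both sides as free $\scrZ$-modules of rank $1$, and then verify that the restriction map realizes this identification.

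First, both $\scrF_s^{\scrL}$ and $\scrF_t^{\scrL}$ are parity sheaves on $\eFl$ (iterated convolutions of parity sheaves via Theorem~\ref{thm:conv_preserves_parity}), supported in $\eFl_{\leq w}$, where $w$ denotes the longest element, viewed in $W$, of the dihedral subgroup $\langle s, t \rangle \subset W_{\scrL}^{\circ}$. Decomposing into indecomposable summands shows that each of $\scrF_s^{\scrL}, \scrF_t^{\scrL}$ contains $\scrE_w^{\scrL}$ with multiplicity one---this corresponds to the top term $\underline{\tilde{H}}_w$ appearing in the categorification of the alternating products $\underline{\tilde{H}}_{\ux}$ and $\underline{\tilde{H}}_{\uy}$, with the other summands supported on $\eFl_{<w}$. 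Consequently $j_w^{*}\scrF_s^{\scrL} \cong \scrK_w^{\scrL}[\ell(w)] \cong j_w^{*}\scrF_t^{\scrL}$, so the right-hand side $\Hom(j_w^{*}\scrF_s^{\scrL}, j_w^{*}\scrF_t^{\scrL}) \cong \Hom(\scrK_w^{\scrL}, \scrK_w^{\scrL}) \cong \scrZ$ is free of rank $1$.

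Next, applying Proposition~\ref{prop:hom_star_and_shriek} to the pair of parity sheaves gives
\[
\Hom(\scrF_s^{\scrL}, \scrF_t^{\scrL}) \cong \bigoplus_{x \leq w} \Hom(j_x^{*}\scrF_s^{\scrL}, j_x^{!}\scrF_t^{\scrL}),
\]
each summand free over $\scrZ$, with $j_w^{*}$ corresponding precisely to the projection onto the $x = w$ piece (since $\eFl_w$ is open in $\eFl_{\leq w}$, so $j_w^{!} = j_w^{*}$ there). It therefore suffices to show that the $x < w$ summands vanish in cohomological degree $0$. For this I will reduce to a characteristic-zero computation: by Lemma~\ref{lem:eos_for_biequiv}, the graded $R_{\scrZ}$-module $\Hom^{\bullet}(\scrF_s^{\scrL}, \scrF_t^{\scrL})$ is free and its degree-$0$ $\scrZ$-rank is preserved under extension of scalars to $\scrQ$. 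Since $\scrZ \to \scrQ$ is unit-injective, Lemma~\ref{lem:unit_inj_implies_equal_H} gives $\Phi_{\scrL_{\scrQ}} = \Phi_{\scrL}$, so $W_{\scrL_{\scrQ}}^{\circ} = W_{\scrL}^{\circ}$ and the endoscopic data is preserved. Over $\scrQ$, the $\H$-functor equivalence (Theorem~\ref{thm:H_functor_properties}) applied to the finite-type parabolic $\{s, t\} \subset S_{\scrL_{\scrQ}}^{\circ}$ identifies $\Hom^{0}(\scrF_s^{\scrL_{\scrQ}}, \scrF_t^{\scrL_{\scrQ}})$ with the degree-$0$ Hom-space between the corresponding Bott--Samelson bimodules in $\SBim_{W_{\scrL_{\scrQ}, \{s,t\}}^{\circ}}(\fr{h}_{\scrQ}^{*})$, which by classical dihedral Soergel theory is $\scrQ$-free of rank one, generated by the $2m_{s,t}$-valent vertex.

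Combining these rank computations, the total degree-$0$ $\scrZ$-rank of $\Hom(\scrF_s^{\scrL}, \scrF_t^{\scrL})$ is exactly $1$, matching the contribution from $x = w$ alone; hence the $x < w$ summands vanish in degree $0$ and $j_w^{*}$ is the desired isomorphism. The main obstacle lies in the compatibility of the free-rank base-change of Lemma~\ref{lem:eos_for_biequiv} with the $\H$-functor equivalence over $\scrQ$, and in pinning down that the classical dihedral Soergel bimodule computation yields precisely a one-dimensional degree-$0$ Hom-space.
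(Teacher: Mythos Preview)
Your proposal is correct and reaches the same conclusion, but it computes the rank of the left-hand side by a different route than the paper. The paper obtains $\operatorname{rank}_{\scrZ}\Hom(\scrF_s^{\scrL},\scrF_t^{\scrL})=1$ directly from the combinatorial Soergel Hom formula (Corollary~\ref{cor:soergel_hom_v2} together with Lemma~\ref{lem:eos_for_biequiv}), i.e.\ from the standard pairing $\langle \underline{\tilde H}_{st\cdots},\underline{\tilde H}_{ts\cdots}\rangle$ in the endoscopic Hecke algebra, which is a purely decategorified computation. You instead base-change to $\scrQ$ and invoke the $\H$-functor equivalence (Theorem~\ref{thm:H_functor_properties}) to reduce to classical dihedral Soergel bimodules. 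Both are valid; the paper's route is shorter and avoids the categorified machinery, while yours makes the link to the diagrammatic $2m_{s,t}$-valent generator more explicit. For the isomorphism itself the arguments coincide: the paper also uses that $\eFl_w$ is open in the support and a variant of Proposition~\ref{prop:hom_star_and_shriek} to see that restriction is surjective between rank-one free modules.

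One small imprecision: your phrase ``decomposing into indecomposable summands'' is not justified over $\scrZ$, which need not be a complete local ring, so the Krull--Schmidt property is unavailable. You do not actually need it: the stalk identification $j_w^*\scrF_s^{\scrL}\cong \scrK_w^{\scrL}[\ell(w)]$ follows, as the paper notes, from the same inductive support/stalk argument as in the proof of Theorem~\ref{thm:existence_of_parity}, without ever splitting off $\scrE_w^{\scrL}$ as a summand.
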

\begin{proof}
  By Corollary \ref{cor:soergel_hom_v2} and Lemma \ref{lem:eos_for_biequiv}, the $\scrZ$-module $\Hom (\scrF_s^{\scrL_{\scrZ}}, \scrF_t^{\scrL_{\scrZ}})$ is free of rank 1. 
  An argument similar to the one given in Theorem \ref{thm:existence_of_parity} shows that $j_w^* \scrF_s^{\scrL_{\scrZ}} \cong \scrK_w^{\scrL_{\scrZ}} \cong j_w^* \scrF_t^{\scrL_{\scrZ}}$.
  Note that $\End (\scrK_w^{\scrL_{\scrZ}})$ is also a free $\scrZ$-module of rank 1.
  Since $\eFl_w$ is open in the support of $\scrF_s^{\scrL_{\scrZ}}$ and $\scrF_t^{\scrL_{\scrZ}}$, by a variation of Proposition \ref{prop:hom_star_and_shriek}, we have that the map (\ref{eq:mst_pullbacks_1}) is surjective.
  We are now done since surjective maps of rank 1 free modules are automatically isomorphisms.
\end{proof}

Define the map $g_{s,t} : \scrF_s^{\scrL_{\scrZ}} \to \scrF_t^{\scrL_{\scrZ}}$ as the pre-image under (\ref{eq:mst_pullbacks_1}) of an isomorphism $g_{s,t}^w : j_w^* \scrF_s^{\scrL_{\scrZ}} \stackrel{\sim}{\to} j_w^* \scrF_t^{\scrL_{\scrZ}}$.
Likewise, by switching the roles of $s$ and $t$, there is a map $g_{t,s} : \scrF_t^{\scrL_{\scrZ}} \to \scrF_s^{\scrL_{\scrZ}}$ lifted from an isomorphism $g_{t,s}^w : j_w^* \scrF_t^{\scrL_{\scrZ}} \stackrel{\sim}{\to} j_w^* \scrF_s^{\scrL_{\scrZ}}$. 
We pick $g_{s,t}^w$ and $g_{t,s}^w$ such that $g_{t,s}^w \circ g_{s,t}^w = \id_{j_w^* \scrF_s^{\scrL_{\scrZ}}}$.

To define the image of the $2m_{s,t}$-valent vertices, we will need another lemma. We define
\[\epsilon_{\hat{s}} \coloneq \epsilon_{s} \star \epsilon_t \ldots  : \scrF_s^{\scrL_{\scrZ}} \to \IC_e^{\scrL_{\scrZ}} [m] \qquad\text{and}\qquad \epsilon_{\hat{t}} \coloneq \epsilon_{t} \star \epsilon_s \ldots  : \scrF_t^{\scrL_{\scrZ}} \to \IC_e^{\scrL_{\scrZ}} [m]\]

\begin{lemma}\label{lem:hom_generated_by_counit}
  For $u \in \{s,t\}$, we have
  \[\Hom_{\DE{\scrL_{\scrZ}} (\scrZ)} (\scrF_u^{\scrL_{\scrZ}}, \IC_e^{\scrL_{\scrZ}} [m]) = \scrZ \cdot \ForME{\scrL_{\scrZ}} (\epsilon_{\hat{u}}).\]
\end{lemma}
\begin{proof}
    By Lemma \ref{lem:eos_for_right_equiv} and Corollary \ref{cor:soergel_hom_v2}, the space in question is free of rank 1 over $\scrZ$.
    It then suffices to show that $\ForME{\scrL_{\scrZ}} (\epsilon_{\hat{u}})$ is nonzero after extension of scalars to any field. This is clear since extension of scalars commutes with convolution and the unit of the adjunction $\id \to j_{s*} j_s^*$ for any $s \in S_{\scrL}^{\circ}$.
\end{proof}

By Lemma \ref{lem:hom_generated_by_counit}, we have 
\begin{equation}\label{eq:cst_constants}
  \ForME{\scrL_{\scrZ}} (\epsilon_{\hat{t}} \circ g_{s,t}) = c_{s,t} \cdot \left( \ForME{\scrL_{\scrZ}} (\epsilon_{\hat{s}})\right) \qquad\text{and}\qquad \ForME{\scrL_{\scrZ}} (\epsilon_{\hat{s}} \circ g_{t,s}) = c_{t,s} \cdot \left( \ForME{\scrL_{\scrZ}} (\epsilon_{\hat{t}}) \right)
\end{equation}
for some $c_{s,t}, c_{t,s} \in \scrZ$. We can now set
\[f_{s,t} \coloneq c_{t,s} g_{s,t} : \scrF_s^{\scrL_{\scrZ}} \to \scrF_t^{\scrL_{\scrZ}} \qquad\text{and}\qquad f_{t,s} \coloneq c_{s,t} g_{t,s} : \scrF_t^{\scrL_{\scrZ}} \to \scrF_s^{\scrL_{\scrZ}},\]
and then define these to be the image of the $2m$-valent vertices under $\Upsilon_{\scrL_{\scrZ}}^{\circ}$:
\[
  \Upsilon_{\scrL_{\scrZ}}^\circ \left( \scalebox{1}{
      \begin{tikzpicture}[baseline={([yshift=-.5ex]current bounding box.center)}, squarednode/.style={rectangle, draw=black, minimum size=1mm}, dotnode/.style={circle,
        draw=black, fill=black, minimum size=1mm, inner sep=0pt}]
        \node (14) at (10.25, 0.75) {$\ldots$};
        \node (17) at (10.25, -0.65) {$\ldots$};
        \draw [color=red, thick] (8.75, -1) to (10,0);
        \draw [color=red, thick] (9.75, -1) to (10,0);
        \draw [color=red, thick] (10,0) to (9.25, 1);
        \draw [color=blue, thick] (8.75, 1) to (10,0);
        \draw [color=blue, thick] (9.25, -1) to (10,0);
        \draw [color=blue, thick] (10,0) to (9.75, 1);
        \draw [color=violet, thick] (11.25, -1) to (10,0);
        \draw [color=violet, thick] (10,0) to (11.25, 1);
  \end{tikzpicture} } \right) \coloneq f_{s,t} \qquad\text{and}\qquad \Upsilon_{\scrL_{\scrZ}}^\circ \left( \scalebox{1}{
      \begin{tikzpicture}[baseline={([yshift=-.5ex]current bounding box.center)}, squarednode/.style={rectangle, draw=black, minimum size=1mm}, dotnode/.style={circle,
        draw=black, fill=black, minimum size=1mm, inner sep=0pt}]
        \node (14) at (10.25, 0.75) {$\ldots$};
        \node (17) at (10.25, -0.65) {$\ldots$};
        \draw [color=blue, thick] (8.75, -1) to (10,0);
        \draw [color=blue, thick] (9.75, -1) to (10,0);
        \draw [color=blue, thick] (10,0) to (9.25, 1);
        \draw [color=red, thick] (8.75, 1) to (10,0);
        \draw [color=red, thick] (9.25, -1) to (10,0);
        \draw [color=red, thick] (10,0) to (9.75, 1);
        \draw [color=violet, thick] (11.25, -1) to (10,0);
        \draw [color=violet, thick] (10,0) to (11.25, 1);
  \end{tikzpicture} } \right) \coloneq f_{t,s}.\]

\subsection{Verification of Defining Relations}\label{subsec:ver_relns}

In order to check that $\Upsilon_{\scrL_{\scrZ}}^\circ$ is a well-defined functor, one must check that the image of a morphism under $\Upsilon_{\scrL_{\scrZ}}^\circ$ is invariant under the defining relations in $\scrD_{\BS} (\fr{h}_{\scrZ}, W_{\scrL}^\circ)$. 
By Lemma \ref{lem:eos_for_biequiv}, it suffices to check this after extending scalars to $\scrQ$-coefficients. Write $\Upsilon_{\scrL_{\scrQ}}^{\circ}$ for the extension of $\Upsilon_{\scrL_{\scrZ}}^{\circ}$ along $\scrZ \to \scrQ$.
Since all relations only depend on parabolic subgroups of $W_{\scrL}^\circ$ of finite-type and by Theorem \ref{thm:H_functor_properties_par}, we can further reduce to checking the defining relations after composition with $\H_{\Theta}^{\scrL_{\scrQ}}$ for some finite-type subset $\Theta \subset S_{\scrL}^\circ$.

Let $\Theta \subseteq S_{\scrL}^\circ$ with $\# \Theta \leq 3$ such that $\langle \Theta \rangle \subset W_{\scrL}^{\circ}$ is finite.
We will now calculate the image of our morphisms under $\H_{\Theta}^{\scrL_{\scrQ}}$. 
\begin{enumerate}
  \item \emph{Polynomials:} $\H_{\Theta}^{\scrL_{\scrQ}} (\Upsilon_{\scrL_{\scrQ}}^\circ (f))$ for $f \in R_{\scrQ}$ is given by multiplication by $f$ in $R_{\scrQ}$.
  \item \emph{The upper dot:} Any morphism of graded $R_{\scrQ}$-bimodules $B_s \to R_{\scrQ} (1)$ is a scalar multiple of $m_s : B_s \to R_{\scrQ} (1)$ taking $m_s (f \otimes g) = fg$.
        By Lemma \ref{lem:xi_s_map}, there is a unique morphism $\xi_s : \scrK_{\Theta}^{\scrL_{\scrQ}} \to \scrE_s^{\scrL_{\scrQ}} [-1]$ such that $(\epsilon_s [-1]) \circ \xi_s = \epsilon_{\scrL_{\scrQ}}$.
        It then follows from Lemma \ref{lem:H_functor_for_internally_simple_reflns} that under the isomorphism $\H_{\Theta}^{\scrL_{\scrQ}} (\IC_s^{\scrL_{\scrQ}}) \cong B_s$, one has that $(\H_{\Theta}^{\scrL_{\scrQ}} (\epsilon_s)) (1 \otimes 1) = 1$.
        This implies that $\H_{\Theta}^{\scrL_{\scrQ}} (\epsilon_s) = m_s$.
  \item \emph{The lower dot:} 
            The map
            \[\IC_e^{\scrL_{\scrQ}} [-1] = j_{e*} j_e^! \IC_s^{\scrL_{\scrQ}} \stackrel{\eta_s}{\to} \IC_s^{\scrL_{\scrQ}} \stackrel{\epsilon_s}{\to} j_{e*} j_e^* \IC_s^{\scrL_{\scrQ}} [1]\]
            in $\Hom (\IC_e^{\scrL_{\scrQ}} [-1], \IC_e^{\scrL_{\scrQ}} [1])$ is given by $\lambda \cdot \id_{\IC_e^{\scrL_{\scrQ}} [-1]}$ for some $\lambda \in H_T^2 (\pt; \scrQ)$.  
            We claim that $\lambda = \alpha_s$. By Lemma \ref{lem:beta_conj_for_H_functor}, it suffices to verify this when $s$ is an externally simple reflection.

            Let $L_s$ be the rank 1 Levi subgroup of $G$ corresponding to $s$ and write $j : T \hookrightarrow L_s$ for the inclusion.
            Since $\scrL_{\scrQ} s = \scrL_{\scrQ}$, there is a unique multiplicative local system $\scrL_{\scrQ}^s$ on $L_s$ which extends $\scrL_{\scrQ}$.
            There is an isomorphism of varieties $\eFl_{\leq s} \cong U_s \backslash L_s$ where $U_s = U \cap L_s$.
            Under this isomorphism, the simple perverse sheaf $\IC_s^{\scrL_{\scrQ}}$ is canonically identified with $\scrL_{\scrQ}^s$. 
            We can then see under this isomorphism and adjunction that the natural morphism of $(T \times T, \scrL_{\scrQ} \boxtimes \scrL_{\scrQ}^{-1})$-equivariant sheaves
            \begin{equation}\label{eq:lower_dot_reln_1_par}
              \scrL_{\scrQ} [-1] = j^! \scrL_{\scrQ}^s [1] \to j^* \scrL_{\scrQ}^s [1] = \scrL_{\scrQ} [1]
            \end{equation}
            is given by $\lambda \cdot \id_{\scrL_{\scrQ} [-1]}$.
            After applying $(-) \otimes^L \scrL_{\scrQ}^{-1}$ to (\ref{eq:lower_dot_reln_1_par}), we see that the natural morphism of $T\times T$-equivariant sheaves
            \[
              \underline{\scrQ}_{T} [-1] = j^! \underline{\scrQ}_{L_s} [1] \to j^* \underline{\scrQ}_{L_s} [1] = \underline{\scrQ}_T [1]
            \]
            is also given by $\lambda \cdot \id_{\underline{\scrQ}_T [-1]}$. 
            It then follows from the non-monodromic case (cf., \cite[\S10.5]{RW}) that $\lambda = \alpha_s$, and as a result, $\H_{\Theta}^{\scrL_{\scrQ}} (\epsilon_s \circ \eta_s) = \alpha_s$.

            Any morphism of graded $R_{\scrQ}$-bimodules $R_{\scrQ} \to B_s (1)$ is a scalar multiple of $\delta_s : R_{\scrQ} \to B_s(1)$ where $\delta_s (1) = \frac{1}{2} (\alpha_s \otimes 1 + 1 \otimes \alpha_s )$.
            Note that $\delta_s$ is uniquely determined by $m_s \circ \delta_s = \alpha_s$. As a consequence, we deduce from the upper dot computation that $\H_{\Theta}^{\scrL_{\scrQ}} (\eta_s) = \delta_s$. 
    \item \emph{The trivalent vertices: } It follows from classical theory, that there are decompositions $B_s \otimes_R B_s \cong B_s (1) \oplus B_s (-1)$. Moreover, we have isomorphisms of vector spaces
        \begin{equation}\label{eq:trivalent_vert_spaces}
         \Hom (B_s, B_s \otimes_R B_s (-1)) \cong \scrQ, \hspace{0.5cm}\text{and}\hspace{0.5cm} \Hom (B_s \otimes_R B_s, B_s (-1)) \cong \scrQ.
        \end{equation}
        Define a map $\partial_s : R \to R^s$ by $\partial_s (f) = (f- s(f))/\alpha_s$. 
        The morphism spaces in (\ref{eq:trivalent_vert_spaces}) are generated by the maps $t_1 : B_s \to B_s \otimes_R B_s (-1)$ defined by $f \otimes g \mapsto f \otimes 1 \otimes g$ and $t_2 : B_s \otimes_R B_s \to B_s (-1)$ defined by $f \otimes g \otimes h \mapsto f (\partial_s g) \otimes h$
        where we have identified $B_s \otimes_R B_s \cong R \otimes_{R^s} R \otimes_{R^s} R (2)$.
        Therefore, $\H_{\Theta}^{\scrL_{\scrQ}} (b_i)$ is a scalar multiple of $t_i$ for $i=1,2$.
        Note that $t_1$ (resp. $t_2$) are uniquely characterized by the identity
        \[\hspace{1cm} (m_s (-1) \otimes_R \id_{B_s}) \circ t_1 = \id_{B_s} \hspace{0.5cm} \text{resp.} \hspace{0.5cm} t_2 (1) \circ (\delta_s \otimes_R \id_{B_s}) = \id_{B_s}.\]
        Therefore, $\H_{\Theta}^{\scrL_{\scrQ}} (\nu_s) = t_1$ and $\H_{\Theta}^{\scrL_{\scrQ}} (\mu_s) = t_2$ follow from Lemma \ref{lem:trivalent_mors}.
\end{enumerate}

Computing the image of the $2m_{s,t}$-valent vertices will need some preparation. Let $s,t \in S_{\scrL}^{\circ}$, and set $m = m_{s,t}$.
We define Bott--Samelson bimodules
\[B_{\hat{s}} \coloneq \underbrace{B_s \otimes_{R_{\scrQ}} B_t \otimes_{R_{\scrQ}} \ldots}_{m\text{ factors}} \qquad\text{and}\qquad B_{\hat{t}} \coloneq \underbrace{B_t \otimes_{R_{\scrQ}} B_s \otimes_{R_{\scrQ}} \ldots}_{m\text{ factors}}.\]
We can then define morphisms
\[m_{\hat{s}} \coloneq m_{s} \otimes_{R_{\scrQ}} m_{t} \otimes_{R_{\scrQ}} \ldots : B_{\hat{s}} \to R_{\scrQ} (m),\]
\[ m_{\hat{t}} \coloneq m_{t} \otimes_{R_{\scrQ}} m_{s} \otimes_{R_{\scrQ}} \ldots : B_{\hat{t}} \to R_{\scrQ} (m).\]
Recall (cf., \cite[Proposition 4.3]{Lib08}) that $\Hom_{\SBim_{W_{\scrL, \Theta}^{\circ}} (\fr{h}_{\scrQ})} (B_{\hat{s}}, B_{\hat{t}})$ is 1-dimensional.
The same argument in \cite[Lemme 4.7]{Lib08} shows that there is a unique morphism of $R_{\scrQ}$-bimodules
\[j_{s,t} : B_{\hat{s}} \to B_{\hat{t}}\]
that acts as the identity map in degree $m$. 
Note that $m$ is the largest degree in which $B_{\hat{s}}$ and $B_{\hat{t}}$ are nonzero. We can then rephrase the condition on $j_{s,t}$ as follows: it is the unique morphism such that there is an equality of maps
\[\scrQ \otimes_{R_{\scrQ}} (m_{\hat{t}} \circ j_{s,t}) = \scrQ \otimes_{R_{\scrQ}} m_{\hat{s}} : \scrQ \otimes_{R_{\scrQ}} B_{\hat{s}} \to \scrQ (m). \]

\begin{lemma}\label{lem:cst_constants_inverses}
  The constants $c_{s,t}, c_{t,s} \in \scrZ$ defined by (\ref{eq:cst_constants}) satisfy
  \[c_{s,t} c_{t,s} = 1.\]
\end{lemma}
\begin{proof}
  Let $m = m_{s,t}$.
  It follows from the definition of $g_{s,t}, g_{t,s}$ that
  \[g_{s,t} \circ g_{t,s} \circ g_{s,t} = g_{s,t}.\]
  Applying $\ForME{\scrL_{\scrZ}} (\epsilon_{\hat{t}} \circ (-))$ both sides and using (\ref{eq:cst_constants}), we deduce that
  $c_{s,t} c_{t,s} c_{s,t} = c_{s,t}$, or alternatively that $c_{s,t} (c_{t,s} c_{s,t} - 1) = 0$.
  
  It remains to prove that $c_{s,t} \neq 0$. Since $g_{s,t}$ is a generator for $\Hom (\scrF_s^{\scrL_{\scrZ}}, \scrF_t^{\scrL_{\scrZ}})$, it suffices to show that the map
  \begin{align*}
    \Hom_{\D{\scrL_{\scrZ}}{\scrL_{\scrZ}} (\scrZ)} (\scrF_s^{\scrL_{\scrZ}}, \scrF_t^{\scrL_{\scrZ}}) &\to \Hom_{\DME{\scrL_{\scrZ}}{{\scrL_{\scrZ}}} (\scrZ)} (\scrF_s^{\scrL_{\scrZ}}, \IC_e^{\scrL_{\scrZ}} [m]) \\
    f &\mapsto \ForME{\scrL_{\scrZ}} (\epsilon_{\hat{t}} \circ f)
  \end{align*}
  is nonzero. It suffices to prove that this map is nonzero after apply $\scrQ (-)$.

  Write $\scrF_u^{\scrL_{\scrQ}} \coloneq \scrQ (\scrF_u^{\scrL_{\scrZ}})$ for $u \in \{s,t\}$. 
  Let $f : \scrF_s^{\scrL_{\scrQ}} \to \scrF_t^{\scrL_{\scrQ}}$. By the upper dot relation we have already verified and (\ref{eq:soergel_and_for}), we have a commutative diagram
  \[\begin{tikzcd}
{\Hom (\scrF_s^{\scrL_{\scrQ}}, \scrF_t^{\scrL_{\scrQ}})} \arrow[d, "\H_{\Theta}^{\scrL_{\scrQ}}"] \arrow[r, "\epsilon_{\hat{s}} \circ f"] & {\Hom (\scrF_s^{\scrL_{\scrQ}}, \IC_e^{\scrL_{\scrQ}} [m])} \arrow[d, "\H_{\Theta}^{\scrL_{\scrQ}}"] \arrow[r, "\ForME{\scrL_{\scrQ}}"] & {\Hom_{\DE{\scrL_{\scrQ}} } (\scrF_s^{\scrL_{\scrQ}}, \IC_e^{\scrL_{\scrQ}} [m])} \arrow[d, "\tilde{\H}_{\Theta}^{\scrL_{\scrQ}}"] \\
{\Hom (B_{\hat{s}}, B_{\hat{t}})} \arrow[r, "m_{\hat{s}} \circ \H_{\Theta}^{\scrL_{\scrQ}} (f)"]                                               & {\Hom (B_{\hat{s}}, R (m))} \arrow[r, "\scrQ \otimes_{R} (-)"]                                                      & {\Hom_{\rgrmod{R }} (\scrQ \otimes_R B_{\hat{s}}, \scrQ (m))}.                                                                                             
\end{tikzcd}\]
By Theorem \ref{thm:H_functor_properties_par} and Proposition \ref{prop:right_equiv_H_functor_properties}, it suffices to prove that $\scrQ \otimes_{R} (m_{\hat{s}} \circ \varphi) \neq 0$ for some $\varphi : B_{\hat{t}} \to B_{\hat{s}}$.
We can take $\varphi = j_{s,t}$ which by the discussion proceeding the lemma satisfies the desired constraint.
\end{proof}

We can now compute the image of the $2m_{s,t}$-valent vertices.
\begin{enumerate}
  \item[5.] \emph{$2m_{s,t}$-valent vertices: } Let $m = m_{s,t}$.
  By our definition for $f_{s,t}$ and Lemma \ref{lem:cst_constants_inverses}, we have $\ForME{\scrL_{\scrQ}} (\epsilon_{\hat{t}} \circ f_{s,t}) = \ForME{\scrL_{\scrQ}} (\epsilon_{\hat{s}})$.
  We can now apply $\tilde{\H}_{\Theta}^{\scrL_{\scrQ}}$ and use (\ref{eq:soergel_and_for}) to deduce that 
  \[\scrQ \otimes_{R_{\scrQ}} \tilde{\H}_{\Theta}^{\scrL_{\scrQ}} (\epsilon_{\hat{t}}) \circ \tilde{\H}_{\Theta}^{\scrL_{\scrQ}} (f_{s,t}) = \scrQ \otimes_{R_{\scrQ}} \tilde{\H}_{\Theta}^{\scrL_{\scrQ}} (\epsilon_{\hat{s}}).\]
  Since $\tilde{\H}_{\Theta}^{\scrL_{\scrQ}} (\epsilon_{\hat{u}}) = m_{\hat{u}}$ for $u \in \{s,t\}$, we conclude from the uniqueness of $j_{s,t}$ that $\H_{\Theta}^{\scrL} (f_{s,t}) = j_{s,t}$.
\end{enumerate}

To conclude the construction of the functor, we must check that the defining relations in $\scrD_{\BS} (\fr{h}_{\scrQ}, W_{\scrL}^\circ)$ are satisfied.
Each relation involves only a subset $\Theta \subseteq S_{\scrL}^\circ$ of finite type with $\abs{\Theta} \leq 3$.
By faithfulness of $\H_{\Theta}^{\scrL_{\scrQ}}$ it suffices to show that the relations are satisfied after composing with $\H_{\Theta}^{\scrL_{\scrQ}}$.
The relations have already been checked at this point using localization (see \cite[\S5.5]{EW}). As a result, we have a monoidal functor 
\[ \Upsilon_{\scrL_{\scrZ}}^\circ : \scrD_{\BS} (\fr{h}_{\scrZ}, W_{\scrL}^\circ) \to \Parity{\scrL_{\scrZ}}{\scrL_{\scrZ}}^{\BS, \circ} (\scrZ).\]

Let $\k \to \k'$ be a ring homomorphism of noetherian domains of finite global dimension.
By Lemma \ref{lem:eos_for_biequiv} and Lemma \ref{lem:existence_of_integral_version}, we can define a monoidal functor
\[ \Upsilon_{\k' (\scrL)}^\circ : \scrD_{\BS} (\fr{h}_{\k'}, W_{\scrL}^\circ) \to \Parity{\k' (\scrL)}{\k' (\scrL)}^{\BS, \circ} (\k') \]
by extending scalars along $\scrZ \to \k \to \k'$. Note this includes the case of $\Upsilon_{\scrL}^{\circ}$ where $\k' = \k$.

By Lemma \ref{lem:eos_for_right_equiv}, we can tensor the graded Hom spaces by $\k' \otimes_{R_{\k'}} (-)$ to produce a functor
\[\tilde{\Upsilon}_{\k' (\scrL)}^{\circ} : \widetilde{\scrD}_{\BS} (\fr{h}_{\k'}, W_{\scrL}^\circ) \to \PME{\k'(\scrL)}{\k'(\scrL)}^{\BS, \circ} (\k').\]
It is clear from their definitions and Proposition \ref{prop:structure_of_min_IC} that both $\Upsilon_{\k'(\scrL)}^{\circ}$ and $\tilde{\Upsilon}_{\k'(\scrL)}^{\circ}$ are essentially surjective.

\subsection{Proof of Theorems \ref{thm:endoscopy_neutral_block_Kac_moody} and \ref{thm:endoscopy_neutral_block_Kac_moody_right_equivariant}.}

We will prove that $\Upsilon_{\scrL}^{\circ}$ and $\tilde{\Upsilon}_{\scrL}^{\circ}$ are equivalences assuming the following claim.

\begin{claim}\label{claim:par_field_case}
$\tilde{\Upsilon}_{\F (\scrL)}^{\circ}$ is an equivalence of categories for all $\k \to \F$ such that $\F$ is a field.
\end{claim}

We will prove Claim \ref{claim:par_field_case} in the next section.

\begin{midsecproof}{Theorem \ref{thm:endoscopy_neutral_block_Kac_moody_right_equivariant}}
  Consider the morphism of free $\k$-modules of finite rank
  \[\alpha : \Hom_{\widetilde{\scrD}_{\BS} (\fr{h}_{\k}, W_{\scrL}^\circ)} (B_{\ux}, B_{\uy} (n)) \to  \Hom_{\PME{\scrL}{\scrL}^{\BS, \circ} (\k)} (\scrE_{\ux}^{\scrL}, \scrE_{\uy}^{\scrL} [n])\]
  given by $\tilde{\Upsilon}_{\scrL}^{\circ}$.
  By the Nakayama lemma, $\alpha$ is an isomorphism if and only if $\F \otimes_{\k} \alpha$ is an isomorphism for all fields $\F$ under $\k$.
  Note that $\F \otimes_{\k} \alpha$ is induced by $\tilde{\Upsilon}_{\F (\scrL)}^{\circ}$. 
  We then have that $\F \otimes_{\k} \alpha$ is an isomorphism by Claim \ref{claim:par_field_case}.
\end{midsecproof}

\begin{midsecproof}{Theorem \ref{thm:endoscopy_neutral_block_Kac_moody}}
  Consider the morphism of graded left $R_{\k}$-modules
  \[\beta : \Hom_{\scrD_{\BS} (\fr{h}_{\k}, W_{\scrL}^\circ)}^{\bullet} (B_{\ux}, B_{\uy}) \to  \Hom_{\Parity{\scrL}{\scrL}^{\BS, \circ} (\k)}^\bullet (\scrE_{\ux}^{\scrL}, \scrE_{\uy}^{\scrL})\]
  induced by $\Upsilon_{\scrL}^{\circ}$.
  As graded left $R_{\k}$-modules, the left-hand side of $\beta$ is free of finite rank by \cite[Theorem 6.11]{EW} and the right-hand side of $\beta$ is free of finite rank by Lemma \ref{lem:eos_for_biequiv}.
  By the graded Nakayama lemma, to show that $\beta$ is an isomorphism it suffices to show that $\k \otimes_R \beta$ is an isomorphism.
  Note that $\k \otimes_R \beta$ is the map induced from $\tilde{\Upsilon}_{\scrL}^{\circ}$.
  It follows from Theorem \ref{thm:endoscopy_neutral_block_Kac_moody_right_equivariant} that $\k \otimes_R \beta$ and hence $\beta$ is an isomorphism.
\end{midsecproof}

\subsection{Passage to Mixed Derived Categories}

Let $\F$ be a field with a ring homomorphism $\k \to \F$.
We can consider the biequivariant and right equivariant mixed derived categories on the endoscopic side,
\[D^m (\fr{h}_{\F}, W_{\scrL}^{\circ}) \coloneq K^b \scrD (\fr{h}_{\F}, W_{\scrL}^\circ) \qquad\text{and}\qquad \widetilde{D}^m (\fr{h}_{\F}, W_{\scrL}^{\circ}) \coloneq K^b \widetilde{\scrD} (\fr{h}_{\F}, W_{\scrL}^\circ).\]
These categories have been thoroughly studied by Achar--Riche--Vay \cite{ARV}. 
Despite these not necessarily being of geometric origin, they share many similar features to the mixed derived categories.
In particular, all the constructions and facts from \S \ref{sec:mixed_cats} hold after making appropriate changes.
We use the same notation as in \S\ref{subsec:endo_gps}, despite the endoscopic Kac--Moody group not necessarily being well-defined. We refer to \emph{loc. cit.} on how to make these constructions precise.

The functor $\Upsilon_{\F (\scrL)}^{\circ} : \scrD_{\BS} (\fr{h}_{\F}, W_{\scrL}^{\circ}) \to \Parity{\F (\scrL)}{\F (\scrL)}^{\BS, \circ} (\F)$ induces a monoidal functor
\[\underline{\Upsilon}_{\F (\scrL)}^{\circ} : D^m (\fr{h}_{\F}, W_{\scrL}^{\circ}) \to \D{\F (\scrL)}{\F(\scrL)}^m (\F).\] 
For each $w \in W_{\scrL}^{\circ}$, there are \emph{standard} $\underline{\Delta}_w^{H, \F (\scrL)}$ and \emph{costandard} objects $\underline{\nabla}_w^{H, \F (\scrL)}$
which we can regard as objects in either $D^m (\fr{h}_{\F}, W_{\scrL}^{\circ})$ or $\widetilde{D}^m (\fr{h}_{\F}, W_{\scrL}^{\circ})$.

\begin{lemma}\label{lem:par_Psi_on_stds}
  For all $w \in W_{\scrL}^{\circ}$, there are isomorphisms
  \[\underline{\Upsilon}_{\F (\scrL)}^{\circ} (\uDelta_w^{H, \F (\scrL)}) \cong \uDelta_w^{\F (\scrL)} \hspace{0.5cm} \text{and} \hspace{0.5cm} \underline{\Upsilon}_{\F (\scrL)}^{\circ} (\unabla_w^{H, \F (\scrL )}) \cong \unabla_w^{\F (\scrL)}.  \]
\end{lemma}
\begin{proof}
  We will just prove the first isomorphism. The second isomorphism follows from a similar argument.

  By monoidality of $\underline{\Upsilon}_{\F (\scrL)}^{\circ}$ and Corollary \ref{cor:endo_mixed_conv_rules} (see also \cite[Proposition 6.11]{ARV}), it suffices to consider the case of $s \in W_{\scrL}^{\circ}$ an endosimple reflection.
  In $D^m (\fr{h}_{\k}, W_{\scrL}^{\circ})$ there is a distinguished triangle
  \begin{equation}\label{eq:par_Psi_on_stds_1}
    \uDelta_s^{H, \F (\scrL)} \to \scrE_s^{H, \F (\scrL)} \to \Delta_e^{H, \F (\scrL)} (1) \to.
  \end{equation}
  We can then apply $\underline{\Upsilon}_{\F (\scrL)}^{\circ}$ to (\ref{eq:par_Psi_on_stds_1}) to produce a distinguished triangle
  \begin{equation}\label{eq:par_Psi_on_stds_2}
    \underline{\Upsilon}_{\F (\scrL)}^{\circ} \left( \uDelta_s^{H, \F (\scrL)} \right) \to \scrE_{s}^{\F (\scrL)} \to \uDelta_e^{\F (\scrL)} (1) \to,
  \end{equation}
  where the second map is given by the unit map $\id \to j_{e*} j_e^*$ in the non-mixed category.
  Since $j_x^* \scrE_x^{\F (\scrL)} = 0$ for all $x \neq e$ and $x \neq s$, the distinguished triangle (\ref{eq:par_Psi_on_stds_2}) coincides with the open-closed distinguished triangle for the pair ($\eFl_{e}, \eFl_{> e}$).
  As a result, we get an isomorphism $\underline{\Upsilon}_{\F (\scrL)}^{\circ} \left( \uDelta_s^{H, \F (\scrL)} \right) \cong \uDelta_s^{\F (\scrL)}$ as desired.
\end{proof}

The functor $\tilde{\Upsilon}_{\F (\scrL)}^{\circ} : \widetilde{\scrD}_{\BS} (\fr{h}_{\F}, W_{\scrL}^\circ) \to \PME{\F (\scrL)}{\F (\scrL)}^{\BS, \circ} (\F)$ induces a functor
\[\underline{\tilde{\Upsilon}}_{\F (\scrL)}^{\circ} : \widetilde{D}^m (\fr{h}_{\F}, W_{\scrL}^{\circ}) \to \DE{\F (\scrL)}^m (\F).\]
By Lemma \ref{lem:par_Psi_on_stds}, we have isomorphisms
\begin{equation}\label{eq:par_ME_Psi_on_stds}
  \underline{\tilde{\Upsilon}}_{\F (\scrL)}^{\circ} (\uDelta_w^{H, \F (\scrL)}) \cong \uDelta_w^{\F (\scrL)} \qquad \text{and} \qquad \underline{\tilde{\Upsilon}}_{\F (\scrL)}^{\circ} (\unabla_w^{H, \F (\scrL)}) \cong \unabla_w^{\F (\scrL)}. 
\end{equation}
We now have the preliminaries need to prove Claim \ref{claim:par_field_case}.

\begin{midsecproof}{Claim \ref{claim:par_field_case}}
  It suffices to prove that $\underline{\tilde{\Upsilon}}_{\F (\scrL)}^{\circ}$ is fully faithful.
  By Lemma \ref{lem:hom_vanishing_for_ME}, we have that 
  \begin{equation}\label{eq:par_field_case_1}
    \Hom_{\widetilde{D}^m (\fr{h}_{\F}, W_{\scrL}^{\circ})} (\uDelta_x^{H, \F (\scrL)} , \unabla_y^{H, \F (\scrL)} \langle m \rangle [n]) \cong \begin{cases} \F & \text{if } x=y \text{ and } n=m=0, \\ 0 & \text{otherwise,}\end{cases}
  \end{equation}
  and
  \begin{equation}\label{eq:par_field_case_2}
    \Hom_{\DE{\F (\scrL)}^m (\F)} (\uDelta_x^{\F (\scrL)} , \unabla_y^{\F (\scrL)} \langle m \rangle [n]) \cong \begin{cases} \F & \text{if } x=y \text{ and } n=m=0,\\ 0 & \text{otherwise.}\end{cases}
  \end{equation}
   In view of this, (\ref{eq:par_ME_Psi_on_stds}), and Beĭlinson's lemma \cite[Lemma 3.9.3]{ABG}, it suffices to prove that for all $w \in W_{\scrL}^{\circ}$, there exists a nonzero morphism $f : \uDelta_w^{H, \F (\scrL)} \to \unabla_w^{H, \F (\scrL)}$ such that $\underline{\tilde{\Upsilon}}_{\F (\scrL)}^{\circ} (f) : \uDelta_w^{\F (\scrL)} \to \unabla_w^{\F (\scrL)}$ is nonzero. 

   Let $\uw$ be a reduced expression of $w$ in $W_{\scrL}^{\circ}$.
   By Proposition \ref{prop:mixed_conv_rules_par}, Corollary \ref{cor:endo_mixed_conv_rules}, and the chain complex description of $\uDelta_s^{H, \F(\scrL)}$ (resp. $\uDelta_{\us}^{\F (\scrL)}$) for $s \in S_{\scrL}^{\circ}$, the complex $\uDelta_w^{H, \F(\scrL)}$ (resp. $\uDelta_{w}^{\F(\scrL)}$) is isomorphic to a complex $\scrF_{\uw}^{H, \F(\scrL)}$ (resp. $\scrF_{\ux}^{\F(\scrL)}$)
   such that
   \[(\scrF_{\uw}^{H, \F(\scrL)})^n = \begin{cases} B_{\uw} & n = 0, \\ 0 & n < 0, \end{cases} \qquad \text{and}\qquad (\scrF_{\uw}^{\F(\scrL)})^n = \begin{cases} \scrE_{\uw}^{\F(\scrL)} & n =0, \\ 0 & n < 0.\end{cases}\]
   Moreover, for $n > 0$, $\left(j_{w}^{H*} \scrF_{\uw}^{H, \F(\scrL)} \right)^n =0$ and $\left( j_w^* \scrF_{\uw}^{\F(\scrL)} \right)^n = 0$. Similarly, $\unabla_w^{H, \F(\scrL)}$ (resp. $\unabla_{w}^{\F(\scrL)}$) is isomorphic to a complex $\scrG_{\uw}^{H, \F(\scrL)}$ (resp. $\scrG_{\uw}^{\F(\scrL)}$)
   such that
   \[(\scrG_{\uw}^{H, \F(\scrL)})^n = \begin{cases} B_{\uw} & n = 0, \\ 0 & n > 0, \end{cases} \qquad \text{and}\qquad (\scrG_{\uw}^{\F(\scrL)})^n = \begin{cases} \scrE_{\uw}^{\F(\scrL)} & n =0, \\ 0 & n > 0.\end{cases}\]
   Moreover, for $n < 0$, $\left( j_{w}^{H*} \scrG_{\uw}^{H, \F(\scrL)} \right)^n =0$ and $\left( j_w^* \scrG_{\uw}^{\F(\scrL)} \right)^n = 0$.

   Consider the map $f : \uDelta_w^{H, \F(\scrL)} \to \unabla_w^{H, \F(\scrL)}$ defined by the morphism of chain complexes
   \[\begin{tikzcd}[row sep=small]
    \vdots                                                & \vdots                                    \\
    {(\scrF_{\uw}^{H, \F(\scrL)})^1} \arrow[r, "0"] \arrow[u] & 0 \arrow[u]                               \\
    {B_{\uw}} \arrow[r, "\id"] \arrow[u]   & {B_{\uw}} \arrow[u]        \\
    0 \arrow[r, "0"] \arrow[u]                            & {(\scrG_{\uw}^{H, \F(\scrL)})^{-1}.} \arrow[u] \\
    \vdots \arrow[u]                                      & \vdots \arrow[u]                         
    \end{tikzcd}\]
   Under the isomorphisms $\uDelta_w^{\F(\scrL)} \cong \scrF_{\uw}^{\F(\scrL)}$ and $\unabla_w^{\F(\scrL)} \cong \scrG_{\uw}^{\F(\scrL)}$, the map $\scrF_{\uw}^{\F(\scrL)} \to \scrG_{\uw}^{\F(\scrL)}$ induced by $\underline{\tilde{\Upsilon}}_{\F(\scrL)}^{\circ} (f)$ corresponds to the chain map given by $(\scrF_{\uw}^{\F(\scrL)})^0 = \scrE_{\uw}^{\F(\scrL)} \stackrel{\id}{\to} \scrE_{\uw}^{\F(\scrL)} = (\scrG_{\uw}^{\F(\scrL)})^0$ and $(\scrF_{\uw}^{\F(\scrL)})^n \stackrel{0}{\to} (\scrG_{\uw}^{\F(\scrL)})^n$ for all $n \neq 0$.
   In particular, $\underline{\tilde{\Upsilon}}_{\F (\scrL)}^{\circ} (f)$ is nonzero.
\end{midsecproof}

\subsection{Whittaker Category and Rigidified Minimal IC Sheaves}\label{subsec:whit}

By Proposition \ref{prop:structure_of_min_IC}, there
are isomorphisms
\[c_{\gamma, \beta} : \IC_{w^\gamma}^{\scrL'} \star \IC_{w^\beta}^{\scrL} \stackrel{\sim}{\to} \IC_{w^{\gamma \beta}}^{\scrL}\]
for blocks $\beta \in \uW{\scrL'}{\scrL}$ and $\gamma \in \uW{\scrL''}{\scrL'}$.
However, these isomorphisms are not canonical, since the minimal IC sheaves themselves are only defined up to non-unique isomorphism. 
We would like to fix a particular representative of the isomorphism class of the minimal IC sheaves as well as isomorphisms between their convolution products in a coherent way.
In particular, for $\beta \in \uW{\scrL'}{\scrL}$, $\gamma \in \uW{\scrL''}{\scrL'}$, and $\delta \in \uW{\scrL'''}{\scrL''}$, we would like the isomorphisms to make the following diagram commute
\begin{equation}\label{eq:failure_of_assoc}
  \begin{tikzcd}
    \IC_{w^\delta}^{\scrL''} \star \IC_{w^\gamma}^{\scrL'} \star \IC_{w^\beta}^{\scrL} \arrow[r, "{c_{\delta, \gamma} \star \id}"] \arrow[d, "{\id \star c_{\gamma, \beta}}"] &
    \IC_{w^{\delta \gamma}}^{\scrL'} \star \IC_{w^\beta}^{\scrL} \arrow[d, "{c_{\delta \gamma, \beta}}"] \\
    \IC_{w^\delta}^{\scrL''} \star \IC_{w^{\gamma \beta}}^{\scrL} \arrow[r, "{c_{\delta, \gamma \beta}}"]                                                                 &
    \IC_{w^{\delta \gamma \beta}}^{\scrL}.
  \end{tikzcd}
\end{equation}
If one is too flippant about making choices, (\ref{eq:failure_of_assoc}) can indeed fail to commute (see \cite[Example 6.7]{LY}).

A solution given in \cite{LY} is to use the Whittaker model to rigidify the minimal IC's.
Unfortunately, the usual Whittaker category does not make sense when working with the analytic topology since no version of the Artin--Schreier sheaf exists.
A workaround is given in \cite{Gou} and \cite{BR} by temporarily changing the sheaf-theoretic setting from Betti sheaves to étale sheaves, and then comparing the resulting categories of sheaves.
We adopt a similar strategy to construct a monodromic Whittaker category that will allow us to rigidify the minimal IC sheaves.
Unfortunately, this comes at the cost: we will need to restrict our coefficient rings so that étale sheaves are well-behaved. 
Moreover, while the Whittaker category can be defined when $G$ is not of finite type, it is too large to rigidify the minimal IC sheaves. As a result, we will require that $G$ is finite-type for the remainder of the paper.

\subsubsection{Coefficient Requirements}\label{subsec:modular_triples}

Fix a finite extension $\O$ of $\Z_{\ell}$. Denote by $\K$ its field of fractions and by $\F$ its residue field.
For the time being, we will only allow $\k$ to be one of the rings $\K, \O$, or $\F$. The triple $(\K, \O, \F)$ is called an \emph{$\ell$-modular triple}.
Recall that the map $\O \twoheadrightarrow \F$ admits a multiplicative section $\F^{\times} \to \O^{\times}$ called the \emph{Teichmüller lift}.
Such a map is far from unique. Nonetheless, we will fix the choice of Teichmüller lift throughout.

Given a torsion local system $\scrL \in \Ch^{\mu} (T, \F)$, the Teichmüller lift allows us to define a sheaf $\scrL_{\O} \in \Ch^{\mu} (T, \O)$. 
Explicitly, $\scrL$ is defined by a morphism $\rho_{\scrL} : \bfY \to \F^{\times}$. We can then define $\scrL_{\O}$ via the composition $\rho_{\scrL_{\O}} : \bfY \to \F^{\times} \to \O^{\times}$.
Since the Teichmüller lift is a multiplicative section, we have that $\F (\scrL_{\O}) = \scrL$.

We can also consider the extension-of-scalars map
\begin{equation}\label{eq:O_to_K_loc_sys}
  \K (-) : \Ch^{\mu} (T, \O) \to \Ch^{\mu} (T, \K).
\end{equation}
Any torsion $\O$-local system on $T$ is defined by a morphism $\bfY \to \O^{\times}$ which takes values in the roots of unity in $\O$.
It is well-known that the roots of unity of both $\K$ and $\O$ are equal, as a result, the map (\ref{eq:O_to_K_loc_sys}) is a bijection.

Let $\scrL \in \Ch^{\mu} (T, \k)$. The discussion above allows us to uniquely define $\scrL_{\k'} \in \Ch^{\mu} (T, \k')$ where $\k' \in \{ \K, \O, \F \}$ which is either obtained by extension-of-scalars or such that $\scrL_{\k}$ can be recovered from extension-of-scalars.
In light of this, we will fix a $W$-orbit $\fr{o} = W \cdot \scrL$ and view it simultaneously as a subset of $\Ch^{\mu} (T, \k)$ for all $\k \in \{ \K, \O, \F\}$.
Likewise, we regard any $\scrL \in \fr{o}$ as a $\k$-local system for all $\k \in \{ \K, \O, \F\}$.

There are straightforward variations of the above for when $\k = \overline{\Q}_{\ell}$ or $\k = \overline{\F}_{\ell}$ as well; although, we reserve the notation of the $\ell$-modular triple $(\K, \O, \F)$ for the finite extension versions.
We note that if $\scrL \in \Ch^{\mu} (T, \overline{\F}_{\ell})$ (resp. for $\overline{\Q}_{\ell}$--coefficients), we can find some $\ell$-modular triple $(\K, \O, \F)$
such that $\scrL$ arises via extension-of-scalars from some $\scrL' \in \Ch^{\mu} (T, \O)$.

\subsubsection{Étale Local Systems on \texorpdfstring{$T$}{T}}

Recall that $T$ canonically arises via base change from a group scheme $T_{\Z}$ over $\Z$.
For a commutative ring $A$, we will write $T_A$ for the base change of $T_{\Z}$ over $A$.
For the time being, we will change our sheaf-theoretic setting to étale sheaves.
We will start by considering étale local systems of $\O$-modules on $T_{A}$.
Write $D_{\cons}^{\et} (T_A, \O)$ for the bounded derived category of étale constructible sheaves on $T_A$ with $\O$-coefficients.
We will consider the subcategory $\Loc^{\et} (T_A, \O)$ of $D_{\cons}^{\et} (T_A, \O)$ consisting of étale local systems.
Contained in  $\Loc^{\et} (T_A, \O)$ is the set $\Ch_{\et}^{\mu} (T_A, \O)$ of torsion rank 1 multiplicative local systems.

By \cite[ 6.1.2 (B'')]{BBD}, pullback along the morphism $T_{\C}^{\et} \to T_{\C}^{\an}$ from the étale to the analytic site on $T_{\C}$ induces an equivalence of categories
\[D_{\cons} (T_{\C}, \O) \stackrel{\sim}{\to} D_{\cons}^{\et} (T_{\C}, \O).\]
Moreover, the above equivalence restricts to that of abelian categories
\[\Loc (T_{\C}, \O) \stackrel{\sim}{\to} \Loc^{\et} (T_{\C}, \O).\]
This follows from the observation that the equivalences from \cite[ 6.1.2 (B'')]{BBD} commute with the six-functors.
Likewise, this equivalence gives a canonical $W$-equivariant bijection
\begin{equation}\label{eq:ch_C_etale_and_analytic}
  \Ch^{\mu} (T_{\C}, \O) \stackrel{\textnormal{1-1}}{\longleftrightarrow} \Ch_{\et}^{\mu} (T_{\C}, \O).
\end{equation}

Let $\scrL \in \Ch^{\mu} (T_{\C}, \O)$ and write $\fr{o} = W \cdot \scrL$ for its $W$-orbit.
We will abuse notation and regard $\scrL$ as an étale local system, and likewise, $\fr{o}$ as a subset of étale local systems by passing through the bijection (\ref{eq:ch_C_etale_and_analytic}).

Our next goal is to transport $\scrL$ to an étale local system on $T_{\kappa}$ where $\kappa$ is an algebraically closed field of positive characteristic.
The general technique follows from \cite[\S 6.1.9]{BBD}.
Let $n$ denote the order of $\scrL$. Recall that $n$ is also the order of any $\scrL' \in \fr{o}$.
We can then find a characteristic 0 commutative ring $A \subseteq \C$ which is finite type over $\Z$ such that $A$ contains a primitive $n$-th root of unity $\zeta$ and such that $n$ is invertible in $A$.
For example, we could take $A = \Z \left[ \frac{1}{n}, \zeta\right]$.
We can then find $A \subseteq \mathfrak{R} \subseteq \C$ which is a strictly henselian discrete valuation ring whose residue field $\kappa$ is algebraically closed and of positive characteristic.
The existence of such a ring is explained in \cite[Lemme 6.1.9]{BBD}.
Our choice of $A$ and $\mathfrak{R}$ ensures that the local systems in $\fr{o}$ descend to local systems on $T_{\mathfrak{R}}$.
In more detail, let $f : T_{\C} \to T_{\mathfrak{R}}$. For all $\scrL' \in \fr{o}$, we can find some $\scrL'' \in \Loc^{\et} (T_{\mathfrak{R}}, \O)$ such that $f^* \scrL'' \cong \scrL'$.
The fact that $\scrL''$ is actually a multiplicative sheaf is not entirely obvious. It follows from \cite[Lemma 11.4.1]{Gou} (see also \cite[Remark 11.4.2]{Gou}) that $\scrL'' \in \Ch_{\et}^{\mu} (T_{\mathfrak{R}, \O})$.
Moreover, this correspondence is $W$-equivariant. From here, we can then look at the residue map $q : T_{\mathfrak{R}} \to T_{\kappa}$ to construct a local system $q^* \scrL'' \in \Ch_{\et}^{\mu} (T_{\kappa}, \O)$.
Combining this construction with (\ref{eq:C_etale_vs_analytic}) produces a $W$-equivariant assignment
\[\fr{o} \to \Ch_{\et}^{\mu} (T_{\kappa}, \O).\] 
As a result, we can abuse notation, and identify our local systems in $\fr{o}$ with étale local systems on $T_{\kappa}$.
Note that $\kappa$ depends on $\fr{o}$, and more precisely, the order of $\scrL$. However, this will not be a problem since our $W$-orbit will always be fixed when comparing between the analytic and étale settings.

\begin{remark}\label{rem:isogeny_for_torsion}
  Our torsion assumption ensures that if $\scrL \in \Ch^{\mu} (T_{\C}, \O)$, we can find a finite étale central isogeny $\nu : \tilde{T}_{\C} \to T_{\C}$ and a character $\chi : \ker (\nu) \to \O^{\times}$ such that $\scrL$ appears as the direct summand of $\nu_* \underline{\O}_{\tilde{T}_{\C}}$ where $\ker (\nu)$ acts through $\chi$.
  Since the order of $\scrL$ is invertible in $\mathfrak{R}$ and $\kappa$, the same can be said when we view $\scrL$ as an étale sheaf on $T_{\mathfrak{R}}$ or $T_{\kappa}$.
\end{remark}

\subsubsection{From Betti Sheaves to Étale Sheaves}\label{subsec:etale_and_analytic}

Recall that we have an integral form $G_{\Z}$ for $G$ with Borel subgroup $B_{\Z}$.
Let $U_{\Z}$ denote the unipotent radical of $B_{\Z}$.
If $A$ is a commutative ring, we will write $G_A$, $B_A$, and $U_A$ for the base change of $G_{\Z}$, $B_{\Z}$, and $U_{\Z}$ over $A$ respectively. 
We can then consider the enhanced flag variety over $A$,
\[\eFl_A = U_{A} \backslash G_{A}.\]

We will take $A \in \{ \kappa, \mathfrak{R}, \C\}$.
Let $\scrL, \scrL' \in \fr{o}$. We can regard $\scrL$ as a multiplicative sheaf on $B_{A}$ via pullback along $B_{A} \to T_{A}$.
We can now define the bounded derived category of $(T_{A} \times B_{A}, \scrL' \boxtimes \scrL^{-1})$-equivariant constructible sheaves on $\eFl_{A}$,
\[\DEE{\scrL'}{\scrL}^{\et} (G_{A}, \O) \coloneq D_{\cons}^{\et} (T_{A} \backslash_{\scrL'} \eFl_{A} /_{\scrL} B_{A}, \O).\]
For a precise construction of this category see \cite{LY} or \cite{Gou}.
The same constructions from \S\S\ref{subsec:hecke}-\ref{sec:equiv_conv} can be made in the étale setting.
In particular, there is a convolution bifunctor
\[(-) \star (-) : \DEE{\scrL''}{\scrL'}^{\et} (G_{A}, \O) \times \DEE{\scrL'}{\scrL}^{\et} (G_{A}, \O) \to \DEE{\scrL''}{\scrL}^{\et} (G_{A}, \O)\]
which is suitably associative.

As explained in \cite[Lemma 11.5.3]{Gou} following ideas from \cite[\S6.1.2]{BBD}, pullback along the morphism $\eFl_{\C}^{\et} \to \eFl_{\C}^{\an}$ from the étale site to the analytic site on $\eFl_{\C}$
induces an equivalence of categories
\begin{equation}\label{eq:C_etale_vs_analytic}
  \DEE{\scrL'}{\scrL} (G_{\C}, \O) \stackrel{\sim}{\to} \DEE{\scrL'}{\scrL}^{\et} (G_{\C}, \O).
\end{equation}
Moreover, this equivalence commutes with convolution in the obvious way. Note that \cite{Gou} only uses finite rings, but the results of \cite{BBD} hold for $\O$ as well (cf., \cite[\S6.1.2 (B'')]{BBD}).
Similarly, while \cite[Lemma 11.5.3]{Gou} is only stated for the right equivariant Hecke category, the equivalence of (\ref{eq:C_etale_vs_analytic}) still follows from the same argument.
Since our construction of twisted-equivariant sheaves is different from \cite{Gou}, one must be slightly careful.
The key observation is that since $\fr{o}$ consists of torsion local systems, we can use Remark \ref{rem:isogeny_for_torsion} and Lemma \ref{lem:desc_as_equivariant_sheaves} to reconcile our definition of twisted-equivariant sheaves with Gouttard's.
The argument from \cite{Gou} then proceeds without issue.

Consider the diagram of schemes
\[\eFl_{\C} \to \eFl_{\mathfrak{R}} \leftarrow \eFl_{\kappa}.\]
By taking pullbacks along these morphisms, we obtain functors
\begin{equation}\label{eq:C_etale_vs_F_etale}
  \DEE{\scrL'}{\scrL}^{\et} (G_{\C}, \O) \leftarrow \DEE{\scrL'}{\scrL}^{\et} (G_{\mathfrak{R}}, \O) \to \DEE{\scrL'}{\scrL}^{\et} (G_{\kappa}, \O).
\end{equation}
By the same argument from \cite[(11.5.6)]{Gou} (see also \cite[\S 6.1.9]{BBD}), these functors are equivalences of categories and commute with convolution.

We combine the equivalences of (\ref{eq:C_etale_vs_analytic}) and (\ref{eq:C_etale_vs_F_etale}) to obtain equivalences
\begin{equation}\label{eq:F_etale_vs_analytic}
  \DEE{\scrL'}{\scrL} (G_{\C}, \O) \cong \DEE{\scrL'}{\scrL}^{\et} (G_{\kappa}, \O),
\end{equation}
which commute with convolution.

\subsubsection{The Monodromic Whittaker Category}

For this section, to ease the notation, we will have $G$ be defined over $\kappa$ instead of $\C$.
Likewise, all the relevant subgroups of $G$ and flag varieties will be over $\kappa$ as well.
At the end of the section, we will need to reintroduce the complex algebraic variants. This will be accomplished by adding a $\C$ subscript to the related objects.

Let $B^{-}$ denote the opposite Borel subgroup of $B$ containing $T$. We will write $U^{-}$ for the unipotent radical of $B^{-}$.
We fix an isomorphism $U_{\alpha} \cong \G_a$ for each $\alpha \in \Phi^{-}$. Consider the homomorphism
\[\psi : U^{-} \to U^{-} /[U^{-}, U^{-}] \cong \prod_{i \in I} U_{-\alpha_i} \stackrel{\sim}{\to} \prod_{i \in I} \G_a \stackrel{\textnormal{sum}}{\longrightarrow} \G_a.\]
Fix a $p$-th root of unity in $\k$ and an Artin--Schreier local system $\AS$ on $\G_a$. We define $\scrL_{\psi} \coloneq \psi^* \AS$.

Let $J \subseteq I$, and write $P_J = U^J L_J$ for the standard parabolic subgroup of $G$ containing $B$ generated by $U_{\alpha_j}$ for $j \in J$.
Let $\scrL \in \fr{o}$ such that $s_j (\scrL) \cong \scrL$ for all $j \in J$. We can then extend $\scrL$ to a multiplicative local system $\scrL^J$ on $L_J$ (see Lemma \ref{lem:extending_character_sheaves_to_levis}).
Moreover, we can regard $\scrL^J$ as a multiplicative local system on $P_J$ via pullback along the quotient map $P_J \to L_J$.
Define the \emph{monodromic Whittaker category},
\[\Whit{\scrL}^J (G, \O) \coloneq D_{\cons}^{\et} (U^{-} \backslash_{\scrL_{\psi}} G /_{\scrL^J} P_J, \O).\]
When $J = \emptyset$, we will omit its superscript from the notation.

The following lemma is well-known (see \cite[Lemma 12.6.7]{Gou} and \cite[Lemma 4.2.1]{BY}).

\begin{lemma}\label{lem:desc_of_whit}\quad
  \begin{enumerate}
    \item If $J \neq \emptyset$, then $\Whit{\scrL}^J (G, \O) = 0$.
    \item If $J = \emptyset$, then there is a $t$-exact equivalence of categories
    \begin{equation}\label{eq:desc_of_whit_1}
      \Whit{\scrL} (G, \O) \stackrel{\sim}{\to} D^b (\mod{\O}^{\fg}).
    \end{equation}
  \end{enumerate}
\end{lemma}

We will write $\scrF_{\psi}^{\scrL} \in \Whit{\scrL} (G, \O)$ for the object corresponding to $\O \in D^b (\mod{\O}^{\fg})$ under the equivalence (\ref{eq:desc_of_whit_1}).

The same convolution pattern from \S\ref{sec:equiv_conv} gives a functor
\[(-) \star (-) :  \Whit{\scrL'} (G, \O) \times \DEE{\scrL'}{\scrL}^{\et} (G, \O) \to \Whit{\scrL} (G, \O).\]
This functor is suitably associative with respect to convolution of monodromic Hecke categories. In particular, for $\scrL' = \scrL$, this gives a right action of $\DEE{\scrL}{\scrL}^{\et} (G, \O)$ on $\Whit{\scrL} (G, \O)$.
Under the equivalence of (\ref{eq:F_etale_vs_analytic}) we can replace the étale-version of the Hecke categories with the Betti-version.
In particular, there is a functor
\[(-) \star (-) : \Whit{\scrL'} (G_{\kappa}, \O) \times \DEE{\scrL'}{\scrL} (G_{\C}, \O) \to \Whit{\scrL} (G_{\kappa}, \O).\]

Note that $\O$ is a complete local ring, so we can make sense of parity sheaves in $\DEE{\scrL'}{\scrL} (G_{\C}, \O)$.

\begin{lemma}\label{lem:conv_in_whit}
  Let $w \in \W{\scrL'}{\scrL}$. Then
  \[\scrF_{\psi}^{\scrL'} \star \scrE_w^{\scrL} \cong \begin{cases} \scrF_{\psi}^{\scrL} & w = w^{\beta} \text{ for some block } \beta \in \uW{\scrL'}{\scrL}; \\ 0 & \text{otherwise.}\end{cases}\]
\end{lemma}
\begin{proof}
  The following argument is adapted from \cite[Lemma 5.10]{LY}.
  By induction on $\ell(w)$ and Theorem \ref{thm:conv_preserves_parity}, it suffices to just consider the case when $w = s$ is a simple reflection.
  In this case, we have that $\scrE_s^{\scrL} \cong \IC_s^{\scrL}$.

  If $s \notin W_{\scrL}^{\circ}$, then since $\IC_s^{s \scrL} \star \IC_s^{\scrL} \cong \IC_e^{\scrL}$, we have that 
  \[(-) \star \IC_s^{\scrL} : \Whit{ s \scrL} (G_{\kappa}, \O) \to \Whit{\scrL} (G_{\kappa}, \O)\]
  is an equivalence of categories. It is a routine exercise to check that convolution by standards (resp. costandards) is left (resp. right) perverse $t$-exact.
  Since $\nabla_s^{\scrL} \cong \IC_s^{\scrL} \cong \Delta_s^{\scrL}$, it then follows that $(-) \star \IC_s^{\scrL}$ is perverse $t$-exact.
  As a result, $\scrF_{\psi}^{s \scrL } \star \scrE_s^{\scrL}$ must be a simple perverse sheaf in $\Whit{\scrL} (G_{\kappa}, \O)$. 
  By Lemma \ref{lem:desc_of_whit}, there is a unique simple perverse Whittaker sheaf, and so we conclude that $\scrF_{\psi}^{s \scrL } \star \scrE_s^{\scrL} \cong \scrF_{\psi}^{\scrL}$.

  Now suppose $s \in W_{\scrL}^{\circ}$. Let $i \in I$ such that $s_i = s$. Consider the pair of adjoint functors
  \[\pi_s^* : \Whit{\scrL}^{\{i\}} (G_{\kappa}, \O) \rightleftarrows  \Whit{\scrL} (G_{\kappa}, \O)  : \pi_{s*}.\]
  An easy analogue of Lemma \ref{lem:conv_with_ICs_rewritting} gives a natural isomorphism
  \[\scrF_{\psi}^{\scrL} \star \IC_s^{\scrL} \cong \pi_s^* \pi_{s*} \scrF_{\psi}^{\scrL} [1].\]
  However, $\Whit{\scrL}^{\{i\}} (G_{\kappa}, \O) = 0$ by Lemma \ref{lem:desc_of_whit}, and hence $\scrF_{\psi}^{\scrL} \star \IC_s^{\scrL} = 0$.
\end{proof}

\subsubsection{Rigidified Minimal IC Sheaves}

\begin{definition}\label{def:rigid_min_IC}
  A \emph{rigidified minimal IC sheaf} consists of a pair $(\scrG_{\beta}^{\scrL}, \epsilon)$ where $\scrG_{\beta}^{\scrL}$ is a representative of the isomorphism class $\IC_{w^{\beta}}^{\scrL} \in \DEE{w^{\beta} \scrL}{\scrL} (G_{\C}, \O)$ and $\epsilon$ is an isomorphism in
  $\Whit{\scrL} (G_{\kappa}, \O)$,
  \[\epsilon : \scrF_{\psi}^{w^\beta \scrL} \star \scrG_{\beta}^{\scrL} \stackrel{\sim}{\to} \scrF_{\psi}^{\scrL}.\]
\end{definition}

By Lemma \ref{lem:conv_in_whit}, rigidified minimal IC sheaves exist for every block. 
Since $\End (\IC_{w^\beta}^{\scrL}) \cong \O$ and $\End (\scrF_{\psi}^{\scrL}) \cong \O$ (see Lemma \ref{lem:desc_of_whit}), for any two rigidified
minimal IC sheaves $(\scrG_{\beta}^{\scrL}, \epsilon)$, $(\scrH_{\beta}^{\scrL}, \epsilon')$, there exists a unique isomorphism $\alpha : \scrG_{\beta}^{\scrL} \to \scrH_{\beta}^{\scrL}$ such
that $\epsilon' \circ (\id \star \alpha) = \epsilon$.
For each block $\beta$, we will fix a rigidified minimal IC sheaf. We abuse notation and write $\IC_{w^\beta}^{\scrL}$ for the underlying object and $\epsilon_\beta$ for the map making ($\IC_{w^\beta}^{\scrL}$, $\epsilon_{\beta}$) into a rigidified minimal IC sheaf.

Our definition of rigidified minimal IC sheaf requires working with $\O$-coefficients. 
There are versions of the Whittaker category that work with $\K$- and $\F$-coefficients, but this slightly complicates the procedure from \S\ref{subsec:etale_and_analytic}.
Namely, the $\F$-coefficient version can be carried out exactly the same using results from \cite{BBD}; however, for $\K$-coefficients, the complex étale and complex analytic categories of sheaves are not equivalent (see \cite[\S6.1.2]{BBD}). 
Fortunately, having already picked out representatives of the minimal IC sheaves for $\O$-coefficients, we can produce fixed representatives for the other coefficient rings by extension-of-scalars.
Namely, we can define $\IC_{w^{\beta}}^{\k (\scrL)} \coloneq \k \left( \IC_{w^{\beta}}^{\scrL} \right)$ for all $\scrL \in \fr{o}_{\O}$ and $\k \in \{\K, \F, \overline{\Q}_{\ell}, \overline{\F}_{\ell}\}$.
By the discussion from \S\ref{subsec:modular_triples}, all such isomorphism classes of minimal IC sheaves arise in this way.

\begin{lemma}\label{lem:assoc_of_rigid_min_IC}
  Let $\k \in \{\K, \O, \F, \overline{\Q}_{\ell}, \overline{\F}_{\ell}\}$.
  For $\beta \in \uW{\scrL'}{\scrL}$ and $\gamma \in \uW{\scrL''}{\scrL'}$, there exists a canonical isomorphism,
  \[b_{\gamma, \beta} : \IC_{w^\gamma}^{\scrL'} \star \IC_{w^\beta}^{\scrL} \stackrel{\sim}{\to} \IC_{w^{\gamma \beta}}^{\scrL}.\]
  Moreover, if $\delta \in \uW{\scrL'''}{\scrL''}$, there is an equality of morphisms
  \[ b_{\delta \gamma, \beta } (b_{\delta, \gamma} \times \id) = b_{\delta, \gamma \beta} (\id \times b_{\gamma, \beta}).\]
\end{lemma}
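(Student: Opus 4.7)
The idea is to use the Whittaker category as a rigidifying device: after convolving with $\scrF^{\scrL''}$, the hom-space between any two rigidified minimal IC sheaves (and their convolutions) becomes canonically identified with $\End(\scrF^{\scrL}) \cong \k$ by Lemma \ref{lem:desc_of_whit_cats}, so there is a distinguished scalar, and the whole statement is pinned down by requiring that scalar to be $1$.

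\textbf{Step 1 (Construction of $b_{\gamma,\beta}$).} Since $w^{\gamma\beta} = w^{\gamma}w^{\beta}$ by Lemma \ref{lem:min_elts_in_blocks}, both $\IC_{w^\gamma}^{\scrL'} \star \IC_{w^\beta}^{\scrL}$ and $\IC_{w^{\gamma\beta}}^{\scrL}$ are minimal IC sheaves in the block $\gamma\beta$, and Proposition \ref{prop:structure_of_min_IC} together with the indecomposability/endomorphism-scalar property guarantees that the set of isomorphisms between them is a torsor under $\k^{\times}$. Convolving on the left with $\scrF^{\scrL''}$ and using Lemma \ref{lem:whit_and_convolution} iteratively gives isomorphisms $\scrF^{\scrL''} \star \IC_{w^\gamma}^{\scrL'} \star \IC_{w^\beta}^{\scrL} \cong \scrF^{\scrL}$ and $\scrF^{\scrL''} \star \IC_{w^{\gamma\beta}}^{\scrL} \cong \scrF^{\scrL}$, so by Lemma \ref{lem:desc_of_whit_cats} the induced hom-space on Whittaker sheaves is $\k$, and the convolution map on hom-spaces identifies the $\k^\times$-torsor above with $\k^{\times} \subset \k$. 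I would then \emph{define} $b_{\gamma,\beta}$ to be the unique isomorphism making the square
\[
\begin{tikzcd}
\scrF^{\scrL''} \star \IC_{w^\gamma}^{\scrL'} \star \IC_{w^\beta}^{\scrL} \ar[r, "\id \star b_{\gamma,\beta}"] \ar[d, "\epsilon_\gamma \star \id"'] & \scrF^{\scrL''} \star \IC_{w^{\gamma\beta}}^{\scrL} \ar[d, "\epsilon_{\gamma\beta}"] \\
\scrF^{\scrL'} \star \IC_{w^\beta}^{\scrL} \ar[r, "\epsilon_\beta"'] & \scrF^{\scrL}
\end{tikzcd}
\]
commute. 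By construction $b_{\gamma,\beta}$ is an isomorphism of rigidified minimal IC sheaves.

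\textbf{Step 2 (Associativity).} Applying $\scrF^{\scrL'''} \star (-)$ to both morphisms $b_{\delta\gamma,\beta}\circ(b_{\delta,\gamma}\star\id)$ and $b_{\delta,\gamma\beta}\circ(\id\star b_{\gamma,\beta})$ produces two endomorphisms of (the $\scrF^{\scrL'''}$-convolution of) $\IC_{w^\delta}^{\scrL''}\star\IC_{w^\gamma}^{\scrL'}\star\IC_{w^\beta}^{\scrL}$, which I will compare using the associativity of convolution (Lemma \ref{lem:assoc_of_conv}). Chasing the defining square of Step 1 three times, once for each pair, shows both composites satisfy $\epsilon_{\delta\gamma\beta} \circ (\id \star -) = \epsilon_\delta \star \epsilon_\gamma \star \epsilon_\beta$ (suitably parenthesised), so they agree after $\scrF^{\scrL'''}\star(-)$. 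Because that convolution map is an injection on hom-spaces of minimal IC sheaves (again by Lemma \ref{lem:whit_and_convolution} combined with the fact that both sides of the comparison live in the rank-one $\k$-module $\Hom(\IC_{w^{\delta\gamma\beta}}^{\scrL},\IC_{w^{\delta\gamma\beta}}^{\scrL})\cdot\k^{\times}$), the two compositions coincide before convolving.

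\textbf{Main obstacle.} The only genuinely delicate point is verifying that $\scrF^{\scrL''}\star(-)$ induces a bijection (or at least an injection) on the hom-spaces of minimal IC sheaves involved, so that the scalar $b_{\gamma,\beta}$ is unambiguously well-defined and the associativity identity can be checked after Whittaker convolution. This amounts to showing $\Hom(\IC_{w^\gamma}^{\scrL'}\star\IC_{w^\beta}^{\scrL},\IC_{w^{\gamma\beta}}^{\scrL})\to \Hom(\scrF^{\scrL},\scrF^{\scrL})$ is an isomorphism of rank-one free $\k$-modules; this should follow from Proposition \ref{prop:structure_of_min_IC}, Lemma \ref{lem:whit_and_convolution}, and Lemma \ref{lem:desc_of_whit_cats}, but requires careful bookkeeping of endomorphism rings (which use the hypothesis that $\k$ is a field or complete local ring, so Schur-type arguments apply to the indecomposable summands).
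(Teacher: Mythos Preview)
Your proposal is correct and is essentially the argument the paper invokes: the paper simply says the lemma follows from the proof of \cite[Lemma 5.12]{LY}, and what you have written is exactly that argument—use the Whittaker sheaf to rigidify, define $b_{\gamma,\beta}$ as the unique isomorphism making the $\epsilon$-square commute, and deduce associativity from uniqueness after convolving with $\scrF^{\scrL'''}$. The ``main obstacle'' you flag (that $\scrF^{\scrL''}\star(-)$ induces a bijection on the relevant rank-one hom-spaces) is handled just as you suggest, via Lemma \ref{lem:desc_of_whit_cats}, Lemma \ref{lem:whit_and_convolution}, and the fact that $\End(\IC_{w^\beta}^{\scrL})\cong\k$.
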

\begin{proof}
  The lemma follows for $\O$-coefficients from the same argument given in the proof of \cite[Lemma 5.12]{LY}. We deduce the lemma for $\k$-coefficients with $\k \in \{\K, \F, \overline{\Q}_{\ell}, \overline{\F}_{\ell}\}$ by extension-of-scalars.
\end{proof}

\begin{corollary}\label{cor:trivialization_of_bicats}
   Let $\k \in \{\K, \O, \F, \overline{\Q}_{\ell}, \overline{\F}_{\ell}\}$.
  The assignment $\beta \mapsto \IC_{w^\beta}^{\scrL}$ extends to a 2-functor of $\IC_{\min} : \Xi \to \Parity{}{} (G_{\C}, \k)$.
\end{corollary}

\begin{remark}
  If $G$ is instead of affine type, then the Whittaker category decomposes into a direct sum of categories which look like the Whittaker categories of reductive groups. 
  A crucial step towards developing a version of Corollary \ref{cor:trivialization_of_bicats} for Kac--Moody groups of affine type is to find a means of identifying these categories.
  Recent work of Dhillon--Li--Yun--Zhu \cite{DLYZ} suggests that this should be doable, but we will not investigate this any further in this paper.
\end{remark}

\begin{remark}
  The requirement that $\k \in \{\K, \O, \F, \overline{\Q}_{\ell}, \overline{\F}_{\ell}\}$ is fairly strong, but probably unnecessary.
  The author expects that no new requirement on $\k$ should be needed, provided one restricts to torsion local systems.
  Currently, the main obstacle is defining a Whittaker category using Betti sheaves rather than étale sheaves.
  The Kirillov model of Gaitsgory and Lysenko \cite{GL, Gai} seems well suited for this task; however, there is certainly some complicated subtleties.
  Namely, all $\G_m$-actions on $U^{-}$ which allow for $\psi : U^{-} \to \G_a$ to be $\G_m$-equivariant require that $G$ is of adjoint type (e.g., $\G_m$ could act by the half-sum of simple coroots).
  In this setting, there is a unique block between any $\scrL, \scrL' \in \fr{o}$ which is rather uninteresting.
  One could instead allow $\G_m$ to act on $\G_a$ by dilation to some power, but this makes the algebraic description from Lemma \ref{lem:desc_of_whit} more complicated.
\end{remark}

\subsection{The Endoscopic Groupoid}

In order to extend Theorem \ref{thm:endoscopy_neutral_block_Kac_moody} to all blocks, we need to describe a 2-categorical enhancement of parity sheaves on the endoscopic group.
To accomplish this, we will construct a groupoid $\fr{H}$ in schemes. The groupoid $\fr{H}$ carries a morphism of groupoids $\fr{H} \to \Xi$ whose fibers over $\scrL \in \Xi$ are given by the endoscopic groups $H_{\scrL}^{\circ}$.
The geometric aspects of this section are covered in \cite[\S10]{LY}. We continue the assumption that $G$ is of finite type. 
This assumption is not strictly necessary, and our constructions also hold for general $G$, in which case, $\fr{H}$ will be a groupoid in ind-schemes.

\subsubsection{Groupoid of Lifts}

Let $\beta \in \uW{\scrL'}{\scrL}$. Consider the scheme ${}_{\scrL'} \tilde{\Xi}_{\scrL}^{\beta}$ given by the moduli space of lifts of $w^{\beta}$,
\[{}_{\scrL'} \tilde{\Xi}_{\scrL}^{\beta} \coloneq \{ \dot{w} \in N_G (T) \mid \dot{w} T = w^{\beta} \}. \]
We can then construct a groupoid $\tilde{\Xi}$ in schemes whose object set is $\fr{o}$, and the morphism set is given by
\[ {}_{\scrL'} \tilde{\Xi}_{\scrL} \coloneq \bigsqcup_{\beta \in \uW{\scrL'}{\scrL}} {}_{\scrL'} \tilde{\Xi}_{\scrL}^{\beta}.\]
Composition in $\tilde{\Xi}$ is given by multiplication in $N_G (T)$.

\begin{definition}
  A \emph{relative pinning} for the endoscopic group $H_{\scrL}^{\circ}$ is a collection of isomorphisms $H_{\scrL, \alpha}^{\circ} \to G_{\alpha}$ for each simple root $\alpha \in \Phi_{\scrL}^+$ where $H_{\scrL, \alpha}^{\circ}$ (resp. $G_{\alpha}$) is the root subgroup for $\alpha$ of $H_{\scrL}^{\circ}$ (resp. $G$).
\end{definition}

Relative pinnings for endoscopic groups exist and are unique up to a unique isomorphism. For each $\scrL \in \Ch^{\mu} (T, \k)$, we fix a relative pinning of $H_{\scrL}^{\circ}$.
For each $\dot{w} \in {}_{\scrL'} \tilde{\Xi}_{\scrL}^{\beta}$, there is a unique isomorphism
\[\sigma (\dot{w}) : H_{\scrL}^{\circ} \stackrel{\sim}{\to} H_{\scrL'}^{\circ},\]
which preserves the fixed relative pinning. In more detail, $\sigma\vert_T$ is given by the action of $w$. Moreover, it induces an isomorphism of Kac--Moody root data between $(S_{\scrL}^{\circ}, \bfX, \{ \alpha_s\}_{s \in S_{\scrL}^{\circ}}, \{\alpha_s^{\vee}\}_{s \in S_{\scrL}^{\circ}})$ and $(S_{\scrL'}^{\circ}, \bfX, \{ \alpha_s\}_{s \in S_{\scrL'}^{\circ}}, \{\alpha_s^{\vee}\}_{s \in S_{\scrL'}^{\circ}})$.
For each simple real root $\alpha \in \Phi_{\re, \scrL}$, the morphism $\sigma (\dot{w})$ restricts to an isomorphism of root subgroups $H_{\scrL, \alpha}^{\circ} \stackrel{\sim}{\to} H_{\scrL', w \alpha}^{\circ}$ which makes the following diagram commute
\[ \begin{tikzcd}
  {H_{\scrL, \alpha}^{\circ}} \arrow[d, "\sigma (\dot{w})"] \arrow[r] & G_{\alpha} \arrow[d, "\textnormal{Ad} (\dot{w})"] \\
  {H_{\scrL', w \alpha}^{\circ}} \arrow[r]                            & G_{w \alpha}.                                     
  \end{tikzcd}
\]

\subsubsection{Translations of Endoscopic Groups}

Let $\beta \in \uW{\scrL'}{\scrL}$ and define a scheme
\[{}_{\scrL'} H_{\scrL}^{\beta} \coloneq {}_{\scrL'} \tilde{\Xi}_{\scrL}^{\beta} \times^T H_{\scrL}^{\circ},\]
where the action of $T$ on $H_{\scrL}^{\circ}$ is by left translation, and its action on $ {}_{\scrL'} \tilde{\Xi}_{\scrL}^{\beta}$ is by right translation.
There is a canonical isomorphism
\[{}_{\scrL'} H_{\scrL}^{\beta} \coloneq {}_{\scrL'} \tilde{\Xi}_{\scrL}^{\beta} \times^T H_{\scrL}^{\circ} \cong H_{\scrL'}^{\circ} \times^T  {}_{\scrL'} \tilde{\Xi}_{\scrL}^{\beta}\]
given by $(\dot{w}, h)\mapsto (\sigma (\dot{w})(h), \dot{w})$. 
As a result, for blocks $\beta \in \uW{\scrL'}{\scrL}$ and $\gamma \in \uW{\scrL''}{\scrL'}$, we can define a multiplication map
\[{}_{\scrL''} H_{\scrL'}^{\gamma} \times {}_{\scrL'} H_{\scrL}^{\beta} \to {}_{\scrL''} H_{\scrL}^{\gamma \beta},\]
\[ (\dot{y}, h', \dot{x}, h) \mapsto (\dot{y} \dot{x}, \sigma (\dot{x}^{-1}) (h') h). \]
The composition map is associative over the set of blocks. As a result, we can define a groupoid $\fr{H}$ in schemes whose object set is $\fr{o}$ and morphisms are given by $\bigsqcup_{\beta \in \uW{\scrL}{\scrL'}} {}_{\scrL'} H_{\scrL}^{\beta}$.

\subsubsection{Parity Sheaves on \texorpdfstring{$\fr{H}$}{the Endoscopic Groupoid}}

We define the flag variety ${}_{\scrL'} \Fl_{\scrL}^{\beta}$ associated to ${}_{\scrL'} H_{\scrL}^{\beta}$ as the scheme
\[{}_{\scrL'} \Fl_{\scrL}^{\beta} \coloneq B_{\scrL'} \backslash {}_{\scrL'} H_{\scrL}^{\beta}.\]
It is naturally acted on the right by the Borel $B_{\scrL}$ of $H_{\scrL}^{\circ}$.
This gives rise to a Bruhat decomposition of $B_{\scrL}$-orbits.
\[{}_{\scrL'} \Fl_{\scrL}^{\beta} = \bigsqcup_{w \in \beta} {}_{\scrL'} \Fl_{\scrL, w}^{\beta},\]
where each ${}_{\scrL'} \Fl_{\scrL, w}^{\beta}$ is isomorphic to an affine space of dimension $\ell_{\beta} (w)$. We will write $j_w^H : {}_{\scrL'} \Fl_{\scrL, w}^{\beta} \to {}_{\scrL'} \Fl_{\scrL}^{\beta}$ for the obvious inclusion map.
Moreover, there is a $B_{\scrL}$-invariant isomorphism of schemes
\begin{equation}\label{eq:shifted_flag_vars_are_shifted}
  {}_{\scrL'} \Fl_{\scrL}^{\beta} \stackrel{\sim}{\to} \Fl_{H_{\scrL}^{\circ}}
\end{equation}
induced from the non-canonical isomorphism ${}_{\scrL'} \tilde{\Xi}_{\scrL}^{\beta} \cong T$. The isomorphism (\ref{eq:shifted_flag_vars_are_shifted}) induces isomorphisms of Bruhat strata
\[{}_{\scrL'} \Fl_{\scrL, w}^{\beta} \stackrel{\sim}{\to} \Fl_{H_{\scrL}^{\circ}, w^{\beta, -1} w}.\]

We can consider the derived category of $B_{\scrL}$-equivariant constructible sheaves on ${}_{\scrL'} \Fl_{\scrL}^{\beta}$, denoted by $D_{\cons} ( {}_{\scrL'} \Fl_{\scrL}^{\beta} / B_{\scrL}, \k)$.
The groupoid structure on $\fr{H}$ gives rise to convolution bifunctors
\[(-) \star (-) : D_{\cons} ( {}_{\scrL''} \Fl_{\scrL'}^{\gamma} / B_{\scrL'}, \k) \times D_{\cons} ( {}_{\scrL'} \Fl_{\scrL}^{\beta} / B_{\scrL}, \k) \to D_{\cons} ( {}_{\scrL''} \Fl_{\scrL}^{\gamma \beta} / B_{\scrL}, \k),\]
which carries an associativity natural transformation and satisfies a variant of the pentagon axiom.

For each $w \in \beta$, we can define the standard and costandard sheaves, respectively,
\[\Delta_w^{H, \scrL} \coloneq j_{w!}^H \uk_{{}_{\scrL'} \Fl_{\scrL, w}^{\beta}} [\ell_{\beta} (w)] \hspace{0.5cm}\text{and}\hspace{0.5cm} \nabla_w^{H, \scrL} \coloneq j_{w*}^H \uk_{{}_{\scrL'} \Fl_{\scrL, w}^{\beta}} [\ell_{\beta} (w)].\]
When $w = w^{\beta}$, the stratum ${}_{\scrL'} \Fl_{\scrL, w^{\beta}}^{\beta}$ is closed, and we get a canonical isomorphism $\Delta_{w^{\beta}}^{H, \scrL} \cong \nabla_{w^{\beta}}^{H, \scrL}$.
The following lemma is an endoscopic variation of Proposition \ref{prop:conv_rules}. 
The proof follows the standard argument given for the usual Hecke category (cf., \cite[\S2.2]{BBM}).

\begin{lemma}
  Let $x \in \gamma$ and $y \in \beta$ where $\gamma \in \uW{\scrL''}{\scrL'}$ and $\beta \in \uW{\scrL'}{\scrL}$. 
  We have natural isomorphisms
  \begin{enumerate}
    \item $\Delta_{xy}^{H, \scrL} \cong \Delta_{x}^{H, \scrL'} \star \Delta_y^{H, \scrL}$ if $\ell_{\gamma \beta} (xy) = \ell_{\gamma} (x) + \ell_{\beta} (y)$;
    \item $\nabla_{xy}^{H, \scrL} \cong \nabla_x^{H, \scrL'} \star \nabla_y^{H, \scrL}$ if $\ell_{\gamma \beta} (xy) = \ell_{\gamma} (x) + \ell_{\beta} (y)$;
    \item $\nabla_{y^{-1}}^{H, \scrL'} \star \Delta_y^{H, \scrL} \cong \Delta_e^{H, \scrL} \cong \Delta_{y^{-1}}^{H, \scrL'} \star \nabla_y^{H, \scrL}$.
  \end{enumerate}
\end{lemma}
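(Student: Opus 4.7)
The plan is to transfer the standard argument from the usual Hecke category to the groupoid setting of $\fr{H}$. The key geometric input is the multiplication map in $\fr{H}$, which on Bruhat strata behaves just like the ordinary Bruhat multiplication map once the twisting by $\sigma(\dot{w})$ is accounted for. I would first pin down the bookkeeping: fix lifts $\dot{w}^\beta \in {}_{\scrL'}\tilde{\Xi}_{\scrL}^{\beta}$ compatibly with composition, using $w^{\gamma \beta} = w^{\gamma} w^{\beta}$ from Lemma~\ref{lem:min_elts_in_blocks}(2), so that the Bruhat stratifications on the various ${}_{\scrL'} \Fl_{\scrL}^{\beta}$ can be identified with that of $\Fl_{H_{\scrL}^\circ}$ in a way that is compatible with multiplication in the groupoid.

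With this set-up, for part (1) I would use the convolution diagram
\[
\begin{tikzcd}
{}_{\scrL''} \Fl_{\scrL'}^{\gamma} \times {}_{\scrL'} \Fl_{\scrL}^{\beta} & {}_{\scrL''} \Fl_{\scrL'}^{\gamma} \times^{B_{\scrL'}} {}_{\scrL'} H_{\scrL}^{\beta}/B_{\scrL} \arrow[l, "q"'] \arrow[r, "m"] & {}_{\scrL''} \Fl_{\scrL}^{\gamma \beta}
\end{tikzcd}
\]
and observe that the restriction of $m$ to $B_{\scrL''} \dot{x} B_{\scrL'} \times^{B_{\scrL'}} B_{\scrL'} \dot{y} B_{\scrL}/B_{\scrL}$ is a proper map landing in ${}_{\scrL''} \Fl_{\scrL, \leq_{\gamma \beta} xy}^{\gamma \beta}$, which is an isomorphism onto the open cell ${}_{\scrL''} \Fl_{\scrL, xy}^{\gamma \beta}$ precisely when the length-additivity hypothesis holds. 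Consequently $m_!(\Delta_x^{H,\scrL'} \widetilde{\boxtimes} \Delta_y^{H,\scrL})$ is supported on ${}_{\scrL''} \Fl_{\scrL, \leq_{\gamma \beta} xy}^{\gamma \beta}$ with the expected shifted constant sheaf on the open stratum, yielding $\Delta_{xy}^{H, \scrL}$. Part (2) follows by the same argument with $m_*$ in place of $m_!$, or equivalently by applying Verdier duality to part (1).

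For part (3), I would reduce by associativity together with parts (1) and (2) to the case where $y = t$ is a single simple reflection in $S_{\scrL}^\circ$. In this case the convolution $\nabla_{t}^{H, t\scrL} \star \Delta_t^{H, \scrL}$ can be computed directly on the rank-one endoscopic subgroup generated by $t$, where the computation happens on a single copy of $\P^1$ and produces $\Delta_e^{H, \scrL}$ in the usual manner. An induction on $\ell_\beta(y)$, using parts (1) and (2) together with Lemma~\ref{lem:length_for_blocks}, then upgrades the rank-one identity to the general statement.

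The main technical obstacle is not the sheaf theory but tracking the twisting by $\sigma(\dot{w})$: each composition in $\fr{H}$ silently conjugates through this isomorphism, and the identifications ${}_{\scrL'} \Fl_{\scrL}^\beta \cong \Fl_{H_{\scrL}^\circ}$ are only non-canonical. The pinning choice combined with $w^{\gamma\beta} = w^{\gamma} w^{\beta}$ ensures that these identifications intertwine the multiplication map $m$ with the ordinary multiplication map on $\Fl_{H_{\scrL}^\circ}$, at which point all three identities reduce to the standard Bruhat-theoretic facts for the Hecke category of $H_{\scrL}^\circ$.
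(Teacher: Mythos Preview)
Your approach is exactly what the paper intends --- it gives no proof beyond ``the proof follows the standard argument given for the usual Hecke category,'' and you have correctly spelled out that argument, including the key bookkeeping point that the non-canonical identifications ${}_{\scrL'}\Fl_{\scrL}^\beta \cong \Fl_{H_{\scrL}^\circ}$ can be made compatibly with the groupoid multiplication via $w^{\gamma\beta} = w^\gamma w^\beta$.

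One small wrinkle in part~(3): your phrase ``reduce \ldots\ to the case where $y = t$ is a single simple reflection in $S_{\scrL}^\circ$'' only lands in the neutral block. The induction on $\ell_\beta(y)$ actually bottoms out at $y = w^\beta$, where the stratum ${}_{\scrL'}\Fl_{\scrL, w^\beta}^\beta$ is a single point, so $\Delta_{w^\beta}^{H,\scrL} \cong \nabla_{w^\beta}^{H,\scrL}$ is a skyscraper and $\nabla_{w^{\beta^{-1}}}^{H,\scrL'} \star \Delta_{w^\beta}^{H,\scrL} \cong \Delta_e^{H,\scrL}$ is a convolution of two skyscrapers. Your $\P^1$ computation for a simple reflection is then the inductive step, not the base case. (Also, since $t \in W_{\scrL}^\circ$ one has $t\scrL = \scrL$, so your $\nabla_t^{H, t\scrL}$ should read $\nabla_t^{H,\scrL}$.)
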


\begin{corollary}\label{cor:block_translation_for_endo_cat}
  Let $\beta \in \uW{\scrL'}{\scrL}$. 
  There are equivalences of categories
  \[\Delta_{w^{\beta, -1}}^{H, \scrL'} \star (-) : D_{\cons} ( {}_{\scrL'} \Fl_{\scrL}^{\beta} / B_{\scrL}, \k) \to D_{\cons} (\Fl_{H_{\scrL}^{\circ}} /B_{\scrL}, \k),\]
  and
  \[(-) \star \Delta_{w^{\beta, -1}}^{H, \scrL'} : D_{\cons} ({}_{\scrL'} \Fl_{\scrL}^{\beta} / B_{\scrL}, \k) \to D_{\cons} (\Fl_{H_{\scrL'}^{\circ}} / B_{\scrL'}, \k).\]
\end{corollary}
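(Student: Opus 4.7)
The plan is to show that convolution with $\Delta_{w^{\beta}}^{H,\scrL}$ is a quasi-inverse to convolution with $\Delta_{w^{\beta,-1}}^{H,\scrL'}$ for the first equivalence, and to use a symmetric construction for the second.

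First, I would collect a few block-theoretic observations. Since $\beta \in \uW{\scrL'}{\scrL}$, its inverse $\beta^{-1} \in \uW{\scrL}{\scrL'}$ satisfies $\beta^{-1}\beta = e$ in $\uW{\scrL}{\scrL}$ and $\beta\beta^{-1} = e$ in $\uW{\scrL'}{\scrL'}$. By Lemma \ref{lem:min_elts_in_blocks}(2), the minimal elements multiply as $w^{\beta^{-1}} w^{\beta} = w^{\beta^{-1}\beta} = e$, so $w^{\beta^{-1}} = w^{\beta,-1}$. In particular, $w^{\beta,-1}$ is the minimal element of $\beta^{-1}$, its Bruhat stratum is a closed point, and the remark following the definition of the standard and costandard sheaves in the endoscopic setting gives a canonical isomorphism $\Delta_{w^{\beta,-1}}^{H,\scrL'} \cong \nabla_{w^{\beta,-1}}^{H,\scrL'}$. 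The analogous statement $\Delta_{w^{\beta}}^{H,\scrL} \cong \nabla_{w^{\beta}}^{H,\scrL}$ also holds.

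Next, I would define the candidate inverse functor $F \coloneq \Delta_{w^{\beta}}^{H,\scrL} \star (-) : D_{\cons}(\Fl_{H_{\scrL}^{\circ}}/B_{\scrL}, \k) \to D_{\cons}({}_{\scrL'}\Fl_{\scrL}^{\beta}/B_{\scrL}, \k)$ and write $G \coloneq \Delta_{w^{\beta,-1}}^{H,\scrL'} \star (-)$ for the functor in the statement. By associativity of the endoscopic convolution, $GF$ and $FG$ are implemented by convolution with $\Delta_{w^{\beta,-1}}^{H,\scrL'} \star \Delta_{w^{\beta}}^{H,\scrL}$ and $\Delta_{w^{\beta}}^{H,\scrL} \star \Delta_{w^{\beta,-1}}^{H,\scrL'}$ respectively, so it suffices to identify these compositions with the monoidal units $\Delta_e^{H,\scrL}$ and $\Delta_e^{H,\scrL'}$. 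Applying part (3) of the preceding lemma with $y = w^{\beta}$ gives $\nabla_{w^{\beta,-1}}^{H,\scrL'} \star \Delta_{w^{\beta}}^{H,\scrL} \cong \Delta_e^{H,\scrL}$; combined with $\nabla_{w^{\beta,-1}}^{H,\scrL'} \cong \Delta_{w^{\beta,-1}}^{H,\scrL'}$, this yields $GF \cong \id$. Applying part (3) with $y = w^{\beta,-1}$ (so $y^{-1} = w^{\beta}$) gives $\nabla_{w^{\beta}}^{H,\scrL} \star \Delta_{w^{\beta,-1}}^{H,\scrL'} \cong \Delta_e^{H,\scrL'}$, and $\nabla_{w^{\beta}}^{H,\scrL} \cong \Delta_{w^{\beta}}^{H,\scrL}$ gives $FG \cong \id$. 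The second equivalence follows by an entirely symmetric argument, taking right convolution with $\Delta_{w^{\beta,-1}}^{H,\scrL}$ as the candidate inverse of $(-) \star \Delta_{w^{\beta,-1}}^{H,\scrL'}$.

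The only delicate point is bookkeeping the block indexing: one must keep track of which flag variety ${}_{\scrM}\Fl_{\scrN}^{\delta}$ each sheaf lives on and verify that $\beta^{-1}\beta$ and $\beta\beta^{-1}$ really are the neutral blocks so that the compositions land in the advertised categories. Since $w^{\beta}$ and $w^{\beta,-1}$ are both minimal in their blocks, no non-trivial length-additivity condition needs to be checked for the three identities of the preceding lemma, and the argument reduces to a purely formal invocation of part (3).
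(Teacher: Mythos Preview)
Your proof is correct and is precisely the argument the paper has in mind: the corollary is stated without proof because it is meant to follow immediately from part (3) of the preceding lemma (together with the observation $\Delta_{w^{\beta}}^{H,\scrL}\cong\nabla_{w^{\beta}}^{H,\scrL}$ for minimal elements, or equivalently from part (1) with the trivial length-additivity $0=0+0$). One small typo: in your last sentence the candidate inverse for the second equivalence should be right convolution with $\Delta_{w^{\beta}}^{H,\scrL}$, not $\Delta_{w^{\beta,-1}}^{H,\scrL}$.
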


The isomorphism  (\ref{eq:shifted_flag_vars_are_shifted}) ensures that $D_{\cons} ({}_{\scrL'} \Fl_{\scrL}^{\beta} / B_{\scrL}, \k)$ satisfies the parity conditions.
As a result, when $\k$ is a field or a complete local ring, we can consider the subcategory of parity sheaves $\Par ({}_{\scrL'} \Fl_{\scrL}^{\beta} / B_{\scrL}, \k)$.
Moreover, parity extensions always exist. 
More precisely, for each $w \in \beta$, there is an indecomposable parity extensions $\scrE_w^{H, \scrL}$ of $\uk_{{}_{\scrL'} \Fl_{\scrL, w}^{\beta}} [\ell_{\beta} (w)]$.

\begin{lemma}\label{lem:conv_preserves_par_sh_for_endo_cat}
  Assume that $\k$ is a field or a complete local ring.
  Let $\gamma \in \uW{\scrL''}{\scrL'}$ and $\beta \in \uW{\scrL'}{\scrL}$. 
  The convolution bifunctor
  \[(-) \star (-) : D_{\cons} ({}_{\scrL''} \Fl_{\scrL'}^{\gamma} / B_{\scrL'}, \k) \times D_{\cons} ( {}_{\scrL'} \Fl_{\scrL}^{\beta} / B_{\scrL}, \k) \to D_{\cons} ( {}_{\scrL''} \Fl_{\scrL}^{\gamma \beta} / B_{\scrL}, \k)\]
  takes parity sheaves to parity sheaves.
\end{lemma}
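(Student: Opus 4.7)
The plan is to reduce the assertion to the case of convolution within a single neutral-block endoscopic Hecke category $\Par(\Fl_{H_{\scrL}^{\circ}}/B_{\scrL}, \k)$, where it is the classical theorem of Juteau--Mautner--Williamson. This theorem is available because, under the standing hypothesis that $G$ is of finite or affine type, every endoscopic group $H_{\scrL}^{\circ}$ is itself a Kac--Moody group of finite or affine type. The reduction is carried out by means of the block-translation equivalences of Corollary \ref{cor:block_translation_for_endo_cat}.

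First I would observe that every minimal standard $\Delta_{w^{\beta}}^{H, \scrL}$ is itself a parity sheaf: since $w^{\beta}$ has $\ell_{\beta}$-length zero, the stratum ${}_{\scrL'}\Fl_{\scrL, w^{\beta}}^{\beta}$ is closed and zero-dimensional, so $\Delta_{w^{\beta}}^{H, \scrL} \cong \nabla_{w^{\beta}}^{H, \scrL}$ is (up to shift) the constant sheaf on this closed point. Because the equivalences of Corollary \ref{cor:block_translation_for_endo_cat} are mutually inverse and are given by convolution with such minimal standards, they transport parity sheaves on ${}_{\scrL'}\Fl_{\scrL}^{\beta}/B_{\scrL}$ bijectively to parity sheaves on the neutral-block category for $H_{\scrL}^{\circ}$, and likewise on the other side.

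Given $\scrF \in \Par({}_{\scrL''}\Fl_{\scrL'}^{\gamma}/B_{\scrL'}, \k)$ and $\scrG \in \Par({}_{\scrL'}\Fl_{\scrL}^{\beta}/B_{\scrL}, \k)$, the previous step lets me write
\[
\scrF \;\cong\; \Delta_{w^{\gamma}}^{H, \scrL''} \star \scrF_{0}, \qquad \scrG \;\cong\; \Delta_{w^{\beta}}^{H, \scrL'} \star \scrG_{0},
\]
with $\scrF_{0} \in \Par(\Fl_{H_{\scrL'}^{\circ}}/B_{\scrL'}, \k)$ and $\scrG_{0} \in \Par(\Fl_{H_{\scrL}^{\circ}}/B_{\scrL}, \k)$. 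Associativity of the groupoid convolution yields
\[
\scrF \star \scrG \;\cong\; \Delta_{w^{\gamma}}^{H, \scrL''} \star \bigl(\scrF_{0} \star \Delta_{w^{\beta}}^{H, \scrL'}\bigr) \star \scrG_{0}.
\]
The inner factor $\scrF_{0} \star \Delta_{w^{\beta}}^{H, \scrL'}$ lies in ${}_{\scrL'}\Fl_{\scrL}^{\beta}/B_{\scrL}$ and is parity by the previous step, so applying the translation equivalence once more rewrites it as $\Delta_{w^{\beta}}^{H, \scrL'} \star \scrG_{0}'$ for some $\scrG_{0}' \in \Par(\Fl_{H_{\scrL}^{\circ}}/B_{\scrL}, \k)$. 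After this rearrangement, $\scrF \star \scrG$ has been expressed as a neutral-block convolution of two parity sheaves in $\Par(\Fl_{H_{\scrL}^{\circ}}/B_{\scrL}, \k)$, followed by a translation by a minimal standard. The inner convolution is parity by Juteau--Mautner--Williamson, and transport by the translation equivalence preserves parity, which concludes the argument.

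The one technical point that requires care, and where I expect most of the bookkeeping to lie, is verifying the compatibility of the groupoid convolution ${}_{\scrL''}H_{\scrL'}^{\gamma} \times {}_{\scrL'}H_{\scrL}^{\beta} \to {}_{\scrL''}H_{\scrL}^{\gamma\beta}$ with the translation functors $\Delta_{w^{\gamma}}^{H} \star (-)$ and $(-) \star \Delta_{w^{\beta}}^{H}$; concretely, one must check that the identity $w^{\gamma} w^{\beta} = w^{\gamma\beta}$ of Lemma \ref{lem:min_elts_in_blocks}(2) lifts to a coherent natural isomorphism of convolution functors after taking into account the twist by the relative pinning $\sigma(\dot{w})$ used in the definition of $\fr{H}$. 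This is essentially forced by the construction of $\fr{H}$, but unpacking it is the only non-formal part of the proof.
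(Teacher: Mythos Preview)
Your proposal is correct and follows essentially the same strategy as the paper: reduce to the neutral-block case via the translation equivalences of Corollary~\ref{cor:block_translation_for_endo_cat}, then invoke the classical result that convolution preserves parity in the ordinary Hecke category of the endoscopic group.

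The only difference is organizational. The paper left-translates $\scrE_x^{H,\scrL'}$ by $\Delta_{w^{\gamma,-1}}^{H,\scrL'}$ and right-translates $\scrE_y^{H,\scrL}$ by $\Delta_{w^{\beta,-1}}^{H,\scrL'}$, so that both factors land in the \emph{middle} neutral block $\Par(\Fl_{H_{\scrL'}^\circ}/B_{\scrL'},\k)$; the convolution is then computed there directly. You instead left-translate both factors, which forces an extra conjugation step (rewriting $\scrF_0 \star \Delta_{w^\beta}$ as $\Delta_{w^\beta}\star\scrG_0'$) before landing in the $\scrL$-neutral block. The paper's symmetric choice avoids this detour but is otherwise the same argument. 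Your closing worry about coherence of $\Delta_{w^\gamma}\star\Delta_{w^\beta}\cong\Delta_{w^{\gamma\beta}}$ is not a genuine obstacle: it follows immediately from the standard-convolution rules (stated just above Corollary~\ref{cor:block_translation_for_endo_cat}) together with $\ell_{\gamma}(w^\gamma)=\ell_{\beta}(w^\beta)=0$, and the paper does not pause over it.
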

\begin{proof}
  Let $x \in \gamma$ and $y \in \beta$.
  By Corollary \ref{cor:block_translation_for_endo_cat}, it can easily be checked that there are isomorphisms 
  \[\Delta_{w^{\gamma, -1}}^{H, \scrL'} \star \scrE_{x}^{H, \scrL'} \cong \scrE_{w^{\gamma, -1} x}^{H, \scrL'} \hspace{0.5cm} \text{and} \hspace{0.5cm} \scrE_{y}^{H, \scrL} \star \Delta_{w^{\beta, -1}}^{H, \scrL'} \cong \scrE_{y w^{\beta, -1}}^{H, \scrL'}.\]
  The result then follows from associativity of convolution and the fact that convolution takes parity sheaves to parity sheaves in the unipotent Hecke category $D_{\cons} (\Fl_{H_{\scrL'}^{\circ}} / B_{\scrL'}, \k)$. This is proved in \cite[\S 4.1]{JMW} or can be derived from the unipotent case of Theorem \ref{thm:conv_preserves_parity}.
\end{proof}

As a consequence of Lemma \ref{lem:conv_preserves_par_sh_for_endo_cat}, when $\k$ is a field or a complete local ring, we can now construct a 2-category $\ParH (\k)$ over $\Xi$ as follows.
The object set of $\ParH (\k)$ is $\fr{o}$. The morphism categories are given by 
\[ \Hom_{\ParH (\k)} (\scrL, \scrL') \coloneq {}_{\scrL'} \ParH_{\scrL} (\k) \coloneq \bigsqcup_{\beta \in \uW{\scrL'}{\scrL}} \Par ({}_{\scrL'} \Fl_{\scrL}^{\beta} / B_{\scrL}, \k),\] 
where composition is given by convolution.
We call $\ParH (\k)$ the \emph{endoscopic Hecke 2-category of parity sheaves}.

\subsection{Block Conjugation Equivariance}

We will assume that $G$ is of finite type, $\k \in \{\K, \O, \F, \overline{\Q}_{\ell}, \overline{\F}_{\ell} \}$, and that $\fr{o} \subset \Ch^{\mu} (T, \k)$ is a $W$-orbit. 
This ensures that the Whittaker model of \S \ref{subsec:whit} can be used.
Note that all such $\k$ are either complete local rings or fields.

\subsubsection{Groupoid Actions on Bicategories}

Let $\Gamma$ be a small groupoid.

\begin{definition}\quad
  \begin{enumerate}
    \item Let $\fr{C}^{\circ} = \{ \scrC_x^{\circ} \}_{x \in \Ob (\Gamma)}$ denote a collection of monoidal categories. Define a 2-category $\textnormal{Free} (\fr{C}^{\circ})$ over $\Gamma$ whose morphism categories are given by 
    $\Hom_{\textnormal{Free} (\fr{C}^{\circ})} (x,y) \coloneq \Fun^{\simeq, \textnormal{mon}} (\scrC_x^{\circ}, \scrC_y^{\circ})$ where $\Fun^{\simeq, \textnormal{mon}} (\scrC_x^{\circ}, \scrC_y^{\circ})$ denotes the category of monoidal equivalences $\scrC_x^{\circ} \stackrel{\sim}{\to} \scrC_y^{\circ}$.
    \item Let $\fr{C}^{\circ} = \{ \scrD_x^{\circ} \}_{x \in \Ob (\Gamma)}$ denote another collection of monoidal categories. For each $x \in \Ob (\Gamma)$, let $F_x : \scrC_x^{\circ} \to \scrD_x^{\circ}$ be a monoidal equivalence along with inverse $F_x^{-1} : \scrD_x^{\circ} \to \scrC_x^{\circ}$.
        We can then define a 2-functor $F : \textnormal{Free} (\fr{C}^{\circ}) \to \textnormal{Free} (\fr{D}^{\circ})$ which is the identity on objects and on morphism categories is given by
        \[\Fun^{\simeq, \textnormal{mon}} (\scrC_x^{\circ}, \scrC_y^{\circ}) \to \Fun^{\simeq, \textnormal{mon}} (\scrD_x^{\circ}, \scrD_y^{\circ}) \hspace{1cm} G \mapsto F_y \circ G \circ F_x^{-1}.\]
  \end{enumerate}
\end{definition}

\begin{definition}\quad
  \begin{enumerate}
    \item Let $\fr{C}$ be a 2-category over $\Gamma$. An \emph{action} of $\Gamma$ on $\fr{C}$ is a 2-functor $\Gamma \to \fr{C}.$ If $\fr{D}$ is a 2-category over $\Gamma$ with a $\Gamma$-action $\Gamma \to \fr{D}$, then a \emph{$\Gamma$-functor} from $\fr{C}$ to $\fr{D}$ is a 2-functor $\fr{C} \to \fr{D}$ such that the following diagram commutes up to natural isomorphism.
    \[\begin{tikzcd}
      & \Gamma \arrow[ld] \arrow[rd] &        \\
\fr{C} \arrow[rr] &                              & \fr{D}.
\end{tikzcd}\]
    \item Let $\fr{C}^{\circ} \coloneq \{ \scrC_x^{\circ} \}_{x \in \Ob (\Gamma)}$ denote a collection of monoidal categories.
    We define an \emph{action} of $\Gamma$ on $\fr{C}^{\circ}$ to be $\Gamma$-action on $\textnormal{Free} (\fr{C}^{\circ})$. Let $\fr{D}^{\circ} \coloneq \{ \scrD_x^{\circ} \}_{x \in \Ob (\Gamma)}$ denote another collection of monoidal categories.
    For each $x \in \Ob (\Gamma)$, let $F_x : \scrC_x^{\circ} \to \scrD_x^{\circ}$ be a monoidal equivalence along with inverse $F_x^{-1} : \scrD_x^{\circ} \to \scrC_x^{\circ}$.
    We say that the collection of functors $\{F_x, F_x^{-1}\}_{x\in \Ob(\Gamma)}$ is \emph{$\Gamma$-compatible} if the induced 2-functor  $F : \textnormal{Free} (\fr{C}^{\circ}) \to \textnormal{Free} (\fr{D}^{\circ})$ is a $\Gamma$-functor.
  \end{enumerate}
\end{definition}

Let $\fr{C}^{\circ} \coloneq \{ \scrC_x^{\circ} \}_{x \in \Gamma}$ be a collection of monoidal categories. Let $A^{\circ} : \Gamma \to \textnormal{Free} (\fr{C}^{\circ})$ be a $\Gamma$-action on $\fr{C}^{\circ}$.
We can then define the semidirect product $\fr{C}^{\circ} \rtimes \Gamma$ as follows.
\begin{enumerate}
  \item $\Ob (\fr{C}^{\circ} \rtimes \Gamma) = \Ob (\Gamma)$.
  \item For $x,y \in \Ob (\Gamma)$, we define the morphism categories
    \[\Hom_{\fr{C}^{\circ} \rtimes \Gamma} (x,y) \coloneq \bigsqcup_{\xi \in {}_y \Gamma_x} {}_y (\fr{C}^{\circ} \rtimes \Gamma)_x^{\xi},\]
    where each ${}_y (\fr{C}^{\circ} \rtimes \Gamma)_x^{\xi}$ is defined to be a copy of $\scrC_y^{\circ}$.
  \item For $\xi \in {}_y \Gamma_x$ and $\eta \in {}_z \Gamma_y$, the composition of 1-morphisms is given by
      \[{}_z (\fr{C}^{\circ} \rtimes \Gamma)_y^{\eta} \times {}_y (\fr{C}^{\circ} \rtimes \Gamma)_x^{\xi} \to {}_z (\fr{C}^{\circ} \rtimes \Gamma)_x^{\eta \xi},\]
      \[(\scrG, \scrF) \mapsto \scrG \circ (A^{\circ} (\eta)) (\scrF).\]
\end{enumerate}
The semidirect product comes equipped with a canonical $\Gamma$-action, $\tilde{A} : \Gamma \to \fr{C}^{\circ} \rtimes \Gamma$.
Explicitly, $\tilde{A}$ is the identity on objects, and for $\xi \in {}_y \Gamma_x$, we define $\tilde{A} (\xi) = \mathds{1}_y \in \scrC_y^{\circ} \eqcolon {}_y (\fr{C}^{\circ} \rtimes \Gamma)_x^{\xi}$.

Let $\fr{C}$ be a 2-category over $\Gamma$ with $\Gamma$-action $A : \Gamma \to \fr{C}$.
Consider the collection of monoidal categories $\fr{C}^{\circ} = \{ {}_x \fr{C}_x^{\id_x} \}_{x \in \Ob (\Gamma)}$.
For each $\xi \in {}_y \Gamma_x$, we can define a monoidal equivalence
\[A^{\circ} (\xi) : {}_x \fr{C}_x^{\id_x} \to {}_y \fr{C}_y^{\id_y}.\]
These monoidal equivalences can be assembled into a 2-functor $A^{\circ} : \Gamma \to \textnormal{Free} (\fr{C}^{\circ})$. In other words, $A : \Gamma \to \fr{C}$ induces a $\Gamma$-action on $\fr{C}^{\circ}$. 

The following lemmas follow easily from the definitions.

\begin{lemma}\label{lem:trivializing_bicats_over_gpds}
  Let $\fr{C}$ be a 2-category over $\Gamma$ with $\Gamma$-action. Consider the collection of monoidal categories $\fr{C}^{\circ} = \{ {}_x \fr{C}_x^{\id_x} \}_{x \in \Ob (\Gamma)}$.
  Then there is a canonical $\Gamma$-functor
  \[\fr{C}^{\circ} \rtimes \Gamma \to \fr{C}.\]
\end{lemma}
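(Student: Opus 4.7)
The plan is to construct the $\Gamma$-functor $F : \fr{C}^{\circ} \rtimes \Gamma \to \fr{C}$ explicitly from the $\Gamma$-action $A : \Gamma \to \fr{C}$ and then verify that all the coherence data assembles correctly. On objects, $F$ will be the identity map on $\Ob(\Gamma)$. On 1-morphisms, given $\xi \in {}_y \Gamma_x$ and $\scrF \in {}_y (\fr{C}^{\circ} \rtimes \Gamma)_x^{\xi} = {}_y \fr{C}_y^{\id_y}$, I will set
\[ F(\scrF) \coloneq \scrF \circ A(\xi) \in {}_y \fr{C}_x^{\xi}. \]
On 2-morphisms, $F$ acts by horizontal composition with $\id_{A(\xi)}$.

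The first step is to spell out how the induced $\Gamma$-action $A^{\circ}$ on $\fr{C}^{\circ}$ is built from $A$. For each $\xi \in {}_y \Gamma_x$, the 1-morphism $A(\xi) \in {}_y \fr{C}_x^{\xi}$ is an equivalence (since $\Gamma$ is a groupoid and $A$ is a 2-functor); fix a quasi-inverse $A(\xi)^{-1} \simeq A(\xi^{-1})$ together with the unit/counit 2-isomorphisms coming from $A$. Then $A^{\circ}(\xi) : {}_x \fr{C}_x^{\id_x} \to {}_y \fr{C}_y^{\id_y}$ is (up to canonical natural isomorphism) conjugation $\scrF \mapsto A(\xi) \circ \scrF \circ A(\xi)^{-1}$, and this is the definition that makes $A^{\circ}$ the induced action used to define $\fr{C}^{\circ} \rtimes \Gamma$.

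The second step is to produce the coherent compositor for $F$. For composable $\xi \in {}_y \Gamma_x$, $\eta \in {}_z \Gamma_y$ and 1-morphisms $\scrG \in {}_z (\fr{C}^{\circ} \rtimes \Gamma)_y^{\eta}$, $\scrF \in {}_y (\fr{C}^{\circ} \rtimes \Gamma)_x^{\xi}$, composition in $\fr{C}^{\circ} \rtimes \Gamma$ yields $\scrG \circ A^{\circ}(\eta)(\scrF) \in {}_z (\fr{C}^{\circ} \rtimes \Gamma)_x^{\eta \xi}$, which $F$ sends to $\scrG \circ A^{\circ}(\eta)(\scrF) \circ A(\eta \xi)$. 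Using the coherence isomorphism $A(\eta \xi) \cong A(\eta) \circ A(\xi)$ from the 2-functoriality of $A$ and the defining natural isomorphism $A^{\circ}(\eta)(\scrF) \circ A(\eta) \cong A(\eta) \circ \scrF$ coming from the conjugation description, I obtain an invertible 2-cell
\[ F(\scrG \circ A^{\circ}(\eta)(\scrF)) \;\cong\; (\scrG \circ A(\eta)) \circ (\scrF \circ A(\xi)) \;=\; F(\scrG) \circ F(\scrF). \]
The unit compositor $F(\mathds{1}_x) \cong \mathds{1}_x$ comes from $A(\id_x) \cong \mathds{1}_x$.

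The third step is to verify the pentagon and unit coherence axioms for these compositors, and to check $\Gamma$-equivariance: since $\tilde{A}(\xi) = \mathds{1}_y \in {}_y \fr{C}_y^{\id_y} = {}_y (\fr{C}^{\circ} \rtimes \Gamma)_x^{\xi}$, we have $F(\tilde{A}(\xi)) = \mathds{1}_y \circ A(\xi) = A(\xi)$, so $F \circ \tilde{A} \cong A$ canonically. The main (though purely bookkeeping) obstacle will be the pentagon axiom for the compositor: it reduces to a diagram built entirely from the 2-functoriality coherences of $A$ applied to triples $(\zeta, \eta, \xi)$ of composable arrows in $\Gamma$ together with the defining coherences of $A^{\circ}$ as conjugation, so it commutes automatically, but writing down the pasting diagram cleanly is the most delicate part of the argument. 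Everything else is essentially forced by the definitions.
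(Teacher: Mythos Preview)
Your proposal is correct and spells out exactly the construction the paper has in mind; the paper itself gives no proof beyond the sentence ``The following lemmas follow easily from the definitions,'' so your explicit description of $F(\scrF) = \scrF \circ A(\xi)$, the compositor built from the 2-functoriality coherences of $A$, and the verification $F \circ \tilde{A} \cong A$ is precisely the content that has been suppressed.
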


\begin{lemma}\label{lem:mon_cat_compat_Gamma}
  Let $\fr{C}^{\circ} = \{\scrC_x^{\circ} \}_{x \in \Ob (\Gamma)}$ and $\fr{D}^{\circ} = \{\scrD_x^{\circ} \}_{x \in \Ob (\Gamma)}$ be collections of monoidal categories.
  For each $x \in \Ob (\Gamma)$, let $F_x : \scrC_x^{\circ} \to \scrD_x^{\circ}$ be a monoidal equivalence along with inverse $F_x^{-1} : \scrD_x^{\circ} \to \scrC_x^{\circ}$ such that the collection $\{F_x, F_x^{-1}\}_{x\in \Ob(\Gamma)}$ is $\Gamma$-compatible.
  Then there is a canonical $\Gamma$-functor
  \[\fr{C}^{\circ} \rtimes \Gamma \to \fr{D}^{\circ} \rtimes \Gamma,\]
  which is an equivalence of 2-categories.
\end{lemma}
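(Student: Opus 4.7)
The plan is to define the $\Gamma$-functor $\Phi : \fr{C}^\circ \rtimes \Gamma \to \fr{D}^\circ \rtimes \Gamma$ blockwise using the equivalences $F_x$, and then check that $\Gamma$-compatibility forces $\Phi$ to respect composition. On objects, we set $\Phi$ to be the identity on $\Ob(\Gamma)$. On a 1-morphism living in the $\xi$-block, that is an object $\scrF \in {}_y(\fr{C}^\circ \rtimes \Gamma)_x^\xi = \scrC_y^\circ$, we declare $\Phi(\scrF) \coloneq F_y(\scrF) \in \scrD_y^\circ = {}_y(\fr{D}^\circ \rtimes \Gamma)_x^\xi$, and we use $F_y$ on 2-morphisms. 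This is manifestly a $\Gamma$-functor in the sense of Lemma~\ref{lem:trivializing_bicats_over_gpds}: the unit $\tilde{A}(\xi) = \mathds{1}_y$ in $\fr{C}^\circ \rtimes \Gamma$ is sent to $F_y(\mathds{1}_y) \cong \mathds{1}_y$ via the monoidal structure isomorphism of $F_y$, and these isomorphisms assemble into the required 2-natural transformation.

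The substantive step is verifying that $\Phi$ respects horizontal composition of 1-morphisms. Given $\scrF \in {}_y(\fr{C}^\circ \rtimes \Gamma)_x^\xi$ and $\scrG \in {}_z(\fr{C}^\circ \rtimes \Gamma)_y^\eta$, composition is $\scrG \circ A^\circ(\eta)(\scrF) \in \scrC_z^\circ$, where $A^\circ(\eta) : \scrC_y^\circ \to \scrC_z^\circ$ is induced by the $\Gamma$-action on $\fr{C}^\circ$. On the $\fr{D}^\circ$-side, composition is $F_z(\scrG) \circ B^\circ(\eta)(F_y(\scrF))$ for the corresponding induced action $B^\circ$ on $\fr{D}^\circ$. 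The $\Gamma$-compatibility hypothesis is precisely that, under the canonical 2-functor $F : \textnormal{Free}(\fr{C}^\circ) \to \textnormal{Free}(\fr{D}^\circ)$ given by $G \mapsto F_{(-)} \circ G \circ F_{(-)}^{-1}$, the actions $A^\circ$ and $B^\circ$ agree up to specified natural isomorphism. Unwinding, this supplies a natural isomorphism $F_z \circ A^\circ(\eta) \cong B^\circ(\eta) \circ F_y$, which is exactly the compositor data $\Phi(\scrG \circ A^\circ(\eta)(\scrF)) \cong F_z(\scrG) \circ B^\circ(\eta)(F_y(\scrF))$ needed for $\Phi$ to be a 2-functor. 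The coherence of these compositors (associativity against triple composition) reduces to the coherence of the 2-functor $A^\circ : \Gamma \to \textnormal{Free}(\fr{C}^\circ)$ together with monoidality of each $F_x$, so no additional input is required.

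Finally, to see $\Phi$ is an equivalence of 2-categories, we run exactly the same construction with $F_x^{-1}$ in place of $F_x$ to produce $\Psi : \fr{D}^\circ \rtimes \Gamma \to \fr{C}^\circ \rtimes \Gamma$; the hypotheses are symmetric in the two collections. The unit and counit natural isomorphisms of each adjoint equivalence $F_x \dashv F_x^{-1}$ assemble, via the same $\Gamma$-compatibility argument applied to the 2-natural transformations witnessing $F \circ F^{-1} \simeq \id$ and $F^{-1} \circ F \simeq \id$ on $\textnormal{Free}(-)$, into pseudonatural equivalences $\Phi \Psi \simeq \id$ and $\Psi \Phi \simeq \id$. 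Checking biessential surjectivity is trivial (identity on objects) and local equivalence on Hom-categories follows from each $F_y$ being an equivalence.

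The only real obstacle is bookkeeping: the compositor 2-cells of $\Phi$ live in separate monoidal categories indexed by composable pairs $(\eta,\xi)$, and one has to confirm that the coherence isomorphism extracted from the $\Gamma$-compatibility of $\{F_x, F_x^{-1}\}$ satisfies the hexagon/associativity diagram in $\scrD_z^\circ$ for each triple $(\zeta,\eta,\xi)$. This is a purely formal diagram chase, propagating the coherence of the 2-functor $F : \textnormal{Free}(\fr{C}^\circ) \to \textnormal{Free}(\fr{D}^\circ)$ through the definition of composition in the semidirect product; once one adopts the convention of strictifying $\Gamma$ to a strict 2-category (which is harmless since $\Gamma$ is a 1-groupoid), the diagram reduces to a single pasting equality that holds on the nose.
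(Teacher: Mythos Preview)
Your proposal is correct and is essentially what the paper intends: the paper states that this lemma (together with Lemma~\ref{lem:trivializing_bicats_over_gpds}) ``follow[s] easily from the definitions'' and gives no further argument. Your write-up supplies exactly the blockwise construction and the coherence check via $\Gamma$-compatibility that the paper leaves implicit, so there is nothing to compare.
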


\subsubsection{Compatibilities with Block Conjugation}

Let $\beta \in \uW{\scrL'}{\scrL}$.
We have an isomorphism of Coxeter groups $(W_{\scrL}^{\circ}, S_{\scrL}^{\circ}) \to (W_{\scrL'}^{\circ}, S_{\scrL'}^{\circ})$ which induces monoidal equivalences of categories
\[{}^{\beta} (-) : \scrD_{\BS} (\fr{h}_{\k}, W_{\scrL}^\circ) \to \scrD_{\BS} (\fr{h}_{\k}, W_{\scrL'}^\circ) \hspace{0.5cm}\text{and}\hspace{0.5cm} {}^{\beta} (-) : \scrD (\fr{h}_{\k}, W_{\scrL}^\circ) \to \scrD (\fr{h}_{\k}, W_{\scrL'}^\circ).\]
Let $\ux$ be an endo-reduced expression for $w^{\beta}$. Let $\overline{\ux}$ denote the reversed expression for $\ux$. The functor
\[{}^{\ux} (-) : \Parity{\scrL}{\scrL}^{\BS, \circ} (\k) \to \Parity{\scrL'}{\scrL'}^{\BS, \circ} (\k) \hspace{1cm} \scrF \mapsto \scrE_{\ux}^{\scrL} \star \scrF \star \scrE_{\overline{\ux}}^{\scrL'}\]
is an equivalence of monoidal categories.

\begin{lemma}\label{lem:beta_conj_compat_with_Phi_BS}
  Let $\beta \in \uW{\scrL'}{\scrL}$. Let $\ux$ be an endo-reduced expression for $w^{\beta}$.
  There is a natural isomorphism of functors
  \[ \begin{tikzcd}[column sep=huge]
    \scrD_{\BS} (\fr{h}_{\k}, W_{\scrL}^\circ)
      \arrow[bend left=20]{r}[name=U,label=above:${}^{\ux} \Upsilon_{\scrL}^{\circ} (-)$]{}
      \arrow[bend right=20]{r}[name=D,label=below:$\Upsilon_{\scrL'}^{\circ} {}^\beta (-)$]{} &
      \Parity{\scrL'}{\scrL'}^{\BS, \circ} (\k).
      \arrow[shorten <=5pt,shorten >=0pt,Rightarrow,to path={(U) -- node[label=right:$C_{\ux}^{\scrL}$] {} (D)}]{}
    \end{tikzcd}\]
    Moreover, for all $\gamma \in \uW{\scrL''}{\scrL'}$ and endo-reduced expressions $\uy$ of $w^{\gamma}$, we have that $C_{\uy}^{\scrL'} \circ {}^{\uy} C_{\ux}^{\scrL} = C_{\uy \ux}^{\scrL}$.
\end{lemma}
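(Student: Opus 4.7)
The plan is to construct $C_{\ux}^{\scrL}$ object-by-object on the generators of $\scrD_{\BS}(\fr{h}_{\k}^*, W_{\scrL}^{\circ})$ and then check that the assignment is natural with respect to each of the defining morphisms of the diagrammatic Hecke category. Since both functors $\Phi_{\scrL'}^{\circ} \circ {}^{\beta}(-)$ and ${}^{\ux}(-) \circ \Phi_{\scrL}^{\circ}$ are monoidal, it suffices to define $C_{\ux}^{\scrL}$ on objects of the form $B_s$ for $s \in S_{\scrL}^{\circ}$ and verify compatibility with the generating morphisms (polynomials, upper/lower dots, trivalent vertices, $2m_{s,t}$-valent vertices).

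On an object $B_s$, we have $\Phi_{\scrL'}^{\circ}({}^{\beta} B_s) = \Phi_{\scrL'}^{\circ}(B_{\beta(s)}) = \scrE_{\underline{\beta(s)}}^{\scrL'}$, where $\underline{\beta(s)}$ is the fixed endo-reduced expression for the endosimple reflection $\beta(s) \in S_{\scrL'}^{\circ}$, while ${}^{\ux}\Phi_{\scrL}^{\circ}(B_s) = \scrE_{\ux \us \overline{\ux}}^{\scrL'}$. Observe that $\ux \us \overline{\ux}$ is itself an endo-reduced expression for $\beta(s)$ in $W$, so by Proposition \ref{prop:structure_of_min_IC} both objects are isomorphic to $\IC_{\beta(s)}^{\scrL'}$. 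The canonical Frobenius algebra structure of \S\ref{subsec:endosimps} is defined on both sides, and Lemma \ref{lem:can_of_frob_str} provides a \emph{unique} isomorphism of Frobenius algebras between them. We define $C_{\ux}^{\scrL}(B_s)$ to be this unique isomorphism, and then extend monoidally to tensor products of $B_s$'s.

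For the compatibility with generating morphisms, the polynomial, dot, and trivalent morphisms are built out of the Frobenius structure (see \S\ref{subsec:constr_of_Phi}), so compatibility for these follows immediately from $C_{\ux}^{\scrL}$ being a Frobenius algebra isomorphism on each $B_s$. The only nontrivial verification is for the $2m_{s,t}$-valent vertex: here we must check that $C_{\ux}^{\scrL}$ intertwines the morphism $\Phi_{\scrL'}^{\circ}(f_{\beta(s),\beta(t)})$ with ${}^{\ux}\Phi_{\scrL}^{\circ}(f_{s,t})$. By the characterization in Lemma \ref{lem:mst_pullbacks}, this morphism is uniquely determined by restriction to the open stratum $\eFl_w$, and pullback along $\IC_{w^{\beta}}^{\scrL'} \star (-) \star \IC_{w^{\beta,-1}}^{\scrL}$ takes the identity stalk isomorphism for $s,t$ to the identity stalk isomorphism for $\beta(s), \beta(t)$. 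This makes the verification straightforward. (The verification for the polynomial generators requires also invoking the $W$-equivariance of the identification $\theta : \Hom^{\bullet}(\scrE_{\emptyset}^{\scrL'}, \scrE_{\emptyset}^{\scrL'}) \cong R_{\scrZ}$ under block conjugation, which is immediate from the construction.)

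The cocycle condition $C_{\uy}^{\scrL'} \circ {}^{\uy}C_{\ux}^{\scrL} = C_{\uy\ux}^{\scrL}$ is then a matter of uniqueness. Both sides are natural isomorphisms from ${}^{\uy\ux}\Phi_{\scrL}^{\circ}$ to $\Phi_{\scrL''}^{\circ} \circ {}^{\gamma\beta}(-)$, and on each generator $B_s$ both restrict to an isomorphism of Frobenius algebras on $\IC_{(\gamma\beta)(s)}^{\scrL''}$; the uniqueness part of Lemma \ref{lem:can_of_frob_str} then forces equality. I expect the main technical hurdle to be bookkeeping around the various choices of endo-reduced expressions (in particular, that the chosen expression $\underline{(\gamma\beta)(s)}$ need not be $\uy\ux\us\overline{\ux}\,\overline{\uy}$), but this is fully absorbed into Lemma \ref{lem:can_of_frob_str}, so no genuinely new computation is required.
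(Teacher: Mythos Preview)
Your overall strategy matches the paper's exactly: define $C_{\ux}^{\scrL}(B_s)$ as the unique Frobenius-algebra isomorphism from Lemma~\ref{lem:can_of_frob_str}, deduce compatibility with the one-colour generators from the Frobenius property, and obtain the cocycle identity from uniqueness. The only substantive divergence is your treatment of the $2m_{s,t}$-valent vertex.

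There you propose to use the open-stratum characterization of Lemma~\ref{lem:mst_pullbacks}, arguing that block conjugation carries the identity stalk map to the identity stalk map. But this only controls the vertical arrows ${}^{\ux}f_{s,t}$ and $f_{u,v}$ after restriction; you still need the two horizontal arrows $C_{\ux}^{\scrL}$ (on $B_sB_t\cdots$ versus $B_tB_s\cdots$) to restrict to the \emph{same} map on the open stratum. Since $C_{\ux}^{\scrL}(B_s)$ is pinned down by compatibility with the \emph{counit} $\epsilon$ (restriction to $\eFl_e$), not by its behaviour on the open cell $\eFl_s$, this is not automatic, and your sketch does not address it. The missing step is a comparison between the counit normalization and the open-stratum normalization of the individual Frobenius isomorphisms.

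The paper avoids this bookkeeping by working entirely on the Frobenius side: it sandwiches both compositions between the ``all-dot'' maps ${}^{\ux}\eta_s^{\scrL}$ and $\epsilon_v^{\scrL'}$. Because $C_{\ux}^{\scrL}$ is a Frobenius isomorphism it intertwines these dot maps by construction, so the comparison reduces to computing $\epsilon\circ f\circ\eta$ for the $2m$-valent morphism itself. By Theorem~\ref{thm:endoscopy_neutral_block_Kac_moody} both sides yield the same element $\mu_{s,t}\in H_T^{2m_{s,t}}(\pt;\k)$, and since the ambient Hom space is free of rank~$1$ (and $\mu_{s,t}$ is a nonzerodivisor) this forces the square to commute. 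This route is more direct given how $C_{\ux}^{\scrL}$ was defined; your open-stratum approach could likely be completed, but it is not ``straightforward'' as stated.
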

\begin{proof}
  Since ${}^{\ux} \Upsilon_{\scrL}^{\circ} (-)$ and $\Upsilon_{\scrL'}^{\circ} {}^\beta (-)$ are monoidal functors, it suffices to define $C_{\ux}^{\scrL}$ just on $B_s$ where $s \in S_{\scrL}^{\circ}$.

  The sheaf $\scrE_{\ux}^{\scrL} \star \IC_s^{\scrL} \star \scrE_{\overline{\ux}}^{\scrL'}$ has a Frobenius algebra structure induced from that of $\IC_s^{\scrL}$. 
  Explicitly, the structure maps are given by
  \[\eta : \IC_e^{\scrL'} [-1] \stackrel{\sim}{\to} \scrE_{\ux}^{\scrL} \star \IC_e^{\scrL} \star \scrE_{\overline{\ux}}^{\scrL'} [-1] \stackrel{\id \star \eta_s \star \id}{\longrightarrow} \scrE_{\ux}^{\scrL} \star \IC_s^{\scrL} \star \scrE_{\overline{\ux}}^{\scrL'}, \]
  \[\epsilon : \scrE_{\ux}^{\scrL} \star \IC_s^{\scrL} \star \scrE_{\overline{\ux}}^{\scrL'} \stackrel{\id \star \epsilon_s \star \id}{\longrightarrow} \scrE_{\ux}^{\scrL} \star \IC_e^{\scrL} \star \scrE_{\overline{\ux}}^{\scrL'} [1] \stackrel{\sim}{\to} \IC_e^{\scrL'},\]
  \[\mu : \scrE_{\ux}^{\scrL} \star \IC_s^{\scrL} \star \scrE_{\overline{\ux}}^{\scrL'} \star \scrE_{\ux}^{\scrL} \star \IC_s^{\scrL} \star \scrE_{\overline{\ux}}^{\scrL'} \stackrel{\sim}{\to} \scrE_{\ux}^{\scrL} \star \IC_s^{\scrL} \star \IC_s^{\scrL} \star \scrE_{\overline{\ux}}^{\scrL'} \stackrel{\id \star m_s \star \id}{\longrightarrow} \scrE_{\ux}^{\scrL} \star \IC_s^{\scrL} \star \scrE_{\overline{\ux}}^{\scrL'},\]
  \[ \nu : \scrE_{\ux}^{\scrL} \star \IC_s^{\scrL} \star \scrE_{\overline{\ux}}^{\scrL'} \stackrel{\id \star \nu_s \star \id}{\longrightarrow}\scrE_{\ux}^{\scrL} \star \IC_s^{\scrL} \star \IC_s^{\scrL} \star \scrE_{\overline{\ux}}^{\scrL'} \stackrel{\sim}{\to} \scrE_{\ux}^{\scrL} \star \IC_s^{\scrL} \star \scrE_{\overline{\ux}}^{\scrL'} \star \scrE_{\ux}^{\scrL} \star \IC_s^{\scrL} \star \scrE_{\overline{\ux}}^{\scrL'}.\]
  Moreover, from \ref{subsec:ver_relns}, we see that $\epsilon \circ \eta = w^{\beta} (\alpha_s) \cdot \id_{\IC_e^{\scrL'}}$.

  Let $u = w^{\beta} s w^{\beta, -1}$ which is an endosimple reflection for $W_{\scrL'}^{\circ}$.
  There is a unique isomorphism $C_{\ux}^{\scrL} (B_s) : \scrE_{\ux}^{\scrL} \star \IC_s^{\scrL} \star \scrE_{\overline{\ux}}^{\scrL'} \to \scrE_{u}^{\scrL'}$ making the following diagram commute
  \begin{equation}\label{eq:beta_conj_compat_with_Phi_BS_0}
  \begin{tikzcd}
\scrE_{\ux}^{\scrL} \star \IC_s^{\scrL} \star \scrE_{\overline{\ux}}^{\scrL'} \arrow[r, "C_{\ux}^{\scrL} (B_s)"] \arrow[d, "\id \star \epsilon_s \star \id"'] & \scrE_{u}^{\scrL'} \arrow[d, "\epsilon_u"] \\
{\scrE_{\ux}^{\scrL} \star \IC_e^{\scrL} \star \scrE_{\overline{\ux}}^{\scrL'} [1]} \arrow[r, "\sim"]                                                         & \IC_e^{\scrL'} [1].                             
\end{tikzcd}
\end{equation}
Since $\epsilon_u \circ \eta_u = \alpha_u \cdot \id_{\IC_e^{\scrL'}} = \epsilon \circ \eta$ and $\epsilon_u \circ C_{\ux}^{\scrL} (B_s) = \epsilon$, it can be checked that $C_{\ux}^{\scrL} (B_s)$ must necessarily be an isomorphism of Frobenius algebras.
  
  It suffices to check that $C_{\ux}^{\scrL}$ defines a natural transformation just on generators for the morphisms in $\scrD_{\BS} (\fr{h}_{\k}, W_{\scrL}^\circ)$.
  The compatibility conditions for one-color morphisms follows from $C_{\ux}^{\scrL} (B_s)$ being an isomorphism of Frobenius algebras.
  As a result, we only need to check the compatibility condition for the $2m$-valent vertex.
  Let $t \in S_{\scrL}^{\circ}$ and $v = w^{\beta} t w^{\beta, -1}$. Note that $m_{s,t} = m_{u,v}$. We use the notation introduced in \S\ref{subsec:constr_of_Phi}.
  We must show that the following diagram commutes
  \begin{equation}\label{eq:beta_conj_compat_with_Phi_BS_1}
    \begin{tikzcd}
      {}^{\ux} (\scrF_s^{\scrL}) \arrow[d, "{{}^{\ux} f_{s,t}}"'] \arrow[r, "C_{\ux}^{\scrL}"] & \scrF_u^{\scrL'} \arrow[d, "{f_{u,v}}"] \\
      {}^{\ux} (\scrF_t^{\scrL}) \arrow[r, "C_{\ux}^{\scrL}"]                                  & \scrF_v^{\scrL'}.                       
      \end{tikzcd}
  \end{equation}
  Consider the following ``all-dot'' maps:
  \[{}^{\ux} \eta_s^{\scrL} \coloneq \id_{\scrE_{\ux}^{\scrL}} \star \underbrace{\epsilon_{s} \star \epsilon_{t} \star \ldots}_{m_{s,t} \text{ terms}} \star \id_{\scrE_{\overline{\ux}}^{\scrL'}} : \scrE_{\emptyset}^{\scrL'} [-m_{s,t}] \to {}^{\ux} (\scrF_s^{\scrL}),\]
  \[\epsilon_v^{\scrL'} \coloneq \underbrace{\epsilon_{v} \star \epsilon_{u} \star \ldots}_{m_{s,t} \text{ terms}}  : \scrF_v^{\scrL'} \to  \scrE_{\emptyset}^{\scrL'} [m_{s,t}].\]
  Since $C_{\ux}^{\scrL}$ is a morphism of Frobenius algebras on simple reflections and Theorem \ref{thm:endoscopy_neutral_block_Kac_moody}, under the isomorphism $\Hom (\scrE_{\emptyset}^{\scrL'} [-m_{s,t}], \scrE_{\emptyset}^{\scrL'} [m_{s,t}]) \cong H_T^{2m_{s,t}} (\pt; \k)$, we have that 
  \[\epsilon_v^{\scrL'} \circ (C_{\ux}^{\scrL} \circ {{}^{\ux} f_{s,t}}) \circ {}^{\ux} \eta_s^{\scrL} = \mu_{s,t} = \epsilon_v^{\scrL'} \circ (f_{u,v} \circ C_{\ux}^{\scrL}) \circ {}^{\ux} \eta_s^{\scrL},\]
  where $\mu_{s,t}$ is the product of all positive roots in the root subsystem of $\Phi_{\scrL}$ corresponding to $s,t$.
  Since $\Hom ( {}^{\ux} (\scrF_s^{\scrL}), \scrF_v^{\scrL'}) \cong \k$, we can conclude that $C_{\ux}^{\scrL} \circ {{}^{\ux} f_{s,t}} = f_{u,v} \circ C_{\ux}^{\scrL}$.

  Finally, if $\gamma \in \uW{\scrL''}{\scrL'}$ and $\uy$ is an endo-reduced expression of $w^{\gamma}$, then $C_{\uy}^{\scrL'} \circ {}^{\uy} C_{\ux}^{\scrL} = C_{\uy \ux}^{\scrL}$ by the uniqueness constraint of (\ref{eq:beta_conj_compat_with_Phi_BS_0}).
\end{proof}

Let $\beta \in \uW{\scrL'}{\scrL}$. We define a monoidal equivalence of categories
\[{}^{\beta} (-) : \Parity{\scrL}{\scrL}^{\circ} (\k) \to \Parity{\scrL'}{\scrL'}^{\circ} (\k), \qquad \scrF \mapsto \IC_{w^{\beta}}^{\scrL} \star \scrF \star \IC_{w^{\beta, -1}}^{\scrL'}.\]
Note that for $\scrF \in \Parity{\scrL}{\scrL}^{\circ} (\k)$ and $\gamma \in \uW{\scrL''}{\scrL'}$, there is a natural isomorphism ${}^{\gamma} ({}^{\beta} (\scrF)) \cong {}^{\gamma \beta} (\scrF)$ afforded by Lemma \ref{lem:assoc_of_rigid_min_IC}.
Moreover, this natural isomorphism is associative with respect to block composition.

\begin{lemma}\label{lem:beta_conj_compat_with_Phi} 
  There is a natural isomorphism making the following diagram commutative
  \[ \begin{tikzcd}
    {\scrD (\fr{h}_{\k}, W_{\scrL}^\circ)} \arrow[d, "{}^{\beta} (-)"'] \arrow[r, "\Upsilon_{\scrL}^{\circ}"] & \Parity{\scrL}{\scrL}^{\circ} (\k) \arrow[d, "{}^{\beta} (-)"] \\
    {\scrD (\fr{h}_{\k}, W_{\scrL'}^\circ)} \arrow[r, "\Upsilon_{\scrL'}^{\circ}"]                             & \Parity{\scrL'}{\scrL'}^{\circ} (\k).                          
    \end{tikzcd}
  \]
  Moreover, these isomorphisms are compatible with block composition.
\end{lemma}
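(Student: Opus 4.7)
The plan is to deduce this from the Bott-Samelson analogue (Lemma \ref{lem:beta_conj_compat_with_Phi_BS}) together with the Whittaker rigidification of minimal IC sheaves (Definition \ref{def:rigid_min_IC}, Lemma \ref{lem:assoc_of_rigid_min_IC}). The Whittaker rigidification is exactly the tool needed to convert Bott-Samelson-level natural isomorphisms into ones built out of canonical representatives that compose associatively along block multiplication.

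First I would reduce to the Bott-Samelson level. Since $\scrD(\fr{h}_{\k}^*, W_{\scrL}^\circ)$ is the Karoubian envelope of the additive hull of $\scrD_{\BS}(\fr{h}_{\k}^*, W_{\scrL}^\circ)$, and since both block conjugation functors as well as the equivalences $\Phi_{\scrL}^\circ, \Phi_{\scrL'}^\circ$ are additive and preserve direct summands, any natural isomorphism defined on Bott-Samelsons extends uniquely to one on $\scrD(\fr{h}_{\k}^*, W_{\scrL}^\circ)$.

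Next, I would fix an endo-reduced expression $\ux$ for $w^\beta$; such an expression exists because $w^\beta$ is minimal in $\beta$, so $\ell_\beta(w^\beta) = 0$ by Lemma \ref{lem:length_for_blocks}. Lemma \ref{lem:beta_conj_compat_with_Phi_BS} supplies a natural isomorphism $C_{\ux}^\scrL : {}^{\ux}\Phi_{\scrL}^\circ(-) \Rightarrow \Phi_{\scrL'}^\circ \circ {}^\beta(-)$, where ${}^{\ux}(-) = \scrE_{\ux}^{\scrL} \star (-) \star \scrE_{\overline{\ux}}^{\scrL'}$. To pass from ${}^{\ux}$ to ${}^{\beta}(-) = \IC_{w^\beta}^{\scrL} \star (-) \star \IC_{w^{\beta,-1}}^{\scrL'}$, iterated application of Corollary \ref{cor:conv_with_ICs_bad_s} to the simple-reflection factors of $\ux$ (none of which are endosimple at the relevant step, since $\ell_\beta(w^\beta) = 0$) identifies $\scrE_{\ux}^{\scrL}$ with the standard sheaf $\Delta_{w^\beta}^{\scrL} \cong \IC_{w^\beta}^{\scrL}$. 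Lemma \ref{lem:whit_and_convolution} then singles out a canonical isomorphism $\iota_{\ux} : \scrE_{\ux}^{\scrL} \to \IC_{w^\beta}^{\scrL}$ as the unique one for which left convolution with $\scrF^{\scrL'}$ agrees with the rigidification $\epsilon_\beta$, and similarly for $\iota_{\overline{\ux}}$ on the right. Transporting $C_{\ux}^\scrL$ through these identifications yields the desired natural isomorphism $C_\beta^\scrL$.

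Finally, the composability statement follows from the cocycle identity $C_{\uy\ux}^\scrL = C_{\uy}^{\scrL'} \circ {}^{\uy}C_{\ux}^\scrL$ in Lemma \ref{lem:beta_conj_compat_with_Phi_BS}, combined with the observation that $\uy\ux$ is endo-reduced for $w^{\gamma\beta} = w^\gamma w^\beta$ (Lemma \ref{lem:min_elts_in_blocks}). The main obstacle is verifying that the Whittaker-rigidified isomorphism $\iota_{\uy\ux}$ factors as $b_{\gamma,\beta} \circ (\iota_{\uy} \star \iota_{\ux})$ under the identification $\IC_{w^\gamma}^{\scrL'} \star \IC_{w^\beta}^{\scrL} \cong \IC_{w^{\gamma\beta}}^{\scrL}$ supplied by Lemma \ref{lem:assoc_of_rigid_min_IC}. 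Both sides lie in the same one-dimensional Hom space, and the defining property of $b_{\gamma,\beta}$, namely compatibility with left Whittaker convolution by $\scrF^{\scrL''}$, together with the uniqueness clause of Lemma \ref{lem:whit_and_convolution} that pins down $\iota_{\uy\ux}$, forces these isomorphisms to coincide.
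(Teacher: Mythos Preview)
Your proposal is correct and follows essentially the same approach as the paper: reduce to the Bott-Samelson level via Karoubian completion, invoke Lemma~\ref{lem:beta_conj_compat_with_Phi_BS}, and then compose with the rigidified-minimal-IC isomorphisms of Lemma~\ref{lem:assoc_of_rigid_min_IC} to pass from ${}^{\ux}(-)$ to ${}^{\beta}(-)$. The paper's proof is a one-sentence pointer to these two lemmas; you have simply unpacked the mechanics of that composition (identifying $\scrE_{\ux}^{\scrL}$ with $\IC_{w^\beta}^{\scrL}$ via the Whittaker rigidification and checking the cocycle condition) in more detail than the paper does.
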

\begin{proof}
  The statement follows from composing the natural isomorphism given in Lemma \ref{lem:beta_conj_compat_with_Phi_BS} (after taking the additive hull and idempotent completion) with the natural isomorphisms given in Lemma \ref{lem:assoc_of_rigid_min_IC}.
\end{proof}

Let $\beta \in \uW{\scrL'}{\scrL}$. Let $\dot{w}$ be a lift of $w^{\beta}$ in $N_G (T)$.
The isomorphism $\sigma (\dot{w}) : H_{\scrL}^{\circ} \to H_{\scrL}^{\circ}$ induces an equivalence of categories
\[ {}^{\beta} (-) : \Par (\Fl_{H_{\scrL}^{\circ}} / B_{\scrL}, \k) \to \Par (\Fl_{H_{\scrL'}^{\circ}} / B_{\scrL'}, \k).\]
Explicitly, for $\scrF \in  \Par (\Fl_{H_{\scrL}^{\circ}} / B_{\scrL}, \k)$, there is a canonical isomorphism
\[{}^{\beta} \scrF \cong \Delta_{w^{\beta}}^{H, \scrL} \star \scrF \star \Delta_{w^{\beta, -1}}^{H, \scrL'}.\]
In particular, $ {}^{\beta} (-)$ is independent of the choice of lift $\dot{w}$.

The following lemma is then immediate from Lemma \ref{lem:beta_conj_compat_with_Phi} and the uniqueness of relative pinnings. 
\begin{lemma}\label{lem:beta_conj_compat_with_Psi}
  There is a natural isomorphism making the following diagram commutative
  \[ \begin{tikzcd}
    {\Par (\Fl_{H_{\scrL}^{\circ}} / B_{\scrL}, \k)} \arrow[d, "{}^{\beta} (-)"'] \arrow[r, "\Psi_{\scrL}^{\circ}"] & {\PEE{\scrL}{\scrL}^{\circ} (\k) } \arrow[d, "{}^{\beta} (-)"] \\
    {\Par (\Fl_{H_{\scrL'}^{\circ}} / B_{\scrL'}, \k)} \arrow[r, "\Psi_{\scrL'}^{\circ}"]                           & {\PEE{\scrL'}{\scrL'}^{\circ} (\k).}                          
    \end{tikzcd}
  \]
  Moreover, these isomorphisms are compatible with block composition.
\end{lemma}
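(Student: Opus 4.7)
The proof proposal is as follows.

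The functor $\Psi_{\scrL}^{\circ}$ is built as the composition of two equivalences: the Riche--Williamson equivalence $\scrD(\fr{h}_{\k}^*, W_{\scrL}^\circ) \xrightarrow{\sim} \Par(\Fl_{H_{\scrL}^{\circ}}/B_{\scrL}, \k)$ and the functor $\Phi_{\scrL}^{\circ}$ from Theorem~\ref{thm:endoscopy_neutral_block_Kac_moody}. Lemma~\ref{lem:beta_conj_compat_with_Phi} already handles the second factor: it supplies a natural isomorphism $\Phi_{\scrL'}^{\circ}\circ {}^{\beta}(-) \Rightarrow {}^{\beta}(-)\circ \Phi_{\scrL}^{\circ}$ that is coherent for composable blocks. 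Thus it suffices to produce an analogous coherent natural isomorphism on the Soergel/parity side, i.e.\ to show that the Riche--Williamson equivalences $\scrD(\fr{h}_{\k}^*, W_{\scrL}^\circ) \cong \Par(\Fl_{H_{\scrL}^{\circ}}/B_{\scrL}, \k)$ intertwine the two incarnations of block conjugation ${}^{\beta}(-)$.

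The plan for this step is to use the standing freedom in choosing relative pinnings. On the diagrammatic side, ${}^{\beta}(-)$ is the 2-functor $\SBim(\phi_{\beta})$ of Lemma~\ref{lem:equiv_of_realizations} associated to the isomorphism of realizations $(W_{\scrL}^{\circ},\fr{h}_{\k}^*) \to (W_{\scrL'}^{\circ},\fr{h}_{\k}^*)$ induced by conjugation by $w^{\beta}$ (Lemma~\ref{lem:beta_conj_of_endo_weyl_gps}). On the geometric side, ${}^{\beta}(-)$ is implemented by $\Delta_{w^{\beta}}^{H,\scrL}\star(-)\star\Delta_{w^{\beta,-1}}^{H,\scrL'}$, which after unwinding equals the direct image along the group isomorphism $\sigma(\dot{w}):H_{\scrL}^{\circ}\to H_{\scrL'}^{\circ}$. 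But $\sigma(\dot{w})$ is the unique group isomorphism preserving the fixed relative pinnings of $H_{\scrL}^{\circ}$ and $H_{\scrL'}^{\circ}$, and its effect on the Kac--Moody root datum is exactly the conjugation isomorphism $\phi_{\beta}$. Therefore the Riche--Williamson equivalence, which is natural under morphisms of Kac--Moody root data compatible with pinnings, carries $\SBim(\phi_{\beta})$ to $\sigma(\dot{w})_*$, giving the desired natural isomorphism.

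For coherence on composable blocks $\gamma\in \uW{\scrL''}{\scrL'}$ and $\beta\in\uW{\scrL'}{\scrL}$, the key observation is that relative pinnings are unique up to unique isomorphism, so both the composition $\sigma(\dot{y})\circ \sigma(\dot{x})$ and the direct map $\sigma(\dot{y}\dot{x})$ preserve the fixed pinning, hence agree; this matches the groupoid structure on $\fr{H}$ described in the preceding subsection. Combined with the associativity of the Riche--Williamson equivalence and the composability statement already proved in Lemma~\ref{lem:beta_conj_compat_with_Phi}, we obtain the required cocycle condition.

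The only real content is the pinning-uniqueness step; once that is set up, assembling the full natural isomorphism by pasting the square for $\Phi_{\scrL'}^{\circ}\circ {}^{\beta}(-) \Rightarrow {}^{\beta}(-)\circ \Phi_{\scrL}^{\circ}$ with the square for Riche--Williamson is purely formal. The expected main obstacle, therefore, is a bookkeeping one: tracking that the block-conjugation natural transformations, rigidified minimal IC isomorphisms from Lemma~\ref{lem:assoc_of_rigid_min_IC}, and the pinning-induced isomorphisms $\sigma(\dot{w})$ all fit into a strictly commuting coherence diagram, rather than just commuting up to scalar. The uniqueness clauses in Lemma~\ref{lem:can_of_frob_str}, Lemma~\ref{lem:assoc_of_rigid_min_IC}, and the pinning rigidity resolve this, so no further calculation is required.
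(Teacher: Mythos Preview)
Your proposal is correct and matches the paper's approach exactly. The paper's proof is a single sentence --- ``immediate from Lemma~\ref{lem:beta_conj_compat_with_Phi} and the uniqueness of relative pinnings'' --- and your argument is precisely an unpacking of that sentence: factor $\Psi_{\scrL}^{\circ}$ through the Riche--Williamson equivalence and $\Phi_{\scrL}^{\circ}$, invoke Lemma~\ref{lem:beta_conj_compat_with_Phi} for the $\Phi$-square, and use that $\sigma(\dot{w})$ is the unique pinning-preserving isomorphism to handle the Riche--Williamson square and its coherence.
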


\subsection{Monodromic-Endoscopic Equivalence for All Blocks}\label{subsec:all_block_endo}

We can now state the extension of Theorem \ref{thm:endoscopy_neutral_block_Kac_moody} to the non-neutral blocks.
\begin{theorem}[Monodromic-Endoscopic equivalence for all blocks]\label{thm:endoscopy_Kac_moody}
  Let $\k \in \{\K, \O, \F, \overline{\Q}_{\ell}, \overline{\Z}_{\ell}\}$. 
  Assume that $G$ is of finite type and that $\fr{o} \subseteq \Ch^{\mu} (T, \k)$.
  There is an equivalence of 2-categories over $\Xi$,
  \[\Psi : \ParH (\k) \to \Parity{}{} (\k).\]
  For all blocks $\beta \in \uW{\scrL'}{\scrL}$, $\Psi$ restricts to an equivalence of categories
  \[{}_{\scrL'} \Psi_{\scrL}^{\beta} : \Par ( {}_{\scrL'} \Fl_{\scrL}^{\beta} / B_{\scrL}, \k) \to \Parity{\scrL'}{\scrL}^{\beta} (\k) \]
  such that for all $w \in \beta$, ${}_{\scrL'} \Psi_{\scrL}^{\beta}$ takes $\scrE_w^{H, \scrL}$ to $\scrE_w^{\scrL}$.
  Moreover, when $\beta$ is the neutral block, ${}_{\scrL} \Psi_{\scrL}^{\circ}$ is the equivalence $\Psi_{\scrL}^{\circ}$ from Theorem \ref{thm:endoscopy_neutral_block_Kac_moody}.
\end{theorem}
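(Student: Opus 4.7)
The plan is to apply the semi-direct product formalism developed just before the theorem: I will exhibit both 2-categories as semi-direct products of their neutral blocks with $\Xi$ and then deduce the full equivalence from the neutral block equivalence together with a compatibility of block-conjugation actions.

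First, on the monodromic side, Corollary \ref{cor:trivialization_of_bicats} (which relies crucially on the Whittaker-model rigidification from Lemma \ref{lem:assoc_of_rigid_min_IC}) gives a 2-functor $\IC_{\min} : \Xi \to \Parity{}{}(\k)$. Interpreting this as a $\Xi$-action on the collection $\fr{D}^\circ \coloneq \{\Parity{\scrL}{\scrL}^\circ(\k)\}_{\scrL \in \fr{o}}$ via block conjugation ${}^\beta(-) = \IC_{w^\beta}^{\scrL'} \star (-) \star \IC_{w^{\beta,-1}}^{\scrL}$, Proposition \ref{prop:structure_of_min_IC} says that $(-) \star \IC_{w^{\beta,-1}}^{\scrL'}$ is an equivalence from $\Parity{\scrL'}{\scrL}^\beta(\k)$ onto the neutral block of $\scrL'$. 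By Lemma \ref{lem:trivializing_bicats_over_gpds} these equivalences assemble into a $\Xi$-equivalence
\[ \Parity{}{}(\k) \simeq \fr{D}^\circ \rtimes \Xi. \]

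Second, I repeat this on the endoscopic side. The standards $\Delta_{w^\beta}^{H,\scrL}$ are canonically defined objects of $\Par({}_{\scrL'}\Fl_{\scrL}^\beta/B_{\scrL}, \k)$, and because lengths add for minimal elements in composable blocks and the relative pinnings of the endoscopic groups have been fixed functorially in lifts of $w^\beta$, we have canonical and coherent isomorphisms $\Delta_{w^\gamma}^{H,\scrL'} \star \Delta_{w^\beta}^{H,\scrL} \cong \Delta_{w^{\gamma\beta}}^{H,\scrL}$. These assemble into a 2-functor $\Delta_{\min} : \Xi \to \ParH(\k)$ and hence into a $\Xi$-action on $\fr{C}^\circ \coloneq \{\Par(\Fl_{H_\scrL^\circ}/B_\scrL,\k)\}$; the equivalences of Corollary \ref{cor:block_translation_for_endo_cat} then give a $\Xi$-equivalence
\[ \ParH(\k) \simeq \fr{C}^\circ \rtimes \Xi. \]

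Third, apply Theorem \ref{thm:endoscopy_neutral_block_Kac_moody} separately at each $\scrL \in \fr{o}$ to obtain monoidal equivalences $\Psi_\scrL^\circ : \Par(\Fl_{H_\scrL^\circ}/B_\scrL, \k) \to \Parity{\scrL}{\scrL}^\circ(\k)$. By Lemma \ref{lem:beta_conj_compat_with_Psi}, these equivalences intertwine the block-conjugation actions on the two sides (in the coherent way demanded by the definition of $\Xi$-compatibility). Lemma \ref{lem:mon_cat_compat_Gamma} then promotes this family to an equivalence $\fr{C}^\circ \rtimes \Xi \stackrel{\sim}{\to} \fr{D}^\circ \rtimes \Xi$; combining with the first two steps yields the desired equivalence $\Psi : \ParH(\k) \to \Parity{}{}(\k)$ of 2-categories over $\Xi$. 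For $w \in \beta$, tracing the equivalences through the factorization $\scrE_w^{H,\scrL} \cong \Delta_{w^\beta}^{H,\scrL} \star \scrE_{w^{\beta,-1}w}^{H,\scrL'}$ and using $\Psi(\Delta_{w^\beta}^{H,\scrL}) \cong \IC_{w^\beta}^{\scrL}$ together with Proposition \ref{prop:structure_of_min_IC} yields $\Psi(\scrE_w^{H,\scrL}) \cong \scrE_w^{\scrL}$.

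The main obstacle is ensuring the coherence of the $\Xi$-action on the monodromic side. The minimal IC sheaves are only defined up to non-canonical isomorphism, and naive choices of the structural isomorphisms $\IC_{w^\gamma}^{\scrL'} \star \IC_{w^\beta}^{\scrL} \cong \IC_{w^{\gamma\beta}}^{\scrL}$ do not satisfy the pentagon coherence needed to define a 2-functor out of $\Xi$. Rigidification via the Kirillov/Whittaker model (Lemma \ref{lem:desc_of_whit_cats} and Lemma \ref{lem:assoc_of_rigid_min_IC}) pins these isomorphisms down uniquely, but this step is precisely what forces the finite-type hypothesis on $G$, since outside finite type the Whittaker category decomposes in a way too coarse to produce a single uniquely characterized simple Whittaker object $\scrF^\scrL$. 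Once the rigidification is in hand, the verification of $\Xi$-compatibility of $\{\Psi_\scrL^\circ\}$ is the content of Lemma \ref{lem:beta_conj_compat_with_Psi}, which rests ultimately on the uniqueness of Frobenius structures on individual endosimple reflections (Lemma \ref{lem:can_of_frob_str}) and on the uniqueness of relative pinnings of the endoscopic groups.
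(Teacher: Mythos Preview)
Your proposal is correct and follows essentially the same approach as the paper: both sides are exhibited as semidirect products of their neutral blocks with $\Xi$ (via $\IC_{\min}$ and $\Delta_{\min}$ respectively, invoking Lemma~\ref{lem:trivializing_bicats_over_gpds}, Proposition~\ref{prop:structure_of_min_IC}, and Corollary~\ref{cor:block_translation_for_endo_cat}), and the neutral block equivalences $\Psi_{\scrL}^{\circ}$ are shown to be $\Xi$-compatible via Lemma~\ref{lem:beta_conj_compat_with_Psi}, after which Lemma~\ref{lem:mon_cat_compat_Gamma} finishes. One small indexing slip: in your factorization $\scrE_w^{H,\scrL} \cong \Delta_{w^\beta}^{H,\scrL} \star \scrE_{w^{\beta,-1}w}^{H,\scrL'}$, the second factor lives in the neutral block of $\scrL$, not $\scrL'$, since $w^{\beta,-1}w \in W_{\scrL}^{\circ}$.
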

\begin{proof}
  By Corollary \ref{cor:trivialization_of_bicats}, there is a $\Xi$-action  on $\Parity{}{} (\k)$.
  By Lemma \ref{lem:trivializing_bicats_over_gpds}, this action induces a $\Xi$-functor
  \begin{equation}\label{eq:endoscopy_Kac_Moody_1}
    \Parity{}{}^{\circ} (\k) \rtimes \Xi \to \Parity{}{} (\k).
  \end{equation}
  Proposition \ref{prop:structure_of_min_IC} tells us that (\ref{eq:endoscopy_Kac_Moody_1}) is an equivalence of 2-categories over $\Xi$.
  
  Similarly, one can define a 2-functor $\Xi \to  \ParH (\k)$ by $\beta \mapsto \Delta_{w^{\beta}}^{H, \scrL}$.
  This induces a $\Xi$-functor
  \begin{equation}\label{eq:endoscopy_Kac_Moody_2}
    \Par^{H, \circ} (\k) \rtimes \Xi \to \ParH (\k).
  \end{equation}
  By Corollary \ref{cor:block_translation_for_endo_cat}, the 2-functor (\ref{eq:endoscopy_Kac_Moody_2}) is also an equivalence of 2-categories over $\Xi$.

  By Theorem \ref{thm:endoscopy_neutral_block_Kac_moody}, for each $\scrL \in \fr{o}$, there is an equivalence of monoidal categories
  \[ \Psi_{\scrL}^{\circ} :  \Par (\Fl_{H_{\scrL}^{\circ}} / B_{\scrL}, \k) \stackrel{\sim}{\to} \Parity{\scrL}{\scrL}^{\circ} (\k).\]
  From Lemma \ref{lem:beta_conj_compat_with_Psi}, the collection $\{\Psi_{\scrL}^{\circ} \}_{\scrL \in \fr{o}}$ is a $\Xi$-compatible collection.
  By Lemma \ref{lem:mon_cat_compat_Gamma}, we get an equivalence of 2-categories over $\Xi$,
  \begin{equation}\label{eq:endoscopy_Kac_Moody_3}
    \Par^{H, \circ} (\k) \rtimes \Xi \stackrel{\sim}{\to} \Parity{}{}^{\circ} (\k) \rtimes \Xi.
  \end{equation}
  The proof then follows from combining the equivalences (\ref{eq:endoscopy_Kac_Moody_1}), (\ref{eq:endoscopy_Kac_Moody_2}), and (\ref{eq:endoscopy_Kac_Moody_3}).
\end{proof}

\begin{remark}
  The only place where finite type is essentially used is in the construction of the 2-functor $\IC_{\min} : \Xi \to \Parity{}{} (\k)$ from Corollary \ref{cor:trivialization_of_bicats}.
  We conjecture that this functor should exist even when $G$ is of affine type. Whenever $\IC_{\min}$ can be constructed, the argument for Theorem \ref{thm:endoscopy_Kac_moody} we have provided will generalize.
\end{remark}

        \appendix
        \section{Six-Functor Formalisms}\label{apdx:A}

Six-functor formalisms for étale constructible sheaves has been well-developed (cf., \cite{LZ, Mann, Sch}). 
However, a six-functor formalism
for the complex analytic topology is not detailed in the literature.
We will develop
such a six-functor formalism for three notable settings:
\begin{enumerate}
    \item constructible sheaves on schemes over $\C$;
    \item constructible sheaves on algebraic stacks over $\C$;
    \item twisted equivariant constructible sheaves on stacks over $\C$.
\end{enumerate}
The scheme and algebraic stack settings require essentially no modifications from work covered in existing literature. 
For these, we will only recall the basic constructions and some key results.

The twisted equivariant setting is not substantively covered in existing literature. 
The main goal of this appendix is then to provide a thorough treatment of the twisted equivariant setting which allows for more general coefficient rings and monodromy parameters than covered by \cite{Gou} and \cite{LY}.

\subsubsection{Preliminaries on \texorpdfstring{$(\infty, 1)$}{inf}-categories}

Unlike the rest of the paper, this appendix will work with the formalism of $\infty$-categories, and more precisely the model of quasicategories presented in \cite{HTT} and \cite{HA}.

We denote by $\infCat$ (resp. $\infCat_{\k}$) the $\infty$-category of small stable $\infty$-categories (resp. the $\infty$-category of $\k$-linear small $\infty$-categories).
Similarly, we denote by $\Pr$ (resp. $\Pr_{\k}$) the $\infty$-category of presentable stable $\infty$-categories (resp. the $\infty$-category of $\k$-linear presentable $\infty$-categories).
We will write $\Pr^L_{\k}$ (resp. $\Pr^R_{\k}$ or $\Pr^{LR}_{\k}$) for the 1-full subcategory of $\Pr_{\k}$ where we restrict morphisms to continuous (resp. cocontinuous or both continuous and cocontinuous) functors. 

The Lurie tensor product $\otimes$ equips $\Pr^L_{\k}$ with the structure of a symmetric monoidal category.
If $\scrA, \scrB, \scrC \in \Alg (\Pr_L^{\k})$ are algebras, then we can consider the category of $(\scrA, \scrB)$-bimodules ${}_{\scrA} \textnormal{Bimod}_{\scrB} (\Pr_{\k}^L)$.
The relative Lurie tensor product defines a functor 
\[(-) \otimes_{\scrB} (-) : {}_{\scrA} \textnormal{Bimod}_{\scrB} (\Pr_{\k}^L) \times {}_{\scrB} \textnormal{Bimod}_{\scrC} (\Pr_{\k}^L) \to {}_{\scrA} \textnormal{Bimod}_{\scrC} (\Pr_{\k}^L).\]
Similarly, if we just consider left $\scrB$-modules, $\Mod_{\scrB} (\Pr_{\k}^L)$, the relative tensor product gives rise to a functor
\[(-) \otimes_{\scrB} (-) : {}_{\scrA} \textnormal{Bimod}_{\scrB} (\Pr_{\k}^L) \times \Mod_{\scrB} (\Pr_{\k}^L) \to \Mod_{\scrA} (\Pr_{\k}^L).\]

There is a functor
\[\ind : \infCat \to \Pr^L,\]
called the \emph{ind-completion} which freely adds filtered colimits to a small $\infty$-category \cite[\S5.5]{HA}.
Similarly, there is a functor
\[(-)^{c} : \Pr^L \to \infCat,\]
which takes a presentable $\infty$-category to its small subcategory of compact objects.

For a (dg-)algebra $A$ over $\k$, we will write $D(A)$ for the $\infty$-derived category of $A$.

\subsection{Six-Functor Formalisms}

We first recall some preliminaries on 3-functor and 6-functor formalisms. For a comprehensive treatise, see works of Scholze \cite{Sch} and Mann \cite{Mann}.

Let $\scrC$ be a category admitting all finite limits. Let $E$ be a class of morphisms of $\scrC$ which is stable under pullback, composition, and contains all isomorphisms.
The pair $(\scrC, E)$ is called a \emph{geometric setup}.
Associated to a geometric setup $(\scrC, E)$ is the symmetric monoidal $\infty$-category $\Corr (\scrC, E)$ of correspondences.
If $E$ consists of all morphisms in $\scrC$, we will shorten $\Corr (\scrC, E)$ to $\Corr (\scrC)$.
The objects of  $\Corr (\scrC, E)$ are the same as the objects of $\scrC$. The 1-morphisms $X \to Y$ in $\Corr (\scrC, E)$ are given by correspondence diagrams
\[X \leftarrow Z \to Y,\]
where the maps $Z \to Y$ belong to $E$. The monoidal structure on $\Corr (\scrC, E)$ is via the Cartesian product in $\scrC$.

\begin{definition}\label{def:3functor}
    A \emph{3-functor formalism} is a lax-symmetric monoidal functor $D : \Corr (\scrC, E) \to \infCat$.
\end{definition}

Let $D : \Corr (\scrC, E) \to \infCat$ be a 3-functor formalism. Let $f : X \to Y$.
\begin{enumerate}
    \item We can form a correspondence $Y \stackrel{f}{\leftarrow} X \stackrel{=}{\to} X$. The \emph{pullback} functor along $f$, denoted by $f^* : D( Y) \to D(X)$, is the functor obtained by applying $D$ to the aforementioned correspondence.
    \item If $f \in E$, we can form a correspondence $X \stackrel{=}{\leftarrow} X \stackrel{f}{\to} X$. The \emph{proper pushforward} functor along $f$, denoted by $f_! : D(X) \to D(Y)$, is the functor obtained by applying $D$ to the aforementioned correspondence diagram.
    \item Since $D$ is a lax-symmetric monoidal functor, we can equip $D(X)$ with the structure of a symmetric monoidal category given by the composition of functors
        \[D(X) \times D(X) \stackrel{\boxtimes}{\to} D(X \times X) \stackrel{\Delta^*}{\to} D(X),\]
        where the first map, called the \emph{external tensor product}, comes from the monoidality constraint and the second map is pullback along the diagonal map $\Delta : X \to X \times X$.
        The resulting bifunctor on $D(X)$, denoted $\otimes^L$, is called the \emph{sheaf tensor product}.
\end{enumerate}

\begin{definition}
    A 3-functor formalism $D : \Corr (\scrC, E) \to \infCat$ is said to be a \emph{6-functor formalism} if all the functors $f^*$, $f_!$ and $A \otimes^L -$ admit right adjoints.
\end{definition}

Let $D : \Corr (\scrC, E) \to \infCat$ be a 3-functor formalism. Let $f : X \to Y$.
\begin{enumerate}
    \item The \emph{pushforward} functor along $f$, denoted by $f_* : D(X) \to D(Y)$, is the right adjoint of $f^*$.
    \item Assume that $f \in E$. The \emph{exceptional pullback} functor along $f$, denoted by $f^! : D(Y) \to D(X)$, is the right adjoint of $f_!$.
    \item The \emph{sheaf Hom} functor, denoted by $\RHom (-, -) : D(X)^{\op} \times D(X) \to D(X)$, is the right adjoint of $\otimes^L$.
\end{enumerate}

\begin{remark}
    It is often useful to consider 3-functor/6-functor formalisms which take values in $\Pr$, $\Pr_{\k}$, or $\infCat_{\k}$ instead of $\infCat$.
    All the definitions given in this section make sense with these replacements.
\end{remark}

\subsection{Constructible Sheaves on Schemes}

Let $\Sch_{\C}$ be the category of separated schemes of finite type over $\C$.
Given a complex scheme $X$, we can equip its $\C$-points, $X (\C)$, with its analytic topology. There exists a six-functor formalism
\[D (-, \k) : \Corr (\Sch_{\C}) \to \Pr_{\k}^L, \hspace{1cm} X \mapsto D (X, \k),\]
where $D (X, \k)$ is the unbounded $\infty$-derived category of $\k$-valued sheaves on $X(\C)$ (see \cite{HTT}). 
The construction of this six-functor formalism is well-known. For example, such a six-functor formalism is carefully detailed in \cite{Sch} where $\Sch_{\C}$ is replaced by locally compact Hausdorff topological spaces.
The only substantive difference when working instead with $\Sch_{\C}$ is that the Stone--Čech compactification should be replaced by the Nagata compactification. 

There are a few features of sheaves which are not encoded in abstract six-functor formalisms. Since they will be significant for our purposes, we will detail them here.
\begin{enumerate}
    \item If $\k \to \k'$ is a ring homomorphism, there is an \emph{extension of scalars} functor
            \[\k' \otimes_{\k} (-) : D (X, \k) \to D(X, \k').\]
            Extension of scalars commutes with $\boxtimes$ and $f^*$.
    \item Let $\Sh (X, \k)$ denote the ordinary abelian category of sheaves on $X (\C)$ with $\k$-coefficients. It is the heart of the natural $t$-structure on $D(X, \k)$.
            In particular, for all $n \in \Z$, there are cohomology functors
            \[H^n : D(X, \k) \to \Sh (X, \k),\]
            which are intertwined by the suspension functor, $H^0 ((-) [1]) = H^1 (-)$.
\end{enumerate}

We can now begin the process of restricting the six-functor formalism to the so-called \emph{constructible sheaves}.
Let $\Loc (X, \k)$ denote the full (abelian) subcategory of $\Sh (X, \k) \subset D (X, \k)$ consisting of local systems on $X$.
We will write $\Loc_{\textnormal{f}} (X, \k)$ for the full subcategory of $\Loc (X, \k)$ consisting of finite rank local systems.

\begin{definition}\label{def:lisse}
    A bounded complex of sheaves $\scrF \in D(X, \k)$ is said to be \emph{lisse} if $H^n (\scrF) \in \Loc_{\textnormal{f}} (X, \k)$ for all $n \in \Z$.
    Let $D_{\lisse} (X, \k)$ denote the full subcategory of $D(X, \k)$ consisting of lisse sheaves.

    Define the category of \emph{ind-lisse} sheaves, denoted $D_{\indlisse} (X, \k)$, as the full subcategory of $D(X, \k)$ consisting of sheaves which are equivalent to filtered colimits of lisse sheaves.
\end{definition}

\begin{definition}\label{def:constructible}
    A bounded complex of sheaves $\scrF \in D(X, \k)$ is said to be \emph{constructible} if there exists a stratification $\{X_\lambda\}_{\lambda \in \Lambda}$ such that $\scrF\vert_{X_\lambda}$ is
    lisse for all $\lambda \in \Lambda$.
    Let $D_{\cons} (X, \k)$ denote the full subcategory of $D(X, \k)$ consisting of constructible sheaves.
  
    Define the category of \emph{ind-constructible} sheaves, denoted $D_{\ic} (X, \k)$ as the full subcategory of $D(X, \k)$ consisting of sheaves which are equivalent
    to filtered colimits of constructible sheaves.
  \end{definition}
  
  \begin{lemma}[{\cite[Proposition 8.2]{HRS}}]\label{lem:cpts_in_Dic}
    A sheaf $\scrF \in D_{\ic} (X, \k)$ is compact if and only if $\scrF$ is constructible. Likewise, $\scrF \in D_{\indlisse} (X, \k)$ is compact if and only if $\scrF$ is lisse.
  \end{lemma}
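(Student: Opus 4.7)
The plan is to identify $D_{\ic}(X, \k)$ with the ind-completion $\Ind(D_{\cons}(X, \k))$ and invoke the standard characterization of compact objects in an ind-completion, and likewise for the lisse variant.

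First, I would prove that every constructible complex $\scrF$ is a compact object of $D_{\ic}(X, \k)$, i.e.\ that $\Hom_{D(X,\k)}(\scrF, -)$ commutes with filtered colimits of ind-constructible sheaves. By taking a stratification $\{X_\lambda\}$ adapted to $\scrF$ and using the recollement triangles $j_! j^* \to \id \to i_* i^*$ together with an induction on the number of strata (and on cohomological amplitude via the standard truncation triangles), this reduces to checking compactness of $j_{\lambda!} L[n]$ for $L \in \Loc_{\mathrm{f}}(X_\lambda, \k)$. By adjunction this further reduces to compactness of $L$ itself in $D_{\ic}(X_\lambda, \k)$; since $X_\lambda$ is a smooth complex variety and $L$ is locally free of finite rank, this in turn follows from the fact that $\k$ is a compact object of $D(\k)$, using a finite covering of $X_\lambda$ by contractible analytic opens and the Mayer--Vietoris sequence.

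Second, the universal property of the ind-completion $\Ind(D_{\cons}(X, \k))$ gives a canonical functor
\[
\Phi \colon \Ind(D_{\cons}(X, \k)) \longrightarrow D_{\ic}(X, \k)
\]
extending the inclusion $D_{\cons}(X,\k) \hookrightarrow D_{\ic}(X,\k)$. By Step~1, the image of $D_{\cons}(X,\k)$ lands in the compact objects, so $\Phi$ is fully faithful on the generators, and hence fully faithful on all of $\Ind(D_{\cons}(X,\k))$ since both sides compute $\Hom$'s via the same filtered colimit formula. The functor is essentially surjective by the very definition of $D_{\ic}(X,\k)$ as filtered colimits of constructible sheaves. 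Hence $\Phi$ is an equivalence.

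Finally, compact objects in $\Ind(\scrC)$ for any small $\infty$-category $\scrC$ are exactly the retracts of objects of $\scrC$ (this is \cite[Lemma 5.4.2.4]{HTT}). To conclude that compact objects of $D_{\ic}(X, \k)$ are precisely constructible sheaves, it therefore remains to show that $D_{\cons}(X,\k)$ is closed under retracts inside $D(X,\k)$: if $\scrF$ is a retract of a constructible $\scrG$, stratify $\scrG$ as above; then each $H^n(\scrF\vert_{X_\lambda})$ is a retract of $H^n(\scrG\vert_{X_\lambda})$, hence is again a finite-rank local system, so $\scrF$ is constructible. The lisse case is the same argument with ``constructible'' replaced by ``lisse'' throughout and all stratification steps omitted. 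The main obstacle is Step~1 -- the compactness of constructible sheaves in $D_{\ic}(X,\k)$ -- which requires using the finite-type analytic geometry of $X$; the remainder is formal.
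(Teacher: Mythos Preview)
The paper does not give its own proof of this lemma; it simply records the statement with a citation to \cite[Proposition 8.2]{HRS}. Your outline is the standard argument and is correct: reduce compactness of constructible complexes to that of $j_{\lambda!}L$ via recollement and truncation, then to compactness of a finite-rank local system on a stratum via the adjunction $(j_{\lambda!}, j_\lambda^*)$ (noting $j_\lambda^*$ preserves filtered colimits), and conclude by identifying $D_{\ic}(X,\k)$ with $\Ind(D_{\cons}(X,\k))$ and invoking \cite[Lemma 5.4.2.4]{HTT} together with closure of constructibility under retracts.

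One small point worth tightening in Step~1: your Mayer--Vietoris reduction to ``$\k$ is compact in $D(\k)$'' implicitly uses that $X_\lambda(\C)$ admits a \emph{finite} cover by contractible analytic opens, which is true for finite-type complex varieties but deserves a word of justification (e.g.\ via a finite triangulation or a finite affine cover with contractible pieces). Alternatively, since $L$ is locally free of finite rank it is dualizable, so $\Hom(L,-)\cong \Hom(\uk_{X_\lambda}, L^\vee\otimes^L -)$ reduces everything to compactness of the constant sheaf, which is perhaps cleaner to state. Either way the argument goes through, and matches what is proved in \cite{HRS}.
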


  \begin{corollary}\label{cor:Dic_is_presentable}
    The inclusions $D_{\lisse} (X, \k)$ and $D_{\cons} (X, \k) \subset D_{\ic} (X, \k)$ extend to a colimit-preserving equivalences
    \[ \ind (D_{\lisse} (X, \k)) \cong D_{\indlisse} (X, \k) \qquad\text{and}\qquad \ind (D_{\cons} (X,\k)) \cong D_{\ic} (X,\k).\]
    As a result, $D_{\ic} (X, \k)$ is presentable.
  \end{corollary}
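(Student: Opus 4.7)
The plan is to derive both equivalences as a formal consequence of the universal property of Ind-completion together with Lemma~\ref{lem:cpts_in_Dic}; I will give the argument for the constructible case, the lisse case being identical with $D_{\lisse}$ and $D_{\indlisse}$ replacing $D_{\cons}$ and $D_{\ic}$ throughout.

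First I would observe that $D_{\cons}(X,\k)$ is essentially small and idempotent complete. Essential smallness follows from the standard fact that constructible sheaves on a finite-type $\C$-scheme are classified, up to equivalence, by a set of combinatorial data: a choice of stratification together with a bounded complex of finite-rank local systems on each stratum. Idempotent completeness is automatic since $D_{\cons}(X,\k)$ is a thick subcategory of the stable $\infty$-category $D(X,\k)$. Since $D_{\ic}(X,\k)$ has filtered colimits (being closed under filtered colimits in $D(X,\k)$ by its very definition), the inclusion $D_{\cons}(X,\k) \hookrightarrow D_{\ic}(X,\k)$ extends uniquely to a colimit-preserving functor
\[ F \colon \Ind(D_{\cons}(X,\k)) \longrightarrow D_{\ic}(X,\k) \]
that restricts to the identity on $D_{\cons}(X,\k)$.

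Next I would verify that $F$ is both essentially surjective and fully faithful. Essential surjectivity is immediate from Definition~\ref{def:constructible}: any $\scrG \in D_{\ic}(X,\k)$ is by construction equivalent to a filtered colimit $\colim \scrG_j$ of constructible sheaves, hence is the image under $F$ of the corresponding formal Ind-object $\hat{\colim}\, \scrG_j$. For fully faithfulness, write $\scrF = \hat{\colim}\, \scrF_i$ and $\scrG = \hat{\colim}\, \scrG_j$ in $\Ind(D_{\cons}(X,\k))$ with $\scrF_i, \scrG_j \in D_{\cons}(X,\k)$. By the universal property of Ind-completion,
\[ \Hom_{\Ind(D_{\cons}(X,\k))}(\scrF, \scrG) \simeq \lim_i \colim_j \Hom_{D_{\cons}(X,\k)}(\scrF_i, \scrG_j). \]
On the target side, applying Lemma~\ref{lem:cpts_in_Dic}, each $\scrF_i$ is compact in $D_{\ic}(X,\k)$, so $\Hom_{D_{\ic}(X,\k)}(\scrF_i, -)$ commutes with the filtered colimit defining $F\scrG$. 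Combined with the natural identification $\Hom_{D_{\ic}(X,\k)}(F\scrF, -) \simeq \lim_i \Hom_{D_{\ic}(X,\k)}(\scrF_i, -)$ arising from $F\scrF = \colim F\scrF_i$, this produces the same double limit, and $F$ is an equivalence.

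Presentability of $D_{\ic}(X,\k)$ is then immediate, since any Ind-completion of an essentially small $\infty$-category is compactly generated and therefore presentable by \cite[\S5.3.5]{HTT}. The only genuine content of the argument is packaged into Lemma~\ref{lem:cpts_in_Dic}, which identifies the compact objects on both sides; everything else is a routine application of the universal property of Ind-completion. The main obstacle worth flagging is the essential smallness of $D_{\cons}(X,\k)$, which is needed in order for $\Ind(D_{\cons}(X,\k))$ to be a well-formed presentable $\infty$-category, but this is standard and does not require new input.
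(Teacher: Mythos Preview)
Your proposal is correct and is precisely the standard argument the paper has in mind: the corollary is stated without proof immediately after Lemma~\ref{lem:cpts_in_Dic}, and the intended deduction is exactly the universal-property-of-$\Ind$ argument you wrote out, using that lemma to supply compactness of constructible (resp.\ lisse) objects for the fully faithfulness step.
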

  
  \begin{lemma}\label{lem:sf_for_constructible}
    The six sheaf functors take constructible sheaves to constructible sheaves. As a result, there are six-functor formalisms
    \begin{align*}
      D_{\cons} (-, \k) : \Corr (\Sch_{\C}) &\to \infCat_{\k}, & D_{\ic} (- , \k): \Corr (\Sch_{\C}) &\to \Pr_{\k}^L, \\
      X &\mapsto D_{\cons} (X, \k), & X &\mapsto D_{\ic} (X, \k).
    \end{align*}
  \end{lemma}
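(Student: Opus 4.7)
The plan is to start from the unrestricted six-functor formalism $D(-,\k)\colon \Corr(\Sch_\C) \to \Pr_\k^L$ and prove that each of the six operations preserves constructibility; the packaging as 3-/6-functor formalisms on $D_{\cons}$ and $D_{\ic}$ will then follow formally from $\Ind$-completion together with Lemma \ref{lem:cpts_in_Dic} and Corollary \ref{cor:Dic_is_presentable}.

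First I would handle preservation of constructibility one functor at a time. For $f^*$, the claim is essentially tautological: a stratification of the target pulls back to one of the source, and a lisse sheaf pulls back to a lisse sheaf. For $\otimes^L$, after passing to a common algebraic refinement of the two relevant stratifications the claim reduces to showing that the derived tensor product of two finite-rank local systems is bounded with finite-rank cohomology sheaves, which holds because $\k$ has finite global dimension. The key nontrivial operation is $f_!$: by Nagata compactification, factor $f = \bar{f} \circ j$ with $j$ an open immersion and $\bar{f}$ proper. For $j$, constructibility of $j_!\scrF$ is immediate from the open--closed recollement and the fact that the extension by zero of a lisse sheaf to the ambient stratified scheme is constructible. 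For $\bar{f}$ proper, we have $\bar{f}_! = \bar{f}_*$, and constructibility follows from the analytic analogue of Verdier's finiteness theorem: choose a Whitney stratification of the source along which both $\bar{f}$ and $\scrF$ are stratified, so that $\bar{f}$ becomes a stratified fibration with compact fibres, and conclude by proper base change and the local acyclicity of such fibrations. Once $f^*$ and $f_!$ are controlled, the remaining three functors are obtained via Verdier duality and adjunction: $\DD$ preserves constructibility by interchanging $\Delta$- and $\nabla$-extensions, so $f_* = \DD f_! \DD$ and $f^! = \DD f^* \DD$ restrict, and constructibility of $\RHom(\scrF,\scrG)$ follows by reducing along a common stratification to $\RHom$ of finite-rank local systems.

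With stability under the six operations in hand, the restriction of the lax-symmetric monoidal functor $D(-,\k)$ to the (non-full) subcategory of constructible objects produces the 3-functor formalism
\[
D_{\cons}(-,\k)\colon \Corr(\Sch_\C) \to \infCat_\k,
\]
and the right adjoints assemble, on a per-morphism basis, into the remaining three operations. To pass to the ind-constructible version, I would apply $\Ind$ pointwise. By Corollary \ref{cor:Dic_is_presentable} the values are $D_{\ic}(X,\k)$, and by Lemma \ref{lem:cpts_in_Dic} the left adjoints $f^*$, $f_!$ and $\otimes^L$ preserve compact objects, so their ind-extensions agree canonically with the restrictions of the ambient operations on $D(-,\k)$. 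Because $\Ind\colon \infCat_\k \to \Pr_\k^L$ is a symmetric monoidal functor, this yields the lax-symmetric monoidal functor $D_{\ic}(-,\k)\colon \Corr(\Sch_\C) \to \Pr_\k^L$; the right adjoints in $\Pr_\k^L$ exist by the adjoint functor theorem, giving the full six-functor formalism.

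The main obstacle is stability under $f_!$. Unlike the étale setting, where this is Deligne's theorem packaged with proper base change, in the complex-analytic topology one must invoke stratification theory (Whitney stratifications, Thom--Mather stratified fibrations) and Verdier's finiteness theorem for analytically constructible complexes, and then match the resulting analytic stratifications with algebraic ones to stay inside the category of algebraically constructible sheaves on separated $\C$-schemes of finite type. Once this cornerstone is established, everything else is formal manipulation of adjoints and ind-completions.
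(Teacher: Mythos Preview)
Your proposal is correct and follows the same two-step outline as the paper: first establish that the six operations preserve constructibility, then pass to $D_{\ic}$ via ind-completion using Corollary~\ref{cor:Dic_is_presentable}. The paper's own proof is a two-sentence sketch that simply cites the first step as a ``classical fact'' without unpacking it; your careful treatment of each functor (especially the Nagata-plus-finiteness argument for $f_!$ and the Verdier-duality reduction for $f_*$, $f^!$, $\RHom$) is exactly the content that the paper is deferring to the literature.
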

  \begin{proof}
    The six-functor formalism for constructible sheaves follows from the classical fact that the six functors preserve constructible sheaves.
    The extension to ind-constructible follows from Corollary \ref{cor:Dic_is_presentable}.
  \end{proof}

  \begin{remark}
    Let $\k \to \k'$ be a ring homomorphism. Extension of scalars preserves constructibility, so there is a functor
    \[\k' \otimes_{\k} (-) : D_{\cons} (X, \k) \to D_{\cons} (X, \k').\]
     Similarly, there is an extension of scalars functor for ind-constructible sheaves.
  \end{remark}

  \begin{remark}\label{rem:homotopy_cat_of_constr_sh}
    The homotopy category of $D_{\cons} (X, \k)$ recovers the usual bounded derived category of constructible sheaves on $X$ with coefficients in $\k$.
  \end{remark}

\subsection{Constructible Sheaves on Stacks}

Let $\Stk_{\C}$ denote the category of algebraic stacks locally of finite type over $\C$.\footnote{On occasion, we write stack to refer to a (2,1)-sheaf with the fppf topology. We reserve algebraic stack to mean a stack which has representable diagonal and a smooth atlas.}
For an algebraic stack $X$, we can define the over slice category $\Sch_{/X}$ as the category of representable morphisms $S \to X$ where $S$ is a scheme. 

\begin{definition}\label{def:cons_for_stks}
    Let $X \in \Stk_{\C}$. Define a category $D_{\ic} (X, \k)$ given by,
    \[D_{\ic} (X, \k) = \lim_{\stackrel{\longleftarrow}{S \in \Stk_{/X}}} D_{\ic} (S, \k),\]
    where the transition maps are given by $*$-pullbacks.
    
    We define $D_{\cons} (X, \k) \subset D_{\ic} (X, \k)$ as the full subcategory of $D_{\ic} (X, \k)$ of objects $\scrF$ such that for all $f : S
    \to X$ with $S \in \Sch_{\C}$, one has that $f^* \scrF \in D_{\cons} (S, \k)$.

    We call the objects of $D_{\cons} (X, \k)$ (resp. $D_{\ic} (X, \k)$) are called \emph{constructible} (resp. \emph{ind-constructible}) sheaves on $X$.
\end{definition}  

\begin{proposition}[{\cite[Proposition A.5.16]{Mann}}]\label{prop:six_functor_for_stks}
    There is an extension of the 6-functor formalism for schemes to a 6-functor formalism
    \[D_{\ic} (-, \k) : \Corr (\Stk_{\C}) \to \Pr^L \hspace{1cm} X \mapsto D_{\ic} (X, \k).\]
\end{proposition}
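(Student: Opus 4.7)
The plan is to invoke the general extension machinery of Mann (c.f., \cite[Proposition A.5.14]{Mann}) which takes a $6$-functor formalism on a small geometric setup and extends it along a class of ``descent-worthy'' morphisms to a larger geometric setup. The inputs are twofold: first, a $6$-functor formalism $D_{\ic}(-, \k) : \Corr(\Sch_\C) \to \Pr^L$ (which we have from Lemma \ref{lem:sf_for_constructible}); second, a proof that this formalism satisfies descent along smooth surjections of schemes. Granting these, Mann's machinery automatically produces the desired extension to $\Corr(\Stk_\C)$ whose underlying presheaf of $\infty$-categories agrees with Definition \ref{def:cons_for_stks}.

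First I would set up the geometric setup $(\Stk_\C, E)$ where $E$ consists of all morphisms of algebraic stacks (so that the relevant $f_!$ exists for arbitrary morphisms). Then I would verify smooth hyperdescent: for any smooth surjection $f : U \to X$ of schemes with Čech nerve $U_\bullet \to X$, the natural functor
\[D_{\ic}(X, \k) \to \lim_{[n] \in \Delta} D_{\ic}(U_n, \k)\]
is an equivalence in $\Pr^L$. This follows from classical smooth descent for sheaves on $X(\C)$ combined with Lemma \ref{lem:cpts_in_Dic}, which identifies compact objects and thereby reduces to descent at the level of constructible sheaves.

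Next, I would check that the $3$-functor formalism (that is, $f^*$, $f_!$, $\otimes^L$ together with base change and projection-formula compatibilities) is itself smooth-local in the sense required by Mann's extension theorem: $f^*$ is smooth-local by the descent above, and $f_!$ is smooth-local because it is constructed from proper pushforward and open extension-by-zero, both of which satisfy smooth base change on schemes. The passage from scheme targets to stack targets in $f_!$ is handled by the general machinery, which produces $f_!$ for a morphism $X \to Y$ of stacks by descending along a smooth atlas $Y' \to Y$ from $f_!$ for the pulled-back scheme morphism.

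The main obstacle will be the verification of smooth descent in the appropriate form for \emph{ind}-constructible (not just constructible) sheaves; one must be careful because the limit in $\Pr^L$ is computed by right adjoints, so one effectively has to check $*$-descent for $D_{\ic}$ against a smooth hypercover. The cleanest way to handle this is to pass through Corollary \ref{cor:Dic_is_presentable} to identify $D_{\ic}(X, \k) \cong \Ind(D_{\cons}(X, \k))$ and note that classical smooth descent for constructible sheaves on complex analytic spaces lifts to the $\Ind$-completion since $\Ind$ commutes with limits of diagrams in $\infCat_{\k}$ along left-exact functors. Once descent is in hand, the six-functor extension is formal from Mann's theorem and automatically recovers the definition of $D_{\ic}(X, \k)$ given in Definition \ref{def:cons_for_stks}.
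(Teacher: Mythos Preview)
The paper provides no proof of its own for this proposition; it simply cites Mann's result. Your strategy of invoking Mann's extension machinery, fed by the scheme-level six-functor formalism and smooth descent, is exactly the intended content of that citation, so at the level of overall approach you are aligned with the paper.

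There is, however, a genuine gap in your verification of smooth descent for $D_{\ic}$. The claim that ``$\Ind$ commutes with limits of diagrams in $\infCat_{\k}$ along left-exact functors'' is not correct: $\Ind$ is a left adjoint and does not preserve limits in general, so one cannot deduce descent for $D_{\ic}$ simply by $\Ind$-completing descent for $D_{\cons}$. The paper handles smooth $*$-descent differently (see Proposition~\ref{prop:smooth_star_descent}, stated just after the proposition you are proving): rather than reducing to constructible descent, it observes that for a smooth surjective morphism $f : X \to Y$ of schemes the functor $f^* : D_{\ic}(Y,\k) \to D_{\ic}(X,\k)$ is conservative (a stalk computation), and then applies \cite[Proposition~6.18]{Sch}, which upgrades conservativity to full $*$-descent in the presence of the six-functor structure. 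This argument is both shorter and avoids the $\Ind$-versus-limits subtlety entirely; you should replace your descent step with it.
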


\subsubsection{Smooth Descent}

In the definition of ind-constructible sheaves on algebraic stacks, we take a limit over a large class of objects. 
This is beneficial since it allows us to build the six-formalism via Kan extensions. However, in practice
this also makes constructing sheaves on an algebraic stack rather difficult due to the extensive
number of compatibilities one must check. A convenient way to simplify this complexity is via smooth $*$-descent.

\begin{definition}\label{def:descent}
    Let $D$ be a 6-functor formalism on $\Sch_{\C}$. Let $f : X \to Y$ be a smooth surjective morphism of schemes.
    Let $X^{n/Y}$ denote the $n$-fold product of $X$ over $Y$. This comes equipped with projection maps $f_n : X^{n/Y} \to Y$.
    We say that $D$ satisfies \emph{smooth $*$-descent along $f$} if
    \[(f_\Delta^*) : D (Y) \to \lim_{\stackrel{\longleftarrow}{\Delta}} D (X^{n/Y})\]
    is an isomorphism where the transition maps of the limit are given by $*$-pullback.
  \end{definition}
  
  \begin{proposition}\label{prop:smooth_star_descent}
    The 6-functor formalism $D_{\ic}$ on $\Sch_{\C}$ satisfies smooth $*$-descent.
  \end{proposition}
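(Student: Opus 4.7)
The plan is to first establish smooth $*$-descent for the full unbounded $\infty$-derived category $D(-, \k)$, and then verify that this descent restricts to the ind-constructible subcategories.

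First I would show that the functor $D(-, \k) : \Sch_{\C}^{\op} \to \Pr^L$ satisfies smooth hyperdescent. The key geometric input is that a smooth surjection $f : X \to Y$ of complex schemes admits analytic-local sections, by the complex analytic submersion theorem applied to $f(\C) : X(\C) \to Y(\C)$. Consequently any smooth cover refines to an analytic open cover of $Y(\C)$, and descent for $D(-, \k)$ reduces to analytic hyperdescent of the $\infty$-derived category of sheaves of $\k$-modules, which is standard (cf. \cite{HTT, HA}). This gives an equivalence $f_\Delta^* : D(Y, \k) \xrightarrow{\sim} \lim_{\Delta} D(X^{n/Y}, \k)$.

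Next I would show that the subcategories $D_{\ic}(-, \k) \subseteq D(-, \k)$ are preserved and reflected by smooth $*$-pullback. Preservation is part of Lemma \ref{lem:sf_for_constructible}. For reflection, suppose $\scrF \in D(Y, \k)$ with $f^*\scrF \in D_{\ic}(X, \k)$. Since $D_{\ic}$ is stable under filtered colimits by Corollary \ref{cor:Dic_is_presentable} and $f^*$ is continuous and conservative (the latter because $f$ is surjective and analytic-locally split), it suffices to reduce to the case where $f^*\scrF$ is constructible. Choose a stratification $\{X_\lambda\}$ of $X$ on which $f^*\scrF$ is lisse. By Chevalley's theorem (smooth morphisms of finite-type schemes are open and send constructible sets to constructible sets), the subsets $\{f(X_\lambda)\}$ give a constructible partition of $Y$, which can be refined to an algebraic stratification $\{Y_\mu\}$ of $Y$. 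On each stratum, the pullback of $\scrF|_{Y_\mu}$ along the smooth surjection $f^{-1}(Y_\mu) \to Y_\mu$ is lisse, and lisse-ness descends along smooth surjections (again by reduction to the analytic topology, where local systems form a hypercomplete stack). Hence $\scrF|_{Y_\mu}$ is lisse and $\scrF$ is constructible.

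Combining these two steps, the equivalence of Step 1 restricts to the desired equivalence $D_{\ic}(Y, \k) \xrightarrow{\sim} \lim_{\Delta} D_{\ic}(X^{n/Y}, \k)$. The main obstacle will be Step 2, namely the verification that constructibility is smooth-local on the target; while both ingredients (Chevalley's theorem and analytic descent for local systems) are classical, one must be careful to package them $\infty$-categorically so that the equivalence obtained in Step 1 truly restricts to the ind-constructible subcategories rather than a larger full subcategory of the limit.
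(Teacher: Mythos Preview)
Your approach differs from the paper's, which gives a two-line proof: by \cite[Proposition~6.18]{Sch}, smooth $*$-descent for a six-functor formalism reduces to conservativity of $f^*$ for smooth surjections $f$, and that is immediate from a stalk computation. Scholze's criterion works entirely within $D_{\ic}$, using the projection formula and base change already packaged into the six-functor formalism, and never passes through the ambient category $D(-,\k)$.

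Your Step~1 is fine, but Step~2 has a gap. The sentence ``it suffices to reduce to the case where $f^*\scrF$ is constructible'' does not follow from the stated hypotheses. Continuity and conservativity of $f^*$, together with closure of $D_{\ic}(X,\k)$ under filtered colimits, do not imply that an $\scrF \in D(Y,\k)$ with $f^*\scrF$ ind-constructible can be written as a filtered colimit of objects with constructible pullback: a presentation $f^*\scrF \cong \colim_i \scrG_i$ with $\scrG_i \in D_{\cons}(X,\k)$ produces no candidate objects on $Y$ whatsoever. Reflection of ind-constructibility along smooth surjections is plausible, but proving it seems to require something like expressing $\scrF$ as the geometric realization of the simplicial object $(f_\bullet)_!(f_\bullet)^*\scrF$ and using that $f_!$ preserves $D_{\ic}$---an argument already close in strength to the descent statement you want. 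The paper's route sidesteps this issue entirely.
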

  \begin{proof}
    By \cite[Proposition 6.18]{Sch}, it suffices to show that for any smooth surjective morphism $f : X \to Y$, the functor $f^* : D_{\ic} (Y, \k) \to D_{\ic} (X, \k)$
    is conservative. This can be shown via a simple stalk computation.
  \end{proof}

Let $\pi : U \to X$ be a smooth atlas of $X$. Consider the Čech nerve $C_\bullet (\pi)$. It is the simplicial object in $\Sch_{\C}$ whose degree $n$ term is $U^{n/X}$.
Note that $U^{n/X} \in \Sch_{/X}$.

\begin{lemma}\label{lem:cech_complex_suffices}
  The induced functor
  \[ \lim_{\stackrel{\longleftarrow}{S \in C_\bullet (\pi)}} D_{\ic} (S, \k) \to \lim_{\stackrel{\longleftarrow}{S \in \Sch_{/ X}}} D_{\ic} (S, \k) \]
  is an equivalence of categories.
\end{lemma}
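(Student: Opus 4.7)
The plan is to recognize the target of this functor as $D_{\ic}(X, \k)$ itself: by Definition \ref{def:cons_for_stks}, $\lim_{S \in \Sch_{/X}} D_{\ic}(S, \k) = D_{\ic}(X, \k)$, so the assertion reduces to showing that $*$-pullback along the augmentation $\epsilon_\bullet : C_\bullet(\pi) \to X$ induces an equivalence
\[ D_{\ic}(X, \k) \xrightarrow{\sim} \lim_{[n] \in \Delta} D_{\ic}(U^{n/X}, \k). \]
This is the natural stacky analogue of Proposition \ref{prop:smooth_star_descent}, and I would deduce it by bootstrapping from the scheme-level descent statement.

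Explicitly, I would construct a candidate inverse. Given a compatible family $\{\scrF_n\}_{n \geq 0}$ on $C_\bullet(\pi)$ and any $f : S \to X$ in $\Sch_{/X}$, form the pullback $p_S : U \times_X S \to S$; since $\pi$ is representable, smooth, and surjective, so is $p_S$, and $U \times_X S$ is a scheme. The Čech nerve $C_\bullet(p_S)$ has $n$-th term $U^{n/X} \times_X S$ and sits in a natural simplicial morphism $C_\bullet(p_S) \to C_\bullet(\pi)$ given by projection onto the first factor. Pulling back $\{\scrF_n\}$ along this morphism yields a compatible system on $C_\bullet(p_S)$, which by Proposition \ref{prop:smooth_star_descent} descends to a unique $\scrF_S \in D_{\ic}(S, \k)$. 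For $g : S' \to S$ in $\Sch_{/X}$, base change produces a simplicial map $C_\bullet(p_{S'}) \to C_\bullet(p_S)$ over $C_\bullet(\pi)$, and applying smooth $*$-descent to both $p_S$ and $p_{S'}$ yields a canonical identification $g^* \scrF_S \cong \scrF_{S'}$. Assembling these defines the desired object of $D_{\ic}(X, \k)$.

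That the two functors are mutually inverse follows immediately from Proposition \ref{prop:smooth_star_descent}: starting from $\scrG \in D_{\ic}(X, \k)$, the reconstruction on each $S$ pulls $\scrG\vert_S$ back along the Čech nerve of $p_S$ and then descends, recovering $\scrG\vert_S$; the opposite composition tautologically returns the input family. The principal (essentially $\infty$-categorical) obstacle is organizing the Čech nerves $C_\bullet(p_S)$ into a functor $\Sch_{/X} \to \Fun(\Delta^{\op}, \Sch_{\C})$ augmented over $C_\bullet(\pi)$, so that all higher simplicial compatibilities arise naturally from base-change squares. Once this coherence is set up, Proposition \ref{prop:smooth_star_descent} supplies the remaining descent data and forces the two functors to be mutual inverses.
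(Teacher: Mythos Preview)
Your proposal is correct and follows essentially the same approach as the paper: both arguments construct the inverse by, for each scheme $T \to X$, forming the Čech nerve of the pullback $U \times_X T \to T$, mapping it to $C_\bullet(\pi)$ via projection, pulling back the given family, and then applying Proposition~\ref{prop:smooth_star_descent} to descend to $T$. The paper's proof is slightly more concrete (drawing out the commutative cube that produces the simplicial map), while you are more explicit about the $\infty$-categorical coherence issue; but the substance is identical.
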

\begin{proof}
  The lemma amounts to showing that for every $\scrF \in \lim_{S \in C_\bullet (\pi)} D_{\ic} (S, \k)$, we can uniquely define $\scrF (T)$ for all $T \in \Sch_{/X}$ such
  that for any $f : T \to T'$ in $\Sch_{/X}$, there is an isomorphism $f^* \scrF (T') \cong \scrF (T)$.
  Moreover, these isomorphisms should extend those given in $\scrF (U^{n/X})$.

  Let $Z = T \times_X U$ which is a scheme equipped with maps $p_T : Z \to T$ and $p_U^0 : Z \to U$. Consider the commutative cube
\[
    \begin{tikzcd}
      Z^{1/T} \arrow[rr] \arrow[dd] \arrow[rd, "p_U^1"] &                               & Z \arrow[rd] \arrow[dd] &              \\
      & U^{1/X} \arrow[rr] \arrow[dd] &                         & U \arrow[dd] \\
      Z \arrow[rd, "p_U^0"] \arrow[rr]                  &                               & T \arrow[rd]            &              \\
      & U \arrow[rr]                  &                         & X,
    \end{tikzcd}
\]
  where $p_U^1 : Z^{T/1} \to U^{X/1}$ follows from the universal property of pullbacks. We can repeat this process to obtain morphisms $p_U^n : Z^{n/T} \to U^{n/X}$
  which fit in the commutative diagram
\[
    \begin{tikzcd}
      \ldots \arrow[r] \arrow[r, shift right] \arrow[r, shift left=2] \arrow[r, shift left] & U^{2/X} \arrow[r, shift left] \arrow[r] \arrow[r, shift right]
      & U^{1/X} \arrow[r] \arrow[r, shift left]                     & U \arrow[r]                     & X.           \\
      \ldots \arrow[r] \arrow[r, shift left] \arrow[r, shift right] \arrow[r, shift left=2] & Z^{2/T} \arrow[r, shift left] \arrow[u, "p_U^2"'] \arrow[r] \arrow[r, shift
      right] & Z^{1/T} \arrow[r] \arrow[u, "p_U^1"'] \arrow[r, shift left] & Z \arrow[r] \arrow[u, "p_U^0"'] & T \arrow[u]
    \end{tikzcd}
\]
  Note that $C_\bullet (\pi) = U^{\bullet/X}$ and the above diagram gives a morphism of simplicial schemes $p_U : Z^{\bullet/T} \to U^{\bullet/X}$. Therefore, we can
  define $\scrG \in \lim D_{\ic} (Z^{n/T}, \k)$ by $p_u^* \scrG$.
  By Proposition \ref{prop:smooth_star_descent}, we have that there is a unique object $\scrF (T) \in D_{\ic} (T)$ such that its $*$-pullback to $Z^{n/T}$ is $\scrG$.
  It is clear that this object satisfies the pullback compatibilities extending $\scrF$ uniquely to an object in $D_{\ic} (X, \k)$.
\end{proof}

\subsubsection{Equivariant Sheaves}

Let $H$ be an algebraic group over $\C$. By algebraic group, we will always mean a group scheme of finite type which is not necessarily reduced. Suppose $H$ acts on a scheme $S \in \Sch_{\C}$. We write $H \backslash S$ for the quotient stack afforded by this action.
The $H$-equivariant constructible sheaves on $S$ should be thought of as the category of constructible sheaves on $H\backslash S$.

In the remainder of the section, we will describe $D_{\ic} (H\backslash S, \k)$ in the language of categorical representation theory. This perspective
is particularly nice as it allows one to construct $D_{\ic} (H \backslash S, \k)$ from $D_{\ic} (S, \k)$ via homological algebra.
This perspective will be invaluable when defining twisted equivariant sheaves.

We can equip $D_{\ic} (H, \k)$ with a monoidal structure. Let $m : H \times H \to H$ denote the multiplication map. The convolution is defined as
\[\scrF \star \scrG \coloneq m_! (\scrF \boxtimes \scrG ),\]
where $\scrF \in D_{\ic} (H, \k)$ and $\scrG \in D_{\ic} (X, \k)$. 

One can also define a $D_{\ic} (H, \k)$-module structure on $D_{\ic} (S, \k)$ through similar means. Namely, let $a : H \times S \to S$ denote the action map. We can then define
\[\scrF \star \scrG \coloneq a_! (\scrF \boxtimes \scrG),\]
where $\scrF \in D_{\cons} (H, \k)$ and $\scrG \in D_{\cons} (S, \k)$. 

\begin{definition}\label{def:inv_and_coinv}
  Let $\scrC \in \Mod_{D_{\ic} (H, \k)} (\Pr^L)$. We define 
  \begin{enumerate}
    \item the category of \emph{$H$-invariants} $\scrC^H \coloneq \Hom_{D_{\ic} (H, \k)} (D(\k), \scrC)$,
    \item the category of \emph{$H$-coinvariants} $\scrC_H \coloneq \scrC \otimes_{D_{\ic} (H, \k)} D(\k)$.
  \end{enumerate}
\end{definition}

\begin{theorem}[{\cite[Theorem B.1.2]{Gai}}]\label{thm:invs_vs_coinvs}
    Let $\scrC \in \Mod_{D_{\ic} (H, \k)} (\Pr^L)$. 
    The natural forgetful functor $\For_H : \scrC^H \to \scrC$ admits a left adjoint $\Av_{H!} : \scrC \to \scrC^H$ which factors through an equivalence of categories
    \[\scrC_H \to \scrC^H.\]
\end{theorem}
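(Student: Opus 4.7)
The plan is to construct the averaging functor $\Av_{H!}$ and then identify its factorization through $\scrC_H$ via a self-duality property of $D_{\ic}(H, \k)$, which ultimately reflects the smoothness of $H$ (a categorical ambidexterity for $BH \to \pt$). First, I would construct $\Av_{H!}$ by the adjoint functor theorem in $\Pr^L$. Using the cobar resolution of $D(\k)$ as a $D_{\ic}(H, \k)$-module, $\scrC^H$ is computed as a cosimplicial totalization of iterated tensor powers of $D_{\ic}(H, \k)$ with $\scrC$, and $\For_H$ corresponds to evaluation at the initial vertex; this exhibits $\For_H$ as a continuous functor between presentable stable $\infty$-categories, so a left adjoint $\Av_{H!}$ exists.

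Next, I would construct the comparison functor $\phi : \scrC_H \to \scrC^H$. By the universal property of the relative tensor product, $\phi$ is determined by a $D_{\ic}(H, \k)$-balanced functor $D(\k) \otimes \scrC \to \scrC^H$, which one obtains from the module structure on $\scrC^H$ together with the adjunction unit $\eta : \id \to \For_H \circ \Av_{H!}$. A direct computation shows that the composite $\scrC \to \scrC_H \xrightarrow{\phi} \scrC^H$ agrees with $\Av_{H!}$, yielding the claimed factorization.

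The heart of the argument is showing $\phi$ is an equivalence. Both sides are continuous $D_{\ic}(H, \k)$-linear functors of $\scrC$, so a standard $\Pr^L$-argument reduces this to the universal case $\scrC = D_{\ic}(H, \k)$ with its regular right action; there, the claim becomes a self-duality of $D(\k)$ as a $D_{\ic}(H, \k)$-bimodule, witnessed by the dualizing complex $\omega_H \cong \underline{\k}_H[2\dim H]$. The principal obstacle is the coherent construction of this duality datum in the $\infty$-categorical setting: one must produce compatible evaluation and coevaluation maps using the smoothness isomorphism $p^! \cong p^*[2\dim H]$ for $p : H \to \pt$ and verify the triangle identities up to coherent homotopy. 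This is essentially an ambidexterity statement for $BH \to \pt$ and can be extracted from the six-functor formalism of Proposition~\ref{prop:six_functor_for_stks} (proper base change plus smoothness), though carrying out the full coherence carefully is the technical core of \cite[Theorem B.1.2]{Gai}.
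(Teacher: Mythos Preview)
Your sketch traces a plausible outline of Gaitsgory's argument, but the paper's proof is much more modest: it simply cites \cite[Theorem B.1.2]{Gai} and isolates the \emph{single} coefficient-dependent step in that argument, namely that $R\Gamma(\uk_H) \in D(\k)$ is compact (equivalently perfect). The paper then checks this using that $H$ is finite-dimensional and $\k$ has finite global dimension. Your proposal never mentions this compactness, instead locating the crux in the smoothness isomorphism $p^! \cong p^*[2\dim H]$; but smoothness is entirely independent of $\k$, so you have not identified where the hypothesis on $\k$ is actually used. That is the whole content of the paper's proof.

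There is also a genuine gap in your reduction step. You assert that both $\scrC \mapsto \scrC_H$ and $\scrC \mapsto \scrC^H$ are continuous (colimit-preserving) functors on $\Mod_{D_{\ic}(H,\k)}(\Pr^L)$, and then reduce to the universal case $\scrC = D_{\ic}(H,\k)$. But $\scrC^H = \Hom_{D_{\ic}(H,\k)}(D(\k),\scrC)$ is an internal Hom, hence a limit, and is not a priori colimit-preserving in $\scrC$; establishing that it \emph{is} continuous is essentially equivalent to the theorem itself (since it would then agree with the manifestly continuous $\scrC_H$). The non-circular route is to first prove that $D(\k)$ is dualizable as a $D_{\ic}(H,\k)$-module, which converts the Hom into a tensor product and yields continuity as a consequence. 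Verifying the triangle identities for that duality datum uses both the smoothness shift you mention \emph{and} the compactness of $R\Gamma(\uk_H)$; the latter is precisely what the paper singles out.
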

\begin{proof}
    The argument given in \cite{Gai} is for $\overline{\Q}_{\ell}$ sheaves; however, the only characteristic dependent part of the argument is the statement that $R\Gamma (\uk_H) \in D(\k)$ is compact.
    Since $\k$ has finite global dimension and since $H$ is finite dimensional, $R\Gamma (\uk_H)$ is a perfect $\k$-module.
\end{proof}

\begin{proposition}\label{prop:two_views_of_equivariant_sheaves}
  There is an equivalence of categories,
  \[D_{\ic} (H \backslash S, \k) \cong D_{\ic} (S, \k)^H.\]
\end{proposition}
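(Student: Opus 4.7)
The plan is to compare both sides via the same cosimplicial diagram arising from the atlas $\pi\colon S\to H\backslash S$. Since $H$ is an algebraic group acting on the scheme $S$, the map $\pi$ is a smooth surjection with \v{C}ech nerve $C_\bullet(\pi)$ whose $n$-th term is the iterated fiber product $S\times_{H\backslash S}\cdots\times_{H\backslash S}S$. Using that $\pi$ is an $H$-torsor, each such fiber product is canonically isomorphic to $H^n\times S$, with face maps given by the action map $a\colon H\times S\to S$, the multiplication maps $H\times H\to H$, and the projections.

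First, I would apply Lemma \ref{lem:cech_complex_suffices} (which is stated for schemes, but extends to algebraic stacks by the same argument, since smooth $*$-descent \ref{prop:smooth_star_descent} passes to stacks via Proposition \ref{prop:six_functor_for_stks}) to write
\[
D_{\ic}(H\backslash S,\k)\;\cong\;\lim_{[n]\in\Delta}D_{\ic}(H^n\times S,\k).
\]
Next, I would invoke a K\"unneth-type identification $D_{\ic}(H^n\times S,\k)\cong D_{\ic}(H,\k)^{\otimes n}\otimes D_{\ic}(S,\k)$ in $\Pr^L_{\k}$, which follows from the fact that external product is compatible with ind-constructibility and that all the categories in sight are dualizable/compactly generated (so that Lurie tensor product of module categories computes products of schemes on constructible sheaves). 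Under this identification, the face and degeneracy maps of the cosimplicial object become precisely the structure maps of the cobar construction associated to the $D_{\ic}(H,\k)$-module structure on $D_{\ic}(S,\k)$.

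Finally, I would identify this totalization with $H$-invariants. By definition
\[
D_{\ic}(S,\k)^H=\Hom_{D_{\ic}(H,\k)}(D(\k),D_{\ic}(S,\k)),
\]
and the standard cobar resolution $D(\k)\simeq|D_{\ic}(H,\k)^{\otimes\bullet+1}|$ as a $D_{\ic}(H,\k)$-module yields exactly the limit
\[
D_{\ic}(S,\k)^H\;\cong\;\lim_{[n]\in\Delta}D_{\ic}(H,\k)^{\otimes n}\otimes D_{\ic}(S,\k),
\]
with cosimplicial structure maps matching those of the previous step. Composing these equivalences gives the desired equivalence $D_{\ic}(H\backslash S,\k)\cong D_{\ic}(S,\k)^H$.

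The main obstacle I anticipate is the K\"unneth identification in the second step: to pass from the cosimplicial object built out of $D_{\ic}(H^n\times S,\k)$ to one built out of tensor powers in $\Pr^L_{\k}$ requires knowing that ind-constructible sheaves realize external products as Lurie tensor products, and that this identification is natural in enough ways to match all face and degeneracy maps simultaneously. This is essentially a compact-generation plus dualizability argument, but setting it up carefully so that the cosimplicial structure maps on both sides literally agree (and not just levelwise) is the technical heart of the proof. Once that is done, Theorem \ref{thm:invs_vs_coinvs} is not strictly needed for the statement, though it would give an alternative route via coinvariants.
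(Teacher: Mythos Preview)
Your proposal is correct and follows essentially the same approach as the paper's proof: both sides are identified with the totalization of the same cosimplicial diagram $D_{\ic}(H^n\times S,\k)$ arising from the \v{C}ech nerve of $\pi\colon S\to H\backslash S$, using Lemma~\ref{lem:cech_complex_suffices} on one side and the cobar description of invariants on the other. The K\"unneth obstacle you flag is exactly what the paper addresses: it first uses compact generation and self-duality of $D_{\ic}(H,\k)$ (via Verdier duality) to rewrite $\Hom_{\Pr^L}(D_{\ic}(H,\k)^{\otimes n},D_{\ic}(S,\k))\cong D_{\ic}(H,\k)^{\otimes n}\otimes D_{\ic}(S,\k)$, and then cites \cite[1.4.6]{Gai} to identify the latter with $D_{\ic}(H^n\times S,\k)$ compatibly with the cosimplicial structure; as a minor point, Lemma~\ref{lem:cech_complex_suffices} is already stated for algebraic stacks, so no extension is needed.
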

\begin{proof}
  Let $\pi : S \to H \backslash S$ be the quotient map. Note that $\pi$ is a smooth atlas for $H \backslash S$.
  From the same argument given in Theorem \ref{thm:invs_vs_coinvs}, $D_{\ic} (H, \k)$ is compactly generated and in fact self-dual via the Verdier duality functor.
  We can then compute 
  \begin{align*}
    \Hom_{\Pr^L} (D_{\ic} (H,\k)^{\otimes n} \otimes \Vect, D_{\ic} (S, \k)) &\cong \Hom_{\Pr^L} (D_{\ic} (H,\k)^{\otimes n} , D_{\ic} (S, \k)) \\
    &\cong (D_{\ic} (H,\k)^{\otimes n})^\vee \otimes D_{\ic} (S, \k) \\
    &\cong D_{\ic} (H, \k)^{\otimes n} \otimes D_{\ic} (S, \k)
  \end{align*}
  One can then compute $D_{\ic} (S, \k)^H$ as the totalization of the following diagram
\[
    \begin{tikzcd}
      {D_{\ic} (S, \k)} \arrow[r, shift left] \arrow[r] & {D_{\ic} (H, \k) \otimes D_{\ic} (S, \k)} \arrow[r, shift left] \arrow[r] \arrow[r, shift right] & {D_{\ic} (H,
      \k)^{\otimes 2} \otimes D_{\ic} (S, \k)} \arrow[r, shift left=2] \arrow[r, shift left] \arrow[r] \arrow[r, shift right] & \ldots. {}
    \end{tikzcd}
  \]
  By \cite[1.4.6]{Gai}, this can be rewritten as the following the totalization of the following diagram
\[
    \begin{tikzcd}
      {D_{\ic} (S, \k)} \arrow[r, shift left] \arrow[r] & {D_{\ic} (H \times S, \k)} \arrow[r, shift left] \arrow[r] \arrow[r, shift right] & {D_{\ic} (H^2 \times S,
      \k)} \arrow[r, shift left=2] \arrow[r, shift left] \arrow[r] \arrow[r, shift right] & \ldots, {}
    \end{tikzcd}
  \]
  whose transition maps are given by pullbacks along partial multiplication maps.

  Likewise, the Čech complex $C_\bullet (\pi)$ is given by $S^{n/(H \backslash S)} \cong H^n \times S$ where the projection maps $S^{n/(H \backslash S)} \to S^{(n-1)/(H
  \backslash S)}$ become the partial multiplication maps $H^n \times S \to H^{n-1} \times S$.
  As a result, the proposition then follows from Lemma \ref{lem:cech_complex_suffices}.
\end{proof}

We will finish this section by comparing the homotopy category of $D_{\cons} (H \backslash S, \k)$ with the Bernstein--Lunts triangulated derived category of $H$-equivariant sheaves on $S$.

\begin{proposition}\label{prop:homotopy_cat_of_equiv_sh}
  There is an equivalence of triangulated categories,
  \[\Ho (D_c (H \backslash S, \k)) \cong hD_H (S, \k),\]
  where $hD_H (S, \k)$ is the (triangulated) bounded derived category of $H$-equivariant constructible sheaves on $S$.
\end{proposition}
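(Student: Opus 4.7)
The plan is to identify both sides with the same limit construction. By Proposition \ref{prop:two_views_of_equivariant_sheaves} together with Lemma \ref{lem:cech_complex_suffices} applied to the atlas $\pi \colon S \to H \backslash S$ (whose \v{C}ech nerve is $H^{\bullet} \times S$ with face maps given by partial multiplication/action), we have an equivalence of $\infty$-categories
\[
D_{\ic}(H \backslash S, \k) \;\cong\; \lim_{[n] \in \Delta} D_{\ic}(H^n \times S, \k),
\]
and $D_{\cons}(H \backslash S, \k)$ is the full subcategory of compatible systems $(\scrF_n)$ with each $\scrF_n$ constructible. The first step is then to observe that, because the transition maps are smooth pullbacks and constructibility is preserved, this subcategory is itself a totalization of $\infty$-categories of constructible sheaves on the simplicial scheme $H^{\bullet} \times S$.

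Next, I would import the Bernstein--Lunts model. Classically, $hD_H(S, \k)$ is defined as the triangulated category of ``equivariant'' complexes on the simplicial scheme $H^{\bullet} \times S$, i.e.\ a bounded complex $\scrF$ on $S$ together with descent data expressed as coherent isomorphisms $a^* \scrF \cong p_S^* \scrF$ with higher coherences on $H^n \times S$; equivalently, via the smooth-acyclic resolution model, one has
\[
hD_H(S, \k) \;\simeq\; \varinjlim_{n} hD_{\cons}^b((S \times P_n)/H, \k)
\]
where $P_n$ is $n$-acyclic free. The second step is to note that both presentations realize $hD_H(S, \k)$ as the homotopy category of the totalization $\mathrm{Tot}\, D_{\cons}(H^{\bullet} \times S, \k)$ at the level of bounded-cohomology objects; this is the content (in modern language) of the main comparison theorem of Bernstein--Lunts, who verified that their triangulated category is intrinsically the category of cohomologically bounded, descent-compatible complexes on the simplicial resolution.

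For the final step, I would invoke the fact that for a cosimplicial diagram of stable $\infty$-categories indexed by $\Delta$ in which transition functors are $t$-exact up to shift and each term has a bounded $t$-structure whose heart satisfies effective descent, the passage $\mathrm{Ho}(\lim) \to \lim \mathrm{Ho}$ on the subcategory of uniformly cohomologically bounded objects is essentially surjective and fully faithful; this applies here because each $D_{\cons}(H^n \times S, \k)$ has its standard $t$-structure, constructibility is bounded, and the transition maps are smooth pullbacks (shifted by the relative dimension). Combining these steps yields the desired equivalence
\[
\mathrm{Ho}\bigl(D_{\cons}(H\backslash S, \k)\bigr) \;\simeq\; hD_H(S, \k).
\]

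The main obstacle is the last step: the homotopy category of an $\infty$-categorical limit is generally \emph{not} the limit of the homotopy categories, and higher Toda brackets can obstruct lifting a compatible system of bounded objects in $hD_{\cons}(H^n \times S, \k)$ to an object of the $\infty$-categorical limit. The resolution, as in Bernstein--Lunts, is that boundedness of cohomological amplitude kills these obstructions in finitely many steps (coskeletal truncation of the cosimplicial diagram suffices to reconstruct any bounded object), so the comparison is controlled by a finite piece of the descent datum where homotopy-categorical and $\infty$-categorical limits agree.
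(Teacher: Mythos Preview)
Your approach is correct in outline but genuinely different from the paper's. You work with the \v{C}ech nerve $H^{\bullet}\times S$ and must therefore confront the mismatch between $\Ho(\lim)$ and $\lim\Ho$ head-on, resolving it via boundedness/coskeletal truncation. The paper instead passes to the category $\Sch_{/(H\backslash S)}^{\mathrm{resn}}$ of smooth free $H$-resolutions $P\to S$: for each such $P$ the quotient $H\backslash P$ is an honest scheme, so $\Ho(D_{\cons}(H\backslash P,\k))$ is already the classical bounded derived category by Remark~\ref{rem:homotopy_cat_of_constr_sh}, and the Bernstein--Lunts category $hD_H(S,\k)$ is \emph{defined} as the limit over exactly this indexing category. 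The comparison then becomes a direct matching of two limits over the same diagram with term-wise equivalent entries, sidestepping the simplicial totalization issue entirely.

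Your route has the advantage of staying inside the cosimplicial framework already set up in Proposition~\ref{prop:two_views_of_equivariant_sheaves}, and your final paragraph correctly identifies the obstruction and its resolution. The paper's route is shorter because it aligns the indexing category with the one Bernstein--Lunts use, but it does require invoking Lemma~\ref{lem:cech_complex_suffices} once more to see that the resolution subcategory is cofinal. One caution: your second step asserts that Bernstein--Lunts ``realize $hD_H(S,\k)$ as the homotopy category of $\mathrm{Tot}\,D_{\cons}(H^{\bullet}\times S,\k)$'', but that is close to what you are proving; to avoid circularity you should either cite the specific place in \cite{BL} where the simplicial and resolution models are compared, or carry out that comparison yourself before invoking it.
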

\begin{proof}
  Let $\Sch_{/(H \backslash S)}^{\textnormal{resn}}$ be the category of $H$-resolutions over $S$.
  Explicitly, the objects in $\Sch_{/(H \backslash S)}^{\textnormal{resn}}$ consist of pairs $(H \backslash P, \pi)$ where $P$ is a principle $H$-bundle equipped with a smooth $H$-equivariant map $\pi : P \to S$. We will also write $\pi : H
  \backslash P \to H \backslash S$ for the induced map on quotient stacks.
  For a fixed $H$-resolution $P$, $P \times_S P$ is also an $H$-resolution, and $\pi : H \backslash P \to H \backslash S$ is a smooth atlas.
  By Lemma \ref{lem:cech_complex_suffices}, one has that
  \[D_{\ic} (H \backslash S, \k) \cong \lim_{\stackrel{\longleftarrow}{H \backslash P \in \Sch_{/(H \backslash S)}^{\textnormal{resn}}}} D_{\ic} (H \backslash P, \k).\]
  We can then construct the functor,
  \[\Phi : \Ho (D_{\cons} (H \backslash S, \k)) \to hD_H (S, \k)\]
  by $\Phi (\scrF) = (P \mapsto \scrF (H \backslash P))$.
  It then follows from Remark \ref{rem:homotopy_cat_of_constr_sh} that this will give an equivalence of triangulated categories.
\end{proof}

\subsection{Twisted Equivariant Sheaves}\label{subsec:twisted_eq_sh}

Let $H$ be a complex algebraic group of finite type and let $X$ be an algebraic $H$-stack over $\C$ of finite type.
Let $\scrF \in D_{\ic} (H \backslash X, \k)$ and let $\pi : X \to H \backslash X$ be the quotient map.
We can then consider the action map $a : H \times X \to X$.
A basic property of equivariant sheaves states that there is an isomorphism
\[a^* \pi^* \scrF \cong \underline{\k}_H \boxtimes \pi^* \scrF.\]
One can think of this condition as a weak equivariance condition. Informally, equivariant sheaves are eigen-objects for $a^*$ with eigenvalue $\underline{\k}_H$.
The basic premise with twisted equivariant sheaves is that one could replace the eigenvalue with any multiplicative sheaf on $H$.

The theory of twisted equivariant sheaves is well-developed in the triangulated category setting in \cite{LY} and \cite{Gou}. In the $\infty$-category setting for
$D$-modules this has been detailed in \cite{Gai}, and in the $\infty$-category setting for étale constructible sheaves the theory is contained in \cite{Et}.
We follow Gaitsgory and Eteve's formulation to define twisted equivariant sheaves in our setting. The main benefit in their approach is that it allows for additional flexibility in the types of multiplicative local systems allowed.
Unlike constructible sheaves on schemes and algebraic stacks, twisted equivariant
sheaves will not satisfy a 6-functor formalism in the usual sense.
The main obstacle encountered is that our category of correspondences in this setting lacks a terminal object.

The motivation of the definition comes from our reformulation in Proposition \ref{prop:two_views_of_equivariant_sheaves}.
Recall that for an algebraic $H$-stack $X \in \Stk_{\C}$, we had a $D_{\ic} (H, \k)$-action on $D_{\ic} (X, \k)$ denoted by $\star$ which made $D_{\ic} (X, \k)$ into a $D_{\ic} (H, \k)$-module.

\begin{definition}\label{def:mult_loc_sys}
  A rank one local system $\scrL \in D_{c} (H, \k)$ is \emph{multiplicative} if it is equipped with the following data:
  \begin{enumerate}
    \item A trivialization at $1 \in H$, i.e., an isomorphism $1^* \scrL \cong \k$,
    \item An isomorphism $m^* \scrL \cong \scrL \boxtimes \scrL$ where $m : H \times H \to H$ is the multiplication map such that the restriction at $(1,1) \in H \times
      H$ gives a compatible isomorphism with (1).
  \end{enumerate}
  We denote the set of all multiplicative local systems by $\Ch (H, \k)$.
\end{definition}

\begin{lemma}\label{lem:ch_closed_under_extension_of_scalars}
  Let $\scrL \in \Ch (H, \k)$. Let $\k' \to \k$ be a ring homomorphism. Then $\k' (\scrL) \in \Ch (H, \k')$.
\end{lemma}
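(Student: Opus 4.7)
The proof is essentially formal and unpacks the interaction of the extension-of-scalars functor with the two structural pullbacks that define a multiplicative local system. The plan is to verify the three pieces of data in Definition \ref{def:mult_loc_sys} for $\k'(\scrL)$ one at a time, using only the fact that $\k'(-) = \k' \otimes_{\k} (-)$ commutes with $*$-pullback $f^*$ and with external tensor product $\boxtimes$ (as was recorded in the list of features after the construction of $D(-, \k)$ on $\Sch_{\C}$).

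First I would check that $\k'(\scrL)$ is again a rank-one local system: this is immediate since $\k'(-)$ sends local systems to local systems and the stalks of $\k'(\scrL)$ are $\k' \otimes_{\k} \k \cong \k'$. Next, applying $\k'(-)$ to the trivialization $1^* \scrL \cong \k$ and using that $\k'(-)$ commutes with $1^*$ yields
\[
1^* \k'(\scrL) \;\cong\; \k'(1^* \scrL) \;\cong\; \k'(\k) \;\cong\; \k',
\]
which provides the required trivialization (1) for $\k'(\scrL)$. For (2), apply $\k'(-)$ to the multiplicativity isomorphism $m^* \scrL \cong \scrL \boxtimes \scrL$ and combine with commutation of $\k'(-)$ with $m^*$ and with $\boxtimes$:
\[
m^* \k'(\scrL) \;\cong\; \k'(m^* \scrL) \;\cong\; \k'(\scrL \boxtimes \scrL) \;\cong\; \k'(\scrL) \boxtimes \k'(\scrL).
\]

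Finally, the coherence condition — that the restriction of the multiplicativity isomorphism to $(1,1) \in H \times H$ recovers the identity-trivialization tensored with itself — is preserved by $\k'(-)$ simply by functoriality applied to the commutative diagram expressing this compatibility for $\scrL$. There is no real obstacle here: the entire content is that $\k'(-)$ is symmetric monoidal with respect to the external product and commutes with pullbacks along maps of schemes (in particular along $1 : \mathrm{pt} \to H$, $m : H \times H \to H$, and $(1,1) : \mathrm{pt} \to H \times H$). The only thing to be mildly careful about is that $\k' \otimes_\k (-)$ is taken in the derived sense, but since $\scrL$ is a rank-one free local system, this derived tensor product coincides with the ordinary one, so no flatness issue arises.
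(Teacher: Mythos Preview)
Your proof is correct and follows essentially the same approach as the paper: both arguments reduce to the fact that extension of scalars commutes with $*$-pullback and external tensor product, after first noting that $\scrL$ is locally free (the paper invokes the trivialization at $1$ for this, which is implicit in your stalk computation). The paper compresses the verification into a single sentence, whereas you spell out conditions (1), (2), and the coherence separately, but the substance is identical.
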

\begin{proof}
  Note that the trivialization condition ensures that $\scrL$ is locally free. As a result, $\k' (\scrL)$ is a local system.
  The result then follows from extension of scalars commuting with sheaf functors.
\end{proof}

\subsubsection{Six-Functors}

Fix a multiplicative local system $\scrL \in D_{\cons} (H, \k)$.
We define a new action of $D_{\ic} (H, \k)$-action on $D_{\ic} (X, \k)$ denoted by $\star^{\scrL}$ obtained by twisting by $\scrL$. Precisely, we define
\[\scrF \star^{\scrL} \scrG = (\scrF \otimes^L \scrL^{-1}) \star \scrG,\]
where $\scrF \in D_{\ic} (H, \k)$ and $\scrG \in D_{\ic} (X, \k)$. This action makes $D_{\ic} (X, \k)$ into a $D_{\ic} (H, \k)$-module.
We will write $D_{\ic} (H, \k) \stackrel{\scrL}{\curvearrowright} D_{\ic} (X, \k)$ to indicate this action.

\begin{definition}\label{def:twisted_equiv_sh}
  The category of \emph{ind-constructible $(H, \scrL)$-equivariant sheaves on $X$} is defined to be
  \[D_{\ic} (H \backslash_{\scrL} X, \k) \coloneq D_{\ic} (X, \k)^{(H, \scrL)},\]
  i.e., the $H$-invariants of sheaves on $X$ with respect to the $\scrL$-twisted action.
\end{definition}

As in the equivariant setting, we get a pair of adjoint functors,
\[\Av_{H, \scrL!} : D_{\ic} ( X, \k) \leftrightarrows D_{\ic} (H \backslash_{\scrL} X, \k) : \For_{H, \scrL}.\]
We define the subcategory of \emph{constructible $(H, \scrL)$-equivariant sheaves on $X$}, denoted $D_{\cons} (H \backslash_{\scrL} X, \k)$, as the full subcategory of $D_{\ic} (H \backslash_{\scrL} X, \k)$ consisting of sheaves $\scrF$ such that $\For_{H, \scrL} (\scrF) \in D_{\cons} (X, \k)$.

\begin{definition}\label{def:twisted_stacks}
    A \emph{twisted stack} is a triple $(X, H, \scrL)$ where $H$ is an algebraic group, $X \in \Stk_{\C}$ is an $H$-stack, and $\scrL$ is a multiplicative rank one local system on $H$.
    A morphism of twisted stacks $\tilde{f} : (X', H', \scrL') \to (X, H, \scrL)$ consists of a triple $(f, \varphi, \theta)$ where $f : X' \to X$ is a representable morphism of
    stacks, $\varphi : H' \to H$ is a morphism of algebraic groups, and $\theta : \varphi^* \scrL \to \scrL'$ is a morphism of local systems such that
    \begin{enumerate}
      \item $f$ is $\varphi$-equivariant, i.e., the following diagram commutes
        \[
          \begin{tikzcd}
            H' \times X' \arrow[r, "\varphi \times f"] \arrow[d, "a'"] & H \times X \arrow[d, "a"] \\
            X' \arrow[r, "f"]                                          & X,
          \end{tikzcd}
        \]
        where $a : H \times X \to X$ and $a' : H' \times X' \to X'$ are the action maps.
      \item $\theta : \varphi^* \scrL \stackrel{\sim}{\to} \scrL'$ is an isomorphism of local systems.
    \end{enumerate}
    We will frequently denote a twisted stack by $H \backslash_{\scrL} X$. If $\tilde{f} = (f,\varphi, \theta)$ is a morphism of twisted stacks, we will often abuse notation and
    write $f$ to refer to the whole triple.
    We denote the category of twisted stacks by $\TwStk_{\C}$.
  \end{definition}
  
  \begin{lemma}\label{lem:twstk_and_pullbacks}
    The category of twisted stacks $\TwStk_{\C}$ admits all pullbacks.
  \end{lemma}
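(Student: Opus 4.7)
The plan is to construct the pullback explicitly, exploiting the fact that each piece of data in a twisted stack---algebraic group, algebraic stack with group action, and multiplicative rank one local system---lives in a setting in which the relevant fibre product already exists. Given a cospan
\[(X_1, H_1, \scrL_1) \xrightarrow{(f_1, \varphi_1, \theta_1)} (X, H, \scrL) \xleftarrow{(f_2, \varphi_2, \theta_2)} (X_2, H_2, \scrL_2),\]
I would set $H' \coloneq H_1 \times_H H_2$ and $X' \coloneq X_1 \times_X X_2$, noting that $H'$ is an algebraic group as a fibre product of group schemes and that $X'$ inherits a canonical $H'$-action via the componentwise actions. Writing $p_i : H' \to H_i$ and $q_i : X' \to X_i$ for the projections, I would then define $\scrL' \coloneq p_1^* \scrL_1$, whose multiplicative structure is inherited from that of $\scrL_1$ since $p_1$ is a group homomorphism.

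The next step is to produce the canonical isomorphism of local systems on $H'$,
\[\eta : p_1^* \scrL_1 \xrightarrow{p_1^* \theta_1^{-1}} (\varphi_1 \circ p_1)^* \scrL = (\varphi_2 \circ p_2)^* \scrL \xrightarrow{p_2^* \theta_2} p_2^* \scrL_2,\]
which, together with the identity on $\scrL'$, supplies the $\theta$-data making each projection into a morphism of twisted stacks $(X', H', \scrL') \to (X_i, H_i, \scrL_i)$. The equality of the two composites to $(X, H, \scrL)$ then follows tautologically from the construction of $\eta$ and the compatibilities already encoded in the $\theta_i$'s.

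For the universal property, given any twisted stack $(Y, K, \scrM)$ equipped with morphisms $(g_i, \psi_i, \xi_i) : (Y, K, \scrM) \to (X_i, H_i, \scrL_i)$ whose composites to $(X, H, \scrL)$ coincide, the universal properties of $X'$ in algebraic stacks and of $H'$ in algebraic groups would produce a unique representable $g : Y \to X'$ and a unique group homomorphism $\psi : K \to H'$ with the required equivariance. The induced isomorphism $\psi^* \scrL' \cong \scrM$ is then forced by the demand that it recover $\xi_1$ and $\xi_2$ after composition with the projections, and its existence is guaranteed by the assumed compatibility of the composites.

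The main technical point to check is that the projections $q_i : X' \to X_i$ remain representable---so that they qualify as morphisms of twisted stacks in the sense of Definition \ref{def:twisted_stacks}---which is a consequence of the stability of representable morphisms of stacks under base change applied to the assumption that $f_1, f_2$ are themselves representable. All other verifications are routine bookkeeping with the functoriality of pullback of local systems along composites of group homomorphisms.
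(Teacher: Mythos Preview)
Your proposal is correct and follows essentially the same approach as the paper: both construct the pullback componentwise as $(X_1 \times_X X_2,\, H_1 \times_H H_2,\, p_1^*\scrL_1)$, equip it with the diagonal action, and identify $p_1^*\scrL_1 \cong p_2^*\scrL_2$ via the given $\theta_i$'s. Your write-up is in fact more explicit than the paper's on two points---the construction of $\eta$ and the representability of the projections---while the paper spends its ink on drawing out the commutative cube that produces the $H'$-action on $X'$; neither difference is substantive.
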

  \begin{proof}
    Consider the diagram
    \[
      \begin{tikzcd}
        & H_2 \backslash_{\scrL_2} X_2 \arrow[d, "\tilde{f}_2"] \\
        H_1 \backslash_{\scrL_1} X_1 \arrow[r, "\tilde{f}_1"] & H \backslash_{\scrL} X.
      \end{tikzcd}
    \]
    Our goal is to construct the pullback of $\tilde{f}_1$ and $\tilde{f}_2$.
    Consider the cartesian squares
    \[
      \begin{tikzcd}
        H_1 \times_H H_2 \arrow[r] \arrow[d] & H_2 \arrow[d] &  & X_1 \times_X X_2 \arrow[r] \arrow[d] & X_2 \arrow[d] \\
        H_1 \arrow[r]                        & H,             &  & X_1 \arrow[r]                        & X.
      \end{tikzcd}
    \]
    Note that $H_1 \times_H H_2$ is an algebraic group. We then can construct an action of $H_1 \times_H H_2$ on $X_1 \times_X X_2$ via the commutative diagram
    \[
      \begin{tikzcd}[column sep=small, row sep=small]
        H_1 \times_H H_2 \times X_1 \times_X X_2 \arrow[dd] \arrow[rr] \arrow[rd, "a'", dashed] &                                        & H_2 \times X_2 \arrow[rd, "a_2"]
        \arrow[dd] &                \\
        & X_1 \times_X X_2 \arrow[dd] \arrow[rr] &                                             & X_2 \arrow[dd] \\
        H_1 \times X_1 \arrow[rd, "a_1"] \arrow[rr]                                             &                                        & H \times X \arrow[rd, "a"]
        &                \\
        & X_1 \arrow[rr]                         &                                             & X,
      \end{tikzcd}
    \]
    where $a_1 : H_1 \times X_1 \to X_1$, $a_2 : H_2 \times X_2 \to X_2$, and $a : H \times X \to X$ denote the action maps. The existence of $a'$ follows from the
    universal property for pullbacks.
    It is easy to check that $a' :  H_1 \times_H H_2 \times X_1 \times_X X_2 \to X_1 \times_X X_2$ defines an $H_1 \times_H H_2$-action on $X_1 \times_X X_2$.
    Let $\scrL = \pr_1^* \scrL_1 \cong \pr_2^* \scrL_2$ where $\pr_i : H_1 \times_H H_2 \to H_i$ are the projection maps for $i=1,2$.
    Therefore, $(X_1 \times_X X_2, H_1 \times_H H_2, \scrL)$ is a twisted stack. Since $X_1 \times_X X_2$ and $H_1 \times_H H_2$ are pullbacks, it is easy to check that
    $(X_1 \times_X X_2, H_1 \times_H H_2, \scrL)$ is a pullback of $\tilde{f}_1$ and $\tilde{f}_2$.
  \end{proof}
  
    Let $E$ be the class of morphisms in $\TwStk_{\C}$ consisting of morphisms $\tilde{f} = (f, \varphi, \theta) : H' \backslash_{\scrL'} X' \to H \backslash_{\scrL} X$ such that $\varphi$ is an isomorphism of algebraic groups.
    Note that the category of twisted stacks $\TwStk_{\C}$ is not finitely complete. 
    In particular, it does not have a terminal object. 
    Nonetheless, the category of correspondences $\Corr (\TwStk_{\C}, E)$ is still well-defined. Moreover, the Cartesian product equips it with the structure of a monoidal category (without unit).
  
  \begin{lemma}\label{lem:equivariance_of_sh_functors}
    Let $f : H' \backslash_{\scrL'} X' \to H \backslash_{\scrL} X$ be a morphism of twisted stacks. 
    The functor $\varphi^* : D_{\ic} (H, \k) \to D_{\ic} (H', \k)$ gives rise to a $D_{\ic} (H, \k)$ action on $D_{\ic} (X', \k)$. 
    The functor $f^* : D_{\ic} (X, \k) \to D_{\ic} (X', \k)$ is $D_{\ic} (H, \k)$-equivariant with respect to the $\scrL$-twisted $D_{\ic} (H, \k)$-action.
  \end{lemma}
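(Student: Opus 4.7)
The plan is to first construct the $D_{\ic}(H, \k)$-action on $D_{\ic}(X', \k)$ using $\varphi^*$, and then to verify that $f^*$ intertwines this action with the $\scrL$-twisted $D_{\ic}(H, \k)$-action on $D_{\ic}(X, \k)$ via a base-change argument along the $\varphi$-equivariance square.

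For the first step, the isomorphism $\theta: \varphi^*\scrL \cong \scrL'$ ensures that $\varphi^*$ takes multiplicative local systems to multiplicative local systems, so the formula $\scrF \cdot \scrG := \varphi^*\scrF \star^{\scrL'} \scrG$ for $\scrF \in D_{\ic}(H, \k)$ and $\scrG \in D_{\ic}(X', \k)$ is well-posed. Coherence of this assignment as a genuine action amounts to monoidality of $\varphi^*$ with respect to convolution, which in the $\infty$-categorical setting follows from the base-change isomorphism $\varphi^* m_! \cong m'_! (\varphi \times \varphi)^*$ combined with the projection formula; equivalently, under the equivalence between $D_{\ic}(H, \k)$-modules and sheaves on $BH$, the functor $\varphi^*$ corresponds to the (automatically monoidal) pullback along $B\varphi$.

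For the equivariance of $f^*$, I would unpack the desired isomorphism $f^*(\scrF \star^{\scrL} \scrG) \cong \varphi^*\scrF \star^{\scrL'} f^*\scrG$ using the definitions. Using $\theta$ to replace $(\scrL')^{-1}$ with $\varphi^*\scrL^{-1}$, both sides take the form
\begin{align*}
  \text{LHS} &= f^* a_!\bigl((\scrF \otimes \scrL^{-1}) \boxtimes \scrG\bigr), \\
  \text{RHS} &= a'_! (\varphi \times f)^*\bigl((\scrF \otimes \scrL^{-1}) \boxtimes \scrG\bigr),
\end{align*}
so the desired equivariance reduces to base change along the $\varphi$-equivariance square
\[
\begin{tikzcd}
  H' \times X' \arrow[r, "\varphi \times f"] \arrow[d, "a'"] & H \times X \arrow[d, "a"] \\
  X' \arrow[r, "f"] & X.
\end{tikzcd}
\]

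The hard part is that this square is not strictly cartesian when $\varphi$ fails to be an isomorphism: the pullback $(H \times X) \times_X X'$ identifies with $H \times X'$ via $(h, x, x') \leftrightarrow (h, x')$ with $x = h^{-1} f(x')$, and the natural map from $H' \times X'$ records only $(h', x') \mapsto (\varphi(h'), a'(h', x'))$. To overcome this, I would pass to the induced morphism of quotient stacks $\bar f: H' \backslash X' \to H \backslash X$ and invoke the functoriality of the 6-functor formalism of Proposition \ref{prop:six_functor_for_stks}: the relevant equivariance is manifest at the level of $\bar f^*: D_{\ic}(H \backslash X, \k) \to D_{\ic}(H' \backslash X', \k)$, since on the stacks $\bar f$ is a genuine morphism. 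Combining this with the identification $D_{\ic}(H \backslash X, \k) \cong D_{\ic}(X, \k)^H$ of Proposition \ref{prop:two_views_of_equivariant_sheaves} (and an appropriate twisted variant for the $\scrL$-twisted action) then yields the required equivariance of $f^*$ at the level of non-equivariant sheaves.
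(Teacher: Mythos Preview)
The paper states this lemma without proof, so there is nothing to compare against directly. That said, your proposal has a genuine gap that mirrors exactly the difficulty you flag in the second half.

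You claim that the $D_{\ic}(H,\k)$-action on $D_{\ic}(X',\k)$ is obtained by restricting along a \emph{monoidal} functor $\varphi^*: D_{\ic}(H,\k) \to D_{\ic}(H',\k)$, and you justify monoidality by the ``base-change isomorphism $\varphi^* m_! \cong m'_! (\varphi \times \varphi)^*$.'' But the square
\[
\begin{tikzcd}
H' \times H' \arrow[r, "\varphi \times \varphi"] \arrow[d, "m'"] & H \times H \arrow[d, "m"] \\
H' \arrow[r, "\varphi"] & H
\end{tikzcd}
\]
is \emph{not} cartesian unless $\varphi$ is an isomorphism: for $\varphi: \{e\} \hookrightarrow H$, the fibre product is $m^{-1}(e) \cong H$, not a point. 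Concretely, $e^*(\scrF \star \scrG) \cong R\Gamma_c(H, \scrF \otimes \mathrm{inv}^*\scrG)$, which is not $\scrF_e \otimes \scrG_e$. So $\varphi^*$ is not monoidal for convolution, and your alternative justification via $B\varphi$ does not repair this: pullback along $B\varphi$ induces restriction of the module structure, but that presupposes, rather than establishes, monoidality of $\varphi^*$.

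This is the \emph{same} obstruction you correctly identify for the equivariance of $f^*$ (the non-cartesian $\varphi$-equivariance square), and your proposed workaround through $\bar f: H'\backslash X' \to H\backslash X$ and Proposition~\ref{prop:two_views_of_equivariant_sheaves} is circular: that proposition treats untwisted invariants, and the ``appropriate twisted variant'' you invoke is essentially the content of the six-functor formalism you are trying to build. In the paper's intended application (Proposition~\ref{prop:actstk_six_functor}) the functor to $\LMod_{\BiAlg}(\Pr_{\k}^L)$ only needs to be defined on $\Corr(\TwStk_\C, E)$, and one should expect the argument to separate the $f^*$-leg (arbitrary $\varphi$, handled at the level of coalgebra/comodule structure via $m^*$, where pullback \emph{is} compatible) from the $g_!$-leg (where $\varphi$ is an isomorphism and the base-change squares \emph{are} cartesian). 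Your proposal conflates these and tries to force monoidality where it does not hold.
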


  Consider the category $\LMod_{\BiAlg} (\Pr_{\k}^L)$ of left modules over bialgebras in $\Pr_{\k}^L$. 
  The objects of $\LMod_{\BiAlg} (\Pr_{\k}^L)$ consists of pairs $(\scrA, \scrC)$ where $\scrA \in \BiAlg (\Pr_{\k}^L)$ is a bialgebra and $\scrC \in \LMod_{\scrA} (\Pr_{\k}^{L})$ is a left module over $\scrA$ (with respect to just the algebra structure). 
  The external tensor product of modules gives rise to functors
  \begin{equation}\label{eq:LMod_mon_str}
    \LMod_{\scrA} (\Pr_{\k}^L) \otimes \LMod_{\scrB} (\Pr_{\k}^L) \to \LMod_{\scrA \otimes \scrB} (\Pr_{\k}^L),
  \end{equation}
  which makes $\LMod_{\BiAlg} (\Pr_{\k}^L)$ into a monoidal category.

  Let $\scrA \in \BiAlg (\Pr_{\k}^L)$. The counit for $\scrA$ gives an action of $\scrA$ on $D(\k)$.
  There is a \emph{coinvariants} functor
  \[\Coinv_{\scrA} : \LMod_{\scrA} (\Pr_{\k}^L) \to \Pr_{\k}^L,\]
  informally given on objects by $\Coinv_{\scrA} \scrC = D(\k) \otimes_{\scrA} \scrC$.
  If $\Phi : \scrA \to \scrB$ is a bialgebra morphism, then there is an induced functor
  \[\For^{\scrA}_{\scrB} : \Coinv_{\scrA} (\scrC) \to \Coinv_{\scrB} (\scrC). \]
  This allows us to define a functor
  \[\Coinv : \LMod_{\BiAlg} (\Pr_{\k}^L) \to \Pr_{\k}^L\]
  defined by $\Coinv (\scrA, \scrC) = \Coinv_{\scrA} (\scrC)$ and on morphisms by
  \[\Coinv (\scrC \stackrel{F}{\to} \scrD, \scrA \stackrel{\Phi}{\to} \scrB) = \For_{\scrA}^{\scrB} \Coinv_{\scrA} (F). \]
  In fact, since (\ref{eq:LMod_mon_str}) is an equivalence of categories \cite[Proposition 8.5.4]{GR}, the coinvariants functor is monoidal. 

  \begin{proposition}\label{prop:actstk_six_functor}
    There exists a six-functor formalism
    \[D_{\ic} (-, \k): \Corr (\TwStk_{\C}, E) \to \Pr_{\k}^L, \]
    satisfying
    \[D_{\ic} (-, \k) (H \backslash_{\scrL} X) = D_{\ic} (H \backslash_{\scrL} X, \k).\]
  \end{proposition}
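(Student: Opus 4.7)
The plan is to construct $D_{\ic}(-, \k)$ on twisted stacks by first lifting to a category of bialgebra--module pairs and then applying coinvariants. Specifically, I would produce a lax symmetric monoidal functor
\[\Phi : \Corr(\TwStk_{\C}, E) \longrightarrow \LMod_{\BiAlg}(\Pr_{\k}^L)\]
sending a twisted stack $H \backslash_{\scrL} X$ to the pair $(D_{\ic}(H, \k), D_{\ic}(X, \k))$, where $D_{\ic}(H, \k)$ carries its convolution bialgebra structure and $D_{\ic}(X, \k)$ is the module via the $\scrL$-twisted action $\star^{\scrL}$. Composing with the monoidal coinvariants functor $\Coinv : \LMod_{\BiAlg}(\Pr_{\k}^L) \to \Pr_{\k}^L$ then yields a lax symmetric monoidal functor from $\Corr(\TwStk_{\C}, E)$ to $\Pr_{\k}^L$, and Theorem \ref{thm:invs_vs_coinvs} together with Definition \ref{def:twisted_equiv_sh} identifies its value at $H \backslash_{\scrL} X$ with $D_{\ic}(H \backslash_{\scrL} X, \k)$.

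To define $\Phi$ on correspondences, I would first construct it on ordinary morphisms and on morphisms in $E$ (giving pullback and proper pushforward), then extend via base change using the correspondence-extension machinery of \cite{Mann} or \cite{Sch}. For a morphism $\tilde f = (f, \varphi, \theta) : H' \backslash_{\scrL'} X' \to H \backslash_{\scrL} X$, the group homomorphism $\varphi : H' \to H$ induces a bialgebra map $\varphi^* : D_{\ic}(H, \k) \to D_{\ic}(H', \k)$ in $\Pr_{\k}^L$, and the pair $(\varphi^*, f^*)$ is a morphism in $\LMod_{\BiAlg}(\Pr_{\k}^L)$ by Lemma \ref{lem:equivariance_of_sh_functors} once $\theta$ is used to identify $\varphi^* \scrL \simeq \scrL'$. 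When $\tilde f$ lies in $E$ so that $\varphi$ is a group isomorphism, we may canonically identify the two bialgebras and lift the proper pushforward $f_!$ to a morphism of modules by a base-change argument for the action maps $H \times X \to X$ and $H' \times X' \to X'$. The base change needed to extend to correspondences is then reduced to base change in the stack-level six-functor formalism (Proposition \ref{prop:six_functor_for_stks}) via the pointwise pullback of twisted stacks constructed in Lemma \ref{lem:twstk_and_pullbacks}. The lax symmetric monoidal structure on $\Phi$ is supplied by the Künneth equivalence $D_{\ic}(H_1 \times H_2, \k) \simeq D_{\ic}(H_1, \k) \otimes D_{\ic}(H_2, \k)$, which respects bialgebra and module structures; monoidality of $\Coinv$ then produces a three-functor formalism.

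The main obstacle is upgrading this to a six-functor formalism, i.e., producing right adjoints for $f^*$, $f_!$, and the sheaf tensor product at the level of twisted equivariant sheaves. At the stack level these adjoints $f_*$, $f^!$, $\RHom$ exist by Proposition \ref{prop:six_functor_for_stks}, so the real content is showing they descend through coinvariants. This is controlled by the adjunction $\Av_{H, \scrL !} \dashv \For_{H, \scrL}$, with $\For_{H, \scrL}$ conservative, together with the identification of invariants with coinvariants (Theorem \ref{thm:invs_vs_coinvs}), which expresses the coinvariants as a geometric realization whose transition functors admit all the relevant adjoints. Coherently packaging these compatibilities at the $\infty$-categorical level is the only genuine technical hurdle, but it follows the now-standard template of \cite{Mann}.
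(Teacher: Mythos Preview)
Your proposal is correct and follows essentially the same approach as the paper: factor through a lax-monoidal functor $\Corr(\TwStk_{\C}, E) \to \LMod_{\BiAlg}(\Pr_{\k}^L)$ sending $H \backslash_{\scrL} X$ to the pair $(D_{\ic}(H,\k), D_{\ic}(X,\k))$ with the $\scrL$-twisted action, and then compose with the monoidal coinvariants functor. Your worry about producing right adjoints is overstated, however: since the composite lands in $\Pr_{\k}^L$, every functor it outputs is a colimit-preserving functor between presentable $\infty$-categories and hence admits a right adjoint automatically by the adjoint functor theorem, so the passage from a three-functor to a six-functor formalism is formal here.
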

  \begin{proof}
    We can equip $D_{\ic} (H, \k)$ with the structure of a coalgebra by the pullback maps
    \[e^* : D_{\ic} (H, \k) \to D_{\ic} (\pt, \k) \hspace{0.5cm} \text{and} \hspace{0.5cm} m^* : D_{\ic} (H, \k) \to D_{\ic} (H \times H, \k),\]
    where $e$ is the unit map and $m$ is the multiplication map for $H$. This coalgebra structure along with $!$-convolution make $D_{\ic} (H, \k)$ into a bialgebra.

    By Lemma \ref{lem:equivariance_of_sh_functors}, there is a lax-monoidal\footnote{This should be interpreted as a lax-monoidal functor without any unit constraint.} functor
    \[\Corr (\TwStk_{\C}, E) \to \LMod_{\BiAlg} (\Pr_{\k}^L) \hspace{1cm} (H, X) \mapsto (D_{\ic} (H, \k) \stackrel{\scrL}{\curvearrowright}  D_{\ic} (X, \k)),\]
    making the following diagram commute
    \[ \begin{tikzcd}
        \Corr (\TwStk_{\C}, E) \arrow[r] \arrow[d] & \LMod_{\BiAlg} (\Pr_{\k}^L) \arrow[d] \\
        \Corr (\Stk) \arrow[r]              & \Pr_{\k}^L,                
        \end{tikzcd}
    \]
    where the vertical maps are the obvious projections. We can then define $D_{\ic} (-, \k): \Corr (\TwStk_{\C}, E) \to \Pr_{\k}^L$ as the composition
    \[\Corr (\TwStk_{\C}, E) \to \LMod_{\BiAlg} (\Pr_{\k}^L) \stackrel{\Coinv}{\to} \Pr_{\k}^L.\]
  \end{proof}

  One aspect lacking from Proposition \ref{prop:actstk_six_functor} is the proper pushforward of morphisms of twisted stacks of the form $f : H' \backslash_{\scrL'} X \to H \backslash_{\scrL} X$ when $H \not\cong H'$. 
  Nonetheless, it is often the case that the pullback functor $f^*$ admits a left adjoint. For example, if the morphism $H' \to H$ is smooth, then $f^*$ admits a left adjoint.
  We will often abuse notation and write $f_!$ for this left adjoint. 

  The categories $D_{\ic} (H \backslash_{\scrL} X, \k)$ are not monoidal unless $\scrL = \uk_H$-- a facet of the category of twisted stacks not admitting a terminal object.
  Nonetheless, we can define a tensor product of sheaves.
  There are morphisms of twisted stacks
  \[\Delta_{\scrL, \scrL'} : H \backslash_{\scrL \otimes^L \scrL'} X \to (H \times H) \backslash_{\scrL \boxtimes \scrL'} X \times X.\]
  We then define the \emph{tensor product} of twisted equivariant sheaves as the bifunctor given by the composition
  \begin{align*}
    \otimes^L : D_{\ic} (H \backslash_{\scrL} X, \k) \otimes D_{\ic} (H \backslash_{\scrL'} X, \k) &\stackrel{\sim}{\to} D_{\ic} ( (H \times H) \backslash_{\scrL \boxtimes \scrL'} X \times X, \k) \\
    &\stackrel{\Delta_{\scrL,\scrL'}^*}{\to} D_{\ic} (H \backslash_{\scrL \otimes \scrL'} X, \k).
  \end{align*}
  These bifunctors satisfy variants of the associativity and unit axioms. 

  The tensor product of sheaves admits a right adjoint,
  \[\RHom : D_{\ic} (H \backslash_{\scrL} X, \k)^{\op} \otimes D_{\ic} (H \backslash_{\scrL'} X, \k) \to  D_{\ic} (H \backslash_{\scrL^{-1} \otimes \scrL'} X, \k),\]
  called the \emph{sheaf-Hom functor} for twisted equivariant sheaves. Let $\omega_{H \backslash X}$ denote the dualizing complex on $H \backslash X$.
  We can then define a \emph{Verdier duality} functor
  \[\DD : D_{\ic} (H \backslash_{\scrL} X, \k)^{\op} \to D_{\ic} (H \backslash_{\scrL^{-1}} X, \k)\]
  by $\DD = \RHom (-, \omega_{H \backslash X})$.

  Let $\k \to \k'$ be a ring homomorphism. 
  The extension of scalars functor $\k' (-) : D_{\ic} (H, \k) \to D_{\ic} (H, \k')$ is an algebra morphism. As a result, after taking coinvariants there are induced functors
  \[\k' (-) : D_{\ic} (H \backslash_{\scrL} X, \k) \to D_{\ic} (H \backslash_{\k' (\scrL)} X, \k'),\]
  which we also call \emph{extension of scalars}. 
  Since extension of scalars commutes with the six-functor formalism of algebraic stacks, it is easy to deduce that it will commute with all six sheaf operations and Verdier duality for twisted equivariant sheaves as well.

  \begin{definition}
    Let $H \backslash_{\scrL} X$ be a twisted stack.
    A sheaf $\scrF \in D_{\ic} (H \backslash_{\scrL} X, \k)$ is said to be \emph{constructible} if $\For_{H, \scrL} \scrF \in D_{\cons} (X, \k)$.
    We denote by $D_{\cons} (H \backslash_{\scrL} X, \k)$ the full subcategory of $D_{\ic} (H \backslash_{\scrL} X, \k)$ consisting of constructible sheaves.
  \end{definition}

  \begin{remark}\label{rem:for_and_sheaf_ops}\quad
    \begin{enumerate}
      \item  There are forgetful functors $\For_{H, \scrL} : D_{\ic} (H \backslash_{\scrL} X, \k) \to D_{\ic} (X, \k)$ given by evaluation on 0-simplices.
      By construction, these forgetful functors commute with the six sheaf operations and extension of scalars.
      \item By (1), there is a six-functor formalism
        \[\Corr (\TwStk_{\C}, E) \to \infCat_{\k} \hspace{1cm} H \backslash_{\scrL}X \mapsto D_{\cons} (H \backslash_{\scrL} X, \k)\]
        given by restricting the six-functor formalism of Proposition \ref{prop:actstk_six_functor} to constructible sheaves. 
        Moreover, extension of scalars preserves constructibility.
    \end{enumerate}
  \end{remark}

  \subsubsection{Averaging and Forgetful Functors}

  Let $\varphi : H' \to H$ be a morphism of algebraic groups. Let $X$ be an algebraic $H$-stack of finite type and $\scrL \in \Ch (H, \k)$.
  The map $\overline{\varphi} : H' \backslash_{\varphi^* \scrL} X \to H \backslash_{\scrL} X$ given by $X \stackrel{\id}{\to} X$ and $\varphi : H' \to H$ gives rise to sheaf functors  
  \[\overline{\varphi}^* : D_{\ic} (H \backslash_{\scrL} X, \k) \leftrightarrows D_{\ic} (H' \backslash_{\varphi^* \scrL} X, \k) : \overline{\varphi}_*.\]
  We think of these functors as variants of equivariance forgetful and averaging functors.
  It will be useful to give an explicit description of these functors and applying $\For_{H, \scrL}$.

  Let $\scrF \in D_{\ic} (H \backslash_{\scrL} X, \k)$.
  It can be easily checked from the definitions that $\For_{H, \scrL} \scrF$ and $\For_{H', \scrL'} \overline{\varphi}^* \scrF$ are naturally isomorphic.

  Let $\scrG \in D_{\ic} (H' \backslash_{\scrL'} X, \k)$.
  The sheaf $\scrL \boxtimes \scrG$ can be regarded as an $(H \times H \times H', \scrL \boxtimes \scrL^{-1} \boxtimes \scrL')$-equivariant sheaf on $H \times X$ with respect to the action of $H \times H \times H'$ on $H \times X$ via $(h_1, h_2, h_3) \cdot (g,x) = (h_1 g h_2^{-1}, h_3 x)$.
  Let $\tau : H \times H' \to H \times H \times H'$ be given by $\tau (h_1, h_2) = (h_1, \varphi(h_2), h_2)$. 
  This gives rise to a morphism of twisted algebraic stacks 
  \[ \overline{\tau} : (H \times H') \backslash_{\scrL \times \uk_{H'}} H \times X \to (H \times H \times H') \backslash_{\scrL \boxtimes \scrL^{-1} \boxtimes \scrL'} H \times X,\]
  where $H \times H'$ acts on $H \times X$ via $(h_1, h_2) \cdot (g,x) = (h_1 g h_2^{-1} , h_2 x)$.
  Via the quotient equivalence, the sheaf $\overline{\tau}^* (\scrL \boxtimes \scrG) \in D_{\ic} \left( (H \times H') \backslash_{\scrL \times \uk_{H'}} H \times X , \k \right)$ can be regarded as a sheaf, denoted $\scrL \tilde{\boxtimes} \scrG$, in $D_{\ic} (H \backslash_{\scrL} H \times^{H'} X, \k)$. 
  Let $a : H \times^{H'} X \to X$ denote the action map.
  It follows from the Čech complex description of categorical coinvariants that $\overline{\varphi}_* \scrG \cong a_* (\scrL \tilde{\boxtimes} \scrG)$.
  In particular, the underlying sheaf of $\For_{H, \scrL} (\overline{\varphi}_* \scrG)$ is given by $a_* \For_{H, \scrL} (\scrL \tilde{\boxtimes} \scrG)$.

  When $\overline{\varphi} : H' \hookrightarrow H$ is an embedding, we will occasionally write $\Av_{(H', \scrL') *}^{(H, \scrL)} \coloneq \overline{\varphi}_*$ and $\For_{(H', \scrL')}^{(H, \scrL)} \coloneq \overline{\varphi}^*$. 

  \subsubsection{Comparison with Equivariant Sheaves}

  Let $\nu : \tilde{H} \to H$ be a finite central isogeny with kernel $K$. Let $\Ch_{\nu} (H, \k)$ denote the collection of simple local systems on $H$ (with multiplicity) which are summands of $\nu_* \underline{\k}_{\tilde{H}}$.
  When $\scrL \in \Ch_{\nu} (H, \k)$ it is often useful to view $(H, \scrL)$-equivariant sheaves as a subcategory of $\tilde{H}$-equivariant sheaves.
  We warn that in general such a $\nu$ need not exist.

  \begin{lemma}[{\cite[1.5.4]{Gai}}]\label{lem:monad_desc}
    The endofunctor $\For_{H, \scrL} \Av_{H, \scrL!}$ is given by $\scrL \star (-)$.
  \end{lemma}

  \begin{lemma}\label{lem:desc_as_equivariant_sheaves}
    Let $\nu : \tilde{H} \to H$ be a finite central isogeny.
    The category $D_{\ic} (\tilde{H} \backslash X, \k)$ decomposes as a direct sum of categories,
    \[D_{\ic} (\tilde{H} \backslash X, \k) = \bigoplus_{\scrL \in \Ch_{\nu} (H, \k)} D_{\ic} (\tilde{H} \backslash X, \k)_{\scrL}.\]
    Moreover, there is an equivalence of categories on the component pieces with twisted equivariant sheaves,
    \[D_{\ic} (\tilde{H} \backslash X, \k)_{\scrL} \cong D_{\ic} (H \backslash_{\scrL} X, \k).\]
  \end{lemma}
  \begin{proof}
    Write $A_{\scrL}$ (resp. $A_{\tilde{H}}$) for the monad given by $\scrL \star (-)$ (resp. $\uk_{\tilde{H}} \star (-)$) for the usual action of $D_{\ic} (H, \k)$ (resp. $D_{\ic} (\tilde{H}, \k)$ on $D_{\ic} (X, \k)$).
    By Lemma \ref{lem:monad_desc} and \cite[1.4.5]{Gai}, we have that $D_{\ic} (H \backslash_{\scrL} X, \k)$ can be identified with modules over the monad $A_{\scrL}$.
    Likewise, $D_{\ic} (\tilde{H} \backslash X, \k)$ can be identified with modules over the monad $A_{\tilde{H}}$.
    A simple sheaf computation gives that $A_{\tilde{H}} \cong \nu_! \uk_{\tilde{H}} \star (-)$ where the convolution here is instead in terms of the $D_{\ic} (H, \k)$-action.
    By definition of $\Ch_{\nu} (H, \k)$, we then have an isomorphism of comonads $A_{\tilde{H}} \cong \bigoplus_{\scrL \in \Ch_{\nu} (H, \k)} A_{\scrL}$.
    In particular, we obtain an equivalence of comodules over these comonads,
    \[D_{\ic} (\tilde{H} \backslash X, \k) = \bigoplus_{\scrL \in \Ch_{\nu} (H, \k)} D_{\ic} (H \backslash_{\scrL} X, \k)\]
    as desired.
  \end{proof}

  The main utility of Lemma \ref{lem:desc_as_equivariant_sheaves} is that it matches our definition of twisted-equivariant sheaves with other known definitions such as \cite{LY} and \cite{Gou}.

  \subsection{Twisted Equivariant Sheaves on Ind-Algebraic Stacks}

  We extend the theory of twisted equivariant sheaves to ind-algebraic stacks of ind-finite type with an action of an algebraic group of pro-finite type.
  
  \subsubsection{Ind-Algebraic Stacks}
  
  We introduce the theory of ind-algebraic stacks. The geometry reviewed in this section is essentially well-known (cf., \cite{Emerton}).
  
  A stack $X$ is called a \emph{(strict) ind-algebraic stack} if there exists a directed system $\{X_i\}_{i \in I}$ of algebraic stacks and an isomorphism of stacks $X \cong \underrightarrow{\lim}_{\stackrel{}{i}} X_i$ where the transition maps $X_i \to X_{i'}$ for $i \leq i'$ are closed embeddings.
  We further say that $X$ is of \emph{ind-finite type} if the $X_i$'s can be chosen such that $X_i$ is of finite type for all $i \in I$. 
  
  A morphism $f : X \cong \underrightarrow{\lim}_{\stackrel{}{i\in I}} X_i \to \underrightarrow{\lim}_{\stackrel{}{j \in J}} Y_j \cong Y$ is said to be \emph{bounded} if for all $j \in J$, the pre-image $f^{-1} (Y_j)$ is contained in $X_i$ for some $i \in I$, and the restriction of $f$ to $Y_i$ is representable of finite type.
  We will abuse notation and write $\IndStk_{\C}$ for the subcategory of stacks consisting of ind-algebraic stacks of ind-finite type with bounded morphisms.
  
  Given $\underrightarrow{\lim}_{\stackrel{}{i\in I}} X_i \cong X \in \IndStk_{\C}$ and $i \leq i'$, write $\kappa_{i, i'} : X_i \to X_{i'}$ for the transition map.
  We can then define the derived category of constructible sheaves on $X$ as the colimit of categories
  \[D_{\cons} (X, \k) \coloneq \lim_{\stackrel{\longrightarrow}{i\in I}} D_{\cons} (X_i, \k),\]
  where the transition maps are given by pushforward maps $\kappa_{i,i' !}$ for $i \leq i'$.
  Likewise, the category of ind-constructible sheaves on $X$, denoted $D_{\ic} (X, \k)$, can be obtained by replacing $D_{\cons} (X_i, \k)$ with $D_{\ic} (X_i, \k)$ in the above definition.
  It follows from a standard argument that $D_{\cons} (X, \k)$ does not depend on the choice of directed system $\{X_i\}_{i \in I}$.
  
  Let $f : X \cong \underrightarrow{\lim}_{\stackrel{}{i\in I}} X_i \to \underrightarrow{\lim}_{\stackrel{}{j \in J}} Y_j \cong Y$ be a bounded morphism of ind-algebraic stacks of ind-finite type.
  We can define sheaf functors
  \[f_! : D_{\cons} (X, \k) \to D_{\cons} (Y, \k) \hspace{0.5cm} \text{and}\hspace{0.5cm} f^* : D_{\cons} (Y, \k) \to D_{\cons} (X, \k).\]
  The boundedness constraint ensures that there are representable morphisms $f_{i,j} : X_i \to Y_j$ for all $j \in J$ and $i \in I$ satisfying $X_i \subseteq f^{-1} (Y_j)$ given by the restriction to $f$.
  We can then define $f_!$ and $f^*$ component-wise. Explicitly, if $\scrF \in D_{\cons} (X_i, \k)$, we can find some $j \in J$ such that $f (X_i) \subset Y_j$. Then $f_! \scrF \in D_{\cons} (Y_j, \k)$ can be viewed as an object in $D_{\cons} (Y, \k)$.
  Similarly, if $\scrG \in D_{\cons} (Y_j, \k)$, the $i$-th component of $f^* \scrG$ is given by $f_{i,j}^* \scrG$ for all $i \in I$ satisfying  $X_i \subseteq f^{-1} (Y_j)$.
  
  \subsubsection{Algebraic Groups of Pro-Finite Type}
  
  An algebraic group $H$ is said to be of \emph{pro-finite type} if there exists a projective system $\{H_j\}_{j \in J}$ of algebraic groups of finite type whose transition maps $H_j \to H_{j'}$ for $j \geq j'$ are smooth surjections, along with an isomorphism of algebraic groups $H \cong \underleftarrow{\lim}_{\stackrel{}{j}} H_j$.
  We further say that $H$ is \emph{predominantly pro-unipotent} if it contains a subgroup of finite codimension which is pro-unipotent, i.e., for sufficiently large $j$, $\ker (H \to H_j)$ is an inverse limit of unipotent algebraic groups of finite type.
  
  Let $H \cong \underleftarrow{\lim}_{\stackrel{}{j \in J}} H_j$ be an algebraic group of pro-finite type.
  We can define the derived category of ${}^*$-constructible sheaves on $H$ as the inductive limit,
  \[D_{\cons}^* (H, \k) = \lim_{\stackrel{\longrightarrow}{j \in J}} D_{\cons} (H_j, \k),\]
  where the transitions maps are given by pullbacks along the transition maps $\pi_{j,j'} : H_j \to H_{j'}$ where $j \geq j'$.
  
  A \emph{multiplicative local system} $\scrL \in D_{\cons}^* (H, \k)$ consists of a collection of multiplicative local systems $\scrL_j \in \Loc (H_j, \k)$ such that for $j'$ sufficiently large and $j \geq j'$, there are compatible isomorphisms $\pi_{j,j'}^* \scrL_{j'} \cong \scrL_j$.
  We denote the set of multiplicative local systems on $H$ by $\Ch (H, \k)$.

  Let $f : H  = \underleftarrow{\lim}_{j \in J} H_{j} \to G = \underleftarrow{\lim}_{i \in I} G_i$ be a morphism of algebraic groups of pro-finite type.
  For all $i \in I$, there exists some $j \in J$, such that $f$ factors through a group homomorphism $f_{i,j} : H_j \to G_i$. 
  We can then define the pullback functor
  \[ f^* : D_{\cons}^* (G, \k) \to D_{\cons}^* (H, \k) \]
  by $f^* \scrF_i \coloneq f_{i,j}^* \scrF_j$. 
  Similarly, given ${}^*$-sheaves $\scrF$ and $\scrG$ on $H$, we can define a ${}^*$-sheaf $\scrF \otimes^L \scrG$ by $(\scrF \otimes^L \scrG)_i \coloneq \scrF_i \otimes^L \scrG_i$.
  This gives rise to functor $\otimes^L : D_{\cons}^* (H, \k) \otimes D_{\cons}^* (H, \k) \to D_{\cons}^* (H, \k)$.
  Similarly, we can define the external tensor product, 
  \[ \boxtimes : D_{\cons}^* (H, \k) \times D_{\cons}^* (G, \k) \to D_{\cons}^* (H \times G, \k) \hspace{1cm} \scrF \boxtimes \scrG \coloneq \pr_H^* \scrF \otimes^L \pr_G^* \scrG,\]
  where $\pr_H : H \times G \to H$ and $\pr_G : H \times G \to G$ are the obvious projection maps.

  It is easy to check from definitions that if $m : H \times H \to H$ is the multiplication map for an algebraic group $H$ of pro-finite type and $\scrL \in \Ch (H, \k)$, then $m^* \scrL \cong \scrL \boxtimes \scrL$. 
  
  \subsubsection{Twisted Equivariant Sheaves}\label{subsec:twisted_eq_sh_on_ind_stacks}
  
  If $H$ is a predominantly pro-unipotent algebraic group, then we can write $H \cong \underleftarrow{\lim}_{j \in J} H_j$ where $H_j = H/K_j$ with $K_j \trianglelefteq K_{j'}$ for $j \leq j'$, $K_j$ unipotent for large $j$, and such that $K_j$ has finite codimension in $H$.
  If $H$ acts on an ind-algebraic stack $X \cong \underrightarrow{\lim}_{\stackrel{}{i\in I}} X_i$, then we say that this action is \emph{compatible} if for all $i$, there exists some $i' \geq i$ and $j \in J$ such that the algebraic stack $X_{i'}$ is $H$-stable and such that $K_j$ acts on $X_{i'}$ trivially.
  If $\scrL \in \Ch (H, \k)$, we will call the triple $H \backslash_{\scrL} X \coloneq (X, H, \scrL)$ a \emph{twisted ind-algebraic stack}.

  Fix $i \in I$. We can then find some $j \in J$ such that $K_j$ acts trivially on $X_i$. We define
  \[D_{\cons} (H \backslash_{\scrL} X_i, \k) \coloneq D_{\cons} (H_j \backslash_{\scrL_j} X_i, \k).\]
  If $j' \geq j$, then the inflation functor $D_{\cons} (H_j \backslash_{\scrL_j} X_i, \k) \to D_{\cons} (H_{j'} \backslash_{\scrL_{j'}} X_i, \k)$ is an equivalence of categories. 
  As a result, $D_{\cons} (H \backslash_{\scrL} X_i, \k)$ is independent of the choice of $j \in J$.

  We can now define the derived category of $(H, \scrL)$-equivariant sheaves on $X$ as the direct limit,
  \[D_{\cons} (H \backslash_{\scrL} X, \k) = \lim_{\stackrel{\longrightarrow}{i \in I}} D_{\cons} (H \backslash_{\scrL} X_i, \k),\]
  where the transitions maps are given by pushforwards along the maps $H_j \backslash_{\scrL_j} X_i \to H_j \backslash_{\scrL_j} X_{i'}$ where $i \leq i'$ and $j \in J$ is taken sufficiently large.
  
  There are variations of the sheaf functors for sheaves on twisted ind-algebraic stacks. Namely, if $f : H \backslash_{\scrL} X \to H' \backslash_{\scrL'} X'$ is a bounded morphism of twisted ind-algebraic stacks, there are sheaf functors $f_*$ and $f^*$.
  Moreover, when $H=H'$ and the component map $\varphi : H \to H'$ is the identity map, we can also have sheaf functors $f_!$ and $f^!$.
  We will not explicitly detail the construction of the six-functors nor assemble them into an abstract six-functor formalism.
  The key observation is that just as in the untwisted setting, sheaves on ind-algebraic stacks are entirely determined by their behavior on twisted algebraic stacks of finite type.
  As a result, all the properties of constructible sheaves on twisted algebraic stacks transfer without issue.

  Let $X \cong \underrightarrow{\lim}_{i \in I} X_i$ be an ind-algebraic stack of ind-finite type with a compatible action of an algebraic group of pro-finite type $H = \underleftarrow{\lim}_{j \in J} H_j$.
  Let $H' = \underleftarrow{\lim}_{k \in K} H_k'$ be another algebraic group of pro-finite type along with a morphism $\varphi : H' \to H$ of algebraic groups.
  We can define the \emph{induction space} as the ind-algebraic stack $H' \times^H X \coloneq  \underrightarrow{\lim} H_k' \times^{H_j} X_i$ where $i \in I, j \in K$, and $k \in K$ are taken to satisfy that $X_i$ is $H_j$-stable and $\varphi (H_k) \subseteq H_i$.
  Suppose that $H$ and $H'$ are predominantly unipotent. Let $\scrL \in \Ch (H,\k)$ and  $\scrL' = \varphi^* \scrL$. For each $\scrF \in D_{\cons} (H \backslash X, \k)$, 
  the twisted external product construction of \S \ref{subsec:twisted_eq_sh} gives rise to a sheaf $\scrL' \tilde{\boxtimes} \scrF \in D_{\cons} (H' \backslash_{\scrL'} H' \times^{H} X, \k)$.
  Via $*$-pushforward along the action map $a : H' \times^H X \to X$, we recover the classical description of the $*$-averaging functor $D_{\cons} (H \backslash_{\scrL} X, \k) \to D_{\cons} (H'  \backslash_{\scrL'} X, \k)$. 

\subsection{Perverse Sheaves}

In this section, we will construct various abelian categories of perverse sheaves for constructible on the various geometric settings we have considered thus far.

Let $S \in \Sch_{\C}$. We can define the \emph{perverse $t$-structure} on $D_{\cons} (S, \k)$ by
\[{}^p D_{\cons} (S, \k)^{\leq 0} = \{ \scrF \in D_{c} (S, \k) \mid \text{for all } i, \text{ we have } \dim \supp H^i (\scrF) \leq -i \}, \]
The nonnegative portion of the perverse $t$-structure ${}^p D_{\cons} (S, \k)^{\geq 0}$ is characterized by $\Hom (\scrF, \scrG [-1]) = 0$ for all $\scrF \in {}^p D_{\cons} (S, \k)^{\leq 0}$ and $\scrG \in {}^p D_{\cons} (S, \k)^{\geq 0}$.
The fact that this defines a $t$-structure is a standard argument (see \cite[\S3.1]{A1}).
We can then define the \emph{perverse sheaves} on $S$, denoted $\Perv (S, \k)$, as the heart of this $t$-structure.

Now let $X \in \Stk_{\C}$ and let $\pi : U \to X$ be a smooth atlas in $\Sch_{/X}$ of relative dimension $d$.
The \emph{perverse $t$-structure} on $D_{\cons} (X, \k)$ is given by
\[{}^p D_{\cons} (X, \k)^{\leq 0} = \left \{ \scrF \in D_{\cons} (X, \k) \mid \pi^* \scrF [d] \in {}^p D_{\cons} (S, \k)^{\leq 0} \right \}, \]
\[{}^p D_{\cons} (X, \k)^{\geq 0} = \left \{ \scrF \in D_{\cons} (X, \k) \mid \pi^* \scrF [d] \in {}^p D_{\cons} (S, \k)^{\geq 0} \right \}, \]
It is routine to then check that this is a $t$-structure and the resulting category of perverse sheaves on $X$, denoted $\Perv (X, \k)$, has the following description,
\[\Perv (X, \k) = \left \{ \scrF \in D_{\cons} (X, \k) \mid \pi^* \scrF [d] \in \Perv (S, \k) \right \}.\]
Moreover, a routine argument using functorality of pullbacks shows that the perverse $t$-structure on $D_{\cons} (X, \k)$ does not depend on the choice of smooth atlas.
If $X \cong \underleftarrow{\lim}_{i \in I} X_i$ is instead an ind-algebraic stack, we define the perverse $t$-structure on $D_{\cons} (X, \k)$ as the inductive limit of $t$-structures on the $D_{\cons} (X_i, \k)$'s.

Let $H \backslash_{\scrL} X$ be a twisted stack.
Recall, that there is a forgetful functor $\For_{H, \scrL} : D_{\cons} (H \backslash_{\scrL} X, \k) \to D_{\cons} (X, \k)$.
The \emph{perverse $t$-structure} on $D_{\cons} (H \backslash_{\scrL} X, \k)$ is given by
\[{}^p D_{\cons} (H \backslash_{\scrL} X, \k)^{\leq 0} = \left \{ \scrF \in D_{\cons} (H \backslash_{\scrL} X, \k) \mid \For_{H, \scrL} (\scrF) [\dim H] \in {}^p D_{\cons} (X, \k)^{\leq 0} \right \}, \]
\[{}^p D_{\cons} (H \backslash_{\scrL} X, \k)^{\geq 0} = \left \{ \scrF \in D_{\cons} (H \backslash_{\scrL} X, \k) \mid \For_{H, \scrL} (\scrF) [\dim H] \in {}^p D_{\cons} (X, \k)^{\geq 0} \right \}, \]
As before, we obtain an abelian category $(H, \scrL)$-equivariant perverse sheaves on $X$, denoted $\Perv (H \backslash_{\scrL} X, \k)$, which has the following description,
\[\Perv (H \backslash_{\scrL} X, \k) = \left \{ \scrF \in D_{\cons} (H \backslash_{\scrL} X, \k) \mid \For_{H, \scrL} (\scrF) [\dim H] \in \Perv (X, \k) \right \}.\]
If $H \backslash_{\scrL} X$ is instead a twisted ind-algebraic stack, we define the perverse $t$-structure on $D_{\cons} (H \backslash_{\scrL} X, \k)$ as the inductive limit of $t$-structures on the $D_{\cons} (H_j \backslash_{\scrL_j} X_i, \k)$'s.

It is easy to check that when $\k$ is a field, the Verdier duality functor $\DD$ restricts to a functor
\[\DD : \Perv (H \backslash_{\scrL} X, \k)^{\op} \to \Perv (H \backslash_{\scrL^{-1}} X, \k).\]

\subsection{Fixed Stratifications}

It will frequently be helpful to consider constructible sheaves with respect to a fixed stratification. We will always assume that our stratifications of ind-algebraic stack are by algebraic stacks of finite type. 

\begin{definition}\label{def:fixed_strat}
  Let $X = \bigsqcup_{\lambda \in \Lambda} X_{\lambda}$ be a stratification of an (ind)-algebraic stack $X$.
  We denote by $D_{\Lambda} (X, \k)$ the full subcategory of $D_{\cons} (X, \k)$ consisting of sheaves which are constructible with respect to $\{X_\lambda\}_{\lambda \in
  \Lambda}$.

  Further, suppose that $X$ is an (ind-)algebraic $H$-stack and that $\{X_\lambda\}_{\lambda \in \Lambda}$ is an $H$-stable stratification.
  Let $\scrL \in \Ch (H, \k)$. We denote by $D_{\Lambda} (H \backslash_{\scrL} X, \k)$ the full subcategory of $D_{\cons} (H \backslash_{\scrL} X, \k)$ consisting of sheaves $\scrF$ such that $\For_{H, \scrL} \scrF \in D_{\Lambda} (X, \k)$.
\end{definition}

\begin{definition}\label{def:stratified_mor}
  Let $f : H \backslash_{\scrL} X \to H' \backslash_{\scrL'} X'$ be a morphism of twisted stacks.
Let $X = \bigsqcup_{\lambda \in \Lambda } X_\lambda$ be an $H$-stable stratification of $X$. Let $X' = \bigsqcup_{\mu \in \Lambda' } X_\mu$ be an $H'$-stable stratification of $X'$. 
Suppose that $f$ further satisfies the following two conditions:
\begin{enumerate}
  \item for all $\lambda \in \Lambda$, $f^{-1} (X_\lambda) = \bigcup_{\mu \in \Lambda'_{\lambda} } X_\mu'$ where $\Lambda_{\lambda}' \subset \Lambda'$;
  \item for each $\mu \in \Lambda_{\lambda}'$, $f : X_\mu' \to X_\lambda$ is smooth.
\end{enumerate}
In this case, we will call $f$ is \emph{stratified}.
\end{definition}

\begin{definition}
  Let $H \backslash_{\scrL} X$ be a twisted (ind-)algebraic stack.
  We define a full subcategory $\Loc_{\textnormal{f}} (H \backslash_{\scrL} X, \k)$ of $D_{\cons} (H \backslash_{\scrL} X,\k)$ consisting of sheaves $\scrF$ such that $\For_{H, \scrL} \scrF \in \Loc_{\textnormal{f}} (X, \k)$.
  The objects in $\Loc_{\textnormal{f}} (H \backslash_{\scrL} X, \k)$ are called the \emph{$(H, \scrL)$-equivariant local systems on $X$}.
\end{definition}

Suppose $X$ is of finite type.
The category $D_{(X)} (H \backslash_{\scrL} X, \k)$ of $(H, \scrL)$-equivariant constructible sheaves with respect to the trivial stratification consists of sheaves $\scrF \in D_{\cons} (H \backslash_{\scrL} X, \k)$ such that ${}^p H^i (\scrF) [-\dim X_\lambda + \dim H] \in \Loc_{\textnormal{f}} (H \backslash_{\scrL} X, \k)$ for all $i \in \Z$.
It is convenient to define a functor
\[{}^p H^i : D_{(X)} (H \backslash_{\scrL} X, \k) \to \Loc (H \backslash_{\scrL} X, \k),\]
\[\scrF \mapsto {}^p H^i (\scrF) [-\dim X_\lambda + \dim H].\]

The following lemma follows from recollement.
\begin{lemma}
  Let $X$ be an algebraic $H$-stack with finitely many $H$-orbits.  
  Let $X = \bigsqcup_{\lambda \in  \Lambda} X_{\lambda}$ be the stratification of $X$ by the $H$-orbits. We denote by $j_{\lambda} : X_\lambda \to X$ the obvious inclusion maps.
  The category $D_{\Lambda} (H \backslash_{\scrL} X, \k)$ is the smallest stable subcategory of $D_{\cons} (H \backslash_{\scrL} X, \k)$ generated by $j_{\lambda!} \scrK$ (alternatively,  $j_{\lambda*} \scrK$) for all $\scrK \in \Loc_{\textnormal{f}} (H \backslash_{\scrL} X_\lambda, \k)$ and $\lambda \in \Lambda$.
\end{lemma}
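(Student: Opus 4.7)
The plan is to proceed by induction on $\abs{\Lambda}$ using recollement, and the hint indicates exactly this approach. I would handle the $j_!$-generation statement explicitly; the $j_*$-version is entirely dual.

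\emph{Base case.} When $\abs{\Lambda} = 1$, the stratum $X_{\lambda}$ equals $X$, so $j_{\lambda} = \id_X$ and $j_{\lambda!}\scrK = \scrK$. In this setting $D_{\Lambda}(H\backslash_{\scrL}X,\k) = D_{(X)}(H\backslash_{\scrL}X,\k)$, and by definition every object $\scrF$ satisfies ${}^p H^i(\scrF)[-\dim X + \dim H] \in \Loc_{\textnormal{f}}(H\backslash_{\scrL}X,\k)$ for every $i$. Since the perverse $t$-structure is bounded on $D_{\cons}(H\backslash_{\scrL}X,\k)$, only finitely many perverse cohomology objects are nonzero, so $\scrF$ is built by iterated extensions of shifts of these perverse cohomology sheaves. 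This proves that $\Loc_{\textnormal{f}}(H\backslash_{\scrL}X_\lambda,\k)$ generates $D_{\Lambda}(H\backslash_{\scrL}X,\k)$ as a stable subcategory.

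\emph{Inductive step.} Assume the statement holds whenever the number of strata is less than $\abs{\Lambda}$. Pick an open stratum $X_{\lambda_0} \subset X$ (which exists by finiteness of $\Lambda$) and let $U = X_{\lambda_0}$, $Z = X \setminus U$, with inclusions $j \coloneq j_{\lambda_0} : U \hookrightarrow X$ and $i : Z \hookrightarrow X$. The stratification of $X$ restricts to an $H$-stable stratification of $Z$ indexed by $\Lambda_Z \coloneq \Lambda \setminus \{\lambda_0\}$, to which the inductive hypothesis applies. For $\scrF \in D_{\Lambda}(H\backslash_{\scrL}X,\k)$, the open–closed recollement triangle
\[ j_! j^* \scrF \longrightarrow \scrF \longrightarrow i_* i^* \scrF \longrightarrow \]
reduces the problem to showing each outer term belongs to the stable subcategory generated by $\{j_{\lambda!}\scrK\}$.

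For the open part, $j^*\scrF \in D_{(U)}(H\backslash_{\scrL}U,\k)$, so by the base case it is built from $\Loc_{\textnormal{f}}(H\backslash_{\scrL}U,\k)$; applying the exact functor $j_!$ produces an object built from $j_{\lambda_0!}\scrK$ for $\scrK \in \Loc_{\textnormal{f}}(H\backslash_{\scrL}X_{\lambda_0},\k)$. For the closed part, $i^*\scrF \in D_{\Lambda_Z}(H\backslash_{\scrL}Z,\k)$, so by the inductive hypothesis it is built from sheaves of the form $(j_\mu^Z)_! \scrK$ for $\mu \in \Lambda_Z$ and $\scrK \in \Loc_{\textnormal{f}}(H\backslash_{\scrL}X_\mu,\k)$; since $i$ is a closed embedding we have $i_* \circ (j_\mu^Z)_! \cong j_{\mu!}$, so $i_* i^*\scrF$ belongs to the desired subcategory. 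Taking extensions via the recollement triangle concludes the induction.

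\emph{Obstacle.} There is no serious obstacle beyond bookkeeping: the recollement triangle, the identifications $i_* \circ (j_\mu^Z)_! \cong j_{\mu!}$, and the exactness of $j_!$ and $i_*$ as triangulated functors are all standard features of the six-functor formalism developed earlier in the appendix, and the only non-formal input is the base case, which is immediate from the description of $D_{(X)}(H\backslash_{\scrL}X,\k)$ via its perverse cohomology. The dual statement for $j_*$-generation follows by the same argument applied to the alternative recollement triangle $i_* i^!\scrF \to \scrF \to j_* j^*\scrF \to$, noting that $j_*\scrK$ lies in $D_\Lambda$ because $\scrK \in \Loc_{\textnormal{f}}$ is already constructible along the stratification.
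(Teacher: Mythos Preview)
Your proposal is correct and is precisely the standard recollement argument the paper has in mind; the paper itself merely states that the lemma ``follows from recollement'' without further detail, so your induction on $\abs{\Lambda}$ via the open--closed triangle is exactly the intended proof spelled out.
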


\begin{remark}
  Let $f : H \backslash_{\scrL} X \to H' \backslash_{\scrL'} X'$ be a morphism of twisted stacks.
  Assume that there are finitely many $H$-orbits on $X$ and $H'$-orbits on $X'$. We take stratifications for $X$ and $X'$ with respect to the orbits.
  In this case, if $f$ is stratified, then $f_!$, $f_*$, $f^!$, and $f^*$ preserve the fixed stratifications.
\end{remark}

\end{document}